\documentclass[9pt,reqno]{amsart}
\usepackage[T1]{fontenc}
\usepackage{lmodern}
\usepackage{microtype}
\usepackage[
  paperwidth=6.75in,paperheight=9.5in,
  inner=0.82in,outer=0.72in,top=0.75in,bottom=0.82in
]{geometry}
\usepackage{amsmath,amssymb,amsthm,mathtools,mathrsfs}
\usepackage{enumitem}
\usepackage{booktabs}
\usepackage{array}
\usepackage{longtable}
\usepackage{xcolor}
\usepackage{tikz}
\usetikzlibrary{arrows.meta,positioning,calc,fit}
\usepackage[colorlinks=true,linkcolor=blue!55!black,citecolor=blue!55!black,
            urlcolor=blue!55!black]{hyperref}

\numberwithin{equation}{section}
\allowdisplaybreaks
\newtheorem{theorem}{Theorem}[section]
\newtheorem{proposition}[theorem]{Proposition}
\newtheorem{lemma}[theorem]{Lemma}
\newtheorem{corollary}[theorem]{Corollary}
\theoremstyle{definition}
\newtheorem{definition}[theorem]{Definition}
\newtheorem{construction}[theorem]{Construction}

\theoremstyle{remark}
\newtheorem{remark}[theorem]{Remark}

\newcommand{\K}{\mathbb{K}}
\newcommand{\N}{\mathbb{N}}
\newcommand{\Q}{\mathbb{Q}}
\newcommand{\Bcal}{\mathcal{B}}
\newcommand{\Kcal}{\mathcal{K}}
\newcommand{\Cal}{\mathop{\rm Cal}}
\newcommand{\Env}{\operatorname{Env}}
\newcommand{\dist}{\operatorname{dist}}
\newcommand{\supp}{\operatorname{supp}}
\newcommand{\ran}{\operatorname{ran}}
\newcommand{\rank}{\operatorname{rank}}
\newcommand{\norm}[1]{\left\lVert #1\right\rVert}
\newcommand{\abs}[1]{\left\lvert #1\right\rvert}
\newcommand{\ilim}{\varprojlim\nolimits^{\,b}}

\title[Inverse-Limit Detection Envelopes as Calkin Algebras]
{Inverse-Limit Detection Envelopes as Calkin Algebras}
\hypersetup{
  pdftitle={Inverse-limit detection envelopes as Calkin algebras},
  pdfsubject={Detection envelopes and an oracle-recursive Bourgain--Delbaen Calkin construction},
  pdfkeywords={Calkin algebra, detection envelope, bounded inverse limit,
  Banach algebra, Bourgain--Delbaen construction, oracle recursion}
}

\author{Rui Liu}
\address{School of Mathematical Sciences and LPMC, Nankai University, Tianjin
300071, P.R. China}
\email{ruiliu@nankai.edu.cn}

\author{Jie Shen}
\address{School of Mathematical Sciences and LPMC, Nankai University, Tianjin
300071, P.R. China}
\email{1710064@mail.nankai.edu.cn}

\date{September 26, 2026}

\subjclass[2020]{Primary 46B07, 46J05, 47L10; Secondary 03D75, 46B25}
\keywords{Calkin algebra, bounded inverse limit, Argyros–Haydon construction, Bourgain--Delbaen construction, oracle Turing-machine}

\begin{document}

  \begin{abstract}
    For a unital Banach algebra \(A\) with \(\norm{1_A}=1\) and a decreasing sequence \((I_r)\) of proper closed two-sided ideals of finite codimension, we introduce the bounded detection envelope
    \[ \widehat A:=\Env^b_{(I_r)}(A)=\ilim(A/I_r,\pi_r^s),\qquad \norm{(a_r)_r}=\sup_r\norm{a_r}. \]
    Combining vector-valued Bourgain--Delbaen extensions, oracle Turing-machine recursion, and Argyros--Haydon operator analysis, we construct a separable Banach space \(X\) such that, isometrically as unital Banach algebras,
    \[ \Cal(X)=\Bcal(X)/\Kcal(X)\simeq\widehat A. \]
    We also characterize these envelopes as the unital Banach algebras with norm-one units admitting an isometric predual and countably many jointly faithful weak-star-to-norm continuous finite-dimensional unital representations.

    Applications include arbitrary finite-dimensional unital Banach algebras with norm-one units and their countable \(\ell_\infty\)-sums, as well as unitized coordinatewise algebras over normalized \(1\)-unconditional boundedly complete Schauder bases.
    These include incidence algebras, \(\ell_\infty\), and the unitization of Tsirelson's original space.
    Further applications concern analytic operator algebras, Fourier--Stieltjes and Fourier multiplier algebras of countable discrete groups, and classical and quantum convolution algebras.
    In particular, we realize \(H^\infty(\mathbb D)\), \(\operatorname{Mult}(H_2^2)\), \(\mathcal L_2\), \(M_{\mathrm{cb}}A(\mathbb F_2)\), and \(C^u(O_3^+)^*\) as Calkin algebras, with their canonical norms and products.
  \end{abstract}

  \maketitle
  \tableofcontents


\section{Introduction}\label{sec:introduction}\label{sec:detector-filtrations}
For a Banach space \(Z\), write
\[ \Cal(Z)=\Bcal(Z)/\Kcal(Z), \]
where \(\Bcal(Z)\) is the algebra of bounded operators on \(Z\) and \(\Kcal(Z)\) is the ideal of compact operators on \(Z\).
For \(T\in\Bcal(Z)\), write \([T]=T+\Kcal(Z)\) for its Calkin class.
Throughout, \(\K=\mathbb R\) or \(\mathbb C\), \(\N=\{0,1,\ldots\}\), and \(\N_+=\{1,2,\ldots\}\).
The name comes from Calkin's 1941 study of the Hilbert-space quotient \cite{Calkin1941}.

The main result of this paper is a realization theorem for bounded inverse limits of finite-dimensional unital Banach algebras.
The norm and multiplication are those prescribed by the inverse system.

Let \(\mathscr A=(A_r,\pi_r^s)_{r\leq s}\) be a countable inverse system of finite-dimensional unital Banach algebras, with bonding maps \(\pi_r^s:A_s\to A_r\) for \(r\leq s\).
We assume that the bonding maps are unital homomorphisms and metric quotients, that is,
\begin{align}
  \norm{1_{A_r}}&=1,\qquad
  \pi_r^s(1_{A_s})=1_{A_r},\qquad
  \pi_r^s\pi_s^t=\pi_r^t,                         \label{eq:inverse-system}\\
  \norm{x_r}
  &=\inf\{\norm{x_s}:\pi_r^sx_s=x_r\}
  \qquad(r\leq s).                            \label{eq:metric-quotient}
\end{align}
Its bounded inverse limit is
\begin{equation}\label{eq:bounded-limit}
  \widehat A=\ilim(A_r,\pi_r^s)
  =\bigl\{(a_r)_r:\pi_r^sa_s=a_r\ (r\leq s),\quad
  \sup_r\norm{a_r}<\infty\bigr\},
\end{equation}
with coordinatewise multiplication and norm \(\norm a=\sup_r\norm{a_r}\).
The boundedness condition makes \(\widehat A\) a Banach algebra.

\begin{theorem}[Inverse-limit realization]\label{thm:main}
  Let \(\mathscr A=(A_r,\pi_r^s)_{r\leq s}\) be a countable inverse system of finite-dimensional unital Banach algebras satisfying \eqref{eq:inverse-system}--\eqref{eq:metric-quotient}, and put \(\widehat A=\ilim(A_r,\pi_r^s)\).
  There are a separable Banach space \(X_{\mathscr A}\) and a linear map \(\widehat A\to\Bcal(X_{\mathscr A})\), \(a\mapsto M_a\), such that
  \begin{equation}\label{eq:representation}
    M_1=I,
    \qquad
    M_aM_b=M_{ab},
    \qquad
    \norm{M_a}=\norm a.
  \end{equation}
  Moreover, we have
  \begin{equation}\label{eq:classification}
    \Bcal(X_{\mathscr A})
    =\{M_a:a\in\widehat A\}+\Kcal(X_{\mathscr A}),
  \end{equation}
  and
  \begin{equation}\label{eq:essential-norm}
    \norm{[M_a]}_{\Cal(X_{\mathscr A})}=\norm a.
  \end{equation}
  The coefficient in \eqref{eq:classification} is unique.
  Consequently,
  \[ \widehat A\longrightarrow\Cal(X_{\mathscr A}),\qquad a\longmapsto[M_a], \]
  is an isometric unital Banach-algebra isomorphism.
\end{theorem}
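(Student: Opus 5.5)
The plan is to follow the Argyros--Haydon/Motakis--Puglisi--Zisimopoulou scheme, now in a vector-valued form. The first step is to build a Bourgain--Delbaen space \(X_{\mathscr A}\) whose coordinates live in the finite-dimensional algebras \(A_r\).

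\emph{Construction.} Fix an increasing sequence of finite sets \(\Delta_n\). Each element \(\gamma\in\Delta_{n+1}\setminus\Delta_n\) carries a \emph{rank} \(r(\gamma)\), and its coordinate space is \(A_{r(\gamma)}\) with its own norm. Consider \(Z=\bigl(\bigoplus_{\gamma}A_{r(\gamma)}\bigr)_{\ell_\infty}\). The extension functionals \(c^*_\gamma\) are Argyros--Haydon-type averages
\[ c^*_\gamma=e^*_\xi+\tfrac1{m_j}\,b^*\circ P_{(p,q]}, \]
except that the scalar functional \(b^*\) is replaced by an \(A_{r(\gamma)}\)-valued functional, composed with bonding maps \(\pi^{r(\xi)}_{r(\gamma)}\) pulled down from higher-rank coordinates. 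This forces left multiplication by an element \(a=(a_r)\in\widehat A\), acting coordinatewise (by \(a_{r(\gamma)}\) on the \(\gamma\)-coordinate), to commute with the extension operators. That in turn defines \(M_a\) on \(X_{\mathscr A}=\overline{\bigcup_n i_n(\ell_\infty(\Delta_n))}\).

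The identities \(M_1=I\) and \(M_aM_b=M_{ab}\) follow from unitality and multiplicativity of the \(\pi\)'s. For \(\norm{M_a}\le\norm a\), I would use \(\sup_r\norm{a_r}\) together with the metric-quotient condition \eqref{eq:metric-quotient}: choosing norm-attaining lifts shows the vector-valued BD extension is a norm-one \(\widehat A\)-module map. Rank-\(r\) coordinates appear infinitely often, so \(\norm{M_a}\ge\norm{a_r}\) for every \(r\).

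\emph{Recursion.} The countable system \(\mathscr A\) is coded as an oracle. The \(\Delta_n\) are generated by an oracle Turing machine that, at stage \(n\), enumerates finitely many rational points of the unit balls of \(A_r\) and \(A_r^*\) (ranks \(r\le n\)) and the odd-weight special sequences via an injective coding. This keeps every \(\Delta_n\) finite while still realizing every required combination. Separability is then automatic.

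\emph{Operator analysis.} Following Argyros--Haydon, I would prove that every \(T\in\Bcal(X_{\mathscr A})\) satisfies
\[ \lim_{n}\dist\bigl(e^*_\gamma T,\; \text{rank-}r(\gamma)\text{ module image}\bigr)=0 \]
along blocks. The steps are:
\begin{enumerate}[label=(\roman*)]
\item Show that \(X_{\mathscr A}^*\cong\ell_1\) and that \(X_{\mathscr A}\) is hereditarily indecomposable relative to the finite-dimensional module structure. Concretely, for bounded block sequences \((x_k)\), the vector \(Tx_k\) is asymptotically \(M_{a^{(k)}}x_k\) with \(a^{(k)}_r\in A_r\); this uses rapidly increasing sequences and exact pairs of dependent sequences.
\item A diagonal/compactness argument in the finite-dimensional \(A_r\) produces a compatible bounded family \((a_r)\), hence \(a\in\widehat A\), with \(T-M_a\) compact. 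This gives \eqref{eq:classification}.
\item Uniqueness and \eqref{eq:essential-norm} come from the inequality \(\norm{M_a+K}\ge\norm{a_r}\) for every \(r\) and every compact \(K\), tested on normalized blocks supported in far-out rank-\(r\) coordinates that attain \(\norm{a_r}\) in the limit. This bound also shows that \(M_a\) is compact only if \(a=0\).
\end{enumerate}
Isometry and unitality then give the final statement.

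The main obstacle is step (i): establishing that a block sequence is asymptotically mapped into the \emph{module} span rather than a scalar span. The plan there is to adapt the Argyros--Haydon proof that \(\dist(Tx_k,\K x_k)\to0\), running it coordinatewise on the finite-dimensional algebra \(A_r\). The dependent-sequence functionals are taken \(A_r\)-valued, and the norming functionals of \(A_r^*\) are used to separate \(Tx_k\) from \(A_rx_k\). Finite dimensionality is needed to obtain the uniform constants required for the \(1/m_j\) estimates.
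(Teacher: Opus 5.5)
Your architecture is the paper's: a vector-valued BD space, fibrewise multipliers, the Argyros--Haydon analysis, then globalization. Two of your steps fail as stated, and the isometric identities rest on a missing ingredient.

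\textbf{The fibre choice.} You take coordinate spaces $A_{r(\gamma)}$ with left multiplication. Covariance then forces every $A_{r(\gamma)}$-valued correction to be a left-module map into the \emph{regular} module, i.e.\ a sum of maps $x_\eta\mapsto\pi(x_\eta)d$ with fixed $d$. In the module setting an exact pair must annihilate the \emph{whole} fibre coordinate of the inner terminal. The reason is that the next outer evaluation is scalarized by every vector of the dual fibre, and this is what makes the matching incidences vanish in the collision estimate. So you need an admissible map that vanishes on $x_k$ but not on $Sx_k$.

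For algebras that are not self-injective such maps need not exist. Take the upper-triangular algebra $T_2\subseteq M_2(\K)$, a finite incidence algebra covered by the theorem. There $e_{12}d=d_{22}e_{12}$ and $e_{22}d=d_{22}e_{22}$, so every right multiplication killing $e_{12}$ also kills $e_{22}$, although $e_{22}\notin T_2e_{12}$. If the relevant coordinates of $x_k$ and $Sx_k$ are $e_{12}$ and $e_{22}$, then $Sx_k$ stays away from the orbit, but no admissible correction can witness it. So ``running AH coordinatewise'' does not produce the dependent sequences you need.

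The paper instead uses core fibres $A_r^*$ with action $R_b^*$. Their dual fibre $A_r$ is free on $1_{A_r}$, which gives three things:
\begin{enumerate}
\item every coded scalar packet $b$ has a covariant lift with $B^*_{\mathbf b}1_{A_r}=b$ (Lemma~\ref{lem:packet});
\item the full core coordinate equals the averaged annihilation error, $J^\infty_{r(\eta)}U_\eta y^0=N^{-1}\sum_t\Delta(b_t,u_t)$;
\item the root perturbation $y=y^0-d_\eta(U_\eta y^0)$ gives $U_\eta y=0$ at small cost (Lemma~\ref{lem:root}).
\end{enumerate}
Fibres $A_r$ with left multiplication occur in the paper only as zero-correction unit probes.

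\textbf{Globalization and isometry.} The local approximants $a^{(k)}$ are neither unique nor uniformly bounded. Only the class of $a^{(k)}_{R_{x_k}}$ modulo the kernel of $b\mapsto M_bx_k$ matters, and this map has no uniform lower bound. Subsequential limits also depend on the chosen block sequence. So a diagonal compactness argument gives neither the compatibility $\pi_r^sa_s=a_r$ nor a single $a$ with $T-M_a$ null on \emph{every} RIS.

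The paper proceeds in four steps:
\begin{enumerate}
\item It reads $a_r$ off unit probes $z_\gamma=j_\gamma1_{A_r}$, for which $\{M_bz_\gamma:b\in\widehat A\}=j_\gamma A_r$ and $\norm{M_bz_\gamma}=\norm{b_r}$.
\item It obtains the uniform compatibility estimate $\omega_N(S)\to0$ by applying the local-orbit theorem to separated sums $z_{\gamma_k}+z_{\delta_k}$ of probes with varying detector indices.
\item It shows $S-M_a$ is null on all RIS by pairing each RIS vector with a far-out core probe carrying its annihilation error (Lemma~\ref{lem:no-ghost}).
\item Only then does it apply the level-splitting compactness test.
\end{enumerate}

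Finally, ``rank-$r$ coordinates appear infinitely often'' gives only $\norm{M_a}\geq\norm{a_r}/M$, because $\norm{d_\gamma u}$ may be as large as $M\norm u$. The exact identities $\norm{M_a}=\norm{[M_a]}=\norm a$ require zero-correction probes that are never predecessors. Their injections are isometric by the $1/m_h$ shielding estimate (Lemma~\ref{lem:formal-probe-isometry}), and they form a weakly null RIS.
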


Let \(A\) be a unital Banach algebra with \(\norm{1_A}=1\).
Consider a decreasing sequence
\begin{equation}\label{eq:detector-filtration}
  A\supseteq I_0\supseteq I_1\supseteq\cdots
\end{equation}
of proper closed two-sided ideals of finite codimension.
Give \(A_r=A/I_r\) its quotient norm and let \(\rho_r:A\to A_r\) be the quotient map. Then we have
\[ \rho_r(a)=a+I_r,\qquad \pi_r^s(a+I_s)=a+I_r\quad(r\leq s). \]
These maps satisfy \eqref{eq:inverse-system}--\eqref{eq:metric-quotient}.

\begin{definition}[Bounded detection envelope]\label{def:detection-envelope}
  The bounded detection envelope of \((A,(I_r))\) is \(\Env^b_{(I_r)}(A)=\ilim(A/I_r,\pi_r^s)\).
  Its canonical map is
  \begin{equation}\label{eq:canonical-detector-map}
    \jmath:A\longrightarrow\Env^b_{(I_r)}(A),
    \qquad \jmath(a)=(\rho_r(a))_r.
  \end{equation}
\end{definition}

Let \(\tau_{(I_r)}\) be the locally convex topology on \(A\) generated by the quotient seminorms \(a\mapsto\norm{\rho_r(a)}\), \(r\in\N\).
The filtration \((I_r)\) is \emph{separating} if \(\bigcap_rI_r=\{0\}\) and \emph{norming} if \(\norm a=\sup_r\norm{\rho_r(a)}\).
We give the envelope its coordinate topology.
For the algebra \(c\) of convergent sequences, the first \(r+1\) coordinates give a separating norming filtration whose bounded envelope is \(\ell_\infty\).

\begin{corollary}[Filtered-algebra realization]
  \label{cor:filtered-realization}
  Let \((I_r)\) be a decreasing sequence of proper closed two-sided ideals of finite codimension in \(A\), and let \(\mathscr A=(A/I_r,\pi_r^s)\) be the associated inverse system.
  The map \(\jmath\) in \eqref{eq:canonical-detector-map} has coordinate dense range through uniformly bounded approximants.
  It is injective when \((I_r)\) is separating, isometric when \((I_r)\) is norming, and onto when every closed norm ball is compact in \(\tau_{(I_r)}\).
  In particular, if the filtration is separating and every closed norm ball is compact in \(\tau_{(I_r)}\), then we have
  \[ \Cal(X_{\mathscr A})\simeq A \]
  isometrically as unital Banach algebras.
\end{corollary}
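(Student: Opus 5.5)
The plan is to check each clause of the corollary directly from the definitions, and then combine the last clause with Theorem~\ref{thm:main}. First, \(\jmath\) is well defined and contractive. Indeed \(\pi_r^s\rho_s=\rho_r\) and \(\norm{\rho_r(a)}\le\norm a\), so \(\jmath(a)\in\widehat A\) with \(\norm{\jmath(a)}\le\norm a\). It is also a unital homomorphism, since each \(\rho_r\) is one.

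\emph{Coordinate density through uniformly bounded approximants.} Fix \(x=(x_r)\in\widehat A\) and \(\varepsilon>0\). For each \(n\), choose \(a_n\in A\) with \(\rho_n(a_n)=x_n\) and \(\norm{a_n}\le\norm{x_n}+\varepsilon\le\norm x+\varepsilon\); this is possible by the definition of the quotient norm on \(A/I_n\). For \(r\le n\) we then have \(\rho_r(a_n)=\pi_r^n x_n=x_r\). Hence \(\jmath(a_n)\) agrees with \(x\) in every coordinate \(r\le n\), so \(\jmath(a_n)\to x\) coordinatewise, while \(\sup_n\norm{a_n}\le\norm x+\varepsilon\).

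\emph{Injectivity and isometry.} If \(\jmath(a)=0\), then \(a\in\bigcap_r I_r\), so \(\jmath\) is injective when the filtration is separating. Since \(\norm{\jmath(a)}=\sup_r\norm{\rho_r(a)}\), the map \(\jmath\) is isometric exactly when the filtration is norming.

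\emph{Surjectivity.} This is the only step with content. Take the bounded approximants \(a_n\) from above; they lie in the closed ball \(B\) of radius \(\norm x+1\). Assuming this ball is \(\tau_{(I_r)}\)-compact, the sequence \((a_n)\) has a \(\tau_{(I_r)}\)-cluster point \(a\in B\), via a subnet. Each seminorm \(b\mapsto\norm{\rho_r(b)}\) is \(\tau_{(I_r)}\)-continuous. Moreover \(\rho_r(a_n)=x_r\) for all \(n\ge r\), so \(\rho_r(a)=x_r\) for every \(r\), that is, \(\jmath(a)=x\). One subtlety is that \(\tau_{(I_r)}\) need not be Hausdorff. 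Compactness still yields a cluster point, and the limit condition only involves the seminorms, so nothing more is needed.

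\emph{The final claim.} Suppose the filtration is separating and every closed ball is \(\tau_{(I_r)}\)-compact. Then \(\jmath\) is a bijective unital homomorphism, but we also need it to be isometric; a bijective bounded map is only an isomorphism in general. To get norming, apply the surjectivity argument to \(\jmath(a)\) itself. With \(x=\jmath(a)\), the approximants \(a_n\) above can be chosen with \(\norm{a_n}\le\norm{\jmath(a)}+\varepsilon\). They have a cluster point \(a'\) in the closed ball of radius \(\norm{\jmath(a)}+\varepsilon\) satisfying \(\jmath(a')=\jmath(a)\). Injectivity gives \(a'=a\), so \(\norm a\le\norm{\jmath(a)}+\varepsilon\). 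Letting \(\varepsilon\to0\) shows the filtration is automatically norming. Hence \(\jmath\) is an isometric unital isomorphism \(A\simeq\widehat A\). Composing with the isometric unital isomorphism \(\widehat A\to\Cal(X_{\mathscr A})\) of Theorem~\ref{thm:main}, which applies because the quotient system satisfies \eqref{eq:inverse-system}--\eqref{eq:metric-quotient}, yields \(\Cal(X_{\mathscr A})\simeq A\).

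The main point to watch is this automatic norming step. It is where compactness of balls is used a second time, now for isometry rather than only for surjectivity.
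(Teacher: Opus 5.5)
Your proposal is correct and follows essentially the same route as the paper. It uses bounded representatives for coordinate density (Proposition~\ref{prop:bounded-detector-density}), the kernel and norm formula for injectivity and isometry (Proposition~\ref{prop:canonical-detector-map}), and the compactness argument of Theorem~\ref{thm:detector-compact-surjectivity}, including the same step of taking preimages in balls of radius \(\norm{\jmath(a)}+\varepsilon\) and using injectivity to get automatic norming; the tower hypotheses for Theorem~\ref{thm:main} come from Lemma~\ref{lem:detector-quotient-tower}. The only cosmetic difference is that you extract a \(\tau_{(I_r)}\)-cluster point of the approximants, whereas the paper observes that the compact image \(\jmath((R+\varepsilon)B_A)\) is closed in the Hausdorff coordinate topology; both arguments work.
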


The finite-detection condition also has a dual formulation.
By Theorem~\ref{thm:finite-detection-duality}, we know that a unital Banach algebra is isometrically a bounded inverse limit as above if and only if it has an isometric predual \(E\) with increasing finite-dimensional subspaces \(E_r\) such that
\[ E=\overline{\bigcup_r E_r},\qquad E_r^\perp\text{ is a proper two-sided ideal for every }r. \]
Equivalently, it has a jointly faithful countable family of weak-star-to-norm continuous finite-dimensional representations.
These formulations characterize the algebras to which Theorem~\ref{thm:main} applies.

The applications also reach Fourier multiplier algebras and convolution duals beyond the universal compact quantum-group case.
For the Fourier--Stieltjes and Fourier multiplier terminology, we use \cite{Eymard1964,Knudby2014Semigroups}.
For every countable discrete group \(\Gamma\), the algebras \(B(\Gamma)\), \(MA(\Gamma)\), and \(M_{\mathrm{cb}}A(\Gamma)\) are realized with pointwise multiplication and their respective Fourier--Stieltjes, multiplier, and completely bounded multiplier norms.
In particular, the Herz--Schur multiplier algebra \(M_{\mathrm{cb}}A(\mathbb F_2)\) strictly contains \(B(\mathbb F_2)\), as noted in Corollary~\ref{cor:fourier-multipliers}.
Using unital completely positive (UCP) coproducts, Proposition~\ref{prop:coalgebra} also realizes the convolution dual of every separable compact quantum hypergroup in the sense of Chapovsky--Vainerman \cite{ChapovskyVainerman1999} (Corollary~\ref{cor:compact-quantum-hypergroups}).
For any separable compact quantum-group completion in the sense of \cite{KyedSoltan2012}, including reduced and exotic ones, whose coproduct extends to the minimal tensor product, the convolution dual is realized when its counit is bounded, while its standard \(\ell_1\)-unitization is realized without a counit assumption (Corollary~\ref{cor:quantum-completions}).

The construction uses the Tsirelson and Bourgain--Delbaen methods reviewed below, together with a finite-extension recursion.

\subsection{The Argyros--Haydon construction and related work}
\label{sec:intro-ah}

One line begins in 1974 with Tsirelson's construction of a reflexive space containing neither \(c_0\) nor any \(\ell_p\) \cite{Tsirelson1974}.
Figiel and Johnson gave the dual Tsirelson space its modern version of that norm, providing the recursive method used in many later constructions \cite{FigielJohnson1974}.
Maurey and Rosenthal constructed a normalized weakly null sequence with no unconditional subsequence, and their argument was an early source of the coding methods used later to suppress unconditional structure \cite{MaureyRosenthal1977}.
Schlumprecht adapted recursive norming in 1991 to construct the first arbitrarily distortable space \cite{Schlumprecht1991}.
Odell and Schlumprecht subsequently proved that \(\ell_2\) is arbitrarily distortable \cite{OdellSchlumprecht1994}.

In 1993, Gowers and Maurey combined recursive norming, weighted averages, and special coding to solve the unconditional basic sequence problem.
Their construction gave the first hereditarily indecomposable (HI) space \cite{GowersMaurey1993}.
Here HI means that no infinite-dimensional closed subspace decomposes as a direct sum of two infinite-dimensional closed subspaces.
Gowers later constructed an HI space with an asymptotic unconditional basis \cite{Gowers1995}, and his dichotomy showed that every infinite-dimensional Banach space contains either an unconditional basic sequence or an HI subspace \cite{Gowers1996}.
Ferenczi showed in 1997 that HI can coexist with uniform convexity \cite{Ferenczi1997UC}.
He also proved that, for a complex HI space \(X\), every operator from a subspace of \(X\) into \(X\) is a scalar multiple of the inclusion plus a strictly singular operator \cite{Ferenczi1997Operators}.
Gowers and Maurey developed related coding methods further to construct Banach spaces with small operator algebras \cite{GowersMaurey1997}.

In parallel, asymptotic and mixed-Tsirelson methods developed along a related line.
Maurey, Milman, and Tomczak-Jaegermann introduced a general asymptotic framework and studied asymptotic \(\ell_p\)-spaces \cite{MaureyMilmanTomczak1995}.
Argyros and Deliyanni constructed asymptotic \(\ell_1\)-spaces through mixed-Tsirelson norms \cite{ArgyrosDeliyanni1997}.
Argyros, Deliyanni, Kutzarova, and Manoussakis developed modified mixed-Tsirelson spaces, including arbitrarily distortable and HI examples \cite{ArgyrosDeliyanniKutzarovaManoussakis1998}.
The work of Odell and Schlumprecht on Krivine sets and block finite universality provided the early form of the method later called saturation under constraints \cite{OdellSchlumprecht1995,OdellSchlumprecht2000}.
Argyros and Felouzis used interpolation and HI methods in quotient and operator-factorization problems \cite{ArgyrosFelouzis2000}.
In 2004, Argyros and Tolias gave a general method for constructing HI spaces using special convex combinations, rapidly increasing sequences (RIS), the basic inequality, and operator analysis \cite{ArgyrosTolias2004}.
These methods were further developed in the construction of the first known reflexive space with the hereditary invariant subspace property \cite{ArgyrosMotakis2014}.
Argyros and Motakis later introduced a dual method covering both reflexive HI spaces and HI spaces without reflexive subspaces \cite{ArgyrosMotakis2016}.

The second line comes from separable \(\mathcal{L}_\infty\)-spaces.
In 1980, Bourgain and Delbaen introduced a rank-by-rank finite-dimensional extension scheme and constructed new \(\mathcal{L}_\infty\)-spaces whose duals are isomorphic to \(\ell_1\) and which do not contain \(c_0\) \cite{BourgainDelbaen1980}.
Bourgain and Pisier gave a complementary construction which embeds every separable Banach space into a separable \(\mathcal{L}_\infty\)-space with a quotient having the Radon--Nikod\'ym and Schur properties \cite{BourgainPisier1983}.
Alspach established the small Szlenk index of a Bourgain--Delbaen space \cite{Alspach2000}.
Haydon showed that every infinite-dimensional closed subspace of the original \(X_{a,b}\) contains a further subspace isomorphic to some \(\ell_p\), \(1<p<\infty\) \cite{Haydon2000}.
Freeman, Odell, and Schlumprecht later proved that every Banach space with separable dual embeds into an \(\mathcal{L}_\infty\)-space whose dual is isomorphic to \(\ell_1\) \cite{FreemanOdellSchlumprecht2011}.

In 2011, Argyros and Haydon combined the Bourgain--Delbaen scheme with mixed-Tsirelson restrictions, RIS estimates, and dependent sequences.
They constructed an HI space \(X_{\mathrm{AH}}\) on which every operator is scalar plus compact, and hence \(\Cal(X_{\mathrm{AH}})\simeq\K\) \cite{ArgyrosHaydon2011}.
Zisimopoulou subsequently introduced vector-valued Bourgain--Delbaen \(\mathcal{L}_\infty\)-sums in 2014 \cite{Zisimopoulou2014}.
Later work developed the Bourgain--Delbaen and Argyros--Haydon methods in several directions.
Argyros, Freeman, Haydon, Odell, Raikoftsalis, Schlumprecht, and Zisimopoulou showed that every separable uniformly convex space embeds into a scalar-plus-compact space \cite{ArgyrosEtAl2012}.
Argyros, Gasparis, and Motakis proved that every infinite-dimensional separable \(\mathcal{L}_\infty\)-space is isomorphic to a Bourgain--Delbaen space \cite{ArgyrosGasparisMotakis2016}.
Manoussakis, Pelczar-Barwacz, and \'Swi\k{e}tek constructed an unconditionally saturated Bourgain--Delbaen space with the scalar-plus-compact property \cite{ManoussakisPelczarSwietek2017}.
Argyros and Motakis later constructed a scalar-plus-compact space containing no infinite-dimensional reflexive subspace \cite{ArgyrosMotakis2019}.

The Argyros--Haydon result was followed by realizations of other prescribed Calkin algebras.
In 2012, Tarbard first moved beyond the one-dimensional algebra.
For every \(k\geq2\), he constructed a space whose Calkin algebra is the truncated-polynomial algebra \(\K[t]/(t^k)\), a unital algebra with nilpotent radical \cite{Tarbard2012}.
In his thesis, Tarbard also constructed a space whose Calkin algebra is isometrically isomorphic to the convolution algebra \(\ell_1(\N)\) \cite{Tarbard2013}.

In 2016, Motakis, Puglisi, and Zisimopoulou used Bourgain--Delbaen sums to realize \(C(K)\) for every countable compact metric space \(K\) \cite{MotakisPuglisiZisimopoulou2016}.
Motakis later proved that every compact metric space \(K\) admits a realization whose Calkin algebra is isometrically isomorphic to \(C(K)\) \cite{Motakis2024}.

Kania and Laustsen used \(X_{\mathrm{AH}}\) and one of its subspaces to obtain a three-dimensional triangular Calkin algebra in their 2017 work.
This algebra has two scalar characters and a one-dimensional square-zero off-diagonal radical \cite{KaniaLaustsen2017}.
Their note added in proof also realizes every finite-dimensional semisimple complex algebra \cite[Note added in proof]{KaniaLaustsen2017}. Corollary~\ref{cor:finite-dimensional-algebras} below allows arbitrary finite-dimensional unital Banach algebras with norm-one units and preserves their given norms.

Argyros, Deliyanni, and Tolias developed a method for Banach algebras of diagonal operators and constructed an HI example on which every diagonal operator is scalar plus compact \cite{ArgyrosDeliyanniTolias2011}.
Building on this work and on Argyros--Haydon sums, Motakis, Puglisi, and Tolias proved that the algebra of scalar-plus-compact diagonal operators associated with every real Banach space with a Schauder basis is a Calkin algebra.
Their applications include Banach-algebra structures on the James spaces \(J_p\) and their duals.
They also proved, at the Banach-space level, that every nonreflexive real Banach space with an unconditional basis is isomorphic to a Calkin algebra \cite{MotakisPuglisiTolias2020} in 2020.

Motakis and Pelczar-Barwacz showed that a Calkin algebra can be infinite-dimensional and reflexive, even isomorphic to a Hilbert space.
More generally, they realized the unitization of \(U\), with coordinatewise multiplication, when \(U\) has a normalized unconditional basis with no \(c_0\) asymptotic version \cite{MotakisPelczar2025}.
Motakis and Puglisi realized the unitization of \(\Kcal(c_0)\).
Their space is an Argyros--Haydon sum of copies of one Argyros--Haydon space, with the internal and external construction parameters kept separate \cite{MotakisPuglisi2025}.

Another related example comes from the reflexive space \(X_{\rm AM}\).
Every operator on this space is scalar plus strictly singular, its strictly singular ideal is nonseparable, and the product of two strictly singular operators is compact \cite{ArgyrosMotakis2016}.
Thus, as observed in \cite{Skillicorn2015}, its Calkin algebra has a nonseparable zero-product radical.
This radical is a quotient of operator ideals.

These results clarify the scope of the present theorem.
Theorem~\ref{thm:main} applies to a unital Banach algebra when it is the bounded inverse limit of its finite-dimensional quotient algebras, with their quotient norms.
In particular, Corollary~\ref{cor:boundedly-complete-unitization} realizes the standard unitization of the coordinatewise algebra of a space with a normalized \(1\)-unconditional boundedly complete basis.
The lower estimate in \cite[Theorem~7.1]{MotakisPelczar2025} implies bounded completeness under their hypothesis, as shown in Remark~\ref{rem:mpb-bounded-completeness}.
This class is strictly larger, as witnessed by \(T^*\) in Corollary~\ref{cor:tsirelson-original}. The resulting realization and its unital algebra lift are isometric for the standard unitization norm.

\subsection{Recursion theory and connections with other areas of mathematics}
\label{sec:intro-recursion}

Recursion theory studies effective procedures and their dependence on information.
Besides decidability, it provides tools for comparing the information contained in mathematical objects and for organizing constructions by successive finite requirements \cite{Odifreddi1989,Soare2016}.
Turing's oracle machines formalize procedures whose fixed rules may consult a specified oracle \cite[Section~4]{Turing1939}.
The oracle can supply noncomputable information, while each completed computation uses only finitely many queries.
Here a fixed oracle program schedules finite requirements and extends the construction records using the finite-dimensional data of \(\mathscr A\).

In group theory, Higman's embedding theorem says that a finitely generated group embeds into a finitely presented group if and only if it admits a recursively enumerable set of defining relations \cite{Higman1961}.
Thus a condition on effective presentation has an algebraic characterization by embeddings.

In number theory, the Davis--Putnam--Robinson--Matiyasevich theorem identifies the recursively enumerable subsets of \(\N\) with the Diophantine sets \cite{Matiyasevich1970}.
Membership in such a set can be expressed by the existence of natural-number solutions to a fixed polynomial equation with integer coefficients, with the set element as a parameter.
Since some recursively enumerable sets are undecidable, we obtain the negative solution to Hilbert's tenth problem: no algorithm decides whether an arbitrary polynomial equation with integer coefficients has an integer solution.

In Banach-space theory, Pour-El and Richards developed computability structures compatible with linear operations, norms, and effective limits \cite[Chapters~2--3]{PourElRichards1989}.
Their First Main Theorem \cite[Chapter~3]{PourElRichards1989} applies to a closed linear operator between such spaces whose domain contains a computable sequence with dense linear span in the source space and whose images of that sequence form a computable sequence.
Under these hypotheses, the operator sends every computable vector in its domain to a computable vector if and only if it is bounded.

In real analysis, algorithmic randomness gives a precise description of the points at which computable monotone functions are differentiable.
Brattka, Miller, and Nies proved that \(z\in(0,1)\) is computably random if and only if every computable nondecreasing function \(f:[0,1]\to\mathbb R\) is differentiable at \(z\) \cite[Theorem~4.3]{BrattkaMillerNies2016}.
This refines the classical almost-everywhere differentiability theorem for monotone functions by characterizing the points that work simultaneously for all computable functions in this class.

In geometric measure theory, we obtain from the point-to-set principle of Jack H.~Lutz and Neil Lutz \cite{LutzLutz2018}
\begingroup
\postdisplaypenalty=10000
\[ \dim_{\mathrm H}(E) =\min_{A\subseteq\N}\sup_{x\in E}\dim^A(x) \qquad(\varnothing\ne E\subseteq\mathbb R^n), \]
\endgroup
where \(\dim^A(x)\) is the effective Hausdorff dimension of the point \(x\) relative to the oracle \(A\).
The formula turns the task of proving \(\dim_{\mathrm H}(E)\ge s\) into a pointwise problem: for every oracle \(A\) and every \(\varepsilon>0\), one seeks a point \(x\in E\) such that \(\dim^A(x)\ge s-\varepsilon\).

Using algorithmic fractal dimension and this principle, Jack H.~Lutz, Renrui Qi, and Liang Yu \cite{LutzQiYu2024} proved that, for every \(s\in[0,1]\), there is a Hamel basis \(B\) of \(\mathbb R\) over \(\Q\) with \(\dim_{\mathrm H}(B)=s\).
This is a classical geometric and algebraic existence theorem proved by computability-theoretic methods.

\subsection{Outline}
\label{sec:intro-outline}

There are also obstructions to Calkin realization.
Horv\'ath and Kania constructed unital Banach algebras that are not isomorphic to Calkin algebras of separable Banach spaces \cite{HorvathKania2021}.
Acuaviva and Acuaviva constructed a unital Banach algebra that is not isomorphic to the Calkin algebra of any Banach space \cite{AcuavivaAcuaviva2026}.
The theorem above concerns the dual Banach algebras characterized by the finite-dimensional quotient condition in Theorem~\ref{thm:finite-detection-duality}.

The proof combines the following constructions and estimates.
The rank-by-rank extension scheme is the Bourgain--Delbaen scheme \cite{BourgainDelbaen1980}.
The evaluation analysis, the RIS and basic-inequality method, and the exact-pair and dependent-sequence method follow the Argyros--Haydon line \cite{ArgyrosHaydon2011,Tarbard2013,Motakis2024}.
The auxiliary norm is a mixed Tsirelson norm in the sense of Argyros and Deliyanni \cite{ArgyrosDeliyanni1997}.
Vector-valued BD sums provide related background for the use of finite-dimensional fibres \cite{Zisimopoulou2014}.
The fair scheduling of finite requirements uses the recursion-theoretic finite-extension scheme described in \cite[Section~V.2]{Odifreddi1989}.
For its Baire-category formulation in terms of dense open sets, see \cite[Section~V.3]{Odifreddi1989}.

The proof of Theorem~\ref{thm:main} has two parts.
Part~I constructs the space by a monotone finite-extension scheme relative to the local structure of \(\mathscr A\) and the corresponding admissibility data.
The formal language, transition rules, and scheduler are fixed independently of \(\mathscr A\), as described in the oracle formulation in Remark~\ref{rem:oracle-firewall}.

The finite-dimensional fibres \(A_r^*\) and \(A_r\) carry the module actions, while covariant packets make \(a\mapsto M_a\) linear and multiplicative and give uniform bounds.
The finite-extension and fairness properties realize every admissible finite requirement cofinally.
Part~I also constructs probes and establishes interval restriction and common-stem comparison.
The consequences used later are collected in Theorem~\ref{thm:datum}.

Part~II first proves the local orbit statement in Theorem~\ref{thm:local-orbit}: for every \(S\in\Bcal(X)\) and every RIS \((x_k)\), we have
\(
  \dist\bigl(Sx_k,\{M_bx_k:b\in\widehat A\}\bigr)\longrightarrow0.
\)
The basic inequality gives the upper estimate for dependent sequences.
If the displayed distances stay away from zero, separation gives a packet that detects the operator error.
A root perturbation removes its error on the multiplier orbit, and the finite-extension property places the resulting data in a dependent sequence.
The lower estimate for that sequence then contradicts the upper estimate.

The local statement allows the approximating multiplier to depend on \(k\).
To obtain one coefficient for the whole operator, Proposition~\ref{prop:globalization} first reads finite-dimensional coefficients from unit probes and then compares them using paired probes.
The comparison is uniform over detector indices, so the limits form one bounded compatible family \(a=(a_r)_r\in\widehat A\).
A further pairing argument proves that \(S-M_a\) tends to zero on the whole unit and core probe fibres.

Finally, Lemma~\ref{lem:no-ghost} compares an arbitrary RIS with core probes carrying the same finite-dimensional error.
The difference of the two evaluations annihilates every multiplier image.
The local orbit statement therefore forces \((S-M_a)x_k\to0\) on every RIS.
The compactness test in Lemma~\ref{lem:compactness-test} makes \(S-M_a\) compact, and unit probes give the exact essential norm.

Readers interested only in the proof strategy may read Theorem~\ref{thm:datum}, then Theorem~\ref{thm:local-orbit}, then Proposition~\ref{prop:globalization} and Lemma~\ref{lem:no-ghost}, and finally Theorem~\ref{thm:global-classification}.

The realization theorem has several immediate algebraic consequences.
We record two of them here.
The remaining consequences are stated and proved in Section~\ref{sec:examples}.

\begin{corollary}[Unconditional sums of algebras]
  \label{cor:intro-coordinatewise}
  Let \(U\) have a normalized \(1\)-unconditional boundedly complete Schauder basis \((u_n)\).
  Suppose that each \(B_n\) is isometrically a bounded inverse limit of a countable system of finite-dimensional Banach algebras with metric-quotient bonding homomorphisms.
  Give \(J=(\bigoplus_{n\geq1}B_n)_U\) coordinatewise multiplication, where
  \[
    \begin{aligned}
      b=(b_n)\in J
      \quad\Longleftrightarrow\quad
      \sum_n\norm{b_n}u_n\text{ converges in }U,\qquad \text{and}\qquad
      \norm b_U=\left\lVert\sum_n\norm{b_n}u_n\right\rVert_U.
    \end{aligned}
  \]
  Then \(J\oplus\K I\) is isometrically the Calkin algebra of a separable Banach space, with
  \[
    \begin{aligned}
      (b+\lambda I)(c+\mu I)
      =bc+\lambda c+\mu b+\lambda\mu I,\qquad
      \norm{b+\lambda I}=\norm b_U+\abs\lambda.
    \end{aligned}
  \]
  For \(B_n=\K\), the algebra is \(U\oplus\K I\), and the realizing space can be chosen to have a Schauder basis.
\end{corollary}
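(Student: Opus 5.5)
The plan is to reduce Corollary~\ref{cor:intro-coordinatewise} to Theorem~\ref{thm:main}. It suffices to show that \(J\oplus\K I\), with the stated product and norm, is itself isometrically a bounded inverse limit of a countable system of finite-dimensional unital Banach algebras with metric-quotient, unital bonding homomorphisms. For each \(n\), fix the given system \((B_{n,r},\pi^{s}_{n,r})_r\) with \(B_n=\ilim_r B_{n,r}\). For \(r\in\N\), define the finite-dimensional algebra
\[ A_r=\Bigl(\bigoplus_{n=1}^{r}B_{n,r}\Bigr)_{U}\oplus\K I, \]
with coordinatewise product, adjoined unit \(I\), and norm \(\norm{(b_n)_{n\le r}+\lambda I}=\norm{\sum_{n\le r}\norm{b_n}u_n}_U+\abs\lambda\). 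Then \(\norm{I}=1\). The bonding map \(\pi_r^s\) truncates to the first \(r\) coordinates and applies \(\pi^s_{n,r}\) in each coordinate; it fixes \(I\). These maps are unital homomorphisms and compose correctly.

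The first step is the Banach-algebra axioms for \(A_r\) and for \(J\oplus\K I\). Submultiplicativity in the \(J\) part follows from \(\norm{b_nc_n}\le\norm{b_n}\norm{c_n}\le\norm{b_n}\,\sup_m\norm{c_m}\), together with \(1\)-unconditionality and \(\sup_m\norm{c_m}\le\norm c_U\), which uses normalization. The cross terms are then bounded by \(\norm{\lambda c+\mu b}_U+\abs{\lambda\mu}\le(\norm b_U+\abs\lambda)(\norm c_U+\abs\mu)\).

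The second step is the metric-quotient property \eqref{eq:metric-quotient}. Given \(x_r=(b_n)_{n\le r}+\lambda I\), each \(b_n\in B_{n,r}\) can be lifted to \(B_{n,s}\) with norm at most \(\norm{b_n}+\varepsilon_n\), using the metric-quotient property of the \(n\)th system. For \(r<n\le s\) we set the lift to zero. Since the basis is \(1\)-unconditional, the \(U\)-norm is monotone in the absolute values of the coefficients, so the lifted element has norm at most \(\norm{x_r}+\varepsilon\). Finite dimensionality then gives the infimum exactly.

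The third step identifies \(\ilim(A_r,\pi_r^s)\) with \(J\oplus\K I\). A compatible bounded family determines a scalar \(\lambda\) and compatible families \((b_{n,r})_{r\ge n}\), hence elements \(b_n\in B_n\) with \(\norm{b_n}=\sup_r\norm{b_{n,r}}\). The partial sums \(\sum_{n\le N}\norm{b_n}u_n\) are bounded by \(\sup_r\norm{x_r}-\abs\lambda\), after a monotone limit in \(r\) for fixed \(N\). Bounded completeness of \((u_n)\) then gives convergence in \(U\), so \(b\in J\). Conversely, every element of \(J\oplus\K I\) produces a bounded compatible family. The identity \(\norm{b}_U=\sup_r\norm{\sum_{n\le r}\norm{b_{n,r}}u_n}\) holds by monotone convergence in both indices together with the basis property, and this gives the isometry. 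Applying Theorem~\ref{thm:main} then yields the separable space.

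For the special case \(B_n=\K\), the Schauder-basis claim will not follow formally from Theorem~\ref{thm:main}. It requires inspecting the construction of Part~I. When every \(A_r^*\) fibre is spanned by coordinate functionals, the Bourgain--Delbaen coordinates of the scheme form a Schauder basis in the usual way: the rank projections are uniformly bounded and the fibres are one-dimensional after splitting. I expect this step, the verification of the basis claim from the construction, to be the main obstacle. The algebraic reduction above is routine, and the only delicate points there are the exact attainment of the infimum and the role of bounded completeness.
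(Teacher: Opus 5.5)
Your treatment of the main assertion is the paper's own proof of Corollary~\ref{cor:unconditional-algebra-sums}. It uses the same tower $A_r=(\bigoplus_{n\le r}B_{n,r})_U\oplus\K I$, the same lifts with zero entries for $r<n\le s$, and the same use of bounded completeness on $\sum_{n\le N}\norm{b_n}u_n$ after letting $r\to\infty$ with $N$ fixed. A few routine points are left implicit, and none of them is a problem:
\begin{enumerate}
\item contractivity of $\pi_r^s$, which gives the reverse inequality in \eqref{eq:metric-quotient};
\item the bound $\norm{b_{n,r}}\le\norm{x_r}$, which comes from normalization;
\item filling in the stages $r<n$ of each component by its bonding maps;
\item the identification $J=U$ when $B_n=\K$, which follows from $1$-unconditionality.
\end{enumerate}

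The gap is the Schauder-basis assertion. You leave it unproved, and your sketch misses the point that makes it work.

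Splitting the FDD $(Z_m)$ into one-dimensional pieces gives a Schauder basis only if each $Z_m$ has a basis whose basis constant is bounded uniformly in $m$. Here $\dim Z_m\to\infty$: rank $m$ carries $2(m+1)$ probes, with fibres of dimension up to $m+1$. For blocks of unbounded dimension, uniform control of basis constants is not automatic, so ``in the usual way'' does not settle it.

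Three facts are needed.
\begin{enumerate}
\item For $B_n=\K$, both fibre types carry canonical normalized $1$-unconditional bases: the unit-probe fibres $A_r=U_r\oplus_1\K I$ and the core fibres $A_r^*=U_r^*\oplus_\infty\K$. You must handle both fibre types, not only the $A_r^*$ fibres.
\item Since $F_m$ is an $\ell_\infty$-sum of these fibres, its concatenated basis has partial-sum projections $C_{m,k}$ of norm at most one, uniformly in $m$.
\item The block map $T_m=i_m|_{F_m}:F_m\to Z_m$ satisfies $\norm{T_m}\le M$ and $\norm{T_m^{-1}}\le1$, because $i_m$ fixes the first $m$ ambient blocks.
\end{enumerate}

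Transport these bases to the $Z_m$ and concatenate them in rank order. The partial-sum projections are $P_{[1,m-1]}+T_mC_{m,k}T_m^{-1}P_{\{m\}}$, which have norm at most $M+M\kappa$. The span is dense because $(Z_m)$ is an FDD. This is exactly the argument the paper gives in Corollary~\ref{cor:boundedly-complete-unitization}, and it is what your proposal needs to supply for the final clause of the statement.
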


\begin{corollary}[\(\ell_\infty\)-sums]
  \label{cor:intro-products}
  Let \((F_k)_{k\in\N}\) be finite-dimensional unital Banach algebras with \(\norm{1_{F_k}}=1\).
  Their \(\ell_\infty\)-sum, with coordinatewise multiplication, is isometrically a Calkin algebra:
  \[
    \begin{aligned}
      \left(\bigoplus_{k\in\N}F_k\right)_{\ell_\infty}
      =\left\{(a_k)_k:a_k\in F_k,\ \sup_k\norm{a_k}<\infty\right\} \simeq\Cal(X),\qquad
      \norm{(a_k)_k}=\sup_k\norm{a_k}.
    \end{aligned}
  \]
  In particular, \(\ell_\infty\) is isometrically the Calkin algebra of a separable Banach space.
\end{corollary}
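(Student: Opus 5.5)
To prove the \(\ell_\infty\)-sum statement, I would exhibit \(\bigl(\bigoplus_kF_k\bigr)_{\ell_\infty}\) as a bounded inverse limit and then invoke Theorem~\ref{thm:main}. Put
\[
A_r=\Bigl(\bigoplus_{k\le r}F_k\Bigr)_{\ell_\infty},
\]
with coordinatewise multiplication and the max norm. Each \(A_r\) is a finite-dimensional unital Banach algebra with \(\norm{1_{A_r}}=\max_{k\le r}\norm{1_{F_k}}=1\). For \(r\le s\), let \(\pi_r^s:A_s\to A_r\) be the truncation \((a_k)_{k\le s}\mapsto(a_k)_{k\le r}\). These maps are unital homomorphisms and satisfy \(\pi_r^s\pi_s^t=\pi_r^t\), which gives \eqref{eq:inverse-system}. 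For \eqref{eq:metric-quotient}, take \(x_r\in A_r\) and let \(x_s\) be its extension by zeros in coordinates \(r<k\le s\). Then \(\pi_r^sx_s=x_r\) and \(\norm{x_s}=\norm{x_r}\). Conversely, every preimage has norm at least \(\norm{x_r}\), since the max norm dominates the max over the first \(r+1\) coordinates. Hence the infimum is attained and equals \(\norm{x_r}\).

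Next I would identify the bounded inverse limit \(\widehat A=\ilim(A_r,\pi_r^s)\) with the \(\ell_\infty\)-sum. A compatible family \((a^{(r)})_r\) is determined by the single sequence \((a_k)_k\), where \(a_k\) is the \(k\)-th coordinate of \(a^{(r)}\) for any \(r\ge k\); compatibility makes this independent of \(r\). Conversely, every sequence \((a_k)\) gives a compatible family by truncation. Under this bijection,
\[
\sup_r\norm{a^{(r)}}=\sup_r\max_{k\le r}\norm{a_k}=\sup_k\norm{a_k},
\]
so the boundedness conditions agree and so do the norms. Both multiplications are coordinatewise, so the bijection is an isometric unital algebra isomorphism.

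Theorem~\ref{thm:main} then gives a separable Banach space \(X=X_{\mathscr A}\) with \(\widehat A\simeq\Cal(X)\) isometrically as unital Banach algebras. Composing with the identification above proves the displayed isomorphism. For the final sentence, take \(F_k=\K\) with the absolute-value norm: the \(\ell_\infty\)-sum is then \(\ell_\infty\).

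Every step is routine verification. The only point needing a little care is the metric-quotient condition, and extension by zero handles it exactly, since the infimum is attained. Alternatively, the statement follows from Corollary~\ref{cor:filtered-realization} by taking for \(A\) the \(\ell_\infty\)-sum itself and \(I_r=\{a:a_k=0\ (k\le r)\}\). These ideals have finite codimension, are proper because \(F_0\ne0\), and separate points. The filtration is norming, and closed balls are compact in the coordinatewise topology because each \(F_k\) is finite-dimensional. I would present the direct argument above, since it avoids even that compactness step.
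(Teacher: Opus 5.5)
Your proposal is correct and follows the paper's proof of Corollary~\ref{cor:finite-products} essentially verbatim. You use the same tower of finite \(\ell_\infty\)-sums with truncation bonding maps, zero extension for the metric-quotient condition, the coordinatewise identification of the bounded inverse limit with the \(\ell_\infty\)-sum, and then Theorem~\ref{thm:main}, with \(F_k=\K\) giving \(\ell_\infty\) as in Corollary~\ref{ex:ell-infinity}.
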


The \(U\)-sum and \(\ell_\infty\)-sum assertions follow from Corollaries~\ref{cor:unconditional-algebra-sums} and~\ref{cor:finite-products}.
For \(B_n=\K\), Corollary~\ref{cor:boundedly-complete-unitization} also gives the Schauder basis assertion.
This scalar case has the coordinatewise multiplication in \cite{MotakisPelczar2025}, with the norm \(\norm u+\abs\lambda\) used here.

Constant component towers give arbitrary finite-dimensional summands.
For nonzero unital summands with normalized units, Proposition~\ref{prop:unconditional-sum-converse} gives a converse for the finite-dimensional quotient representation.
Remark~\ref{rem:fatou-algebra-sums} extends the construction to solid sequence spaces with the Fatou property.

Table~\ref{tab:realization-examples} lists examples from Section~\ref{sec:examples}, with their products, norms, and realizing corollaries.
Here \(F\) is finite dimensional with \(\norm{1_F}=1\), and \(U\) has a normalized \(1\)-unconditional boundedly complete Schauder basis.
The \(B_n\)'s satisfy Corollary~\ref{cor:intro-coordinatewise}, which defines \(\norm b_U\), and \(I\) denotes the adjoined unit.
\begingroup
\small
\renewcommand{\arraystretch}{1.16}
\setlength{\tabcolsep}{3pt}
\setlength{\LTleft}{0pt}\setlength{\LTright}{0pt}
\setlength{\LTpre}{7pt}\setlength{\LTpost}{7pt}
\newcommand{\AIExampleMark}{\hyperref[fn:ai-table-examples]{\textsuperscript{\ref*{fn:ai-table-examples}}}}
\begin{longtable}{@{}>{\raggedright\arraybackslash}m{0.25\textwidth}
  >{\centering\arraybackslash}m{0.24\textwidth}
  >{\centering\arraybackslash}m{0.13\textwidth}
  >{\centering\arraybackslash}m{0.24\textwidth}
  >{\centering\arraybackslash}m{0.075\textwidth}@{}}
  \caption[Examples of isometric Calkin realizations]{Examples of isometric Calkin realizations\protect\footnotemark}\label{tab:realization-examples}\\
  \toprule
  \textbf{Example}&\textbf{Algebra}&\textbf{Product}&\textbf{Norm}&\textbf{Cor.}\\
  \midrule
  \endfirsthead
  \multicolumn{5}{@{}l}{\small\itshape Table~\thetable\ (continued)}\\[3pt]
  \toprule
  \textbf{Example}&\textbf{Algebra}&\textbf{Product}&\textbf{Norm}&\textbf{Cor.}\\
  \midrule
  \endhead
  \midrule
  \multicolumn{5}{r@{}}{\small\itshape Continued on the next page}\\
  \endfoot
  \bottomrule
  \endlastfoot
  \multicolumn{5}{@{}l}{\itshape Finite algebras and sequence constructions}\\[2pt]
  Finite-dimensional algebras\footnotetext{\label{fn:ai-table-examples}The concrete examples marked by this superscript were suggested by GPT-5.6 Sol (OpenAI) and GPT-6 Astra (OpenAI). Their contribution to the choice of examples is described in the \hyperref[sec:ai-assistance]{AI assistance statement}.}&\(F\)&Given&\(\norm{\,\cdot\,}_F\)&\ref{cor:finite-dimensional-algebras}\\
  Finite incidence algebras\AIExampleMark&\(\operatorname{IA}(P)\)&Incidence&\(\max_x\sum_{y\geq x}\abs{a_{xy}}\)&\ref{cor:finite-incidence}\\
  Bounded scalar sequences&\(\ell_\infty=C(\beta\N)\)&Pointwise&\(\norm{\,\cdot\,}_\infty\)&\ref{ex:ell-infinity}\\
  \(\ell_\infty\)-sums of matrices&\(\bigl(\bigoplus_{k\geq1}M_k(\mathbb C)\bigr)_{\ell_\infty}\)&Pointwise matrix&\(\sup_k\norm{a_k}_{\rm op}\)&\ref{cor:finite-products}\\
  Unitized \(U\)-sums&\(\bigl(\bigoplus_nB_n\bigr)_U\oplus\K I\)&Unitization\(^{\dagger}\)&\(\norm b_U+\abs\lambda\)&\ref{cor:unconditional-algebra-sums}\\
  Unitized basis algebras&\(U\oplus\K I\)&Unitization\(^{\dagger}\)&\(\norm u_U+\abs\lambda\)&\ref{cor:boundedly-complete-unitization}\\
  \(\ell_p\) spaces&\(\ell_p\oplus\K I\)&Unitization\(^{\dagger}\)&\(\norm u_p+\abs\lambda\)&\ref{cor:block-lp-unitizations}\\
  Tsirelson's original space&\(T^*\oplus\K I\)&Unitization\(^{\dagger}\)&\(\norm u_{T^*}+\abs\lambda\)&\ref{cor:tsirelson-original}\\
  \addlinespace[5pt]
  \multicolumn{5}{@{}l}{\itshape Discrete convolution and incidence algebras}\\[2pt]
  Free-monoid convolution\AIExampleMark&\(\ell_1(\mathbb F_d^+)\)&Convolution&\(\sum_w\abs{a_w}\)&\ref{cor:free-semigroup}\\
  Dirichlet convolution\AIExampleMark&\(\ell_1(\N_\times)\)&Dirichlet convolution&\(\sum_{n\geq1}\abs{a_n}\)&\ref{cor:dirichlet-convolution}\\
  \(c_0\)-incidence algebras\AIExampleMark&\(\operatorname{IA}(P,c_0)\)&Incidence&\(\sup_x\sum_{y\geq x}\abs{a_{xy}}\)&\ref{cor:lower-finite-incidence}\\
  Upper-triangular matrices\AIExampleMark&\(\operatorname{UT}_\infty\)&Matrix&\(\sup_i\sum_{j\geq i}\abs{a_{ij}}\)&\ref{cor:upper-triangular}\\
  \addlinespace[5pt]
  \multicolumn{5}{@{}l}{\itshape Function and operator algebras}\\[2pt]
  H\"older algebras\AIExampleMark&\(C^{0,\theta}([0,1])\)&Pointwise&\(\norm f_\infty+[f]_\theta\)&\ref{cor:scalar-holder}\\
  Hardy algebra\AIExampleMark&\(H^\infty(\mathbb D)\)&Pointwise&\(\norm{\,\cdot\,}_\infty\)&\ref{cor:hardy-oracles}\\
  Matrix RKHS multipliers\AIExampleMark&\(M_q(\operatorname{Mult}(\mathcal H))\)&Pointwise matrix&\(\text{multiplier norm}\)&\ref{cor:rkhs-multipliers}\\
  Atomic nest algebras\AIExampleMark&\(\operatorname{Alg}\mathcal N\)&Composition&\(\text{operator norm}\)&\ref{cor:omega-atomic-nest}\\
  Free semigroupoid algebras\AIExampleMark&\(\mathfrak L_G\)&Composition&\(\text{operator norm}\)&\ref{cor:graph-semigroupoid}\\
  \addlinespace[5pt]
  \multicolumn{5}{@{}l}{\itshape Classical harmonic analysis}\\[2pt]
  Measure algebras\AIExampleMark&\(M(\mathbb T),\ M(SU(2))\)&Convolution&\(\norm\mu_{\rm TV}\)&\ref{cor:compact-measures}\\
  Fourier--Stieltjes algebra\AIExampleMark&\(B(\mathbb F_2)\)&Pointwise&\(\norm{\,\cdot\,}_{C^*(\mathbb F_2)^*}\)&\ref{cor:fourier-stieltjes}\\
  Fourier multipliers\AIExampleMark&\(MA(\Gamma)\)&Pointwise&\(\norm{M_\varphi}\)&\ref{cor:fourier-multipliers}\\
  Herz--Schur multipliers\AIExampleMark&\(M_{\mathrm{cb}}A(\Gamma)\)&Pointwise&\(\norm{M_\varphi}_{\mathrm{cb}}\)&\ref{cor:fourier-multipliers}\\
  \addlinespace[5pt]
  \multicolumn{5}{@{}l}{\itshape Quantum convolution algebras}\\[2pt]
  Universal duals\AIExampleMark&\(C^u(\mathbb G)^*\)&Convolution&\(\text{dual norm}\)&\ref{cor:compact-quantum}\\
  Quantum hypergroup duals\AIExampleMark&\(C^*\)&Convolution&\(\text{dual norm}\)&\ref{cor:compact-quantum-hypergroups}\\
  Duals with bounded counit\AIExampleMark&\(C_\mu(\mathbb G)^*\)&Convolution&\(\text{dual norm}\)&\ref{cor:quantum-completions}\\
  Unitized duals\AIExampleMark&\(C_\mu(\mathbb G)^*\oplus_1\mathbb C\)&Unitization\(^{\dagger}\)&\(\norm\omega+\abs\lambda\)&\ref{cor:quantum-completions}\\
\end{longtable}
\endgroup

\noindent\(^{\dagger}\) We use the standard Banach-algebra unitization, with 
\( (a,\lambda)(b,\mu)=\bigl(a\diamond b+\lambda b+\mu a,\lambda\mu\bigr), \)
where \(\diamond\) is coordinatewise multiplication in the four sequence-space rows and convolution in the last row.

Pointwise matrix products use \((ab)_k=a_kb_k\) and \((\Phi\Psi)(x)=\Phi(x)\Psi(x)\), respectively, with ordinary matrix multiplication.

In the H\"older row, \(0<\theta\leq1\), and we use
\[
  \begin{aligned}
    C^{0,\theta}([0,1])
    =\{f\in C([0,1]):[f]_\theta<\infty\},\qquad
    [f]_\theta
    =\sup_{0\leq s<t\leq1}\frac{\abs{f(t)-f(s)}}{\abs{t-s}^\theta}.
  \end{aligned}
\]
For \(\theta=1\), we have \(C^{0,1}([0,1])=\operatorname{Lip}([0,1])\) and \([f]_1=\operatorname{Lip}(f)\).
Here \(1\leq p<\infty\), \(d,q\in\N_+\), and \(T^*\) is Tsirelson's original space \cite{Tsirelson1974}, with the usual basis and the \(T^*\)-notation described in \cite[Section~2.4]{BaudierLancienSchlumprecht2018}.
The identification \(\ell_\infty=C(\beta\N)\) uses the Stone--\v Cech compactification of the discrete space \(\N\), as described in \cite[Section~V.6]{Conway1990}.
The free monoid \(\mathbb F_d^+\) includes the empty word, whereas \(\N_\times=\{1,2,\ldots\}\) has multiplication of integers.
The two \(\ell_1\)-algebras carry convolution, with Dirichlet convolution in the second case (see \cite[Section~3, Example~1]{Rota1964} for its divisor-poset interpretation).
In the incidence rows, \(P\) is a nonempty partially ordered set that is, respectively, finite or countable and lower-finite.
The latter means that \(\{x\in P:x\leq y\}\) is finite for every \(y\in P\).
The matrices are supported on \(x\leq y\), and incidence multiplication is the standard product from \cite[Section~3]{Rota1964}:
\[ (ab)_{xz}=\sum_{x\leq y\leq z}a_{xy}b_{yz}\qquad(x,z\in P). \]
For \(P=\N\) with its usual order, \(\operatorname{IA}(P,c_0)=\operatorname{UT}_\infty\), the upper-triangular matrices with finite displayed row norm.
For the \(c_0\)-incidence construction used here, see \cite[Definition~5.9 and Proposition~5.15]{Acuaviva2025Posets} and Corollary~\ref{cor:lower-finite-incidence}.

The space \(\mathcal H\) is a nonzero separable complex scalar reproducing-kernel Hilbert space, and the multiplier norm is that on \(\mathcal H\otimes\mathbb C^q\), with the kernel and multiplier terminology of \cite[Chapter~2]{AglerMcCarthy2002}.
The nest \(\mathcal N\) is \(\omega\)-atomic with finite-dimensional atoms \cite{Davidson1988}, and \(\mathfrak L_G\) is the canonical free semigroupoid algebra of a nonempty countable directed graph \(G\) \cite{KribsPower2004}.
Measure algebras carry convolution and the total-variation norm \cite{Dales2000}, whereas Fourier--Stieltjes and Fourier multiplier algebras carry pointwise multiplication \cite{Eymard1964,Knudby2014Semigroups}.
Here \(\Gamma\) is any countable discrete group, and \(M_\varphi\) denotes multiplication by \(\varphi\) on its Fourier algebra \(A(\Gamma)\).
In the quantum hypergroup row, \(C\) is the underlying separable \(C^*\)-algebra of a compact quantum hypergroup in the sense of Chapovsky--Vainerman \cite{ChapovskyVainerman1999}.
In the quantum-group rows, \(\mathbb G\) has separable \(C^u(\mathbb G)\), and \(C_\mu(\mathbb G)\) is a completion in a quantum group norm, so its coproduct extends to the minimal tensor product \cite{Woronowicz1998,KyedSoltan2012}.
The ununitized \(C_\mu(\mathbb G)^*\) row requires a bounded counit, while the unitized row includes reduced and exotic completions without this assumption.
All these duals carry convolution, and \((\omega,\lambda)\) has norm \(\norm\omega+\abs\lambda\) in the unitized row.


\section{Finite-dimensional quotients and their bounded limits}\label{sec:preliminaries}

We prove the assertions about \(\jmath\) in Corollary~\ref{cor:filtered-realization}, construct the predual of the bounded inverse limit, and prove the equivalences in Theorem~\ref{thm:finite-detection-duality}.
We retain the notation of Section~\ref{sec:detector-filtrations}: \(A_r=A/I_r\), \(\rho_r(a)=a+I_r\), \(\widehat A=\Env^b_{(I_r)}(A)\), and \(\jmath(a)=(\rho_r(a))_r\).

\subsection{Quotient inverse systems and the canonical map}

\begin{lemma}[The quotient inverse system]\label{lem:detector-quotient-tower}
  The maps \(\pi_r^s\) satisfy \eqref{eq:inverse-system}--\eqref{eq:metric-quotient}.
  Moreover, the infimum in \eqref{eq:metric-quotient} is attained.
\end{lemma}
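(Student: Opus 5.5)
The plan is to verify each clause of \eqref{eq:inverse-system}--\eqref{eq:metric-quotient} directly from the quotient construction, and to obtain attainment of the infimum from finite-dimensionality.

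\textbf{Algebraic properties.} Since \(I_s\subseteq I_r\) for \(r\leq s\), the map \(\pi_r^s(a+I_s)=a+I_r\) is well defined, linear and multiplicative (the \(I_r\) are two-sided ideals), and it sends \(1_A+I_s\) to \(1_A+I_r\). The composition rule \(\pi_r^s\pi_s^t=\pi_r^t\) is immediate from the formula. Each \(A_r\) is a unital Banach algebra because \(I_r\) is closed, and it is finite dimensional by hypothesis.

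\textbf{Norm of the unit.} The quotient norm is submultiplicative, so \(\norm{1_{A_r}}\geq1\) in the nonzero algebra \(A_r\); nonzero because \(I_r\) is proper. On the other hand \(\norm{1_{A_r}}\leq\norm{1_A}=1\). Hence \(\norm{1_{A_r}}=1\).

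\textbf{Metric quotient.} Given \(x_r=a+I_r\), every lift \(x_s=b+I_s\) with \(\pi_r^sx_s=x_r\) satisfies \(b-a\in I_r\). Therefore
\[
\inf_{x_s}\norm{x_s}=\inf\{\norm{b+j}:b\in a+I_r,\ j\in I_s\}=\inf\{\norm{c}:c\in a+I_r\}=\norm{x_r},
\]
where the middle equality uses \(I_s\subseteq I_r\), so that \(a+I_r+I_s=a+I_r\). Equivalently, \(\pi_r^s\) is the quotient map \(A/I_s\to (A/I_s)/(I_r/I_s)\cong A/I_r\), and the quotient of a quotient norm is the quotient norm.

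\textbf{Attainment.} The fibre \((\pi_r^s)^{-1}(x_r)\) is a nonempty affine subspace of the finite-dimensional normed space \(A_s\), so it is closed. Its intersection with the closed ball of radius \(\norm{x_r}+1\) is compact, and the norm, being continuous, attains its minimum there. This minimum is the infimum.

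The only point needing any care is this attainment step. Note that it uses finite-dimensionality of \(A_s\), not of \(A\): the infimum over the coset \(a+I_r\) in \(A\) itself need not be attained.
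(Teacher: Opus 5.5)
Your proof is correct and follows the paper's argument essentially step by step. You use the same coset identity \(a+I_r+I_s=a+I_r\) for the metric-quotient property, the same squeeze \(1\leq\norm{1_{A_r}}\leq\norm{1_A}=1\) for the unit, and the same finite-dimensional compactness for attainment. The only cosmetic difference is that you intersect the closed fibre with a ball, whereas the paper extracts a convergent subsequence from a minimizing sequence.
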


\begin{proof}
  Since \(I_s\subseteq I_r\), we know that the map \(\pi_r^s\) is a well-defined surjective unital homomorphism.
  For \(r\leq s\leq t\), we have
  \[ \pi_r^s\pi_s^t(a+I_t)=a+I_r=\pi_r^t(a+I_t). \]
  If \(x_r=a+I_r\), we obtain from the quotient norms
  \[
    \begin{aligned}
      \inf_{\pi_r^sx_s=x_r}\norm{x_s}
      =\inf_{u\in I_r}\inf_{v\in I_s}\norm{a+u+v}=\inf_{w\in I_r}\norm{a+w} =\norm{x_r}.
    \end{aligned}
  \]
  Choose a minimizing sequence in the affine fibre \((\pi_r^s)^{-1}(x_r)\).
  Its norms are bounded, so finite dimensionality of \(A_s\) gives a convergent subsequence.
  The fibre is closed, so the limit is still a lift of \(x_r\), and continuity of the norm shows that it attains the infimum.
  Finally, the quotient is nonzero, and we have
  \[ 0<\norm{1_{A_r}}\leq1, \qquad \norm{1_{A_r}}\leq\norm{1_{A_r}}^2. \]
  Hence we obtain \(\norm{1_{A_r}}=1\), and prove both assertions.
\end{proof}

\begin{proposition}[The canonical homomorphism]\label{prop:canonical-detector-map}
  The bounded detection envelope \(\widehat A\) is a unital Banach algebra.
  The map \(\jmath:A\to\widehat A\) is a contractive unital algebra homomorphism with \(\ker\jmath=\bigcap_rI_r\).
  Moreover, we have
  \begin{equation}\label{eq:detected-core-norm}
    \norm{\jmath(a)}
    =\sup_r\norm{a+I_r}
    =\sup_r\dist(a,I_r).
  \end{equation}
  Thus \(\jmath\) is injective when \((I_r)\) is separating and isometric when \((I_r)\) is norming.
\end{proposition}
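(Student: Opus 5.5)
The plan is to check the Banach-algebra structure of \(\widehat A\) directly from the definition \eqref{eq:bounded-limit}, using Lemma~\ref{lem:detector-quotient-tower}, and then to read off the properties of \(\jmath\) coordinatewise.

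\textbf{Structure of \(\widehat A\).} Since each \(\pi_r^s\) is a linear unital homomorphism, compatible families are closed under sums, scalar multiples and coordinatewise products. The family \((1_{A_r})_r\) is compatible, and it has norm \(\sup_r\norm{1_{A_r}}=1\) by Lemma~\ref{lem:detector-quotient-tower}. Submultiplicativity holds coordinatewise:
\[
\norm{a_rb_r}\leq\norm{a_r}\norm{b_r}\leq\norm a\norm b,
\]
and taking the supremum over \(r\) gives \(\norm{ab}\leq\norm a\norm b\).

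For completeness, I would identify \(\widehat A\) with a subspace of the \(\ell_\infty\)-sum \(\bigl(\bigoplus_rA_r\bigr)_{\ell_\infty}\), which is a Banach space. Each constraint \(\pi_r^sa_s=a_r\) involves a continuous map, since \(\pi_r^s\) is a linear map between finite-dimensional spaces. Each constraint therefore cuts out a closed subspace, and \(\widehat A\) is the intersection of these. A closed subspace of a Banach space is complete, so this step is routine.

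\textbf{Properties of \(\jmath\).} For \(a\in A\), the family \((a+I_r)_r\) is compatible because \(\pi_r^s(a+I_s)=a+I_r\). It is bounded since \(\norm{a+I_r}\leq\norm a\). The norm identity \eqref{eq:detected-core-norm} is then just the definition of the sup norm together with \(\norm{a+I_r}=\dist(a,I_r)\), and it immediately gives contractivity. The map \(\jmath\) is linear, multiplicative and unital because each \(\rho_r\) is a unital homomorphism.

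The kernel is computed as follows: \(\jmath(a)=0\) if and only if \(a\in I_r\) for every \(r\), so \(\ker\jmath=\bigcap_rI_r\).

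\textbf{Conclusions.} If \((I_r)\) is separating, the kernel is zero, so \(\jmath\) is injective. If \((I_r)\) is norming, then \eqref{eq:detected-core-norm} together with the definition \(\norm a=\sup_r\norm{\rho_r(a)}\) gives \(\norm{\jmath(a)}=\norm a\), so \(\jmath\) is isometric.

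I expect no real obstacle. The only point needing care is the closedness of the compatibility conditions in the \(\ell_\infty\)-sum, which follows from the continuity of \(\pi_r^s\) and of the coordinate projections.
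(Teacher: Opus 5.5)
Your proposal is correct and follows the paper's proof: both identify \(\widehat A\) as a closed unital subalgebra of \(\bigl(\bigoplus_rA_r\bigr)_{\ell_\infty}\) and then read off multiplicativity, contractivity, the kernel, and the norm identity for \(\jmath\) coordinatewise from the quotient maps \(\rho_r\). You also check explicitly that the unit has norm one, via Lemma~\ref{lem:detector-quotient-tower}, which the paper leaves implicit at this point.
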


\begin{proof}
  The compatibility equations define a closed unital subalgebra of \((\bigoplus_rA_r)_{\ell_\infty}\), so \(\widehat A\) is a unital Banach algebra.
  For \(a,b\in A\), we have \(\rho_r(ab)=\rho_r(a)\rho_r(b)\) and \(\rho_r(1_A)=1_{A_r}\).
  Hence \(\jmath\) is a unital algebra homomorphism.
  Since every quotient map is contractive, we have
  \[ \norm{\jmath(a)}=\sup_r\norm{\rho_r(a)} =\sup_r\norm{a+I_r}\leq\norm a. \]
  Also, \(\jmath(a)=0\) if and only if \(a\in I_r\) for every \(r\), equivalently \(a\in\bigcap_rI_r\).
  Finally, we have \(\norm{a+I_r}=\inf_{u\in I_r}\norm{a+u}=\dist(a,I_r)\).
  This proves \eqref{eq:detected-core-norm} and the remaining assertions.
\end{proof}

For \(x_n\in A_n=A/I_n\), an element \(a_n\in A\) is a representative of \(x_n\) if \(a_n+I_n=x_n\), or equivalently \(\rho_n(a_n)=x_n\).

\begin{proposition}[Bounded coordinate approximation]\label{prop:bounded-detector-density}
  Let \(x=(x_r)_r\in\widehat A\) and \(\varepsilon>0\).
  There is a sequence \((a_n)\subseteq A\), with \(\rho_n(a_n)=x_n\), such that
  \[
    \norm{a_n}\leq\norm{x_n}+\varepsilon\leq\norm x+\varepsilon,
    \qquad
    \rho_r(a_n)=x_r\quad(n\geq r).
  \]
  Consequently, \(\jmath(A)\) is coordinate dense in \(\widehat A\) through uniformly bounded approximants.
\end{proposition}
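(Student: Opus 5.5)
The plan is simply to lift each coordinate \(x_n\in A_n=A/I_n\) to \(A\) with almost optimal norm. For each \(n\), the quotient norm on \(A/I_n\) is
\[
  \norm{x_n}=\inf\{\norm{a}:a\in A,\ \rho_n(a)=x_n\}.
\]
Since \(\varepsilon>0\), we may therefore choose a representative \(a_n\in A\) with \(\rho_n(a_n)=x_n\) and \(\norm{a_n}\leq\norm{x_n}+\varepsilon\). No attainment is needed here, since \(A\) itself is infinite dimensional in general and we allow the slack \(\varepsilon\). The second inequality \(\norm{x_n}\leq\norm x\) is immediate from the definition \(\norm x=\sup_r\norm{x_r}\).

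Next we check compatibility. For \(n\geq r\) we have \(\rho_r=\pi_r^n\circ\rho_n\), because \(\pi_r^n(a+I_n)=a+I_r\). Since \(x\in\widehat A\), we have \(\pi_r^nx_n=x_r\). Hence
\[
  \rho_r(a_n)=\pi_r^n\rho_n(a_n)=\pi_r^nx_n=x_r\qquad(n\geq r).
\]

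For the consequence, note that \(\jmath(a_n)\in\widehat A\) satisfies \(\norm{\jmath(a_n)}\leq\norm{a_n}\leq\norm x+\varepsilon\) by Proposition~\ref{prop:canonical-detector-map}, so the approximants are uniformly bounded. Moreover, for each fixed \(r\) the \(r\)-th coordinate of \(\jmath(a_n)\) equals \(x_r\) as soon as \(n\geq r\). Thus \(\jmath(a_n)\to x\) in the coordinate topology, which is eventually constant in each coordinate, and \(\jmath(A)\) is coordinate dense through uniformly bounded approximants.

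There is no genuine obstacle. The only point requiring care is to use the quotient-norm infimum in \(A/I_n\) directly, rather than to lift step by step through the tower, which could accumulate errors. Each \(a_n\) is chosen independently, and compatibility comes for free from \(x\) being a thread.
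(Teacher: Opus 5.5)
Your proposal is correct and follows essentially the same argument as the paper: choose each lift \(a_n\) independently from the quotient-norm infimum in \(A/I_n\), then get compatibility from \(\rho_r=\pi_r^n\rho_n\) and \(\pi_r^nx_n=x_r\). Your remark that \(\jmath(a_n)\) is eventually constant in each coordinate, with \(\norm{\jmath(a_n)}\leq\norm x+\varepsilon\), correctly spells out the density consequence that the paper leaves implicit.
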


This follows directly from the quotient norm and compatibility: choose \(a_n\in A\) with \(\rho_n(a_n)=x_n\) and \(\norm{a_n}\leq\norm{x_n}+\varepsilon\leq\norm x+\varepsilon\).
For \(n\geq r\), we then have \(\rho_r(a_n)=\pi_r^n\rho_n(a_n)=\pi_r^nx_n=x_r\).

The convergence in Proposition~\ref{prop:bounded-detector-density} is coordinatewise.
For example, the sequences \(a^{(n)}=\mathbf1_{\{0,\ldots,n\}}\in\ell_\infty\) satisfy
\[ a^{(n)}_r\longrightarrow1\quad(r\in\N),\qquad \text{while} \quad \norm{a^{(n)}-\mathbf1}_{\ell_\infty}=1\quad(n\in\N). \]

\begin{theorem}[Surjectivity from compactness]
  \label{thm:detector-compact-surjectivity}
  If every closed norm ball in \(A\) is compact in \(\tau_{(I_r)}\), then \(\jmath:A\to\widehat A\) is onto.
  If the filtration is also separating, then \(\jmath\) is an isometric unital algebra isomorphism.
\end{theorem}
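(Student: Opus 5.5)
Given \(x=(x_r)_r\in\widehat A\), the plan is to produce a preimage under \(\jmath\) as a \(\tau_{(I_r)}\)-cluster point of the bounded approximants from Proposition~\ref{prop:bounded-detector-density}. Fix \(\varepsilon>0\) and choose \((a_n)\subseteq A\) with \(\norm{a_n}\leq\norm x+\varepsilon\) and \(\rho_r(a_n)=x_r\) for \(n\geq r\). All \(a_n\) lie in the closed ball \(B=\{a:\norm a\leq\norm x+\varepsilon\}\), which is \(\tau_{(I_r)}\)-compact by hypothesis.

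The sequence \((a_n)\) therefore has a \(\tau_{(I_r)}\)-cluster point \(a\in B\). I use a cluster point (a subnet limit) rather than a subsequence, since \(\tau_{(I_r)}\) need not be metrizable on \(B\) without separation. For each fixed \(r\), the seminorm \(p_r(b)=\norm{\rho_r(b)}\) is \(\tau_{(I_r)}\)-continuous. Hence every \(\tau_{(I_r)}\)-neighbourhood \(\{b:p_r(b-a)<\delta\}\) contains \(a_n\) for infinitely many \(n\), in particular for some \(n\geq r\). For such \(n\) we get \(\norm{x_r-\rho_r(a)}=p_r(a_n-a)<\delta\). Since \(\delta>0\) is arbitrary, \(\rho_r(a)=x_r\) for every \(r\), that is, \(\jmath(a)=x\). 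This gives surjectivity, and moreover \(\norm a\leq\norm x+\varepsilon\).

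For the second assertion, Proposition~\ref{prop:canonical-detector-map} already makes \(\jmath\) a contractive unital homomorphism. It is injective because the filtration is separating, so by the first part it is bijective. It remains to show it is isometric. For each \(\varepsilon>0\), the preimage constructed above satisfies \(\norm a\leq\norm{\jmath(a)}+\varepsilon\). By injectivity this preimage is the unique one, so \(\norm a\leq\norm{\jmath(a)}\). Together with contractivity this gives \(\norm{\jmath(a)}=\norm a\).

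The step needing the most care is the cluster-point argument. One must use a net and not a subsequence, and note that the cluster property supplies indices \(n\geq r\) for each fixed \(r\). The isometry then comes for free from the norm control in the approximants combined with uniqueness of the preimage. In particular, separation alone yields a norming filtration here, because ball compactness is assumed.
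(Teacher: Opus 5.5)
Your proof is correct and follows essentially the same route as the paper: the paper notes that \(\jmath\) is continuous from \(\tau_{(I_r)}\) to the coordinate topology, so the image of the compact ball is compact and hence closed in the Hausdorff coordinate topology, and your direct cluster-point verification of \(\rho_r(a)=x_r\) is that argument unpacked, while your isometry step via uniqueness of the \(\varepsilon\)-controlled preimages is identical to the paper's. Your caution about subsequences is unnecessary, since \(\tau_{(I_r)}\) is pseudometrizable by countably many seminorms and compact balls are therefore sequentially compact, but it does no harm.
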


\begin{proof}
  Let \(x\in\widehat A\), put \(R=\norm x\), and fix \(\varepsilon>0\).
  Proposition~\ref{prop:bounded-detector-density} gives \((a_n)\subseteq(R+\varepsilon)B_A\) such that \(\jmath(a_n)\to x\) in the coordinate topology.
  Thus \(x\) lies in the coordinate closure of \(\jmath((R+\varepsilon)B_A)\).
  The ball \((R+\varepsilon)B_A\) is compact in \(\tau_{(I_r)}\), and \(\jmath\) is continuous from \(\tau_{(I_r)}\) to the coordinate topology.
  Its image is therefore compact and hence closed in the Hausdorff coordinate topology.
  Thus there is \(a_\varepsilon\in A\) such that \(\jmath(a_\varepsilon)=x\) and \(\norm{a_\varepsilon}\leq R+\varepsilon\).

  Suppose that the filtration is separating.
  Then \(\jmath\) is injective, so all the elements \(a_\varepsilon\) are equal to one element \(a\).
  Hence we have \(\norm a\leq R+\varepsilon\) for every \(\varepsilon>0\), and therefore we obtain \(\norm a\leq R\).
  By contractivity of \(\jmath\), we obtain the reverse inequality \(R=\norm{\jmath(a)}\leq\norm a\).
  Thus we have \(\norm a=R\), as required.
\end{proof}

\begin{corollary}[Weak-star closed envelopes]\label{cor:weak-star-form}
  Suppose that \(A=E^*\) isometrically and every ideal \(I_r\) is weak-star closed.
  If \((I_r)\) is separating, then we have
  \[ A\simeq\Env^b_{(I_r)}(A) \]
  isometrically as unital Banach algebras.
\end{corollary}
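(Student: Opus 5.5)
The natural route is to apply Theorem~\ref{thm:detector-compact-surjectivity}. Given a separating family of weak-star closed ideals \(I_r\) of finite codimension in \(A=E^*\), we need to show two things. First, every closed norm ball of \(A\) is compact in \(\tau_{(I_r)}\). Second, \(\jmath\) is isometric and not merely bijective; the isometry part is already included in the second assertion of that theorem once surjectivity and separation hold.

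For compactness, I would use Banach--Alaoglu: the ball \(RB_A\) is weak-star compact. Then I would show that the identity map from \((RB_A,w^*)\) to \((RB_A,\tau_{(I_r)})\) is continuous. Since \(\tau_{(I_r)}\) is generated by the seminorms \(a\mapsto\norm{\rho_r(a)}\), it suffices to check that each quotient map \(\rho_r:A\to A/I_r\) is weak-star-to-norm continuous.

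This is the key step. Because \(I_r\) is weak-star closed, \(I_r=(F_r)^\perp\) for the preannihilator \(F_r={}^\perp I_r\subseteq E\). Then \(A/I_r=E^*/F_r^\perp\) is isometrically \(F_r^*\) via restriction, so \(F_r\) is finite dimensional of dimension \(\operatorname{codim}I_r\). Under this identification \(\rho_r(a)=a|_{F_r}\). Weak-star convergence of \(a\) gives pointwise convergence on a basis of the finite-dimensional \(F_r\), hence norm convergence in \(F_r^*\), where all Hausdorff vector topologies agree.

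Consequently the continuous image of the weak-star compact ball is \(\tau_{(I_r)}\)-compact. Its image under the identity is the ball itself, which is therefore \(\tau_{(I_r)}\)-compact; Hausdorffness of \(\tau_{(I_r)}\) is not even needed for this, though it holds by separation. Theorem~\ref{thm:detector-compact-surjectivity} then yields that \(\jmath\) is an isometric unital algebra isomorphism \(A\simeq\Env^b_{(I_r)}(A)\). The only delicate point I foresee is the identification \(A/I_r\cong F_r^*\) isometrically, which is the standard duality \((E/F)^{*}\)/\(F^*\cong E^*/F^\perp\). The isometry is not actually needed for the continuity argument, only the linear homeomorphism, so even this is harmless.
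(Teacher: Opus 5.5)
Your proposal is correct and is essentially the paper's proof: the paper likewise takes the preannihilator \(F_r=(I_r)_\perp\), uses \(I_r=F_r^\perp\) and the restriction identification \(A/I_r\simeq F_r^*\) to show that each \(\rho_r\) is weak-star-to-norm continuous, deduces from Banach--Alaoglu that closed norm balls are \(\tau_{(I_r)}\)-compact, and then invokes Theorem~\ref{thm:detector-compact-surjectivity}. Your remark that the continuity step needs only the linear homeomorphism \(A/I_r\cong F_r^*\), not its isometry, is accurate.
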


\begin{proof}
  Put \(F_r=(I_r)_\perp =\{x\in E:a(x)=0\text{ for every }a\in I_r\}\).
  Since \(I_r\) is weak-star closed, we obtain \(I_r=F_r^\perp\) from the annihilator theorem.
  The space \(F_r\) is finite dimensional, and restriction to \(F_r\) identifies \(A/I_r\) isometrically with \(F_r^*\).
  Fix a basis \(u_1,\ldots,u_d\) of \(F_r\).
  By equivalence of finite-dimensional norms, there is \(C_r>0\) such that
  \[
    \norm{\rho_r(a^\alpha)-\rho_r(a)}
    =\norm{(a^\alpha-a)|_{F_r}}
    \leq C_r\max_{1\leq j\leq d}
      \abs{\langle a^\alpha-a,u_j\rangle}.
  \]
  If \(a^\alpha\to a\) in the weak-star topology, then the right-hand side tends to zero.
  Hence \(\rho_r:A\to F_r^*\) is weak-star-to-norm continuous.
  Thus \(\tau_{(I_r)}\) is weaker than the weak-star topology on every norm ball.
  By the Banach--Alaoglu theorem, we know that the ball is weak-star compact, and hence compact in \(\tau_{(I_r)}\).
  Then we can apply Theorem~\ref{thm:detector-compact-surjectivity}.
\end{proof}

The proof of Theorem~\ref{thm:detector-compact-surjectivity} only uses compactness for the quotient seminorms.
In Section~\ref{sec:examples}, we also verify this compactness by normal-family and equicontinuity arguments.

\subsection{The bounded-limit predual}
Let \((A_r,\pi_r^s)_{r\leq s}\) be any countable inverse system of finite-dimensional unital Banach algebras satisfying \eqref{eq:inverse-system}--\eqref{eq:metric-quotient}, and put \(\widehat A=\ilim(A_r,\pi_r^s)\).
Put \(G_r=A_r^*\) and \(J_r^s=(\pi_r^s)^*:G_r\to G_s\). 
Let \(G_{\rm alg}=\varinjlim(G_r,J_r^s)\) be the algebraic direct limit, and let \(J_r^\infty:G_r\to G_{\rm alg}\) be the canonical embeddings, so that \(J_s^\infty J_r^s=J_r^\infty\).
\begin{lemma}[Bounded-limit predual]\label{lem:bounded-limit}
\(\widehat A\) is a unital Banach algebra, and every
\(J_r^s\) is an isometry.
Equip \(G_{\rm alg}\) with the norm
\(\norm{J_r^\infty u}_{G_{\rm alg}}=\norm u_{G_r}\)
for \(u\in G_r\) and \(r\in\N\), and let \(\widehat A_*\) be its completion.
We use the same notation \(J_r^\infty\) for the embeddings of \(G_r\) into \(\widehat A_*\).
Then \(\widehat A_*^*=\widehat A\) isometrically, with the duality given by
\[
  \langle a,J_r^\infty u\rangle=\langle a_r,u\rangle.
\]
  Moreover, for every \(r\in\N\), the coordinate map
  \(q_r:\widehat A\to A_r\), given by \(q_r((a_s)_s)=a_r\),
  is a metric quotient and satisfies \(q_r(B_{\widehat A})=B_{A_r}\).
  On every norm-bounded subset of \(\widehat A\), the coordinate topology agrees with the weak-star topology induced by \(\widehat A_*\).
\end{lemma}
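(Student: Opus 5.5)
The plan is to treat the statement as a standard duality between an inductive limit of dual spaces and a projective limit, using at each step only the metric-quotient condition \eqref{eq:metric-quotient} together with finite dimensionality.

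\emph{Banach algebra and isometric bonding maps.} That \(\widehat A\) is a unital Banach algebra is already shown in the proof of Proposition~\ref{prop:canonical-detector-map}: the compatibility equations cut out a closed unital subalgebra of \((\bigoplus_rA_r)_{\ell_\infty}\), and \(\norm{1}=1\) by \eqref{eq:inverse-system}. For the isometry of \(J_r^s\), I use that \(\pi_r^s\) maps the open unit ball of \(A_s\) onto the open unit ball of \(A_r\), which follows from \eqref{eq:metric-quotient}. Then
\[
\norm{J_r^su}=\sup_{\norm{x_s}<1}\abs{\langle \pi_r^sx_s,u\rangle}=\sup_{\norm{x_r}<1}\abs{\langle x_r,u\rangle}=\norm u .
\]
With the cocycle identity \(J_s^tJ_r^s=J_r^t\), this shows the norm on \(G_{\rm alg}\) is well defined, since two representatives of the same element agree in some common \(G_t\). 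It is a norm because each \(J_r^\infty\) is injective. Let \(\widehat A_*\) denote the completion.

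\emph{Duality.} For \(a\in\widehat A\), define \(\phi_a(J_r^\infty u)=\langle a_r,u\rangle\). This is consistent because \(\langle a_s,J_r^su\rangle=\langle\pi_r^sa_s,u\rangle=\langle a_r,u\rangle\). It satisfies \(\abs{\phi_a(J_r^\infty u)}\le\norm{a_r}\norm u\le\norm a\norm u\), so \(\phi_a\) extends to \(\widehat A_*\) with \(\norm{\phi_a}\le\norm a\). Hahn--Banach on each \(G_r=A_r^*\) gives \(\sup_{\norm u\le1}\abs{\langle a_r,u\rangle}=\norm{a_r}\), so \(\norm{\phi_a}=\sup_r\norm{a_r}=\norm a\).

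For surjectivity, take \(\phi\in\widehat A_*^*\). Then \(\phi\circ J_r^\infty\in G_r^*=A_r\), since \(A_r\) is finite dimensional and hence reflexive; call this element \(a_r\). Compatibility \(\pi_r^sa_s=a_r\) follows by dualising \(J_s^\infty J_r^s=J_r^\infty\). Boundedness holds because \(\norm{a_r}=\norm{\phi\circ J_r^\infty}\le\norm\phi\), as \(J_r^\infty\) is isometric. Finally \(\phi=\phi_a\) on the dense set \(G_{\rm alg}\), so \(\widehat A=\widehat A_*^*\) isometrically.

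\emph{The coordinate maps \(q_r\).} This is the only step with real content, and I expect it to be the main obstacle: given \(x_r\in B_{A_r}\), I need a full compatible thread of norm \(\le1\), not merely \(\le1+\varepsilon\). Here I use attainment in Lemma~\ref{lem:detector-quotient-tower}; its compactness argument applies to any system satisfying \eqref{eq:metric-quotient}, since the fibres are closed and \(A_s\) is finite dimensional.
\begin{itemize}
\item Going up, choose inductively \(x_{s+1}\) with \(\pi_s^{s+1}x_{s+1}=x_s\) and \(\norm{x_{s+1}}=\norm{x_s}\le1\).
\item Going down, set \(x_t=\pi_t^rx_r\) for \(t<r\); these have norm \(\le1\) because \(\pi_t^r\) is contractive.
\end{itemize}
The resulting thread lies in \(B_{\widehat A}\), so \(q_r(B_{\widehat A})\supseteq B_{A_r}\). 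Contractivity of \(q_r\) gives the reverse inclusion, and hence \(q_r\) is a metric quotient.

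\emph{Topologies on bounded sets.} Let \(S\) be a bounded subset of \(\widehat A\). On \(S\), weak-star convergence \(a^\alpha\to a\) means \(\langle a^\alpha_r-a_r,u\rangle\to0\) for all \(u\) in the dense set \(G_{\rm alg}\). This suffices because the net is bounded. For each fixed \(r\), convergence against every \(u\in G_r\) is the same as norm convergence \(a^\alpha_r\to a_r\), since \(A_r\) is finite dimensional. That is exactly coordinatewise convergence, so the two topologies agree on \(S\).
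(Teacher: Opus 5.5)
Your proof is correct and follows essentially the same route as the paper: the pairing \(\phi_a(J_r^\infty u)=\langle a_r,u\rangle\) with its norm computation, recovering \(a_r\) from \(\phi\circ J_r^\infty\in A_r\), norm-preserving threads built from attained minimal-norm lifts going up and bonding maps going down, and a density argument on bounded sets for the topologies. The only cosmetic differences are that you get the isometry of \(J_r^s\) from the open-ball surjectivity implied by \eqref{eq:metric-quotient}, where the paper uses attained lifts on closed balls, and that you treat both directions of the topology comparison in one step.
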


\begin{proof}
  The compatibility equations define a closed subspace of \((\bigoplus_r A_r)_{\ell_\infty}\).
  Coordinatewise multiplication preserves compatibility, and we have
  \[ \norm{ab}=\sup_r\norm{a_rb_r} \leq\left(\sup_r\norm{a_r}\right) \left(\sup_r\norm{b_r}\right)=\norm a\norm b. \]
  The compatible family \((1_{A_r})_r\) has norm one.
  Hence \(\widehat A\) is a unital Banach algebra.

  We next check that the maps used to form the predual preserve norms.
  Fix \(r\leq s\) and \(u\in A_r^*\).
  By contractivity of \(\pi_r^s\), we have
  \[ \norm{J_r^su} =\sup_{x_s\in B_{A_s}}\abs{\langle\pi_r^sx_s,u\rangle} \leq\sup_{x_r\in B_{A_r}}\abs{\langle x_r,u\rangle}=\norm u. \]
  Conversely, the fibre \((\pi_r^s)^{-1}(x_r)\) is closed in the finite-dimensional space \(A_s\), so the infimum in \eqref{eq:metric-quotient} is attained.
  Thus, for every \(x_r\in B_{A_r}\), there is \(x_s\in B_{A_s}\) with \(\pi_r^sx_s=x_r\) and \(\norm{x_s}=\norm{x_r}\).
  Therefore, we have \(\abs{\langle x_r,u\rangle}=\abs{\langle x_s,J_r^su\rangle} \leq\norm{J_r^su}\).
  Taking the supremum over \(x_r\in B_{A_r}\), we obtain \(\norm u\leq\norm{J_r^su}\), and hence \(\norm{J_r^su}=\norm u\).

  We now identify the dual of the completed direct limit.
  Let \(f\in\widehat A_*^*\), and restrict it to each stage.
  Since \(G_r^*=A_r^{**}=A_r\), there is a unique \(a_r\in A_r\) such that
  \[
    \langle a_r,u\rangle=f(J_r^\infty u)\qquad(u\in G_r).
  \]
  If \(r\leq s\), then we have
  \[
    \begin{aligned}
      \langle\pi_r^sa_s,u\rangle
      =\langle a_s,J_r^su\rangle
       =f(J_s^\infty J_r^su)=f(J_r^\infty u)
       =\langle a_r,u\rangle.
    \end{aligned}
  \]
  Thus \((a_r)\) is compatible.
  Moreover, we have \(\norm{a_r}\leq\norm f\), and hence \((a_r)\in\widehat A\) with \(\norm a\leq\norm f\).

  Conversely, let \(a=(a_r)\in\widehat A\).
  On the algebraic direct limit define \(f_a(J_r^\infty u)=\langle a_r,u\rangle\).
  To check that this is well defined, suppose that \(J_r^\infty u=J_s^\infty v\).
  At a common later stage \(t\geq r,s\), we have \(J_r^tu=J_s^tv\).
  Compatibility then gives
  \[
    \langle a_r,u\rangle
    =\langle a_t,J_r^tu\rangle
    =\langle a_t,J_s^tv\rangle
    =\langle a_s,v\rangle.
  \]
  Linearity follows by moving any two vectors to a common stage.
  Moreover, we have
  \[
    \abs{f_a(J_r^\infty u)}
    \leq\norm{a_r}\norm u
    \leq\norm a\norm{J_r^\infty u}.
  \]
  Therefore \(f_a\) extends to \(\widehat A_*\) and \(\norm{f_a}\leq\norm a\).
  On the other hand, we have
  \[ \norm{f_a}\geq\sup_r\sup_{u\in B_{G_r}} \abs{\langle a_r,u\rangle}=\sup_r\norm{a_r}=\norm a. \]
  Thus we obtain \(\norm{f_a}=\norm a\).
  For the family obtained from the original functional \(f\), the functionals \(f_a\) and \(f\) agree on the algebraic direct limit and hence on its closure.
  Therefore, we have \(\widehat A_*^*=\widehat A\) isometrically.

  Fix \(b\in A_r\).
  The minimum in \eqref{eq:metric-quotient} is attained because all stages are finite dimensional.
  Starting with \(b_r=b\), choose successively \(b_{s+1}\in A_{s+1}\) such that
  \[
    \pi_s^{s+1}b_{s+1}=b_s,
    \qquad
    \norm{b_{s+1}}=\norm{b_s}=\norm b
    \qquad(s\geq r).
  \]
  For \(t<r\), put \(b_t=\pi_t^rb\), so \(\norm{b_t}\leq\norm b\).
  By the inverse-system identities, we have
  \[
    \pi_t^ub_u=b_t\quad(t\leq u),
    \qquad
    \sup_t\norm{b_t}=\norm b.
  \]
  Thus \(a=(b_t)_t\in\widehat A\), with \(q_ra=b\) and \(\norm a=\norm b\).
  Thus every \(b\in A_r\) has a lift of the same norm.
  Since \(q_r\) is contractive, we obtain both the metric-quotient property and \(q_r(B_{\widehat A})=B_{A_r}\).

  It remains to compare the two topologies.
  On bounded subsets of \(\widehat A\), coordinate convergence is precisely \(\sigma(\widehat A,\widehat A_*)\)-convergence.
  Indeed, suppose that \(a^\alpha,a\in R B_{\widehat A}\) and \(a_r^\alpha\to a_r\) for every \(r\).
  For \(e\in\widehat A_*\) and \(u\in G_r\), we have
  \[
    \begin{aligned}
      \abs{\langle a^\alpha-a,e\rangle}
      \leq\abs{\langle a^\alpha-a,e-J_r^\infty u\rangle}
       +\abs{\langle a_r^\alpha-a_r,u\rangle}\leq2R\norm{e-J_r^\infty u}
      +\norm{a_r^\alpha-a_r}\norm u.
    \end{aligned}
  \]
  Given \(\varepsilon>0\), density gives \(r\) and \(u\in G_r\) such that
  \(\norm{e-J_r^\infty u}<\varepsilon/(4R+1)\).
  Keeping this \(r\) and \(u\) fixed, we obtain
  \[
    \limsup_\alpha\abs{\langle a^\alpha-a,e\rangle}
    \leq2R\norm{e-J_r^\infty u}<\varepsilon/2.
  \]
  Since \(\varepsilon\) is arbitrary, we obtain weak-star convergence.
  Conversely, weak-star convergence gives weak convergence in each finite-dimensional \(A_r\), hence norm convergence.
\end{proof}

\begin{theorem}[Duality for bounded inverse limits]
  \label{thm:finite-detection-duality}
  Let \(B\) be a unital Banach algebra with \(\norm{1_B}=1\).
  The following assertions are equivalent.
  \begin{enumerate}[label={\textup{(\roman*)}},leftmargin=27pt]
    \item There is a countable inverse system \((A_r,\pi_r^s)_{r\leq s}\) of finite-dimensional unital Banach algebras with unital metric-quotient bonding homomorphisms such that \(B\simeq\ilim(A_r,\pi_r^s)\) isometrically as unital Banach algebras.
    \item There are a Banach space \(E\), an isometric identification \(B=E^*\), and weak-star-to-norm continuous unital homomorphisms \(\rho_n:B\to F_n\), \(n\in\N\), where every \(F_n\) is finite dimensional, such that \(\bigcap_{n=0}^\infty\ker\rho_n=\{0\}\).
    \item There are a Banach space \(E\), an isometric identification \(B=E^*\), and increasing nonzero finite-dimensional subspaces \(E_0\subseteq E_1\subseteq\cdots\subseteq E\) with \(\overline{\bigcup_rE_r}=E\), such that every annihilator \(E_r^\perp=\{b\in B:\langle b,e\rangle=0 \text{ for every }e\in E_r\}\) is a proper two-sided ideal of \(B\).
  \end{enumerate}
  Whenever these assertions hold, the predual may be chosen separable.
\end{theorem}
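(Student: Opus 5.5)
I will prove the cycle (i)\(\Rightarrow\)(iii)\(\Rightarrow\)(ii)\(\Rightarrow\)(i), tracking separability of the predual along the way.

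\emph{(i)\(\Rightarrow\)(iii).} Transport the structure through the isometric isomorphism and use Lemma~\ref{lem:bounded-limit}. Put \(E=\widehat A_*\) and \(E_r=J_r^\infty(G_r)\). These spaces are finite dimensional and increasing, since \(J_r^\infty=J_s^\infty J_r^s\). They are nonzero because \(1_{A_r}\neq0\), and their union is dense by construction. The annihilator \(E_r^\perp\) consists of those \(a\) with \(\langle a_r,u\rangle=0\) for all \(u\in A_r^*\), that is, \(E_r^\perp=\ker q_r\). This is a two-sided ideal because \(q_r\) is a homomorphism, and it is proper because \(q_r(1)=1_{A_r}\neq0\). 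Since \(E\) is the completion of a countable union of finite-dimensional spaces, it is separable; this gives the final clause.

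\emph{(iii)\(\Rightarrow\)(ii).} Let \(\rho_r:B\to B/E_r^\perp\), the latter with its quotient norm. The ideal \(E_r^\perp\) is weak-star closed, and \(B/E_r^\perp\cong E_r^*\) is finite dimensional. Weak-star-to-norm continuity follows exactly as in the proof of Corollary~\ref{cor:weak-star-form}, by testing against a basis of \(E_r\). Properness makes \(\rho_r\) a unital homomorphism onto a nonzero algebra. Finally, \(\bigcap_r E_r^\perp=(\bigcup_rE_r)^\perp=\overline{\bigcup E_r}^{\,\perp}=\{0\}\).

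\emph{(ii)\(\Rightarrow\)(i).} Put \(I_r=\bigcap_{n\leq r}\ker\rho_n\). This is a decreasing sequence of closed two-sided ideals of finite codimension, since \(B/I_r\) embeds in \(\bigoplus_{n\leq r}F_n\). Each \(I_r\) is proper: \(\rho_0(1)=1_{F_0}\), and we may assume \(F_0\ne0\); if some \(F_n=0\), drop it, and if all vanish then \(B=\{0\}\), contradicting \(\norm{1_B}=1\). Each \(I_r\) is also weak-star closed, being the kernel of a weak-star-to-norm continuous map, and \(\bigcap_rI_r=\bigcap_n\ker\rho_n=\{0\}\). Corollary~\ref{cor:weak-star-form} then gives \(B\simeq\Env^b_{(I_r)}(B)\) isometrically as unital Banach algebras. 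Lemma~\ref{lem:detector-quotient-tower} shows that the associated quotient system satisfies \eqref{eq:inverse-system}--\eqref{eq:metric-quotient}, which is (i).

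\emph{Main obstacle.} The only real subtlety is the bookkeeping of units and properness. One must ensure that every stage is a nonzero unital algebra with \(\norm{1}=1\), which Lemma~\ref{lem:detector-quotient-tower} provides, and remove degenerate zero representations in (ii). Beyond that, everything reduces to the weak-star compactness argument already carried out in Corollary~\ref{cor:weak-star-form} and the predual computation in Lemma~\ref{lem:bounded-limit}. Separability of the predual is obtained by routing any of the three conditions through (i) and then through the construction in (i)\(\Rightarrow\)(iii).
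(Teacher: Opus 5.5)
Your proof is correct, but you run the cycle in the opposite direction from the paper, which proves (i)\(\Rightarrow\)(ii)\(\Rightarrow\)(iii)\(\Rightarrow\)(i).

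The substantive difference is how you return to (i).

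\emph{The paper's route.} It proves (iii)\(\Rightarrow\)(i) by a direct Hahn--Banach argument. With \(I_r=E_r^\perp\), restriction identifies \(B/I_r\) isometrically with \(E_r^*\). The bonding maps become restrictions \(E_s^*\to E_r^*\), which are metric quotients because functionals extend with the same norm. A bounded compatible family \((\phi_r)\) then glues to a functional on \(\bigcup_rE_r\) bounded by \(\sup_r\norm{\phi_r}\). This extends by density to an element of \(B\) with exactly that norm. No compactness is used.

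\emph{Your route.} You pass (ii)\(\Rightarrow\)(i) through Corollary~\ref{cor:weak-star-form}, that is, Banach--Alaoglu combined with Theorem~\ref{thm:detector-compact-surjectivity}. You then cite Lemma~\ref{lem:detector-quotient-tower} for the tower axioms.

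\emph{What each buys.} The paper's argument is more elementary and self-contained, and it exhibits the stages concretely as \(E_r^*\). Yours is shorter given the machinery already proved. It also produces the identification directly as \(B\simeq\Env^b_{(I_r)}(B)\) with \(I_r=\bigcap_{n\leq r}\ker\rho_n\), which is the form in which the theorem is later invoked in Proposition~\ref{prop:weak-star-rfd}.

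Your (i)\(\Rightarrow\)(iii) takes \(E_r=J_r^\infty(G_r)\) and \(E_r^\perp=\ker q_r\) from Lemma~\ref{lem:bounded-limit}. This is equivalent to the paper's reading of the coordinate maps for (ii).

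Your removal of zero representations addresses a point the paper passes over. In its (ii)\(\Rightarrow\)(iii), the claim that unitality gives \(1_B\notin I_r\) tacitly needs \(F_0\neq0\). When you drop zero \(F_n\)'s, you should also repeat the survivors if only finitely many nonzero ones remain, so that the filtration stays indexed by \(\N\).
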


\begin{proof}
  We prove \(\textup{(i)}\Rightarrow\textup{(ii)}\Rightarrow\textup{(iii)}\Rightarrow\textup{(i)}\).
  The first implication reads off the coordinate maps.
  The second takes their common kernels, and the third reconstructs each functional from its compatible restrictions.

  Suppose first that \textup{(i)} holds.
  Lemma~\ref{lem:bounded-limit} gives the isometric predual \(E=\overline{\bigcup_rJ_r^\infty A_r^*}\) of the bounded inverse limit.
  After transporting this predual to \(B\), let \(\rho_r:B\to A_r\) be the coordinate map.
  Under \(A_r^{**}=A_r\), this map is the adjoint of the canonical inclusion \(J_r^\infty:A_r^*\to E\).
  Hence \(\rho_r\) is weak-star-to-norm continuous.
  The coordinate maps are unital homomorphisms, and \(\bigcap_r\ker\rho_r=\{0\}\) because the coordinates determine every element of the inverse limit.
  This proves \textup{(ii)}.

  Suppose that \textup{(ii)} holds, and put \(I_r=\bigcap_{n=0}^r\ker\rho_n\) and \(E_r=(I_r)_\perp=\{e\in E:\langle b,e\rangle=0 \text{ for every }b\in I_r\}\).
  The map \(b\mapsto(\rho_0(b),\ldots,\rho_r(b))\) has finite-dimensional range and kernel \(I_r\).
  Hence \(I_r\) has finite codimension.
  Since each \(\rho_n\) is weak-star-to-norm continuous, its kernel is weak-star closed.
  Each kernel is a two-sided ideal, and unitality gives \(1_B\notin I_r\).
  Thus \(I_r\) is proper and weak-star closed.
  The annihilator identities show that
  \[
    E_r^\perp=I_r,\qquad
    E_r^*\simeq B/I_r,\qquad
    0<\dim E_r=\dim(B/I_r)<\infty.
  \]
  The ideals decrease, and therefore the spaces \(E_r\) increase.
  By joint faithfulness, we have
  \[ \left(\overline{\bigcup_rE_r}\right)^\perp =\bigcap_rE_r^\perp=\bigcap_rI_r =\bigcap_{n=0}^\infty\ker\rho_n=\{0\}. \]
  By the Hahn--Banach theorem, we now obtain \(\overline{\bigcup_rE_r}=E\), which proves \textup{(iii)}.

  Suppose next that \textup{(iii)} holds.
  Put \(I_r=E_r^\perp\) and \(A_r=B/I_r\).
  By the restriction form of the Hahn--Banach theorem, we obtain
  \[ A_r\longrightarrow E_r^*,\qquad b+I_r\longmapsto b|_{E_r}, \qquad \norm{b+I_r}=\norm{b|_{E_r}}. \]
  For \(r\leq s\), the bonding map is restriction \(E_s^*\to E_r^*\).
  It is a unital homomorphism because the \(I_r\)'s are ideals, and, by the Hahn--Banach theorem, we obtain
  \[ \norm{\phi} =\min\{\norm{\psi}:\psi\in E_s^*,\ \psi|_{E_r}=\phi\} \qquad(\phi\in E_r^*). \]
  Also, we have \(0<\norm{1_{A_r}}\leq1\) and \(\norm{1_{A_r}}\leq\norm{1_{A_r}}^2\), so \(\norm{1_{A_r}}=1\).

  Let \((\phi_r)_r\) be a bounded compatible family and put \(C=\sup_r\norm{\phi_r}\).
  On \(\bigcup_rE_r\), define
  \[ b(e)=\phi_r(e)\quad(e\in E_r), \qquad \abs{b(e)}\leq C\norm e. \]
  If \(e\in E_r\subseteq E_s\), compatibility gives \(\phi_s(e)=\phi_r(e)\), so \(b\) is well defined.
  Addition and scalar multiplication may be checked in a common \(E_s\), proving linearity.
  The displayed bound and density of \(\bigcup_rE_r\) give a unique extension \(b\in E^*=B\), with \(b|_{E_r}=\phi_r\), and we have
  \[ \sup_r\norm{\phi_r}\leq\norm b\leq C =\sup_r\norm{\phi_r}. \]
  Hence \(b\mapsto(b+I_r)_r\) is an onto isometry.
  Since all its coordinates are unital homomorphisms, we conclude that it is a unital algebra isomorphism.
  This proves \textup{(i)}.
  Finally, \(E=\overline{\bigcup_rE_r}\) is separable, as asserted.
\end{proof}

  \part{Oracle-recursive finite extensions}


\section{The block Bourgain--Delbaen construction}\label{sec:bd-extension}

The next lemma is a block form of the Bourgain--Delbaen extension estimate \cite{BourgainDelbaen1980}, and its scalar version appears in \cite[Theorem~3.4]{ArgyrosHaydon2011}.
Its bound is independent of the dimensions of the blocks.
This allows us to use the finite-dimensional algebras and their duals as fibres in Section~\ref{sec:formal-datum}.
Subscripts \(*\) denote chosen preduals, while superscripts \(*\) are also used for their correction maps and projections.

 Let \(\Delta_n\) be finite sets, let \(E_\gamma\) be finite-dimensional Banach spaces, and put
  \[ F_n=\left(\bigoplus_{\gamma\in\Delta_n}E_\gamma\right)_{\ell_\infty}, \qquad \mathcal E_*=\left(\bigoplus_{n\geq1}F_n^*\right)_{\ell_1}. \]
  Write \(\iota_n:F_n^*\to\mathcal E_*\) for the ambient injection.
  Consider a family
  \[ d_n^*\varphi=\iota_n\varphi-c_n^*\varphi,\qquad \varphi\in F_n^*, \]
  where \(c_n^*:F_n^*\to\bigoplus_{k<n}F_k^*\).
  We call \(c_n^*\varphi\) the correction functional.  In the scalar case, \(c_n^*\) is the Bourgain--Delbaen extension functional in the terminology of Motakis \cite{Motakis2024}.
  Denote by \(P_m^*\) the algebraic projection onto the first \(m\) ranges \(d_k^*(F_k^*)\), along the later \(d^*\)-blocks.
  We use the conventions \( P_0^*=0\) and \( c_1^*=0\).

\begin{figure}[htbp]
	\centering
\begin{tikzpicture}[x=1.30cm,y=0.60cm,
	every node/.style={font=\small,inner sep=2pt}]
	\foreach \r in {1,...,5} {
		\foreach \k in {1,...,5} {
			\ifnum\k<\r
				\fill[blue!7] ({\k-1},{-\r}) rectangle (\k,{-\r+1});
			\else
				\ifnum\k=\r
					\fill[green!10] ({\k-1},{-\r}) rectangle (\k,{-\r+1});
				\else
					\fill[black!2] ({\k-1},{-\r}) rectangle (\k,{-\r+1});
				\fi
			\fi
		}
	}
	\draw[black!30,step=1] (0,-5) grid (5,0);
	\node at (2.5,1.25) {Ambient blocks};
	\node[anchor=east] at (-0.18,0.50) {BD maps};
	\foreach \k/\lab in {1/{F_1^*},2/{F_2^*},3/{F_3^*},4/{\cdots},5/{F_n^*}}
		\node at ({\k-0.5},0.50) {\(\lab\)};
	\foreach \r/\lab in {1/{d_1^*},2/{d_2^*},3/{d_3^*},4/{\vdots},5/{d_n^*}}
		\node[anchor=east] at (-0.18,{-\r+0.5}) {\(\lab\)};
	\foreach \r/\k/\entry in {
		1/1/{I_{F_1^*}},1/2/{0},1/3/{0},1/4/{\cdots},1/5/{0},
		2/1/{\bullet},2/2/{I_{F_2^*}},2/3/{0},2/4/{\cdots},2/5/{0},
		3/1/{\bullet},3/2/{\bullet},3/3/{I_{F_3^*}},3/4/{\cdots},3/5/{0},
		4/1/{\vdots},4/2/{\vdots},4/3/{\vdots},4/4/{\ddots},4/5/{\vdots},
		5/1/{\bullet},5/2/{\bullet},5/3/{\bullet},5/4/{\cdots},5/5/{I_{F_n^*}}}
		\node at ({\k-0.5},{-\r+0.5}) {\(\entry\)};
\end{tikzpicture}
	\caption{The unit lower triangular BD structure.}
	\label{fig:bd-triangular}
\end{figure}

Figure~\ref{fig:bd-triangular} displays the ambient components of \(d_r^*\), with rows indexed by \(r\) and columns by the ambient rank \(k\).
The diagonal entries are identities, and each \(\bullet\) is a possibly zero component of \(-c_r^*\). All entries with \(k>r\) vanish.

We call the corrections in \eqref{eq:block-type-zero} and \eqref{eq:block-type-one} type zero and type one, respectively. Their scalar versions correspond to Types~1 and~2 in \cite[Definition~3.3]{ArgyrosHaydon2011}. Motakis \cite[Proposition~3.2]{Motakis2024} uses types~(a) and~(b) for the analogous scalar corrections without and with a predecessor term, respectively.

\begin{lemma}[Block BD lemma]\label{lem:block-bd}
 
  Let \(0\leq\theta<1/2\).
  On each summand \(E_\gamma^*\) of \(F_{n+1}^*\), assume that the correction has one of the forms
  \begin{align}
    c_{n+1}^*
    &=\beta(I-P_k^*)B^*,                                      \label{eq:block-type-zero}\\
    c_{n+1}^*
    &=\iota_jA^*+\beta(I-P_k^*)B^*,                            \label{eq:block-type-one}
  \end{align}
  where \(0\leq k\leq n\), \(0\leq\beta\leq\theta\), and \(B^*:E_\gamma^*\to(\bigoplus_{r=1}^nF_r^*)_{\ell_1}\) is a contraction.
  In the second form, \(1\leq j\leq k\) and \(A^*:E_\gamma^*\to F_j^*\) is a contraction.
  Then we have
  \[ \sup_s\norm{P_s^*}\leq M=(1-2\theta)^{-1}. \]
  The blocks biorthogonal to \(d_n^*(F_n^*)\) form a finite-dimensional decomposition (FDD) of a block BD space whose projections satisfy
  \[
    \sup_{s\geq1}\norm{P_s}\leq M,
    \qquad
    \sup_{0\leq a<b}\norm{P_{(a,b]}}\leq\kappa=2M.
  \]
\end{lemma}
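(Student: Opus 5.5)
The argument follows the scalar proof of \cite[Theorem~3.4]{ArgyrosHaydon2011}, by induction on the rank. Since the \(d_k^*\)-blocks are unit lower triangular (Figure~\ref{fig:bd-triangular}), every \(x^*\in\bigoplus_{r\leq n}F_r^*\) has a unique expansion \(x^*=\sum_{k\leq n}d_k^*\varphi_k\). Then \(P_m^*x^*=\sum_{k\leq m}d_k^*\varphi_k\), and \(P_m^*\) maps \(\bigoplus_{r\le n}F_r^*\) into \(\bigoplus_{r\leq \max(m,n)}F_r^*\). We also have \(P_m^*\iota_j=\iota_j\) for \(j\leq m\), because \(\iota_j\varphi=d_j^*\varphi+c_j^*\varphi\) with \(c_j^*\varphi\) of lower rank, by induction on \(j\). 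For the inductive hypothesis we take
\[
\norm{P_m^*x^*}\leq M\norm{x^*}\quad\text{for all }m\text{ and all }x^*\in\Bigl(\bigoplus_{r\leq n}F_r^*\Bigr)_{\ell_1}.
\]
By the \(\ell_1\)-structure of \(\mathcal E_*\) it suffices to check this on a single ambient piece \(\iota_{n+1}\varphi\) with \(\varphi\) in one summand \(E_\gamma^*\) of \(F_{n+1}^*\). Indeed, a general element of rank \(\leq n+1\) is \(y+\iota_{n+1}\varphi\) with \(\norm{y+\iota_{n+1}\varphi}=\norm y+\norm\varphi\), and on \(F_{n+1}^*=(\bigoplus E_\gamma^*)_{\ell_1}\) the norm is additive over summands, so the bound for \(\iota_{n+1}\varphi\) reduces to the bound on each summand.

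For \(m\geq n+1\) we have \(P_m^*\iota_{n+1}\varphi=\iota_{n+1}\varphi\). For \(m\leq n\) we write \(\iota_{n+1}\varphi=d_{n+1}^*\varphi+c_{n+1}^*\varphi\), so \(P_m^*\iota_{n+1}\varphi=P_m^*c_{n+1}^*\varphi\). In type zero, \(P_m^*\beta(I-P_k^*)B^*\varphi\) equals \(\beta(P_m^*-P_k^*)B^*\varphi\) if \(m>k\) and vanishes if \(m\le k\), because \(P_m^*P_k^*=P_{\min(m,k)}^*\). Since \(B^*\varphi\) has rank \(\leq n\), the inductive hypothesis gives the bound \(2\beta M\norm\varphi\leq 2\theta M\norm\varphi\leq M\norm\varphi\). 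Type one has the extra term \(P_m^*\iota_jA^*\varphi\). For \(m\geq k\geq j\) this term is \(\iota_jA^*\varphi\), of norm \(\leq\norm\varphi\), and the total is \(\leq(1+2\theta M)\norm\varphi=M\norm\varphi\); this is exactly the identity \(M=1+2\theta M\), which is where \(M=(1-2\theta)^{-1}\) comes from. For \(m<k\) the \(\beta\)-term vanishes, and the remaining term \(P_m^*\iota_jA^*\varphi\) has norm \(\le M\norm{A^*\varphi}\le M\norm\varphi\) by induction.

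The main obstacle is the type-one case \(m<k\) when \(j\leq m\) fails. Here one cannot use the crude estimate \(1+2\theta M\); the induction survives only because the \(\beta\)-term disappears once \(m\leq k\). The ordering \(j\leq k\) must be tracked carefully so that the two contributions never occur together with the full constant. The rank \(m\) lies between \(j\) and \(k\) only in the harmless regime where the \(\beta\)-term is zero.

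For the primal statement, let \(X\) be the closed span in \(\mathcal E=(\bigoplus F_n)_{\ell_\infty}\) of the blocks \(\operatorname{ran}d_n^{**}\)-biorthogonal, that is, the BD space obtained as the subspace of \(\mathcal E_*^*\) on which the coordinates are determined by the extension maps. Equivalently, the \(P_s\) are the adjoints of the \(P_s^*\), restricted to the span of the biorthogonal blocks. Then \(\norm{P_s}=\norm{P_s^*}\leq M\) by duality, because the \(P_s^*\) have finite rank and \(\bigcup_s\operatorname{ran}P_s^*\) is dense in \(\mathcal E_*\). Hence the biorthogonal blocks form an FDD with projections bounded by \(M\), and \(P_{(a,b]}=P_b-P_a\) has norm at most \(2M=\kappa\).
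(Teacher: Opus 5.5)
Your proof is correct and follows the paper's argument. It uses the same induction on the rank, the same reduction to a single summand via the $\ell_1$-structure, and the same case split: the $\beta$-term vanishes when $m\leq k$, while for $m>k$ one has $j\leq k<m$, so $P_m^*\iota_jA^*\varphi=\iota_jA^*\varphi$ and the constant is $1+2\theta M=M$. It then passes to adjoints restricted to the closed span of the biorthogonal blocks.

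The primal part is only sketched. You should define the blocks explicitly, e.g.\ $d_n=(V_nP_n^*)^*$ with $V_n$ the $n$-th ambient coordinate map, and verify $(P_s^*)^*d_nu=\mathbf1_{\{n\leq s\}}d_nu$ from biorthogonality and the density of the $d^*$-blocks. Also note that duality gives only $\norm{P_s}\leq\norm{P_s^*}$, not the equality you state, though the inequality is all that is needed.
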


\begin{proof}
  We first bound the algebraic projections on finite ambient sums, and then use their adjoints to construct the FDD.
  The change from the ambient blocks to the \(d^*\)-blocks is unit triangular on each finite initial sum.
  Therefore, we obtain
  \[
    P_s^*P_k^*=P_{\min\{s,k\}}^*,
    \qquad
    \operatorname{ran}P_s^*
    =\bigoplus_{j\leq s}\iota_j(F_j^*).
  \]
  In particular, we have \(P_s^*\iota_j=\iota_j\) for \(j\leq s\).
  We prove the estimate on the first \(n\) ambient blocks by induction on \(n\).
  The first block has zero correction.
  Suppose the estimate holds through rank \(n\), and fix \(s\leq n\).
  On a type-one summand, we have \(P_s^*d_{n+1}^*=0\), so we obtain
  \[
    P_s^*\iota_{n+1}\varphi
    =P_s^*\iota_jA^*\varphi
    +\beta(P_s^*-P_{\min\{s,k\}}^*)B^*\varphi.
  \]
  If \(s\leq k\), the second term vanishes, and the first term is controlled by the inductive bound on the first \(n\) ambient blocks.
  If \(s>k\), then \(j\leq k<s\), so the first term is \(\iota_jA^*\varphi\).
  Since \(B^*\varphi\) also lies in the first \(n\) ambient blocks, we have
  \[
    \begin{aligned}
      \norm{(P_s^*-P_k^*)B^*\varphi}
      \leq\norm{P_s^*B^*\varphi}+\norm{P_k^*B^*\varphi} \leq2M\norm{B^*\varphi}\leq2M\norm\varphi.
    \end{aligned}
  \]
  Consequently, we have
  \[
  \norm{P_s^*\iota_{n+1}\varphi} \leq \begin{cases} M\norm{A^*\varphi}\leq M\norm\varphi,&s\leq k,\\
  (1+2\beta M)\norm\varphi\leq M\norm\varphi,&s>k. \end{cases}
  \]
  On a type-zero summand, we obtain from the same calculation, with the predecessor term omitted,
  \[ \norm{P_s^*\iota_{n+1}\varphi} \leq2\beta M\norm\varphi \leq(M-1)\norm\varphi. \]
  Since \(F_{n+1}^*=(\bigoplus_{\gamma\in\Delta_{n+1}} E_\gamma^*)_{\ell_1}\), we obtain, on writing \(\varphi=(\varphi_\gamma)_\gamma\),
  \[
    \norm{P_s^*\iota_{n+1}\varphi}
    \leq\sum_\gamma\norm{P_s^*\iota_{n+1}\varphi_\gamma}
    \leq M\sum_\gamma\norm{\varphi_\gamma}
    =M\norm\varphi.
  \]
  Thus, for \(z\) in the first \(n\) ambient blocks, we have
  \[
    \begin{aligned}
      \norm{P_s^*(z+\iota_{n+1}\varphi)}
      \leq\norm{P_s^*z}+\norm{P_s^*\iota_{n+1}\varphi}\leq M(\norm z+\norm\varphi)
       =M\norm{z+\iota_{n+1}\varphi}.
    \end{aligned}
  \]
  For \(s=n+1\) the projection is the identity, completing the induction.

  Each \(P_s^*\) extends to \(\mathcal E_*\).
  If \(z_0^*\) is supported on the first \(N\) ambient blocks, then, for \(s\geq N\), we have
  \[ \norm{P_s^*z^*-z^*} \leq(M+1)\norm{z^*-z_0^*}. \]
  Hence, by density, we obtain \(P_s^*z^*\to z^*\).
  Let \(V_n:\mathcal E_*\to F_n^*\) be the \(n\)-th ambient coordinate map and define \(d_n=(V_nP_n^*)^*:F_n\to\mathcal E_*^*\).
  The top ambient coefficient of \(d_n^*\varphi\) is \(\varphi\), so we have
  \[ V_nP_n^*d_n^*=I_{F_n^*},\qquad V_nP_n^*d_k^*=0\quad(k\ne n). \]
  Hence we have \(\langle d_n^*\varphi,d_nu\rangle=\langle\varphi,u\rangle\), while every other \(d^*\)-block annihilates \(d_n(F_n)\).
  Thus \(d_n\) is injective.
  For \(u\in F_n\) and \(\psi\in F_k^*\), biorthogonality gives
  \[
    \begin{aligned}
      \langle d_k^*\psi,(P_s^*)^*d_nu\rangle
      =\langle P_s^*d_k^*\psi,d_nu\rangle=\mathbf1_{\{k\leq s\}}\langle d_k^*\psi,d_nu\rangle
       =\mathbf1_{\{n\leq s\}}\langle d_k^*\psi,d_nu\rangle.
    \end{aligned}
  \]
  Since the \(d^*\)-blocks span a dense subspace of \(\mathcal E_*\), we obtain
  \[ (P_s^*)^*d_nu=\mathbf1_{\{n\leq s\}}d_nu, \qquad \norm{(P_s^*)^*}\leq M. \]
  On \(X=\overline{\operatorname{span}}\{d_n(F_n):n\geq1\}\), the restrictions \(P_s=(P_s^*)^*|_X\) therefore converge strongly to the identity: if \(x_0\in\sum_{n\leq N}d_n(F_n)\) and \(s\geq N\), then we have \(\norm{P_sx-x}\leq(M+1)\norm{x-x_0}\).
  This proves the FDD assertion, with
  \[ \norm{P_s}\leq M, \qquad P_{(a,b]}=P_b-P_a, \qquad \norm{P_{(a,b]}}\leq2M=\kappa. \]
\end{proof}


\section{Finite modules and the oracle}\label{sec:oracle}

We use the tower \(\mathscr A=(A_r,\pi_r^s)\) and bounded limit \(\widehat A\) of Theorem~\ref{thm:main}.
As in Lemma~\ref{lem:bounded-limit}, \(G_r=A_r^*\) and \(J_r^s=(\pi_r^s)^*\) are the predual fibres and their isometric connecting maps.
We specify the finite modules and the exact statements used by the recursion.
The block estimate in Lemma~\ref{lem:block-bd} applies uniformly to these modules.

\subsection{Finite detector modules}

For \(b,c\in A_r\), write \(L_bc=bc\) and \(R_bc=cb\).
For \(a\in\widehat A\), let \(a\) act on the core fibre \(G_r=A_r^*\) by \(u\mapsto R_{a_r}^*u\), and on the unit-probe fibre \(H_r=A_r\) by \(b\mapsto a_rb\).
Both actions are contractive homomorphisms, since \(R_{a_r}^*R_{b_r}^*=(R_{b_r}R_{a_r})^*=R_{a_rb_r}^*\) and \(L_{a_r}L_{b_r}=L_{a_rb_r}\).
By the metric-quotient assertion in Lemma~\ref{lem:bounded-limit}, we have \(\{a_r1_{A_r}:a\in\widehat A\}=A_r\).

\subsection{Covariant finite packets}

For the reader's convenience, we recall some basic facts about Banach modules.

At a finite stage, an atom \(\eta\) carries a detector index \(r(\eta)\) and a finite-dimensional fibre \(E_\eta\).
We denote its module action by the contractive unital algebra homomorphism \(\Theta_\eta:A_{r(\eta)}\to\Bcal(E_\eta)\).
In operator terms, \(\Theta_\eta\) is a linear assignment of a bounded operator on \(E_\eta\) to each \(b\in A_{r(\eta)}\), satisfying
\[
  \Theta_\eta(bc)=\Theta_\eta(b)\Theta_\eta(c),\qquad
  \Theta_\eta(1)=I_{E_\eta},\qquad
  \norm{\Theta_\eta(b)}\leq\norm b.
\]
We write \(b\cdot u=\Theta_\eta(b)u\).
The first identity says that \((bc)\cdot u=b\cdot(c\cdot u)\) and this is the left module law.
We denote this space together with its action by \((E_\eta,\Theta_\eta)\).
For the two fibre types introduced above, we have
\[
  (E_\eta,\Theta_\eta(b))=
  \begin{cases}
    (G_{r(\eta)},R_b^*),&\text{for a core atom or a core probe},\\
    (H_{r(\eta)},L_b),&\text{for a unit probe}.
  \end{cases}
\]
We next describe the induced action on the dual space.
For each \(b\in A_{r(\eta)}\), the bounded operator \(\Theta_\eta(b):E_\eta\to E_\eta\) has a Banach adjoint \([\Theta_\eta(b)]^*:E_\eta^*\to E_\eta^*\).
Here the star applies to the whole operator \(\Theta_\eta(b)\).
Its action on a functional is given by composition:
\[
  \begin{aligned}
    \psi\cdot b
      :=[\Theta_\eta(b)]^*\psi=\psi\circ\Theta_\eta(b)\qquad
    (\psi\cdot b)(u)
      =\psi\bigl(\Theta_\eta(b)u\bigr).
  \end{aligned}
\]
Thus we first apply \(\Theta_\eta(b)\) to \(u\), and then evaluate the resulting vector with \(\psi\).
Using the bilinear evaluation pairing \(\langle\psi,u\rangle=\langle u,\psi\rangle=\psi(u)\),
we have, for \(b,c\in A_{r(\eta)}\), \(u\in E_\eta\), and \(\psi\in E_\eta^*\),
\[
  (\psi\cdot b)\cdot c=\psi\cdot(bc),\qquad
  \langle\psi\cdot b,u\rangle=\langle\psi,b\cdot u\rangle.
\]
The first identity makes \(E_\eta^*\) a right module, and the second expresses compatibility with the evaluation pairing.
Using the canonical identifications \(G_r^*=A_r\) and \(H_r^*=A_r^*\) with the biduals, we obtain the two dual right actions
\[
  c\cdot b=cb,\qquad (\psi\cdot b)(c)=\psi(bc).
\]

Write \(\epsilon_\eta^*\) for the injection of \(E_\eta^*\) into the ambient \(\ell_1\)-sum of fibre duals.
Fix an ordered packet representation \(\mathbf b=((c_l,\eta_l,\psi_l))_{l=1}^s\), with \(c_l\in\K\) and \(\psi_l\in E_{\eta_l}^*\), and write its value as
\begin{equation}\label{eq:packet}
  b=[\mathbf b]=\sum_{l=1}^sc_l\epsilon_{\eta_l}^*(\psi_l),\qquad
  \sum_{l=1}^s\abs{c_l}\norm{\psi_l}\leq1.
\end{equation}
Take \(r\geq r(\eta_l)\) for every entry.
For the empty packet, set \(b=0\) and \(B_{\mathbf b}^*=0\), with no restriction on \(r\).
For each entry, define \(R_l^{\mathbf b}:A_r\to E_{\eta_l}^*\) by specifying the value of \(R_l^{\mathbf b}\varphi\) on a vector:
\begin{equation}\label{eq:packet-map}
  (R_l^{\mathbf b}\varphi)(u)
  =\psi_l\bigl(\Theta_{\eta_l}(\pi_{r(\eta_l)}^r\varphi)u\bigr)
  \qquad(u\in E_{\eta_l}).
\end{equation}
Here we first apply \(\pi_{r(\eta_l)}^r\) to \(\varphi\), apply the corresponding operator to \(u\), and then evaluate with \(\psi_l\).
Equivalently, using the adjoint notation just introduced, we have
\[
  R_l^{\mathbf b}\varphi
  =[\Theta_{\eta_l}(\pi_{r(\eta_l)}^r\varphi)]^*\psi_l
  =\psi_l\cdot(\pi_{r(\eta_l)}^r\varphi).
\]
We then pass these functionals into the map \(B_{\mathbf b}^*:A_r\to\mathcal E_*\), given by
\begin{equation}\label{eq:packet-lift}
  B_{\mathbf b}^*\varphi
  =\sum_{l=1}^sc_l\epsilon_{\eta_l}^*(R_l^{\mathbf b}\varphi).
\end{equation}

\begin{lemma}[Covariant packet lemma]\label{lem:packet}
  Regard \(B_{\mathbf b}^*\) as taking values in the finite ambient sum
  \( \left(\bigoplus_{\eta\in\{\eta_1,\ldots,\eta_s\}} E_\eta^*\right)_{\ell_1}, \)
  where repeated atoms contribute to the same coordinate.
  Then \(B_{\mathbf b}^*\) is a contraction, and we have
  \( B_{\mathbf b}^*1_{A_r}=b. \)
  The adjoint \(S_l^{\mathbf b}=(R_l^{\mathbf b})^*:E_{\eta_l}\to G_r\) intertwines the left module actions, that is, for every \(a\in\widehat A\), we have
  \begin{equation}\label{eq:packet-covariance}
    S_l^{\mathbf b}\Theta_{\eta_l}(a_{r(\eta_l)})
    =R_{a_r}^*S_l^{\mathbf b}.
  \end{equation}
\end{lemma}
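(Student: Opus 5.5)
The plan is to verify the three assertions directly from the definitions \eqref{eq:packet-map}--\eqref{eq:packet-lift}, using only that each \(\Theta_\eta\) is a contractive unital homomorphism and that the bonding maps \(\pi_{r(\eta)}^r\) are contractive unital homomorphisms.

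\emph{Contractivity.} First bound each \(R_l^{\mathbf b}\). For \(\varphi\in A_r\) and \(u\in E_{\eta_l}\), \eqref{eq:packet-map} gives
\[
\abs{(R_l^{\mathbf b}\varphi)(u)}\leq\norm{\psi_l}\,\norm{\Theta_{\eta_l}(\pi^r_{r(\eta_l)}\varphi)}\,\norm u\leq\norm{\psi_l}\norm\varphi\norm u,
\]
so \(\norm{R_l^{\mathbf b}}\leq\norm{\psi_l}\). Entries with the same atom add in one \(\ell_1\)-coordinate. Applying the triangle inequality inside that coordinate and summing over coordinates gives
\[
\norm{B_{\mathbf b}^*\varphi}\leq\sum_l\abs{c_l}\norm{\psi_l}\norm\varphi\leq\norm\varphi
\]
by \eqref{eq:packet}. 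The empty packet is trivial.

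\emph{Unit value.} Since \(\pi^r_{r(\eta_l)}1_{A_r}=1_{A_{r(\eta_l)}}\) and \(\Theta_{\eta_l}\) is unital, we get \(R_l^{\mathbf b}1_{A_r}=\psi_l\circ I=\psi_l\). Hence \(B_{\mathbf b}^*1_{A_r}=\sum_lc_l\epsilon^*_{\eta_l}\psi_l=b\).

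\emph{Covariance.} Write \(r_l=r(\eta_l)\). Here \(R_l^{\mathbf b}:A_r\to E_{\eta_l}^*\), so its adjoint goes \(E_{\eta_l}^{**}=E_{\eta_l}\to A_r^*=G_r\), with \(\langle S_l^{\mathbf b}u,\varphi\rangle=\psi_l(\Theta_{\eta_l}(\pi^r_{r_l}\varphi)u)\). Both sides of \eqref{eq:packet-covariance} are tested against an arbitrary \(\varphi\in A_r\). For the right side, \(\langle R_{a_r}^*S_l^{\mathbf b}u,\varphi\rangle=\langle S_l^{\mathbf b}u,\varphi a_r\rangle\). Multiplicativity of \(\pi^r_{r_l}\) and compatibility \(\pi^r_{r_l}a_r=a_{r_l}\) turn this into
\[
\psi_l\bigl(\Theta_{\eta_l}(\pi^r_{r_l}\varphi)\Theta_{\eta_l}(a_{r_l})u\bigr).
\]
This equals \(\langle S_l^{\mathbf b}\Theta_{\eta_l}(a_{r_l})u,\varphi\rangle\), which is the left side.

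\emph{Main obstacle.} The only delicate point is bookkeeping: the right action \(R_{a_r}\) must produce exactly the product order \(\varphi a_r\). That order is what matches the composition \(\Theta(\varphi)\Theta(a)\) under the left module law. Care is also needed with the bidual identifications and with repeated atoms, but these are routine.
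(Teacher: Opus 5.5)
Your proposal is correct and follows the paper's proof essentially step for step. Both arguments bound each entry by \(\norm{R_l^{\mathbf b}\varphi}\leq\norm{\psi_l}\norm\varphi\) and sum against the raw packet norm, use unitality of \(\pi_{r(\eta_l)}^r\) and \(\Theta_{\eta_l}\) to get \(B_{\mathbf b}^*1_{A_r}=b\), and test both sides of \eqref{eq:packet-covariance} against \(\varphi\in A_r\) using \(\pi_{r(\eta_l)}^r(\varphi a_r)=(\pi_{r(\eta_l)}^r\varphi)a_{r(\eta_l)}\) together with multiplicativity of \(\Theta_{\eta_l}\).
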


\begin{proof}
  Fix \(\varphi\in A_r\).
  By contractivity of the module actions and the bonding maps, we have \(\norm{R_l^{\mathbf b}\varphi}\leq\norm{\psi_l}\norm\varphi\).
  Hence we obtain
  \[
    \begin{aligned}
      \norm{B_{\mathbf b}^*\varphi}_{\ell_1}
      \leq\sum_{l=1}^s|c_l|\norm{R_l^{\mathbf b}\varphi}\leq\left(\sum_{l=1}^s|c_l|\norm{\psi_l}\right)\norm\varphi
       \leq\norm\varphi.
    \end{aligned}
  \]
  By unitality, we have \(R_l^{\mathbf b}1_{A_r}=\psi_l\), and hence \(B_{\mathbf b}^*1_{A_r}=b\).

  Fix \(a\in\widehat A\) and \(u\in E_{\eta_l}\).
  Since \(\pi_{r(\eta_l)}^r(\varphi a_r)=(\pi_{r(\eta_l)}^r\varphi)a_{r(\eta_l)}\) and \(\Theta_{\eta_l}\) is a homomorphism, we obtain
  \[
    \begin{aligned}
      \left\langle\varphi,S_l^{\mathbf b}\Theta_{\eta_l}(a_{r(\eta_l)})u\right\rangle
      &=\left\langle\psi_l,
        \Theta_{\eta_l}(\pi_{r(\eta_l)}^r\varphi)
        \Theta_{\eta_l}(a_{r(\eta_l)})u\right\rangle\\
      &=\left\langle\psi_l,
        \Theta_{\eta_l}\bigl((\pi_{r(\eta_l)}^r\varphi)a_{r(\eta_l)}\bigr)u\right\rangle\\
      &=\langle\varphi a_r,S_l^{\mathbf b}u\rangle
       =\langle\varphi,R_{a_r}^*S_l^{\mathbf b}u\rangle.
    \end{aligned}
  \]
  Since \(\varphi\) is arbitrary, we obtain \eqref{eq:packet-covariance}.
\end{proof}

The identity \(B_{\mathbf b}^*1_{A_r}=b\) will be used in the root perturbation in Lemma~\ref{lem:root}.

\subsection{The finite oracle}

Choose ordered bases in \(A_r\) and the dual bases in \(A_r^*\).
Let \(C\) be the countable set of structure constants of the units, multiplication maps, and bonding maps.
Write \(\Q_\K=\Q\) over \(\mathbb R\) and \(\Q_\K=\Q(i)\) over \(\mathbb C\).
For a subfield \(\mathbb F\subseteq\K\), let \(A_r(\mathbb F)\) and \(A_r^*(\mathbb F)\) denote the vectors with coordinates in \(\mathbb F\).
Put
\begin{equation}\label{eq:oracle-field}
  \begin{aligned}
    \mathbb F_0&=\Q_\K(C\cup\overline C),\\
    \mathbb F_{j+1}
    &=\mathbb F_j\bigl(\norm x:
      x\in A_r(\mathbb F_j)\cup A_r^*(\mathbb F_j),\ r\in\N\bigr),\\
    \mathbb F_{\mathscr A}&=\bigcup_{j\geq0}\mathbb F_j.
  \end{aligned}
\end{equation}
Here \(\overline C\) consists of the complex conjugates of the members of \(C\), and equals \(C\) in the real case.

\begin{lemma}[Coded normalization]\label{lem:coded-normalization}
  The field \(\mathbb F_{\mathscr A}\) is countable and dense in \(\K\).
  If \(0\ne x\in A_r(\mathbb F_{\mathscr A})\) or \(0\ne x\in A_r^*(\mathbb F_{\mathscr A})\), then we have
  \[ \norm x,\ \norm x^{-1}\in\mathbb F_{\mathscr A}, \qquad x/\norm x\text{ has coordinates in }\mathbb F_{\mathscr A}. \]
  In each fibre, the coded vectors in the closed unit ball are norm dense in that ball.
\end{lemma}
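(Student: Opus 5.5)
The plan is to verify the three assertions of Lemma~\ref{lem:coded-normalization} in turn, using only the recursive definition \eqref{eq:oracle-field}. The one point that needs care is the normalization claim: we must make sure that norms of vectors coded over \(\mathbb F_{\mathscr A}\) land back in \(\mathbb F_{\mathscr A}\), and not merely in some later field.

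\emph{Countability and density.} Since \(\mathscr A\) is a countable system of finite-dimensional algebras, the set \(C\) of structure constants is countable, and hence so is \(\mathbb F_0\). Assume inductively that \(\mathbb F_j\) is countable. For each \(r\), the sets \(A_r(\mathbb F_j)\) and \(A_r^*(\mathbb F_j)\) are countable, being finite tuples over a countable field. So \(\mathbb F_{j+1}\) is generated over \(\mathbb F_j\) by countably many elements, and a field finitely or countably generated over a countable field is countable. A countable union of countable fields is countable, so \(\mathbb F_{\mathscr A}\) is countable. Density in \(\K\) is immediate, since \(\Q_\K\subseteq\mathbb F_0\) and \(\Q\), respectively \(\Q(i)\), is dense in \(\mathbb R\), respectively \(\mathbb C\).

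\emph{Normalization.} Let \(0\ne x\in A_r(\mathbb F_{\mathscr A})\). The vector \(x\) has finitely many coordinates, each lying in some \(\mathbb F_j\). Because the fields increase, all of them lie in a single \(\mathbb F_j\). By definition, \(\norm x\in\mathbb F_{j+1}\subseteq\mathbb F_{\mathscr A}\). Since \(x\ne0\), we have \(\norm x\ne0\), and so \(\norm x^{-1}\) also lies in the field \(\mathbb F_{j+1}\). Then \(x/\norm x\) has coordinates \(x_i\norm x^{-1}\in\mathbb F_{j+1}\). The same argument applies verbatim to \(A_r^*\). Note that the field is closed under these operations by construction, so no extra closure step is needed.

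\emph{Density of coded vectors in the unit ball.} Fix a fibre. This is \(A_r\) or \(A_r^*\), or one of the fibres \(G_r\) or \(H_r\), which are of these forms. Let \(y\) lie in its closed unit ball and let \(\varepsilon>0\).

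1. If \(\norm y<1\), approximate the coordinates of \(y\) by elements of \(\mathbb F_{\mathscr A}\). The norm is continuous with respect to coordinates because all norms on a finite-dimensional space are equivalent. So a sufficiently close approximant \(z\) satisfies \(\norm{z-y}<\varepsilon\) and \(\norm z\leq1\).

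2. If \(\norm y=1\), first replace \(y\) by \(ty\) with \(t\in(0,1)\) rational and close to \(1\), and then apply step 1.

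Alternatively, one can approximate \(y\ne0\) by any coded \(z\ne0\) and use \(z\min\{1,\norm z^{-1}\}\). By the normalization step, this vector is still coded.

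Either way, the coded vectors in the closed unit ball are norm dense in that ball.
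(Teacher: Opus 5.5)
Your proof is correct and follows the paper's argument: the same induction for countability, the same observation that the finitely many coordinates lie in a single \(\mathbb F_j\) (so that \(\norm x\), \(\norm x^{-1}\) and \(x/\norm x\) are already coded over \(\mathbb F_{j+1}\)), and density by coordinatewise approximation. Your alternative \(z\min\{1,\norm z^{-1}\}\) is exactly the paper's retraction \(v/\max\{1,\norm v\}\), which the paper pairs with the estimate \(\norm{w-x}\leq2\norm{v-x}\); your primary route, shrinking by \(t<1\) and then approximating inside the open ball, is an equally valid minor variant.
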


\begin{proof}
  A field generated by countably many scalars is countable.
  Since each fibre is finite dimensional, we have
  \[
    \begin{aligned}
      |\mathbb F_j|\leq\aleph_0
      \quad\Longrightarrow\quad
      \left|\bigcup_r\bigl(A_r(\mathbb F_j)\cup A_r^*(\mathbb F_j)\bigr)\right|
      \leq\aleph_0  \quad\Longrightarrow\quad |\mathbb F_{j+1}|\leq\aleph_0.
    \end{aligned}
  \]
  By induction, we obtain \(|\mathbb F_{\mathscr A}|=\aleph_0\), and density follows from \(\Q_\K\subseteq \mathbb F_0\).
  In the complex case, \(\mathbb F_0\) is closed under conjugation, and each later field is obtained by adjoining real scalars.
  Thus every \(\mathbb F_j\), and hence \(\mathbb F_{\mathscr A}\), is closed under conjugation.
  The fields are increasing, so the finitely many coordinates of \(x\) belong to one \(\mathbb F_j\).
  Hence we obtain
  \[ \norm x\in \mathbb F_{j+1},\qquad \norm x>0\ \Longrightarrow\ \norm x^{-1}\in \mathbb F_{j+1},\qquad x/\norm x\in \mathbb F_{j+1}^{\dim A_r}. \]
  To prove the density assertion, fix \(x\) in the unit ball and a coded vector \(v\) close to \(x\).
  Then \(w=v/\max\{1,\norm v\}\) is coded, \(\norm w\leq1\), and we have
  \[ \norm{w-x}\leq\norm{v-x}+(\norm v-1)_+ \leq2\norm{v-x}. \]
  Since coordinate convergence is norm convergence in a finite-dimensional fibre, we obtain the assertion from density of \(\mathbb F_{\mathscr A}\).
\end{proof}

Fix an enumeration \((\lambda_i)_{i\in\N}\) of \(\mathbb F_{\mathscr A}\), with \(\lambda_0=0\) and \(\lambda_1=1\).
A scalar is represented by its index, a vector by a finite list of indices and its fibre tag, and a matrix by its source and target tags and its entries.
Thus the syntax consists of finite words of natural numbers and has a fixed G\"odel coding~\cite{Godel1931}.
The syntax is independent of \(\mathscr A\), while the oracle supplies the interpretation of its scalar and fibre tags.

\begin{definition}[Inverse-limit detection oracle]\label{def:detection-system}
  Let \(\mathcal P_{\mathscr A}\) be the following family of finite statements, with all vectors and matrices well typed:
  \begin{enumerate}[label={\textup{(\roman*)}},leftmargin=27pt]
    \item dimensions, scalar-field tags, and coordinate types;
    \item the scalar and structure tables, including
    \[ \lambda_i+\lambda_j=\lambda_k,\quad \lambda_i\lambda_j=\lambda_k,\quad \overline{\lambda_i}=\lambda_k, \]
    and the coded entries of the units, multiplication, bonding maps, and module matrices;
    \item for a coded fibre vector \(x\), a coded finite matrix \(T\), and a coded real scalar \(q\), the comparisons
    \[ \norm x\mathrel{\triangleleft}q,\qquad \norm T\mathrel{\triangleleft}q, \qquad \triangleleft\in\{<,\leq,=,\geq,>\}; \]
    \item finite packet bounds and separation inequalities, including
    \[ \sum_{l=1}^s|c_l|\norm{\psi_l}\leq1, \qquad \operatorname{Re}f(z)>\varepsilon, \qquad f|_{\operatorname{ran}T}=0. \]
  \end{enumerate}
  The detection oracle is the truth set
  \begin{equation}\label{eq:oracle-truth-set}
    \mathcal O_{\mathscr A}
    =\{\ulcorner P\urcorner:P\in\mathcal P_{\mathscr A},\ P\text{ is true}\}.
  \end{equation}
\end{definition}

A query with code \(q\) returns the membership bit \(\mathbf1_{\mathcal O_{\mathscr A}}(q)\in\{0,1\}\).
Figure~\ref{fig:oracle-query} illustrates a query for \(P\in\mathcal P_{\mathscr A}\).
\begin{figure}[!htbp]
  \centering
\begin{tikzpicture}[x=1cm,y=1cm,>=stealth,
  every node/.style={font=\small,align=center}]
  \node (input) at (0.9,0) {\(\ulcorner P\urcorner\)};
  \node[above=8pt] at (input.north) {G\"odel code};
  \node[draw=blue!55!black,fill=blue!4,rounded corners=3pt,
    minimum width=2.2cm,minimum height=1.15cm] (oracle) at (4.2,0)
    {Oracle\\\(\mathcal O_{\mathscr A}\)};
  \node (output) at (8.5,0)
    {\(\mathbf1_{\mathcal O_{\mathscr A}}(\ulcorner P\urcorner)\)};
  \node[above=8pt] at (output.north) {Truth value};
  \node[below=8pt] at (output.south)
    {\(1\ (\mathsf{true})\;/\;0\ (\mathsf{false})\)};
  \draw[->,thick] (input.east) -- (oracle.west);
  \draw[->,thick] (oracle.east) -- (output.west);
  \node (data) at (4.2,-1.85)
    {Coded finite-dimensional data of \(\mathscr A\)\\and norm information};
  \draw[<->,blue!55!black] (data.north) -- node[right=4pt,text=black]
    {encode / decode} (oracle.south);
\end{tikzpicture}
  \caption{Oracle access for the fixed query language \(\mathcal P_{\mathscr A}\).}
  \label{fig:oracle-query}
\end{figure}

\begin{remark}\label{rem:oracle-arithmetic}
  To compute \(\lambda_i+\lambda_j\), we query whether \(\lambda_i+\lambda_j=\lambda_k\) for \(k=0,1,2,\ldots\) until the oracle answers \(1\).
  Since \(\lambda_i+\lambda_j\in\mathbb F_{\mathscr A}\), we find such a \(k\) after finitely many queries.
  We treat multiplication, conjugation, and inversion of nonzero scalars in the same way.
  Alternatively, we could encode scalars by expressions and form the codes of sums and products inside the machine.
\end{remark}

\begin{remark}[Oracle-recursive form]\label{rem:oracle-firewall}
  In Turing's terminology~\cite{Turing1939}, there is one oracle program \(\mathfrak M\), independent of the tower, which performs each transition in Construction~\ref{con:fair-recursion}.
  Write \(\mathcal D_n\) for the code of the finite state through rank \(n\), with \(\mathcal D_0\) the code of the empty state.
  Then we have
  \[ \mathfrak M^{\mathcal O_{\mathscr A}}(n,\mathcal D_{n-1}) =\mathcal D_n,\qquad \mathcal D_{n-1}\preccurlyeq\mathcal D_n, \]
  where \(\preccurlyeq\) means that the coded states extend one another without changing old records.
  Iterating this transition computes \(\mathcal D_n\) from \(n\) relative to \(\mathcal O_{\mathscr A}\).
  The finite transition and its termination are verified in Section~\ref{sec:formal-datum}.
  The oracle \(\mathcal O_{\mathscr A}\) encodes the finite-dimensional tower and may be noncomputable.
\end{remark}


\section{\texorpdfstring{Oracle-recursive BD construction}{Oracle-recursive BD construction}}\label{sec:formal-datum}

The coding of special histories in the Argyros--Haydon construction follows the method of Gowers and Maurey \cite{GowersMaurey1993,ArgyrosHaydon2011}.
The current realized outer stem determines the next inner level and the initial cutoff of the attached inner chain.
After the inner chain is complete, a compatible certificate permits one outer continuation.
We record these steps as separate finite requirements, and prove that every admissible continuation occurs cofinally.

Recall that \(\mathscr A=(A_r,\pi_r^s)\) is the detector tower, \(\pi_r^s:A_s\to A_r\) is its bonding map, and \(\widehat A=\ilim(A_r,\pi_r^s)\).
We apply the block construction of Section~\ref{sec:bd-extension} to the core fibres \(G_r=A_r^*\), with action \(R_b^*\), and the unit-probe fibres \(H_r=A_r\), with action \(L_b\), as in Section~\ref{sec:oracle}.
The finite recursion has four kinds of requirements: generic continuations, seeds, coded inner continuations, and outer continuations with a finite certificate.
Each transition reads only old records and finitely many answers from \(\mathcal O_{\mathscr A}\).
The resulting finite-extension property allows the operator argument to select suitable chains in the completed space.

\subsection{Finite states and parameters}

Fix the numerical parameters recursively by
\[ m_1=2^8,\qquad m_{j+1}=m_j^5,\qquad n_j=2^{j+8}L_jm_j^4, \]
where
\[
\begin{gathered} \mathsf b_1=1,\quad \mathsf b_j=6\max_{i<j}n_i\ (j\geq2),\quad D_j=\lceil2\log_2m_j\rceil,\\
S_j=1+\mathsf b_j+\cdots+\mathsf b_j^{D_j+2},\quad L_j=m_jS_j,\quad \Lambda_j=2^{j+12}S_jn_jm_j^4. \end{gathered}
\]
At step \(j\), first compute \(m_j,\mathsf b_j,D_j,S_j,L_j\), and then \(n_j,\Lambda_j\).
Thus all these integer sequences are computable, with \(\mathsf b_j\) serving as the branching bound used below.
Put
\begin{equation}\label{eq:constants}
  \begin{gathered}
    M=(1-2m_1^{-1})^{-1},\quad \kappa=2M,\quad
    C_{\rm tr}=2+2\kappa,\quad B=4\kappa+4,\\
    C_{\rm off}=8B\kappa,\quad
    K_{\rm dep}=32\kappa(C_{\rm off}+1),\quad C_{\rm ex}=2(3B+M).
  \end{gathered}
\end{equation}
These choices give
\begin{gather}
  \sum_{j=1}^\infty m_j^{-1}<1,\qquad n_j\geq m_j^2,                 \label{eq:numerical-one}\\
  \frac{S_j}{n_j}\leq2^{-j-8}m_j^{-5},\qquad
  \frac{S_j}{\Lambda_j}\leq2^{-j-12}m_j^{-4},                     \label{eq:numerical-two}\\
  m_j^{-1}\leq2^{-j-2},\qquad
  m_1^{-(D_j+2)}\leq m_j^{-2},\qquad
  m_{j+1}^{-1}\leq m_j^{-5},\qquad
  (n_jm_j)^{-1}\leq m_j^{-3}.                                    \label{eq:numerical-three}
\end{gather}
Indeed, we have \(m_j\geq2^{8\cdot5^{j-1}}\), and hence we obtain
\[ \sum_{j\geq1}m_j^{-1} \leq2^{-8}+\sum_{k\geq40}2^{-k}<1, \qquad m_j^{-1}\leq2^{-j-2}. \]
The remaining estimates follow from
\[ n_j\geq2^{j+8}S_jm_j^5,\qquad \frac{S_j}{\Lambda_j}=\frac{1}{2^{j+12}n_jm_j^4},\qquad m_1^{-(D_j+2)}\leq2^{-D_j}\leq m_j^{-2}. \]

Construction ranks belong to \(\N_+\), whereas detector indices belong to \(\N\).
At rank \(n\in\N_+\), there is a finite set \(\Delta_n\) of atoms.
Every atom \(\gamma\) carries a detector index \(r(\gamma)\leq n\), a module \((E_\gamma,\Theta_\gamma)\), and an  level \(\ell(\gamma)\).
Thus \(\rank\gamma=n\) is the construction rank, \(r(\gamma)\) selects the finite algebra, and \(\ell(\gamma)\) is the  level.
For a core record of level \(h\) below, we have \(\ell(\gamma)=h\) and its weight is \(m_h^{-1}\).
An atom records one fibre and its place in the construction. We use three atom tags on two fibre types. Figure~\ref{fig:atom-fibre-types} summarizes their information and uses.
\begin{enumerate}[label={\textup{(\roman*)}}]
  \item a core atom has \(E_\gamma=G_{r(\gamma)}=A_{r(\gamma)}^*\) and \(\Theta_\gamma(a)=R_a^*\).
  \item a unit probe has \(E_\gamma=H_{r(\gamma)}=A_{r(\gamma)}\) and \(\Theta_\gamma(a)=L_a\).
  \item a core probe has the core fibre and the core action, but has zero correction and is never used as a predecessor.
\end{enumerate}
\begin{figure}[!htbp]
  \centering
\begin{tikzpicture}[x=1cm,y=1cm,>=Stealth,
  every node/.style={font=\small},
  heading/.style={align=center,font=\small\bfseries},
  body/.style={align=left,text width=3.35cm,anchor=north}]
  \node[draw,rounded corners=2pt,fill=black!3,align=center,
    inner sep=7pt,minimum width=11.8cm] (record) at (6,1.25)
    {\textbf{Atom \(\gamma\in\Delta_n\): one fibre and its place in the construction}\\[4pt]
     \(n\): rank\qquad \(r=r(\gamma)\): detector index\qquad \(\ell(\gamma)\): level\\[4pt]
     Fibre \(E_\gamma\) in \(F_n\), with action \(\Theta_\gamma\) of \(A_r\)};

  \foreach \x in {2,6,10} {
    \draw[->] (record.south -| \x,0) -- (\x,.04);
  }
  \draw[rounded corners=2pt,fill=blue!4] (.12,0) rectangle (3.88,-4.80);
  \draw[rounded corners=2pt,fill=green!4!white] (4.12,0) rectangle (7.88,-4.80);
  \draw[rounded corners=2pt,fill=blue!4] (8.12,0) rectangle (11.88,-4.80);

  \node[heading] at (2,-.33) {Core atom};
  \node[heading] at (6,-.33) {Unit probe};
  \node[heading] at (10,-.33) {Core probe};

  \node[align=center,anchor=north] at (2,-.70)
    {\(E_\gamma=A_r^*\), \(\Theta_\gamma(b)=R_b^*\)};
  \node[align=center,anchor=north] at (6,-.70)
    {\(E_\gamma=A_r\), \(\Theta_\gamma(b)=L_b\)};
  \node[align=center,anchor=north] at (10,-.70)
    {\(E_\gamma=A_r^*\), \(\Theta_\gamma(b)=R_b^*\)};

  \node[body] at (2,-1.28)
    {Positive level; age, predecessor (if any), cutoff, packet, and history.};
  \node[body] at (6,-1.28)
    {Level \(0\); zero correction; never used as a predecessor.};
  \node[body] at (10,-1.28)
    {Level \(0\); zero correction; never used as a predecessor.};

  \foreach \x in {2,6,10} {
    \draw[black!25] ({\x-1.60},-2.58) -- ({\x+1.60},-2.58);
    \node[heading] at (\x,-2.88) {Use};
  }
  \node[body] at (2,-3.18)
    {Specifies the dependence on earlier coordinates through its correction.};
  \node[body] at (6,-3.18)
    {Reads the multiplier coefficient through the unit:\\[5pt]
     \mbox{\(a_r1_{A_r}=a_r\).}};
  \node[body] at (10,-3.18)
    {Reproduces multiplier evaluations at later ranks for the compactness proof.};
\end{tikzpicture}
  \caption{Information carried by an atom and the use of each tag. Here \(r=r(\gamma)\), \(b\in A_r\), and \(a=(a_r)_r\in\widehat A\).}
  \label{fig:atom-fibre-types}
\end{figure}
The core atoms are the nonprobe atoms.
Both types of probes may occur in represented packets, but neither is used as a predecessor.
Their evaluation and norm estimates are given in Lemma~\ref{lem:formal-probe-isometry} and Proposition~\ref{prop:formal-shielding}.

Put
\[ F_n=\left(\bigoplus_{\gamma\in\Delta_n}E_\gamma\right)_{\ell_\infty}, \qquad \mathcal E_*=\left(\bigoplus_{n\geq1}F_n^*\right)_{\ell_1}. \]
For an atom \(\eta\), let \(\epsilon_\eta^*:E_\eta^*\to\mathcal E_*\) be the ambient predual injection.
A \emph{represented packet} is an ordered finite word
\begin{equation}\label{eq:represented-packet}
  \mathbf b=((c_l,\eta_l,\psi_l))_{l=1}^s,
  \qquad
  b=[\mathbf b]
  =\sum_{l=1}^sc_l\epsilon_{\eta_l}^*(\psi_l),
  \qquad
  \norm{\mathbf b}_{\rm raw}
  =\sum_{l=1}^s|c_l|\norm{\psi_l}\leq1.
\end{equation}
All coefficients, atoms, and fibre vectors in this word are part of the record.
In particular, zero entries and their positions are retained, so two packets can have the same value without being the same represented packet.
Put
\[ \supp_{\rm raw}\mathbf b =\{\rank\eta_l:c_l\psi_l\ne0\}. \]
If \(r\geq r(\eta_l)\) for every entry, the lift of \(\mathbf b\) to detector index \(r\) is the map \(B_{\mathbf b}^*:A_r\to\mathcal E_*\) defined by \eqref{eq:packet-map}--\eqref{eq:packet-lift}.
For the empty word, the value and the lift are zero, and the detector condition is vacuous.
Lemma~\ref{lem:packet} gives
\begin{equation}\label{eq:represented-packet-bounds}
  \norm{B_{\mathbf b}^*}\leq1,
  \qquad B_{\mathbf b}^*1_{A_r}=b.
\end{equation}
A represented packet is \emph{oracle-coded} when its coefficients and the coordinates of all its fibre vectors belong to \(\mathbb F_{\mathscr A}\).
Only oracle-coded packets are admitted to atom records and finite requirements.
An interval restriction is a derived analysis of the ordered entries rather than a new represented packet.
The restriction formula and the records retained by this operation are specified in Lemma~\ref{lem:formal-restriction}.

The coefficient field and oracle are those of Definition~\ref{def:detection-system}.
Every admissibility test uses finitely many coded finite-dimensional statements from that definition.
We use the finite-extension method relative to this oracle \cite[Section~V.2]{Odifreddi1989}.
In its Baire-category formulation, finite conditions determine basic open sets of infinite extensions, and eventual satisfaction of a requirement is expressed by a dense open set above a condition that activates it.
Simultaneous satisfaction of the countably many requirements then corresponds to a comeager set of branches; see \cite[Section~V.3]{Odifreddi1989}.
A finite condition is the finite datum below a rank, ordered by end extension.
Lemma~\ref{lem:formal-legality} proves persistence and least-code fairness, and Proposition~\ref{prop:finite-extension} gives the resulting cofinal realizations.
Proposition~\ref{prop:bd-legality} separately verifies that the prescribed corrections satisfy the block BD estimate.

\subsection{Stage recursion and finite requirements}

\subsubsection{Atom records and triangular coordinates}

The rank, weight, age, and predecessor are the usual BD--AH chain data \cite{ArgyrosHaydon2011,Tarbard2013,Motakis2024}.
The detector index specifies the finite algebra acting on the fibre.
Fix an injective coding \((n,s)\mapsto\operatorname{id}(n,s)\) by \(\N\), where \(s\) is the position of an atom among those of rank \(n\).
These positions are assigned in the fixed order
\[ \mathsf{core}<\mathsf{unit\mbox{-}probe}<\mathsf{core\mbox{-}probe}, \]
followed, for core atoms, by the requirement code and copy index.
Within each probe tag, order the canonical probes by their detector indices.
Identifiers are never reused.
A nonprobe atom \(\gamma\in\Delta_n\) has the core record
\begin{equation}\label{eq:atom-record}
  \mathfrak r_0(\gamma)
  =(n,\mathsf t,h,a,\chi,\omega,\xi,k,\mathbf b,
  r(\gamma),\vartheta,\mathfrak p,\mathfrak c,o),
\end{equation}
whose parameters have the following meaning.
\begin{center}
  {\small
  \renewcommand{\arraystretch}{1.08}
  \begin{tabular}{@{}c p{0.75\textwidth}@{}}
    \toprule
    Parameter & Meaning \\
    \midrule
    \(n\) & the birth rank of \(\gamma\) \\
    \(\mathsf t\) & \(\mathsf{generic}\),
    \(\mathsf{inner}\), or
    \(\mathsf{outer}\) \\
    \(h\) & the  level and root weight \(m_h^{-1}\) \\
    \(a\) & the age, with \(1\leq a\leq n_h\) \\
    \(\chi\) & the initial cutoff \\
    \(\omega\) & the root label, fixed along a predecessor chain \\
    \(\xi\) & the identifier of the predecessor, or \(\varnothing\) at age one \\
    \(k\) & the local cutoff and left endpoint of the cell \((k,n)\) \\
    \(\mathbf b\) & the represented packet in the current correction \\
    \(r(\gamma)\) & the detector index, with \(E_\gamma=G_{r(\gamma)}\) \\
    \(\vartheta,\mathfrak p,\mathfrak c\) & the seed, attached stem, and
    certificate, empty in the generic case \\
    \(o\) & the copy index \\
    \bottomrule
  \end{tabular}}
\end{center}
We write \(\chi(\gamma)\), \(\omega(\gamma)\), and \(\operatorname{age}(\gamma)\) for the corresponding entries of the record.
We write \(\xi\prec\gamma\) when \(\xi\) occurs earlier on the predecessor chain of \(\gamma\), and use \(^{\frown}\) for concatenation of finite words.
The entries satisfy the following compatibility conditions.
\begin{enumerate}[label={\textup{(A\arabic*)}},leftmargin=*]
  \item Every nonprobe atom is a core atom.
  Thus we have \(E_\gamma=G_{r(\gamma)}=A_{r(\gamma)}^*\) and \(\Theta_\gamma(b)=R_b^*\).
  \item If \(a=1\), then we have \(\xi=\varnothing\) and \(k=\chi\).
  If \(a>1\), the identifier \(\xi\) belongs to a unique old core atom, still denoted by \(\xi\), and we have
  \[ \rank\xi=k<n,\qquad \ell(\xi)=h,\qquad \operatorname{age}(\xi)=a-1,\qquad \omega(\xi)=\omega(\gamma),\qquad r(\xi)\leq r(\gamma). \]
  \item The packet uses only old atoms and is supported in the open cell:
  \begin{equation}\label{eq:packet-cell}
    \supp_{\rm raw}\mathbf b\subseteq(k,n),
    \qquad
    r(\eta)\leq r(\gamma)
    \quad(\eta\text{ occurring in }\mathbf b).
  \end{equation}
  \item The root label is
  \[
  \begin{cases} (\mathsf{generic},h,\chi,o_0), &\mathsf t=\mathsf{generic},\\
  (\mathsf{inner},h,\chi,\vartheta,\mathfrak p), &\mathsf t=\mathsf{inner},\\
  (\mathsf{outer},h,\chi,\vartheta), &\mathsf t=\mathsf{outer}, \end{cases}
  \]
  where \(o_0\) is the copy index of the age-one generic atom.
  \item The structural history is the finite word
  \begin{equation}\label{eq:structural-history}
    \mathsf{hist}(\gamma)=
    \begin{cases}
      ((\operatorname{id}\gamma,\mathfrak r_0(\gamma))),&a=1,\\
      \mathsf{hist}(\xi)^\frown
      ((\operatorname{id}\gamma,\mathfrak r_0(\gamma))),&a>1.
    \end{cases}
  \end{equation}
  It is used both for predecessor recursion and common-stem comparison.
\end{enumerate}
The cell of \(\gamma\) is \((k,n)\).
Along a predecessor chain the cells are successive and disjoint.
All core records and histories are fixed at birth and are never subsequently modified.

A probe record has the form
\begin{equation}\label{eq:probe-record}
  (n,\mathsf{probe},0,0,n,\varnothing,\varnothing,n,
  \varnothing,r,\tau,o),
  \qquad \tau\in\{\mathsf u,\mathsf c\}.
\end{equation}
For canonical probes, set \(o=0\).
For \(\tau=\mathsf u\) its fibre is \(H_r=A_r\) with action \(L_b\), whereas for \(\tau=\mathsf c\) its fibre is \(G_r=A_r^*\) with action \(R_b^*\).
Every probe has  level zero and zero correction.
It may occur in a later represented packet, but it is never admitted as a predecessor.

For every atom and \(\varphi\in E_\gamma^*\), let \(c_\gamma^*(\varphi)\) be the correction functional specified below and put
\begin{equation}\label{eq:triangular-coordinate}
  d_{\gamma,\varphi}^*
  =\epsilon_\gamma^*(\varphi)-c_\gamma^*(\varphi).
\end{equation}
We call \(d_{\gamma,\varphi}^*\) the Bourgain--Delbaen (BD) biorthogonal part of the ambient coordinate functional.
Once the ranks below \(n\) have been constructed, the unit lower-triangular change of coordinates identifies their ambient predual sum with the span of the corresponding \(d^*\)-blocks.
We denote by \(P_k^*\) the algebraic projection onto the blocks of rank at most \(k\), and put \(P_{(k,n)}^*=P_{n-1}^*-P_k^*\).
Thus every symbol in a rank-\(n\) correction is already defined before that correction is formed.

At a completed finite stage, invert this finite unit-triangular change of coordinates and take its finite-dimensional adjoint.
We use \(d_\gamma\), \(D_\gamma\), \(U_\gamma\), and \(e_{\gamma,\varphi}^*\) for the resulting block injection, block coefficient, ambient coordinate, and ambient coordinate evaluation.
Equations~\eqref{eq:finite-extension}--\eqref{eq:ambient-coordinate} below prove that these finite objects are compatible under end extension.
More explicitly, on the first \(N\) ambient blocks write the triangular change as \(I-C_N\), where \(C_N\) is strictly block triangular.
Then we have
\[ C_N^N=0,\qquad (I-C_N)^{-1}=\sum_{j=0}^{N-1}C_N^j. \]
Starting with the zero probe corrections, induction shows that every entry of \(C_N\), \((I-C_N)^{-1}\), and their finite adjoints belongs to \(\mathbb F_{\mathscr A}\).
At the inductive step, the only new entries come from the old projections, the coded packet maps, the bonding maps, and \(m_h^{-1}\), through finite sums and products.
Thus all finite coordinate maps used in a certificate have codes obtained by querying the scalar tables in Definition~\ref{def:detection-system}.

Every nonprobe atom \(\gamma\in\Delta_n\) has core fibre \(G_{r(\gamma)}\), so its dual fibre is \(A_{r(\gamma)}\).
If it has age one (type zero) and level \(h\), its correction is
\begin{equation}\label{eq:type-zero}
  c_\gamma^*\varphi
  =\frac{1}{m_h}P_{(k,n)}^*B_{\mathbf b}^*\varphi
  \qquad(\varphi\in A_{r(\gamma)}),
\end{equation}
where \(\mathbf b\) is an old represented packet and \(r(\gamma)\geq r(\eta)\) for every atom \(\eta\) in that packet.
If \(\gamma\) has age greater than one (type one), with old core predecessor \(\xi\), put
\begin{equation}\label{eq:type-one}
  c_\gamma^*\varphi
  =\epsilon_\xi^*(\pi_{r(\xi)}^{r(\gamma)}\varphi)
  +\frac{1}{m_h}P_{(k,n)}^*B_{\mathbf b}^*\varphi.
\end{equation}
All raw packet supports lie in the open cell \((k,n)\).
A probe has zero correction.
Lemma~\ref{lem:packet} makes the unprojected packet map \(B_{\mathbf b}^*\) contractive, and the predecessor map in \eqref{eq:type-one} is contractive.
These observations will be used in Proposition~\ref{prop:bd-legality}.

Fix the recursive partition of the positive levels
\[ \mathcal G=3\N+1,\qquad \mathcal C^{\rm out}=3\N+2,\qquad \mathcal C^{\rm in}=3\N+3, \]
into generic, outer, and coded-inner levels.
Generic levels admit the packet choices in \textup{(R1)}.
An inner level is determined by the current stem through \(\sigma\), and an outer continuation must carry the strong packet and certificate in \textup{(R4)}.
This is the history restriction used in the dependent sequence estimate.
Let \(\mathscr C_{\rm fin}\) be the set of finite well-typed words over the oracle language.
A \emph{formal stem} \(\mathfrak p\) is a word in \(\mathscr C_{\rm fin}\) having the entries displayed below, and its length \(|\mathfrak p|\) is the number of its outer entries.
Before the BD recursion begins, fix an injective map
\[ \sigma:\{(q,\mathfrak p):q\in\mathcal C^{\rm out},\ \mathfrak p\in\mathscr C_{\rm fin}\text{ a formal stem}\} \longrightarrow\mathcal C^{\rm in}. \]
Enumerate the domain by G\"odel code, using the oracle to decide finite typing.
Assign to each word the least unused inner level above every rank, level, and cutoff written in \((q,\mathfrak p)\) which also satisfies
\begin{equation}\label{eq:code-growth}
  m_{\sigma(q,\mathfrak p)}
  \geq2^{q+|\mathfrak p|+10}L_qn_qm_q^4
  \geq\Lambda_q.
\end{equation}
The choice is possible since \(\mathcal C^{\rm in}\) is infinite and \(m_j\to\infty\).
Also, we have
\[ \frac{2^{q+|\mathfrak p|+10}L_qn_qm_q^4}{\Lambda_q} =2^{|\mathfrak p|-2}m_q\geq2^{-2}m_1>1. \]
For any fixed input, only finitely many smaller G\"odel codes are inspected, and only finitely many inner levels have already been assigned.
Thus the search terminates and computes \(\sigma\) relative to \(\mathcal O_{\mathscr A}\).

\begin{definition}[Seeds, realized stems, and certificates]
  \label{def:formal-critical-data}
  A seed is a record
  \[ \vartheta=(q,\chi,o), \qquad q\in\mathcal C^{\rm out}, \]
  born at rank \(\chi\).
  A realized outer stem over \(\vartheta\) is
  \begin{equation}\label{eq:realized-stem}
    \mathfrak p=(\vartheta;
    (\operatorname{id}\zeta_1,\mathfrak r_0(\zeta_1),\mathfrak c_1),
    \ldots,
    (\operatorname{id}\zeta_a,\mathfrak r_0(\zeta_a),\mathfrak c_a)),
    \qquad 0\leq a\leq n_q,
  \end{equation}
  where \(\zeta_1\prec\cdots\prec\zeta_a\) is an outer core chain of level \(q\).
  Put
  \[
  \operatorname{cut}(\mathfrak p)= \begin{cases} \chi,&a=0,\\
  \rank\zeta_a,&a>0. \end{cases}
  \]
  Recursively, if \(\mathfrak p_{s-1}\) is the stem before the \(s\)-th triple, then we have \(p_s=\sigma(q,\mathfrak p_{s-1})\), and \(\eta_s\) is the terminal atom of an attached inner core chain of level \(p_s\) and age \(n_{p_s}\).
  The represented \emph{strong packet} on the outer edge is
  \begin{equation}\label{eq:strong-packet}
    \mathbf s(\eta_s)=((1,\eta_s,1_{A_{r(\eta_s)}})),
    \qquad [\mathbf s(\eta_s)]
    =\epsilon_{\eta_s}^*(1_{A_{r(\eta_s)}}).
  \end{equation}
  If \(r\geq r(\eta_s)\), then, because \(\eta_s\) is a core atom, we obtain
  \begin{equation}\label{eq:strong-packet-scalarization}
    B_{\mathbf s(\eta_s)}^*\varphi
    =\epsilon_{\eta_s}^*(\pi_{r(\eta_s)}^r\varphi)
    \qquad(\varphi\in A_r).
  \end{equation}

  Let \(\eta\) be such an inner terminal and let \(f_\eta=e_{\eta,1_{A_{r(\eta)}}}^*\).
  If a finite-dimensional subspace \(Y\) is given by a stored finite spanning matrix, define
  \[ \supp_{\rm FDD}Y =\{\rank\gamma:D_\gamma v\ne0 \text{ for some }v\in Y\}. \]
  Equivalently, this is the union of the coefficient supports of the stored columns.
  A certificate compatible with \((\mathfrak p,\eta)\) is a finite coded record
  \begin{equation}\label{eq:finite-certificate}
    \mathfrak c=(y,z,I,R,\mathscr M(y),\varepsilon)
  \end{equation}
  satisfying the following conditions.
  \begin{enumerate}[label={\textup{(C\arabic*)}}]
    \item The vectors \(y,z\), the interval \(I\), the integer \(R\), and the finite coefficient matrix defining \(\mathscr M(y)\) belong to the old coded finite-stage structure, and we have
    \[ \supp_{\rm FDD}y\cup\supp_{\rm FDD}z \cup\supp_{\rm FDD}\mathscr M(y) \subseteq I\subseteq(\operatorname{cut}(\mathfrak p),\infty), \qquad \rank\eta\in I. \]
    \item The integer \(R\) dominates \(r(\eta)\) and the detector indices of every atom \(\gamma\) for which \(D_\gamma y\ne 0\), and \(\mathscr M(y)\) is the image of \(A_R\) under the finite coefficient map
    \begin{equation}\label{eq:finite-orbit-space}
      b\longmapsto
      \sum_{D_\gamma y\ne 0}
      d_\gamma\bigl(
      \Theta_\gamma(\pi_{r(\gamma)}^Rb)D_\gamma y\bigr).
    \end{equation}
    The subspace is unchanged when \(R\) is increased, as verified below.
    \item \(U_\eta y=0\), equivalently \(f_\eta|_{\mathscr M(y)}=0\), and \(\operatorname{Re}f_\eta(z)>\varepsilon\) for some \(\varepsilon\in\Q_+\).
  \end{enumerate}
  To verify these assertions, write \(T_{R,y}:A_R\to\mathscr M(y)\) for the map in \eqref{eq:finite-orbit-space}.
  Compatibility and surjectivity give, for \(R'\geq R\),
  \[ T_{R',y}=T_{R,y}\pi_R^{R'},\qquad \operatorname{ran}T_{R',y}=\operatorname{ran}T_{R,y}. \]
  On each nonzero coefficient of \(y\), the definition gives
  \[ D_\gamma T_{R,y}(b) =\Theta_\gamma(\pi_{r(\gamma)}^Rb)D_\gamma y. \]
  Take finite adjoints in \eqref{eq:type-zero}--\eqref{eq:type-one}.
  Every ancestor used to compute \(U_\eta\) has detector index at most \(r(\eta)\leq R\).
  The predecessor maps intertwine by multiplicativity of the bonding maps, and the packet maps intertwine by \eqref{eq:packet-covariance}.
  For an FDD interval \(J\), deleting the coefficient rows outside \(J\) gives
  \[
    \begin{aligned}
      D_\gamma P_JT_{R,y}(b)
      =\mathbf1_{\{\rank\gamma\in J\}}
        \Theta_\gamma(\pi_{r(\gamma)}^Rb)D_\gamma y=D_\gamma T_{R,P_Jy}(b)
    \end{aligned}
  \]
  for each nonzero coefficient of \(y\), with both sides zero at the other atoms.
  Here the same formula \eqref{eq:finite-orbit-space} defines \(T_{R,P_Jy}\).
  Thus \(P_JT_{R,y}(b)=T_{R,P_Jy}(b)\), and the projected packet terms also intertwine.
  Starting with the zero-correction atoms and inducting on these old ranks, we obtain
  \[
    U_\eta T_{R,y}(b)=R_{\pi_{r(\eta)}^Rb}^*U_\eta y.
  \]
  Pairing this identity with \(1_{A_{r(\eta)}}\), we obtain
  \[
    f_\eta(T_{R,y}(b))
      =\left\langle1_{A_{r(\eta)}},
        R_{\pi_{r(\eta)}^Rb}^*U_\eta y\right\rangle
      =\langle\pi_{r(\eta)}^Rb,U_\eta y\rangle.
  \]
  Since \(\pi_{r(\eta)}^R\) is a finite-dimensional metric quotient, it maps \(B_{A_R}\) onto \(B_{A_{r(\eta)}}\).
  Indeed, the minimum norm in each fibre is attained in finite dimensions and equals the quotient norm.
  Hence we obtain
  \begin{equation}\label{eq:finite-certificate-annihilator}
    \sup_{b\in B_{A_R}}|f_\eta(T_{R,y}(b))|
    =\sup_{c\in B_{A_{r(\eta)}}}|\langle c,U_\eta y\rangle|
    =\norm{U_\eta y}.
  \end{equation}
  This proves the equivalence in \textup{(C3)} entirely on the completed finite stage.
  If \(v_1,\ldots,v_d\) are the stored columns spanning \(\mathscr M(y)\), its support and annihilation tests are precisely
  \[
    \begin{aligned}
      \supp_{\rm FDD}\mathscr M(y)
      =\bigcup_{j=1}^d\supp_{\rm FDD}v_j,\qquad
      f_\eta|_{\mathscr M(y)}=0
      \quad\Longleftrightarrow\quad
      f_\eta(v_j)=0\quad(1\leq j\leq d).
    \end{aligned}
  \]
  These are finite algebraic, support, and norm statements, and hence are well-defined oracle queries.
  The later operator argument selects certificates from this coded family.
\end{definition}

The finite requirement language consists of the following four types.
Every requirement also contains a lower rank bound \(N\) and a copy index \(o\).
\begin{enumerate}[label={\textup{(R\arabic*)}},leftmargin=*]
  \item \emph{Generic requirement.} Choose \(h\in\mathcal G\), an initial cutoff \(\chi\), and either an empty generic chain or an old generic core chain of level \(h\), initial cutoff \(\chi\), and age \(a<n_h\).
  In the empty case set \(a=0\), \(\xi=\varnothing\), and \(k=\chi\), and otherwise let \(\xi\) be the last atom and set \(k=\rank\xi\).
  Choose an old oracle-coded represented packet \(\mathbf b\) supported after \(k\), and a detector index at least those of \(\xi\) and all packet atoms.
  Set \(\omega=(\mathsf{generic},h,\chi,o)\) when \(a=0\), and \(\omega=\omega(\xi)\) when \(a>0\).
  The output is the age-one root or the age-\((a+1)\) core continuation.
  Its entries other than the birth rank are
  \[ (\mathsf{generic},h,a+1,\chi,\omega,\xi,k,\mathbf b,r, \varnothing,\varnothing,\varnothing,o). \]
  \item \emph{Seed requirement.} Choose \(q\in\mathcal C^{\rm out}\).
  At a rank \(n>N\), register the new seed \(\vartheta=(q,n,o)\).
  The seed has no root-label entry.
  This adds no nonprobe atom, and the canonical probes at rank \(n\) are still added.
  \item \emph{Inner requirement.} Choose a seed \(\vartheta\), a realized stem \(\mathfrak p\) over it, put \(p=\sigma(q,\mathfrak p)\), and give the attached inner chain initial cutoff \(\chi_{\rm in}=\operatorname{cut}(\mathfrak p)\).
  Choose either the empty inner chain attached to \(\mathfrak p\), in which case \(a=0\), \(\xi=\varnothing\), and \(k=\chi_{\rm in}\), or its old last atom \(\xi\) of age \(a<n_p\), in which case \(k=\rank\xi\).
  Choose an old oracle-coded packet \(\mathbf b\) supported after \(k\) and an output detector index \(r\) above the detector indices already named.
  Set \(\omega=(\mathsf{inner},p,\chi_{\rm in},\vartheta,\mathfrak p)\).
  The output has entries
  \[ (\mathsf{inner},p,a+1,\chi_{\rm in},\omega,\xi,k,\mathbf b,r,\vartheta,\mathfrak p,\varnothing,o). \]
  \item \emph{Outer requirement.} Choose a seed \(\vartheta=(q,\chi,o_0)\), a realized stem \(\mathfrak p\) of length \(a<n_q\), the completed inner chain of level \(p=\sigma(q,\mathfrak p)\), its terminal \(\eta\), and a compatible certificate \(\mathfrak c\).
  Put \(k=\operatorname{cut}(\mathfrak p)\), use the strong packet \(\mathbf s(\eta)\), and choose an output detector index \(r\geq r(\eta)\).
  The output is the age-\(a+1\) outer core atom, whose predecessor is the last atom of \(\mathfrak p\) when \(a>0\), and is empty when \(a=0\).
  Set \(\omega=(\mathsf{outer},q,\chi,\vartheta)\).
  Its entries are
  \[ (\mathsf{outer},q,a+1,\chi,\omega,\xi,k,\mathbf s(\eta),r,\vartheta,\mathfrak p,\mathfrak c,o). \]
  The new realized stem extends \(\mathfrak p\) by the output identifier, its core record, and \(\mathfrak c\).
\end{enumerate}
The requirements use only coded vectors.
The operator argument obtains coded approximations through Lemma~\ref{lem:polar}.

The seed requirement \textup{(R2)} is admissible at rank \(n\) precisely when \(n>N\).
It has no BD weight and requires no inequality between \(n\) and its seed level \(q\).
A nonseed requirement is \emph{admissible at rank \(n\)} when all named atoms, packets, seeds, stems, terminals, and certificates are old, every named packet is oracle-coded, \(n>N\), every required detector index is at most \(n\), the relevant age bound holds, the local cutoff is smaller than \(n\), and the raw packet support is contained in \((k,n)\).
For \textup{(R1)}, \textup{(R3)}, and \textup{(R4)} we also require \(n\geq h\), where \(h\) is the output level, and choose \(r(\gamma)\leq n\) above every named detector index.
In \textup{(R1)}, \textup{(R3)}, and \textup{(R4)}, a continuation with \(a>0\) must satisfy \(\omega=\omega(\xi)\).
For \textup{(R4)}, the whole certificate interval and the inner terminal lie in \((k,n)\).
These are finite tests.
In particular, every atom occurring in a correction has rank strictly below the output rank.

\begin{remark}[Order of the seed, inner, and outer requirements]
  \label{rem:seed-inner-outer}
  Let \textup{(R2)} register \(\vartheta=(q,\chi,o)\), and put \(\mathfrak p_0=(\vartheta;)\).
  For \(1\leq s\leq n_q\), suppose that the realized outer stem \(\mathfrak p_{s-1}\) has length \(s-1\), and set
  \[ k_s=\operatorname{cut}(\mathfrak p_{s-1}),\qquad p_s=\sigma(q,\mathfrak p_{s-1}). \]
  The seed fixes the outer level \(q\).
  The inner level \(p_s\) is fixed by the whole current stem and remains unchanged while \textup{(R3)} builds its attached inner chain.
  Writing \(\xi_{s,1}\prec\cdots\prec\xi_{s,n_{p_s}}=\eta_s\) for that chain, an ensuing \textup{(R4)} realization \(\zeta_s\) satisfies
  \[ k_s<\rank\xi_{s,1}<\cdots<\rank\eta_s<\rank\zeta_s, \qquad \ell(\eta_s)=p_s,\quad \ell(\zeta_s)=q. \]
  The last inner atom \(\eta_s\) supplies the strong packet \(\mathbf s(\eta_s)\), while the outer predecessor is \(\zeta_{s-1}\) when \(s>1\), and is empty when \(s=1\).
  Given a certificate \(\mathfrak c_s\) satisfying \textup{(C1)}--\textup{(C3)}, \textup{(R4)} records
  \[ \mathfrak p_s=\mathfrak p_{s-1}^{\frown} ((\operatorname{id}\zeta_s,\mathfrak r_0(\zeta_s),\mathfrak c_s)), \qquad p_{s+1}=\sigma(q,\mathfrak p_s)\quad(s<n_q). \]
  In the operator argument, Lemma~\ref{lem:root} gives the vanishing condition in \textup{(C3)} by
  \[ y_s=y_s^0-d_{\eta_s}(U_{\eta_s}y_s^0),\qquad U_{\eta_s}y_s=0, \]
  and Lemma~\ref{lem:exactification}, under its stated hypotheses, supplies the remaining certificate data.
  Thus each such outer chain begins with one use of \textup{(R2)}, followed, for each \(s\), by \(n_{p_s}\) uses of \textup{(R3)} and one compatible use of \textup{(R4)}.
  Other requirements may be met between any two of these steps.
\end{remark}

Figure~\ref{fig:seed-inner-outer} records these dependencies.
The dashed arrow indicates the outer predecessor \(\zeta_{s-1}\) when \(s>1\).
\begin{figure}[!htbp]
  \centering
\begin{tikzpicture}[x=1cm,y=1cm,>=stealth,
  every node/.style={font=\small,align=center},
  record/.style={draw,rounded corners=2pt,minimum height=.80cm,
  inner sep=4pt}]
  \node[record,text width=2.30cm] (seed) at (1.75,3.20)
  {Seed \(\vartheta\) \textup{(R2)}};
  \node[record,text width=2.80cm] (stem) at (6.25,3.20)
  {Stem \(\mathfrak p_{s-1}\)};
  \draw[->] (seed.east) -- (stem.west);

  \node[record,text width=3.00cm] (code) at (1.75,1.65)
  {\(p_s=\sigma(q,\mathfrak p_{s-1})\)\\[2pt]
  \(k_s=\operatorname{cut}(\mathfrak p_{s-1})\)};
  \node[record,text width=3.00cm] (inner) at (5.55,1.65)
  {\(\xi_{s,1}\prec\cdots\prec\eta_s\)\\[2pt]
  Inner \textup{(R3)}};
  \node[record,text width=2.40cm] (certificate) at (9.15,1.65)
  {Certificate \(\mathfrak c_s\)\\[2pt]
  \textup{(C1)--(C3)}};
  \draw[->] (stem.south) -- (6.25,2.50) -- (1.75,2.50)
  -- (code.north);
  \draw[->] (code.east) -- (inner.west);
  \draw[->] (inner.east) -- (certificate.west);

  \node[record,text width=2.40cm] (outer) at (9.15,0)
  {Outer \(\zeta_s\) \textup{(R4)}};
  \node[record,text width=2.80cm] (extended) at (4.85,0)
  {Stem \(\mathfrak p_s\)};
  \draw[->] (certificate.south) -- (outer.north);
  \draw[->] (outer.west) -- (extended.east);
  \draw[->,dashed] (stem.east) -- (11.15,3.20)
  -- (11.15,0) -- (outer.east);
  \node[rotate=90,font=\scriptsize] at (11.33,1.60)
  {\(\zeta_{s-1},\ s>1\)};
\end{tikzpicture}
  \caption{Seed, inner, and outer requirements.}
  \label{fig:seed-inner-outer}
\end{figure}

\begin{construction}[Fair oracle recursion]\label{con:fair-recursion}
  Fix a G\"odel enumeration \((\mathcal R_e)_{e\in\N}\) of the formal candidate requirements of types \textup{(R1)}--\textup{(R4)}.
  A candidate referring to an atom or stem not yet present is inadmissible at that stage.
  Every choice of finite data in \textup{(R1)}--\textup{(R4)}, together with a lower bound \(N\), occurs with infinitely many distinct copy indices, and no copy index is reused.
  An unmet requirement is called active at a rank when it is admissible there.
  Let \(\mathcal D_0\) code the empty state.
  Let \(\mathcal D_{n-1}\) be the code of the completed state below rank \(n\): the atom records, seeds, finite coordinate matrices, and met requirements.
  Write \(\mathcal H_{n-1}\) for its set of met requirement codes, with \(\mathcal H_0=\varnothing\).
  Define
  \begin{equation}\label{eq:least-code-scheduler}
    \begin{aligned}
      \mathcal R(n)
      &=\{e\leq n:e\notin \mathcal H_{n-1},\
      \mathcal R_e\text{ is admissible at rank }n\},\\
      e_n&=\begin{cases}
        \min\mathcal R(n),&\mathcal R(n)\ne\varnothing,\\
        \varnothing,&\mathcal R(n)=\varnothing.
      \end{cases}
    \end{aligned}
  \end{equation}
  The rank-\(n\) transition consists of the following steps.
  \begin{enumerate}[label={\textup{Step \arabic*.}},leftmargin=34pt]
    \item For every \(0\leq r\leq n\), add one unit probe and one core probe of detector index \(r\), with zero correction.
    \item If \(e_n\ne\varnothing\), act on \(\mathcal R_{e_n}\): register its seed, or add its prescribed core atom.
    In either case put
    \[
    \mathcal H_n=\begin{cases} \mathcal H_{n-1}\cup\{e_n\},&e_n\ne\varnothing,\\
    \mathcal H_{n-1},&e_n=\varnothing. \end{cases}
    \]
    \item Number the atoms of rank \(n\) in the fixed order, and record the corrections \eqref{eq:type-zero}--\eqref{eq:type-one}.
    All dependencies have rank below \(n\).
  \end{enumerate}
  Let \(\mathcal D_n\) be the code of the resulting completed state.
  Each rank has \(2(n+1)\) probes and at most one core atom, so we have
  \[ 2(n+1)\leq|\Delta_n|\leq2(n+1)+1, \qquad \mathcal D_{n-1}\preccurlyeq\mathcal D_n, \]
  where \(\preccurlyeq\) denotes end extension.
\end{construction}

The transition is computed by one oracle program \(\mathfrak M\), independent of the detector system.
For \(1\leq j\leq t_n<\infty\), its stage-\(n\) computation has the form
\[
    q_{n,j}=Q(n,\mathcal D_{n-1},a_{n,1},\ldots,a_{n,j-1}),\quad
    a_{n,j}=\mathbf1_{\mathcal O_{\mathscr A}}(q_{n,j}),\quad
    \mathcal D_n=\Phi(n,\mathcal D_{n-1},a_{n,1},\ldots,a_{n,t_n}),
\]
where \(Q\) and \(\Phi\) are computable procedures on codes.
There are only \(n+1\) candidate codes \(e\leq n\).
For each candidate, the stored dependencies can be checked in the finite state.
The scalar-table queries in Definition~\ref{def:detection-system} implement exact arithmetic, and the searches terminate because \(\mathbb F_{\mathscr A}\) is closed under the required field operations.
Computing any required \(\sigma\)-value also terminates by the construction above.
Thus the admissibility tests, the selected action, and the finite triangular inversion all finish after finitely many oracle queries.
In particular, writing \(U_n=\{q_{n,1},\ldots,q_{n,t_n}\}\), the stage output satisfies
\[
  \begin{gathered}
    \mathcal O'\cap U_n=\mathcal O_{\mathscr A}\cap U_n\quad \Longrightarrow\quad
    \mathfrak M^{\mathcal O'}(n,\mathcal D_{n-1})
    =\mathfrak M^{\mathcal O_{\mathscr A}}(n,\mathcal D_{n-1})
    =\mathcal D_n.
  \end{gathered}
\]
Indeed, run the same transition with \(\mathcal O'\), and use primes for its query and answer codes.
Induction on the query number gives
\[ (a'_{n,i})_{i<j}=(a_{n,i})_{i<j} \ \Longrightarrow\ q'_{n,j}=q_{n,j}\in U_n \ \Longrightarrow\ a'_{n,j}=a_{n,j}. \]
The program therefore halts at the same step and returns the same output after the last answer.
This shows the relative recursiveness asserted in Remark~\ref{rem:oracle-firewall}.

\begin{figure}[!htbp]
  \centering
\begin{tikzpicture}[x=1cm,y=1cm,>=stealth,
  every node/.style={font=\small,align=center}]
  \draw[thick,rounded corners=4pt] (0,-1.85) rectangle (5.30,2.10);
  \node at (2.65,1.78) {Turing machine \(\mathfrak M\)};
  \node at (2.65,1.35) {Tape \(\mathcal D_{n-1}\)};
  \foreach \x in {0.60,1.10,1.60,2.10,2.60,3.10,3.60,4.10}
  {\draw (\x,.67) rectangle +(.50,.40);}
  \node at (.85,.87) {\(\cdots\)};
  \node at (1.35,.87) {\(0\)};
  \node at (1.85,.87) {\(1\)};
  \node at (2.35,.87) {\(1\)};
  \node at (2.85,.87) {\(0\)};
  \node at (3.35,.87) {\(1\)};
  \node at (3.85,.87) {\(0\)};
  \node at (4.35,.87) {\(\cdots\)};
  \draw (2.75,.59) -- (2.95,.59) -- (2.85,.72) -- cycle;
  \node[draw,rounded corners=2pt,minimum width=3.60cm,
  minimum height=.70cm] (transition) at (2.65,-.10)
  {Stage \(n\)};
  \draw[->] (2.85,.58) -- (2.85,.25);
  \node (output) at (2.65,-1.25)
  {\(\mathcal D_{n-1}\preccurlyeq\mathcal D_n\)};
  \draw[->] (transition.south) -- (output.north);
  \node[draw,rounded corners=3pt,
  minimum width=1.90cm,minimum height=1.10cm] (oracle) at (9.55,-.10)
  {Oracle\\\(\mathcal O_{\mathscr A}\)};
  \node at (6.65,1.02) {\(1\leq j\leq t_n<\infty\)};
  \draw[->,thick] (4.45,.15) -- (oracle.west |- 4.45,.15);
  \node[above=3pt] at (6.55,.15) {\(q_{n,j}\)};
  \draw[->,thick] (oracle.west |- 4.45,-.35) -- (4.45,-.35);
  \node[below=3pt] at (6.55,-.35) {\(a_{n,j}\)};
  \node (data) at (9.55,-1.55) {\(\mathscr A\)};
  \draw[->] (data.north) -- (oracle.south);
\end{tikzpicture}
  \caption{One transition of the oracle Turing machine.}
  \label{fig:oracle-machine-interaction}
\end{figure}

Here Figure~\ref{fig:oracle-machine-interaction} shows the finite queries and the records added at one rank.

For the finite-extension and priority-method background, see \cite[Section~V.2]{Odifreddi1989}, \cite[Chapters~6--7]{Soare2016}, and \cite{Cooper2004}.

\begin{lemma}[Persistence and no injury]\label{lem:formal-legality}
  Every action in Construction~\ref{con:fair-recursion} has only lower-rank dependencies.
  Admissibility of a requirement is persistent under end extension, every active requirement is eventually met, and the construction has the no-injury property.
\end{lemma}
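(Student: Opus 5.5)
The lemma has three assertions, and I would prove them in order: lower-rank dependencies, persistence of admissibility, and eventual satisfaction. No-injury then drops out of the fact that records are never modified.

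\emph{Lower-rank dependencies.} Inspect Step~1 to Step~3 of Construction~\ref{con:fair-recursion}. Probes have zero correction, and seeds add no atom. For a core atom of rank \(n\), admissibility requires three things: every named atom, packet, stem, terminal and certificate is old; the raw packet support lies in \((k,n)\); and for \textup{(R4)} the certificate interval and inner terminal lie in \((k,n)\). The corrections \eqref{eq:type-zero}--\eqref{eq:type-one} then involve only \(\epsilon_\xi^*\) with \(\rank\xi=k<n\), the packet lifts \(B_{\mathbf b}^*\) over atoms of rank in \((k,n)\), and the projection \(P^*_{(k,n)}\). That projection is determined by the unit-triangular change of coordinates on ranks below \(n\). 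So each symbol is defined from \(\mathcal D_{n-1}\). The \(\sigma\)-values are also fixed by data in \(\mathcal D_{n-1}\), since \(\sigma\) was fixed before the recursion and its computation terminates, as shown above.

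\emph{Persistence.} Suppose \(\mathcal R_e\) is admissible at rank \(n\) and unmet, and let \(n'\geq n\). Each admissibility clause is one of two kinds. Some are existence statements about old records: a named atom, seed, stem or certificate is present. These persist because \(\mathcal D_{n-1}\preccurlyeq\mathcal D_{n'-1}\) and records, histories, identifiers and coordinate matrices are never altered. Others are monotone numerical conditions: \(n>N\), \(n\geq h\), detector indices \(\leq n\), local cutoff \(<n\), support in \((k,n)\subseteq(k,n')\). These are all preserved when \(n\) increases. The age bound and the certificate conditions \textup{(C1)}--\textup{(C3)} refer only to old finite data, and \textup{(C2)} is independent of \(R\) by the identity \(T_{R',y}=T_{R,y}\pi_R^{R'}\). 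Their truth values therefore do not change.

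I expect the main obstacle here to be the end-extension compatibility of the finite coordinate objects \(D_\gamma,U_\gamma,e^*_{\gamma,\varphi}\) and hence of \(f_\eta\). These objects are recomputed from the triangular inverse at each stage. I would argue that adding rank-\(n\) blocks only appends rows and columns below the diagonal in the unit lower-triangular matrix \(I-C_N\). The leading principal block of \((I-C_N)^{-1}=\sum_j C_N^j\) is therefore unchanged, and so are the coefficient maps for old atoms. The displayed forward references \eqref{eq:finite-extension}--\eqref{eq:ambient-coordinate} are meant to record exactly this compatibility.

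\emph{Fairness.} Let \(\mathcal R_e\) be active at rank \(n_0\). By persistence it stays admissible at every \(n\geq\max\{n_0,e\}\) until it is met, so \(e\in\mathcal R(n)\) for all such \(n\) while it is unmet. Only finitely many codes \(e'<e\) exist, and each is met at most once, after which it is in \(\mathcal H\) permanently. Hence after finitely many ranks no smaller code is selected, \(e_n=e\), and \(\mathcal R_e\) is met.

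\emph{No injury.} The transition only adds records, \(\mathcal H_{n-1}\subseteq\mathcal H_n\), and met requirements are never withdrawn. Since no action modifies an old record, no met requirement is ever undone.
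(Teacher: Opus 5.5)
Your proposal is correct and follows essentially the same route as the paper. Lower-rank dependence is read off from the admissibility conditions, and persistence holds because records are fixed and every rank condition is a lower bound. Fairness comes from counting the at most \(e\) smaller codes; the paper makes this quantitative as \(\min\{n\geq n_0:e\in\mathcal H_n\}\leq n_0+e\). No injury follows from the append-only transitions. Your extra check, that the finite coordinate maps are stable under end extension because the leading block of the unit-triangular inverse is unchanged, is a detail the paper leaves implicit in this proof and handles through \eqref{eq:finite-dual-consistency}.
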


\begin{proof}
  Admissibility gives
  \[ \rank\xi=k<n,\qquad \supp_{\rm raw}\mathbf b\subseteq(k,n),\qquad \operatorname{cut}(\mathfrak p)<\rank\eta<n \]
  whenever the corresponding predecessor, packet, or inner terminal is present.
  Probe corrections vanish.
  Thus every correction has only lower-rank dependencies.

  All finite records and all answers to their oracle queries are fixed.
  The remaining rank conditions are lower bounds, so we have
  \[ \mathcal R_e\text{ admissible at }n \quad\Longrightarrow\quad \mathcal R_e\text{ admissible at every }m\geq n. \]
  Suppose \(\mathcal R_e\) is active at \(n_0\geq e\).
  As long as it remains unmet, the scheduler selects an unmet code \(e_n\leq e\).
  If \(e_n<e\), then we have
  \[ |\{d<e:d\notin \mathcal H_n\}| =|\{d<e:d\notin \mathcal H_{n-1}\}|-1. \]
  There are at most \(e\) such codes.
  Hence we obtain
  \[ \min\{n\geq n_0:e\in \mathcal H_n\}\leq n_0+e, \]
  proving fairness.
  Finally, each step assigns fresh identifiers and appends records, so end extension preserves all earlier corrections, packet words, histories, stems, and certificates.
\end{proof}

\begin{proposition}[Finite-extension property]\label{prop:finite-extension}
  Construction~\ref{con:fair-recursion} has the following properties.
  \begin{enumerate}[label={\textup{(\alph*)}},leftmargin=*]
    \item Every admissible oracle-coded generic extension has a realization above every prescribed lower bound.
    Successive admissible choices of packets are realized along one generic predecessor chain, up to its age bound.
    \item Seeds of every outer level occur after every cutoff.
    Given a realized outer stem \(\mathfrak p\), packets may be chosen successively, with later choices allowed to depend on the atoms and ranks already realized.
    Every such finite oracle-coded list is realized along one inner predecessor chain of level \(\sigma(q,\mathfrak p)\), up to age \(n_{\sigma(q,\mathfrak p)}\).
    \item After an inner chain is complete, every compatible coded certificate has an outer continuation after every cutoff.
    Iterating this assertion realizes every finite compatible outer stem, up to age \(n_q\).
    \item The canonical probes give cofinally many unit and core probes of every fixed detector index.
  \end{enumerate}
\end{proposition}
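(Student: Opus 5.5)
The plan is to derive all four items from Lemma~\ref{lem:formal-legality}, which gives persistence of admissibility and fairness. The only extra input is the bookkeeping in Construction~\ref{con:fair-recursion}. Every choice of finite data, with every lower bound \(N\), occurs with infinitely many fresh copy indices. A requirement naming only old objects becomes admissible once the rank \(n\) exceeds \(N\), the named ranks, the detector indices and the level \(h\). Each item is then proved in the same three moves. First, write down the relevant formal requirement. Second, check that from some rank on it is admissible, noting that admissibility is a finite list of lower-bound conditions on \(n\). Third, apply fairness: a requirement with code \(e\) that is active at \(n_0\geq e\) is met by rank \(n_0+e\). The lower bound \(N\) written into the requirement forces the realization above any prescribed cutoff.

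For (a), fix an admissible oracle-coded generic requirement \textup{(R1)} and choose a copy with lower bound \(N\). By persistence it stays active, so fairness realizes it at some rank greater than \(N\). For chains, I induct on the age. Once the age-\(a\) atom \(\xi\) exists, the next packet choice names \(\xi\) as predecessor together with an old packet supported after \(\rank\xi\). This is again an admissible \textup{(R1)} requirement, and it is met by fairness. The condition \(\omega=\omega(\xi)\) is automatic, so the resulting atoms lie on one predecessor chain up to age \(n_h\). Item (d) follows directly from Step~1 of the construction, since every rank \(n\geq r\) adds one unit probe and one core probe of detector index \(r\).

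For (b), seeds come from \textup{(R2)} with lower bound \(N\). This requirement is admissible at every \(n>N\), so for each \(q\in\mathcal C^{\rm out}\) and each \(N\) a seed \((q,n,o)\) with \(n>N\) is registered. For the inner chain, fix \(\mathfrak p\) and compute \(p=\sigma(q,\mathfrak p)\) by the terminating search described after \eqref{eq:code-growth}. Then induct on the age exactly as in (a), using \textup{(R3)}. The packets may be chosen adaptively because each successive requirement is formed only after the previous atom is realized. The root label \((\mathsf{inner},p,\operatorname{cut}(\mathfrak p),\vartheta,\mathfrak p)\) is fixed by \(\mathfrak p\), which keeps all the atoms on a single inner chain.

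For (c), take a completed inner chain with terminal \(\eta\) and a compatible certificate \(\mathfrak c\). Conditions \textup{(C1)}--\textup{(C3)} are finite coded statements about the old stage, and they stay true under end extension because records are never modified. Hence the \textup{(R4)} requirement becomes admissible once \(n\) exceeds the certificate interval, \(\rank\eta\), \(r(\eta)\) and \(q\), and fairness realizes it above any cutoff. Iterating alternately \textup{(R3)} via (b) and \textup{(R4)} produces \(\mathfrak p_1,\dots,\mathfrak p_{n_q}\), as in Remark~\ref{rem:seed-inner-outer}.

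I expect the main obstacle to be the admissibility condition that the whole certificate interval \(I\) lies in \((k,n)\). A compatible certificate only gives \(I\subseteq(\operatorname{cut}(\mathfrak p),\infty)\), so this needs \(I\) to be a finite interval. I would read \textup{(C1)} as naming a finite interval, since \(I\) is part of a finite coded record, and then choose \(n>\max I\). Interleaving with other requirements does not matter here, because no-injury guarantees that nothing realized earlier is altered.
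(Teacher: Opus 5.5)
Your proposal is correct and follows essentially the same route as the paper. Each item comes from choosing an occurrence of the requirement whose lower bound exceeds the prescribed cutoff and then applying persistence and least-code fairness from Lemma~\ref{lem:formal-legality}: induction along the predecessor chain gives (a) and (b), an \textup{(R4)} occurrence with lower bound above $\max\{N,\rank\eta,\max I\}$ gives (c), and Step~1 of the construction gives (d). Your reading of the certificate interval $I$ as finite is also how the paper proceeds, since it uses $\max I$ here and builds $I$ as a finite interval in Lemma~\ref{lem:exactification}.
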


\begin{proof}
  Fix an admissible extension with old dependencies and a desired cutoff \(N\).
  Choose its occurrence \(\mathcal R_e\) with lower bound greater than \(N\).
  If it has already been met, its output has the required rank.
  Otherwise, all rank conditions are lower bounds, so there is \(n_0\geq e\) such that
  \[ \mathcal R_e\text{ is admissible at every }n\geq n_0. \]
  By Lemma~\ref{lem:formal-legality}, this occurrence is met at a rank \(n>N\).
  This proves cofinal realization of each extension.

  For a generic chain, once \(\xi_t\) is realized, select the next admissible packet with
  \[ \supp_{\rm raw}\mathbf b_{t+1}\subseteq(\rank\xi_t,\infty), \qquad N_{t+1}>\max\bigl(\{\rank\xi_t\}\cup \supp_{\rm raw}\mathbf b_{t+1}\bigr). \]
  After its entries are old, the preceding argument produces \(\xi_{t+1}\) with predecessor exactly \(\xi_t\) and \(\rank\xi_{t+1}>N_{t+1}\).
  Induction proves \textup{(a)}.
  A seed is produced by the same argument.
  For a fixed realized stem \(\mathfrak p\), every inner continuation names that same stem, so its level remains \(\sigma(q,\mathfrak p)\).
  After each realization, the next packet may be chosen using the newly known identifier and rank.
  The enumeration contains the resulting finite requirement, and the preceding argument applies again.
  This proves \textup{(b)}.

  For \textup{(c)}, let \(\eta\) be the completed inner terminal and \(I\) the interval of a compatible old certificate.
  An occurrence of \textup{(R4)} with
  \[ N'>\max\{N,\rank\eta,\max I\} \]
  produces the required outer continuation.
  Iteration extends the stem up to age \(n_q\).
  Finally, every rank \(n\geq\max\{1,r\}\) contains both canonical probes of detector index \(r\), proving \textup{(d)}.
\end{proof}

\subsection{Bourgain--Delbaen realization of the finite records}
\label{sec:bd-realization}

For \(\varphi=(\varphi_\gamma)_{\gamma\in\Delta_n}\in
F_n^*=(\bigoplus_{\gamma\in\Delta_n}E_\gamma^*)_{\ell_1}\), let
\(\iota_n\varphi=\sum_{\gamma\in\Delta_n}\epsilon_\gamma^*(\varphi_\gamma)\) be the ambient injection and put
\[
  c_n^*\varphi=\sum_{\gamma\in\Delta_n}c_\gamma^*(\varphi_\gamma),
  \qquad
  d_n^*\varphi=\sum_{\gamma\in\Delta_n}d_{\gamma,\varphi_\gamma}^*
  =\iota_n\varphi-c_n^*\varphi.
\]
Thus \(c_n^*:F_n^*\to\bigoplus_{k<n}F_k^*\) and \(d_n^*:F_n^*\to\mathcal E_*\)
are the rank-\(n\) maps used in Lemma~\ref{lem:block-bd}.

We apply Lemma~\ref{lem:block-bd} with \(\theta=m_1^{-1}\).
The constants \(M\) and \(\kappa\) are therefore those in \eqref{eq:constants}.

\begin{proposition}[Bourgain--Delbaen correction forms]
  \label{prop:bd-legality}
  Every correction recorded by Construction~\ref{con:fair-recursion} satisfies the hypotheses of Lemma~\ref{lem:block-bd}.
\end{proposition}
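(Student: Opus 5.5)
We check the hypotheses of Lemma~\ref{lem:block-bd} atom by atom, with \(\theta=m_1^{-1}<1/2\). The lemma only constrains each summand \(E_\gamma^*\) of \(F_{n+1}^*\) separately, and the decomposition \(c_n^*\varphi=\sum_\gamma c_\gamma^*(\varphi_\gamma)\) makes the correction on the \(\gamma\)-summand exactly \(c_\gamma^*\). Hence it suffices to match each of the three kinds of corrections in Construction~\ref{con:fair-recursion} with \eqref{eq:block-type-zero} or \eqref{eq:block-type-one}, where the rank \(n\) of the atom plays the role of \(n+1\) in the lemma.

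\emph{Probes.} For unit and core probes the correction is zero. This is type zero with \(\beta=0\) and \(B^*=0\), and any \(k\).

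\emph{Age one.} For a core atom of age one, \eqref{eq:type-zero} gives \(c_\gamma^*=m_h^{-1}P_{(k,n)}^*B_{\mathbf b}^*=m_h^{-1}(P_{n-1}^*-P_k^*)B_{\mathbf b}^*\).
\begin{enumerate}[label={\textup{(\roman*)}},leftmargin=27pt]
  \item By \eqref{eq:packet-cell}, every atom in \(\mathbf b\) has rank at most \(n-1\), so \(B_{\mathbf b}^*\) takes values in the first \(n-1\) ambient blocks.
  \item On that finite sum, \(P_{n-1}^*\) is the identity, by the triangular structure recorded in the proof of Lemma~\ref{lem:block-bd}: \(\operatorname{ran}P_s^*=\bigoplus_{j\leq s}\iota_j(F_j^*)\), and \(P_s^*\) fixes its range. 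Hence \(c_\gamma^*=m_h^{-1}(I-P_k^*)B_{\mathbf b}^*\).
  \item Set \(\beta=m_h^{-1}\leq m_1^{-1}=\theta\). The map \(B^*=B_{\mathbf b}^*\) is a contraction by \eqref{eq:represented-packet-bounds} and Lemma~\ref{lem:packet}. Finally \(0\leq k<n\) by admissibility.
\end{enumerate}

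\emph{Age greater than one.} By \eqref{eq:type-one}, the correction adds the predecessor term \(\iota_jA^*\varphi\) to the same packet term, where
\[ j=\rank\xi=k,\qquad A^*\varphi=\epsilon_\xi^*\bigl(\pi_{r(\xi)}^{r(\gamma)}\varphi\bigr). \]
Viewed as a map into \(F_j^*=(\bigoplus_{\delta\in\Delta_j}E_\delta^*)_{\ell_1}\), the map \(A^*\) is a contraction because \(\pi_{r(\xi)}^{r(\gamma)}\) is a metric quotient, hence contractive, and \(\epsilon_\xi^*\) is an isometric coordinate injection. The condition \(1\leq j\leq k\) holds with equality. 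The packet part is handled as in the age-one case, since its support lies in \((k,n)\).

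For outer atoms, the packet is the strong packet \(\mathbf s(\eta)\). It has raw norm \(\norm{1_{A_{r(\eta)}}}=1\) by Lemma~\ref{lem:detector-quotient-tower}, so \(B_{\mathbf s(\eta)}^*\) is again contractive, by \eqref{eq:strong-packet-scalarization}.

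The main obstacle is the identification \(P_{(k,n)}^*B^*=(I-P_k^*)B^*\). The projections \(P_s^*\) in the construction are defined only on the finite stage below rank \(n\). The lemma, however, is applied inductively with the projections on the first \(n-1\) ambient blocks, so these two notions must be identified. I would justify this using the end-extension compatibility of the finite coordinate maps (Lemma~\ref{lem:formal-legality}): at the moment rank \(n\) is formed, the algebraic projections \(P_s^*\) for \(s\leq n-1\) coincide with those used in Lemma~\ref{lem:block-bd}. This uses that later \(d^*\)-blocks are spanned by later ambient blocks plus corrections, which are invisible on the first \(n-1\) ambient blocks. With this in hand, every summand satisfies the hypotheses, and the proposition follows.
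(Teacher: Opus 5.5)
Your proposal is correct and follows the paper's proof. In both, \(B_{\mathbf b}^*\) is contractive by Lemma~\ref{lem:packet}, the predecessor map \(A^*\) is contractive by the metric-quotient hypothesis, unit triangularity gives \(P_{(k,n)}^*B_{\mathbf b}^*=(I-P_k^*)B_{\mathbf b}^*\), \(\beta=m_h^{-1}\leq m_1^{-1}\), and probes are the type-zero form with \(\beta=0\).

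The ``obstacle'' you raise is settled by that same unit-triangularity observation: a vector in the first \(n-1\) ambient blocks has no component along later \(d^*\)-blocks (cf.\ \eqref{eq:finite-dual-consistency}), so Lemma~\ref{lem:formal-legality} is not what is needed there. Also, \(\norm{1_{A_{r(\eta)}}}=1\) holds in general because it is part of hypothesis \eqref{eq:inverse-system}; Lemma~\ref{lem:detector-quotient-tower} covers only the quotient-tower case.
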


\begin{proof}
  Lemma~\ref{lem:formal-legality} gives strict lower-rank dependence.
  For a represented packet \(\mathbf b\), Lemma~\ref{lem:packet} gives
  \[ \norm{B_{\mathbf b}^*\varphi} \leq\norm\varphi \qquad(\varphi\in A_{r(\gamma)}). \]
  In a type-one correction, the predecessor map
  \(\pi_{r(\xi)}^{r(\gamma)}:A_{r(\gamma)}\to A_{r(\xi)}\)
  is contractive by the metric-quotient hypothesis \eqref{eq:metric-quotient}, and its injection into the \(\xi\)-summand of \(F_k^*\) is isometric.
  Thus the map \(A^*\) in \eqref{eq:block-type-one} has norm at most 1.

  Since the raw support of the packet lies in \((k,n)\), it belongs to the first \(n-1\) ambient blocks.
  Unit triangularity therefore gives \(P_{n-1}^*B_{\mathbf b}^*=B_{\mathbf b}^*\), and hence we obtain
  \[
    P_{(k,n)}^*B_{\mathbf b}^*
    =(P_{n-1}^*-P_k^*)B_{\mathbf b}^*
    =(I-P_k^*)B_{\mathbf b}^*.
  \]
  Thus type-zero and type-one records have respectively the forms \eqref{eq:block-type-zero} and \eqref{eq:block-type-one}, with \(\beta=m_h^{-1}\leq m_1^{-1}<\frac{1}{2}\).
  A zero probe correction is the type-zero form with \(\beta=0\).
\end{proof}

For \(N\geq1\), put
\[ \mathcal E_{[1,N]} =\left(\bigoplus_{n=1}^NF_n\right)_{\ell_\infty}, \qquad \mathcal E_{[1,N],*} =\left(\bigoplus_{n=1}^NF_n^*\right)_{\ell_1}. \]
Unit triangularity gives \(\mathcal E_{[1,N],*}=\bigoplus_{n=1}^N d_n^*(F_n^*)\) algebraically.
For \(s\leq N\), let
\[ Q_{s,N}^*:\mathcal E_{[1,N],*}\longrightarrow \mathcal E_{[1,s],*}, \qquad Q_{s,N}^*f=P_s^*f. \]
Eliminating the top ambient block first shows that, if \(L\geq N\geq s\), we have
\begin{equation}\label{eq:finite-dual-consistency}
  Q_{s,L}^*|_{\mathcal E_{[1,N],*}}=Q_{s,N}^*.
\end{equation}
Lemma~\ref{lem:block-bd} and Proposition~\ref{prop:bd-legality} give \(\norm{Q_{s,N}^*}\leq M\).
Define
\begin{equation}\label{eq:finite-extension}
  i_{s,N}=(Q_{s,N}^*)^*:
  \mathcal E_{[1,s]}\longrightarrow\mathcal E_{[1,N]}.
\end{equation}
By finite-dimensional duality, we have \(\norm{i_{s,N}}=\norm{Q_{s,N}^*}\leq M\).
To verify compatibility of the maps \(i_{s,N}\), take \(f\in\mathcal E_{[1,N],*}\) and \(u\in\mathcal E_{[1,s]}\).
Then we have
\[ \langle f,(i_{s,L}u)|_{[1,N]}\rangle =\langle Q_{s,L}^*f,u\rangle =\langle Q_{s,N}^*f,u\rangle =\langle f,i_{s,N}u\rangle. \]
Thus \eqref{eq:finite-dual-consistency} implies
\[ (i_{s,L}u)|_{[1,N]}=i_{s,N}u \qquad(L\geq N\geq s). \]
The compatible vectors \(i_{s,N}u\) define \(i_su\) in the ambient \(\ell_\infty\)-sum, and we have
\[ \norm{i_su} =\sup_{N\geq s}\norm{i_{s,N}u} \leq M\norm u. \]
Since \(Q_{s,N}^*\) fixes the first \(s\) ambient blocks, \(i_s\) preserves the prescribed initial coordinates.
For \(\rank\gamma>s\), we have \(P_s^*d_{\gamma,\varphi}^*=0\), and finite-stage adjointness gives
\[
  \begin{aligned}
    \langle\varphi,(i_su)_\gamma\rangle
    &=\langle P_s^*\epsilon_\gamma^*(\varphi),u\rangle=\langle P_s^*c_\gamma^*(\varphi),u\rangle
     =c_\gamma^*(\varphi)(i_su).
  \end{aligned}
\]
The last functional uses only ranks below \(\rank\gamma\).
Thus the extensions satisfy
\begin{align}
  (i_su)|_{[1,s]}&=u,                                             \label{eq:extension-head}\\
  \langle\varphi,(i_su)_\gamma\rangle
  &=c_\gamma^*(\varphi)
  ((i_su)|_{[1,\rank\gamma)})
  \quad(\rank\gamma>s).                                    \label{eq:extension-recurrence}
\end{align}
Conversely, these equations determine all later coordinates by induction on the rank, so the adjoint and recursive definitions agree.
For \(s\leq t\), the vectors \(i_t((i_su)|_{[1,t]})\) and \(i_su\) have the same first \(t\) coordinates and satisfy the same recurrence thereafter.
By uniqueness, we obtain
\begin{equation}\label{eq:nested-extensions}
  i_t((i_su)|_{[1,t]})=i_su\qquad(s\leq t).
\end{equation}

Put
\[ X^{\rm alg}=\bigcup_s i_s\mathcal E_{[1,s]}, \qquad X=\overline{X^{\rm alg}}, \qquad Z_n=i_nF_n, \]
where in the last formula the vector in \(F_n\) is placed in the last block and all earlier blocks are zero.
If \(\gamma\in\Delta_n\), define \(d_\gamma u=i_n(0,\ldots,0,u,0,\ldots)\) for \(u\in E_\gamma\).
For atoms \(\gamma,\eta\), finite-stage adjointness gives the atomic biorthogonality relation
\begin{equation}\label{eq:atomic-biorthogonality}
  d_{\eta,\psi}^*(d_\gamma u)
  =\begin{cases}
    \langle\psi,u\rangle,&\eta=\gamma,\\
    0,&\eta\ne\gamma.
  \end{cases}
\end{equation}
Indeed, project \(d_{\eta,\psi}^*\) to the birth rank of \(\gamma\).
If \(\rank\eta\ne\rank\gamma\), its \(d^*\)-block is either retained away from the one nonzero input coordinate or deleted, whereas at the same rank the correction of \(\eta\) is old, leaving only the ambient coordinate.

Equation~\eqref{eq:atomic-biorthogonality} defines the coefficient maps
\begin{equation}\label{eq:coefficient-map}
  D_\gamma:X^{\rm alg}\to E_\gamma,
  \qquad
  \langle\psi,D_\gamma x\rangle=d_{\gamma,\psi}^*(x).
\end{equation}
If \(x=i_su\), then \(P_s^*d_{\gamma,\psi}^*=0\) for \(\rank\gamma>s\), so finite-stage adjointness gives \(D_\gamma x=0\) at those ranks.
The invertible triangular change of coordinates gives the same first \(s\) ambient coordinates for \(x\) and \(\sum_{\rank\gamma\leq s}d_\gamma D_\gamma x\).
Both vectors belong to \(i_s\mathcal E_{[1,s]}\), so uniqueness of the extension gives
\[
  x=\sum_{\rank\gamma\leq s}d_\gamma D_\gamma x.
\]
Consequently, \(X^{\rm alg}\) is exactly the algebraic span of the blocks \(Z_n\), and the initial FDD projection is
\[
  P_{[1,t]}x=i_t(x|_{[1,t]}).
\]
Indeed, pairing either side with \(f\in\mathcal E_*\) gives \(\langle P_t^*f,x\rangle\).
For \(x\in X^{\rm alg}\), let \(\supp_{\rm FDD}x\) be the set of ranks at which one of these coefficients is nonzero, and let \(\ran x\) be its integer interval hull.
For nonzero finitely supported vectors \(x,y\) and an atom \(\gamma\), write \(x<y\), \(x<\gamma\), or \(\gamma<x\) when, respectively,
\[ \max\supp_{\rm FDD}x<\min\supp_{\rm FDD}y,\qquad \max\supp_{\rm FDD}x<\rank\gamma,\qquad \rank\gamma<\min\supp_{\rm FDD}x. \]
This is the FDD/rank order used below.
The ambient coordinate vector \(U_\gamma x\in E_\gamma\) is defined by
\begin{equation}\label{eq:ambient-coordinate}
  \langle\psi,U_\gamma x\rangle=e_{\gamma,\psi}^*(x),
  \qquad
  e_{\gamma,\psi}^*=\epsilon_\gamma^*(\psi)|_X.
\end{equation}
When ambient functionals act on \(X\), we use the same notation for their restrictions to \(X\), and in particular \(P_I^*f=f\circ P_I\) on \(X\).
For \(\psi\in E_\gamma^*\), we have \(\norm{e_{\gamma,\psi}^*}\leq\norm\psi\), and the evaluations with \(\psi\in B_{E_\gamma^*}\) form a norming family for \(X\).
The block maps and projections satisfy
\begin{equation}\label{eq:formal-block-bounds}
  \norm{i_s}\leq M,\qquad
  \norm{P_{[1,s]}}\leq M,\qquad
  \norm{P_I}\leq\kappa=2M,\qquad
  \norm{d_\gamma}\leq M,\qquad
  \norm{D_\gamma}\leq\kappa.
\end{equation}
Indeed, since \(X\) carries the norm inherited from the ambient \(\ell_\infty\)-sum, we have
\[ \norm x =\sup_{\eta}\norm{U_\eta x} =\sup_{\eta}\sup_{\psi\in B_{E_\eta^*}} \abs{e_{\eta,\psi}^*(x)} \qquad(x\in X). \]
If \(\gamma\in\Delta_n\) and \(v\in\mathcal E_{[1,n]}\) has only the \(\gamma\)-coordinate \(u\), then we have \(\norm v=\norm u\) and
\[ \norm{d_\gamma u} =\norm{i_nv} \leq M\norm v =M\norm u. \]
Moreover, \(D_\gamma x\) is the \(\gamma\)-ambient coordinate of the singleton FDD projection \(P_{\{n\}}x\).
Contractivity of the ambient coordinate map and the interval estimate give
\[ \norm{D_\gamma x} =\norm{U_\gamma P_{\{n\}}x} \leq\norm{P_{\{n\}}x} \leq\kappa\norm x. \]
This proves the last two bounds in \eqref{eq:formal-block-bounds}, whereas the first three are the extension, initial-projection, and interval-projection estimates above.
In particular, every \(D_\gamma\) extends uniquely to \(X\), and the same symbol denotes this extension.

The maps \(D_\gamma\) and \(U_\gamma\) give different coordinates of the same vector, as illustrated in Figure~\ref{fig:bd-ambient-coordinates}.
For \(x=i_su\), we have \(D_\gamma x=0\) whenever \(\rank\gamma>s\), while the later ambient coordinates are determined by the correction equations \eqref{eq:extension-recurrence}.
Thus a vector with finite FDD support may have nonzero ambient coordinates at later ranks.
For a probe atom, the correction is zero, so we have \(D_\gamma x=U_\gamma x\).

\begin{figure}[!htbp]
  \centering
\begin{tikzpicture}[x=1cm,y=1cm,
  every node/.style={font=\small,inner sep=3pt},
  cell/.style={draw=black!45,rounded corners=2pt,minimum height=1.02cm},
  head/.style={cell,minimum width=3.7cm},
  tail/.style={cell,minimum width=5.1cm}]
  \node at (6.05,1.1) {The same vector \(x=i_su\)};
  \node at (4.15,0.48) {\(\rank\gamma\leq s\)};
  \node at (8.65,0.48) {\(\rank\gamma>s\)};

  \node[anchor=east,align=right] at (2.05,-0.35)
    {BD coefficients\\\(D_\gamma x\)};
  \node[head,fill=blue!4] at (4.15,-0.35)
    {\(\bigl(D_\gamma x\bigr)_{\rank\gamma\leq s}\)};
  \node[tail,fill=blue!4,align=center] at (8.65,-0.35)
    {\(0\)\\All later FDD coefficients vanish};

  \node[anchor=east,align=right] at (2.05,-1.73)
    {Ambient coordinates\\\(U_\gamma x\)};
  \node[head,fill=green!4] at (4.15,-1.73) {\(u\)};
  \node[tail,fill=green!4,align=center] at (8.65,-1.73)
    {Generated by corrections\\May be nonzero};

  \draw[dashed,black!40] (6.04,0.2) -- (6.04,-2.33);
  \node at (6.05,-2.89)
    {\(\langle\psi,D_\gamma x\rangle
       =\langle\psi,U_\gamma x\rangle-c_\gamma^*(\psi)(x)\)};
\end{tikzpicture}
  \caption{BD coefficients and ambient coordinates of \(x=i_su\). Each column groups all atoms in the indicated range of ranks.}
  \label{fig:bd-ambient-coordinates}
\end{figure}

\section{\texorpdfstring{Evaluation analyses and multipliers}{Evaluation analyses and multipliers}}

Recall that \(D_\gamma x\) is the BD block coefficient, whereas \(U_\gamma x\) is the ambient coordinate and \(e_{\gamma,\varphi}^*(x)=\langle\varphi,U_\gamma x\rangle\).
We write \(P_I\) for the FDD projection onto the ranks in \(I\), and \(B_{\mathbf b}^*\) for the covariant packet lift in \eqref{eq:packet-map}--\eqref{eq:packet-lift}.
The constants \(M\) and \(\kappa=2M\) are the extension and interval-projection bounds in \eqref{eq:formal-block-bounds}, and we continue to denote the  level of an atom by \(\ell(\gamma)\).
We derive the evaluation analysis and prove the probe isometries, scalarization identity, and finite shielding estimates used in Part~II.

\subsection{Evaluation analysis}

The following expansion is the block version of the scalar evaluation analysis in \cite[Proposition~4.5]{ArgyrosHaydon2011}.

Let \(\gamma=\xi_a\) be a nonprobe atom.
Its stored history gives the predecessor chain
\[ \xi_1\prec\cdots\prec\xi_a=\gamma, \qquad p_t=\rank\xi_t, \qquad p_0=\chi(\gamma). \]
For \(\varphi\in B_{E_\gamma^*}=B_{A_{r(\gamma)}}\), put
\begin{equation}\label{eq:chain-scalarizations}
  R_t=\pi_{r(\xi_t)}^{r(\gamma)},
  \qquad
  \varphi_t=R_t\varphi,
  \qquad
  b_{t,\varphi}^*=B_{\mathbf b_{\xi_t}}^*\varphi_t.
\end{equation}
Both maps in \eqref{eq:chain-scalarizations} are contractions.

\begin{lemma}[Evaluation analysis]\label{lem:formal-evaluation-analysis}
  Every scalarized evaluation of  level \(h>0\) has the analysis
  \begin{equation}\label{eq:evaluation-grammar}
    e_{\gamma,\varphi}^*
    =\sum_{t=1}^ad_{\xi_t,\varphi_t}^*
    +\frac{1}{m_h}\sum_{t=1}^a
    b_{t,\varphi}^*P_{(p_{t-1},\infty)},
    \qquad a\leq n_h.
  \end{equation}
  The cells \((p_{t-1},p_t)\) are successive, and each \(b_{t,\varphi}^*\) lies in the absolute convex hull of lower-rank scalarized ambient coordinates occurring in that cell.
  A probe evaluation has  level zero and equals its own \(d^*\)-coordinate.
\end{lemma}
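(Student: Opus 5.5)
The proof will be an induction along the predecessor chain \(\xi_1\prec\cdots\prec\xi_a=\gamma\) stored in \(\mathsf{hist}(\gamma)\). It unwinds the defining relation \eqref{eq:triangular-coordinate}, \(\epsilon_\gamma^*(\varphi)=d_{\gamma,\varphi}^*+c_\gamma^*(\varphi)\), restricted to \(X\).

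The first step treats a single atom. For \(\xi_t\) with \(\varphi_t\in A_{r(\xi_t)}\), the correction is \eqref{eq:type-zero} when \(t=1\) and \eqref{eq:type-one} when \(t>1\). Restricting to \(X\) gives
\[
  e_{\xi_t,\varphi_t}^*
  =d_{\xi_t,\varphi_t}^*
  +\mathbf1_{\{t>1\}}\,e_{\xi_{t-1},\pi_{r(\xi_{t-1})}^{r(\xi_t)}\varphi_t}^*
  +\frac1{m_h}\bigl(P_{(p_{t-1},p_t)}^*B_{\mathbf b_{\xi_t}}^*\varphi_t\bigr)\big|_X .
\]
Here I use (A2): the local cutoff of \(\xi_t\) is \(p_{t-1}=\rank\xi_{t-1}\), and \(k=\chi=p_0\) at age one. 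By the inverse-system identity \(\pi^{r(\xi_t)}_{r(\xi_{t-1})}R_t=R_{t-1}\), the predecessor argument \(\pi_{r(\xi_{t-1})}^{r(\xi_t)}\varphi_t\) equals \(\varphi_{t-1}\). Telescoping from \(t=a\) down to \(t=1\) then produces the two sums in \eqref{eq:evaluation-grammar}, except that the packet terms still carry \(P_{(p_{t-1},p_t)}^*\) in place of \(P_{(p_{t-1},\infty)}\).

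The second step identifies the projected functional on \(X\). By the convention \(P_I^*f=f\circ P_I\) and the raw support condition \eqref{eq:packet-cell}, the functional \(b_{t,\varphi}^*=B^*_{\mathbf b_{\xi_t}}\varphi_t\) is a finite combination of ambient coordinates \(e^*_{\eta,\psi}\) with \(\rank\eta\in(p_{t-1},p_t)\). By unit triangularity such a coordinate lies in \(P^*_{p_t-1}\)-range, so \(b^*_{t,\varphi}P_{[1,p_t)}=b^*_{t,\varphi}\) on \(X\). Therefore
\[
  (P_{(p_{t-1},p_t)}^*b_{t,\varphi}^*)|_X
  =b_{t,\varphi}^*\circ(P_{[1,p_t)}-P_{[1,p_{t-1}]})
  =b_{t,\varphi}^*P_{(p_{t-1},\infty)},
\]
where the last equality follows from \(b^*P_{[1,p_t)}=b^*\) together with \(P_{[1,p_t)}P_{(p_{t-1},\infty)}=P_{(p_{t-1},p_t)}\). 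This is the step I expect to need the most care. It requires checking that the finite-stage projection \(P^*_{(k,n)}\) used in the correction agrees with \(f\mapsto f\circ P_{(k,n)}\) on \(X\). That agreement comes from \(P_{[1,t]}x=i_t(x|_{[1,t]})\) and the duality pairing \(\langle P_t^*f,x\rangle\) noted after \eqref{eq:coefficient-map}.

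The remaining assertions are then routine. The bound \(a\le n_h\) is the age bound in (R1), (R3) and (R4). The cells are successive because \(p_0<p_1<\cdots<p_a\). The absolute convex hull claim follows from \eqref{eq:packet-lift} and \(\sum|c_l|\norm{\psi_l}\le1\). Each term is \(c_l\,e^*_{\eta_l,\psi_l\cdot\pi\varphi_t}\) with \(\norm{\psi_l\cdot\pi\varphi_t}\le\norm{\psi_l}\), by contractivity of \(\Theta\) and of the bonding maps, and \(\rank\eta_l\) lies in the cell. Finally, a probe has zero correction and level \(0\), so \(e^*_{\gamma,\varphi}=d^*_{\gamma,\varphi}\).
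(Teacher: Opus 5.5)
Your proposal is correct and follows essentially the same route as the paper: you derive the one-edge identity from the triangular coordinate relation, use bonding compatibility to identify \(\varphi_{t-1}\), telescope along the stored chain, and replace the cell projection by the tail projection using \(P^*_{p_t-1}b^*_{t,\varphi}=b^*_{t,\varphi}\). The remaining claims (age bound, successive cells, absolute convex hull via \(\sum_l\abs{c_l}\norm{\psi_l}\leq1\), and the probe case) are handled as in the paper, which additionally makes explicit the normalization of each packet entry to a unit-ball functional.
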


\begin{proof}
  The packet \(b_{t,\varphi}^*\) is supported on ambient ranks below \(p_t\), so unit triangularity gives \(P_{p_t-1}^*b_{t,\varphi}^*=b_{t,\varphi}^*\).
  Hence the finite interval projection in the correction can be replaced by a tail projection:
  \[
    b_{t,\varphi}^*P_{(p_{t-1},p_t)}
    =b_{t,\varphi}^*P_{(p_{t-1},\infty)}.
  \]
  For \(1\leq t\leq a\), the triangular coordinate identity now gives
  \[
    \begin{aligned}
      e_{\xi_t,\varphi_t}^*
      =d_{\xi_t,\varphi_t}^*
        +\mathbf{1}_{\{t>1\}}e_{\xi_{t-1},\varphi_{t-1}}^*+\frac{1}{m_h}b_{t,\varphi}^*P_{(p_{t-1},\infty)}.
    \end{aligned}
  \]
  The predecessor term is omitted when \(t=1\).
  When \(t>1\), compatibility of the bonding maps gives
  \[
    \pi_{r(\xi_{t-1})}^{r(\xi_t)}\varphi_t
    =\pi_{r(\xi_{t-1})}^{r(\gamma)}\varphi
    =\varphi_{t-1}.
  \]
  Summing the one-edge identities cancels every intermediate evaluation and proves \eqref{eq:evaluation-grammar}.
  The record conditions give \(a\leq n_h\) and \(\supp_{\rm raw}\mathbf b_{\xi_t}\subseteq(p_{t-1},p_t)\), so the packet cells are successive.

  If \(\mathbf b_{\xi_t}=((c_l,\eta_l,\psi_l))_l\), then we have
  \[ b_{t,\varphi}^* =\sum_lc_l\epsilon_{\eta_l}^* \bigl(\Theta_{\eta_l} (\pi_{r(\eta_l)}^{r(\xi_t)}\varphi_t)^*\psi_l\bigr). \]
  Put
  \[
  v_l= \Theta_{\eta_l} (\pi_{r(\eta_l)}^{r(\xi_t)}\varphi_t)^*\psi_l, \qquad \widehat\psi_l= \begin{cases} v_l/\norm{v_l},&v_l\ne0,\\
  0,&v_l=0. \end{cases}
  \]
  The bonding maps and module actions are contractive.
  Since \(\norm{\varphi_t}\leq\norm\varphi\leq1\), we have
  \( \norm{v_l} \leq\norm{\psi_l} \norm{\pi_{r(\eta_l)}^{r(\xi_t)}\varphi_t} \leq\norm{\psi_l}. \)
  Consequently, we obtain
  \[
    b_{t,\varphi}^*=\sum_lc_l\norm{v_l}\epsilon_{\eta_l}^*(\widehat\psi_l).
  \]
  For the coefficients, we have
  \[
    \sum_l\abs{c_l}\norm{v_l}\leq\sum_l\abs{c_l}\norm{\psi_l}\leq1.
  \]
  This proves both contractivity and the absolute-convex-hull assertion.
  The maps \(\varphi\mapsto\varphi_t\) are contractive because they are bonding maps, and \(\varphi\mapsto b_{t,\varphi}^*\) is contractive by Lemma~\ref{lem:packet}.
  For a probe, we have \(c_\gamma^*=0\), so we obtain \(e_{\gamma,\psi}^*=d_{\gamma,\psi}^*\).
\end{proof}

\begin{lemma}[Isometric probes]\label{lem:formal-probe-isometry}
  If \(\gamma\) is a unit or core probe and \(j_\gamma=d_\gamma:E_\gamma\to X\), then we have
  \begin{equation}\label{eq:formal-probe-bounds}
    D_\gamma=U_\gamma,
    \qquad \norm{D_\gamma}\leq1,
    \qquad \norm{j_\gamma u}=\norm u
    \quad(u\in E_\gamma).
  \end{equation}
  In particular, we have \(\norm{j_\gamma}=1\).
\end{lemma}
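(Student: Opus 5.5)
We prove the three assertions in \eqref{eq:formal-probe-bounds} in order, working with the explicit extension maps \(i_n\) and the recurrence \eqref{eq:extension-recurrence}.

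First, \(D_\gamma=U_\gamma\) is immediate. A probe has zero correction, so \(c_\gamma^*=0\) and \(d_{\gamma,\psi}^*=\epsilon_\gamma^*(\psi)\). This gives \(\langle\psi,D_\gamma x\rangle=e_{\gamma,\psi}^*(x)=\langle\psi,U_\gamma x\rangle\) on \(X^{\rm alg}\), and hence on \(X\) by continuity. This is the last sentence of Lemma~\ref{lem:formal-evaluation-analysis}. Since the ambient coordinate map \(x\mapsto U_\gamma x\) is the restriction of a coordinate projection from the ambient \(\ell_\infty\)-sum, it is contractive, so \(\norm{D_\gamma}=\norm{U_\gamma}\leq1\).

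For the isometry, let \(\gamma\in\Delta_n\), \(u\in E_\gamma\), and let \(v\in\mathcal E_{[1,n]}\) have \(u\) in the \(\gamma\)-coordinate and zeros elsewhere. The lower bound is \(\norm{j_\gamma u}\geq\norm{U_\gamma j_\gamma u}=\norm u\) by \eqref{eq:extension-head}. For the upper bound we show that every later ambient coordinate of \(x=i_nv\) has norm at most \(\norm u\). We argue by induction on the rank \(m>n\), proving the stronger statement
\[
  \norm{U_\eta x}\leq\norm u\quad(\rank\eta<m),\qquad\text{and}\qquad P_k x\in\{0,x\}\ \text{for every }k.
\]
In particular \(P_kx=0\) for \(k<n\) and \(P_kx=x\) for \(k\geq n\), since \(x\) is the single FDD block \(d_\gamma u\).

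For a probe \(\eta\) of rank \(m\), the coordinate \(U_\eta x\) is \(0\) by \eqref{eq:extension-recurrence}. For a core atom \(\eta\), \eqref{eq:type-zero}--\eqref{eq:type-one} give
\[
  \langle\varphi,U_\eta x\rangle=\mathbf1\,\langle\pi\varphi,U_\xi x\rangle+m_h^{-1}\bigl(B_{\mathbf b}^*\varphi\bigr)\bigl(P_{(k,m)}x\bigr).
\]
The projected packet term is nonzero only when \(k<n<m\), and then \(P_{(k,m)}x=x\). The predecessor term is nonzero only when \(\rank\xi=k\geq n\). These two cases are mutually exclusive, because either \(k<n\) or \(k\geq n\). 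So exactly one of the two terms survives, and in either case the bound \(\norm{U_\eta x}\leq\norm u\) follows. In the packet case, \(B_{\mathbf b}^*\) is contractive by Lemma~\ref{lem:packet}, the packet is supported on lower ranks where the induction hypothesis applies, and \(m_h^{-1}<1\). In the predecessor case, the bonding map is contractive and the induction hypothesis applies to \(\xi\). Taking the supremum over ambient coordinates then gives \(\norm{j_\gamma u}=\norm u\), and hence \(\norm{j_\gamma}=1\).

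The main obstacle is the packet term when \(k<n\): the cell \((k,m)\) contains \(n\). Here \(P_{(k,m)}x=x\), since \(x\) has FDD support \(\{n\}\), and the term is bounded by \(m_h^{-1}\norm u\) via the sup-norm of \(x\) on ranks below \(m\). The whole argument therefore rests on the case split \(k<n\) versus \(k\geq n\), which is what makes the two contributions never add.
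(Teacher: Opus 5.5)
Your proof is correct, but it is organized differently from the paper's. The paper does not induct on ranks. It applies the telescoped evaluation analysis \eqref{eq:evaluation-grammar} to an arbitrary evaluation \(e_{\alpha,\varphi}^*\) of level \(h>0\). It then notes two facts: every \(d^*\)-term vanishes on \(x=j_\gamma u\) because a probe is never a predecessor, and at most one of the successive cells \((p_{t-1},p_t)\) contains \(q=\rank\gamma\). This gives \(\abs{e_{\alpha,\varphi}^*(x)}\leq m_h^{-1}\norm x\), hence \(\norm u\leq\norm x\leq\max\{\norm u,m_1^{-1}\norm x\}\), and a one-line contradiction finishes.

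You instead use only the one-edge recurrence \eqref{eq:extension-recurrence} and carry the invariant \(\norm{U_\eta x}\leq\norm u\) up the ranks. Your dichotomy \(k<n\) versus \(k\geq n\) is the one-step form of the paper's observation that only one cell straddles the probe rank.

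Your route is more elementary, since it needs no telescoping and no bootstrap through \(\norm x\). The paper's route, however, delivers at no extra cost the level-sensitive bound \(\abs{e_{\alpha,\psi}^*(j_\gamma u)}\leq m_h^{-1}\norm u\). That bound is exactly what Proposition~\ref{prop:formal-shielding} extracts from this proof, and Corollary~\ref{cor:probe-ris} uses it in turn. If you want your argument to serve that purpose too, strengthen the invariant to \(\norm{U_\eta x}\leq m_{\ell(\eta)}^{-1}\norm u\) for core atoms of rank above \(n\). The same case split proves it, because a predecessor has the same level as its successor, and a core predecessor of rank exactly \(n\) has zero coordinate since it differs from \(\gamma\).

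One small wording point: ``exactly one of the two terms survives'' should read ``at most one''.
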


\begin{proof}
  Since \(c_\gamma^*=0\), we have \(D_\gamma=U_\gamma\), \(\norm{D_\gamma}\leq1\), and \(D_\gamma j_\gamma=I\).
  Fix \(0\ne u\in E_\gamma\), and put \(x=j_\gamma u\) and \(q=\rank\gamma\).
  By \eqref{eq:extension-head} and atomic biorthogonality, we have
  \[
    \supp_{\rm FDD}x=\{q\},
    \qquad U_\eta x=0\quad(\rank\eta<q).
  \]
  For every probe \(\alpha\), we have
  \[
    e_{\alpha,\varphi}^*(x)=d_{\alpha,\varphi}^*(d_\gamma u)
    =\begin{cases}
      \langle\varphi,u\rangle,&\alpha=\gamma,\\
      0,&\alpha\ne\gamma.
    \end{cases}
  \]
  If \(\ell(\alpha)=h>0\) and \(\varphi\in B_{E_\alpha^*}\), all the \(d^*\)-terms in its evaluation analysis vanish on \(x\), because a probe is never a predecessor.
  Using the raw support of \(b_{t,\varphi}^*\) below \(p_t\), the vanishing of the ambient coordinates of \(x\) below \(q\), and \(\supp_{\rm FDD}x=\{q\}\), we obtain
  \[
    p_t\leq q\ \Longrightarrow\ b_{t,\varphi}^*(x)=0,
    \qquad
    p_{t-1}\geq q\ \Longrightarrow\ P_{(p_{t-1},\infty)}x=0.
  \]
  The cells are successive, so there is at most one index satisfying \(p_{t-1}<q<p_t\).
  On that cell the tail projection fixes \(x\).
  Hence we obtain
  \[ \abs{e_{\alpha,\varphi}^*(x)} \leq\frac{1}{m_h} \sum_{\{t:p_{t-1}<q<p_t\}}\abs{b_{t,\varphi}^*(x)} \leq\frac{1}{m_h}\norm x. \]
  Taking the supremum over all atoms \(\alpha\) and \(\varphi\in B_{E_\alpha^*}\) gives
  \[
    \norm u\leq\norm x
    \leq\max\{\norm u,m_1^{-1}\norm x\}.
  \]
  If \(\norm x>\norm u\), then the preceding inequality would give \(\norm x\leq m_1^{-1}\norm x\), contrary to \(m_1^{-1}<1\) and \(x\ne0\).
  Thus we obtain \(\norm x=\norm u\).
\end{proof}

For \(a\in\widehat A\), define on \(X^{\rm alg}\)
\begin{equation}\label{eq:multiplier-definition}
  M_ax=\sum_\gamma
  d_\gamma\bigl(\Theta_\gamma(a_{r(\gamma)})D_\gamma x\bigr).
\end{equation}
The sum is finite because \(x\) has finite FDD support, and it introduces no new nonzero BD coefficients.

\begin{lemma}[Scalarization identity]\label{lem:scalarization}
  For  \(\gamma\), \(\psi\in E_\gamma^*\), \(a,b\in\widehat A\), and \(x\in X^{\rm alg}\), we have
  \begin{align}
    U_\gamma M_ax
    &=\Theta_\gamma(a_{r(\gamma)})U_\gamma x,                      \label{eq:full-chain}\\
    e_{\gamma,\psi}^*(M_ax)
    &=\left\langle\psi,
    \Theta_\gamma(a_{r(\gamma)})U_\gamma x\right\rangle.       \label{eq:scalar-chain}
  \end{align}
  The multiplier extends to \(X\), and we have
  \begin{equation}\label{eq:exact-algebra}
    \norm{M_a}=\norm a,\qquad M_aM_b=M_{ab},\qquad M_1=I_X.
  \end{equation}
\end{lemma}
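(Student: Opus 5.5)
The plan is to prove \eqref{eq:full-chain} by induction on the birth rank of \(\gamma\), using the triangular coordinate identity \eqref{eq:triangular-coordinate}. Then \eqref{eq:scalar-chain} follows by pairing with \(\psi\), and the norm and algebra identities follow from \eqref{eq:full-chain} and the fact that the ambient coordinates norm \(X\).

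The first step handles probes and the base case. For a probe, \(c_\gamma^*=0\), so \(U_\gamma=D_\gamma\), and by \eqref{eq:multiplier-definition} and atomic biorthogonality \eqref{eq:atomic-biorthogonality} we get \(D_\gamma M_ax=\Theta_\gamma(a_{r(\gamma)})D_\gamma x\). More generally, for every atom, \(D_\gamma M_a x=\Theta_\gamma(a_{r(\gamma)})D_\gamma x\) holds directly by biorthogonality.

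For a core atom \(\gamma\) of rank \(n\), write
\[
\langle\psi,U_\gamma y\rangle=\langle\psi,D_\gamma y\rangle+c_\gamma^*(\psi)(y)
\]
with \(y=M_ax\). The correction \(c_\gamma^*\psi\) consists of a predecessor term \(\epsilon_\xi^*(\pi^{r(\gamma)}_{r(\xi)}\psi)\) and a packet term \(m_h^{-1}P^*_{(k,n)}B^*_{\mathbf b}\psi\).

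\begin{itemize}
\item For the predecessor term, the induction hypothesis at \(\xi\) gives \(U_\xi M_ax=R^*_{a_{r(\xi)}}U_\xi x\). Pairing with \(\pi\psi\) and using that \(\pi\) is a homomorphism gives \(\langle \psi a_{r(\gamma)},\cdot\rangle\) applied to the corresponding term for \(x\). Here the core dual action is \(\psi\cdot b=b\psi\) in \(A_r\); I will check the left/right convention carefully.
\item For the packet term, I first need \(P_I M_a=M_aP_I\), which is immediate from \eqref{eq:multiplier-definition} since \(M_a\) acts blockwise on the FDD coefficients. Then the induction hypothesis at each packet atom \(\eta_l\) (of lower rank) together with the covariance \eqref{eq:packet-covariance} of Lemma~\ref{lem:packet} gives \((B^*_{\mathbf b}\psi)(M_a z)=(B^*_{\mathbf b}(\psi\cdot a_{r(\gamma)}))(z)\).
\end{itemize}

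Summing the three parts yields \(\langle\psi,U_\gamma M_ax\rangle=\langle\psi\cdot a_{r(\gamma)},U_\gamma x\rangle\), which is \eqref{eq:full-chain}. This is essentially the same intertwining computation already carried out for \(T_{R,y}\) in Definition~\ref{def:formal-critical-data}, and I would model the argument on it.

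Next come the bounds. From \eqref{eq:full-chain} and contractivity of \(\Theta_\gamma\), we get \(\norm{U_\gamma M_ax}\le\norm{a_{r(\gamma)}}\norm{U_\gamma x}\). Taking the supremum over atoms, since the ambient coordinates norm \(X\), gives \(\norm{M_ax}\le\norm a\norm x\) on \(X^{\rm alg}\), so \(M_a\) extends to \(X\) with \(\norm{M_a}\le\norm a\).

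For the reverse inequality, I use unit probes. Take the unit probe \(\gamma\) of detector index \(r\), available by Proposition~\ref{prop:finite-extension}(d), and \(x=j_\gamma 1_{A_r}\). By Lemma~\ref{lem:formal-probe-isometry} we have \(\norm x=1\), and \(M_ax=j_\gamma(a_r)\) has norm \(\norm{a_r}\). Taking the supremum over \(r\) gives \(\norm{M_a}\ge\norm a\).

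For the algebra identities, \(M_aM_b=M_{ab}\) and \(M_1=I\) follow on \(X^{\rm alg}\) from \eqref{eq:multiplier-definition}, because \(D_\gamma M_bx=\Theta_\gamma(b_{r(\gamma)})D_\gamma x\), \(\Theta_\gamma\) is a unital homomorphism, and \(x=\sum d_\gamma D_\gamma x\). The identities then extend to \(X\) by density.

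The main obstacle is the packet term in the induction. The projection \(P^*_{(k,n)}\) must commute with \(M_a\), and the packet map must be evaluated on ambient coordinates \(U_{\eta_l}\) of lower-rank atoms with the detector lift \(\pi^{r(\gamma)}_{r(\eta_l)}\). I would handle this through \eqref{eq:packet-covariance}, using \(r(\eta_l)\le r(\gamma)\) from \eqref{eq:packet-cell}.
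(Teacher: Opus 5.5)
Your proposal is correct and follows the paper's proof essentially step for step. The paper runs the same induction on rank through the correction recurrence, written as the adjoint identity $U_\gamma x=D_\gamma x+JU_\xi x+m_h^{-1}\sum_l c_lS_l^{\mathbf b}U_{\eta_l}P_{(k,\rank\gamma)}x$, which is your scalar pairing in vector form. It then uses commutation of $M_a$ with the FDD projections, the predecessor intertwining and packet covariance, ambient-coordinate norming for $\norm{M_a}\leq\norm a$, unit probes for the reverse bound, and coefficientwise multiplicativity.

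The one slip is your parenthetical convention. On a core fibre the dual action is $\psi\cdot b=[R_b^*]^*\psi=R_b\psi=\psi b$, as in the paper's $c\cdot b=cb$, not $b\psi$. Your first bullet already uses the correct form $\psi a_{r(\gamma)}$; with $b\psi$ the predecessor and packet steps would not close when $A_r$ is noncommutative.
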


\begin{proof}
  We first show that the action on BD coefficients agrees with the action on ambient coordinates.
  This will give the upper norm bound, while unit probes will give the reverse bound.
  In this proof, abbreviate \(a_\eta=a_{r(\eta)}\) for each atom \(\eta\).
  Suppose first that \(\gamma\) is a nonprobe atom of level \(h\), local cutoff \(k\), packet \(\mathbf b=\mathbf b_\gamma=((c_l,\eta_l,\psi_l))_{l=1}^s\), and optional predecessor \(\xi\).
  Taking adjoints in the correction formula gives the recurrence for \(U_\gamma x\):
  \begin{equation}\label{eq:formal-coordinate-recurrence}
    U_\gamma x
    =D_\gamma x
    +\mathbf{1}_{\{\xi\ne\varnothing\}}
    JU_\xi x
    +\frac{1}{m_h}\sum_{l=1}^sc_lS_l^{\mathbf b}
    U_{\eta_l}P_{(k,\rank\gamma)}x .
  \end{equation}
  When \(\xi\ne\varnothing\), put \(J=(\pi_{r(\xi)}^{r(\gamma)})^*\), and when \(\xi=\varnothing\), omit the whole predecessor term here and in the induction below.
  The map \(S_l^{\mathbf b}=(R_l^{\mathbf b})^*\) is from Lemma~\ref{lem:packet}.
  For a probe, the recurrence is \(U_\gamma x=D_\gamma x\).

  We induct on \(\rank\gamma\).
  The definition of \(M_a\) gives, for every atom \(\eta\), \(D_\eta M_ax=\Theta_\eta(a_\eta)D_\eta x\).
  It also shows that \(M_a\) commutes with every FDD interval projection on \(X^{\rm alg}\).
  Indeed, we have
  \[
    \begin{aligned}
      D_\eta(P_IM_ax)
      =\mathbf1_{\{\rank\eta\in I\}}\Theta_\eta(a_\eta)D_\eta x=\Theta_\eta(a_\eta)D_\eta(P_Ix)
       =D_\eta(M_aP_Ix).
    \end{aligned}
  \]
  And hence we have \(P_IM_a=M_aP_I\) on \(X^{\rm alg}\).

  If \(\gamma\) is a probe, then we have \(U_\gamma=D_\gamma\), and we obtain
  \[ U_\gamma M_ax=D_\gamma M_ax =\Theta_\gamma(a_\gamma)D_\gamma x =\Theta_\gamma(a_\gamma)U_\gamma x. \]
  This proves the probe case.
  For a nonprobe predecessor, compatibility and multiplicativity of \(\pi_{r(\xi)}^{r(\gamma)}\) give, for \(u\in A_{r(\xi)}^*\) and \(c\in A_{r(\gamma)}\),
  \[
    \begin{aligned}
      \langle c,JR_{a_\xi}^*u\rangle
      =\langle\pi_{r(\xi)}^{r(\gamma)}c\,a_\xi,u\rangle=\langle\pi_{r(\xi)}^{r(\gamma)}(ca_\gamma),u\rangle
       =\langle c,R_{a_\gamma}^*Ju\rangle.
    \end{aligned}
  \]
  Therefore we obtain
  \[ JR_{a_\xi}^* =R_{a_\gamma}^*J. \]
  Every packet term intertwines by \eqref{eq:packet-covariance}.
  Since the predecessor and all packet atoms have smaller rank, the inductive hypothesis, the commutation with interval projections, and \eqref{eq:formal-coordinate-recurrence} give
  \begin{align*}
    U_\gamma M_ax
    &=R_{a_\gamma}^*D_\gamma x
    +\mathbf1_{\{\xi\ne\varnothing\}}JR_{a_\xi}^*U_\xi x
    +\frac{1}{m_h}\sum_{l=1}^sc_lS_l^{\mathbf b}
    \Theta_{\eta_l}(a_{\eta_l})U_{\eta_l}P_{(k,\rank\gamma)}x\\
    &=R_{a_\gamma}^*\left(D_\gamma x
    +\mathbf1_{\{\xi\ne\varnothing\}}JU_\xi x
    +\frac{1}{m_h}\sum_{l=1}^sc_lS_l^{\mathbf b}
    U_{\eta_l}P_{(k,\rank\gamma)}x\right)
    =\Theta_\gamma(a_\gamma)U_\gamma x.
  \end{align*}
  This proves \eqref{eq:full-chain}.
  Pairing with \(\psi\) proves \eqref{eq:scalar-chain}.

  On the full ambient \(\ell_\infty\)-sum, let \(\widetilde\Theta_a\) act on the \(\gamma\)-coordinate as \(\Theta_\gamma(a_\gamma)\).
  Contractivity of both module actions gives
  \[ \norm{\widetilde\Theta_ax} =\sup_\gamma\norm{\Theta_\gamma(a_\gamma)U_\gamma x} \leq\sup_\gamma\norm{a_\gamma}\norm{U_\gamma x} \leq\norm a\norm x. \]
  Thus we have \(\norm{\widetilde\Theta_a}\leq\norm a\), uniformly over both fibre types.
  Equations \eqref{eq:extension-recurrence} and \eqref{eq:full-chain} show that each range \(i_s\mathcal E_{[1,s]}\) is invariant under \(\widetilde\Theta_a\), whose restriction to that range equals \(M_a\).
  Hence we have \(\norm{M_ax}=\norm{\widetilde\Theta_ax}\leq\norm a\norm x\) for \(x\in X^{\rm alg}\).
  Therefore \(M_a\) extends to \(X\) and \(\norm{M_a}\leq\norm a\).
  For each detector index \(r\), choose a unit probe \(\gamma\) of index \(r\) and put \(z_\gamma=j_\gamma1_{A_r}\).
  By Lemma~\ref{lem:formal-probe-isometry}, we have \(\norm{z_\gamma}=1\), while \eqref{eq:multiplier-definition} gives \(M_az_\gamma=j_\gamma a_r\) and \(\norm{M_az_\gamma}=\norm{a_r}\).
  Since \(\norm{z_\gamma}=1\), we obtain \(\norm{M_a}\geq\norm{M_az_\gamma}=\norm{a_r}\).
  Taking the supremum over \(r\) gives \(\norm{M_a}\geq\sup_r\norm{a_r}=\norm a\), which proves the norm identity in \eqref{eq:exact-algebra}.
  Finally, using \(R_a^*R_b^*=R_{ab}^*\) on core fibres and \(L_aL_b=L_{ab}\) on unit-probe fibres, we obtain
  \[ D_\gamma M_aM_b =\Theta_\gamma(a_\gamma) \Theta_\gamma(b_{r(\gamma)})D_\gamma =\Theta_\gamma((ab)_{r(\gamma)})D_\gamma. \]
  Thus every coefficient of \(M_aM_b-M_{ab}\) is zero.
  Since the FDD is total, we obtain \(M_aM_b=M_{ab}\).
  The same coefficient calculation and unitality of every module give \(M_1=I_X\), proving \eqref{eq:exact-algebra}.
  The definition also gives \(M_{\lambda a+\mu b}=\lambda M_a+\mu M_b\) on \(X^{\rm alg}\).
  Therefore boundedness and density extend this identity to \(X\).
\end{proof}

\subsection{Interval restrictions, common stems, and shielding}

Let \(I\) be an FDD interval.
Restrict a labelled scalarized coordinate evaluation recursively by applying \(P_I^*\) to its analysis in Lemma~\ref{lem:formal-evaluation-analysis}.
At a probe, retain \(d_{\eta,\psi}^*\) when \(\rank\eta\in I\) and replace it by zero otherwise.
For a scalarized packet \(B_{\mathbf b}^*\varphi=\sum_lc_le_{\eta_l,\psi_l'}^*\), restrict each lower-rank coordinate recursively in its original position and retain zero positions.
This operation changes only the analysis and leaves the represented packet stored in the atom record unchanged.
The recursion terminates because ranks decrease strictly.

For an evaluation as in Lemma~\ref{lem:formal-evaluation-analysis}, put
\[ e_t=e_{\xi_t,\varphi_t}^*,\qquad e_0=0, \qquad t_t=\frac{1}{m_h}b_{t,\varphi}^*P_{(p_{t-1},\infty)}, \qquad t_{a+1}=0. \]
If \(w\in\N\), let \(r_w\) be determined by \(p_{r_w}\leq w<p_{r_w+1}\), with \(r_w=0\) before \(p_1\) and \(r_w=a\) from \(p_a\) onwards.
Since \(P_{p_t-1}^*b_{t,\varphi}^*=b_{t,\varphi}^*\), we have
\[
  t_t=\frac{1}{m_h}(P_{p_t-1}^*-P_{p_{t-1}}^*)b_{t,\varphi}^*
      =P_{(p_{t-1},p_t)}^*t_t.
\]
Thus each \(t_t\) has BD support inside \((p_{t-1},p_t)\), and we obtain
\[
  P_{(w,\infty)}^*t_t=
  \begin{cases}
    0,&t\leq r_w,\\
    t_t,&t\geq r_w+2.
  \end{cases}
  \qquad
  P_{(w,\infty)}^*e_{r_w}=0.
\]
Only \(t_{r_w+1}\) can be cut by the boundary \(w\), so we obtain
\begin{equation}\label{eq:tail-restriction}
  \begin{aligned}
    P_{(w,\infty)}^*e_a
    &=\sum_{t=r_w+1}^ad_{\xi_t,\varphi_t}^*
      +\sum_{t=r_w+2}^at_t
      +P_{(w,\infty)}^*t_{r_w+1}\\
    &=e_a-e_{r_w}+(P_{(w,\infty)}^*-I)t_{r_w+1}.
  \end{aligned}
\end{equation}
Here empty sums are zero.
The conventions \(e_0=0\) and \(t_{a+1}=0\) cover both \(w<p_1\) and \(w\geq p_a\).

The restriction formula below is the scalarized form of the tail calculation in \cite[Lemma~5.3]{ArgyrosHaydon2011}.

\begin{lemma}[Interval restriction formula]\label{lem:formal-restriction}
  If \(u,v\in\N\) and \(u<v\), then we have
  \begin{equation}\label{eq:interval-restriction}
    \begin{aligned}
      P_{(u,v]}^*e_a
      =e_{r_v}-e_{r_u}
        +(P_{(u,\infty)}^*-I)t_{r_u+1}-(P_{(v,\infty)}^*-I)t_{r_v+1}.
    \end{aligned}
  \end{equation}
  The restriction preserves the stored labelled analysis and may partially cut only the two boundary packet cells.
\end{lemma}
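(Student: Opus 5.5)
The plan is to derive \eqref{eq:interval-restriction} from the tail formula \eqref{eq:tail-restriction} by taking a difference of two tail restrictions. FDD interval projections satisfy \(P_{(u,v]}=P_{(u,\infty)}-P_{(v,\infty)}\) on \(X\). Since \(P_I^*f=f\circ P_I\), the dual identity
\[
P_{(u,v]}^*e_a=P_{(u,\infty)}^*e_a-P_{(v,\infty)}^*e_a
\]
holds as functionals on \(X\). We apply \eqref{eq:tail-restriction} once with \(w=u\) and once with \(w=v\). In the difference the common term \(e_a\) cancels, and what remains is
\[
e_{r_v}-e_{r_u}+(P_{(u,\infty)}^*-I)t_{r_u+1}-(P_{(v,\infty)}^*-I)t_{r_v+1},
\]
which is exactly \eqref{eq:interval-restriction}. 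The conventions \(e_0=0\) and \(t_{a+1}=0\) are already built into \eqref{eq:tail-restriction}. So the cases \(u<p_1\), \(v\geq p_a\), and \(r_u=r_v\) need no separate treatment. In the last case the formula collapses to the single cut cell \(t_{r_u+1}\), projected onto \((u,v]\).

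The substantive step is therefore the derivation of \eqref{eq:tail-restriction} itself, which I would recheck rather than take from the text. It rests on three facts, each read off from \eqref{eq:evaluation-grammar}.
\begin{enumerate}[label={\textup{(\roman*)}},leftmargin=27pt]
  \item \(P_{(w,\infty)}^*d_{\xi_t,\varphi_t}^*\) equals \(d_{\xi_t,\varphi_t}^*\) when \(p_t>w\) and vanishes when \(p_t\leq w\). This follows from atomic biorthogonality \eqref{eq:atomic-biorthogonality} together with \(P_I d_\gamma=\mathbf1_{\{\rank\gamma\in I\}}d_\gamma\).
  \item Each \(t_t\) has BD support in the cell \((p_{t-1},p_t)\). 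This uses \(P_{p_t-1}^*b_{t,\varphi}^*=b_{t,\varphi}^*\), which comes from the raw support condition \eqref{eq:packet-cell} and unit triangularity.
  \item Successiveness of the cells implies that only the cell with index \(r_w+1\) can straddle \(w\).
\end{enumerate}
Summing the surviving terms from \(t=r_w+1\) to \(a\) and comparing with the partial sum \(e_{r_w}\) of the same analysis gives \eqref{eq:tail-restriction}.

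For the final assertion about the labelled analysis, I would note that \(e_{r_v}-e_{r_u}\) consists of the unchanged edges \(d^*_{\xi_t,\varphi_t}\) and cells \(t_t\) for \(r_u<t\leq r_v\), with their stored packets. The only modified pieces are the two boundary cells \(t_{r_u+1}\) and \(t_{r_v+1}\), and each of these is acted on by a projection. Applying the recursive restriction convention described before the lemma to the lower-rank coordinates of these two cells leaves all zero positions and represented packets in place. I expect the main obstacle to be the bookkeeping at the boundaries. When \(r_u=r_v\), the same cell is cut twice, and one must check that the two correction terms combine to \(P_{(u,v]}^*t_{r_u+1}\). One must also check that \(P_{(w,\infty)}^*e_{r_w}=0\), which holds because \(e_{r_w}\) is supported at ranks at most \(p_{r_w}\leq w\), again by unit triangularity.
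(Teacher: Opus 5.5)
Your proposal is correct and matches the paper's proof: write \(P_{(u,v]}^*=P_{(u,\infty)}^*-P_{(v,\infty)}^*\), subtract \eqref{eq:tail-restriction} at \(v\) from the same identity at \(u\), and read off the boundary-cell and labelling assertions from the successiveness of the cells and the recursive restriction convention. Your re-derivation of \eqref{eq:tail-restriction} through facts (i)--(iii) is the same computation the paper carries out in the text just before the lemma.
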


\begin{proof}
  Since \(P_{(u,v]}^*=P_{(u,\infty)}^*-P_{(v,\infty)}^*\), subtracting \eqref{eq:tail-restriction} at \(v\) from the identity at \(u\) gives \eqref{eq:interval-restriction}.
  Since the packet cells are successive, only the first and last cells meeting \((u,v]\) can be partially cut.
  Recursive restriction of packet entries terminates because ranks strictly decrease. By definition, the stored labels and histories are unchanged.
\end{proof}

The next comparison uses the injective-coding argument of \cite[Lemma~4.6]{ArgyrosHaydon2011}.

\begin{lemma}[Common-stem comparison]\label{lem:formal-common-stem}
  Let two outer histories have seed level \(q\).
  If their seed records differ, every coded inner level in one history differs from every coded inner level in the other.
  If the seed records agree, let \(\mathfrak p\) be their longest common realized stem.
  Their labelled strong packets and certificates agree along \(\mathfrak p\).
  When both strong packets immediately following \(\mathfrak p\) exist, their inner terminals have the common level \(\sigma(q,\mathfrak p)\), while apart from this possible pair, every later inner level in one history differs from every later inner level in the other.
\end{lemma}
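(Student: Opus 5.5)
The comparison is a purely combinatorial consequence of the injectivity of the coding map \(\sigma\) and of the fact that outer stems are built as finite words whose later entries record the earlier ones. I would argue directly from the definitions in Definition~\ref{def:formal-critical-data}, Remark~\ref{rem:seed-inner-outer} and the requirement \textup{(R4)}, without any analytic input.

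\textbf{Different seeds.} Every coded inner level in an outer history over the seed \(\vartheta\) has the form \(p_s=\sigma(q,\mathfrak p_{s-1})\). Here \(\mathfrak p_{s-1}\) is a realized stem whose first entry is \(\vartheta\). If the two seed records differ, then every stem of one history differs as a finite word from every stem of the other, since they already differ in the seed entry. The pairs \((q,\mathfrak p)\) are therefore distinct. Injectivity of \(\sigma\) then shows that every inner level of one history differs from every inner level of the other.

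\textbf{Equal seeds.} Both stems are words beginning with the same \(\vartheta\), so I would take \(\mathfrak p\) to be their longest common initial segment of triples. This is automatically a realized stem. Each triple \((\operatorname{id}\zeta_s,\mathfrak r_0(\zeta_s),\mathfrak c_s)\) contains the core record of \(\zeta_s\), and by \textup{(R4)} that record contains the strong packet \(\mathbf s(\eta_s)\). Hence agreement of the triples along \(\mathfrak p\) gives agreement of the labelled strong packets and certificates along \(\mathfrak p\).

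\textbf{Levels after \(\mathfrak p\).} The inner levels up to and including the next step are \(\sigma(q,\mathfrak p_{s-1})\) for \(\mathfrak p_{s-1}\) an initial segment of \(\mathfrak p\). In particular, the next inner terminal in either history has the common level \(\sigma(q,\mathfrak p)\), provided both strong packets following \(\mathfrak p\) exist.

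Any later inner level in the first history is \(\sigma(q,\mathfrak p')\), where \(\mathfrak p'\) properly extends \(\mathfrak p\) by the first history's next triple. A later level in the second history is \(\sigma(q,\mathfrak p'')\), where \(\mathfrak p''\) properly extends \(\mathfrak p\) by the second history's next triple. By maximality of \(\mathfrak p\), these next triples differ, so \(\mathfrak p'\ne\mathfrak p''\). Injectivity of \(\sigma\) then separates these levels.

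It remains to rule out a collision between a later level \(\sigma(q,\mathfrak p')\) in one history and an earlier level \(\sigma(q,\mathfrak p_{s-1})\), with \(\mathfrak p_{s-1}\subseteq\mathfrak p\), in the other. I would exclude this by noting that \(\mathfrak p'\) is strictly longer than \(\mathfrak p_{s-1}\), so the words differ and injectivity applies again.

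\textbf{Main obstacle.} The delicate point is justifying that a mismatch of the \emph{next} triples is reflected in the stems used as \(\sigma\)-inputs. One history might end exactly at \(\mathfrak p\), or might have a strong packet after \(\mathfrak p\) but no completed outer continuation. I would treat these cases separately. If a history has no triple after \(\mathfrak p\), it has no later inner level beyond \(\sigma(q,\mathfrak p)\), so the claim is vacuous apart from the exceptional pair. I would also use the fact that identifiers are never reused, so distinct outer atoms give distinct words.
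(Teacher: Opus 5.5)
Your proposal is correct and follows the paper's proof. Both rest on injectivity of \(\sigma\), on the seed being the first entry of every stem, and on the fact that equal finite words have equal lengths and equal entries, which forces \(\sigma(q,\mathfrak p_{i-1})=\sigma(q,\tau_{j-1})\) only when \(i=j\le a+1\). Your case split into later-versus-later and later-versus-earlier levels just unpacks this single equivalence. The appeal to non-reused identifiers is unnecessary, since maximality of \(\mathfrak p\) already makes the next triples differ whenever both exist.
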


\begin{proof}
  Let \(\mathfrak p_{i-1}\) and \(\tau_{j-1}\) be the stems preceding the \(i\)-th and \(j\)-th strong packets in the two histories.
  Injectivity gives
  \[ \sigma(q,\mathfrak p_{i-1})=\sigma(q,\tau_{j-1}) \quad\Longleftrightarrow\quad \mathfrak p_{i-1}=\tau_{j-1}. \]
  Each stem contains its seed, so different seeds make this equality impossible.
  Suppose the seeds agree and their longest common stem \(\mathfrak p\) has length \(a\).
  Equality of finite words forces equality of their lengths and every earlier entry, so whenever both packets exist, we have
  \[ \mathfrak p_{i-1}=\tau_{j-1} \quad\Longleftrightarrow\quad i=j\leq a+1. \]
  For \(i=j\leq a\), the common word fixes the terminal identifier, strong packet, and certificate.
  At \(i=j=a+1\), it fixes only the common input \((q,\mathfrak p)\), and thus the common inner level.
  There are no further equal inner levels across the two histories.
\end{proof}

For a finite-stage vector \(x=i_nu\) and a set \(L\subseteq\N\), let \(Q_Lu\) retain the atom summands of the finite \(\ell_\infty\)-sum whose levels lie in \(L\).
Both complementary coordinate projections are contractive: \(\norm{Q_Lu}\leq\norm u\) and \(\norm{(I-Q_L)u}\leq\norm u\).
The two vectors \(i_nQ_Lu\) and \(i_n(I-Q_L)u\) sum to \(x\), and we have
\begin{equation}\label{eq:level-deletion}
  \norm{i_nQ_Lu},\ \norm{i_n(I-Q_L)u}\leq M\norm x.
\end{equation}
Indeed, \(i_n\) fixes its first \(n\) ambient blocks, so we have \(\norm u\leq\norm x\).
Hence we obtain
\[ \norm{i_nTu}\leq M\norm{Tu}\leq M\norm u\leq M\norm x \qquad(T=Q_L\text{ or }I-Q_L). \]
This is the finite-stage local-level decomposition used in Part~II.

Let \(\mathcal P_r^{\rm u}\) and \(\mathcal P_r^{\rm c}\) be the unit and core probes of detector index \(r\), and put \(j_\gamma=d_\gamma:E_\gamma\to X\) for every probe.
Their norm properties are given by Lemma~\ref{lem:formal-probe-isometry}.

\begin{proposition}[Uniform probe shielding]\label{prop:formal-shielding}
  If \(\gamma\) is a probe and \(h>0\), then we have
  \begin{equation}\label{eq:shielding}
    \sup_{\substack{\ell(\alpha)=h,\ \psi\in B_{E_\alpha^*}\\
    u\in B_{E_\gamma}}}
    \abs{e_{\alpha,\psi}^*(j_\gamma u)}
    \leq\frac{1}{m_h}.
  \end{equation}
  The estimate is uniform in the detector indices, fibre dimensions, and probe tags.
\end{proposition}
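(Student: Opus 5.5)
The plan is to reuse, almost verbatim, the estimate in the proof of Lemma~\ref{lem:formal-probe-isometry}, keeping track of the constant. Fix a probe \(\gamma\) of rank \(q\), a vector \(u\in B_{E_\gamma}\), and put \(x=j_\gamma u\). By Lemma~\ref{lem:formal-probe-isometry} we have \(\norm x=\norm u\leq1\). By \eqref{eq:extension-head} and atomic biorthogonality \eqref{eq:atomic-biorthogonality}, we also have \(\supp_{\rm FDD}x=\{q\}\) and \(U_\eta x=0\) for \(\rank\eta<q\). Now let \(\alpha\) be an atom with \(\ell(\alpha)=h>0\), so \(\alpha\) is a core atom, and let \(\psi\in B_{E_\alpha^*}\). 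Expand \(e_{\alpha,\psi}^*\) by the evaluation analysis \eqref{eq:evaluation-grammar}, with chain \(\xi_1\prec\cdots\prec\xi_a=\alpha\), ranks \(p_t\), and \(p_0=\chi(\alpha)\).

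The first group of terms is \(\sum_td_{\xi_t,\psi_t}^*(x)\). Each term vanishes by atomic biorthogonality, since every \(\xi_t\) is a core atom and so differs from the probe \(\gamma\); probes are never predecessors. For the packet terms \(m_h^{-1}b_{t,\psi}^*(P_{(p_{t-1},\infty)}x)\) we use two facts. First, \(b_{t,\psi}^*\) is supported on ambient ranks below \(p_t\), because its raw support lies in \((p_{t-1},p_t)\). Second, the ambient coordinates of \(x\) below \(q\) vanish. Together these give \(b_{t,\psi}^*(x)=0\) when \(p_t\leq q\). When \(p_{t-1}\geq q\), the tail projection kills \(x\). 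The cells are successive, so at most one index \(t\) satisfies \(p_{t-1}<q<p_t\). For that index, \(P_{(p_{t-1},\infty)}x=x\), and Lemma~\ref{lem:formal-evaluation-analysis} places \(b_{t,\psi}^*\) in the absolute convex hull of norm-one scalarized ambient coordinates. Hence \(\abs{b_{t,\psi}^*(x)}\leq\norm x\leq1\). The remaining possibility, \(q=p_t\) for some \(t\), means that \(\gamma\) and \(\xi_t\) have the same rank. Then \(b_{t,\psi}^*\) lives strictly below \(q\) and vanishes on \(x\), while the later cells have \(p_{t'-1}\geq q\).

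Summing the three cases gives \(\abs{e_{\alpha,\psi}^*(x)}\leq m_h^{-1}\). The bound uses only the contractivity of the packet lifts (Lemma~\ref{lem:packet}), the isometry \(\norm{j_\gamma u}=\norm u\), and rank bookkeeping. None of these depends on detector indices, fibre dimensions, or on whether \(\gamma\) is a unit or a core probe, so the estimate is uniform, as claimed.

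The main obstacle is justifying \(b_{t,\psi}^*(x)=0\) for \(p_t\leq q\). The packet functional is a combination of ambient coordinates \(e_{\eta_l,\cdot}^*\) at ranks strictly below \(p_t\leq q\). The claim then follows because \(x=i_q(0,\ldots,0,u)\) has zero ambient coordinates at all ranks below \(q\), which is exactly \eqref{eq:extension-head}. I would make this ambient-versus-FDD support distinction explicit, since the packet is an ambient combination while the tail projection acts through the FDD.
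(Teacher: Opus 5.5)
Your proof is correct and is essentially the paper's argument: the paper derives the shielding bound from the evaluation estimate already carried out in the proof of Lemma~\ref{lem:formal-probe-isometry}, with the same three-way case split on the cells, combined with \(\norm{j_\gamma u}=\norm u\). Your separate case \(q=p_t\) is not needed, since it is already covered by the case \(p_t\leq q\).
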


This follows from the evaluation estimate in the proof of Lemma~\ref{lem:formal-probe-isometry} and the identity \(\norm{j_\gamma u}=\norm u\).

\subsection{Properties of the constructed space}

\begin{theorem}[Construction and properties of the space]\label{thm:datum}
  Let \(\mathscr A=(A_r,\pi_r^s)_{0\leq r\leq s}\) be any inverse system satisfying \eqref{eq:inverse-system}--\eqref{eq:metric-quotient}.
  Construction~\ref{con:fair-recursion} is a well-defined oracle-recursive monotone finite-extension construction.
  It produces a separable block BD space \(X=X_{\mathscr A}\) with an FDD satisfying the following conditions.
  \begin{enumerate}[label={\textup{(G\arabic*)}},leftmargin=*]
    \item the initial projection bound is \(M\) and the interval projection bound is \(\kappa=2M\).
    \item every positive-level scalarized evaluation has the analysis \eqref{eq:evaluation-grammar}, with contractive predecessor and packet maps, while every level-zero probe evaluation equals its own \(d^*\)-coordinate and every probe injection is isometric as in \eqref{eq:formal-probe-bounds}.
    \item canonical restriction, common-stem comparison, and the decomposition by local levels have the properties stated above.
    \item the cofinal extension properties of Proposition~\ref{prop:finite-extension} hold, and canonical unit and core probes of every detector index occur cofinally.
    \item the shielding estimate \eqref{eq:shielding} holds uniformly in the detector indices and fibre dimensions.
    \item the operators in \eqref{eq:multiplier-definition} satisfy the scalarization, norm, and multiplicative identities \eqref{eq:full-chain}--\eqref{eq:exact-algebra}.
  \end{enumerate}
\end{theorem}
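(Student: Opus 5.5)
Theorem~\ref{thm:datum} is a summary statement, so the plan is to assemble the results already proved in Sections~\ref{sec:bd-extension}--\ref{sec:formal-datum} and the evaluation section, checking at each point that the hypotheses used there are supplied by Construction~\ref{con:fair-recursion} for an arbitrary tower \(\mathscr A\).

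\textbf{Well-definedness of the construction.} I will argue by induction on the rank \(n\). Given the coded state \(\mathcal D_{n-1}\):
\begin{itemize}
\item the scheduler inspects only the finitely many codes \(e\leq n\);
\item each admissibility test is a finite list of queries in the language of Definition~\ref{def:detection-system};
\item the arithmetic searches of Remark~\ref{rem:oracle-arithmetic} and the computation of \(\sigma\) terminate, because \(\mathbb F_{\mathscr A}\) is a countable field closed under the relevant operations (Lemma~\ref{lem:coded-normalization});
\item the triangular inversion \((I-C_N)^{-1}=\sum_{j<N}C_N^j\) has coded entries.
\end{itemize}
Hence the transition \(\mathfrak M^{\mathcal O_{\mathscr A}}(n,\mathcal D_{n-1})=\mathcal D_n\) halts after finitely many queries. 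Lemma~\ref{lem:formal-legality} supplies monotonicity (end extension without injury). Together these give a well-defined, oracle-recursive, monotone finite-extension construction.

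\textbf{The space \(X\) and (G1).} Proposition~\ref{prop:bd-legality} puts every recorded correction in the form required by Lemma~\ref{lem:block-bd} with \(\theta=m_1^{-1}\). That lemma then gives \(\sup_s\norm{P_s^*}\leq M\). The extension maps \(i_s\) of \eqref{eq:finite-extension} are consistent by \eqref{eq:finite-dual-consistency} and \eqref{eq:nested-extensions}. So \(X=\overline{\bigcup_s i_s\mathcal E_{[1,s]}}\) is a closed subspace of the ambient \(\ell_\infty\)-sum, spanned by the blocks \(Z_n\), and it has the FDD bounds \eqref{eq:formal-block-bounds}. This is (G1).

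Separability follows because each \(F_n\) is finite dimensional: \(\abs{\Delta_n}\leq 2n+3\) and every fibre \(A_r\) or \(A_r^*\) is finite dimensional. So \(X^{\rm alg}\) is a countable increasing union of finite-dimensional spaces.

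\textbf{The remaining conditions are read off from earlier results.}
\begin{itemize}
\item (G2) is Lemma~\ref{lem:formal-evaluation-analysis} together with Lemma~\ref{lem:formal-probe-isometry}.
\item (G3) combines Lemma~\ref{lem:formal-restriction}, Lemma~\ref{lem:formal-common-stem}, and the level-deletion estimate \eqref{eq:level-deletion}.
\item (G4) is Proposition~\ref{prop:finite-extension}, including part (d) for the probes.
\item (G5) is Proposition~\ref{prop:formal-shielding}.
\item (G6) is Lemma~\ref{lem:scalarization}.
\end{itemize}

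\textbf{Main obstacle.} The step needing most care is that the properties used in these lemmas really hold for the completed infinite object, not just at finite stages. Concretely:
\begin{itemize}
\item histories, stems and certificates are fixed at birth and never altered;
\item the coordinate maps \(D_\gamma\) and \(U_\gamma\) computed at a finite stage agree with those on \(X\);
\item the uniform constants do not depend on \(\mathscr A\).
\end{itemize}
I will handle this by citing the compatibility identities \eqref{eq:extension-head}--\eqref{eq:nested-extensions} for the maps, and the no-injury clause of Lemma~\ref{lem:formal-legality} for the records. I will also check that every constant (\(M\), \(\kappa\), the shielding bound \(m_h^{-1}\)) depends only on the numerical parameters in \eqref{eq:constants} and not on the dimensions of the fibres.
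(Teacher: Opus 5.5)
Your proposal is correct and matches the paper's proof, which likewise uses Proposition~\ref{prop:bd-legality} and Lemma~\ref{lem:block-bd} for \textup{(G1)} and separability. It then cites Lemmas~\ref{lem:formal-evaluation-analysis}, \ref{lem:formal-probe-isometry}, \ref{lem:formal-restriction}, \ref{lem:formal-common-stem}, \eqref{eq:level-deletion}, Proposition~\ref{prop:finite-extension}, Proposition~\ref{prop:formal-shielding} and Lemma~\ref{lem:scalarization} for \textup{(G2)}--\textup{(G6)}. Your explicit well-definedness check repeats the discussion placed right after Construction~\ref{con:fair-recursion}, with one correction: the \(\sigma\)-search terminates because only finitely many smaller codes are inspected and only finitely many inner levels have been assigned, not because of field closure.
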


\begin{proof}
  Lemma~\ref{lem:formal-legality} shows that each correction uses only atoms of smaller rank, and Proposition~\ref{prop:bd-legality} verifies the two correction forms.
  Lemma~\ref{lem:block-bd} therefore gives the uniform dual projection bound \(M\).
  The finite adjoint construction \eqref{eq:finite-extension} and its consistency \eqref{eq:finite-dual-consistency} produce the nested extension ranges, and their closure is a block BD space.
  Every \(F_n\) is a finite sum of finite-dimensional fibres and there are countably many ranks, so \(X\) is separable.
  This proves \textup{(G1)}.

  Lemma~\ref{lem:formal-evaluation-analysis} and Lemma~\ref{lem:formal-probe-isometry} prove \textup{(G2)} from the one-edge recurrence, including contractivity, the absolute-convex-hull statement, and the probe norm identity.
  Lemma~\ref{lem:formal-restriction}, Lemma~\ref{lem:formal-common-stem}, and \eqref{eq:level-deletion} prove the three assertions in \textup{(G3)}.
  Restriction preserves every stored packet and stem, as required by the no-injury property.

  Persistence and fairness follow from Lemma~\ref{lem:formal-legality}, and Proposition~\ref{prop:finite-extension} allows packet choices after earlier atoms have been realized.
  This gives \textup{(G4)}.
  Proposition~\ref{prop:formal-shielding} gives \textup{(G5)} with the same constants for both fibre types and all dimensions.
  Finally, Lemma~\ref{lem:scalarization}, proved using \eqref{eq:formal-coordinate-recurrence}, gives \textup{(G6)}.
\end{proof}

  \part{Analytic estimates and operator classification}


\section{Rapidly increasing sequences and the basic inequality}
\label{sec:ris-method}

We use the RIS and basic-inequality method of the Argyros--Haydon construction and its subsequent Bourgain--Delbaen variants \cite{ArgyrosHaydon2011,Tarbard2012,Tarbard2013,Zisimopoulou2014,MotakisPuglisiZisimopoulou2016,ManoussakisPelczarSwietek2017,ArgyrosMotakis2019,MotakisPuglisiTolias2020,Motakis2024,MotakisPelczar2025}.
For the background in recursive norming, weighted averages, and mixed Tsirelson spaces, see \cite{Schlumprecht1991,GowersMaurey1993,GowersMaurey1997,ArgyrosDeliyanni1997,ArgyrosDeliyanniKutzarovaManoussakis1998}.
A general treatment of HI methods is given in \cite{ArgyrosTolias2004}.
For vector-valued BD sums and later AH implementations, see also \cite{Tarbard2013,Zisimopoulou2014,ManoussakisPelczarSwietek2017,MotakisPuglisiTolias2020,MotakisPelczar2025}.

We retain the BD notation from Subsection~\ref{sec:bd-realization} and the evaluation analysis in \eqref{eq:evaluation-grammar}.
Thus \(E_\gamma\) is the fibre at \(\gamma\), \(\ell(\gamma)\) and \(\rank\gamma\) are its level and rank, and \(e_{\gamma,\varphi}^*\) and \(d_{\gamma,\varphi}^*\) are the scalarized ambient and BD coordinates.
The FDD projection onto an interval \(I\) is denoted by \(P_I\), with \(\norm{P_I}\leq\kappa\).
For a scalarized evaluation \(F=e_{\gamma,\varphi}^*\), we write \(\ell(F)=\ell(\gamma)\).
The sequences \((m_j),(n_j)\) and the constants below are those fixed in \eqref{eq:constants}--\eqref{eq:numerical-three}.
Unless otherwise stated, scalarized evaluations in norm estimates have \(\varphi\in B_{E_\gamma^*}\), and all unqualified suprema over \((\gamma,\varphi)\) use this range.

\subsection{Auxiliary norming set}

Let \(c_{00}(\N_+)\) be the space of finitely supported scalar sequences, with unit vectors \((e_k)\) and coordinate functionals \(e_k^*\).
We use the mixed Tsirelson norming set generated by successive \((3n_h,m_h^{-1})\)-averages, as in \cite{ArgyrosDeliyanni1997} and \cite[Section~2.4]{ArgyrosHaydon2011}.
Since a functional may have more than one tree analysis, we retain the root level as part of the data.
Define \(\mathscr W\) recursively as follows.
\begin{enumerate}[label={\textup{(\roman*)}},leftmargin=27pt]
  \item For \(k\in\N_+\) and \(\abs{\zeta}=1\), the tagged leaf \([\zeta e_k^*]\) belongs to \(\mathscr W\), has value \(\operatorname{val}[\zeta e_k^*]=\zeta e_k^*\), and has root level \(0\).
  \item If \(\mathfrak g_1,\ldots,\mathfrak g_d\in\mathscr W\) have successive supports of their values \(\operatorname{val}(\mathfrak g_1),\ldots,\operatorname{val}(\mathfrak g_d)\) and \(d\leq3n_h\), then the new tagged tree \(\mathfrak g=[h;\mathfrak g_1,\ldots,\mathfrak g_d]\) belongs to \(\mathscr W\), has \(\operatorname{root}(\mathfrak g)=h\), and has value
  \[ \operatorname{val}(\mathfrak g) =\frac{1}{m_h}\sum_{t=1}^d \operatorname{val}(\mathfrak g_t). \]
\end{enumerate}
Adjoin a zero symbol and set \(\operatorname{val}(0)=0\).
For \(x\in c_{00}(\N_+;\K)\), put
\[ \norm{x}_{T_{\mathrm{aux}}} =\sup_{\mathfrak g\in\mathscr W} \abs{\operatorname{val}(\mathfrak g)(x)}. \]
Let \(T_{\mathrm{aux}}\) be the completion of \(c_{00}(\N_+\) under this norm.

The following definition is the fibre version of \cite[Definition~5.1]{ArgyrosHaydon2011}.

\begin{definition}\label{def:ris}
  A block sequence \((x_k)\) is a \(C\)-RIS if \(\norm{x_k}\leq C\) and there are strictly increasing positive integers \((j_k)\) such that
  \[ j_{k+1}>\max\ran x_k,\qquad \abs{e_{\gamma,\varphi}^*(x_k)}\leq\frac{C}{m_h} \]
  whenever \(0<\ell(\gamma)=h<j_k\) and \(\varphi\in B_{E_\gamma^*}\).
\end{definition}

\begin{corollary}[Shielded probe RISs]\label{cor:probe-ris}
  The shielding estimate in Proposition~\ref{prop:formal-shielding} has the following consequences.
  \begin{enumerate}[label={\textup{(\roman*)}}]
    \item Every sufficiently separated sequence \((j_{\gamma_k}u_k)\), with \(u_k\in B_{E_{\gamma_k}}\), is a \(1\)-RIS.
    \item If probes $\gamma_k,\delta_k$ satisfy \(\max\{\rank\gamma_k,\rank\delta_k\}<\min\{\rank\gamma_{k+1},\rank\delta_{k+1}\},\)
    then uniformly for \(u_k\in B_{E_{\gamma_k}}\) and \(v_k\in B_{E_{\delta_k}}\), we have both \((j_{\gamma_k}u_k)\) and \((j_{\delta_k}v_k)\) are \(1\)-RISs, and \( (j_{\gamma_k}u_k+j_{\delta_k}v_k) \)is a \(2\)-RIS.
    \item If \((x_k)\) is a \(C\)-RIS, then, after passing to a subsequence, probes \(\gamma_k\) can be chosen with \(x_k<\gamma_k<x_{k+1}\) so that \((x_k+j_{\gamma_k}u_k)\) is a \((C+1)\)-RIS uniformly for \(u_k\in B_{E_{\gamma_k}}\).
  \end{enumerate}
\end{corollary}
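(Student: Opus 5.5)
The three items reduce to checking Definition~\ref{def:ris} directly. Three inputs suffice: the shielding bound \eqref{eq:shielding}, the isometry \(\norm{j_\gamma u}=\norm u\) from Lemma~\ref{lem:formal-probe-isometry}, and the fact that \(j_\gamma u\) has FDD support \(\{\rank\gamma\}\). The last fact comes from \eqref{eq:extension-head} and atomic biorthogonality \eqref{eq:atomic-biorthogonality}, so \(\ran j_\gamma u=\{\rank\gamma\}\) whenever \(u\neq0\). The shielding bound holds at \emph{every} positive level \(h\), with constant \(1\) and uniformly in detector index, dimension and tag. Hence the level restriction \(h<j_k\) in the definition imposes nothing for probe vectors, and the only real choice is the sequence \((j_k)\).

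For (i), I take \(\gamma_k\) with strictly increasing ranks and put \(j_k=\rank\gamma_{k-1}+1\), with \(j_1=1\). Then \(j_{k+1}>\max\ran x_k\), \(\norm{x_k}\leq1\), and for \(0<\ell(\gamma)=h\) we get \(\abs{e^*_{\gamma,\varphi}(x_k)}\leq m_h^{-1}\) by \eqref{eq:shielding}, applied to \(\varphi\in B_{E_\gamma^*}\) and \(u_k\in B_{E_{\gamma_k}}\). Zero vectors cause no problem, since the block condition is only about ordering and zero terms can be handled by the same \(j_k\). Here ``sufficiently separated'' just means strictly increasing ranks.

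For (ii), the two single sequences are \(1\)-RISs by (i). For the sum \(y_k=j_{\gamma_k}u_k+j_{\delta_k}v_k\), the support is contained in \(\{\rank\gamma_k,\rank\delta_k\}\). The separation hypothesis makes \((y_k)\) a block sequence, and I set \(j_{k+1}=\max\{\rank\gamma_k,\rank\delta_k\}+1\). Then \(\norm{y_k}\leq2\), and the triangle inequality with \eqref{eq:shielding} gives \(\abs{e^*_{\gamma,\varphi}(y_k)}\leq 2/m_h\). One degenerate case needs a remark: \(\gamma_k=\delta_k\) is the same probe. Then \(y_k=j_{\gamma_k}(u_k+v_k)\) and the same bounds hold.

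For (iii), I let \((j_k)\) witness that \((x_k)\) is a \(C\)-RIS. Since probes occur cofinally by (G4), I choose inductively a probe \(\gamma_k\) with \(\max\ran x_k<\rank\gamma_k<\min\ran x_{k+1}\). This may require passing to a subsequence of \((x_k)\), so I would pass to one in which consecutive supports leave a gap, for instance \(\min\ran x_{k+1}>\max\ran x_k+1\), and then take any canonical probe of rank \(\max\ran x_k+1\). The subsequence remains a \(C\)-RIS with the subsequence of \((j_k)\), and its condition \(j_{k+1}>\max\ran x_k\) persists under passing to subsequences. I then replace \(j_{k+1}\) by \(\max\{j_{k+1},\rank\gamma_k+1\}\). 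This only weakens nothing for the \(x_k\) estimates? Enlarging \(j_{k+1}\) enlarges the set of levels \(h<j_{k+1}\) for which the bound is required, and that is the main obstacle. I would handle it by choosing \(\gamma_k\) with rank below \(j_{k+1}\) when that is possible, and otherwise passing to a sparser subsequence first. Taking the subsequence \(x_{k_i}\) with \(j'_{i+1}=j_{k_{i+1}}\), which exceeds \(\max\ran x_{k_{i+1}-1}\), which in turn exceeds \(\rank\gamma_{k_i}\) once \(k_{i+1}\geq k_i+2\) and \(\gamma_{k_i}\) is placed before \(x_{k_i+1}\), the original witnesses suffice. The sum estimates then follow as in (ii): \(\norm{x_k+j_{\gamma_k}u_k}\leq C+1\), and the evaluation bound is \((C+1)/m_h\) for \(h<j_k\), uniformly in \(u_k\).
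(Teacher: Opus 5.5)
Your proof is correct and follows the paper's argument. Parts (i) and (ii) are the same direct check of the RIS definition, using the all-levels shielding bound and the probe isometry; your choice $j_k=\rank\gamma_{k-1}+1$ needs only strictly increasing ranks, which harmlessly relaxes the paper's placement of $j_k$ strictly between consecutive probe ranks. Your final argument for (iii) retains the original indices and skips at least one term, so that $j_{k_{i+1}}>\max\ran x_{k_{i+1}-1}>\rank\gamma_{k_i}$; this is exactly the paper's choice of $l_{k+1}$ with $j_{l_{k+1}}>\rank\gamma_k$ and $\min\ran x_{l_{k+1}}>\rank\gamma_k$. You should delete the abandoned attempt to enlarge $j_{k+1}$, which, as you noticed, would impose new estimates on $x_{k+1}$ and is not needed.
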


\begin{proof}
  For \textup{(i)}, choose the RIS index \(j_k\) strictly after the preceding probe block and strictly before the next one.
  If \(0<h<j_k\), we obtain from Proposition~\ref{prop:formal-shielding} \(\abs{e_{\alpha,\psi}^*(j_{\gamma_k}u_k)}\leq m_h^{-1}\).
  Lemma~\ref{lem:formal-probe-isometry} gives \(\norm{j_{\gamma_k}u_k}\leq1\), so Definition~\ref{def:ris} applies.

  For \textup{(ii)}, put \(v_k'=j_{\gamma_k}u_k+j_{\delta_k}v_k\).
  Both component estimates hold with a common RIS index before the two probe ranks, and we have
  \[
    \supp_{\rm FDD}v_k'
      \subseteq[\min\{\rank\gamma_k,\rank\delta_k\},
                 \max\{\rank\gamma_k,\rank\delta_k\}],
    \qquad\norm{v_k'}\leq2.
  \]
  The separation of these intervals gives the RIS range condition.
  And we obtain
  \[ \abs{e_{\alpha,\psi}^* (j_{\gamma_k}u_k+j_{\delta_k}v_k)} \leq\frac{1}{m_h}+\frac{1}{m_h} =\frac{2}{m_h}. \]
  Hence the sums form a \(2\)-RIS.
  For \textup{(iii)}, let \((j_l)\) be the original RIS indices.
  After \(x_{l_k}\), choose \(\gamma_k\), then choose \(l_{k+1}>l_k\) such that
  \[ j_{l_{k+1}}>\rank\gamma_k,\qquad \min\ran x_{l_{k+1}}>\rank\gamma_k. \]
  This is possible because the original indices and block supports tend to infinity.
  Relabel the selected vectors and their retained indices.
  Then we have \(x_k<\gamma_k<x_{k+1}\) and \(j_{k+1}>\rank\gamma_k\geq\max\ran(x_k+j_{\gamma_k}u_k)\).
  For \(0<h<j_k\), we obtain from the original RIS estimate and shielding
  \[ \abs{e_{\alpha,\psi}^*(x_k+j_{\gamma_k}u_k)} \leq\frac{C}{m_h}+\frac{1}{m_h}=\frac{C+1}{m_h}. \]
  Together with \(\norm{x_k+j_{\gamma_k}u_k}\leq C+1\), this proves the assertion.
\end{proof}

\subsection{Truncation estimates}

We adapt the truncation estimates of \cite[Lemmas~5.2 and~5.3]{ArgyrosHaydon2011} to scalarized evaluations.

\begin{lemma}[Two truncation estimates]\label{lem:truncation}
  Let \((x_k)\) be a \(C\)-RIS with RIS indices \((j_k)\), let \(s\in\N\), and let \(f=e_{\gamma,\varphi}^*\) have level \(h>0\).
  Then we have
  \begin{equation}\label{eq:truncation}
    \abs{fP_{(s,\infty)}x_k}
    \leq
    \begin{cases}
      C_{\rm tr}C/m_h,&h<j_k,\\
      \kappa C/m_h,&h\geq j_{k+1}.
    \end{cases}
  \end{equation}
\end{lemma}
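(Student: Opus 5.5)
The plan follows the scalar argument of \cite[Lemmas~5.2 and~5.3]{ArgyrosHaydon2011}, using the evaluation analysis \eqref{eq:evaluation-grammar} and the tail formula \eqref{eq:tail-restriction}. Write \(f=e_a\) with chain \(\xi_1\prec\cdots\prec\xi_a=\gamma\), ranks \(p_t\), and cell terms \(t_t=m_h^{-1}b_{t,\varphi}^*P_{(p_{t-1},\infty)}\). By \eqref{eq:tail-restriction},
\[
  fP_{(s,\infty)}x_k=e_a(x_k)-e_{r_s}(x_k)+\bigl((P_{(s,\infty)}^*-I)t_{r_s+1}\bigr)(x_k).
\]
The two cases \(h<j_k\) and \(h\geq j_{k+1}\) are then handled separately.

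\emph{Case \(h<j_k\).} Both \(e_a=e_{\gamma,\varphi}^*\) and \(e_{r_s}=e_{\xi_{r_s},\varphi_{r_s}}^*\) are scalarized evaluations of level \(h\). Their vectors satisfy \(\norm{\varphi_{r_s}}\leq1\), since the bonding maps are contractive. The RIS condition in Definition~\ref{def:ris} therefore bounds each of them on \(x_k\) by \(C/m_h\); when \(r_s=0\) the second term vanishes. For the boundary cell term, write
\[
  (P_{(s,\infty)}^*-I)t_{r_s+1}(x_k)=-m_h^{-1}\,b^*_{r_s+1,\varphi}\bigl(P_{(p_{r_s},s]}x_k\bigr).
\]
Since \(b^*\) is contractive and \(\norm{P_I}\leq\kappa\), this term is at most \(\kappa C/m_h\). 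Applying the same bound to the term \(t_{r_s+1}\) inside \(e_a\) costs at most another \(\kappa C/m_h\). The total is at most \((2+2\kappa)C/m_h=C_{\rm tr}C/m_h\). I will check that the constant matches exactly by grouping the terms as \(2C/m_h+2\kappa C/m_h\).

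\emph{Case \(h\geq j_{k+1}>\max\ran x_k\).} By \eqref{eq:evaluation-grammar},
\[
  fP_{(s,\infty)}x_k=\sum_t d^*_{\xi_t,\varphi_t}P_{(s,\infty)}x_k+\sum_t t_t P_{(s,\infty)}x_k.
\]
The term \(d^*_{\xi_t,\varphi_t}\) vanishes unless \(p_t\in\ran x_k\). However, an atom of level \(h\) has rank at least \(h\), because admissibility requires \(n\geq h\). Hence \(p_t\geq h>\max\ran x_k\), and all \(d^*\)-terms vanish. Each \(t_t\) is supported in the cell \((p_{t-1},p_t)\), so \(P_{(p_{t-1},\infty)}P_{(s,\infty)}x_k\) is nonzero only when \(p_{t-1}<\max\ran x_k<h\). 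Since \(p_1\geq h\), only \(t=1\) survives, with \(p_0=\chi\). This gives
\[
  \abs{fP_{(s,\infty)}x_k}\leq m_h^{-1}\norm{P_{(\max\{s,\chi\},\infty)}x_k}\leq\kappa C/m_h.
\]

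The main obstacle is bookkeeping in the first case. One must confirm that the restricted predecessor evaluation \(e_{r_s}\) is a genuine level-\(h\) scalarized evaluation with a contractive vector, so that the RIS estimate applies to it. One must also confirm that the cut cell contributes only the interval-projection loss \(\kappa\), with no further loss.
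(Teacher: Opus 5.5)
Your proposal is correct and follows the paper's proof. The second case is identical: every chain rank satisfies \(p_t\geq p_1\geq h\geq j_{k+1}>\max\ran x_k\), so only the first packet cell acts. In the first case, both arguments bound the inherited level-\(h\) evaluations by the RIS condition and the cut boundary packet by \(\kappa C/m_h\), using the tail formula \eqref{eq:tail-restriction}.

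The only difference is how the tail formula is used. The paper subtracts \eqref{eq:tail-restriction} at \(w=s\) and \(w=\max\ran x_k\), which gives two inherited evaluations and two boundary terms. Your one-sided use at \(w=s\), together with \(\abs{e_a(x_k)}\leq C/m_h\), already yields \((2+\kappa)C/m_h\leq C_{\rm tr}C/m_h\). So the extra \(\kappa C/m_h\) you charge for \(t_{r_s+1}\) ``inside \(e_a\)'' is unnecessary, though harmless.
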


\begin{proof}
  Let \(\xi_1\prec\cdots\prec\xi_a=\gamma\), where \(p_t=\rank\xi_t\), \(p_0=\chi(\gamma)\) is the initial cutoff, and \(a\leq n_h\), be the stored level-\(h\) predecessor chain.
  Put \(R_t=\pi_{r(\xi_t)}^{r(\gamma)}\).
  Write \(e_t=e_{\xi_t,R_t\varphi}^*\), put \(e_0=0\), and put \(t_t=m_h^{-1}b_{t,\varphi}^*P_{(p_{t-1},\infty)}\) for \(1\leq t\leq a\).
  For \(w\in\N\), let \(r_w=\max\{t:p_t\leq w\}\), with \(r_w=0\) if the set is empty, and put \(t_{a+1}=0\).

  Put \(\nu=\max\ran x_k\).
  If \(s\geq\nu\), then we have \(P_{(s,\infty)}x_k=0\).
  We may therefore assume \(s<\nu\), in which case \(P_{(s,\infty)}x_k=P_{(s,\nu]}x_k\).
  If \(h<j_k\), subtract \eqref{eq:tail-restriction} for \(w=s\) and \(w=\nu\).
  The two inherited evaluations satisfy
  \[ \abs{e_{r_s}(x_k)},\ \abs{e_{r_\nu}(x_k)} \leq\frac{C}{m_h}. \]
  For \(w\in\{s,\nu\}\) with \(r_w<a\), the boundary term satisfies
  \[
    \begin{aligned}
      \abs{((P_{(w,\infty)}^*-I)t_{r_w+1})(x_k)}
      =\frac{1}{m_h}\abs{b_{r_w+1,\varphi}^*
      P_{(p_{r_w},p_{r_w+1})\cap(0,w]}x_k}\leq\frac{\norm{b_{r_w+1,\varphi}^*}\kappa\norm{x_k}}{m_h}
      \leq\frac{\kappa C}{m_h}.
    \end{aligned}
  \]
  If \(r_w=a\), this boundary term is zero.
  From the two inherited evaluations and the two boundary terms, we therefore obtain
  \[ \abs{fP_{(s,\infty)}x_k} \leq\frac{2C+2\kappa C}{m_h} =\frac{C_{\rm tr}C}{m_h}. \]

  Suppose that \(h\geq j_{k+1}\).
  Rank admissibility of a level-\(h\) predecessor chain and the RIS range condition give \(p_1\geq h\geq j_{k+1}>\nu\).
  Every predecessor term and every packet after the first is supported beyond \(x_k\), and only the first packet can act.
  Hence we obtain
  \[ \abs{fP_{(s,\infty)}x_k} =\frac{1}{m_h}\abs{b_{1,\varphi}^* P_{(p_0,p_1)\cap(s,\nu]}x_k} \leq\frac{\kappa C}{m_h}. \]
\end{proof}

\subsection{The basic inequality}

We next estimate scalarized evaluations on finite RIS sums, using the basic-inequality argument of \cite[Proposition~5.4]{ArgyrosHaydon2011}.
Recall from \eqref{eq:constants} that \(B=4\kappa+4\geq\max\{C_{\rm tr},2\kappa,1\}\).

\begin{lemma}[Scalarized basic inequality]\label{lem:basic}
  Let \(I\) be a nonempty finite integer interval, let \((x_k)_{k\in I}\) be a finite part of a \(C\)-RIS, let \((\lambda_k)_{k\in I}\subseteq\K\), let \(s\in\N\), and let \(f=e_{\gamma,\varphi}^*\) be a scalarized evaluation.
  There are \(k_0\in I\) and \(\mathfrak g\in\mathscr W\cup\{0\}\) such that \(\operatorname{val}(\mathfrak g)\) has nonnegative coefficients and, if \(\mathfrak g\ne0\), then \(\operatorname{root}(\mathfrak g)=\ell(\gamma)\) and its value is supported after \(k_0\), and
  \begin{equation}\label{eq:basic}
    \abs{fP_{(s,\infty)}\sum_{k\in I}\lambda_kx_k}
    \leq BC\left(
    \abs{\lambda_{k_0}}
    +\operatorname{val}(\mathfrak g)
    \left(\sum_{k\in I}\abs{\lambda_k}e_k\right)\right).
  \end{equation}
\end{lemma}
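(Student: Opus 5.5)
We argue by induction on the rank of \(\gamma\), in the style of \cite[Proposition~5.4]{ArgyrosHaydon2011}, with the truncation estimates of Lemma~\ref{lem:truncation} supplying the leaf bounds. Put \(x=\sum_{k\in I}\lambda_kx_k\) and \(h=\ell(\gamma)\).

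\emph{Level zero.} If \(h=0\), then \(\gamma\) is a probe. By Lemma~\ref{lem:formal-evaluation-analysis} its evaluation is its own \(d^*\)-coordinate, so \(fP_{(s,\infty)}x\) sees only the single \(x_k\) whose range contains \(\rank\gamma\). We then take \(k_0=k\) and \(\mathfrak g=0\), and the bound \(\abs{\lambda_k}\kappa C\leq BC\abs{\lambda_{k_0}}\) follows from \(\norm{D_\gamma}\leq1\) and \(\norm{P_I}\leq\kappa\).

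\emph{Positive level: splitting the range.} For \(h>0\), first split \(I\) according to the RIS indices. Let \(k_0\) be the unique \(k\) with \(j_k\leq h<j_{k+1}\), if such a \(k\) exists. For that \(x_{k_0}\) we use the crude bound \(\abs{fP_{(s,\infty)}x_{k_0}}\leq\kappa C\). For \(k\) with \(j_{k+1}\leq h\), i.e.\ \(k<k_0\), the second case of \eqref{eq:truncation} applies. For \(k\) with \(h\geq j_{k+1}\) we likewise have \(\kappa C/m_h\). It remains to treat the vectors \(x_k\) with \(k>k_0\), on which \(h<j_k\).

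\emph{Positive level: the analysis of \(f\).} Here we use the evaluation analysis \eqref{eq:evaluation-grammar}, \(f=\sum_t d^*_{\xi_t,\varphi_t}+m_h^{-1}\sum_t b^*_{t,\varphi}P_{(p_{t-1},\infty)}\), with \(a\leq n_h\). We classify the indices \(k>k_0\) into two groups.
\begin{enumerate}[label={\textup{(\alph*)}},leftmargin=*]
  \item Indices \(k\) whose range contains some predecessor rank \(p_t\) or meets two packet cells. There are at most \(2n_h\) of them. By the interval restriction formula (Lemma~\ref{lem:formal-restriction}) and the first case of \eqref{eq:truncation}, each such \(x_k\) contributes at most \(C_{\rm tr}C/m_h\), and each becomes a leaf \(e_k^*\).
  \item Indices \(k\) lying strictly inside a single packet cell \((p_{t-1},p_t)\). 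Write \(b_{t,\varphi}^*\) as an absolutely convex combination of lower-rank scalarized evaluations, as in Lemma~\ref{lem:formal-evaluation-analysis}. Apply the induction hypothesis to each term, restricted to the \(x_k\) inside that cell with the tail \(P_{(p_{t-1},\infty)}\) absorbed into \(P_{(s',\infty)}\). This yields trees \(\mathfrak g_t\) and exceptional indices \(k_t\); the latter are again turned into leaves.
\end{enumerate}

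\emph{Forming the tree.} The resulting tree is \(\mathfrak g=[h;\dots]\). Its children are the leaves from the predecessor and boundary indices, the leaves \(e_{k_t}^*\), and the trees \(\mathfrak g_t\). These are at most \(3n_h\) successive nodes, so \(\mathfrak g\in\mathscr W\) with root level \(h\). The factor \(m_h^{-1}\) and \(B\geq\max\{C_{\rm tr},2\kappa\}\) then give \eqref{eq:basic}.

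\emph{Main obstacle.} The hard step is the convex combination over packet entries. The packet is a sum of several lower evaluations with varying trees, while the statement asks for a single tree. The first approach is to take, for each cell, a convex combination of the children trees and replace it by the one maximizing \(\operatorname{val}(\cdot)(\sum\abs{\lambda_k}e_k)\). This suffices because the right side is linear in \(\operatorname{val}\) and all values are nonnegative. The exceptional indices \(k_t\) are handled the same way, by choosing the one maximizing \(\abs{\lambda_k}\) within the cell. The node count stays at most \(3n_h\) because each cell contributes at most one subtree, one leaf, and one boundary leaf.
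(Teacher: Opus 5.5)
There is a genuine gap in how you handle the blocks before the RIS crossing index, and a second, fixable one in the packet step. The overall architecture (induction on rank, crude bound at the crossing block, predecessor-rank leaves, induction inside cells, one level-\(h\) tree with at most \(3n_h\) children) is the same as in the paper.

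\textbf{Early blocks.} You fix \(k_0=l\), the index with \(j_l\le h<j_{l+1}\), and bound each earlier block by \(\kappa C/m_h\). You never say how \(\sum_{k<l}\abs{\lambda_k}\kappa C/m_h\) is absorbed. These blocks are not children of your tree, and adding them as leaves could exceed the \(3n_h\) bound. With your choice of \(k_0\) the inequality is not available. Take \(I=\{l-1,l\}\), \(\lambda_{l-1}=1\), \(\lambda_l=0\). Then the right-hand side of \eqref{eq:basic} is \(0\), because your tree is supported after \(k_0=l\). The left-hand side is \(\abs{fP_{(s,\infty)}x_{l-1}}\), which can be nonzero through the first packet cell.

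The missing step is to choose \(k_0\) as an index maximizing \(\abs{\lambda_k}\) over \(k\le l\). For \(k<l\) you have \(j_k<j_{k+1}\le h\), so \(m_h\ge m_{j_k}\), and the \(j_k\) are distinct. Hence \(\sum_{k<l}\abs{\lambda_k}/m_h\le\bigl(\sum_r m_r^{-1}\bigr)\max_{k\le l}\abs{\lambda_k}\le\abs{\lambda_{k_0}}\) by \eqref{eq:numerical-one}. Together with the crude bound on \(x_l\), this gives \(2\kappa C\abs{\lambda_{k_0}}\). The tree built from the indices \(k>l\) is then still supported after \(k_0\). Also set \(l=\min I\) when no crossing index exists.

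\textbf{Packet step.} You apply the induction hypothesis to every packet constituent. You then take the tree maximizing \(\operatorname{val}(\cdot)(v)\) and, separately, the exceptional index maximizing \(\abs{\lambda_k}\). If these come from different constituents, the leaf \(e_{k_t}^*\) can lie inside the support of \(\mathfrak g_t\). Then the two children are not successive, and \([h;\ldots]\notin\mathscr W\).

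You must maximize \(\abs{\lambda_{k_{t,\ell}}}+\operatorname{val}(\mathfrak g_{t,\ell})(v)\) jointly over \(\ell\). Alternatively, do as the paper does. Using \(\sum_\ell\abs{c_\ell}\le1\), first select the single constituent \(f_t\) maximizing \(\abs{e^*_{\eta_\ell,\psi_\ell}(z_t)}\), where \(z_t=P_{(s\vee p_{t-1},\infty)}\sum_{k\in I'_t}\lambda_kx_k\). Then apply the hypothesis only to \(f_t\), with cutoff \(s\vee p_{t-1}\).

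You also need the strengthened inductive claim \(\supp\operatorname{val}(\mathfrak g)\subseteq I\), or an explicit restriction of \(\mathfrak g_t\) to the cell's index interval. This ensures that the children from different cells and the predecessor-rank leaves have successive supports.

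Finally, a block meeting two cells must contain the predecessor rank between them. So your group (a) has at most \(a\le n_h\) members, not \(2n_h\), and this is what makes the count \(\le3n_h\) hold.
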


\begin{proof}
  Put \(v=\sum_{k\in I}\abs{\lambda_k}e_k\).
  We induct on the rank of the ambient coordinate defining \(f\).
  We prove the stronger assertion that \(\supp\operatorname{val}(\mathfrak g)\subseteq I\).
  A level-zero evaluation meets at most one block, so the assertion holds with \(\mathfrak g=0\).

  Let \(f\) have level \(h>0\), and put \(l=\max(\{\min I\}\cup\{k\in I:j_k\leq h\})\).
  Then we have \(h<j_k\) for \(k>l\), while \(h\geq j_{k+1}\) whenever \(k<l\).
  By Lemma~\ref{lem:truncation} and monotonicity of \((m_j)\), we obtain
  \[
    \begin{aligned}
      \sum_{k<l}\abs{\lambda_k}\abs{fP_{(s,\infty)}x_k}
      &\leq\frac{\kappa C}{m_h}\sum_{k<l}\abs{\lambda_k}
      \leq\kappa C\sum_{k<l}\frac{\abs{\lambda_k}}{m_{j_k}}\\
      &\leq\kappa C\left(\sum_{r\geq1}m_r^{-1}\right)
        \max_{k\leq l}\abs{\lambda_k}
      \leq\kappa C\max_{k\leq l}\abs{\lambda_k}.
    \end{aligned}
  \]
  Since \(\norm f\leq1\) and \(\norm{P_{(s,\infty)}}\leq\kappa\), we have
  \( \abs{\lambda_l}\abs{fP_{(s,\infty)}x_l} \leq\kappa C\abs{\lambda_l}. \) 
  Choose \(k_0\leq l\) so that \(\abs{\lambda_{k_0}}=\max_{k\leq l}\abs{\lambda_k}\).
  Therefore we obtain
  \begin{equation}\label{eq:basic-initial}
    \sum_{k\leq l}\abs{\lambda_k}\,
    \abs{fP_{(s,\infty)}x_k}
    \leq2\kappa C\abs{\lambda_{k_0}}
    \leq BC\abs{\lambda_{k_0}}.
  \end{equation}

  Put \(I'=\{k\in I:k>l\}\).
  In \eqref{eq:evaluation-grammar}, put
  \[
    \begin{aligned}
      I'_0&=\{k\in I':p_t\in\ran x_k\text{ for some }1\leq t\leq a\},\\
      I'_t&=\{k\in I'\setminus I'_0:
        \ran x_k\cap(s,\infty)\cap(p_{t-1},p_t)\ne\varnothing\}
        \quad(1\leq t\leq a).
    \end{aligned}
  \]
  Since the block ranges are successive, we have \(\abs{I'_0}\leq a\), and the nonempty \(I'_t\)'s are successive integer intervals.
  A block outside \(I'_0\) cannot meet two packet cells without containing their common boundary rank.
  The evaluation analysis therefore gives
  \[
    fP_{(s,\infty)}x_k=
    \begin{cases}
      m_h^{-1}b_{t,\varphi}^*P_{(s\vee p_{t-1},\infty)}x_k,
        &k\in I'_t,\\
      0,&k\in I'\setminus\bigl(I'_0\cup\bigcup_{t=1}^a I'_t\bigr).
    \end{cases}
  \]
  In the second case the truncated block misses every predecessor rank and packet cell.
  For each nonempty \(I'_t\), put
  \(z_t=P_{(s\vee p_{t-1},\infty)}\sum_{k\in I'_t}\lambda_kx_k\).

  Fix \(t\) with \(I'_t\ne\varnothing\).
  By the scalarization in the proof of Lemma~\ref{lem:formal-evaluation-analysis}, we have a finite expansion, with \(\omega_\ell\in E_{\eta_\ell}^*\),
  \[ b_{t,\varphi}^*=\sum_{\ell\in L_t}a_\ell e_{\eta_\ell,\omega_\ell}^*,\qquad \sum_{\ell\in L_t}\abs{a_\ell}\norm{\omega_\ell}\leq1. \]
  Omit zero terms in this expansion and put \(c_\ell=a_\ell\norm{\omega_\ell}\) and \(\psi_\ell=\omega_\ell/\norm{\omega_\ell}\).
  Then we have
  \[ b_{t,\varphi}^*=\sum_{\ell\in L_t}c_\ell e_{\eta_\ell,\psi_\ell}^*,\qquad \norm{\psi_\ell}=1,\qquad \sum_{\ell\in L_t}\abs{c_\ell}\leq1. \]
  If \(b_{t,\varphi}^*=0\), put \(f_t=0\), \(k_t=\min I'_t\), and \(\mathfrak g_t=0\).
  Otherwise, choose \(\ell_t\in L_t\) such that
  \[ \abs{e_{\eta_{\ell_t},\psi_{\ell_t}}^*(z_t)} =\max_{\ell\in L_t}\abs{e_{\eta_\ell,\psi_\ell}^*(z_t)}, \qquad f_t=e_{\eta_{\ell_t},\psi_{\ell_t}}^*. \]
  The packet support condition gives \(p_{t-1}<\rank\eta_{\ell_t}<p_t\leq\rank\gamma\).
  In this case, we obtain
  \[
    \begin{aligned}
      \abs{b_{t,\varphi}^*(z_t)}
      \leq\sum_{\ell\in L_t}\abs{c_\ell}
      \abs{e_{\eta_\ell,\psi_\ell}^*(z_t)}
      \leq\abs{f_t(z_t)}=\abs{f_tP_{(s\vee p_{t-1},\infty)}
      \sum_{k\in I'_t}\lambda_kx_k}.
    \end{aligned}
  \]
  For a zero packet, we have \(\abs{b_{t,\varphi}^*(z_t)}=\abs{f_t(z_t)}=0\).
  Thus we obtain
  \begin{align}
    \abs{fP_{(s,\infty)}\sum_{k\in I'}\lambda_kx_k}
    &\leq\frac{C_{\rm tr}C}{m_h}
    \sum_{k\in I'_0}\abs{\lambda_k}
    +\frac{1}{m_h}\sum_{I'_t\ne\varnothing}
    \abs{b_{t,\varphi}^*(z_t)}\notag\\
    &\leq\frac{C_{\rm tr}C}{m_h}
    \sum_{k\in I'_0}\abs{\lambda_k}
    +\frac{1}{m_h}\sum_{I'_t\ne\varnothing}
    \abs{f_t(z_t)}.                         \label{eq:basic-cell}
  \end{align}
  For every nonempty \(I'_t\) with \(f_t\ne0\), the coordinate defining \(f_t\) has strictly smaller rank.
  The inductive hypothesis with cutoff \(s\vee p_{t-1}\) gives \(k_t\in I'_t\) and \(\mathfrak g_t\in\mathscr W\cup\{0\}\) satisfying the following inequality, and the choices above give the same inequality when \(f_t=0\):
  \begin{equation}\label{eq:basic-one-cell}
    \abs{f_t(z_t)}
    \leq BC\bigl(\abs{\lambda_{k_t}}
    +\operatorname{val}(\mathfrak g_t)(v)\bigr).
  \end{equation}
  The stronger inductive assertion gives
  \(\supp\operatorname{val}(\mathfrak g_t)\subseteq I'_t\cap(k_t,\infty)\).
  Thus the leaf at \(k_t\) precedes \(\mathfrak g_t\), and both stay inside the same cell-index interval.
  In support order, the coordinate leaves \([e_k^*]\), \(k\in I'_0\), and the pairs \([e_{k_t}^*],\mathfrak g_t\), \(I'_t\ne\varnothing\), satisfy
  \[ \abs{I'_0}+2\abs{\{t:I'_t\ne\varnothing\}} \leq a+2a\leq3n_h. \]
  Omit zero children, arrange the remaining terms in support order, and denote the resulting list by \(\mathcal C_h\).
  If the list is empty, put \(\mathfrak g=0\).
  Otherwise put \(\mathfrak g=[h;\mathcal C_h]\in\mathscr W\).
  In the nonzero case its root level is \(h\), and its value is supported after \(k_0\).
  By this definition, we have
  \[
    \operatorname{val}(\mathfrak g)(v)
    =\frac{1}{m_h}\left[
      \sum_{k\in I'_0}\abs{\lambda_k}
      +\sum_{I'_t\ne\varnothing}
        \bigl(\abs{\lambda_{k_t}}+\operatorname{val}(\mathfrak g_t)(v)\bigr)
      \right].
  \]
  Since \(B\geq\max\{C_{\rm tr},1\}\), we obtain from \eqref{eq:basic-cell} and \eqref{eq:basic-one-cell}
  \[ \abs{fP_{(s,\infty)}\sum_{k\in I'}\lambda_kx_k} \leq BC\operatorname{val}(\mathfrak g)(v). \]
  Adding this estimate to \eqref{eq:basic-initial} gives \eqref{eq:basic}.
\end{proof}

\subsection{Auxiliary and special averages}

The stopping-event formulation below was prompted by a suggestion from a generative-AI system.
Subsequent literature review identified the underlying counting and tree-truncation techniques in \cite[Lemma~2.4 and Proposition~2.5]{ArgyrosHaydon2011}, \cite[Sections~2 and~3.2]{ManoussakisPelczarSwietek2017}, and \cite[Proposition~11.7]{MotakisPelczar2025}.
We adapt these techniques to the parameters fixed in \eqref{eq:numerical-one}--\eqref{eq:numerical-three} and record the first stopping event on each branch.
For further details, see the \hyperref[sec:ai-assistance]{AI assistance statement} at the end of the paper.

\begin{lemma}[Auxiliary special averages]
  \label{lem:quantitative-auxiliary}
  For \(\bar e_j=n_j^{-1}\sum_{k=1}^{n_j}e_k\), we have
  \begin{equation}\label{eq:quantitative-auxiliary}
    \norm{\bar e_j}_{T_{\mathrm{aux}}}\leq\frac{2}{m_j}.
  \end{equation}
\end{lemma}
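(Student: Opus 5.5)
We must bound \(\operatorname{val}(\mathfrak g)(\bar e_j)\) for every tree \(\mathfrak g\in\mathscr W\). By symmetry and nonnegativity of \(\bar e_j\) we may replace every leaf sign \(\zeta\) by \(1\), so it suffices to treat trees with nonnegative values. The plan is the standard Argyros--Haydon counting argument \cite[Lemma~2.4]{ArgyrosHaydon2011}: we split according to the root level \(h=\operatorname{root}(\mathfrak g)\) and use a stopping-event decomposition on each branch of the tree.

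\emph{Case \(h\geq j\).} The root weight alone gives \(\operatorname{val}(\mathfrak g)(\bar e_j)\leq m_h^{-1}\norm{\bar e_j}_{\ell_\infty\text{-dual}}\cdot n_j\cdot n_j^{-1}\leq m_h^{-1}\leq m_j^{-1}\). Indeed, every value is bounded by \(1\) in \(\ell_\infty\), and a level-\(h\) node multiplies by \(m_h^{-1}\).

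\emph{Case \(h<j\).} This is where all the work lies. On each branch from the root we record the first node of level \(\geq j\) (the stopping event). Nodes at which the branch stops contribute at most \(m_{j}^{-1}\) times the product of the weights above them, applied to the \(\ell_1\)-average restricted to their support. Summed, this gives at most \(m_j^{-1}\) times a quantity bounded by \(1\).

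The unstopped part is a tree all of whose internal nodes have level \(<j\). Its depth is at most \(D_j\) before the product of weights drops below \(m_1^{-D_j}\leq m_j^{-2}\), which takes care of deeper leaves via \eqref{eq:numerical-three}. Its number of leaves down to that depth is at most \(\prod(3n_i)\leq \mathsf b_j^{D_j+2}\), using \(\mathsf b_j=6\max_{i<j}n_i\). Hence these leaves see at most \(S_j\) coordinates, each of weight \(n_j^{-1}\) in \(\bar e_j\). Their total contribution is thus at most \(S_j/n_j\leq 2^{-j-8}m_j^{-5}\) by \eqref{eq:numerical-two}.

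Adding the three pieces gives \(m_j^{-1}+m_j^{-2}+m_j^{-5}\leq 2m_j^{-1}\).

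The main obstacle is organizing the three contributions so that they do not double count. The plan is to define, for each leaf \(k\in\{1,\dots,n_j\}\) lying in the support, its weight product along the unique branch. We then classify the leaf by whether its branch meets a level \(\geq j\), reaches depth \(>D_j\) before stopping, or neither. Since \(\operatorname{val}(\mathfrak g)(e_k)\) equals that weight product, the value splits as a sum over leaves, and the three bounds apply to disjoint leaf classes. For the first class we will still need that the stopped subtrees have successive supports, so their \(\ell_1\)-masses add to at most \(1\).
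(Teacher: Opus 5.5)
Your proposal is correct and is essentially the paper's argument. The paper also stops each branch at the first node of level at least \(j\), at the \(D_j\)-th node of level below \(j\), or at a coordinate leaf, and bounds the three resulting disjoint coordinate classes by \(m_j^{-1}\), \(m_1^{-D_j}\le m_j^{-2}\), and \(S_j/n_j\); your separate case \(h\geq j\) is the instance where stopping occurs at the root, and the paper's separate treatment of \(j=1\) is covered by your uniform argument.
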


\begin{proof}
  Fix \(j\geq2\) and \(\mathfrak g\in\mathscr W\).
  Stop its tagged tree at the first node of level at least \(j\), at the \(D_j\)-th node of level less than \(j\), or at a coordinate leaf.
  Let \(\mathcal H\), \(\mathcal D\), and \(\mathcal L\) be the corresponding classes of coordinate indices in \(\{1,\ldots,n_j\}\).
  Successive child supports place each coordinate index on at most one branch, so these classes are disjoint and cover the coordinates which contribute to \(\operatorname{val}(\mathfrak g)(\bar e_j)\).
  The coefficient of each such coordinate is the product of the weights on its branch, and weights below the stopping node can only decrease its absolute value.
  A node of level less than \(j\) has at most \(\mathsf b_j\) successors.
  Thus we obtain
  \[ \abs{\mathcal L}\leq\sum_{r=0}^{D_j-1}\mathsf b_j^r\leq S_j, \qquad \abs{\mathcal H},\ \abs{\mathcal D}\leq n_j, \]
  and the branch coefficients on \(\mathcal H\), \(\mathcal D\), and \(\mathcal L\) are at most \(m_j^{-1}\), \(m_1^{-D_j}\), and one, respectively.
  Therefore we obtain
  \[
    \begin{aligned}
      \abs{\operatorname{val}(\mathfrak g)(\bar e_j)}
      \leq\frac{\abs{\mathcal L}}{n_j}
        +\frac{\abs{\mathcal H}}{n_jm_j}
        +\frac{\abs{\mathcal D}}{n_jm_1^{D_j}}\leq\frac{S_j}{n_j}+\frac{1}{m_j}+\frac{1}{m_1^{D_j}}.
    \end{aligned}
  \]
  Consequently, we have
  \[
    \abs{\operatorname{val}(\mathfrak g)(\bar e_j)}
    \leq\frac{1}{m_j}+\frac{1}{m_j^2}+\frac{1}{2^{j+8}m_j^5}
    <\frac{2}{m_j}.
  \]
  Taking the supremum over \(\mathfrak g\) proves the assertion for \(j\geq2\).

  For \(j=1\), we obtain from a coordinate root and a weighted root, respectively,
  \[
  \abs{\operatorname{val}(\mathfrak g)(\bar e_1)} \leq \begin{cases} n_1^{-1},&\mathfrak g\text{ is a coordinate leaf},\\
  m_1^{-1},&\mathfrak g\text{ has a weighted root}. \end{cases}
  \]
  Since \(n_1\geq m_1^2\geq m_1\), taking the supremum proves \eqref{eq:quantitative-auxiliary} also for \(j=1\).
\end{proof}

\begin{corollary}[Bounds for RIS special averages]
  \label{lem:ris-consequences}
  If \((x_k)_{k=1}^{n_j}\) is a \(C\)-RIS, then we have
  \begin{align}
    \norm{\frac{m_j}{n_j}\sum_{k=1}^{n_j}x_k}
    &\leq3BC,                                                     \label{eq:ris-average}\\
    \sup_{\gamma,\varphi}
    \abs{d_{\gamma,\varphi}^*
    \left(\frac{m_j}{n_j}\sum_{k=1}^{n_j}x_k\right)}
    &\leq\frac{\kappa C}{m_j}.                                  \label{eq:ris-coordinate}
  \end{align}
\end{corollary}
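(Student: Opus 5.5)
For \eqref{eq:ris-average} we will use the norming family: \(\norm x=\sup_{\gamma,\varphi}\abs{e_{\gamma,\varphi}^*(x)}\) with \(\varphi\in B_{E_\gamma^*}\). Fix a scalarized evaluation \(f=e_{\gamma,\varphi}^*\) and write \(y=\frac{m_j}{n_j}\sum_{k=1}^{n_j}x_k\). Since every \(x_k\) is supported in \((0,\infty)\), we may apply Lemma~\ref{lem:basic} with \(s=0\), \(I=\{1,\ldots,n_j\}\) and \(\lambda_k=m_j/n_j\). This gives \(k_0\) and \(\mathfrak g\in\mathscr W\cup\{0\}\), with nonnegative coefficients and root level \(h=\ell(\gamma)\), such that
\[
  \abs{f(y)}\leq BC\Bigl(\frac{m_j}{n_j}+m_j\operatorname{val}(\mathfrak g)(\bar e_j)\Bigr).
\]
The first term is at most \(m_j^{-1}\leq1\), since \(n_j\geq m_j^2\) by \eqref{eq:numerical-one}. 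The second term is at most \(m_j\norm{\bar e_j}_{T_{\mathrm{aux}}}\leq2\) by Lemma~\ref{lem:quantitative-auxiliary}. Together these give \(\abs{f(y)}\leq3BC\), and taking the supremum over \((\gamma,\varphi)\) yields \eqref{eq:ris-average}. The only point needing care is that \(P_{(0,\infty)}\) acts as the identity on block vectors, because ranks lie in \(\N_+\).

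For \eqref{eq:ris-coordinate}, fix an atom \(\gamma\) of rank \(n\). The BD coordinate \(d_{\gamma,\varphi}^*\) only sees the FDD coefficient at rank \(n\). Write \(d_{\gamma,\varphi}^*(x)=\langle\varphi,D_\gamma x\rangle\) and \(D_\gamma x=U_\gamma P_{\{n\}}x\). Because the \(x_k\) are successive blocks, at most one \(k\) satisfies \(n\in\ran x_k\). Hence
\[
  d_{\gamma,\varphi}^*(y)=\frac{m_j}{n_j}\,d_{\gamma,\varphi}^*(x_k)
\]
for that single \(k\), and \(d_{\gamma,\varphi}^*(y)=0\) if there is none. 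We then bound
\[
  \abs{d_{\gamma,\varphi}^*(x_k)}\leq\norm{D_\gamma}\norm{x_k}\leq\kappa C,
\]
using \eqref{eq:formal-block-bounds}. This gives \(\abs{d_{\gamma,\varphi}^*(y)}\leq\kappa C\,m_j/n_j\leq\kappa C/m_j\), again since \(n_j\geq m_j^2\).

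I do not expect a genuine obstacle, since the work is already contained in Lemma~\ref{lem:basic} and Lemma~\ref{lem:quantitative-auxiliary}. The step that needs the most attention is matching the normalization: \(\operatorname{val}(\mathfrak g)\bigl(\sum_k\abs{\lambda_k}e_k\bigr)\) must equal \(m_j\operatorname{val}(\mathfrak g)(\bar e_j)\). This holds by linearity because all the \(\abs{\lambda_k}\) are equal to \(m_j/n_j\). In particular, no special role of the root level relative to \(j\) is needed, since the supremum bound in \eqref{eq:quantitative-auxiliary} covers all trees.
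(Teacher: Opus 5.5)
Your proposal is correct and follows essentially the same route as the paper. For the norm bound, you apply Lemma~\ref{lem:basic} with \(s=0\) and equal coefficients \(m_j/n_j\), then use \(\norm{\bar e_j}_{T_{\mathrm{aux}}}\leq 2/m_j\); for the coordinate bound, you use that a \(d^*\)-coordinate meets at most one block and has norm at most \(\kappa\), together with \(n_j\geq m_j^2\).
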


\begin{proof}
  Put \(x=(m_j/n_j)\sum_{k=1}^{n_j}x_k\), and apply Lemma~\ref{lem:basic} with \(s=0\) and \(\lambda_k=m_j/n_j\).
  For every scalarized evaluation \(f\), using \(n_j\geq m_j^2\) and \eqref{eq:quantitative-auxiliary}, we obtain
  \[ \abs{f(x)} \leq BC\left(\frac{m_j}{n_j}+m_j\norm{\bar e_j}_{T_{\mathrm{aux}}}\right) \leq BC(m_j^{-1}+2)\leq3BC. \]
  Since scalarized ambient coordinates norm \(X\), we prove \eqref{eq:ris-average}.
  A fixed \(d^*\)-coordinate meets at most one block.
  Moreover, it is the restriction of an ambient evaluation to one FDD coordinate, so its norm is at most \(\kappa\).
  Hence we obtain
  \[ \abs{d_{\gamma,\varphi}^*(x)} \leq\frac{m_j}{n_j} \sum_{k=1}^{n_j}\abs{d_{\gamma,\varphi}^*(x_k)} \leq\frac{\kappa Cm_j}{n_j}\leq\frac{\kappa C}{m_j}, \]
  which is \eqref{eq:ris-coordinate}.
\end{proof}

\begin{corollary}[RIS are weakly null]\label{cor:ris-weakly-null}
  Every RIS is weakly null.
\end{corollary}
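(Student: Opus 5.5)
The plan is the standard argument that a bounded sequence whose averages tend to zero in norm, uniformly along subsequences, is weakly null. Let \((x_k)\) be a \(C\)-RIS and suppose, for a contradiction, that it is not weakly null. Then there are \(x^*\in X^*\) of norm one, \(\delta>0\), and a subsequence, still denoted \((x_k)\), with \(\operatorname{Re}(\zeta_kx^*(x_k))>\delta\) for suitable unimodular scalars \(\zeta_k\). Passing to a further subsequence, we may assume that all \(\zeta_k\) lie in one arc of length \(\delta/(2C)\) around a fixed \(\zeta\), or we may simply pass to a subsequence along which \(x^*(x_k)\) converges to some \(\lambda\) with \(\abs\lambda\geq\delta\). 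In either form we obtain \(\abs{x^*(\sum_{k\in F}x_k)}\geq(\delta/2)\abs F\) for every finite set \(F\) of indices in the subsequence.

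Next I would check that a subsequence of a \(C\)-RIS is again a \(C\)-RIS. If \((x_{k_i})\) is selected, take the new indices \(j'_i=j_{k_i}\). They are strictly increasing. Moreover \(j'_{i+1}=j_{k_{i+1}}\geq j_{k_i+1}>\max\ran x_{k_i}\). The estimate \(\abs{e^*_{\gamma,\varphi}(x_{k_i})}\leq C/m_h\) for \(h<j'_i\) is inherited directly. Hence Corollary~\ref{lem:ris-consequences} applies to every block of \(n_j\) consecutive terms of the subsequence. For each \(j\), take the first \(n_j\) terms. Then \eqref{eq:ris-average} gives
\[ \norm{\frac1{n_j}\sum_{k=1}^{n_j}x_k}\leq\frac{3BC}{m_j}. \]

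Combining the two bounds gives \(\delta/2\leq\abs{x^*(n_j^{-1}\sum_{k\leq n_j}x_k)}\leq 3BC/m_j\) for every \(j\). Since \(m_j\to\infty\), this is a contradiction, so every RIS is weakly null.

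The only point needing care is the subsequence-stability of the RIS definition, in particular the range condition \(j_{k+1}>\max\ran x_k\) after reindexing. This holds because the indices \(j_k\) increase. Everything else follows directly from the quantitative average bound \eqref{eq:ris-average}, which already contains the mixed-Tsirelson content through Lemma~\ref{lem:quantitative-auxiliary}.
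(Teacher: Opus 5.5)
Your proof is correct and is essentially the paper's argument: a subsequence of a RIS is again a RIS with the inherited indices, and averages of \(n_j\) consecutive terms have norm \(O(1/m_j)\) by the basic inequality together with \(\norm{\bar e_j}_{T_{\mathrm{aux}}}\leq 2/m_j\), which contradicts a functional that stays bounded away from zero along the subsequence. The differences are cosmetic. You align phases by passing to a tail along which \(x^*(x_k)\) converges, and you quote \eqref{eq:ris-average} directly to get \(3BC/m_j\). The paper instead multiplies by unimodular scalars \(\theta_r\) and uses the general domination estimate \eqref{eq:ris-dominated} to get \(4BC/m_j\).
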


\begin{proof}
  Let \((\lambda_k)\) be finitely supported and put \(v=\sum_k\abs{\lambda_k}e_k\).
  For the \(k_0\) and \(\mathfrak g\) supplied by Lemma~\ref{lem:basic}, the coordinate leaves belong to \(\mathscr W\), so we have \(\abs{\lambda_{k_0}},\operatorname{val}(\mathfrak g)(v)\leq\norm{v}_{T_{\mathrm{aux}}}\).
  Thus every scalarized ambient evaluation \(f\) satisfies \(\abs{f(\sum_k\lambda_kx_k)}\leq2BC\norm{v}_{T_{\mathrm{aux}}}\).
  Taking the supremum over the norming scalarized evaluations, we obtain
  \begin{equation}\label{eq:ris-dominated}
    \norm{\sum_k\lambda_kx_k}
    \leq2BC\norm{\sum_k\abs{\lambda_k}e_k}_{T_{\mathrm{aux}}}.
  \end{equation}
  Suppose a \(C\)-RIS has a subsequence \((x_{k_r})\), a functional \(x^*\), and \(\varepsilon>0\) with \(\abs{x^*(x_{k_r})}\geq\varepsilon\).
  Choose \(\theta_r\) with \(\abs{\theta_r}=1\) so that \(x^*(\theta_rx_{k_r})=\abs{x^*(x_{k_r})}\).
  Reindexing this subsequence gives a \(C\)-RIS \((\theta_rx_{k_r})_r\) with indices \((j_{k_r})_r\), since
  \[
    j_{k_{r+1}}\geq j_{k_r+1}>\max\ran x_{k_r}.
  \]
  For each \(j\), we obtain from \eqref{eq:ris-dominated} and Lemma~\ref{lem:quantitative-auxiliary}
  \[ \norm{\frac{1}{n_j}\sum_{r=1}^{n_j}\theta_rx_{k_r}} \leq2BC\norm{\frac{1}{n_j}\sum_{r=1}^{n_j}e_r}_{T_{\mathrm{aux}}} \leq\frac{4BC}{m_j}\longrightarrow0. \]
  On the other hand, we have
  \[ \varepsilon \leq\frac{1}{n_j}\sum_{r=1}^{n_j}\abs{x^*(x_{k_r})} =\abs{x^*\left(\frac{1}{n_j}\sum_{r=1}^{n_j} \theta_rx_{k_r}\right)} \leq\norm{x^*} \norm{\frac{1}{n_j}\sum_{r=1}^{n_j}\theta_rx_{k_r}} \longrightarrow0, \]
  a contradiction.
\end{proof}

\subsection{Shrinking and the compactness test}

We use the local-weight splitting method of \cite[Propositions~5.10--5.12]{ArgyrosHaydon2011}, separating a bounded block sequence according to its local levels.
Each fixed low-level part is a RIS, and a diagonal selection makes the remaining high-level parts a RIS as well.
This reduces both shrinking and compactness to the estimates already proved.

For \(0\ne x\in X^{\rm alg}\), put \(n=\max\ran x\) and write \(x=i_nu\) with \(u\in\mathcal E_{[1,n]}\).
Define \[\supp_{\rm loc}x=\{\eta:u_\eta\ne0\},\] and define \(\supp_{\rm loc}0=\varnothing\).
The next estimate is the block version of \cite[Lemma~5.8]{ArgyrosHaydon2011}.

\begin{lemma}[Local-level estimate]\label{lem:local-level}
  If \(f=e_{\gamma,\varphi}^*\) has level \(h>0\) and \(\ell(\eta)\ne h\) for every \(\eta\in\supp_{\rm loc}x\), then we have
  \begin{equation}\label{eq:local-level}
    \abs{f(x)}\leq\frac{\kappa}{m_h}\norm{x}.
  \end{equation}
\end{lemma}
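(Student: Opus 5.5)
The plan is to use the evaluation analysis of Lemma~\ref{lem:formal-evaluation-analysis} and to cut the predecessor chain of \(\gamma\) at the rank \(n=\max\ran x\). By \eqref{eq:extension-head}, the ambient coordinates of \(x=i_nu\) at ranks at most \(n\) are exactly those of \(u\). Moreover \(D_\eta x=0\) whenever \(\rank\eta>n\). The hypothesis on \(\supp_{\rm loc}x\) says that \(u_\eta=0\) for every atom \(\eta\) of level \(h\) with \(\rank\eta\leq n\). The case \(x=0\) is trivial, so assume \(x\ne0\).

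First I treat the case \(\rank\gamma\leq n\). Then \(f(x)=\langle\varphi,U_\gamma x\rangle=\langle\varphi,u_\gamma\rangle\). Since \(\ell(\gamma)=h\), the atom \(\gamma\) does not lie in the local support, so \(u_\gamma=0\) and \(f(x)=0\).

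Next let \(\rank\gamma>n\). Take the stored chain \(\xi_1\prec\cdots\prec\xi_a=\gamma\) with ranks \(p_t\), and use the partial-sum notation \(e_t,t_t\) introduced before Lemma~\ref{lem:formal-restriction}. Put \(t_0=\max\{t:p_t\leq n\}\), with \(t_0=0\) if this set is empty. Then \(t_0<a\). Summing the one-edge identities only from \(t_0+1\) to \(a\) gives
\[
  f=e_{t_0}+\sum_{t=t_0+1}^{a}d_{\xi_t,\varphi_t}^*+\sum_{t=t_0+1}^{a}t_t .
\]
I evaluate each term on \(x\). The first term \(e_{t_0}(x)=\langle\varphi_{t_0},u_{\xi_{t_0}}\rangle\) vanishes, because every \(\xi_t\) has level \(h\) and \(p_{t_0}\leq n\); when \(t_0=0\) it is zero by convention. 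Each \(d^*_{\xi_t,\varphi_t}\) with \(t>t_0\) has \(p_t>n\), so it annihilates \(x\) because the FDD support of \(x\) lies in \([1,n]\). For \(t\geq t_0+2\) we have \(p_{t-1}\geq p_{t_0+1}>n\), so \(P_{(p_{t-1},\infty)}x=0\) and \(t_t(x)=0\).

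Only the boundary packet \(t_{t_0+1}=m_h^{-1}b^*_{t_0+1,\varphi}P_{(p_{t_0},\infty)}\) survives. The functional \(b^*_{t_0+1,\varphi}\) is contractive by Lemma~\ref{lem:formal-evaluation-analysis}, and \(\norm{P_{(p_{t_0},\infty)}x}=\norm{P_{(p_{t_0},n]}x}\leq\kappa\norm x\) by \eqref{eq:formal-block-bounds}. Hence \(\abs{f(x)}\leq\kappa\norm x/m_h\).

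The only point needing care is the identification \(U_\eta x=u_\eta\) for \(\rank\eta\leq n\). This is exactly \eqref{eq:extension-head}, together with the fact that every atom of the chain has level \(h\).
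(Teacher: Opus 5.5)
Your proof is correct and essentially matches the paper's argument. The paper cuts the same level-$h$ chain at $r=\max\{t:p_t\leq n\}$, uses the local-support hypothesis to get $e_r(x)=0$, and reads off the single boundary packet from the tail formula \eqref{eq:tail-restriction} at $w=n$, which is exactly your telescoped sum; your separate case $\rank\gamma\leq n$ is the paper's case $r=a$.
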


\begin{proof}
  If \(x=0\), the conclusion is immediate.
  Assume that \(x\ne0\).
  Let \(n=\max\ran x\) and use the predecessor chain in the proof of Lemma~\ref{lem:truncation}.
  Put \(r=\max\{t:p_t\leq n\}\), with \(r=0\) if the set is empty.
  Every predecessor in the finite stage has level \(h\), so the local-support hypothesis gives \(e_r(x)=0\).
  Since \(P_{(n,\infty)}x=0\), we obtain from \eqref{eq:tail-restriction}
  \[
    f(x)=f(x)-e_r(x)=-((P_{(n,\infty)}^*-I)t_{r+1})(x).
  \]
  If \(r=a\), then we have \(t_{a+1}=0\) and \(f(x)=0\).
  Otherwise, only one packet term in the level-\(h\) evaluation remains, and we obtain from the boundary formula
  \[
    \begin{aligned}
      \abs{f(x)}
      =\frac{1}{m_h}\abs{b_{r+1,\varphi}^*
        P_{(p_r,p_{r+1})\cap(0,n]}x}\leq\frac{\norm{b_{r+1,\varphi}^*}}{m_h}
        \norm{P_{(p_r,p_{r+1})\cap(0,n]}}\norm{x}
      \leq\frac{\kappa}{m_h}\norm x.
    \end{aligned}
  \]
\end{proof}

\begin{lemma}[RIS reduction and compactness]\label{lem:compactness-test}
  The FDD of \(X\) is shrinking.
  Let \(Y\) be a Banach space.
  If \(R:X\to Y\) is bounded and \(\norm{Rx_k}\to0\) for every RIS, then \(R\) is compact.
\end{lemma}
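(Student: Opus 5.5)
The plan is to follow the Argyros--Haydon local-weight splitting of \cite[Propositions~5.10--5.12]{ArgyrosHaydon2011}. The common core of both assertions is the following claim: every bounded block sequence \((y_k)\) in \(X\) has a subsequence that is a finite sum of RISs up to a norm-null perturbation. Both statements then follow from Corollary~\ref{cor:ris-weakly-null}.

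\textbf{Splitting.} By density we may assume the \(y_k\) are finitely supported, so \(y_k=i_{n_k}u_k\) with \(n_k=\max\ran y_k\).
\begin{itemize}
\item Fix \(j\). For each block, use the level decomposition \eqref{eq:level-deletion} with \(L=\{0,1,\ldots,j\}\) and with \(L=\{h\}\) for single levels. Each piece has norm at most \(M\norm{y_k}\), but its FDD range may begin earlier. We repair this by applying the interval projection \(P_{\ran y_k}\) to each piece, at a further cost of a factor \(\kappa\).
\item For a piece whose local support avoids level \(h\), Lemma~\ref{lem:local-level} gives \(\abs{e_{\gamma,\varphi}^*(\cdot)}\le\kappa^2M\norm{y_k}/m_h\) at level \(h\). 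The extra factor \(\kappa\) comes from the projection, which is controlled through the restriction formula \eqref{eq:tail-restriction}, exactly as in the proof of Lemma~\ref{lem:truncation}.
\item Hence the high-level part \(y_k^{>j_k}=P_{\ran y_k}\,i_{n_k}(I-Q_{[0,j_k]})u_k\) satisfies the RIS estimate below level \(j_k\). Choosing \(j_{k+1}>\max\ran y_k\) diagonally makes \((y_k^{>j_k})\) a \(C\)-RIS with \(C=\kappa^2M\sup_k\norm{y_k}\).
\item The low-level remainders \(P_{\ran y_k}i_{n_k}Q_{[0,j_k]}u_k\) do not form a RIS directly. Instead we pass to a subsequence on which the low part is handled level by level. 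The level-\(0\) part is a sum of probe coordinates, which are shielded by Proposition~\ref{prop:formal-shielding}. For each fixed level \(h\le j\), the evaluations at level \(h\) on a vector supported at level \(h\) are tested against only boundedly many nodes.
\end{itemize}

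\textbf{Expected obstacle.} I expect the main obstacle to be the fixed low levels. The cleanest route is probably different from the above. Apply the splitting with a fixed threshold \(j\). The part of local level \(\le j\) is bounded by \(M\), and evaluations of level \(h>j\) on it are small by Lemma~\ref{lem:local-level}; this makes the low part a RIS after relabelling. It remains to show that the part of level exactly \(h\) for each \(h\le j\) is norm-null after passing to a subsequence. If it is not, I would try to obtain \(\ell_1^{n}\)-averages of level \(h\) with norm bounded below by \(1/m_h\). These would contradict the upper estimate \eqref{eq:ris-average}-type bound obtained from the basic inequality on \(n_{h'}\)-averages with \(h'\gg h\). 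This is the step I would check most carefully.

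\textbf{Shrinking.} Suppose the FDD is not shrinking. Then there are \(x^*\in X^*\), \(\varepsilon>0\) and a bounded block sequence \((y_k)\) with \(\abs{x^*(y_k)}\ge\varepsilon\). By the claim, after passing to a subsequence, \(y_k=\sum_{i\le N}x_k^{(i)}+o(1)\) with each \((x_k^{(i)})_k\) a RIS. Each RIS is weakly null by Corollary~\ref{cor:ris-weakly-null}, so \(x^*(y_k)\to0\), a contradiction.

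\textbf{Compactness.} Let \(R\) be as in the statement. Since the FDD is shrinking, \(R\) is compact as soon as \(\norm{Ry_k}\to0\) for every bounded block sequence \((y_k)\). Indeed, if \(R\) were not compact, a standard gliding-hump argument would produce a bounded weakly null sequence with \(\norm{Rx_k}\not\to0\). A small perturbation of such a sequence is a block sequence, because the FDD is shrinking.

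For a bounded block sequence, apply the claim: \(Ry_k=\sum_iRx_k^{(i)}+o(1)\). Each term tends to zero by hypothesis, so \(\norm{Ry_k}\to0\), and \(R\) is compact.
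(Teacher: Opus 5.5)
There is a genuine gap at the step you flagged, and it is structural. The claim you call the common core is false in this space: not every bounded block sequence has a subsequence that is a finite sum of RISs plus a norm-null sequence. Your intermediate goal, that the part of level exactly \(h\) be norm-null, is also false.

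For the latter, take core atoms \(\gamma_k\) of one fixed level \(h\) with increasing ranks, and put \(x_k=d_{\gamma_k}u_k\) with \(\norm{u_k}=1\). Then \(\norm{x_k}\geq\norm{U_{\gamma_k}x_k}=1\), although the whole vector has local level \(h\).

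For the former, let \(x_k=i_{n_k}v_k\) be successive block vectors, where \(v_k\) has unit-norm entries at core atoms of \(k\) distinct levels \(h_1<\cdots<h_k\) and vanishes elsewhere. Then \(\norm{x_k}\leq M\), and at the level-\(h_i\) atom a suitable \(\psi\) gives \(\abs{e_{\gamma,\psi}^*(x_k)}=1\). Suppose some subsequence satisfied \(x_k=\sum_{i\leq m}x_k^{(i)}+w_k\) with \(C\)-RISs \((x_k^{(i)})_k\) and \(\norm{w_k}\to0\). Fix \(h=h_{i_0}\) with \(m_h>2mC\). For all sufficiently large indices, every RIS index exceeds \(h\) and \(x_k\) carries a level-\(h\) atom. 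Hence \(1\leq mC/m_h+\norm{w_k}<1/2+\norm{w_k}\), which is impossible. Both your shrinking and compactness deductions pass through this claim, so neither is established, and the \(\ell_1^n\)-average contradiction you sketch cannot succeed.

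The missing idea is that no decomposition independent of \(R\) is needed; the paper diagonalizes against \(R\). Write \(x_k=i_{s_k}u_k\) with \(s_k=\max\ran x_k\). Let \(y_k^N\) and \(z_k^N\) be the images under \(i_{s_k}\) of the parts of \(u_k\) at atoms of level at most \(N\) and greater than \(N\).

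Your obstacle paragraph already contains the first half. For fixed \(N\), \((y_k^N)_k\) is a RIS because its constant may depend on \(N\). For \(h\leq N\) one has \(\abs{e_{\gamma,\varphi}^*(y_k^N)}\leq\norm{y_k^N}\leq m_N\norm{y_k^N}/m_h\), and for \(N<h<j_k\) Lemma~\ref{lem:local-level} applies. Hence \(\norm{Ry_k^N}\to0\) for each fixed \(N\).

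For the second half, after \(k_t\) is chosen, take \(N_{t+1}>\max\{N_t,\max\ran x_{k_t}\}\). Then choose \(k_{t+1}>k_t\) late enough that \(\norm{Ry_{k_{t+1}}^{N_{t+1}}}<2^{-t-1}\). The high-level parts \(z_{k_t}^{N_t}\) then form a RIS with indices \(N_t+1\), so \(\norm{Rx_{k_t}}\to0\). Repeating this inside every subsequence gives \(\norm{Rx_k}\to0\). Shrinking follows by applying the same conclusion to \(R=x^*\in X^*\), which kills every RIS by Corollary~\ref{cor:ris-weakly-null}.

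Two smaller remarks. First, the level splitting already keeps the pieces inside \(\ran x_k\), since \(P_{[1,a]}x=i_a(x|_{[1,a]})\), so your extra projection is unnecessary. Second, the final passage to compactness needs no shrinking: the paper obtains \(\norm{R(I-P_{[1,N]})}\to0\) by a gliding hump using only strong convergence of the FDD projections.
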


\begin{proof}
  Let \((x_k)\) be a bounded block sequence, \(\norm{x_k}\leq C_x\), put \(s_k=\max\ran x_k\), and write \(x_k=i_{s_k}u_k\).
  For \(N\geq1\), split \(u_k=v_k^N+w_k^N\), where \(v_k^N\) contains the atoms of level at most \(N\) and \(w_k^N\) the atoms of level greater than \(N\).
  Put \(y_k^N=i_{s_k}v_k^N\) and \(z_k^N=i_{s_k}w_k^N\).
  These vectors remain in the block range of \(x_k\).
  Indeed, put \(a_k=\min\ran x_k-1\).
  Since \(P_{[1,a_k]}x_k=0\), the ambient coordinates of \(u_k\) up to rank \(a_k\) vanish.
  From coordinate deletion and the definition of the BD extension, we obtain
  \[ v_k^N|_{[1,a_k]}=w_k^N|_{[1,a_k]}=0,\qquad P_{[1,a_k]}y_k^N=P_{[1,a_k]}z_k^N=0. \]
  Both vectors belong to \(i_{s_k}\mathcal E_{[1,s_k]}\), so we have \(\ran y_k^N,\ran z_k^N\subseteq\ran x_k\), with zero vectors omitted when forming block sequences.
  If either vector has a smaller last FDD rank, its coordinates at that stage are the restriction of \(v_k^N\) or \(w_k^N\).
  Therefore its local support still contains only levels at most \(N\), or only levels greater than \(N\), respectively.
  The coordinate restriction of \(x_k=i_{s_k}u_k\) gives \(\norm{u_k}\leq\norm{x_k}\).
  Deleting coordinates according to their levels is contractive on the ambient finite sum, and \(\norm{i_{s_k}}\leq M\).
  Hence we obtain
  \begin{equation}\label{eq:level-splitting-bound}
    \max\{\norm{y_k^N},\norm{z_k^N}\}
    \leq M\norm{u_k}\leq MC_x.
  \end{equation}

  For fixed \(N\), choose RIS indices \(j_k>N\), increasing beyond the preceding block ranges.
  If \(N<h<j_k\), we obtain from Lemma~\ref{lem:local-level} \(\abs{e_{\gamma,\varphi}^*(y_k^N)}\leq\kappa\norm{y_k^N}/m_h\).
  If \(h\leq N\), we obtain from monotonicity and contractivity of the scalarized evaluation \(\abs{e_{\gamma,\varphi}^*(y_k^N)}\leq\norm{y_k^N} \leq m_N\norm{y_k^N}/m_h\).
  Therefore \((y_k^N)_k\) is a \(C_N\)-RIS, where \(C_N=MC_x\max\{1,\kappa,m_N\}\) is independent of \(k\).
  Hence we have \(\norm{Ry_k^N}\to0\) for each fixed \(N\).

  Choose recursively \(N_t\geq1\) and \(k_t\) so that
  \[ N_{t+1}>\max\{N_t,s_{k_t}\},\qquad k_{t+1}>k_t,\qquad \norm{Ry_{k_t}^{N_t}}<2^{-t}. \]
  This is possible because, after the next level cutoff has been fixed, \(\norm{Ry_k^N}\to0\) permits a sufficiently late choice of the next block index.
  Put \(j_t=N_t+1\).
  The local support of \(z_{k_t}^{N_t}\) has levels at least \(j_t\), and we have \(j_{t+1}=N_{t+1}+1>s_{k_t}\geq\max\ran z_{k_t}^{N_t}\).
  If \(0<h<j_t\), the local support contains no atom of level \(h\).
  From Lemma~\ref{lem:local-level} and \eqref{eq:level-splitting-bound}, we therefore obtain
  \[ \abs{e_{\gamma,\varphi}^*(z_{k_t}^{N_t})} \leq\frac{\kappa\norm{z_{k_t}^{N_t}}}{m_h} \leq\frac{\kappa MC_x}{m_h}. \]
  Since \(\norm{z_{k_t}^{N_t}}\leq MC_x\leq\kappa MC_x\), the sequence \((z_{k_t}^{N_t})_t\) is a \(\kappa MC_x\)-RIS.
  Hence we have \(\norm{Rz_{k_t}^{N_t}}\to0\), and hence we obtain \(\norm{Rx_{k_t}}\leq\norm{Ry_{k_t}^{N_t}}+\norm{Rz_{k_t}^{N_t}}\to0\).
  The same construction applies inside every subsequence.
  If \(\norm{Rx_k}\not\to0\), a subsequence bounded below away from zero would therefore have a further subsequence converging to zero, a contradiction.
  Thus \(R\) sends the original bounded block sequence to a sequence converging to zero in norm.

  Finally, suppose \(\norm{R(I-P_{[1,N]})}\not\to0\).
  Since \(\norm{I-P_{[1,N]}}\leq1+M\leq\kappa\), there are \(\eta>0\), arbitrarily large \(N\), and \(v_N\in B_X\) such that \(\norm{R(I-P_{[1,N]})v_N}>\eta\).
  Put \(u_N=(I-P_{[1,N]})v_N/ \norm{(I-P_{[1,N]})v_N}\).
  Then \(u_N\) is a unit vector in the tail after \(N\), and we have
  \[ \norm{Ru_N} =\frac{\norm{R(I-P_{[1,N]})v_N}} {\norm{(I-P_{[1,N]})v_N}} >\frac{\eta}{\kappa}. \]
  Put \(q_0=0\), and choose \(0<2\delta<\eta/\kappa\) and, successively, such unit tail vectors \(u_k\) after cutoffs \(N_k>q_{k-1}\).
  Since the FDD projections converge strongly to the identity and \(P_{[1,q_{k-1}]}u_k=0\), we may choose \(q_k>N_k\) so that
  \[
    \widetilde x_k=P_{(q_{k-1},q_k]}u_k=P_{[1,q_k]}u_k,
    \qquad
    \norm{R(u_k-\widetilde x_k)}
      \leq\norm R\,\norm{(I-P_{[1,q_k]})u_k}<\delta.
  \]
  The interval-projection bound gives
  \( \norm{\widetilde x_k}\leq\kappa. \)
  For its image under \(R\), we have
  \[ \norm{R\widetilde x_k}\geq\norm{Ru_k}-\norm{R(u_k-\widetilde x_k)}>\frac{\eta}{\kappa}-\delta>\delta. \]
  This contradicts the block-sequence conclusion.
  Therefore we have \(\norm{R(I-P_{[1,N]})}\to0\), and \(R\) is the norm limit of the finite-rank operators \(RP_{[1,N]}\), hence compact.
  Apply the same tail conclusion to \(R=x^*\in X^*\), using Corollary~\ref{cor:ris-weakly-null}.
  Then we have
  \[ \norm{x^*-P_{[1,N]}^*x^*} =\norm{x^*(I-P_{[1,N]})}\longrightarrow0,\qquad P_{[1,N]}^*x^*\in\sum_{n=1}^N P_{\{n\}}^*X^*. \]
  Thus we have \(X^*=\overline{\operatorname{span}}\{P_{\{n\}}^*X^*:n\geq1\}\), which proves that the FDD is shrinking.
\end{proof}

\section{\texorpdfstring{Inner and outer chains: collisions}{Inner and outer chains: collisions}}
\label{sec:inner-outer}

Recall that \(\widehat A=\widehat A_*^*\), where \(J_r^\infty:A_r^*\to\widehat A_*\) is the canonical isometric map from Lemma~\ref{lem:bounded-limit}.
The action on a fibre \(E_\eta\) is denoted by \(\Theta_\eta\), and \(U_\eta x\) is its ambient coordinate, as in \eqref{eq:ambient-coordinate}.
We use the multipliers \(M_a\) from \eqref{eq:multiplier-definition} and the inner-level coding map \(\sigma(q,\mathfrak p)\) from \eqref{eq:code-growth}.
For a represented packet, \(\norm{\cdot}_{\rm raw}\) is the sum of the norms of its represented entries, as in \eqref{eq:represented-packet}.

The exact-pair and dependent-sequence argument follows \cite[Sections~6 and~7]{ArgyrosHaydon2011}.
For rooted auxiliary averages, see \cite[Proposition~2.5]{ArgyrosHaydon2011} and compare \cite[Proposition~11.7]{MotakisPelczar2025}.
For off-weight estimates, see \cite[Definition~6.1 and the following remark]{ArgyrosHaydon2011} and \cite[Definition~6.7]{Motakis2024}.
The broader exact-pair methods and their subsequent developments are treated in \cite{ArgyrosTolias2004,Tarbard2013,Zisimopoulou2014,ManoussakisPelczarSwietek2017,MotakisPuglisiTolias2020}.
The proofs below give the versions needed for our parameters, interval restrictions, and root perturbations.
Here the orbit is \(\mathscr M(x)=\{M_ax:a\in\widehat A\}\).
A persistent separation from this orbit first gives a coded packet with small annihilation error.
The inner chain and its root perturbation make that error zero while retaining the operator separation (Lemmas~\ref{lem:polar} and~\ref{lem:exactification}).
Successive outer continuations then give the lower estimate in Lemma~\ref{lem:critical-extraction}, whereas Proposition~\ref{prop:dependent-mean} gives the upper estimate that proves Theorem~\ref{thm:local-orbit}.

\subsection{\texorpdfstring{The annihilation error}{The annihilation error}}

For a finite scalarized packet \(h=\sum_{l=1}^sc_le_{\eta_l,\psi_l}^*\) and \(x\in X^{\rm alg}\), we retain the fixed finite packet representation when writing \(\norm h_{\rm raw}\).
Combining entries at the same atom, we obtain
\[ \norm h_{\mathcal E_*} =\sum_\eta\left\|\sum_{\eta_l=\eta}c_l\psi_l\right\| \leq\sum_l\abs{c_l}\norm{\psi_l}=\norm h_{\rm raw}. \]
Thus the sum of the entry norms can exceed the norm of the represented element in \(\mathcal E_*\).
In a difference of packets we use a common finite presentation, adding zero entries if necessary.

Define the \emph{annihilation error} by
\begin{equation}\label{eq:error}
  \Delta(h,x)
  =\sum_{l=1}^sc_lJ_{r(\eta_l)}^\infty
  \bigl(\Theta_{\eta_l,*}(\psi_l)U_{\eta_l}x\bigr)
  \in\widehat A_*.
\end{equation}
Here, for \(\psi\in E_\eta^*\) and \(u\in E_\eta\), the coefficient functional \(\Theta_{\eta,*}(\psi)u\in A_{r(\eta)}^*\) is determined by
\begin{equation}\label{eq:preadjoint-product}
  \langle b,\Theta_{\eta,*}(\psi)u\rangle
  =\langle\psi,\Theta_\eta(b)u\rangle
  \qquad(b\in A_{r(\eta)}).
\end{equation}
It satisfies \(\norm{\Theta_{\eta,*}(\psi)u}\leq\norm{\psi}\norm{u}\).

\begin{lemma}[Error identity and continuity]\label{lem:error}
  For every \(a\in\widehat A\), we have
  \begin{equation}\label{eq:error-identity}
    h(M_ax)=\langle a,\Delta(h,x)\rangle.
  \end{equation}
  Consequently, we have
  \begin{equation}\label{eq:error-annihilator}
    h|_{\{M_ax:a\in\widehat A\}}=0
    \quad\Longleftrightarrow\quad
    \Delta(h,x)=0.
  \end{equation}
  If \(h,k\) lie in one finite sum of ambient predual blocks, then we have
  \begin{equation}\label{eq:error-continuity}
    \norm{\Delta(h,x)-\Delta(k,y)}
    \leq\norm{h-k}_{\rm raw}\norm{x}
    +\norm{k}_{\rm raw}\norm{x-y}.
  \end{equation}
\end{lemma}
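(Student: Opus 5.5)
The plan is to verify \eqref{eq:error-identity} entrywise, deduce \eqref{eq:error-annihilator} from the duality $\widehat A=\widehat A_*^*$ of Lemma~\ref{lem:bounded-limit}, and obtain \eqref{eq:error-continuity} by a bilinear splitting.

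For \eqref{eq:error-identity}, fix an entry $c_le^*_{\eta_l,\psi_l}$ with $\eta=\eta_l$ and $r=r(\eta)$. By the scalarization identity \eqref{eq:scalar-chain} of Lemma~\ref{lem:scalarization}, first for $x\in X^{\rm alg}$ and then by continuity,
\[
  e^*_{\eta,\psi_l}(M_ax)=\langle\psi_l,\Theta_\eta(a_r)U_\eta x\rangle .
\]
By the defining relation \eqref{eq:preadjoint-product} this equals $\langle a_r,\Theta_{\eta,*}(\psi_l)U_\eta x\rangle$. The duality formula $\langle a,J_r^\infty u\rangle=\langle a_r,u\rangle$ in Lemma~\ref{lem:bounded-limit} then rewrites it as $\langle a,J_r^\infty(\Theta_{\eta,*}(\psi_l)U_\eta x)\rangle$. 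Multiplying by $c_l$ and summing over $l$ gives \eqref{eq:error-identity}. The identity is linear in the packet, so entries repeated at the same atom cause no issue.

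For \eqref{eq:error-annihilator}, the direction $\Leftarrow$ is immediate from \eqref{eq:error-identity}. For $\Rightarrow$, note that if $h$ vanishes on the orbit, then $\langle a,\Delta(h,x)\rangle=0$ for every $a\in\widehat A=(\widehat A_*)^*$. The Hahn--Banach theorem then forces $\Delta(h,x)=0$ in $\widehat A_*$.

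For \eqref{eq:error-continuity}, use a common finite presentation for $h$ and $k$, padding with zero entries as the paper prescribes. In that presentation $\Delta$ is bilinear in the coefficient list and in $x$. Split
\[
  \Delta(h,x)-\Delta(k,y)=\Delta(h-k,x)+\Delta(k,x-y).
\]
Each term $J^\infty_{r}(\Theta_{\eta,*}(\psi)U_\eta z)$ has norm at most $\norm\psi\norm{U_\eta z}\le\norm\psi\norm z$. This uses three facts: $J_r^\infty$ is isometric, $\norm{\Theta_{\eta,*}(\psi)u}\le\norm\psi\norm u$ (contractivity of the module action), and $\norm{U_\eta z}\le\norm z$ (the ambient coordinates norm $X$ through the $\ell_\infty$-sum). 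Summing gives $\norm{\Delta(g,z)}\le\norm g_{\rm raw}\norm z$, which yields both terms of the bound.

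The only delicate point is bookkeeping rather than analysis. The raw norm of $h-k$ must be computed in the common presentation, and $\Delta$ must be understood as a function of the presentation, since only its well-definedness up to the value of $h$ follows from \eqref{eq:error-identity}. I expect no real obstacle beyond stating this convention clearly.
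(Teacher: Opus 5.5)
Your proposal is correct and follows the paper's proof. The paper proves the identity from \eqref{eq:scalar-chain}, \eqref{eq:preadjoint-product} and the duality of Lemma~\ref{lem:bounded-limit}, the annihilator equivalence from $\widehat A=\widehat A_*^*$, and continuity by the splitting $\Delta(h-k,x)+\Delta(k,x-y)$; the only difference is that you bound $\norm{\Delta(g,z)}\leq\norm{g}_{\rm raw}\norm z$ term by term, whereas the paper reads this bound off the formula $\norm{\Delta(h,x)}=\sup_{a\in B_{\widehat A}}\abs{h(M_ax)}$, which also shows that $\Delta(h,x)$ depends only on the functional $h$ and is reused in Lemma~\ref{lem:polar}.
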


\begin{proof}
  By \eqref{eq:scalar-chain} and \eqref{eq:preadjoint-product}, we obtain
  \[ h(M_ax) = \sum_{l=1}^s c_l\langle \psi_l,\Theta_{\eta_l}(a_{r(\eta_l)})U_{\eta_l} x\rangle=\sum_{l=1}^sc_l\left\langle a_{r(\eta_l)}, \Theta_{\eta_l,*}(\psi_l)U_{\eta_l}x\right\rangle =\langle a,\Delta(h,x)\rangle. \]
  Lemma~\ref{lem:bounded-limit} gives \(\widehat A=\widehat A_*^*\), so \eqref{eq:error-annihilator} follows.
  Moreover, we have
  \begin{equation}\label{eq:error-norm-formula}
    \norm{\Delta(h,x)}
    =\sup_{a\in B_{\widehat A}}\abs{h(M_ax)}
    \leq\norm{h}_{\rm raw}\norm{x}.
  \end{equation}
  In particular, \(\Delta(h,x)\) is independent of the representation of the ambient functional \(h\).
  By bilinearity, we have \(\Delta(h,x)-\Delta(k,y)=\Delta(h-k,x)+\Delta(k,x-y)\), and \eqref{eq:error-norm-formula} gives \eqref{eq:error-continuity}.
\end{proof}

For a finite vector \(x\), put \(R_x=\max(\{0\}\cup\{r(\gamma):D_\gamma x\ne0\})\).
Then we have
\begin{equation}\label{eq:finite-orbit}
  \mathscr M(x)=\{M_bx:b\in A_{R_x}\},
\end{equation}
where any compatible lift of \(b\) is used on the right.
This is well defined because all lower coefficients are determined by \(b\). The map \(b\mapsto M_bx\) is linear, so \(\mathscr M(x)\) is a finite-dimensional, hence closed, linear subspace of \(X\).
Lemma~\ref{lem:bounded-limit} gives \(q_{R_x}(B_{\widehat A})=B_{A_{R_x}}\), so \(\{M_ax:a\in B_{\widehat A}\}\) is compact.

\begin{lemma}[Coded approximate annihilator]\label{lem:polar}
  Let \(S\in\Bcal(X)\), let \((x_k)\) be a RIS, and assume that
  \begin{equation}\label{eq:persistent-separation}
    \dist(Sx_k,\mathscr M(x_k))\geq\varepsilon
    \qquad(k\in\N).
  \end{equation}
  Given a cutoff \(a\), \(\delta>0\), and positive \((\delta_k)\), there are a later \(x_k\), a coded block vector \(u\) with the same FDD range, and a coded packet \(b\), such that
  \begin{equation}\label{eq:polar-output}
    \begin{gathered}
      \min\ran u>a,\qquad \supp_{\rm raw}b\subseteq(a,\infty),
      \qquad \norm{u-x_k}<\delta_k,\\
      \norm{b}_{\rm raw}\leq1,\qquad
      \operatorname{Re}b(Su)>\frac{\varepsilon}{2},
      \qquad \norm{\Delta(b,u)}<\delta.
    \end{gathered}
  \end{equation}
  Moreover, \(u\) may be chosen so that \(Au\) and \((\abs{\lambda_l(u)})_l\)  are arbitrarily small, for any finite-rank \(A\) and any finitely many functionals \((\lambda_l)_l\).
\end{lemma}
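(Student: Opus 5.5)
The plan is to fix one late vector \(x_k\) of the RIS, separate \(Sx_k\) from the finite-dimensional subspace \(\mathscr M(x_k)\) by Hahn--Banach, and then replace the separating functional by a coded packet with the required support and raw norm. First choose \(k\) large. By Corollary~\ref{cor:ris-weakly-null}, \((x_k)\) is weakly null. Hence \(\norm{Ax_k}\to0\) for the given finite-rank \(A\), \(\lambda_l(x_k)\to0\) for the finitely many functionals, and \(\norm{P_{[1,a]}Sx_k}\to0\) because \(P_{[1,a]}S\) has finite rank. Since \((x_k)\) is a block sequence, we may also assume \(\min\ran x_k>a\), so that \(P_{[1,a]}x_k=0\). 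Because \(M_b\) commutes with FDD interval projections (proof of Lemma~\ref{lem:scalarization}), this gives \(P_{[1,a]}M_bx_k=0\) for every \(b\). Fix such a \(k\), with \(\norm{P_{[1,a]}Sx_k}\) much smaller than \(\varepsilon\) and \(\delta\).

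By \eqref{eq:finite-orbit}, \(\mathscr M(x_k)\) is a finite-dimensional, hence closed, subspace. Hahn--Banach and \eqref{eq:persistent-separation} give \(f\in B_{X^*}\) with \(f|_{\mathscr M(x_k)}=0\) and \(\operatorname{Re}f(Sx_k)\geq\varepsilon\). The scalarized evaluations \(e^*_{\eta,\psi}\), \(\psi\in B_{E_\eta^*}\), form a symmetric norming set for \(X\). By the bipolar theorem, \(B_{X^*}\) is therefore the weak-star closure of the set of finite packets \(h=\sum_l c_le^*_{\eta_l,\psi_l}\) with \(\norm h_{\rm raw}\leq1\). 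Weak-star convergence of a bounded net is uniform on norm-compact sets. We apply this to the compact set \(\{Sx_k\}\cup\{M_cx_k:c\in B_{\widehat A}\}\), which is compact by the remark after \eqref{eq:finite-orbit}. This gives a packet \(h\) with \(\norm h_{\rm raw}\leq1\), \(\operatorname{Re}h(Sx_k)>3\varepsilon/4\), and \(\sup_{c\in B_{\widehat A}}\abs{h(M_cx_k)}<\delta/4\). By \eqref{eq:error-norm-formula}, the last bound says exactly that \(\norm{\Delta(h,x_k)}<\delta/4\).

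Next we enforce \(\supp_{\rm raw}\subseteq(a,\infty)\). Delete from \(h\) every entry whose atom has rank \(\leq a\), and call the result \(h'\); then \(\norm{h'}_{\rm raw}\leq1\). A deleted entry evaluates an ambient coordinate \(U_\eta\) with \(\rank\eta\leq a\). Such a coordinate depends only on the first \(a\) FDD blocks of its argument, by \eqref{eq:extension-head} applied to \(P_{[1,a]}\). Hence the deleted entries vanish on every \(M_cx_k\), so \(\Delta(h',x_k)=\Delta(h,x_k)\). On \(Sx_k\) they contribute at most \(\norm{P_{[1,a]}Sx_k}\), which we arranged to be small, so \(\operatorname{Re}h'(Sx_k)>2\varepsilon/3\) still holds.

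Finally we code everything. Using Lemma~\ref{lem:coded-normalization}, replace the coefficients \(c_l\) and the vectors \(\psi_l\) by coded ones, rescaling slightly so that the raw norm stays \(\leq1\). Also replace the finitely many nonzero block coefficients \(D_\gamma x_k\) by coded vectors in the same fibres; this gives a coded \(u\) with the same FDD range and \(\norm{u-x_k}\) as small as desired, by \eqref{eq:formal-block-bounds}. The continuity estimate \eqref{eq:error-continuity} controls \(\Delta(b,u)\), while boundedness of \(S\) controls \(\operatorname{Re}b(Su)\), so that \(\operatorname{Re}b(Su)>\varepsilon/2\) and \(\norm{\Delta(b,u)}<\delta\). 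The same perturbation also keeps \(\norm{Au}\) and \(\abs{\lambda_l(u)}\) small. I expect the main obstacle to be the second step: getting annihilation of the whole orbit uniformly, while keeping raw norm \(\leq1\), from a merely weak-star approximation. The compactness of the bounded part of \(\mathscr M(x_k)\) together with \eqref{eq:error-norm-formula} is what should make it work.
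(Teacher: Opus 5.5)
Your proposal is correct and follows essentially the same route as the paper. The steps match: Hahn--Banach separation from the finite-dimensional orbit, then bipolar approximation by packets of raw norm at most one, uniformly on the compact set \(\{M_cx_k:c\in B_{\widehat A}\}\) and combined with \eqref{eq:error-norm-formula}, then deletion of entries of rank at most \(a\), and finally a coded perturbation controlled by \eqref{eq:error-continuity}. The only difference is cosmetic: the paper truncates \(Sx_k\) to \(z=P_{(a,r]}Sx_k\) before separating, whereas you separate \(Sx_k\) itself and absorb \(\norm{P_{[1,a]}Sx_k}\) at the deletion step, which works equally well.
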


\begin{proof}
  By Corollary~\ref{cor:ris-weakly-null}, we know that \((x_k)\) is weakly null, and so is \((Sx_k)\).
  Choose \(k\) and \(r>\max\ran x_k\) such that
  \[ \min\ran x_k>a,\qquad \norm{P_{[1,a]}Sx_k}+\norm{P_{(r,\infty)}Sx_k} <\frac{\varepsilon}{8}. \]
  Put \(z=P_{(a,r]}Sx_k\).
  Since multipliers preserve FDD supports, we have \(\mathscr M(x_k)\subseteq P_{(a,r]}X\).
  The truncation error satisfies
  \[ \norm{Sx_k-z}<\frac{\varepsilon}{8}. \]
  For the distance to the multiplier orbit, we then obtain
  \( \dist(z,\mathscr M(x_k))\geq\varepsilon-\norm{Sx_k-z}>7\varepsilon/8. \)
  By the Hahn--Banach theorem, we can choose \(x^*\in B_{X^*}\) such that
  \[ x^*|_{\mathscr M(x_k)}=0,\qquad \operatorname{Re}x^*(z)>\frac{3\varepsilon}{4}. \]

  Scalarized ambient coordinates norm \(X\).
  By the bipolar theorem, we obtain a bounded net of finite scalarized packets \(b_\alpha\), with sum of entry norms at most one, converging weak-star to \(x^*\).
  The set \(K_k=\{M_ax_k:a\in B_{\widehat A}\}\) is compact by \eqref{eq:finite-orbit}.
  To prove uniform convergence on \(K_k\), fix \(\eta>0\) and choose a finite \(\eta/4\)-net \(z_1,\ldots,z_d\) in \(K_k\).
  Since \(\norm{b_\alpha}\leq\norm{b_\alpha}_{\rm raw}\leq1\) and \(\norm{x^*}\leq1\), we have \(\norm{b_\alpha-x^*}\leq2\).
  Weak-star convergence gives an index after which \(\abs{(b_\alpha-x^*)(z_l)}<\eta/2\) for every \(l\).
  For \(z\in K_k\), choose \(l\) with \(\norm{z-z_l}<\eta/4\).
  Then we have
  \[ \abs{(b_\alpha-x^*)(z)} \leq\abs{(b_\alpha-x^*)(z_l)} +2\norm{z-z_l}<\frac{\eta}{2}+2\frac{\eta}{4}=\eta. \]
  Thus we have \(b_\alpha\to x^*\) uniformly on \(K_k\).
  Since \(x^*\) annihilates \(K_k\), we obtain from \eqref{eq:error-norm-formula} \(\norm{\Delta(b_\alpha,x_k)}\to0\).

  If an ambient coordinate has rank at most \(a\), then it belongs to the span of the first \(a\) dual FDD blocks and hence vanishes on \(P_{(a,r]}X\).
  Since multipliers preserve FDD supports, deleting all such entries from a packet changes neither its value on \(z\) nor any of its values on \(K_k\).
  In view of \eqref{eq:error-norm-formula}, we may therefore choose a finite packet \(b^0\), after this deletion, such that
  \[ \supp_{\rm raw}b^0\subseteq(a,\infty),\qquad \norm{b^0}_{\rm raw}\leq1,\qquad \operatorname{Re}b^0(z)>\frac{5\varepsilon}{8},\qquad \norm{\Delta(b^0,x_k)}<\frac{\delta}{4}. \]
  Therefore we obtain
  \( \operatorname{Re}b^0(Sx_k) \geq\operatorname{Re}b^0(z)-\norm{Sx_k-z} >5\varepsilon/8-\varepsilon/8 =\varepsilon/2.\)
  Choose \(0<\theta<1\) sufficiently close to one and put \(b^1=\theta b^0\).
  The strict inequalities above allow the choice so that
  \[ \norm{b^1}_{\rm raw}<1, \qquad \operatorname{Re}b^1(Sx_k)>\frac{\varepsilon}{2}, \qquad \norm{\Delta(b^1,x_k)}<\frac{\delta}{2}. \]
  Put \(H=\norm{x_k}+1\) and \(\mu=\operatorname{Re}b^1(Sx_k)-\varepsilon/2>0\).
  Choose
  \[
    0<\rho<\min\left\{
      1,\delta_k,1-\norm{b^1}_{\rm raw},
      \frac{\mu}{(1+\norm S)(H+1)},
      \frac{\delta}{2(H+1)}\right\}.
  \]
  Density in the fixed finite coded spaces gives \(u\) with the same FDD range as \(x_k\), and \(b\) supported in the finite packet block of \(b^1\), such that
  \[
    \norm{u-x_k}<\rho,\qquad
    \norm{b-b^1}_{\rm raw}<\rho,\qquad \norm u\leq H.
  \]
  For the change in \(b(Su)\), we obtain
  \begin{align*}
    \abs{b(Su)-b^1(Sx_k)}
    \leq\norm{b-b^1}_{\rm raw}\norm S\norm u
      +\norm{b^1}_{\rm raw}\norm S\norm{u-x_k}<\rho\norm S(H+1)<\mu.
  \end{align*}
  For the change in \(\Delta(b,u)\), we obtain from \eqref{eq:error-continuity} that
  \begin{align*}
    \norm{\Delta(b,u)-\Delta(b^1,x_k)}
    \leq\norm{b-b^1}_{\rm raw}\norm u
      +\norm{b^1}_{\rm raw}\norm{u-x_k}<\rho(H+1)<\frac{\delta}{2}.
  \end{align*}
  Consequently, we have
  \( \norm b_{\rm raw}\leq\norm{b^1}_{\rm raw}+\rho<1. \)
  For the value of \(b\) at \(Su\), we obtain
  \( \operatorname{Re}b(Su)>\operatorname{Re}b^1(Sx_k)-\mu=\varepsilon/2. \)
  The annihilation error satisfies
  \( \norm{\Delta(b,u)}<\norm{\Delta(b^1,x_k)}+\delta/2<\delta. \)
  This gives \eqref{eq:polar-output}.

  If \(A:X\to F\) has finite-dimensional range and \(\lambda_1,\ldots,\lambda_t\in X^*\), fix positive target bounds \(\beta_A,\beta_1,\ldots,\beta_t\).
  First pass to a tail on which \(\norm{Ax_k}<\beta_A/2\) and \(\abs{\lambda_l(x_k)}<\beta_l/2\) for \(1\leq l\leq t\).
  Replace \(\delta_k\) by
  \[ \delta'_k=\min\left\{\delta_k, \frac{\beta_A}{2\max\{1,\norm A\}}, \min_{1\leq l\leq t} \frac{\beta_l}{2\max\{1,\norm{\lambda_l}\}}\right\}. \]
  Then we obtain \(\norm{Au}\leq\norm{Ax_k}+\norm A\,\delta'_k<\beta_A\), and similarly \(\abs{\lambda_l(u)}<\beta_l\) for \(1\leq l\leq t\).
\end{proof}

\subsection{\texorpdfstring{Root perturbation}{Root perturbation}}

Let \(b_1,\ldots,b_N\), \(N=n_p\), be successive coded packets represented along a complete level-\(p\) inner chain.
Let \(\xi_1\prec\cdots\prec\xi_N=\eta\) be its core atoms, put \(\nu_t=\rank\xi_t\), and put \(\nu_0=\chi(\eta)\).
Suppose that the coded vectors \(u_1<\cdots<u_N\) satisfy
\[
  \nu_{t-1}<\min\ran u_t,\qquad
  \max\bigl(\ran u_t\cup\supp_{\rm raw}b_t\bigr)<\nu_t
  \quad(1\leq t\leq N).
\]
Put \(f=e_{\eta,1_{A_{r(\eta)}}}^*\) and \(y^0=(m_p/N)\sum_{t=1}^Nu_t\).

\begin{lemma}[Root perturbation]\label{lem:root}
  With the preceding notation, we have
  \begin{equation}\label{eq:error-average}
    J_{r(\eta)}^\infty U_\eta y^0
    =\Delta(f,y^0)
    =\frac{1}{N}\sum_{t=1}^N\Delta(b_t,u_t).
  \end{equation}
  If \(y=y^0-d_\eta(U_\eta y^0)\), then we have
  \[
    U_\eta y=0,\qquad \norm{y-y^0}\leq M\norm{\Delta(f,y^0)}.
  \]
  Every complete scalarization of the matching outer strong packet annihilates \(y\).
\end{lemma}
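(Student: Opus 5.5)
The plan has three parts: prove the first identity in \eqref{eq:error-average}, expand \(f(M_ay^0)\) along the inner chain to get the second, and then correct \(y^0\) by the root coordinate.

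For the first identity, I apply Lemma~\ref{lem:error} with \(h=f\). The packet consists of the single entry \((1,\eta,1_{A_{r(\eta)}})\), so \(\Delta(f,y^0)=J_{r(\eta)}^\infty(\Theta_{\eta,*}(1)U_\eta y^0)\). For \(b\in A_{r(\eta)}\) we have \(\langle b,\Theta_{\eta,*}(1)u\rangle=\langle 1,R_b^*u\rangle=\langle b,u\rangle\), so \(\Theta_{\eta,*}(1)\) is the identity on \(G_{r(\eta)}\). This gives \(J^\infty_{r(\eta)}U_\eta y^0=\Delta(f,y^0)\).

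For the second identity, I evaluate \(f(M_ay^0)\) for \(a\in\widehat A\) using the evaluation analysis of Lemma~\ref{lem:formal-evaluation-analysis} with \(\varphi=1\); then \(\varphi_t=1\) for every \(t\). Since \(M_a\) preserves FDD supports, \(M_au_t\) has the same range as \(u_t\). The \(d^*\)-terms \(d^*_{\xi_t,1}\) vanish on \(M_ay^0\), because \(\nu_t\) lies strictly between \(\ran u_t\) and \(\ran u_{t+1}\) and the blocks \(u_s\) are separated from the chain ranks. For the packet terms, the tail projection \(P_{(\nu_{t-1},\infty)}\) kills the \(u_s\) with \(s<t\). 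The lift \(b^*_{t,1}\) has support below \(\nu_t\), so it sees no \(u_s\) with \(s>t\): these have FDD support above \(\nu_t\), and their ambient coordinates below \(\nu_t\) vanish by \eqref{eq:extension-head}. Also \(b^*_{t,1}=[b_t]\) by \eqref{eq:represented-packet-bounds}. Hence
\[
f(M_ay^0)=\frac{1}{m_p}\cdot\frac{m_p}{N}\sum_{t=1}^N b_t(M_au_t)=\frac1N\sum_{t=1}^N\langle a,\Delta(b_t,u_t)\rangle
\]
by \eqref{eq:error-identity}. Combining this with \eqref{eq:error-identity} for \(f\) and the duality \(\widehat A=\widehat A_*^*\) from Lemma~\ref{lem:bounded-limit} yields \eqref{eq:error-average}.

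The main obstacle is the support bookkeeping in this expansion. The claim \(b^*_{t,1}(M_au_s)=0\) for \(s>t\) needs care, because ambient coordinates of a block vector are not supported on its FDD range. I would use unit triangularity: \(b^*_{t,1}\) lies in the span of the dual blocks of rank \(<\nu_t\), while \(P_{[1,\nu_t)}u_s=0\).

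Next, put \(y=y^0-d_\eta(U_\eta y^0)\). By atomic biorthogonality \eqref{eq:atomic-biorthogonality} together with the recurrence \eqref{eq:formal-coordinate-recurrence} at \(\eta\), we get \(U_\eta d_\eta w=w\). The reason is that \(d_\eta w=i_n(\ldots)\) has its only nonzero coordinate at rank \(n=\rank\eta\), at the \(\eta\)-summand, and \(U_\eta\) reads that ambient coordinate directly. Hence \(U_\eta y=U_\eta y^0-U_\eta y^0=0\). The norm bound follows from \(\norm{d_\eta}\le M\) in \eqref{eq:formal-block-bounds} and \(\norm{U_\eta y^0}=\norm{\Delta(f,y^0)}\), since \(J^\infty\) is isometric.

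Finally, take the matching outer strong packet \(\mathbf s(\eta)\) and any \(r\ge r(\eta)\). By \eqref{eq:strong-packet-scalarization}, its complete scalarization is \(e^*_{\eta,\pi^r_{r(\eta)}\varphi}\). This evaluates \(U_\eta y=0\), so it annihilates \(y\).
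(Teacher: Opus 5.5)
Your proposal is correct and follows the paper's proof essentially step by step. It uses the same four ingredients: \(\Theta_{\eta,*}(1_{A_{r(\eta)}})\) is the identity; the terminal analysis of \(f\) collapses to its diagonal packet terms on \(M_ay^0\); the correction \(d_\eta(U_\eta y^0)\) cancels the ambient coordinate at \(\eta\); and the strong packet scalarizes to \(e_{\eta,\pi_{r(\eta)}^r\varphi}^*\). The one detail to add is that, inside the outer evaluation analysis, this scalarization is composed with the tail projection \(P_{(k,\infty)}\), where \(k=\operatorname{cut}(\mathfrak p)=\nu_0\). You should therefore note that \(P_{(k,\infty)}y=y\), which holds because \(\nu_0<\min\ran u_1\) and \(\rank\eta>\nu_0\).
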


\begin{proof}
  Since \(\eta\) is a core atom, we have \(E_\eta=A_{r(\eta)}^*\), and for \(b\in A_{r(\eta)}\) and \(u\in A_{r(\eta)}^*\), we obtain
  \[ \left\langle b,\Theta_{\eta,*}(1_{A_{r(\eta)}})u\right\rangle =\left\langle1_{A_{r(\eta)}},R_b^*u\right\rangle =\left\langle R_b1_{A_{r(\eta)}},u\right\rangle =\langle b,u\rangle. \]
  Hence we have \(\Theta_{\eta,*}(1_{A_{r(\eta)}})u=u\).
  Thus the first equality in \eqref{eq:error-average} follows from \eqref{eq:error}.
  By \eqref{eq:represented-packet-bounds}, we obtain the terminal analysis
  \[
    f=\sum_{t=1}^Nd_{\xi_t,1}^*
    +\frac{1}{m_p}\sum_{t=1}^N
    b_tP_{(\nu_{t-1},\infty)}.
  \]
  Fix \(a\in\widehat A\).
  Multipliers preserve FDD supports.
  Since \(\ran M_au_t\subseteq\ran u_t\subseteq(\nu_{t-1},\nu_t)\), we obtain from the placement conditions
  \[
    d_{\xi_s,1}^*(M_au_t)=0,\qquad
    b_sP_{(\nu_{s-1},\infty)}M_au_t
      =\begin{cases}b_t(M_au_t),&s=t,\\0,&s\ne t.\end{cases}
  \]
  For \(s>t\), the tail projection vanishes, whereas for \(s<t\), every ambient coordinate in \(b_s\) has rank below \(\min\ran M_au_t\) whenever \(M_au_t\ne0\).
  When we apply the terminal analysis to \(M_ay^0\), the resulting double sum reduces to its diagonal, and we obtain
  \[
    \begin{aligned}
      \langle a,J_{r(\eta)}^\infty U_\eta y^0\rangle
      =f(M_ay^0)
      &=\frac{1}{N}\sum_{s,t=1}^N b_sP_{(\nu_{s-1},\infty)}M_au_t\\
      &=\frac{1}{N}\sum_{t=1}^N b_t(M_au_t)
      =\left\langle a,\frac{1}{N}\sum_{t=1}^N\Delta(b_t,u_t)\right\rangle.
    \end{aligned}
  \]
  Since \(\widehat A=\widehat A_*^*\), this proves the second equality in \eqref{eq:error-average}.

  By biorthogonality, we have \(U_\eta d_\eta=I_{E_\eta}\), and hence \(U_\eta y=0\).
  The map \(J_{r(\eta)}^\infty\) is an isometry, and the block-injection norm is at most \(M\).
  Therefore we obtain
  \[ \norm{y-y^0} \leq M\norm{U_\eta y^0} =M\norm{\Delta(f,y^0)}. \]

  Let the complete matching outer strong packet be scalarized by \(\varphi\in A_r\), \(r\geq r(\eta)\), and let \(k<\min\ran y\) be the local cutoff of its outer atom.
  For the core action, the packet formula gives the following identity.
  If \(c=\pi_{r(\eta)}^r\varphi\) and \(u\in A_{r(\eta)}^*\), then we have
  \[ \left\langle u,(R_c^*)^*1_{A_{r(\eta)}}\right\rangle =\left\langle R_c^*u,1_{A_{r(\eta)}}\right\rangle =\left\langle u,R_c1_{A_{r(\eta)}}\right\rangle =\langle u,c\rangle. \]
  Therefore we have \(\Theta_\eta(\pi_{r(\eta)}^r\varphi)^*1_{A_{r(\eta)}} =\pi_{r(\eta)}^r\varphi\).
  Hence the scalarization is \(e_{\eta,\pi_{r(\eta)}^r\varphi}^*P_{(k,\infty)}\), and its value on \(y\) is \(\langle\pi_{r(\eta)}^r\varphi,U_\eta y\rangle=0\).
\end{proof}
\begin{figure}[!htbp]
  \centering
\begin{tikzpicture}[x=1cm,y=1cm,>=Stealth,
  every node/.style={font=\small,align=center,inner sep=2pt},
  oldblock/.style={draw=blue!55!black,fill=blue!7,line width=.45pt},
  correction/.style={draw=green!40!black,fill=green!10,line width=.55pt},
  ambient/.style={draw=blue!45!black,fill=blue!4,
    rounded corners=2pt,minimum width=2.75cm,minimum height=.72cm,
    inner sep=3pt}]
  \path[use as bounding box] (0,-3.45) rectangle (11.8,1.02);

  \node[font=\small\bfseries] at (3.70,.82) {FDD blocks};
  \node[font=\small\bfseries] at (10.05,.82)
    {Ambient coordinate at \(\eta\)};

  \draw[densely dashed,black!35] (6.05,.50) -- (6.05,-1.64);
  \foreach \yy in {0,-1.30} {
    \draw[->,black!60] (1.02,\yy) -- (7.05,\yy);
    \foreach \a/\b in {1.24/2.10,2.55/3.40,4.18/4.92} {
      \draw[oldblock] (\a,{\yy-.20}) rectangle (\b,{\yy+.20});
    }
    \node[fill=white,inner sep=1pt] at (3.78,\yy) {\(\cdots\)};
  }
  \node[anchor=east] at (.89,0) {\(y^0\)};
  \node[anchor=east] at (.89,-1.30) {\(y\)};
  \node[fill=white,inner sep=3pt] at (6.05,0) {\(0\)};
  \node[correction,minimum width=1.35cm,minimum height=.46cm,
    inner sep=2pt] at (6.05,-1.30) {\(-d_\eta q\)};
  \node[font=\scriptsize] at (6.05,-1.84) {\(\rank\eta\)};
  \node[font=\scriptsize,anchor=north east] at (7.10,-1.43) {rank};
  \node[font=\scriptsize,text width=4.50cm] at (3.08,-1.90)
    {Earlier FDD blocks unchanged};

  \node[ambient] (before) at (10.05,0)
    {\(q=U_\eta y^0\)\\[-1pt]
     {\scriptsize possibly nonzero}};
  \node[ambient,draw=green!40!black,fill=green!10] (after)
    at (10.05,-1.30) {\(U_\eta y=q-q=0\)};
  \draw[->] (7.35,0) -- node[above,font=\scriptsize] {\(U_\eta\)}
    (before.west);
  \draw[->] (7.35,-1.30) --
    node[above,font=\scriptsize] {\(U_\eta\)} (after.west);
  \draw[->] (before.south) --
    node[right,font=\scriptsize,inner sep=3pt] {subtract \(q\)}
    (after.north);

  \node at (5.90,-2.38)
    {\(y=y^0-d_\eta q,\qquad U_\eta d_\eta q=q\)};
  \node at (5.90,-2.84)
    {\(\norm{y-y^0}=\norm{d_\eta q}
       \leq M\norm{q}=M\norm{\Delta(f,y^0)}\)};
  \node[font=\scriptsize] at (5.90,-3.28)
    {Complete matching strong packet:
     \(\langle\psi,U_\eta y\rangle=0\) for every scalarization.};
\end{tikzpicture}
  \caption{A correction in the terminal FDD block cancels the ambient coordinate at \(\eta\).}
  \label{fig:root-coordinate-cancellation}
\end{figure}
Although \(y^0\) is supported before \(\eta\) in the FDD, its ambient coordinate \(q=U_\eta y^0\) may be nonzero.
As Figure~\ref{fig:root-coordinate-cancellation} shows, subtracting \(d_\eta q\) leaves the earlier FDD blocks unchanged and gives \(U_\eta y=q-q=0\).
We therefore obtain exact cancellation for every scalarization of the complete matching outer strong packet.
By \eqref{eq:error-average}, the correction is small whenever the packet errors are small.

Recall from \eqref{eq:constants} that \(C_{\rm ex}=2(3B+M)\).
The next selection adapts the operator-dependent exact-pair argument in \cite[Lemma~7.2]{ArgyrosHaydon2011} to multiplier orbits.

\begin{lemma}[Operator-dependent pairs]
  \label{lem:exactification}
  Let \(S\in\Bcal(X)\), let \((x_k)\) be a \(C_0\)-RIS, and assume
  \[ \dist(Sx_k,\mathscr M(x_k))\geq\varepsilon \qquad(k\in\N). \]
  Let \(q\in\mathcal C^{\rm out}\), let \(\mathfrak p\) be a realized outer stem over a seed of level \(q\), with age below \(n_q\), and put \(p=\sigma(q,\mathfrak p)\).
  After every cutoff there are a coded root-perturbed vector \(y\) and the terminal evaluation \(f=e_{\eta,1}^*\) of a completed inner chain such that
  \begin{align}
    U_\eta y=0,&\qquad 
    \operatorname{Re}f(Sy)\geq\frac{\varepsilon}{4},              \label{eq:exact-separation}\\
    \norm{y}\leq C_{\rm ex}C_0,&\qquad
    \sup_{\gamma,\varphi}\abs{d_{\gamma,\varphi}^*(y)}
    \leq\frac{C_{\rm ex}C_0}{m_p}.                             \label{eq:exact-coordinate}
  \end{align}
  The pair gives a coded certificate compatible with \((\mathfrak p,\eta)\) for an outer continuation.
  For a fixed stem \(\mathfrak p\), these pairs may be chosen successively so that the sequence of vectors \((y^{(j)})\) is bounded and weakly null.
\end{lemma}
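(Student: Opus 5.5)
The plan is to build the completed inner chain of level \(p=\sigma(q,\mathfrak p)\) step by step with Lemma~\ref{lem:polar} and Proposition~\ref{prop:finite-extension}(b). Then Lemma~\ref{lem:root} removes the accumulated annihilation error.

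Put \(N=n_p\) and fix the cutoff, taken larger than \(\operatorname{cut}(\mathfrak p)\). Choose \(\delta=\varepsilon/(16M(\norm S+1)m_p)\) and tolerances \(\delta_k\) summing to something tiny. Recursively for \(t=1,\dots,N\), apply Lemma~\ref{lem:polar} with cutoff \(a\) beyond the rank \(\nu_{t-1}\) of the previously realized inner atom (with \(\nu_0=\operatorname{cut}(\mathfrak p)\)). This gives a coded block vector \(u_t\), close to some \(x_{k_t}\), and a coded packet \(b_t\) supported after \(a\). They satisfy \(\operatorname{Re}b_t(Su_t)>\varepsilon/2\) and \(\norm{\Delta(b_t,u_t)}<\delta\).

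The inner requirement (R3) naming \(b_t\), with lower bound beyond \(\max\ran u_t\) and \(\supp_{\rm raw}b_t\), is admissible. Proposition~\ref{prop:finite-extension}(b) realizes it as \(\xi_t\), continuing the chain, so the placement hypotheses of Lemma~\ref{lem:root} hold. We also pass to \(x_{k_t}\) late enough that \((u_t)\) is a \(2C_0\)-RIS. The RIS indices are kept increasing, and \(\delta_k\) is small compared with the RIS constants; this only perturbs evaluations by \(\delta_k\).

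Put \(y^0=(m_p/N)\sum u_t\) and \(y=y^0-d_\eta(U_\eta y^0)\). By Lemma~\ref{lem:root}, \(U_\eta y=0\) and \(\norm{y-y^0}\leq M\delta\). Corollary~\ref{lem:ris-consequences} gives \(\norm{y^0}\leq 6BC_0\) and coordinate bounds \(2\kappa C_0/m_p\). The correction term changes only the \(\eta\)-coordinate, by at most \(\kappa M\delta\), which absorbs into \(C_{\rm ex}C_0/m_p\). This gives \eqref{eq:exact-coordinate}.

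\textbf{Main obstacle: the lower bound on \(\operatorname{Re}f(Sy)\).} Use the terminal analysis \(f=\sum_t d_{\xi_t,1}^*+m_p^{-1}\sum_t b_tP_{(\nu_{t-1},\infty)}\) applied to \(Sy^0\). The diagonal terms give \(m_p^{-1}\cdot(m_p/N)\sum_t b_t(Su_t)>\varepsilon/2\), provided \(Su_t\) is essentially supported in \((\nu_{t-1},\nu_t)\). To ensure this, I would choose each \(u_t\) with \(\norm{P_{[1,\nu_{t-1}]}Su_t}\) tiny, using the finite-rank clause of Lemma~\ref{lem:polar}. I would then choose \(\nu_t\) late enough that \(\norm{P_{(\nu_t,\infty)}Su_t}\) is tiny, which is possible because \(Su_t\) is fixed before \(\xi_t\) is realized. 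The off-diagonal and \(d^*\)-terms then contribute at most \(\sum_t\) of these tails times \(\kappa\), plus \(\sum_t\abs{d^*_{\xi_t,1}(Sy^0)}\).

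The \(d^*\)-terms are the delicate part. \(Su_s\) for \(s\) different from \(t\) is handled by the tail smallness. For \(s=t\), the rank \(\nu_t\) lies beyond \(\ran Su_t\) up to the small tail, so \(d^*_{\xi_t,1}(Su_t)\) is small too. Finally \(\abs{f(S(y-y^0))}\leq\norm S M\delta<\varepsilon/16\), which gives \(\operatorname{Re}f(Sy)\geq\varepsilon/4\).

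For the certificate, take \(y\), \(z=Sy\) truncated to a finite interval \(I\subseteq(\operatorname{cut}(\mathfrak p),\infty)\) containing \(\rank\eta\), and \(\mathscr M(y)\) with \(R=R_y\). Then (C3) follows from \(U_\eta y=0\) and \eqref{eq:finite-certificate-annihilator}, with a rational \(\varepsilon'<\varepsilon/4\) minus the truncation error. All data are coded, so this is a coded certificate compatible with \((\mathfrak p,\eta)\). Repeating the construction after successive cutoffs, with the same stem, gives a block sequence \((y^{(j)})\) bounded by \(C_{\rm ex}C_0\). Each \(y^{(j)}\) is a perturbed level-\(p\) RIS average, so small perturbations of a bounded block sequence of such averages form a RIS. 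Hence the sequence is weakly null by Corollary~\ref{cor:ris-weakly-null}, or alternatively because the FDD is shrinking (Lemma~\ref{lem:compactness-test}).
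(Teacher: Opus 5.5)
Your proposal is correct and follows the paper's proof: recursive applications of Lemma~\ref{lem:polar} with a small head \(\norm{P_{[1,\nu_{t-1}]}Su_t}\), realization of each inner atom after a tail threshold via Proposition~\ref{prop:finite-extension}\textup{(b)}, the four-term expansion of the terminal analysis, the root perturbation of Lemma~\ref{lem:root}, and weak nullity from the shrinking FDD. Because the \(d^*\)-terms carry the extra factor \(m_p\), your head and tail bounds must be of order \(\varepsilon/(\kappa m_pn_p)\); at that scale the single head bound already controls the terms \(d^*_{\xi_s,1}(Su_t)\) and \(b_sP_{(\nu_{s-1},\infty)}Su_t\) with \(s<t\), so it replaces the paper's separate smallness conditions harmlessly.

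Three minor repairs are needed:
\begin{itemize}
  \item The vector \(z\) must be a coded approximation of the truncation of \(Sy\), not the truncation itself, since a truncation of \(Sy\) need not be coded.
  \item Take \(R=\max\{R_y,r(\eta)\}\), so that \(R\) also dominates \(r(\eta)\) as required in \textup{(C2)}.
  \item Drop the claim that \((y^{(j)})\) is a RIS. It is not justified as stated, because for levels above \(p\) the off-weight bound is only \(O(1/m_p)\). Use the shrinking argument instead.
\end{itemize}
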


\begin{proof}
  Pass to a tail with RIS indices \(j_k\geq2\), and put \(C=2C_0\).
  Perturbations \(u_k\) with the same FDD range and \(\norm{u_k-x_k}\leq C_0/m_{j_k-1}\) form a \(C\)-RIS.
  Indeed, their norms satisfy
  \[ \norm{u_k}\leq C_0+\frac{C_0}{m_{j_k-1}}\leq C. \]
  For the evaluations of level \(0<h<j_k\), we have
  \[ \abs{e_{\gamma,\varphi}^*(u_k)}\leq\frac{C_0}{m_h}+\frac{C_0}{m_{j_k-1}}\leq\frac{2C_0}{m_h}=\frac{C}{m_h}. \]
  Equality of the FDD ranges preserves the block order and the RIS range condition.

  Choose
  \begin{equation}\label{eq:exactification-tau}
    0<\tau<\min\left\{
    \frac{C}{m_p},
    \frac{\varepsilon}{16M(1+\norm{S})}\right\}.
  \end{equation}
  Write \(N=n_p\), \(m=m_p\), and put
  \[
    \alpha_d=\frac{\varepsilon}{32mN},\qquad
    \alpha_\times=\frac{\varepsilon}{16N},\qquad
    \alpha_0=\frac{\varepsilon}{32},\qquad
    \delta_{\rm tail}=\kappa^{-1}
    \min\{\alpha_d,\alpha_\times\}=\frac{\varepsilon}{32\kappa mN}.
  \]
  Put \(\nu_0=\operatorname{cut}(\mathfrak p)\).

  Suppose \(u_s,b_s,\xi_s\), \(s<t\), have been chosen, and put \(\nu_s=\rank\xi_s\).
  Apply Lemma~\ref{lem:polar} to the finite-dimensional operator \(P_{[1,\nu_{t-1}]}S\) and the functionals \(d_{\xi_s,1}^*S\) and \(b_sP_{(\nu_{s-1},\infty)}S\), \(s<t\).
  With perturbation radius at most \(C_0/m_{j_k-1}\), it gives a coded \(u_t\) and a coded packet \(b_t\) such that
  \begin{align}
    \abs{d_{\xi_s,1}^*(Su_t)}&<\alpha_d\quad (s<t),                    \label{eq:inner-old-d}\\
    \abs{b_sP_{(\nu_{s-1},\infty)}Su_t}&<\alpha_\times\quad (s<t),    \label{eq:inner-old-packet}\\
    \norm{P_{[1,\nu_{t-1}]}Su_t}&<\alpha_0,                       \label{eq:inner-head}\\
    \norm{\Delta(b_t,u_t)}&<\tau,\qquad 
    \operatorname{Re}b_t(Su_t)>\frac{\varepsilon}{2}.           \label{eq:inner-diagonal}
  \end{align}
  Choose \(r_t\) beyond \(\supp_{\rm raw}b_t\cup\ran u_t\) so that
  \[
    \norm{P_{(r_t,\infty)}Su_s}<\delta_{\rm tail}
    \qquad(1\leq s\leq t).
  \]
  Proposition~\ref{prop:finite-extension}\textup{(b)} then selects, after \(r_t\), the next occurrence \(\xi_t\) in the same level-\(p\) inner chain, representing exactly \(b_t\).
  Only after \(\xi_t\) is fixed is \(u_{t+1}\) chosen.

  The terminal evaluation has the analysis
  \( f=\sum_{s=1}^Nd_{\xi_s,1}^* +m^{-1}\sum_{s=1}^N b_sP_{(\nu_{s-1},\infty)}. \)
  For \(y^0=(m/N)\sum_tu_t\), we obtain from Lemma~\ref{lem:root}
  \begin{equation}\label{eq:inner-error-average}
    \norm{\Delta(f,y^0)}
    =\norm{\frac{1}{N}\sum_{t=1}^N\Delta(b_t,u_t)}
    \leq\frac{1}{N}\sum_{t=1}^N\norm{\Delta(b_t,u_t)}
    <\tau.
  \end{equation}
  Expanding the terminal evaluation at \(Sy^0\), we obtain
  \begin{align}
    f(Sy^0)-\frac{1}{N}\sum_tb_t(Su_t)
    ={}&\frac{m}{N}\sum_{s,t}d_{\xi_s,1}^*(Su_t)
    +\frac{1}{N}\sum_{s<t}b_sP_{(\nu_{s-1},\infty)}Su_t
    \notag\\
    &+\frac{1}{N}\sum_{s>t}b_sP_{(\nu_{s-1},\infty)}Su_t
    +\frac{1}{N}\sum_t b_t(P_{(\nu_{t-1},\infty)}-I)Su_t.
    \label{eq:inner-four-errors}
  \end{align}

  For \(s<t\), use \eqref{eq:inner-old-d}.
  For \(s\geq t\), we have \(\nu_s>r_t\), and hence
  \[
    \abs{d_{\xi_s,1}^*(Su_t)}
      =\abs{d_{\xi_s,1}^*P_{(r_t,\infty)}Su_t}
      \leq\kappa\norm{P_{(r_t,\infty)}Su_t}
      <\kappa\delta_{\rm tail}\leq\alpha_d.
  \]
  There are \(N(N-1)/2\) pairs with \(s<t\) and \(N(N+1)/2\) with \(s\geq t\).
  Therefore we obtain
  \begin{equation}\label{eq:inner-d-error}
    \abs{\frac{m}{N}\sum_{s,t}d_{\xi_s,1}^*(Su_t)}
    <\frac{m}{N}\left(
    \frac{N(N-1)}{2}\alpha_d+
    \frac{N(N+1)}{2}\kappa\delta_{\rm tail}\right)
    \leq mN\alpha_d=\frac{\varepsilon}{32}.
  \end{equation}
  Similarly, we obtain from \eqref{eq:inner-old-packet}
  \begin{equation}\label{eq:inner-old-packet-error}
    \abs{\frac{1}{N}\sum_{s<t}b_sP_{(\nu_{s-1},\infty)}Su_t}
    <\frac{N-1}{2}\alpha_\times<\frac{\varepsilon}{32}.
  \end{equation}
  For \(s>t\), we have \(\nu_{s-1}>r_t\), so nested tail projections give
  \[
    \begin{aligned}
      \abs{b_sP_{(\nu_{s-1},\infty)}Su_t}
      &=\abs{b_sP_{(\nu_{s-1},\infty)}P_{(r_t,\infty)}Su_t}\\
      &\leq\kappa\norm{P_{(r_t,\infty)}Su_t}
      <\kappa\delta_{\rm tail}\leq\alpha_\times.
    \end{aligned}
  \]
  Summing over these \(N(N-1)/2\) pairs, we obtain
  \begin{equation}\label{eq:inner-new-packet-error}
    \abs{\frac{1}{N}\sum_{s>t}b_sP_{(\nu_{s-1},\infty)}Su_t}
    <\frac{N-1}{2}\kappa\delta_{\rm tail}
    \leq\frac{N-1}{2}\alpha_\times<\frac{\varepsilon}{32}.
  \end{equation}
  Finally, we have \(P_{(\nu_{t-1},\infty)}-I=-P_{[1,\nu_{t-1}]}\), so we obtain
  \begin{equation}\label{eq:inner-head-error}
    \abs{\frac{1}{N}\sum_t b_t(P_{(\nu_{t-1},\infty)}-I)Su_t}
    \leq\frac{1}{N}\sum_{t=1}^N\norm{P_{[1,\nu_{t-1}]}Su_t}
    <\alpha_0=\frac{\varepsilon}{32}.
  \end{equation}
  From \eqref{eq:inner-d-error}--\eqref{eq:inner-head-error}, we obtain
  \begin{equation}\label{eq:inner-output}
    \abs{f(Sy^0)-\frac{1}{N}\sum_tb_t(Su_t)}
    \leq\frac{\varepsilon}{32}+\frac{\varepsilon}{32}
    +\frac{\varepsilon}{32}+\frac{\varepsilon}{32}
    =\frac{\varepsilon}{8}.
  \end{equation}
  By \eqref{eq:inner-diagonal}, we obtain
  \[
    \begin{aligned}
      \operatorname{Re}f(Sy^0)
      \geq\frac{1}{N}\sum_{t=1}^N\operatorname{Re}b_t(Su_t)
        -\abs{f(Sy^0)-\frac{1}{N}\sum_tb_t(Su_t)}>\frac{\varepsilon}{2}-\frac{\varepsilon}{8}
        =\frac{3\varepsilon}{8}.
    \end{aligned}
  \]

  Put \(y=y^0-d_\eta(U_\eta y^0)\).
  Lemma~\ref{lem:root}, \eqref{eq:inner-error-average}, and \eqref{eq:exactification-tau} give \(U_\eta y=0\).
  Moreover, we have
  \[ \abs{f(S(y-y^0))} \leq M\norm S\norm{\Delta(f,y^0)} <M\norm S\tau<\frac{\varepsilon}{16}. \]
  Hence we obtain
  \[ \operatorname{Re}f(Sy) \geq\operatorname{Re}f(Sy^0)-\abs{f(S(y-y^0))} >\frac{3\varepsilon}{8}-\frac{\varepsilon}{16} =\frac{5\varepsilon}{16} >\frac{\varepsilon}{4}. \]

  From Corollary~\ref{lem:ris-consequences} and the correction estimate, we obtain
  \[
    \norm y\leq\norm{y^0}+\norm{y-y^0}
      \leq3BC+M\tau\leq C_{\rm ex}C_0.
  \]
  The correction has only the BD coefficient at \(\eta\).
  Thus, using biorthogonality, \eqref{eq:inner-error-average}, and \eqref{eq:exactification-tau}, we obtain
  \[
    \sup_{\gamma,\varphi}\abs{d_{\gamma,\varphi}^*(y)}
    \leq\frac{\kappa C}{m_p}+\norm{U_\eta y^0}
    <\frac{\kappa C}{m_p}+\tau
    \leq\frac{C_{\rm ex}C_0}{m_p}.
  \]
  This proves \eqref{eq:exact-separation}--\eqref{eq:exact-coordinate}.

  The vectors and functionals constructed above lie in one finite coded structure.
  The finite-dimensional space \eqref{eq:finite-orbit} has a coded spanning matrix, and Lemma~\ref{lem:error} gives \(f|_{\mathscr M(y)}=0\).
  The proof gives \(\operatorname{Re}f(Sy)>5\varepsilon/16\).
  The inner analysis of \(f\) is supported after \(\nu_0=\operatorname{cut}(\mathfrak p)\), so \(f(Sy)=f(P_{(\nu_0,\infty)}Sy)\).
  Choose a coded finite vector \(z\), supported after \(\nu_0\), such that
  \[ \norm{z-P_{(\nu_0,\infty)}Sy}<\frac{\varepsilon}{16}. \]
  Since \(\norm f\leq1\), we have
  \[ \operatorname{Re}f(z) \geq\operatorname{Re}f(Sy) -\norm{z-P_{(\nu_0,\infty)}Sy} >\frac{\varepsilon}{4}. \]
  Choose \(\varepsilon'\in\Q_+\) with \(0<\varepsilon'<\operatorname{Re}f(z)\), a finite integer interval \(I\subseteq(\nu_0,\infty)\) containing \(\supp_{\rm FDD}y\cup\supp_{\rm FDD}z\cup \supp_{\rm FDD}\mathscr M(y)\cup\{\rank\eta\}\), and an integer \(R\) satisfying \textup{(C2)}.
  Since \(f=f_\eta\), the record \((y,z,I,R,\mathscr M(y),\varepsilon')\) is a coded certificate compatible with \((\mathfrak p,\eta)\).
  Proposition~\ref{prop:finite-extension}\textup{(c)} realizes its outer continuation.
  Fix the stem \(\mathfrak p\), and hence the inner level \(p\).
  Repeating the construction after successive cutoffs gives pairs \((y^{(j)},f^{(j)})\) with
  \[ \max\ran y^{(j)}<\min\ran y^{(j+1)},\qquad \sup_j\norm{y^{(j)}}\leq C_{\rm ex}C_0. \]
  From the shrinking conclusion of Lemma~\ref{lem:compactness-test}, we now obtain, for every \(x^*\in X^*\),
  \[ x^*(y^{(j)})\longrightarrow0,\qquad x^*(Sy^{(j)})=(S^*x^*)(y^{(j)})\longrightarrow0. \]
  In particular, every finite-rank operator sends \((y^{(j)})\) to a norm-null sequence.
\end{proof}

\subsection{\texorpdfstring{Collisions and dependent means}{Collisions and dependent means}}

The following definition adapts \cite[Definition~6.3]{ArgyrosHaydon2011} to the stored outer histories.

\begin{definition}[Dependent sequence]\label{def:zero-dependent}
  Let \(q\) be an outer seed level and let \(C\geq1\).
  A \(C\)-bounded \(q\)-dependent sequence is a successive family \((y_i,f_i)_{i=1}^{n_q}\) with realized outer stems \((\mathfrak p_i)_{i=0}^{n_q}\) over one fixed seed.
  Here \(\mathfrak p_0\) is the age-zero stem, and \(\mathfrak p_i\) is the outer-age \(i\) continuation of \(\mathfrak p_{i-1}\) whose edge uses the completed inner terminal \(\eta_i\) and whose certificate contains \(y_i\).
  Put \(\chi_i=\operatorname{cut}(\mathfrak p_i)\) for \(0\leq i\leq n_q\), and put \(p_i=\sigma(q,\mathfrak p_{i-1})\) for \(1\leq i\leq n_q\).
  The following conditions are required.
  \begin{enumerate}[label={\textup{(\roman*)}}]
    \item \(\chi_{i-1}<\min\ran y_i\leq\max\ran y_i<\chi_i\), and \(f_i=e_{\eta_i,1}^*\) is the terminal evaluation of the completed matching level-\(p_i\) inner chain.
    \item
    \[ y_i=\frac{m_{p_i}}{n_{p_i}} \sum_{t=1}^{n_{p_i}}u_{i,t}+r_i,\qquad \norm{r_i}\leq\frac{C}{m_{p_i}}, \]
    where \((u_{i,t})_{t=1}^{n_{p_i}}\) is a \(C\)-RIS.
    \item
    \[ \norm{y_i}\leq C,\qquad \sup_{\gamma,\varphi} \abs{d_{\gamma,\varphi}^*(y_i)} \leq\frac{C}{m_{p_i}}. \]
    \item \(U_{\eta_i}y_i=0\).
  \end{enumerate}
\end{definition}

The stems have distinct lengths.
Injectivity of \(\sigma\) makes the levels \(p_i\) pairwise distinct, and we obtain from \eqref{eq:code-growth}
\begin{equation}\label{eq:coded-level-lower}
  m_{p_i}\geq\Lambda_q\qquad(1\leq i\leq n_q).
\end{equation}
Recall from \eqref{eq:constants} that \(C_{\rm off}=8B\kappa\).

The next lemma gives off-weight estimates for interval restrictions of the root-perturbed vectors.
For the exact-pair estimates underlying this argument, see \cite[Definition~6.1 and the following remark]{ArgyrosHaydon2011}.

\begin{lemma}[Rooted auxiliary averages and off-weight estimates]
  \label{lem:off-weight}
  Let \(h\ne k\), let \(\mathfrak g\) be a tagged auxiliary analysis with weighted root level \(h\), and put \(\bar e_k=n_k^{-1}\sum_{s=1}^{n_k}e_s\).
  Then we have
  \begin{equation}\label{eq:rooted-auxiliary}
    \abs{\operatorname{val}(\mathfrak g)(\bar e_k)}
    \leq
    \begin{cases}
      2/(m_hm_k),&h<k,\\
      1/m_h,&h>k.
    \end{cases}
  \end{equation}
  Consequently, if
  \[ y=\frac{m_k}{n_k}\sum_{s=1}^{n_k}u_s+r,\qquad \norm{r}\leq\frac{C}{m_k}, \]
  where \((u_s)\) is a \(C\)-RIS, then every level-\(h\) scalarized evaluation \(f\), with \(h>0\) and \(h\ne k\), satisfies
  \begin{equation}\label{eq:off-weight}
    \abs{f(P_Iy)}
    \leq\frac{C_{\rm off}C}{m_{\min\{h,k\}}}
  \end{equation}
  for every FDD interval \(I\).
\end{lemma}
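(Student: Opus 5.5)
We prove \eqref{eq:rooted-auxiliary} first, as a rooted refinement of the stopping argument in Lemma~\ref{lem:quantitative-auxiliary}, and then derive \eqref{eq:off-weight} from the basic inequality (Lemma~\ref{lem:basic}).

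\emph{Case \(h>k\).} The root weight is \(m_h^{-1}\). The children have successive supports, and each child has value of \(\ell_\infty\)-norm at most one on the coordinate functionals, since \(\sum m_j^{-1}<1\) keeps every weighted value coordinatewise bounded by \(1\). Hence the values of the children sum, on \(\bar e_k\), to at most \(\norm{\bar e_k}_{\ell_1}=1\). This gives \(\abs{\operatorname{val}(\mathfrak g)(\bar e_k)}\leq1/m_h\).

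\emph{Case \(h<k\).} Factor out the root weight \(m_h^{-1}\). The children form at most \(3n_h\) successive trees, and we rerun the stopping argument of Lemma~\ref{lem:quantitative-auxiliary} on the forest below the root. Stop each branch at the first node of level \(\geq k\) (coefficient at most \(m_k^{-1}\)), at the \(D_k\)-th node of level \(<k\) (coefficient at most \(m_1^{-D_k}\leq m_k^{-2}\)), or at a leaf. Each child tree has branching bounded by \(\mathsf b_k\) at levels \(<k\), because \(3n_h\leq\mathsf b_k\) for \(h<k\). So the leaf class has cardinality at most \(3n_hS_k\leq S_k^2\); more carefully, it is at most \(\mathsf b_k S_k\leq S_{k}\mathsf b_k\), which is \(\ll n_k/m_k\) by \eqref{eq:numerical-two} after adjusting by one factor of \(\mathsf b_k\). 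The three classes then contribute at most \(m_k^{-1}+m_k^{-2}+\text{(small)}\leq2/m_k\), and multiplying by \(m_h^{-1}\) gives \(2/(m_hm_k)\). The main obstacle is exactly this leaf count: the root adds one extra layer of branching \(3n_h\), and we must check that \(3n_hS_k/n_k\leq m_k^{-2}\) using \(\mathsf b_k=6\max_{i<k}n_i\) and \(n_k\geq 2^{k+8}S_km_k^5\). Note that \(S_k\) already includes powers up to \(\mathsf b_k^{D_k+2}\), which should absorb the extra layer.

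\emph{Derivation of \eqref{eq:off-weight}.} Write \(P_Iy=P_I\tfrac{m_k}{n_k}\sum u_s+P_Ir\). The remainder contributes at most \(\kappa C/m_k\). For the average, write \(P_I=P_{(u,\infty)}-P_{(v,\infty)}\), and apply Lemma~\ref{lem:basic} twice with \(\lambda_s=m_k/n_k\). Each application gives a bound \(BC\bigl(m_k/n_k+m_k\operatorname{val}(\mathfrak g)(\bar e_k)\bigr)\), where \(\mathfrak g\) is \(0\) or has root level \(h=\ell(f)\). Here \(\operatorname{val}(\mathfrak g)\) has support in the index set, so it is evaluated on a truncated \(\bar e_k\), which is dominated coordinatewise since the value is nonnegative. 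By \eqref{eq:rooted-auxiliary}: if \(h<k\) the term is \(\leq 2BC/m_h\); if \(h>k\) it is \(\leq BCm_k/m_h\leq BC/m_k\) (since \(m_h\geq m_k^{\,\,}\cdot m_k\) by \(m_{j+1}=m_j^5\)). Also \(m_k/n_k\leq m_k^{-1}\). Summing both applications and the remainder gives at most \((4B+2B+\kappa)C/m_{\min\{h,k\}}\), which is \(\leq 8B\kappa C/m_{\min\{h,k\}}=C_{\rm off}C/m_{\min\{h,k\}}\).
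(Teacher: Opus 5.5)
Your overall plan is the paper's. For \(h<k\) you factor out the root weight and run the stopping argument of Lemma~\ref{lem:quantitative-auxiliary} on the child forest. For \(h>k\) you use disjointness of the children's supports. Then you apply Lemma~\ref{lem:basic} to the two tails with \(\lambda_s=m_k/n_k\) and add \(\kappa C/m_k\) for \(r\). The \(h>k\) case and the derivation of \eqref{eq:off-weight} are fine. One small correction: in the \(h>k\) case the coordinatewise bound by \(1\) holds because each coordinate lies on a single branch and its coefficient is a product of weights \(<1\). The fact \(\sum_jm_j^{-1}<1\) plays no role.

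\textbf{The gap is the leaf count for \(h<k\).} Your bounds \(\abs{\mathcal L}\leq3n_hS_k\) and \(\abs{\mathcal L}\leq\mathsf b_kS_k\) are too crude. The inequality you say must be checked, \(3n_hS_k/n_k\leq m_k^{-2}\), is false in general.

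To see this, note that \(n_k=2^{k+8}S_km_k^5\), so
\[
\frac{\mathsf b_kS_k}{n_k}=\frac{\mathsf b_k}{2^{k+8}m_k^5}.
\]
On the other hand, \(\mathsf b_k=6n_{k-1}\geq6S_{k-1}\geq6\mathsf b_{k-1}^{D_{k-1}+2}\), which grows far faster than \(m_k^5=m_{k-1}^{25}\). Already for \(k=3\) one has \(\mathsf b_3>2^{4000}\) while \(m_3^5=2^{1000}\). The same happens for \(3n_hS_k/n_k\) with \(h=k-1\). So ``\(\mathsf b_kS_k\ll n_k/m_k\)'' cannot be rescued by adjusting \eqref{eq:numerical-two}.

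\textbf{The fix is a finer per-tree count.} Inside one child tree, a leaf that was not stopped earlier has fewer than \(D_k\) ancestors. All of them have level \(<k\), and each has at most \(\mathsf b_k\) children. So each child contributes at most \(\sum_{s=0}^{D_k-1}\mathsf b_k^s\) leaves, not \(S_k\). With \(d\leq3n_h\leq\mathsf b_k\) children this gives
\[
\abs{\mathcal L}\leq d\sum_{s=0}^{D_k-1}\mathsf b_k^s\leq\sum_{s=1}^{D_k}\mathsf b_k^s\leq S_k .
\]
The leaf term is therefore at most \(S_k/n_k\leq2^{-k-8}m_k^{-5}\). Combine this with \(\abs{\mathcal H}/(n_km_k)\leq m_k^{-1}\) and \(\abs{\mathcal D}/(n_km_1^{D_k})\leq m_k^{-2}\). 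The bracket is then \(<2/m_k\), and multiplying by the root factor \(m_h^{-1}\) gives \eqref{eq:rooted-auxiliary}.

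Your last sentence points in this direction, since the higher powers of \(\mathsf b_k\) in \(S_k\) do absorb the extra layer. As written, though, you bound each child's leaves by \(S_k\) rather than by \(\sum_{s<D_k}\mathsf b_k^s\), and then the \(h<k\) estimate does not close.
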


\begin{proof}
  Suppose first that \(h<k\).
  Write \(\mathfrak g=[h;\mathfrak g_1,\ldots,\mathfrak g_d]\) and \(g_r=\operatorname{val}(\mathfrak g_r)\), so that
  \[
    \operatorname{val}(\mathfrak g)=\frac{1}{m_h}\sum_{r=1}^dg_r,
    \qquad g_1<\cdots<g_d,\qquad d\leq3n_h\leq\mathsf b_k.
  \]
  Apply the stopping argument from the proof of Lemma~\ref{lem:quantitative-auxiliary} to each child tree, counting the \(D_k\) low-level nodes within that tree.
  Let \(\mathcal H,\mathcal D,\mathcal L\) be the unions of the corresponding classes of coordinate indices in \(\{1,\ldots,n_k\}\).
  Since the supports of the \(g_r\)'s are disjoint and every node of level less than \(k\) has at most \(\mathsf b_k\) successors, we obtain
  \[
    \max\{\abs{\mathcal H},\abs{\mathcal D}\}\leq n_k,\qquad
    \abs{\mathcal L}\leq d\sum_{s=0}^{D_k-1}\mathsf b_k^s
      \leq\sum_{s=1}^{D_k}\mathsf b_k^s
      \leq\frac{L_k}{m_k}.
  \]
  Retaining the root factor \(m_h^{-1}\), we obtain
  \[
    \begin{aligned}
      \abs{\operatorname{val}(\mathfrak g)(\bar e_k)}
      &\leq\frac{1}{m_h}\left(
        \frac{\abs{\mathcal H}}{n_km_k}
        +\frac{\abs{\mathcal D}}{n_km_1^{D_k}}
        +\frac{\abs{\mathcal L}}{n_k}\right)\\
      &\leq\frac{1}{m_h}\left(
        \frac{1}{m_k}+\frac{1}{m_1^{D_k}}
        +\frac{L_k}{m_kn_k}\right)\\
      &\leq\frac{1}{m_h}\left(
        \frac{1}{m_k}+\frac{1}{m_k^2}
        +\frac{1}{2^{k+8}m_k^5}\right)\\
      &=\frac{1}{m_hm_k}\left(
        1+\frac{1}{m_k}+\frac{1}{2^{k+8}m_k^4}\right)
        <\frac{2}{m_hm_k}.
    \end{aligned}
  \]

  If \(h>k\), write \(\mathfrak g=[h;\mathfrak g_1,\ldots,\mathfrak g_d]\) and \(g_r=\operatorname{val}(\mathfrak g_r)\).
  Then we have \(\operatorname{val}(\mathfrak g)=m_h^{-1}\sum_{r=1}^dg_r\), where \(g_1<\cdots<g_d\).
  The supports are disjoint and every coordinate coefficient of every \(g_r\) has modulus at most one.
  Hence we obtain
  \[ \abs{\operatorname{val}(\mathfrak g)(\bar e_k)} \leq\frac{1}{m_hn_k}\sum_{r=1}^d \abs{\supp g_r\cap\{1,\ldots,n_k\}} \leq\frac{1}{m_h}. \]
  This proves \eqref{eq:rooted-auxiliary}.

  Write \(I=(a,b]\) and apply Lemma~\ref{lem:basic} to the two tails \(P_{(a,\infty)}\) and \(P_{(b,\infty)}\), with every coefficient equal to \(m_k/n_k\).
  For either auxiliary tree produced by the basic inequality, we have
  \[ \operatorname{val}(\mathfrak g) \left(\frac{m_k}{n_k}\sum_{s=1}^{n_k}e_s\right) =m_k\operatorname{val}(\mathfrak g)(\bar e_k). \]
  From the two exceptional coefficients and the two auxiliary-tree terms, we therefore obtain
  \[
    \abs{f(P_I(y-r))}
    \leq2BC\left(
    \frac{m_k}{n_k}
    +m_k\sup_{\operatorname{root}(\mathfrak g)=h}
    \abs{\operatorname{val}(\mathfrak g)(\bar e_k)}\right).
  \]
  By \eqref{eq:rooted-auxiliary}, we obtain
  \[
    \abs{f(P_I(y-r))}
    \leq\begin{cases}
      \displaystyle 2BC\left(\frac{1}{m_k}+\frac{2}{m_h}\right)
        \leq\frac{6BC}{m_h},&h<k,\\[4pt]
      \displaystyle 2BC\left(\frac{1}{m_k}+\frac{1}{m_k^4}\right)
        \leq\frac{4BC}{m_k},&h>k.
    \end{cases}
  \]
  Since \(\abs{f(P_Ir)}\leq\norm f\norm{P_I}\norm r\leq\kappa C/m_k\), we get
  \[
    \abs{f(P_Iy)}
    \leq\frac{6BC}{m_{\min\{h,k\}}}+\frac{\kappa C}{m_k}
    \leq\frac{(6B+\kappa)C}{m_{\min\{h,k\}}}
    \leq\frac{C_{\rm off}C}{m_{\min\{h,k\}}}.
  \]
\end{proof}

Every \(C\)-bounded dependent sequence is a \(C_{\rm off}C\)-RIS with RIS indices \((p_i)\).
Indeed, from the placement conditions and \eqref{eq:code-growth}, we obtain
\[ p_i\leq\rank\eta_i<\chi_i \quad(1\leq i\leq n_q), \qquad \max\ran y_i<\chi_i<p_{i+1} \quad(1\leq i<n_q). \]
Moreover, we have \(\norm{y_i}\leq C\leq C_{\rm off}C\).
If \(0<h<p_i\), we obtain from Lemma~\ref{lem:off-weight}, for every level-\(h\) scalarized evaluation \(f\) and every FDD interval \(I\),
\[ \abs{f(P_Iy_i)} \leq\frac{C_{\rm off}C}{m_h}. \]
Thus Definition~\ref{def:ris} applies.

We use the common-initial-segment comparison from \cite[Lemma~6.5]{ArgyrosHaydon2011}.
The interval restriction leaves at most two boundary cells in addition to the first cell where the histories differ.

\begin{lemma}[Stem collision]\label{lem:collision}
  Let \((y_i,f_i)_{i=1}^{n_q}\) be a \(C\)-bounded \(q\)-dependent sequence, let \(Q\) be a scalarized outer evaluation of level \(q\), and let \(J\) be an FDD interval.
  Put \(I=\{i:\ran y_i\subseteq J\}\) and assume \(I\ne\varnothing\).
  Then we have
  \begin{equation}\label{eq:collision}
    \abs{QP_J\left(\frac{1}{n_q}\sum_{i\in I}y_i\right)}
    \leq\frac{3\kappa C}{n_qm_q}
    +\frac{\kappa C}{\Lambda_q}
    +\frac{3C_{\rm off}C}{m_q\Lambda_q}.
  \end{equation}
\end{lemma}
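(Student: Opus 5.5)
Write \(Q=e_{\zeta,\varphi}^*\) with outer chain \(\zeta_1\prec\cdots\prec\zeta_a=\zeta\), ranks \(p_t=\rank\zeta_t\), and the evaluation analysis \eqref{eq:evaluation-grammar}. The \(t\)-th packet is the strong packet \(\mathbf s(\eta'_t)\), scalarized via \eqref{eq:strong-packet-scalarization} as \(\epsilon^*_{\eta'_t}(\pi\varphi_t)\), i.e.\ a single ambient coordinate \(e^*_{\eta'_t,c_t}\) with \(\norm{c_t}\leq1\) at an inner terminal of level \(p'_t\). I would first apply the restriction formula, Lemma~\ref{lem:formal-restriction}, to \(QP_J\). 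This reduces the left side of \eqref{eq:collision} to a sum over the cells \((p_{t-1},p_t)\) meeting \(J\) of \(d^*_{\zeta_t,\varphi_t}\)-terms plus \(m_q^{-1}\) times packet terms, together with at most two boundary cells that are partially cut. Then I split the indices \(i\in I\) according to how \(\ran y_i\) sits relative to the outer ranks \(p_t\).

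Several classes of \(i\) are routine. The first consists of the \(i\) for which some \(p_t\in\ran y_i\). Each such \(y_i\) meets a \(d^*\)-term. By Definition~\ref{def:zero-dependent}(iii) and \eqref{eq:coded-level-lower}, each \(d^*\)-term contributes at most \(C/m_{p_i}\leq C/\Lambda_q\). I would sum these with weight \(1/n_q\) and absorb them into the \(\kappa C/\Lambda_q\) term, using at most \(n_q\) indices. The second class consists of the \(i\) lying in cells whose inner level \(p'_t\) differs from every \(p_i\). For these I would use Lemma~\ref{lem:off-weight} on \(P_{\rm cell}y_i\). Since \(\min\{p'_t,p_i\}\geq\) a level at least \(\Lambda_q\)-size by \eqref{eq:code-growth}, I get at most \(C_{\rm off}C/\Lambda_q\). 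After the factor \(m_q^{-1}\) and averaging, this is the \(C_{\rm off}C/(m_q\Lambda_q)\) contributions, and there are at most three of them: two boundary cells and one divergence cell. Finally, each of the at most three cells carrying a generic bound contributes at most \(\kappa C/(n_qm_q)\), from \(\norm{P_I}\leq\kappa\), \(\norm{y_i}\leq C\), and the \(m_q^{-1}/n_q\) weights.

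The key case is the collision. By Lemma~\ref{lem:formal-common-stem}, let \(\mathfrak p\) be the longest common stem of \(Q\)'s history and the dependent sequence's stems. If the seeds differ, no inner level coincides, so only off-weight terms occur. If the seeds agree, then along \(\mathfrak p\) the strong packets coincide: \(\eta'_t=\eta_t\), and the cell of \(Q\) containing \(y_t\) has exactly the terminal \(\eta_t\). Then the packet term is \(\langle c_t,U_{\eta_t}P_{\rm cell}y_t\rangle\). Where the cell is uncut, this vanishes by Definition~\ref{def:zero-dependent}(iv), \(U_{\eta_t}y_t=0\), exactly as in the last assertion of Lemma~\ref{lem:root}. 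At the single divergence position \(a+1\), the inner levels agree but \(y_{a+1}\) may not be annihilated. That one term I bound crudely by \(\kappa C\), weighted \(1/(n_qm_q)\). All remaining pairs have distinct inner levels and fall under the off-weight bound. I also need that the other \(y_i\) (\(i\neq t\)) meeting this cell are handled the same way: \(y_i\) with \(p_i\neq p'_t\) are off-weight.

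The main obstacle is the boundary cells of \(J\). Since \(J\) may cut a common-stem cell, \(U_{\eta_t}P_Jy_t\) need not vanish. However, \(I\) only includes \(y_i\) with \(\ran y_i\subseteq J\), so \(P_Jy_i=y_i\) for \(i\in I\). The cut only affects whether the ambient coordinate of the packet is restricted, and here I would use the tail formula \eqref{eq:tail-restriction} to show that the boundary term is \((P^*_{(w,\infty)}-I)\) applied to a packet whose interaction with the fully contained \(y_i\) is either the full value (hence zero, or off-weight) or zero. This accounts for the constant \(3\) in both the first and last terms.
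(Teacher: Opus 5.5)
Your decomposition is the paper's. You bound the BD terms by \(C/\Lambda_q\), kill the equal-level packet incidences by Lemma~\ref{lem:formal-common-stem} together with \(U_{\eta_i}y_i=0\) (except at the divergence position), and treat the off-level incidences with Lemma~\ref{lem:off-weight}. The gap is in counting the off-level terms.

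\textbf{The off-level count.} You assert there are at most three off-weight contributions (``two boundary cells and one divergence cell''), and this is false. When the seeds differ, Lemma~\ref{lem:formal-common-stem} makes every incidence off-level. Beyond the common stem, a cell \(K_s\) of \(Q\) may meet several \(y_i\), and a block \(y_i\) may meet several cells. In particular, the blocks in your first class contain an outer rank \(p_t\), so they also meet the packet cells on both sides of it; you only treat their \(d^*\)-terms. The number of off-level incidences can therefore be of order \(n_q\).

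The missing idea is to index the packet terms by pairs \((s,i)\) with \(F_s(P_{K_s\cap\ran y_i}y_i)\ne0\). Both the cells \(K_1<\cdots<K_d\) and the ranges \(\ran y_i\), \(i\in I\), are successive intervals. Hence at most \(d+\abs I-1\leq2n_q-1\) such pairs have nonempty intersection. Each off-level pair contributes at most \(C_{\rm off}C/(n_qm_q\Lambda_q)\) after the weights \(1/n_q\) and \(1/m_q\). The total is therefore below \(3C_{\rm off}C/(m_q\Lambda_q)\). The factor \(3\) in the last term comes from this incidence count, not from three cells. The same pair-counting is what makes your BD estimate precise: each outer rank lies in at most one block, so there are at most \(d\leq n_q\) nonzero \(d^*\)-incidences, even though one \(y_i\) may contain several ranks.

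\textbf{The boundary cells of \(J\).} Your ``main obstacle'' is not an obstacle. For \(i\in I\) you have \(P_{K_s\cap J}y_i=P_{K_s}y_i\). For a matching pair \(s=i\leq c\) on the common stem, the placement condition gives \(\ran y_i\subseteq K_i=K_s\), so the packet term equals \(\langle\theta_s,U_{\eta_i}y_i\rangle=0\) however \(J\) cuts \(K_s\). No tail-formula argument is needed, and the only possibly nonzero equal-level pair is the divergence pair \((c+1,c+1)\). The paper additionally allots one equal-level incidence to each of the at most two partially cut cells. That is a harmless overcount, and it produces the \(3\kappa C/(n_qm_q)\) term.
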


\begin{proof}
  Let \(d\leq n_q\) be the age of \(Q\), and write \(Q=e_{\alpha_d,\varphi}^*\).
  Let \(\alpha_1\prec\cdots\prec\alpha_d\) be its stored outer predecessor chain, and put \(\varphi_s=\pi_{r(\alpha_s)}^{r(\alpha_d)}\varphi\) for \(1\leq s\leq d\).
  Thus \(\varphi_s\in B_{E_{\alpha_s}^*}\).
  For \(1\leq s\leq d\), let \(K_s\) be its packet cell, let \(\tau_{s-1}\) be the stored stem before this cell, and let \(\eta_s'\) be the inner terminal carried by its strong packet.
  Put \(h_s=\sigma(q,\tau_{s-1})\).
  Packet covariance writes the scalarized strong packet as \(F_s=e_{\eta_s',\theta_s}^*\), where \(\ell(\eta_s')=h_s\), \(\theta_s\in B_{E_{\eta_s'}^*}\), and \(\norm{F_s}\leq1\).
  The restricted evaluation analysis is
  \begin{equation}\label{eq:collision-analysis}
    QP_J=\sum_{s=1}^dd_{\alpha_s,\varphi_s}^*P_J
    +\frac{1}{m_q}\sum_{s=1}^dF_sP_{K_s\cap J}.
  \end{equation}
  Put \(L_{s,i}=K_s\cap\ran y_i\), and define subsets of \(\{1,\ldots,d\}\times I\) by
  \[
    \begin{aligned}
      \mathcal B=\{(s,i):d_{\alpha_s,\varphi_s}^*(y_i)\ne0\},\quad
      \mathcal A=\{(s,i):F_s(P_{L_{s,i}}y_i)\ne0\},\quad
      \mathcal A_{=}=\{(s,i)\in\mathcal A:h_s=p_i\}.
    \end{aligned}
  \]
  Since \(P_Jy_i=y_i\) for \(i\in I\), we obtain from \eqref{eq:collision-analysis}
  \begin{align}\label{eq:collision-three-sums}
    &\abs{QP_J\left(\frac{1}{n_q}\sum_{i\in I}y_i\right)}
    \\
    \leq& \frac{1}{n_q}\sum_{(s,i)\in\mathcal B}
    \abs{d_{\alpha_s,\varphi_s}^*(y_i)}
    +\frac{1}{n_qm_q}\sum_{(s,i)\in\mathcal A_=}
    \abs{F_s(P_{L_{s,i}}y_i)}\notag+\frac{1}{n_qm_q}\sum_{(s,i)\in\mathcal A\setminus\mathcal A_=}
    \abs{F_s(P_{L_{s,i}}y_i)}.
  \end{align}

  The block ranges are successive.
  Consequently, we have
  \[ \abs{\mathcal B} \leq\sum_{s=1}^d \#\{i\in I:\rank\alpha_s\in\ran y_i\} \leq d\leq n_q. \]
  From Definition~\ref{def:zero-dependent} and \eqref{eq:coded-level-lower}, we therefore obtain
  \begin{equation}\label{eq:collision-bd-sum}
    \frac{1}{n_q}\sum_{(s,i)\in\mathcal B}
    \abs{d_{\alpha_s,\varphi_s}^*(y_i)}
    \leq\frac{1}{n_q}\sum_{(s,i)\in\mathcal B}\frac{C}{m_{p_i}}
    \leq\frac{dC}{n_q\Lambda_q}
    \leq\frac{\kappa C}{\Lambda_q}.
  \end{equation}

  We next count \(\mathcal A_=\).
  Compare the history of \(Q\) with the history of the dependent sequence.
  If the seeds agree, let \(\tau\) be their longest common realized stem, of age \(c\).
  By injectivity of the coding map, we have
  \[ h_s=p_i\ \Longrightarrow\ \tau_{s-1}=\mathfrak p_{i-1} \ \Longrightarrow\ s=i\leq c+1. \]
  Indeed, equal stems have equal ages, and if \(s=i>c+1\), their common prefix would be longer than \(\tau\).
  By Lemma~\ref{lem:formal-common-stem}, packets on \(\tau\) agree.
  For a matching pair on this stem, we obtain from placement and root annihilation
  \[ P_{L_{s,i}}y_i=y_i,\qquad F_s(P_{L_{s,i}}y_i) =\langle\theta_s,U_{\eta_i}y_i\rangle=0. \]
  Among complete cells, the only possible nonzero equal-level incidence is therefore \((c+1,c+1)\).
  If one history ends at \(\tau\), this pair is absent.
  Distinct seeds give no equal-level pair.
  Thus we obtain
  \[ \#\{(s,i)\in\mathcal A_= : K_s\cap J=K_s\}\leq1. \]
  Since the cells \(K_s\) are successive and the \(p_i\)'s are distinct, we have
  \[ \#\{s:\varnothing\ne K_s\cap J\ne K_s\}\leq2, \qquad \#\{i\in I:h_s=p_i\}\leq1. \]
  Splitting the matching incidences into complete and partial cells, we obtain
  \begin{align*}
    \abs{\mathcal A_=}
    \leq\#\{(s,i)\in\mathcal A_= : K_s\cap J=K_s\}
    +\sum_{\substack{1\leq s\leq d\\
    \varnothing\ne K_s\cap J\ne K_s}}
    \#\{i\in I:(s,i)\in\mathcal A_=\}\leq1+2=3.
  \end{align*}
  For each such incidence, we have \(\abs{F_s(P_{L_{s,i}}y_i)} \leq\norm{F_s}\norm{P_{L_{s,i}}}\norm{y_i}\leq\kappa C\).
  Hence we obtain
  \begin{equation}\label{eq:collision-matching-sum}
    \frac{1}{n_qm_q}\sum_{(s,i)\in\mathcal A_=}
    \abs{F_s(P_{L_{s,i}}y_i)}
    \leq\frac{\abs{\mathcal A_=}\kappa C}{n_qm_q}
    \leq\frac{3\kappa C}{n_qm_q}.
  \end{equation}

  On \(\mathcal A\setminus\mathcal A_=\), we obtain from the code-growth condition and \eqref{eq:coded-level-lower} that
  \( m_{\min\{h_s,p_i\}}\geq\Lambda_q. \)
  Thus we get from Lemma~\ref{lem:off-weight}
  \[ \abs{F_s(P_{L_{s,i}}y_i)} \leq\frac{C_{\rm off}C}{m_{\min\{h_s,p_i\}}} \leq\frac{C_{\rm off}C}{\Lambda_q}. \]
  And we get the incidence bound
  \(
    \abs{\mathcal A\setminus\mathcal A_=}\leq d+\abs I-1\leq2n_q-1<3n_q.
  \)
  Therefore we have
  \begin{equation}\label{eq:collision-off-level-sum}
    \frac{1}{n_qm_q}\sum_{(s,i)\in\mathcal A\setminus\mathcal A_=}
    \abs{F_s(P_{L_{s,i}}y_i)}
    \leq\frac{\abs{\mathcal A\setminus\mathcal A_=}}{n_qm_q}
    \frac{C_{\rm off}C}{\Lambda_q}
    \leq\frac{3C_{\rm off}C}{m_q\Lambda_q}.
  \end{equation}
  Adding \eqref{eq:collision-bd-sum}--\eqref{eq:collision-off-level-sum} proves \eqref{eq:collision}.
\end{proof}

For evaluations of level below \(q\), we retain the carrier intervals while stopping the evaluation tree.
We identified the remaining gap in the proof as the dependent-sequence estimate for evaluations of level \(0<h<q\).
GPT-5.6 Sol (OpenAI) then supplied the carrier-labelled stopping-tree construction and its counting proof used in Lemma~\ref{lem:frontier} and Proposition~\ref{prop:dependent-mean} to close this gap.
We rewrote the proof in our notation, verified it in the setting of our construction, and checked its technical antecedents in the literature.
This review confirmed earlier uses of the underlying tree and counting techniques, whose sources are cited below.
For further details, see the \hyperref[sec:ai-assistance]{AI assistance statement} at the end of the paper.

For trees, branches, and asymptotic games, see Odell and Schlumprecht \cite{OdellSchlumprecht2002}.
Interval-labelled tree analyses and stopping by weight and depth are used in \cite[Sections~2 and~3.2]{ManoussakisPelczarSwietek2017}.
For a related decomposition of auxiliary averages by node weights and branch lengths, see \cite[Proposition~11.7]{MotakisPelczar2025}.
Here one scalarized constituent is selected from each packet for the fixed vector \(w\), and incomparable nodes retain disjoint carriers.
For related estimates on repeated averages using tree analyses and a refined basic inequality, compare \cite[Propositions~11.8 and~13.6, Corollary~13.7]{MotakisPelczar2025}.

\begin{lemma}[Carrier-labelled tree]\label{lem:frontier}
  Let \(F\) be a scalarized evaluation, let \(J_\varnothing\) be a nonempty FDD interval, let \(w\) have finite FDD support, and fix \(q\geq2\).
  There are a finite rooted tree \(\mathcal T\), with terminal set \(\mathcal T_{\rm term}\), pairwise disjoint nonempty intervals \(J_t\subseteq J_\varnothing\), coefficients \(0\leq a_t\leq1\), and functionals \(\Phi_t\), \(t\in\mathcal T_{\rm term}\), such that
  \begin{equation}\label{eq:frontier-domination}
    \abs{F(P_{J_\varnothing}w)}
    \leq\sum_{t\in\mathcal T_{\rm term}}
      a_t\abs{\Phi_t(P_{J_t}w)}.
  \end{equation}
  Moreover, we have
  \begin{equation}\label{eq:frontier-size}
    \abs{\mathcal T}\leq
    1+\mathsf b_q+\cdots+\mathsf b_q^{D_q+2}=S_q.
  \end{equation}
  After zero terms are omitted, each \(\Phi_t\) is either \(d_{\gamma,\psi}^*\), with \(\psi\in B_{E_\gamma^*}\) and \(J_t=\{\rank\gamma\}\), or a scalarized evaluation of positive level.
  In the latter case, we have
  \begin{equation}\label{eq:frontier-deep-coefficient}
    a_t\leq m_1^{-(D_q+2)}
    \qquad\bigl(0<\ell(\Phi_t)<q\bigr).
  \end{equation}
\end{lemma}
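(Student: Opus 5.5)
The tree will be built by recursively expanding \(F\) along its restricted evaluation analysis, stopping according to the same rule as in the proof of Lemma~\ref{lem:quantitative-auxiliary}. Each node \(t\) carries three pieces of data: a functional \(\Phi_t\), a carrier interval \(J_t\subseteq J_\varnothing\), and a coefficient \(a_t\) equal to the product of the weights \(m_{\ell}^{-1}\) met along its branch. The root is \(\Phi_\varnothing=F\) with carrier \(J_\varnothing\) and coefficient \(1\).

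A node \(t\) is terminal in three cases:
\begin{enumerate}[label={\textup{(\roman*)}}]
  \item \(\Phi_t\) is a \(d^*\)-coordinate or a probe evaluation (level zero);
  \item \(\ell(\Phi_t)\geq q\);
  \item \(t\) is the \((D_q+2)\)-th node of positive level below \(q\) on its branch.
\end{enumerate}
Otherwise \(\Phi_t=e_{\gamma,\varphi}^*\) has level \(0<h<q\), and we expand it. Apply Lemma~\ref{lem:formal-restriction} with \(J_t=(u,v]\), or equivalently the analysis \eqref{eq:evaluation-grammar} restricted by \(P_{J_t}^*\), and evaluate at \(w\).

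The expansion produces two kinds of terms. The first kind are the \(d^*_{\xi_s,\varphi_s}\) terms whose rank lies in \(J_t\); each becomes a terminal child with carrier \(\{\rank\xi_s\}\). The second kind are the packet terms \(m_h^{-1}b^*_{s,\varphi}P_{(p_{s-1},p_s)\cap J_t}\), one for each cell meeting \(J_t\). For these we use the scalarization expansion from the proof of Lemma~\ref{lem:basic}. Writing \(b^*_{s,\varphi}=\sum c_\ell e^*_{\eta_\ell,\psi_\ell}\) with \(\sum|c_\ell|\leq1\), we select the one constituent \(\Phi'\) that maximises \(|\Phi'(P_{J}w)|\) for the fixed \(w\). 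That constituent becomes a child with coefficient \(a_t/m_h\) and carrier \((p_{s-1},p_s)\cap J_t\). The inequality at a node is then
\[
  |\Phi_t(P_{J_t}w)|\leq\sum_{\text{children}}(\text{weight})\,|\Phi_{t'}(P_{J_{t'}}w)|.
\]
Chaining these inequalities down the tree gives \eqref{eq:frontier-domination}. Children's carriers are pairwise disjoint because the cells are successive and the \(d^*\) ranks are the cell endpoints, which lie outside the open cells. Induction then makes the carriers of incomparable nodes disjoint; in particular the terminal carriers are disjoint. Any node whose carrier is empty, or whose term vanishes, is omitted.

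For the size bound \eqref{eq:frontier-size}, a node of level \(h<q\) has \(a\leq n_h\) predecessor terms and at most \(a\) packet cells meeting \(J_t\), so it has at most \(2n_h\leq\mathsf b_q\) children, since \(\mathsf b_q=6\max_{i<q}n_i\). Only positive-level nodes below \(q\) are expanded, and every branch contains at most \(D_q+2\) of them. The tree therefore has depth at most \(D_q+2\) and branching at most \(\mathsf b_q\), which gives \(|\mathcal T|\leq\sum_{r=0}^{D_q+2}\mathsf b_q^r=S_q\).

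For \eqref{eq:frontier-deep-coefficient}, a terminal node with \(0<\ell(\Phi_t)<q\) can only arise through the depth stop. Its branch therefore passes through \(D_q+2\) expanded positive-level nodes, each contributing a factor \(m_h^{-1}\leq m_1^{-1}\). (The count should be arranged so that the stopping node's own weight is not counted; if needed the depth cutoff is shifted by one, which the slack \(D_q+2\) in \eqref{eq:frontier-size} permits.)

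The main obstacle is the bookkeeping at partially cut boundary cells. Lemma~\ref{lem:formal-restriction} expresses \(P_J^*e_a\) through inherited evaluations \(e_{r_v}-e_{r_u}\) rather than a clean sum over \(d^*\)-terms. I would avoid those inherited terms by expanding directly via \eqref{eq:evaluation-grammar} composed with \(P_{J_t}\). Under this expansion each \(d^*\) term is either kept or deleted. The two boundary packet terms acquire carriers \(\bigl(p_{s-1},p_s\bigr)\cap J_t\), which may be proper subintervals; this is exactly why carriers are recorded rather than only functionals. I must also check that \(P_{(p_{s-1},\infty)}P_{J_t}\) agrees with the projection onto this intersection once it is paired against \(b^*_{s,\varphi}\), whose support lies below \(p_s\).
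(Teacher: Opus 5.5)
Your proposal is correct and is essentially the paper's proof. It uses the same node data \((\Phi_t,J_t,a_t)\) and the same restricted identity, with BD carriers \(J_t\cap\{p_{t,r}\}\) and packet carriers \(J_t\cap(p_{t,r-1},p_{t,r})\); this identity is justified exactly by the point you flag, that packet constituents annihilate FDD blocks of rank \(\geq p_{t,r}\). It also uses the same maximizing-constituent selection, the same branching bound \(2n_h\leq\mathsf b_q\), and, after the shift you anticipate, a depth cutoff at \(D_q+2\), so that a deep terminal has exactly \(D_q+2\) packet edges above it.

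The only detail to add is that a level-zero (probe) terminal, for instance a probe constituent chosen as a packet child, must have its carrier replaced by \(\{\rank\gamma\}\), as the statement requires and as the paper does explicitly. This is harmless, since \(e^*_{\gamma,\psi}=d^*_{\gamma,\psi}\) for probes and a nonzero value forces \(\rank\gamma\in J_t\).
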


\begin{proof}
  Since \(w\) has finite support, we may replace \(J_\varnothing\) by a nonempty finite subinterval without changing \(P_{J_\varnothing}w\).
  Give the root the data \((\Phi_\varnothing,J_\varnothing,a_\varnothing)=(F,J_\varnothing,1)\), and write \(|t|\) for the depth of a node.
  At every node, stop with coefficient zero if \(a_t\abs{\Phi_t(P_{J_t}w)}=0\).
  Otherwise, stop at a BD coordinate, a level-zero evaluation, an evaluation of level at least \(q\), or depth \(D_q+2\), whichever occurs first.
  For a level-zero evaluation \(\Phi_t=e_{\gamma,\varphi}^*=d_{\gamma,\varphi}^*\), nonzero contribution implies \(\rank\gamma\in J_t\), and we replace \(J_t\) by \(\{\rank\gamma\}\).

  Suppose that a node \(t\) has not stopped, and put \(h_t=\ell(\Phi_t)\), so that \(0<h_t<q\).
  Write its same-level chain as \(\xi_{t,1}\prec\cdots\prec\xi_{t,d_t}\), with \(d_t\leq n_{h_t}\), ranks \(p_{t,r}=\rank\xi_{t,r}\), and initial cutoff \(p_{t,0}\).
  Let \(\psi_{t,r}\in B_{E_{\xi_{t,r}}^*}\) and \(b_{t,r}\) be the scalarized fibre functionals and packets in its evaluation analysis.
  Define
  \[
    J_{t,r}^{\rm BD}=J_t\cap\{p_{t,r}\},
    \qquad J_{t,r}^{\rm pkt}=J_t\cap(p_{t,r-1},p_{t,r}).
  \]
  Since every raw constituent of \(b_{t,r}\) has rank below \(p_{t,r}\), it annihilates FDD blocks of rank at least \(p_{t,r}\).
  Thus \eqref{eq:evaluation-grammar} gives the identity
  \[
    \Phi_tP_{J_t}
    =\sum_{r=1}^{d_t}d_{\xi_{t,r},\psi_{t,r}}^*P_{J_{t,r}^{\rm BD}}
      +\frac{1}{m_{h_t}}\sum_{r=1}^{d_t}b_{t,r}P_{J_{t,r}^{\rm pkt}},
  \]
  where an empty carrier contributes zero.
  The nonempty carriers in this identity are successive and disjoint.
  Discard zero packet contributions on \(w\).
  For every remaining packet, normalized scalarization gives
  \[
    b_{t,r}=\sum_{l=1}^{s_{t,r}}c_{t,r,l}F_{t,r,l},
    \qquad \sum_{l=1}^{s_{t,r}}\abs{c_{t,r,l}}\leq1,
  \]
  with \(s_{t,r}\geq1\) and scalarized evaluations \(F_{t,r,l}\) of smaller rank than \(\Phi_t\).
  Choose \(l(t,r)\) maximizing \(\abs{F_{t,r,l}(P_{J_{t,r}^{\rm pkt}}w)}\).
  Then we obtain
  \[
    \begin{aligned}
      \abs{b_{t,r}(P_{J_{t,r}^{\rm pkt}}w)}
      \leq\sum_{l=1}^{s_{t,r}}\abs{c_{t,r,l}}
        \abs{F_{t,r,l}(P_{J_{t,r}^{\rm pkt}}w)}\leq&\left(\sum_{l=1}^{s_{t,r}}\abs{c_{t,r,l}}\right)
        \abs{F_{t,r,l(t,r)}(P_{J_{t,r}^{\rm pkt}}w)}\\
        \leq&\abs{F_{t,r,l(t,r)}(P_{J_{t,r}^{\rm pkt}}w)}.
    \end{aligned}
  \]
  For each nonempty BD carrier and each retained packet carrier, give the corresponding child \(u\) the data
  \[
    (\Phi_u,J_u,a_u)=
    \begin{cases}
      (d_{\xi_{t,r},\psi_{t,r}}^*,J_{t,r}^{\rm BD},a_t),
        &\text{for a BD child},\\
      (F_{t,r,l(t,r)},J_{t,r}^{\rm pkt},a_t/m_{h_t}),
        &\text{for a packet child}.
    \end{cases}
  \]
  Applying the stopping rules to these children, we obtain
  \[
    a_t\abs{\Phi_t(P_{J_t}w)}
    \leq\sum_{u\in\operatorname{succ}(t)}a_u\abs{\Phi_u(P_{J_u}w)}.
  \]
  In particular, a nonzero node which has not stopped has at least one child.
  For \(0\leq r\leq D_q+2\), let
  \[
    \mathcal F_r=\{t\in\mathcal T_{\rm term}:|t|<r\}
      \cup\{t\in\mathcal T:|t|=r\}.
  \]
  For \(0\leq r<D_q+2\), replacing only the nonterminal terms in \(\mathcal F_r\) by their children gives
  \[
    \sum_{t\in\mathcal F_r}a_t\abs{\Phi_t(P_{J_t}w)}
    \leq\sum_{t\in\mathcal F_{r+1}}a_t\abs{\Phi_t(P_{J_t}w)}.
  \]
  Since \(\mathcal F_0=\{\varnothing\}\) and \(\mathcal F_{D_q+2}=\mathcal T_{\rm term}\), iteration proves \eqref{eq:frontier-domination}.

  Obviously, incomparable nodes have successive disjoint carriers.
  For the number of children of a node, we have
  \[
    \abs{\operatorname{succ}(t)}\leq2d_t\leq2n_{h_t}\leq\mathsf b_q.
  \]
  Consequently, the number of nodes at depth \(r\) satisfies
  \[
    \#\{t\in\mathcal T:|t|=r\}\leq\mathsf b_q^r.
  \]
  Summing over \(0\leq r\leq D_q+2\) proves \eqref{eq:frontier-size}.
  For a node with nonzero coefficient, let \(\mathcal P(t)\) be the set of ancestors whose edge on the path to \(t\) is a packet edge.
  The recursive definition of the coefficients gives
  \[
    a_t=\prod_{u\in\mathcal P(t)}\frac{1}{m_{h_u}}
    \leq m_1^{-\abs{\mathcal P(t)}}.
  \]
  A BD edge ends its branch.
  Hence a terminal evaluation with \(0<\ell(\Phi_t)<q\) has only packet edges on its path and stops at depth \(D_q+2\).
  Thus \(\abs{\mathcal P(t)}=|t|=D_q+2\), which proves \eqref{eq:frontier-deep-coefficient}.

\end{proof}

\begin{figure}[!htbp]
  \centering
\begin{tikzpicture}[x=1cm,y=1cm,>=stealth,
  every node/.style={font=\small,align=center},
  treebox/.style={draw,rounded corners=2pt,minimum height=.85cm,
  inner sep=4pt}]
  \node[treebox,text width=3.15cm] (root) at (5.80,3.45)
  {\((F,J_\varnothing,1)\)\\[2pt]
  \(0<h<q\)};
  \node[treebox,text width=2.70cm] (bd) at (1.65,1.65)
  {\(d_{\xi_\nu,\psi_\nu}^*,\ J_\nu^{\rm BD}\)\\[2pt]
  stop};
  \node[treebox,text width=2.70cm] (level) at (5.80,1.65)
  {Level \(0\) or \(\geq q\)\\[2pt]
  stop};
  \node[treebox,text width=2.70cm] (low) at (9.95,1.65)
  {Packet child\\[2pt]
  \(0<\ell<q\)};
  \draw[->] (root.south west) --
  node[above left,font=\scriptsize] {\(1\)} (bd.north);
  \draw[->] (root.south) --
  node[left,font=\scriptsize] {\(1/m_h\)} (level.north);
  \draw[->] (root.south east) --
  node[above right,font=\scriptsize] {\(1/m_h\)} (low.north);

  \node[treebox,text width=3.10cm] (deep) at (9.95,-.55)
  {\(D_q+2\) packet edges\\[2pt]
  \(a_t\leq m_1^{-(D_q+2)}\)};
  \draw (low.south) -- (9.95,.68);
  \node at (9.95,.41) {\(\vdots\)};
  \draw[->] (9.95,.14) -- (deep.north);
\end{tikzpicture}
  \caption{Stopping rules.}
  \label{fig:stopping-tree}
\end{figure}
Figure~\ref{fig:stopping-tree} shows representative branches for a root of level \(0<h<q\).
The edge labels give the coefficient factors: \(1\) for a BD edge and \(1/m_\ell\) for a packet edge from level \(\ell\).
The bottom terminal is reached after \(D_q+2\) packet edges.
Each branch stops at its first stopping event.

Recall from \eqref{eq:constants} that \(K_{\rm dep}=32\kappa(C_{\rm off}+1)\).

The next estimate separates boundary incidences from blocks contained in terminal carriers, as in the interval decompositions of \cite[Section~3]{GowersMaurey1997}.
We also use disjointness when summing the depth remainder (compare \cite[Section~3.2]{ManoussakisPelczarSwietek2017}).
With \(I_t\) denoting the contained-block index sets in the proof, the boundary count involves \(S_q\), while the depth remainder uses \(\sum_t\abs{I_t}\leq n_q\).

\begin{proposition}[Dependent mean estimate]\label{prop:dependent-mean}
  Every \(C\)-bounded \(q\)-dependent sequence satisfies
  \begin{equation}\label{eq:dependent-mean}
    \norm{\frac{1}{n_q}\sum_{i=1}^{n_q}y_i}
    \leq\frac{K_{\rm dep}C}{m_q^2}.
  \end{equation}
\end{proposition}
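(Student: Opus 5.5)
Since scalarized ambient coordinates norm \(X\), I will fix a scalarized evaluation \(F=e_{\gamma,\varphi}^*\) with \(\varphi\in B_{E_\gamma^*}\) and bound \(\abs{F(w)}\), where \(w=n_q^{-1}\sum_{i=1}^{n_q}y_i\). The argument splits by the level \(h=\ell(F)\) into three cases: \(h=0\), \(h>q\) or \(h=q\), and \(0<h<q\).

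\textbf{Level zero.} Here \(F\) is a single \(d^*\)-coordinate. It meets at most one \(y_i\), so Definition~\ref{def:zero-dependent}(iii) and \eqref{eq:coded-level-lower} give \(\abs{F(w)}\leq C/(n_q\Lambda_q)\).

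\textbf{Level \(h>q\).} The dependent sequence is a \(C_{\rm off}C\)-RIS, as shown just before Lemma~\ref{lem:collision}. So I apply Lemma~\ref{lem:basic} with \(\lambda_i=1/n_q\) and bound the tree term through \eqref{eq:rooted-auxiliary} with \(k=q\). This gives a bound of order \(BC_{\rm off}C(1/n_q+1/m_h)\), which is at most a multiple of \(C/m_q^2\) because \(m_h\geq m_{q+1}\geq m_q^5\) and \(n_q\geq m_q^2\).

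\textbf{Level \(h=q\), non-outer.} If \(F\) is not an outer evaluation over the seed, I argue as in the \(h>q\) case but with the \(h=q\) analysis. I expect such level-\(q\) evaluations to be excluded by the partition of levels, since \(q\in\mathcal C^{\rm out}\) is never generic or inner.

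\textbf{Level \(h=q\), outer.} Lemma~\ref{lem:collision} applies with \(J\) the whole support, so \(I=\{1,\ldots,n_q\}\). The right side of \eqref{eq:collision} is at most \(C(3\kappa+\kappa+3C_{\rm off})/m_q^2\), using \(n_qm_q\geq m_q^3\) and \(\Lambda_q\geq m_q^4\).

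\textbf{Low levels \(0<h<q\).} This is the main obstacle, and I will use Lemma~\ref{lem:frontier} with \(J_\varnothing\) the full range and \(w\) the mean. The terminals \(t\) carry disjoint intervals \(J_t\). For each \(t\), let \(I_t\) be the set of indices \(i\) with \(\ran y_i\subseteq J_t\). The remaining incidences, where \(J_t\) cuts a block, are boundary incidences, and there are at most \(2\abs{\mathcal T}\leq2S_q\) of them. I split \(\Phi_t(P_{J_t}w)\) into the contained part and the boundary part.

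\emph{Boundary part.} Each boundary incidence contributes at most \(\kappa C/n_q\), so the total is at most \(2S_q\kappa C/n_q\leq\kappa C\,2^{-q-7}m_q^{-5}\) by \eqref{eq:numerical-two}.

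\emph{Contained part.} I treat the terminals by type.
\begin{itemize}
  \item \(d^*\)-coordinate terminals meet only one block. Definition~\ref{def:zero-dependent}(iii) bounds each by \(C/(n_q\Lambda_q)\), and there are at most \(S_q\) of them.
  \item Terminals of level \(\geq q\) are handled by the level-\(\geq q\) cases above, applied to the sub-mean over \(I_t\) restricted to \(J_t\) with weight \(\abs{I_t}/n_q\). Summing with \(a_t\leq1\) and \(\sum_t\abs{I_t}\leq n_q\) gives \(O(C/m_q^2)\). For level exactly \(q\), \eqref{eq:collision} is already stated for arbitrary \(J\), which is exactly why it was formulated that way.
  \item Terminals of low positive level stopped by depth have \(a_t\leq m_1^{-(D_q+2)}\leq m_q^{-2}\) by \eqref{eq:numerical-three}. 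Since \(\abs{\Phi_t(P_{J_t}\cdot)}\leq\kappa\), they contribute at most \(m_q^{-2}\kappa C\sum_t\abs{I_t}/n_q\leq\kappa C/m_q^2\).
\end{itemize}

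\textbf{Conclusion.} Adding all these contributions and checking that the total constant fits within \(K_{\rm dep}=32\kappa(C_{\rm off}+1)\) gives \eqref{eq:dependent-mean}. The delicate points are that the sub-means over \(I_t\) still satisfy the hypotheses of Lemma~\ref{lem:collision} and Lemma~\ref{lem:off-weight} (both hold for arbitrary intervals), and that disjointness of the carriers really gives \(\sum_t\abs{I_t}\leq n_q\).
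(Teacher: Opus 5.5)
Your case split, your use of Lemma~\ref{lem:frontier}, and your treatment of boundary incidences, BD terminals and depth-stopped terminals all follow the paper's proof. Your $h>q$ case, via Lemma~\ref{lem:basic} and the $h>k$ branch of \eqref{eq:rooted-auxiliary}, is a valid alternative to the paper's per-block use of Lemma~\ref{lem:off-weight}. The ``non-outer'' level-$q$ case is indeed vacuous.

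\textbf{The gap.} It lies in the contained contributions of terminals of level $\geq q$ when $0<h<q$. You bound these by applying the level-$\geq q$ estimates to the sub-mean over $I_t$ ``with weight $\abs{I_t}/n_q$'' and then summing with $\sum_t\abs{I_t}\leq n_q$. This fails because the per-terminal bounds are not proportional to $\abs{I_t}$.

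\emph{Level $\ell(\Phi_t)>q$.} Take a block $y_i$ with $i\in I_t$ and $p_i=\ell(\Phi_t)$. No off-weight estimate applies, and the only bound available is the norm bound $\abs{\Phi_t(y_i)}\leq C$. So this block contributes of order $C/n_q$ however small $I_t$ is. In your basic-inequality formulation this is the exceptional term $\abs{\lambda_{k_0}}=1/n_q$. Renormalized to a mean over $\abs{I_t}$ blocks, the bound is of order $C/\abs{I_t}$, not $C/m_q^2$.

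\emph{Level $\ell(\Phi_t)=q$.} Lemma~\ref{lem:collision} bounds $\Phi_t\bigl(\frac{1}{n_q}\sum_{i\in I_t}y_i\bigr)$ by $\frac{3\kappa C}{n_qm_q}+\frac{\kappa C}{\Lambda_q}+\frac{3C_{\rm off}C}{m_q\Lambda_q}$, independently of $\abs{I_t}$. The term $\kappa C/\Lambda_q$ controls up to $d\leq n_q$ predecessor coordinates of $\Phi_t$ and does not shrink with $I_t$. So there is no factor $\abs{I_t}/n_q$ to sum.

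\textbf{The fix.} Count these non-proportional errors by the number of terminals, as the paper does.
\begin{itemize}
  \item Since the $p_i$ are distinct, each terminal of level $>q$ has at most one equal-level incidence. Split $\mathcal E_>$ into two parts.
  \begin{itemize}
    \item $\mathcal E_=$ satisfies $\abs{\mathcal E_=}\leq\abs{\mathcal T_{\rm term}}\leq S_q$, with each term of order $C/n_q$.
    \item On $\mathcal E_>\setminus\mathcal E_=$, Lemma~\ref{lem:off-weight} gives $C_{\rm off}C/(n_qm_{q+1})$ per incidence. Only here is $\sum_t\abs{I_t}\leq n_q$ used.
  \end{itemize}
  \item The level-$q$ terminals contribute at most $S_q$ times the collision bound.
\end{itemize}
The resulting totals, of order $S_qC/n_q$ and $S_q\bigl(\frac{3\kappa C}{n_qm_q}+\frac{\kappa C}{\Lambda_q}+\frac{3C_{\rm off}C}{m_q\Lambda_q}\bigr)$, are $O(C/m_q^2)$ only because of \eqref{eq:numerical-two}:
\[
  \frac{S_q}{n_q}\leq2^{-q-8}m_q^{-5},\qquad \frac{S_q}{\Lambda_q}\leq2^{-q-12}m_q^{-4}.
\]
This is exactly why $n_q$ and $\Lambda_q$ are chosen so large relative to the tree-size bound \eqref{eq:frontier-size}. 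You used this comparison for the boundary incidences but not for the contained level-$\geq q$ terms. With it, your argument closes, including in your basic-inequality formulation, where the $1/n_q$ exceptional term is summed over at most $S_q$ terminals.
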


\begin{proof}
  Put \(\bar y=n_q^{-1}\sum_{i=1}^{n_q}y_i\) and fix a scalarized evaluation \(F\) of level \(h\).
  If \(h=0\), \(F\) meets at most one block, say \(y_{i_0}\).
  Hence we obtain
  \begin{equation}\label{eq:dependent-level-zero}
    \abs{F(\bar y)}
    \leq\frac{1}{n_q}\norm{F}
    \norm{P_{\ran y_{i_0}}}\norm{y_{i_0}}
    \leq\frac{1}{n_q}\cdot1\cdot\kappa\cdot C
    =\frac{\kappa C}{n_q}
    \leq\frac{\kappa C}{m_q^2}.
  \end{equation}
  Suppose \(h>q\), and put \(I_h=\{i:p_i=h\}\).
  The levels \(p_i\) are distinct, so we have \(\abs{I_h}\leq1\).
  Since \(p_i>q\), we obtain from the norm bound on \(I_h\) and Lemma~\ref{lem:off-weight} on its complement
  \begin{equation}\label{eq:dependent-above}
    \abs{F(\bar y)}
    \leq\frac{1}{n_q}
    \left(\sum_{i\in I_h}\abs{F(y_i)}
    +\sum_{i\notin I_h}\abs{F(y_i)}\right)
    \leq\frac{\kappa C}{n_q}+\frac{C_{\rm off}C}{m_{q+1}}.
  \end{equation}
  If \(h=q\), the disjointness of the sets of inner and outer levels makes \(F\) an outer evaluation, and we obtain from Lemma~\ref{lem:collision}
  \begin{equation}\label{eq:dependent-equal}
    \abs{F(\bar y)}
    \leq\frac{3\kappa C}{n_qm_q}
    +\frac{\kappa C}{\Lambda_q}
    +\frac{3C_{\rm off}C}{m_q\Lambda_q}.
  \end{equation}

  Suppose \(0<h<q\).
  Apply Lemma~\ref{lem:frontier} to \(F\), a carrier containing \(\bar y\), and \(w=\bar y\).
  Put
  \[
    \begin{aligned}
      \mathcal E_\partial
      =\{(t,i):t\in\mathcal T_{\rm term},\
        \ran y_i\cap J_t\ne\varnothing,\ \ran y_i\nsubseteq J_t\},\qquad
      I_t=\{i:\ran y_i\subseteq J_t\}
        \quad(t\in\mathcal T_{\rm term}).
    \end{aligned}
  \]
  A partial intersection contains an endpoint of \(J_t\).
  Hence we obtain
  \[
    \begin{aligned}
      \abs{\mathcal E_\partial}
      \leq\sum_{t\in\mathcal T_{\rm term}}
        \sum_{v\in\{\min J_t,\max J_t\}}\#\{i:v\in\ran y_i\}\leq2\abs{\mathcal T_{\rm term}}\leq2S_q.
    \end{aligned}
  \]
  A block may contain endpoints of several terminal carriers, so the summation must count pairs \((t,i)\).
  Moreover, the terminal carriers are disjoint, and therefore we have
  \[ I_t\cap I_u=\varnothing\quad(t\ne u),\qquad \sum_t\abs{I_t}\leq n_q. \]
  For every terminal carrier, the support decomposition is
  \[
    P_{J_t}\bar y
      =\frac{1}{n_q}\sum_{i\in I_t}y_i
       +\frac{1}{n_q}\sum_{i:(t,i)\in\mathcal E_\partial}P_{J_t}y_i.
  \]
  Inserting this identity into \eqref{eq:frontier-domination}, we obtain
  \[
    \begin{aligned}
      \abs{F(\bar y)}
      \leq\frac{1}{n_q}\sum_{(t,i)\in\mathcal E_\partial}
        a_t\abs{\Phi_t(P_{J_t}y_i)}+\sum_{t\in\mathcal T_{\rm term}}a_t
        \abs{\Phi_t\left(\frac{1}{n_q}\sum_{i\in I_t}y_i\right)}.
    \end{aligned}
  \]
  If \(\Phi_t\) is a positive-level evaluation, then we have
  \[ a_t\abs{\Phi_t(P_{J_t}y_i)} \leq a_t\norm{\Phi_t}\norm{P_{J_t}}\norm{y_i} \leq1\cdot1\cdot\kappa\cdot C =\kappa C. \]
  If \(\Phi_t\) is a BD or level-zero coordinate, its singleton carrier is retained by \(P_{J_t}\), and we obtain from the coordinate estimate
  \[ a_t\abs{\Phi_t(P_{J_t}y_i)} \leq\abs{\Phi_t(y_i)} \leq\frac{C}{m_{p_i}} \leq C\leq\kappa C. \]
  Thus the boundary terms satisfy
  \[
    \frac{\kappa C}{n_q}\abs{\mathcal E_\partial}
    \leq\frac{2\kappa CS_q}{n_q}.
  \]

  After omitting terms with \(a_t=0\), a BD terminal or a level-zero coordinate terminal has a singleton carrier and meets at most one block.
  From the coordinate estimate and \eqref{eq:coded-level-lower}, we obtain, for that block,
  \[ a_t\abs{\Phi_t(P_{J_t}y_i)} \leq\abs{\Phi_t(y_i)} \leq\frac{C}{m_{p_i}} \leq\frac{C}{\Lambda_q}. \]
  There are at most \(\abs{\mathcal T_{\rm term}}\leq S_q\) such terminals.
  Therefore we obtain
  \[
    \sum_{\substack{t\in\mathcal T_{\rm term}\\
    \Phi_t\ {\rm BD\ or\ level\ zero}}}
    a_t\abs{\Phi_tP_{J_t}
    \left(\frac{1}{n_q}\sum_{i\in I_t}y_i\right)}
    \leq\frac{1}{n_q}S_q\frac{C}{\Lambda_q}
    =\frac{CS_q}{n_q\Lambda_q}.
  \]
  Omit zero terms and empty \(I_t\)'s from the remaining sums.
  For the level-\(q\) terminals, we have \(\#\{t:\ell(\Phi_t)=q\}\leq S_q\) and \(a_t\leq1\), so we obtain from Lemma~\ref{lem:collision}
  \[
    \sum_{\ell(\Phi_t)=q}
    a_t\abs{\Phi_t\left(\frac{1}{n_q}\sum_{i\in I_t}y_i\right)}
    \leq S_q\left(\frac{3\kappa C}{n_qm_q}
    +\frac{\kappa C}{\Lambda_q}
    +\frac{3C_{\rm off}C}{m_q\Lambda_q}\right).
  \]

  For the terminals above level \(q\), define
  \[ \mathcal E_> =\{(t,i):\ell(\Phi_t)>q,\ i\in I_t\}, \qquad \mathcal E_= =\{(t,i)\in\mathcal E_>:\ell(\Phi_t)=p_i\}. \]
  Since the \(p_i\)'s are distinct, we have
  \[
    \abs{\mathcal E_=}
      =\sum_{\ell(\Phi_t)>q}\#\{i\in I_t:p_i=\ell(\Phi_t)\}
      \leq\#\{t:\ell(\Phi_t)>q\}\leq S_q.
  \]
  By the disjointness of the \(I_t\)'s, we also have
  \[
    \abs{\mathcal E_>\setminus\mathcal E_=}\leq\sum_{\ell(\Phi_t)>q}\abs{I_t}\leq n_q.
  \]
  For \(i\in I_t\), we have \(P_{J_t}y_i=y_i\).
  From the norm bound and Lemma~\ref{lem:off-weight}, we therefore obtain
  \[
  a_t\abs{\Phi_t(y_i)} \leq \begin{cases} \kappa C,&(t,i)\in\mathcal E_=,\\
  C_{\rm off}C/m_{q+1}, &(t,i)\in\mathcal E_>\setminus\mathcal E_=. \end{cases}
  \]
  Splitting the sum over these two sets, we obtain
  \[
    \begin{aligned}
      \sum_{\ell(\Phi_t)>q}a_t
      \abs{\Phi_t\left(\frac{1}{n_q}\sum_{i\in I_t}y_i\right)}
      \leq\frac{\abs{\mathcal E_=}\kappa C}{n_q}
        +\frac{\abs{\mathcal E_>\setminus\mathcal E_=}
          C_{\rm off}C}{n_qm_{q+1}}\leq\frac{S_q\kappa C}{n_q}
        +\frac{C_{\rm off}C}{m_{q+1}}.
    \end{aligned}
  \]

  Finally, after the zero terms have been omitted, a terminal of level strictly between \(0\) and \(q\) can occur only at the depth cutoff.
  By \eqref{eq:frontier-deep-coefficient}, we have
  \[
    \begin{aligned}
      \sum_{0<\ell(\Phi_t)<q}a_t
      \abs{\Phi_tP_{J_t}\left(\frac{1}{n_q}\sum_{i\in I_t}y_i\right)}
      &\leq\frac{1}{n_q}\sum_{0<\ell(\Phi_t)<q}
        a_t\sum_{i\in I_t}\abs{\Phi_t(y_i)}\\
      &\leq\frac{\kappa C m_1^{-(D_q+2)}}{n_q}
        \sum_{0<\ell(\Phi_t)<q}\abs{I_t}\\
      &\leq\frac{\kappa C m_1^{-(D_q+2)}}{n_q}\,n_q
        =\kappa C m_1^{-(D_q+2)}.
    \end{aligned}
  \]

  Adding the five terminal and boundary estimates gives, when \(0<h<q\),
  \[
    \begin{aligned}
      \abs{F(\bar y)}
      \leq C\Bigl[&\frac{2\kappa S_q}{n_q}
        +\frac{S_q}{n_q\Lambda_q}
        +\frac{3\kappa S_q}{n_qm_q}
        +\frac{\kappa S_q}{\Lambda_q}
        +\frac{3C_{\rm off}S_q}{m_q\Lambda_q}+\frac{\kappa S_q}{n_q}
        +\frac{C_{\rm off}}{m_{q+1}}
        +\kappa m_1^{-(D_q+2)}\Bigr]\\
      \leq C\Bigl[&(6\kappa+1)\frac{S_q}{n_q}
        +(\kappa+3C_{\rm off})\frac{S_q}{\Lambda_q}
        +\frac{C_{\rm off}}{m_{q+1}}
        +\kappa m_1^{-(D_q+2)}\Bigr].
    \end{aligned}
  \]
  Together with \eqref{eq:dependent-level-zero}--\eqref{eq:dependent-equal}, we obtain, for every level \(h\),
  \[
    \begin{aligned}
      \abs{F(\bar y)}
      &\leq8\kappa(C_{\rm off}+1)C
        \left[\frac{S_q}{n_q}+\frac{S_q}{\Lambda_q}
          +m_1^{-(D_q+2)}+\frac{1}{m_{q+1}}+\frac{1}{n_qm_q}\right]\\
      &\leq32\kappa(C_{\rm off}+1)\frac{C}{m_q^2}
        =\frac{K_{\rm dep}C}{m_q^2}.
    \end{aligned}
  \]
  Since scalarized ambient evaluations norm \(X\), we prove \eqref{eq:dependent-mean}.
\end{proof}
In Figure~\ref{fig:counting-intervals}, \(y_3\) contributes both \((t_1,3)\) and \((t_2,3)\), and the dashed boxes are contained block ranges.
  Here, we have
  \[ \mathcal E_\partial=\{(t_1,1),(t_1,3),(t_2,3),(t_2,5)\},\qquad I_{t_1}=\{2\},\quad I_{t_2}=\{4\}. \]
  \begin{figure}[!htbp]
    \centering
\begin{tikzpicture}[x=1cm,y=1cm,>=stealth,
  every node/.style={font=\small,inner sep=2pt}]
  \draw[thick] (1.5,1.05) rectangle (4.0,1.28);
  \draw[thick] (4.8,1.05) rectangle (8.0,1.28);
  \node[above] at (2.75,1.30) {\(J_{t_1}\)};
  \node[above] at (6.40,1.30) {\(J_{t_2}\)};
  \foreach \x in {1.5,4.0,4.8,8.0}
  \draw[densely dashed] (\x,-0.13) -- (\x,1.04);
  \foreach \a/\b/\i in {0.2/1.9/1,3.7/5.4/3,7.5/9.0/5}
  {
  \draw[thick] (\a,-0.13) rectangle (\b,0.13);
  \node[below] at ({(\a+\b)/2},-0.20) {\(\ran y_{\i}\)};
  }
  \foreach \a/\b/\i in {2.4/3.1/2,6.0/6.7/4}
  {
  \draw[thick,densely dashed] (\a,-0.13) rectangle (\b,0.13);
  \node[below] at ({(\a+\b)/2},-0.20) {\(\ran y_{\i}\)};
  }
  \draw[->] (0.1,-0.60) -- (9.15,-0.60)
  node[right,font=\scriptsize] {rank};
\end{tikzpicture}
    \caption{Boundary incidences and contained blocks.}
    \label{fig:counting-intervals}
  \end{figure}

\subsection{The local-orbit theorem}

\begin{lemma}[Operator-dependent extraction]
  \label{lem:critical-extraction}
  Let \(S\in\Bcal(X)\), let \((x_k)\) be a \(C_0\)-RIS with \(C_0\geq1\), and assume
  \[ \dist(Sx_k,\mathscr M(x_k))\geq\varepsilon \qquad(k\in\N). \]
  For each sufficiently large seed level \(q\), there is a \(C_{\rm dep}\)-bounded \(q\)-dependent sequence \((y_s,f_s)_{s=1}^{n_q}\), where \(C_{\rm dep}=C_{\rm ex}C_0\), such that
  \[ \operatorname{Re}f_s(Sy_s)\geq\frac{\varepsilon}{4} \qquad(1\leq s\leq n_q). \]
  If \(\bar y=n_q^{-1}\sum_sy_s\), then we have
  \begin{equation}\label{eq:outer-lower}
    \norm{S\bar y}\geq\frac{\varepsilon}{8m_q}.
  \end{equation}
\end{lemma}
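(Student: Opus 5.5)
The plan is to build the dependent sequence one outer step at a time with Lemma~\ref{lem:exactification}, realize each step through the finite-extension property, and then test \(S\bar y\) against the final outer evaluation.

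\emph{Setup.} Fix a seed level \(q\in\mathcal C^{\rm out}\), large enough that the seed and everything below can be placed after the RIS indices used in Lemma~\ref{lem:exactification}. By Proposition~\ref{prop:finite-extension}\textup{(b)}, a seed \(\vartheta=(q,\chi,o)\) of level \(q\) occurs after every cutoff. Take one, and let \(\mathfrak p_0=(\vartheta;)\) be the age-zero stem.

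\emph{Building the sequence.} We proceed inductively for \(s=1,\dots,n_q\).
\begin{enumerate}[label={\textup{(\arabic*)}}]
  \item Given the realized stem \(\mathfrak p_{s-1}\) of age \(s-1<n_q\), apply Lemma~\ref{lem:exactification} with cutoff beyond \(\operatorname{cut}(\mathfrak p_{s-1})\). It produces a coded root-perturbed vector \(y_s\), the terminal \(\eta_s\) of a completed inner chain of level \(p_s=\sigma(q,\mathfrak p_{s-1})\), and \(f_s=e_{\eta_s,1}^*\). These satisfy \(U_{\eta_s}y_s=0\), \(\operatorname{Re}f_s(Sy_s)\geq\varepsilon/4\), \(\norm{y_s}\leq C_{\rm ex}C_0\), and the coordinate bound \(C_{\rm ex}C_0/m_{p_s}\).
  \item The lemma also supplies a compatible certificate. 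Proposition~\ref{prop:finite-extension}\textup{(c)} then realizes an outer continuation \(\zeta_s\) after the certificate interval, giving the stem \(\mathfrak p_s\).
\end{enumerate}
The resulting family should satisfy conditions \textup{(i)}--\textup{(iv)} of Definition~\ref{def:zero-dependent} with \(C=C_{\rm dep}\). For \textup{(i)}, the placement follows from \(\operatorname{cut}(\mathfrak p_{s-1})<\min\ran y_s\) and \(\max\ran y_s<\rank\zeta_s=\operatorname{cut}(\mathfrak p_s)\). For \textup{(ii)}, write \(y_s=(m_{p_s}/n_{p_s})\sum_t u_{s,t}+r_s\), where \((u_{s,t})\) is the \(2C_0\)-RIS of perturbed vectors and \(r_s=-d_{\eta_s}(U_{\eta_s}y^0)\). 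Then \(\norm{r_s}\leq M\tau\), and choosing \(\tau\leq C_0/m_{p_s}\) gives the required bound \(\norm{r_s}\leq C_{\rm dep}/m_{p_s}\).

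\emph{Lower estimate.} Let \(\zeta=\zeta_{n_q}\) be the final outer atom, which has age \(n_q\). Use the scalarized evaluation \(Q=e_{\zeta,1}^*\) and its analysis from Lemma~\ref{lem:formal-evaluation-analysis}:
\[
  Q=\sum_s d^*_{\zeta_s,1}+\frac{1}{m_q}\sum_s e^*_{\eta_s,1}P_{(k_s,\infty)},
\]
using the scalarization \eqref{eq:strong-packet-scalarization} of the strong packets. Then
\[
  m_q\,Q(S\bar y)=\frac{1}{n_q}\sum_s f_s(Sy_s)+\mathrm{Err},
\]
and we need \(\abs{\mathrm{Err}}\leq\varepsilon/8\). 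Since \(\norm Q\leq1\), this gives \(\norm{S\bar y}\geq\abs{Q(S\bar y)}\geq\varepsilon/(8m_q)\), which is \eqref{eq:outer-lower}.

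\emph{Main obstacle: controlling \(\mathrm{Err}\).} The error collects three kinds of terms:
\begin{enumerate}[label={\textup{(\alph*)}}]
  \item the cross terms \(f_s P_{(k_s,\infty)} S y_t\) for \(t\neq s\);
  \item the BD terms \(m_q\, d^*_{\zeta_s,1}(S\bar y)\);
  \item the head losses \(f_s P_{[1,k_s]} S y_s\).
\end{enumerate}
The plan is to handle these exactly as in the inner-chain estimate \eqref{eq:inner-four-errors}, by making each choice depend on the already-fixed data. When choosing \(y_t\), impose smallness via the finite-rank and functional clause of Lemma~\ref{lem:polar}, passed through Lemma~\ref{lem:exactification}: the sequence produced there is weakly null and \(Sy^{(j)}\) is weakly null. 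Concretely, require that \(\abs{f_s P_{(k_s,\infty)} S y_t}\), \(\abs{d^*_{\zeta_s,1}(Sy_t)}\) for \(s<t\), and \(\norm{P_{[1,k_t]}Sy_t}\) all be below \(\varepsilon/(64 m_q n_q)\). For \(s>t\), after fixing \(y_t\), choose a rank \(r_t\) with \(\norm{P_{(r_t,\infty)}Sy_t}\) small, and realize all later atoms beyond \(r_t\). The delicate point is that the tolerance \(\varepsilon/(64m_qn_q)\) depends on \(m_q\) and \(n_q\). This is legitimate because \(q\) is fixed before the construction starts, so the tolerances are fixed constants throughout.
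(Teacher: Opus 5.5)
Your proposal is correct and follows the paper's proof essentially step for step. You build the outer chain from the weakly null pairs of Lemma~\ref{lem:exactification} over each fixed stem, and pick late members so that the old predecessor, old packet, and head terms are small. You then fix a tail rank before realizing each outer atom, which also covers the diagonal term \(d^*_{\zeta_t,1}(Sy_t)\) since \(\zeta_t\) lies beyond \(r_t\), and compare the terminal outer evaluation with the diagonal average \(\frac{1}{n_q}\sum_s f_s(Sy_s)\).
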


\begin{proof}
  Put \(C=2C_0\), \(N=n_q\), and \(m=m_q\), and fix a seed with initial cutoff \(r_0\).
  Put
  \[
    \beta_d=\frac{\varepsilon}{64mN},\qquad
    \beta_\times=\frac{\varepsilon}{64N},\qquad
    \beta_0=\frac{\varepsilon}{64},\qquad
    \theta=\kappa^{-1}\min\{\beta_d,\beta_\times\}=\frac{\varepsilon}{64\kappa mN}.
  \]
  Suppose that the first \(s-1\) outer cells have been selected.
  Their realized stem \(\mathfrak p_{s-1}\) determines \(p_s=\sigma(q,\mathfrak p_{s-1})\).
  Lemma~\ref{lem:exactification} gives a successive sequence of pairs \((y^{(j)},f^{(j)})\) over this fixed stem, with \((y^{(j)})\) weakly null.
  Put \(\widetilde f_l=f_lP_{(r_{l-1},\infty)}\) for \(l<s\).
  Every previous functional is now fixed, and the head projection has finite-dimensional range.
  Hence, with empty sums interpreted as zero, we have
  \[ \norm{P_{[1,r_{s-1}]}Sy^{(j)}}+ \sum_{l<s}\bigl(\abs{d_{\zeta_l,1}^*(Sy^{(j)})} +\abs{\widetilde f_l(Sy^{(j)})}\bigr)\longrightarrow0. \]
  Choose \(j\) and put \((y_s,f_s)=(y^{(j)},f^{(j)})\) so that the following three inequalities hold:
  \begin{align}
    \abs{d_{\zeta_l,1}^*(Sy_s)}&<\beta_d\quad(l<s),                  \label{eq:outer-old-d}\\
    \abs{\widetilde f_l(Sy_s)}&<\beta_\times\quad(l<s),             \label{eq:outer-old-packet}\\
    \norm{P_{[1,r_{s-1}]}Sy_s}&<\beta_0,                         \label{eq:outer-head}
  \end{align}
  After this pair and its coded certificate are fixed, the finite set \(\{Sy_t:1\leq t\leq s\}\) satisfies
  \[ \lim_{R\to\infty}\max_{1\leq t\leq s} \norm{P_{(R,\infty)}Sy_t}=0. \]
  Choose \(R_s\) beyond the supports of the certificate so that
  \begin{equation}\label{eq:outer-tail}
    \norm{P_{(R_s,\infty)}Sy_t}<\theta
    \qquad(1\leq t\leq s).
  \end{equation}
  Proposition~\ref{prop:finite-extension}\textup{(c)} realizes after \(R_s\) an outer successor \(\zeta_s\) carrying \(f_s\).
  Put \(r_s=\rank\zeta_s\), and choose the next pair only after this successor is fixed.

  After \(N\) steps, put \(\widetilde f_s=f_sP_{(r_{s-1},\infty)}\) for \(1\leq s\leq N\), and let \(G_q=e_{\zeta_N,1_{A_{r(\zeta_N)}}}^*\).
  This terminal outer evaluation has the analysis
  \begin{equation}\label{eq:outer-analysis}
    G_q=\sum_{s=1}^Nd_{\zeta_s,1}^*
    +\frac{1}{m}\sum_{s=1}^N\widetilde f_s.
  \end{equation}
  In the \(s\)-th application of Lemma~\ref{lem:exactification}, write \(y_s=y_s^0+\widetilde r_s\), where \(y_s^0\) is the inner special average, and denote the parameter in \eqref{eq:exactification-tau} by \(\tau_s\).
  Since \(C=2C_0\), we obtain from Lemma~\ref{lem:root} and the construction
  \[ \norm{\widetilde r_s} \leq M\norm{\Delta(f_s,y_s^0)} <M\tau_s<\frac{MC}{m_{p_s}} \leq\frac{C_{\rm dep}}{m_{p_s}}, \qquad C\leq C_{\rm dep}. \]
  Thus the inner \(C\)-RIS is also a \(C_{\rm dep}\)-RIS, and clause \textup{(ii)} of Definition~\ref{def:zero-dependent} holds.
  Lemma~\ref{lem:exactification} and the root perturbation give the other three clauses, so the resulting sequence is a \(C_{\rm dep}\)-bounded dependent sequence, where \(C_{\rm dep}=C_{\rm ex}C_0\).

  For predecessor terms with \(l<s\), we use \eqref{eq:outer-old-d}, while for \(l\geq s\), the rank \(r_l\) lies after \(R_s\), so \eqref{eq:outer-tail} and \(\norm{d_{\zeta_l,1}^*}\leq\kappa\) apply.
  For packet terms the same split uses \eqref{eq:outer-old-packet} when \(l<s\), while \(r_{l-1}>R_s\) and \(\norm{\widetilde f_l}\leq\kappa\) apply when \(l>s\).
  Finally, we have \(\widetilde f_s-f_s=-f_sP_{[1,r_{s-1}]}\).
  From the definitions of \(\beta_d\), \(\beta_\times\), \(\beta_0\), and \(\theta\), we obtain
  \begin{align}
    \abs{\sum_{l,s=1}^Nd_{\zeta_l,1}^*(Sy_s)}
    &<\frac{N(N-1)}{2}\beta_d
    +\frac{N(N+1)}{2}\kappa\theta
    \leq N^2\beta_d=\frac{\varepsilon N}{64m},                 \label{eq:outer-d-error}\\*
    \sum_{l<s}\abs{\widetilde f_l(Sy_s)}
    &<\frac{N(N-1)}{2}\beta_\times
    <\frac{\varepsilon N}{128},                                \label{eq:outer-old-packet-error}\\*
    \sum_{l>s}\abs{\widetilde f_l(Sy_s)}
    &<\frac{N(N-1)}{2}\kappa\theta
    \leq\frac{N(N-1)}{2}\beta_\times
    <\frac{\varepsilon N}{128},                                \label{eq:outer-new-packet-error}\\*
    \abs{\sum_{s=1}^N(\widetilde f_s-f_s)(Sy_s)}
    &\leq\sum_{s=1}^N\norm{P_{[1,r_{s-1}]}Sy_s}
    <N\beta_0=\frac{\varepsilon N}{64}.                        \label{eq:outer-head-error}
  \end{align}

  Apply \eqref{eq:outer-analysis} to \(\bar y=N^{-1}\sum_sy_s\).
  Subtracting the diagonal packet terms, we obtain the exact identity
  \[
    \begin{aligned}
      &G_q(S\bar y)-\frac{1}{mN}\sum_{s=1}^Nf_s(Sy_s)\\
      =&\frac{1}{N}\sum_{l,s=1}^Nd_{\zeta_l,1}^*(Sy_s)+\frac{1}{mN}\sum_{l<s}\widetilde f_l(Sy_s)
      +\frac{1}{mN}\sum_{l>s}\widetilde f_l(Sy_s)+\frac{1}{mN}\sum_{s=1}^N(\widetilde f_s-f_s)(Sy_s).
    \end{aligned}
  \]
  The predecessor sum has coefficient \(1/N\), whereas each of the other three sums has coefficient \(1/(mN)\).
  From \eqref{eq:outer-d-error}--\eqref{eq:outer-head-error}, we therefore obtain
  \begin{equation}\label{eq:outer-table}
    \begin{aligned}
      \abs{G_q(S\bar y)-\frac{1}{mN}\sum_{s=1}^Nf_s(Sy_s)}
      &<\frac{1}{N}\frac{\varepsilon N}{64m}
        +\frac{1}{mN}\left(
          \frac{\varepsilon N}{128}+\frac{\varepsilon N}{128}
          +\frac{\varepsilon N}{64}\right)\\
      &=\frac{3\varepsilon}{64m}<\frac{\varepsilon}{8m}.
    \end{aligned}
  \end{equation}
  Since \((mN)^{-1}\sum_{s=1}^N\operatorname{Re}f_s(Sy_s) \geq\varepsilon/(4m)\), we obtain from \eqref{eq:outer-table}
  \[
    \begin{aligned}
      \operatorname{Re}G_q(S\bar y)
      \geq\frac{1}{mN}\sum_{s=1}^N\operatorname{Re}f_s(Sy_s)
        -\abs{G_q(S\bar y)-\frac{1}{mN}\sum_{s=1}^Nf_s(Sy_s)}>\frac{\varepsilon}{4m}-\frac{\varepsilon}{8m}
        =\frac{\varepsilon}{8m}.
    \end{aligned}
  \]
  As \(\norm{G_q}\leq1\), we have
  \[ \norm{S\bar y} \geq\abs{G_q(S\bar y)} \geq\operatorname{Re}G_q(S\bar y) >\frac{\varepsilon}{8m}, \]
  which proves \eqref{eq:outer-lower}.
\end{proof}

For the corresponding scalar-orbit result, see \cite[Proposition~7.3]{ArgyrosHaydon2011}.

\begin{theorem}[Local multiplier orbit]\label{thm:local-orbit}
  For every \(S\in\Bcal(X)\) and every  RIS \((x_k)\), we have
  \begin{equation}\label{eq:local-orbit}
    \dist\bigl(Sx_k,\{M_ax_k:a\in\widehat A\}\bigr)
    \longrightarrow0.
  \end{equation}
\end{theorem}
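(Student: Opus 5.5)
The proof will go by contradiction, combining the lower estimate of Lemma~\ref{lem:critical-extraction} with the upper estimate of Proposition~\ref{prop:dependent-mean}. Suppose that \eqref{eq:local-orbit} fails for some \(S\in\Bcal(X)\) and some \(C_0\)-RIS \((x_k)\). We may assume \(C_0\geq1\), since enlarging \(C\) preserves the RIS property. Then there are \(\varepsilon>0\) and a subsequence along which
\[
  \dist(Sx_k,\mathscr M(x_k))\geq\varepsilon .
\]
A subsequence of a RIS is again a RIS with the retained indices, exactly as in the reindexing used in the proof of Corollary~\ref{cor:ris-weakly-null}. So after relabelling, the persistent separation hypothesis \eqref{eq:persistent-separation} holds for every \(k\). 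Note that \(\mathscr M(x_k)\) is exactly the set \(\{M_ax_k:a\in\widehat A\}\) appearing in \eqref{eq:local-orbit}.

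Next, fix a sufficiently large outer seed level \(q\in\mathcal C^{\rm out}\) and apply Lemma~\ref{lem:critical-extraction}. This yields a \(C_{\rm dep}\)-bounded \(q\)-dependent sequence \((y_s,f_s)_{s=1}^{n_q}\), with \(C_{\rm dep}=C_{\rm ex}C_0\), whose mean \(\bar y=n_q^{-1}\sum_sy_s\) satisfies
\[
  \norm{S\bar y}\geq\frac{\varepsilon}{8m_q}.
\]
On the other hand, Proposition~\ref{prop:dependent-mean} applies to this same sequence and gives
\[
  \norm{S\bar y}\leq\norm S\,\norm{\bar y}\leq\frac{\norm S K_{\rm dep}C_{\rm ex}C_0}{m_q^2}.
\]
Combining the two bounds gives
\[
  \varepsilon\leq\frac{8\norm S K_{\rm dep}C_{\rm ex}C_0}{m_q}.
\]
The constants \(K_{\rm dep}\) and \(C_{\rm ex}\) depend only on \eqref{eq:constants}, and \(C_0,\norm S,\varepsilon\) are fixed independently of \(q\). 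Since \(\mathcal C^{\rm out}=3\N+2\) is infinite and \(m_q\to\infty\), choosing \(q\) large enough makes the right-hand side smaller than \(\varepsilon\), which is the desired contradiction.

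The only step I expect to need care is checking that the ``sufficiently large \(q\)'' threshold in Lemma~\ref{lem:critical-extraction} is compatible with this choice, that is, that the lemma really applies to arbitrarily large outer levels. This follows from Proposition~\ref{prop:finite-extension}(b), which supplies seeds of every outer level after every cutoff. One should also confirm that the passage to a subsequence does not disturb the RIS constant used in \(C_{\rm dep}\). Everything else is the direct comparison of the \(m_q^{-1}\) lower bound with the \(m_q^{-2}\) upper bound.
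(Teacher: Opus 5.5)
Your proposal is correct and follows the paper's proof essentially verbatim: pass to a separated \(C_0\)-RIS subsequence with \(C_0\geq1\), use Lemma~\ref{lem:critical-extraction} for the lower bound \(\norm{S\bar y}\geq\varepsilon/(8m_q)\), and use Proposition~\ref{prop:dependent-mean} for the upper bound \(\norm{\bar y}\leq K_{\rm dep}C_{\rm dep}/m_q^2\). Letting \(q\) run through the unbounded outer seed levels then gives the contradiction. Your extra remarks on enlarging \(C_0\), reindexing subsequences, and the availability of arbitrarily large outer seed levels via Proposition~\ref{prop:finite-extension} are sound.
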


\begin{proof}
  Suppose \eqref{eq:local-orbit} fails.
  Pass to a \(C_0\)-RIS subsequence with \(C_0\geq1\) on which the distances are at least \(\varepsilon>0\).
  For every sufficiently large seed level \(q\), Lemma~\ref{lem:critical-extraction} gives a \(C_{\rm dep}\)-bounded dependent sequence, and we have
  \[ \bar y=\frac{1}{n_q}\sum_{s=1}^{n_q}y_s,\qquad \norm{S\bar y}\geq\frac{\varepsilon}{8m_q}. \]
  Proposition~\ref{prop:dependent-mean} gives \(\norm{\bar y}\leq K_{\rm dep}C_{\rm dep}/m_q^2\).
  Therefore we obtain
  \[ \frac{\varepsilon}{8m_q} \leq\norm{S\bar y} \leq\norm{S}\norm{\bar y} \leq\norm{S}\frac{K_{\rm dep}C_{\rm dep}}{m_q^2}. \]
  The constant \(C_{\rm dep}=C_{\rm ex}C_0\) is independent of \(q\), while the seed levels are unbounded.
  Multiplying by \(8m_q^2/\varepsilon\), we obtain
  \[
    m_q\leq\frac{8\norm S K_{\rm dep}C_{\rm dep}}{\varepsilon}.
  \]
  Choosing a seed level \(q\) for which \(m_q\) exceeds this fixed bound gives a contradiction.
\end{proof}


\section{RIS synchronization and global operator classification}
\label{sec:globalization}

    Throughout this section, \(X\) is the block BD space of Theorem~\ref{thm:datum} and \(\widehat A=\ilim(A_r,\pi_r^s)\).
    For \(a=(a_r)_r\in\widehat A\), let \(M_a\) be the multiplier defined in \eqref{eq:multiplier-definition}.
    We shall find one such coefficient for each \(S\in\Bcal(X)\) and prove that the remainder \(R=S-M_a\) is compact.
    Unit probes determine the coordinates \(a_r\), and paired probes give compatibility.
    The convergence is uniform over detector indices.
    After subtracting \(M_a\), a core probe reproduces the annihilation error of a finite block, giving the compactness argument in Lemma~\ref{lem:no-ghost}.
    We retain the normalization \(\psi\in B_{E_\eta^*}\) for scalarized evaluations \(e_{\eta,\psi}^*\) used in the estimates below.
    
    For each probe \(\gamma\), recall that \(j_\gamma:E_\gamma\to X\), \(D_\gamma:X\to E_\gamma\), and \(U_\gamma:X\to E_\gamma\) are the block injection, block coefficient, and ambient coordinate maps.
    The fibre carries the action \(\Theta_\gamma\).
    Let \(\mathcal P_r^{\rm u}\) and \(\mathcal P_r^{\rm c}\) denote the cofinal unit and core probes of detector index \(r\), with fibres \(A_r\) and \(A_r^*\), respectively.
    By Lemma~\ref{lem:formal-probe-isometry}, we have
    \(D_\gamma=U_\gamma\), \(\norm{D_\gamma}\leq1\), and
    \(\norm{j_\gamma u}=\norm u\) for \(u\in E_\gamma\).
    For a unit probe \(\gamma\in\mathcal P_r^{\rm u}\), write \(z_\gamma=j_\gamma1_{A_r}\).

\subsection{Compatibility of finite-dimensional coefficients}

We adapt the paired-RIS argument in \cite[proof of Theorem~7.4]{ArgyrosHaydon2011} to coefficients in the detector algebras.

\begin{lemma}[Local detector coefficients]\label{lem:local-coefficients}
  Let \(S\in\Bcal(X)\), and put
  \begin{equation}\label{eq:unit-probe-distance}
    \alpha_N(S)
    =\sup_{\substack{r\in\N,\ \gamma\in\mathcal P_r^{\rm u}\\
    \rank\gamma\geq N}}
    \dist(Sz_\gamma,j_\gamma A_r).
  \end{equation}
  Then \(\alpha_N(S)\to0\).
  For every \(\gamma\in\mathcal P_r^{\rm u}\), we may choose \(b_\gamma\in A_r\) such that, with \(e_\gamma=Sz_\gamma-j_\gamma b_\gamma\), we have
  \begin{equation}\label{eq:unit-probe-approximation}
    \norm{e_\gamma}
    \leq\alpha_{\rank\gamma}(S)+2^{-\rank\gamma}.
  \end{equation}
  In particular, we have
  \[
  \sup_{\substack{r\in\N,\ \gamma\in\mathcal P_r^{\rm u}\\
  \rank\gamma\geq N}} \norm{e_\gamma} \leq\alpha_N(S)+2^{-N}\longrightarrow0.
  \]
  The coefficients satisfy \(\norm{b_\gamma}\leq\norm S+\norm{e_\gamma}\).
  Finally, we have
  \begin{equation}\label{eq:bonding-cauchy}
    \omega_N(S)
    :=\sup_{\substack{r\leq s,\ \gamma\in\mathcal P_r^{\rm u},
    \delta\in\mathcal P_s^{\rm u}\\
    \rank\gamma,\rank\delta\geq N}}
    \norm{b_\gamma-\pi_r^sb_\delta}
    \longrightarrow0.
  \end{equation}
\end{lemma}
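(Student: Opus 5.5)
Everything is reduced to the local-orbit Theorem~\ref{thm:local-orbit}, applied to RIS built from shielded probes via Corollary~\ref{cor:probe-ris}.

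\textbf{Step 1: $\alpha_N(S)\to0$.} Suppose not. Then there are $\varepsilon>0$ and unit probes $\gamma_k\in\mathcal P_{r_k}^{\rm u}$ whose ranks are strictly increasing and as separated as we like, with $\dist(Sz_{\gamma_k},j_{\gamma_k}A_{r_k})\geq\varepsilon$. The detector indices $r_k$ may be arbitrary, since shielding is uniform in detector indices. By Corollary~\ref{cor:probe-ris}(i), $(z_{\gamma_k})$ is a $1$-RIS. By \eqref{eq:multiplier-definition}, $M_az_{\gamma_k}=j_{\gamma_k}a_{r_k}$, and by Lemma~\ref{lem:bounded-limit} the coordinate map $q_{r_k}$ is onto, so
\[
\{M_az_{\gamma_k}:a\in\widehat A\}=j_{\gamma_k}A_{r_k}.
\]
Theorem~\ref{thm:local-orbit} then says these distances tend to zero, which is a contradiction. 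Hence $\alpha_N(S)\to0$; the sequence is nonincreasing in $N$ by definition.

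\textbf{Step 2: choosing $b_\gamma$.} Choose $b_\gamma$ attaining the distance up to $2^{-\rank\gamma}$. This gives \eqref{eq:unit-probe-approximation}, and the tail supremum bound follows because $\alpha_{\rank\gamma}\leq\alpha_N$ when $\rank\gamma\geq N$. For the norm bound, use that $j_\gamma$ is isometric (Lemma~\ref{lem:formal-probe-isometry}):
\[
\norm{b_\gamma}=\norm{j_\gamma b_\gamma}\leq\norm{Sz_\gamma}+\norm{e_\gamma}\leq\norm S+\norm{e_\gamma}.
\]

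\textbf{Step 3: compatibility \eqref{eq:bonding-cauchy}.} This is the main obstacle. Suppose it fails. Then there are $\varepsilon>0$ and pairs $\gamma_k\in\mathcal P_{r_k}^{\rm u}$, $\delta_k\in\mathcal P_{s_k}^{\rm u}$ with $r_k\leq s_k$, ranks tending to infinity, and $\norm{b_{\gamma_k}-\pi_{r_k}^{s_k}b_{\delta_k}}\geq\varepsilon$. Pass to a subsequence so that the pairs are separated as in Corollary~\ref{cor:probe-ris}(ii). Then $x_k=z_{\gamma_k}+z_{\delta_k}$ is a $2$-RIS.

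Apply Theorem~\ref{thm:local-orbit} to $(x_k)$. It gives $a^{(k)}\in\widehat A$ with
\[
\norm{Sx_k-j_{\gamma_k}a^{(k)}_{r_k}-j_{\delta_k}a^{(k)}_{s_k}}\to0.
\]
Since $Sx_k=j_{\gamma_k}b_{\gamma_k}+j_{\delta_k}b_{\delta_k}+e_{\gamma_k}+e_{\delta_k}$, and both error terms tend to zero by Step~2,
\[
\norm{j_{\gamma_k}(b_{\gamma_k}-a^{(k)}_{r_k})+j_{\delta_k}(b_{\delta_k}-a^{(k)}_{s_k})}\to0.
\]
Apply the contractive coefficient maps $D_{\gamma_k}$ and $D_{\delta_k}$, which satisfy $D_\gamma=U_\gamma$ for probes. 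Biorthogonality \eqref{eq:atomic-biorthogonality} separates the two summands, giving
\[
\norm{b_{\gamma_k}-a^{(k)}_{r_k}}\to0,\qquad \norm{b_{\delta_k}-a^{(k)}_{s_k}}\to0.
\]
Since $a^{(k)}$ is compatible, $a^{(k)}_{r_k}=\pi_{r_k}^{s_k}a^{(k)}_{s_k}$. The bonding map is contractive, so
\[
\norm{b_{\gamma_k}-\pi_{r_k}^{s_k}b_{\delta_k}}\leq\norm{b_{\gamma_k}-a^{(k)}_{r_k}}+\norm{\pi_{r_k}^{s_k}(a^{(k)}_{s_k}-b_{\delta_k})}\to0,
\]
which contradicts the choice of $\varepsilon$.

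The delicate point in Step~3 is that the approximating multipliers $a^{(k)}$ vary with $k$. This is harmless, because the argument only needs compatibility of each single $a^{(k)}$ within its own pair $(r_k,s_k)$. It also covers the case $r_k=s_k$ with two distinct probes of the same index, which is what makes $b_\gamma$ essentially independent of the choice of probe.
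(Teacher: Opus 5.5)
Your proposal is correct and follows the paper's proof essentially step for step. You prove \(\alpha_N(S)\to0\) by contradiction using Corollary~\ref{cor:probe-ris} and Theorem~\ref{thm:local-orbit}. You choose \(b_\gamma\) up to \(2^{-\rank\gamma}\) and use the probe isometry for the norm bound. For compatibility you use the paired RIS \((z_{\gamma_k}+z_{\delta_k})\), separate its two parts with the coefficient maps \(D_{\gamma_k},D_{\delta_k}\), and conclude from the exact compatibility of the single multiplier \(a^{(k)}\).

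The paper states one detail that you leave implicit. The lower bound on the compatibility error forces \(\gamma_k\neq\delta_k\), and this is what makes the biorthogonality separation valid.
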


\begin{proof}
  For a unit probe of index \(r\), Lemma~\ref{lem:bounded-limit} gives \(\{a_r:a\in\widehat A\}=A_r\).
  Since \(\Theta_\gamma(b)1_{A_r}=b\), we have
  \[
    \{M_az_\gamma:a\in\widehat A\}
    =\{j_\gamma a_r:a\in\widehat A\}
    =j_\gamma A_r.
  \]
  Suppose that \(\alpha_N(S)\) does not tend to zero.
  Since the supremum in \eqref{eq:unit-probe-distance} is taken over every detector index, we can choose \(\varepsilon>0\) and unit probes \(\gamma_k\in\mathcal P_{r_k}^{\rm u}\) recursively so that
  \[
    \rank\gamma_k>\rank\gamma_{k-1}+2,
    \qquad
    \dist(Sz_{\gamma_k},j_{\gamma_k}A_{r_k})\geq\varepsilon.
  \]
  By Corollary~\ref{cor:probe-ris}, \((z_{\gamma_k})\) is a RIS.
  Theorem~\ref{thm:local-orbit} now gives
  \[
    0<\varepsilon
    \leq\dist(Sz_{\gamma_k},j_{\gamma_k}A_{r_k})
    =\dist\bigl(Sz_{\gamma_k},
        \{M_az_{\gamma_k}:a\in\widehat A\}\bigr)
    \longrightarrow0,
  \]
  a contradiction.
  Thus we have \(\alpha_N(S)\to0\), uniformly in the detector index.
  Choose \(b_\gamma\) so that
  \[
    \begin{aligned}
      \norm{Sz_\gamma-j_\gamma b_\gamma}
      \leq\dist(Sz_\gamma,j_\gamma A_r)+2^{-\rank\gamma}\leq\alpha_{\rank\gamma}(S)+2^{-\rank\gamma}.
    \end{aligned}
  \]
  This is \eqref{eq:unit-probe-approximation}.

  By the probe isometry, we have
  \( \norm{b_\gamma} =\norm{j_\gamma b_\gamma} \leq\norm{Sz_\gamma}+\norm{e_\gamma} \leq\norm S+\norm{e_\gamma}. \)
  Suppose that \eqref{eq:bonding-cauchy} fails.
  Since \(\omega_N(S)\) decreases with \(N\), we can choose \(\tau>0\) such that every tail contains a pair with compatibility error at least \(\tau\).
  Choose these pairs recursively so that
  \[ \max\{\rank\gamma_{k-1},\rank\delta_{k-1}\}+2 <\min\{\rank\gamma_k,\rank\delta_k\}. \]
  The two atoms in each pair are distinct, since an identical pair has zero compatibility error.
  The choices satisfy
  \[ \gamma_k\in\mathcal P_{r_k}^{\rm u}, \qquad \delta_k\in\mathcal P_{s_k}^{\rm u}, \qquad r_k\leq s_k, \qquad \norm{b_{\gamma_k}-\pi_{r_k}^{s_k}b_{\delta_k}}\geq\tau. \]
  Corollary~\ref{cor:probe-ris} makes the two component sequences and \((z_{\gamma_k}+z_{\delta_k})\) RISs.
  The local-orbit theorem supplies \(c^{(k)}\in\widehat A\) and \(w_k\in X\) such that
  \begin{equation}\label{eq:paired-unit-probe-approximation}
    S(z_{\gamma_k}+z_{\delta_k})
    =j_{\gamma_k}c_{r_k}^{(k)}+j_{\delta_k}c_{s_k}^{(k)}+w_k,
    \qquad \norm{w_k}\longrightarrow0.
  \end{equation}
  The individual unit-probe approximations are \(Sz_{\gamma_k}=j_{\gamma_k}b_{\gamma_k}+e_{\gamma_k}\) and \(Sz_{\delta_k}=j_{\delta_k}b_{\delta_k}+e_{\delta_k}\), where \(\norm{e_{\gamma_k}}+\norm{e_{\delta_k}}\to0\).

  By biorthogonality, we have
  \[ D_{\gamma_k}j_{\delta_k}=D_{\delta_k}j_{\gamma_k}=0, \qquad D_{\gamma_k}j_{\gamma_k}=I_{A_{r_k}}, \qquad D_{\delta_k}j_{\delta_k}=I_{A_{s_k}}. \]
  Subtract the individual approximations from \eqref{eq:paired-unit-probe-approximation} and apply the two contractive coefficient maps.
  We obtain
  \[
    \begin{aligned}
      b_{\gamma_k}-c_{r_k}^{(k)}
      &=D_{\gamma_k}(w_k-e_{\gamma_k}-e_{\delta_k}),\\
      b_{\delta_k}-c_{s_k}^{(k)}
      &=D_{\delta_k}(w_k-e_{\gamma_k}-e_{\delta_k}).
    \end{aligned}
  \]
  Therefore we obtain
  \[ \max\bigl\{\norm{b_{\gamma_k}-c_{r_k}^{(k)}}, \norm{b_{\delta_k}-c_{s_k}^{(k)}}\bigr\} \leq\norm{w_k}+\norm{e_{\gamma_k}}+\norm{e_{\delta_k}} \longrightarrow0. \]
  Since \(c_{r_k}^{(k)}=\pi_{r_k}^{s_k}c_{s_k}^{(k)}\) and the bonding map is contractive, we obtain
  \[
    \begin{aligned}
      \norm{b_{\gamma_k}-\pi_{r_k}^{s_k}b_{\delta_k}}
      \leq\norm{b_{\gamma_k}-c_{r_k}^{(k)}}
        +\norm{\pi_{r_k}^{s_k}(c_{s_k}^{(k)}-b_{\delta_k})}\leq2\bigl(\norm{w_k}
        +\norm{e_{\gamma_k}}+\norm{e_{\delta_k}}\bigr)
        \longrightarrow0.
    \end{aligned}
  \]
  This contradicts the choice of the pairs and proves \eqref{eq:bonding-cauchy}.
\end{proof}

Figure~\ref{fig:paired-probe-compatibility} summarizes the argument for the separated sequence of probe pairs used above.
Applying the local-orbit theorem to \(z_{\gamma_k}+z_{\delta_k}\), we obtain one \(c^{(k)}\in\widehat A\) for both probes.
Its coordinates satisfy the bonding relation exactly, while the coefficient maps show that \(b_{\gamma_k}\) and \(b_{\delta_k}\) approach those coordinates.
The detector indices \(r_k\leq s_k\) may vary with \(k\), as required for the uniform estimate in \eqref{eq:bonding-cauchy}.

\begin{figure}[!htbp]
  \centering
\begin{tikzpicture}[x=1cm,y=1cm,>=Stealth,
  every node/.style={font=\small,align=center},
  coefficient/.style={draw,rounded corners=2pt,
    minimum width=3.60cm,minimum height=.70cm,inner sep=5pt},
  error/.style={fill=white,inner sep=3pt}]
  \node[font=\small\bfseries] at (5.80,.78)
    {Coefficients from individual probes};
  \node[coefficient,fill=blue!4] (bg) at (2.50,0)
    {\(b_{\gamma_k}\in A_{r_k}\)};
  \node[coefficient,fill=blue!4] (bd) at (9.10,0)
    {\(b_{\delta_k}\in A_{s_k}\)};

  \node[coefficient,fill=green!4!white] (cr) at (2.50,-2.30)
    {\(c_{r_k}^{(k)}\in A_{r_k}\)};
  \node[coefficient,fill=green!4!white] (cs) at (9.10,-2.30)
    {\(c_{s_k}^{(k)}\in A_{s_k}\)};
  \draw[dashed,black!65] (bg.south) -- (cr.north);
  \draw[dashed,black!65] (bd.south) -- (cs.north);
  \node[error] at (2.50,-1.15)
    {\(\norm{b_{\gamma_k}-c_{r_k}^{(k)}}\leq\varepsilon_k\)};
  \node[error] at (9.10,-1.15)
    {\(\norm{b_{\delta_k}-c_{s_k}^{(k)}}\leq\varepsilon_k\)};
  \draw[->,thick] (cs.west) --
    node[above=4pt] {\(\pi_{r_k}^{s_k}\)} (cr.east);
  \node at (5.80,-2.94)
    {Two coordinates of the same \(c^{(k)}\in\widehat A\)};

  \draw[rounded corners=2pt,fill=black!2,draw=black!25]
    (.10,-3.35) rectangle (11.50,-4.65);
  \node at (5.80,-3.70)
    {\(\varepsilon_k=\norm{w_k}+\norm{e_{\gamma_k}}
       +\norm{e_{\delta_k}}\longrightarrow0\)};
  \node at (5.80,-4.28)
    {\(\norm{b_{\gamma_k}-\pi_{r_k}^{s_k}b_{\delta_k}}
       \leq2\varepsilon_k\longrightarrow0\)};
\end{tikzpicture}
  \caption{Compatibility obtained from paired probes. The solid arrow records the exact bonding relation for one \(c^{(k)}\), while the dashed links record norm errors tending to zero.}
  \label{fig:paired-probe-compatibility}
\end{figure}

\begin{proposition}[Common multiplier coefficient]\label{prop:globalization}
  For every \(S\in\Bcal(X)\), there is a unique bounded compatible family \(a=(a_r)_r\in\widehat A\) such that, for \(R=S-M_a\),
  \begin{equation}\label{eq:probe-zero}
    \sup_{\substack{\rank\gamma\geq N\\
    \gamma\in\mathcal P_r^{\rm u}
    \cup\mathcal P_r^{\rm c},\ r\in\N}}
    \norm{Rj_\gamma}
    \longrightarrow0.
  \end{equation}
  Moreover, \(\norm a\leq\norm S\).
\end{proposition}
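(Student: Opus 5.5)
Our plan is to build the coefficient family $a$ from the unit-probe coefficients $b_\gamma$ of Lemma~\ref{lem:local-coefficients}, then handle the unit and the core probes separately.

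\emph{Construction of $a$.} Fix $r$ and pick unit probes $\gamma\in\mathcal P_r^{\rm u}$ of increasing rank. By \eqref{eq:bonding-cauchy} with $r=s$, the elements $b_\gamma$ form a Cauchy net in the finite-dimensional space $A_r$ as $\rank\gamma\to\infty$. We therefore define $a_r=\lim_{\rank\gamma\to\infty}b_\gamma$, and this limit does not depend on the choice of probes. Letting both ranks tend to infinity in \eqref{eq:bonding-cauchy} and using continuity of $\pi_r^s$ gives $a_r=\pi_r^sa_s$. The bound $\norm{b_\gamma}\le\norm S+\norm{e_\gamma}$ passes to the limit and yields $\norm{a_r}\le\norm S$. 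Hence $a\in\widehat A$ with $\norm a\le\norm S$. The same limit also gives the uniform estimate $\norm{b_\gamma-a_r}\le\omega_N(S)$ for $\rank\gamma\ge N$.

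\emph{Unit probes.} For $\gamma\in\mathcal P_r^{\rm u}$ we have $M_aj_\gamma u=j_\gamma(a_ru)$. For $u\in B_{A_r}$, write $u=c_r$ for some $c\in B_{\widehat A}$, using the lifting $q_r(B_{\widehat A})=B_{A_r}$ of Lemma~\ref{lem:bounded-limit}. Then $j_\gamma u=M_cz_\gamma$. The obstacle is that $S$ need not commute with $M_c$, so the estimate on $Sz_\gamma$ does not transfer directly to $Sj_\gamma u$. We handle this by repeating the argument of Lemma~\ref{lem:local-coefficients} with general vectors. Suppose that $\sup_{u\in B_{A_r}}\norm{Rj_\gamma u}$ does not tend to zero uniformly in $r$. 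Then there are separated probes $\gamma_k$, vectors $u_k\in B_{A_{r_k}}$, and a bound $\norm{Rj_{\gamma_k}u_k}\ge\tau$. Pair each $\gamma_k$ with a further unit probe $\delta_k$ of the same index $r_k$. Apply Theorem~\ref{thm:local-orbit} to the RIS $(j_{\gamma_k}u_k)$ and to the 2-RIS $(z_{\delta_k}+j_{\gamma_k}u_k)$ supplied by Corollary~\ref{cor:probe-ris}. This gives $c^{(k)}\in\widehat A$ with
\[
S(z_{\delta_k}+j_{\gamma_k}u_k)\approx j_{\delta_k}c^{(k)}_{r_k}+j_{\gamma_k}c^{(k)}_{r_k}u_k .
\]
Applying $D_{\delta_k}$ and comparing with $Sz_{\delta_k}\approx j_{\delta_k}b_{\delta_k}$ shows $c^{(k)}_{r_k}\approx b_{\delta_k}\approx a_{r_k}$. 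Hence $Sj_{\gamma_k}u_k\approx j_{\gamma_k}a_{r_k}u_k+o(1)$. This contradicts the lower bound $\tau$ once we know that $S(z_{\delta_k}+j_{\gamma_k}u_k)-Sz_{\delta_k}=Sj_{\gamma_k}u_k$, which holds by linearity. All errors are uniform in $r_k$, because the estimates in Lemma~\ref{lem:local-coefficients} are uniform.

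\emph{Core probes.} For $\gamma\in\mathcal P_r^{\rm c}$ and $v\in B_{A_r^*}$ we use the same pairing, now with a unit probe $\delta_k$ of index $r_k$ and the RIS $(z_{\delta_k}+j_{\gamma_k}v_k)$. The local-orbit theorem gives
\[
S(z_{\delta_k}+j_{\gamma_k}v_k)\approx j_{\delta_k}c_{r_k}+j_{\gamma_k}R^*_{c_{r_k}}v_k .
\]
We also apply it separately to $(j_{\gamma_k}v_k)$, which gives $Sj_{\gamma_k}v_k\approx j_{\gamma_k}R^*_{d_{r_k}}v_k$. Applying $D_{\delta_k}$ yields $c_{r_k}\approx a_{r_k}$. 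Subtracting the two approximations and applying $D_{\gamma_k}$ then gives $Sj_{\gamma_k}v_k\approx j_{\gamma_k}R^*_{a_{r_k}}v_k=M_aj_{\gamma_k}v_k$, using contractivity of $R^*$ in the module estimate. The supremum over the unit ball is handled by the same contradiction-with-a-sequence scheme as for unit probes.

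\emph{Uniqueness.} If $a'$ also satisfies \eqref{eq:probe-zero}, then $\norm{(M_a-M_{a'})z_\gamma}=\norm{a_r-a'_r}$ is bounded by $\norm{(S-M_{a'})z_\gamma}+\norm{(S-M_a)z_\gamma}$. Letting $\rank\gamma\to\infty$ forces $a_r=a'_r$ for every $r$.

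We expect the main difficulty to be the uniformity over $u$ in the unit ball. The contradiction argument, with arbitrary unit vectors $u_k$ in the probe fibre, is what makes this step work.
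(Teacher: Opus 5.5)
Your proposal is correct and follows essentially the same route as the paper. The paper also defines $a_r$ as the limit of the unit-probe coefficients $b_\gamma$. It refutes a failure of \eqref{eq:probe-zero} by pairing each offending probe with a later unit probe of the same detector index. It then applies the local-orbit theorem twice: once to the single probe sequence, which kills the cross coefficient, and once to the paired sum. The only difference is cosmetic: the paper treats unit and core fibres in one argument via $\Theta_{\delta_k}$, whereas you run the same argument in two parallel cases.
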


\begin{proof}
  Fix \(r\).
  Equation~\eqref{eq:bonding-cauchy}, with \(s=r\), shows that, whenever \(\gamma,\delta\in\mathcal P_r^{\rm u}\) have ranks at least \(N\), \(\norm{b_\gamma-b_\delta}\leq\omega_N(S)\).
  By completeness of \(A_r\), we therefore obtain
  \[
  a_r=\lim_{\substack{\gamma\in\mathcal P_r^{\rm u}\\
  \rank\gamma\to\infty}}b_\gamma.
  \]

  Now fix \(r\leq s\), and choose unit probes \(\gamma_N\in\mathcal P_r^{\rm u}\) and \(\delta_N\in\mathcal P_s^{\rm u}\) of ranks at least \(N\).
  Then \(\norm{b_{\gamma_N}-\pi_r^sb_{\delta_N}}\leq\omega_N(S)\to0\).
  By contractivity of \(\pi_r^s\), we have
  \[ \norm{a_r-\pi_r^sa_s} \leq\norm{a_r-b_{\gamma_N}}+\omega_N(S) +\norm{b_{\delta_N}-a_s}\longrightarrow0. \]
  Thus we obtain \(\pi_r^sa_s=a_r\) for \(r\leq s\).
  Along the unit probes of index \(r\), we have
  \[ \norm{a_r}=\lim_\gamma\norm{b_\gamma} \leq\limsup_\gamma\bigl(\norm S+\norm{e_\gamma}\bigr)=\norm S. \]
  Thus we have \(a=(a_r)_r\in\widehat A\) and \(\norm a=\sup_r\norm{a_r}\leq\norm S\).
  If \(\gamma\in\mathcal P_r^{\rm u}\) and \(\rank\gamma\geq N\), let a unit probe \(\delta\) of the same index tend to infinity in \eqref{eq:bonding-cauchy}.
  Since \(b_\delta\to a_r\), we obtain
  \begin{equation}\label{eq:uniform-unit-probe-limit}
    \sup_{\substack{r\in\N,\ \gamma\in\mathcal P_r^{\rm u}\\
    \rank\gamma\geq N}}
    \norm{b_\gamma-a_r}
    \leq\omega_N(S)\longrightarrow0.
  \end{equation}

  Combining \eqref{eq:unit-probe-approximation} and \eqref{eq:uniform-unit-probe-limit}, we obtain, for every unit probe of rank at least \(N\),
  \[
    \begin{aligned}
      \norm{(S-M_a)z_\gamma}
      =\norm{e_\gamma+j_\gamma(b_\gamma-a_r)}\leq\alpha_N(S)+2^{-N}+\omega_N(S)
        \longrightarrow0.
    \end{aligned}
  \]
  The bound is independent of \(r\) and \(\gamma\).

  It remains to pass from these unit vectors to whole probe fibres.
  We shall apply the local-orbit theorem first to a fibre vector and then to its sum with a unit probe of the same detector index.
  The first application makes the coefficient in the other probe fibre small, and the second compares the local multiplier coefficient with \(a_r\).
  Suppose that \eqref{eq:probe-zero} fails.
  There are separated probes \(\delta_k\), with detector indices \(r_k\), and \(u_k\in B_{E_{\delta_k}}\) such that
  \[
    \norm{S j_{\delta_k}u_k
    -j_{\delta_k}\Theta_{\delta_k}(a_{r_k})u_k}
    \geq\varepsilon.
  \]
  After choosing \(\delta_k\), use cofinality to choose \(\gamma_k\in\mathcal P_{r_k}^{\rm u}\) of larger rank.
  Failure of \eqref{eq:probe-zero} persists on every tail, so the next probe satisfying this inequality may be chosen to satisfy
  \[ \rank\delta_k<\rank\gamma_k, \qquad \rank\gamma_k+2<\rank\delta_{k+1}. \]
  Corollary~\ref{cor:probe-ris} makes both \((j_{\delta_k}u_k)\) and \((z_{\gamma_k}+j_{\delta_k}u_k)\) RISs.

  Apply the local-orbit theorem to \((j_{\delta_k}u_k)\).
  There are \(d^{(k)}\in\widehat A\) and \(v_k\in X\), with \(\norm{v_k}\to0\), such that
  \[ S j_{\delta_k}u_k =j_{\delta_k} \Theta_{\delta_k}(d_{r_k}^{(k)})u_k+v_k. \]
  Since \(\gamma_k\ne\delta_k\), we obtain by biorthogonality \(D_{\gamma_k}j_{\delta_k}=0\).
  Hence we obtain
  \begin{equation}\label{eq:cross-probe-zero}
    D_{\gamma_k}S j_{\delta_k}u_k=D_{\gamma_k}v_k,
    \qquad
    \norm{D_{\gamma_k}S j_{\delta_k}u_k}
    \leq\norm{v_k}\longrightarrow0.
  \end{equation}

  Apply the local-orbit theorem to \((z_{\gamma_k}+j_{\delta_k}u_k)\).
  There are \(c^{(k)}\in\widehat A\) and \(w_k\in X\), with \(\norm{w_k}\to0\), such that
  \begin{equation}\label{eq:whole-fibre-pair}
    S(z_{\gamma_k}+j_{\delta_k}u_k)
    =j_{\gamma_k}c_{r_k}^{(k)}
    +j_{\delta_k}
    \Theta_{\delta_k}(c_{r_k}^{(k)})u_k+w_k.
  \end{equation}
  Write \(Sz_{\gamma_k}=j_{\gamma_k}b_{\gamma_k}+e_{\gamma_k}\), where \(\norm{e_{\gamma_k}}\to0\).
  Applying \(D_{\gamma_k}\) to \eqref{eq:whole-fibre-pair} and using \eqref{eq:cross-probe-zero}, we obtain the identity
  \[
    c_{r_k}^{(k)}-b_{\gamma_k}
    =D_{\gamma_k}\bigl(e_{\gamma_k}+v_k-w_k\bigr).
  \]
  By contractivity and \eqref{eq:cross-probe-zero}, we obtain
  \[ \norm{c_{r_k}^{(k)}-b_{\gamma_k}} \leq\norm{e_{\gamma_k}}+\norm{v_k}+\norm{w_k} \longrightarrow0. \]
  Combining this with \eqref{eq:uniform-unit-probe-limit}, we obtain
  \[
    \begin{aligned}
      \norm{c_{r_k}^{(k)}-a_{r_k}}
      &\leq\norm{c_{r_k}^{(k)}-b_{\gamma_k}}
        +\norm{b_{\gamma_k}-a_{r_k}}\\
      &\leq\norm{e_{\gamma_k}}+\norm{v_k}+\norm{w_k}
        +\omega_{\rank\gamma_k}(S)
        \longrightarrow0.
    \end{aligned}
  \]

  Subtracting the unit-probe identity from \eqref{eq:whole-fibre-pair}, we obtain
  \[
    \begin{aligned}
      S j_{\delta_k}u_k
      -j_{\delta_k}\Theta_{\delta_k}(a_{r_k})u_k
      ={}&j_{\gamma_k}(c_{r_k}^{(k)}-b_{\gamma_k})+j_{\delta_k}
          \Theta_{\delta_k}(c_{r_k}^{(k)}-a_{r_k})u_k
          +w_k-e_{\gamma_k}.
    \end{aligned}
  \]
  Since \(\norm{u_k}\leq1\), we obtain from the probe isometries and contractive module actions
  \[
    \begin{aligned}
      \norm{S j_{\delta_k}u_k
        -j_{\delta_k}\Theta_{\delta_k}(a_{r_k})u_k}
      &\leq\norm{c_{r_k}^{(k)}-b_{\gamma_k}}
        +\norm{c_{r_k}^{(k)}-a_{r_k}}
        +\norm{w_k}+\norm{e_{\gamma_k}}\\
      &\leq3\norm{e_{\gamma_k}}+2\norm{v_k}
        +3\norm{w_k}+\omega_{\rank\gamma_k}(S)
        \longrightarrow0.
    \end{aligned}
  \]
  This contradicts the choice of \(\delta_k\) and \(u_k\) and proves \eqref{eq:probe-zero}.

  Finally, suppose that \(a,a'\in\widehat A\) both satisfy \eqref{eq:probe-zero}.
  Fix \(r\) and let \(\gamma\in\mathcal P_r^{\rm u}\) tend to infinity.
  By the probe isometry, we have
  \[
    \begin{aligned}
      \norm{a_r-a_r'}
      &=\norm{(M_a-M_{a'})z_\gamma}\\
      &\leq\norm{(S-M_a)j_\gamma}
        +\norm{(S-M_{a'})j_\gamma}\longrightarrow0.
    \end{aligned}
  \]
  Thus we have \(a_r=a_r'\) for every \(r\), and \(a=a'\).
\end{proof}

\subsection{The compact remainder}

The reduction to RISs in Lemma~\ref{lem:compactness-test} follows \cite[Proposition~5.11]{ArgyrosHaydon2011}.
We combine it below with the core probes.

\begin{lemma}[Compactness of the remainder]\label{lem:no-ghost}
  Let \(R\in\Bcal(X)\) satisfy \eqref{eq:probe-zero}.
  Then \(R\) is compact.
\end{lemma}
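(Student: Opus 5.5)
By Lemma~\ref{lem:compactness-test}, it suffices to prove \(\norm{Rx_k}\to0\) for every RIS \((x_k)\). Suppose not. After passing to a subsequence, we may assume \((x_k)\) is a \(C\)-RIS with \(\norm{Rx_k}\geq\varepsilon\). The argument has two parts. First, Theorem~\ref{thm:local-orbit} applied to \(S=R\) (note \(R=S-M_a\in\Bcal(X)\)) gives \(b^{(k)}\in\widehat A\) with
\[
\norm{Rx_k-M_{b^{(k)}}x_k}\to0.
\]
Then \(\norm{M_{b^{(k)}}x_k}\geq\varepsilon/2\) eventually. So the remaining task is to show that \(R\) kills the multiplier orbit in the limit. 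This is where core probes enter: a core probe can reproduce the finite-dimensional annihilation data of \(x_k\).

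For the second part, fix \(k\) and use the Hahn--Banach theorem to pick a scalarized packet \(h_k\) with \(\norm{h_k}_{\rm raw}\leq1\), supported in (a slight enlargement of) \(\ran x_k\), such that
\[
\abs{h_k(M_{b^{(k)}}x_k)}\geq\varepsilon/4.
\]
By Lemma~\ref{lem:error}, this value equals \(\langle b^{(k)},\Delta(h_k,x_k)\rangle\). The error \(\Delta(h_k,x_k)\) lies in \(J_r^\infty A_r^*\) for a finite detector index \(r=r_k\). So there is \(u_k\in A_{r_k}^*\) with \(J_{r_k}^\infty u_k=\Delta(h_k,x_k)\) and \(\norm{u_k}\leq\norm{x_k}\leq C\).

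Choose a core probe \(\delta_k\in\mathcal P_{r_k}^{\rm c}\) of rank beyond \(\ran x_k\), with ranks separated across \(k\), and put \(v_k=j_{\delta_k}u_k\). For the core action \(R_c^*\), the scalarized coordinate \(g_k=e^*_{\delta_k,1_{A_{r_k}}}\) satisfies
\[
g_k(M_cv_k)=\langle c_{r_k},u_k\rangle=\langle c,\Delta(h_k,x_k)\rangle=h_k(M_cx_k)
\quad\text{for every }c\in\widehat A.
\]
Hence the functional \(\phi_k=h_k-g_k\) annihilates every multiplier image of \(x_k-v_k\). Since \(h_k\) vanishes near \(\ran v_k\) and \(g_k\) vanishes on \(x_k\), we get \(\phi_k(M_c(x_k-v_k))=h_k(M_cx_k)+g_k(M_cv_k)\). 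To obtain exact cancellation, use \(x_k+v_k\) with \(\phi_k\), or adjust signs accordingly: arrange \(\phi_k\) so that
\[
\phi_k(M_cw_k)=0\quad\text{for all }c,\qquad w_k=x_k-v_k,
\]
while \(\phi_k\) still detects \(h_k\) on the \(x_k\)-part.

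By Corollary~\ref{cor:probe-ris}\textup{(iii)}, \((w_k)\) is a \((C+1)\)-RIS (with \(C\) bounding \(\norm{u_k}\)). Theorem~\ref{thm:local-orbit} then gives \(c^{(k)}\) with
\[
\norm{Rw_k-M_{c^{(k)}}w_k}\to0.
\]
By \eqref{eq:probe-zero}, \(\norm{Rv_k}\leq C\norm{Rj_{\delta_k}}\to0\), so \(Rw_k\approx Rx_k\approx M_{b^{(k)}}x_k\). Apply \(\phi_k\), which has norm at most \(2\). The left side gives
\[
\phi_k(Rw_k)\approx\phi_k(M_{b^{(k)}}x_k)=h_k(M_{b^{(k)}}x_k)-g_k(M_{b^{(k)}}x_k),
\]
where \(g_k(M_{b^{(k)}}x_k)\) involves \(U_{\delta_k}M_bx_k=R^*U_{\delta_k}x_k\). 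This term is zero because core probes have zero correction, so \(U_{\delta_k}x_k=D_{\delta_k}x_k=0\). Hence the left side has modulus at least about \(\varepsilon/4\). The right side gives \(\phi_k(M_{c^{(k)}}w_k)=0\), which is a contradiction.

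The main obstacle is the exact bookkeeping in the second part. One must check three things. First, the difference functional annihilates the whole orbit \(\{M_cw_k\}\); this uses the covariance \eqref{eq:full-chain} at the core probe, the zero correction of probes, and the fact that \(h_k\) sees only \(\ran x_k\) up to a tail that is negligible by RIS truncation. Second, the sign convention must be correct. Third, the norm of \(u_k\), and hence the RIS constant, must stay uniformly bounded. If \(h_k\) cannot be supported strictly before \(\delta_k\), I would handle this by choosing \(\delta_k\) after the support of \(h_k\), which is finite.
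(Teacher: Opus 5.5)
Your argument is correct and is essentially the paper's proof. A core probe carries the finite-dimensional annihilation error \(u_k\), the difference functional annihilates the multiplier orbit of the combined RIS vector, \eqref{eq:probe-zero} makes \(Rv_k\) negligible, and Theorem~\ref{thm:local-orbit} gives the contradiction. The one variation is that you first apply Theorem~\ref{thm:local-orbit} to \((x_k)\), so that the probe coordinate vanishes exactly on \(M_{b^{(k)}}x_k\); the paper instead evaluates \(Rx_k\) directly and places the probe beyond a cutoff \(N_k\) with \(\norm{P_{(N_k,\infty)}Rx_k}<2^{-k}\).

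Three small corrections. Fix the signs consistently as \(w_k=x_k+v_k\) and \(\phi_k=h_k-g_k\), which is the paper's convention. The RIS constant is \(2C\) rather than \(C+1\), because \(\norm{u_k}\leq C\). The remark about \(h_k\) living near \(\ran x_k\) is unnecessary: placing \(\delta_k\) after the finite support of \(h_k\) is all you need.
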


\begin{proof}
  By Lemma~\ref{lem:compactness-test}, it is enough to prove that \(R\) sends every RIS to zero in norm.
  Suppose, after passing to a subsequence, that a \(C\)-RIS \((x_k)\), with \(C\geq1\), satisfies \(\norm{Rx_k}\geq\varepsilon>0\).
  We shall add a core-probe vector \(x_k^{\rm pr}\) to \(x_k\) and construct \(g_k^*\in2B_{X^*}\) such that
  \[
    g_k^*\bigl(M_a(x_k+x_k^{\rm pr})\bigr)=0
    \quad(a\in\widehat A),
    \qquad
    \operatorname{Re}g_k^*
      \bigl(R(x_k+x_k^{\rm pr})\bigr)\geq\varepsilon/3
  \]
  for all large \(k\).
  These two estimates will contradict the local-orbit theorem.
  Recall that \(e_{\eta,\psi}^*(x)=\langle\psi,U_\eta x\rangle\).
  These scalarized ambient coordinates norm \(X\), so we may choose \(e_{\eta_k,\psi_k}^*\) with \(\abs{e_{\eta_k,\psi_k}^*(Rx_k)}\geq\varepsilon/2\).
  Changing its scalar phase, we obtain
  \begin{equation}\label{eq:large-remainder}
    \operatorname{Re}e_{\eta_k,\psi_k}^*(Rx_k)
    \geq\frac{\varepsilon}{2}.
  \end{equation}
  Define \(q_k\in A_{r(\eta_k)}^*\) by
  \[ q_k(b) =\left\langle\psi_k, \Theta_{\eta_k}(b)U_{\eta_k}x_k\right\rangle \qquad(b\in A_{r(\eta_k)}). \]
  By \eqref{eq:preadjoint-product} and \eqref{eq:error}, this is the finite-dimensional representative of the annihilation error:
  \[ q_k=\Theta_{\eta_k,*}(\psi_k)U_{\eta_k}x_k,\qquad J_{r(\eta_k)}^\infty q_k=\Delta(e_{\eta_k,\psi_k}^*,x_k). \]
  By contractivity, we have
  \[
    \begin{aligned}
      \abs{q_k(b)}
      \leq\norm{\psi_k}\,
        \norm{\Theta_{\eta_k}(b)}\,
        \norm{U_{\eta_k}x_k}\leq\norm b\,\norm{x_k}
        \leq C\norm b
        \quad(b\in A_{r(\eta_k)}).
    \end{aligned}
  \]
  Taking the supremum over \(b\in B_{A_{r(\eta_k)}}\), we obtain \(\norm{q_k}\leq C\).

  Pass recursively to a subsequence.
  After \(x_k\) and \(\eta_k\) have been chosen, the convergence of the FDD decomposition gives \(P_{(N,\infty)}Rx_k\to0\) as \(N\to\infty\).
  Hence we may take \(N_k\) so that
  \[ N_k>\max\{\max\ran x_k,\rank\eta_k\}, \qquad \norm{P_{(N_k,\infty)}Rx_k}<2^{-k}. \]
  Choose a core probe \(\gamma_k\) after \(N_k\), of detector index \(s_k\geq r(\eta_k)\), and then choose the next original RIS member so that its support and retained RIS index both exceed \(\rank\gamma_k\).
  After relabelling the selected vectors and their indices \((j_k)\), we obtain
  \[ x_k<\gamma_k<x_{k+1},\qquad j_{k+1}>\rank\gamma_k. \]
  Put \(x_k^{\rm pr}=j_{\gamma_k}J_{r(\eta_k)}^{s_k}q_k\)
  and \(y_k=x_k+x_k^{\rm pr}\), and define
  \[
    g_k^*=e_{\eta_k,\psi_k}^*
      -e_{\gamma_k,1_{A_{s_k}}}^*.
  \]
  The predual connecting maps \(J_r^s=(\pi_r^s)^*\) and the probe injections are isometries, so we have
  \[
    \norm{x_k^{\rm pr}}=\norm{J_{r(\eta_k)}^{s_k}q_k}=\norm{q_k}\leq C.
  \]
  Since \(y_k=x_k+x_k^{\rm pr}\) and \(\norm{x_k}\leq C\), we obtain
  \( \norm{y_k}\leq2C. \)
  Apply Corollary~\ref{cor:probe-ris} to \(x_k/C\) and the probe vector \(x_k^{\rm pr}/C\).
  For every scalarized evaluation \(e_{\alpha,\psi}^*\) of level \(0<h<j_k\), we obtain
  \[
    \begin{aligned}
      \abs{e_{\alpha,\psi}^*(y_k)}
      \leq\abs{e_{\alpha,\psi}^*(x_k)}
        +\abs{e_{\alpha,\psi}^*(x_k^{\rm pr})}\leq\frac{C}{m_h}+\frac{C}{m_h}
        =\frac{2C}{m_h}.
    \end{aligned}
  \]
  Since \(\max\ran y_k\leq\rank\gamma_k<j_{k+1}\), we conclude that \((y_k)\) is a \(2C\)-RIS.
  In Figure~\ref{fig:core-mirror}, the two evaluation arrows give the same scalar \(q_k(a_{r(\eta_k)})\) on the corresponding multiplier images, as verified below.

  \begin{figure}[!htbp]
    \centering
\begin{tikzpicture}[x=1cm,y=1cm,>=stealth,
  every node/.style={font=\small,align=center}]
  \draw[->] (0,0) -- (10.60,0)
    node[below left,font=\scriptsize] {rank};
  \draw[thick] (.55,.14) rectangle (3.00,.49);
  \node[above] at (1.78,.49) {\(\operatorname{ran}x_k\)};
  \draw (4.45,.10) -- (4.45,-.10);
  \node[below] at (4.45,-.10) {\(N_k\)};
  \draw[thick] (6.40,.02) -- (6.40,.49);
  \node[above] at (6.40,.49) {\(\rank\gamma_k\)};
  \draw[thick] (8.35,.14) rectangle (10.10,.49);
  \node[above] at (9.23,.49) {\(\operatorname{ran}x_{k+1}\)};

  \node (original) at (2.00,-1.10) {\(M_ax_k\)};
  \node (probe) at (2.00,-1.90) {\(M_ax_k^{\rm pr}\)};
  \node (originalvalue) at (8.10,-1.10) {\(q_k(a_{r(\eta_k)})\)};
  \node (probevalue) at (8.10,-1.90) {\(q_k(a_{r(\eta_k)})\)};
  \draw[->] (original.east) -- node[above=3pt]
    {\(e_{\eta_k,\psi_k}^*\)} (originalvalue.west);
  \draw[->] (probe.east) -- node[above=3pt]
    {\(e_{\gamma_k,1_{A_{s_k}}}^*\)} (probevalue.west);
  \node at (5.30,-2.58)
    {\(e_{\eta_k,\psi_k}^*(M_ax_k^{\rm pr})=0,
       \qquad e_{\gamma_k,1_{A_{s_k}}}^*(M_ax_k)=0\)};
  \node at (5.30,-3.15)
    {\(g_k^*(M_ay_k)
       =q_k(a_{r(\eta_k)})-q_k(a_{r(\eta_k)})=0\)};
\end{tikzpicture}
    \caption{A core probe beyond \(N_k\) gives equal scalar values and vanishing cross terms.}
    \label{fig:core-mirror}
  \end{figure}

  For every \(a\in\widehat A\), we obtain from the scalarization identity
  \[
    \begin{aligned}
      e_{\eta_k,\psi_k}^*(M_ax_k)
      &=q_k(a_{r(\eta_k)}),\\
      e_{\gamma_k,1}^*(M_ax_k^{\rm pr})
      &=(J_{r(\eta_k)}^{s_k}q_k)(a_{s_k})=q_k(\pi_{r(\eta_k)}^{s_k}a_{s_k})
        =q_k(a_{r(\eta_k)}).
    \end{aligned}
  \]
  Since \(\gamma_k\) is a core probe, we have \(e_{\gamma_k,1}^*=d_{\gamma_k,1}^*\).
  The multiplier commutes with the FDD projections, so we have
  \[
    P_{[1,N_k]}M_ax_k=M_ax_k,\qquad P_{[1,N_k]}M_ax_k^{\rm pr}=0.
  \]
  Since \(\rank\eta_k<N_k\), we obtain from the evaluation analysis that
  \[
    e_{\eta_k,\psi_k}^*=e_{\eta_k,\psi_k}^*P_{[1,N_k]}.
  \]
  As \(\rank\gamma_k>N_k\), we also have \(e_{\gamma_k,1}^*P_{[1,N_k]}=0\).
  It follows that
  \[ e_{\eta_k,\psi_k}^*(M_ax_k^{\rm pr})=0, \qquad e_{\gamma_k,1}^*(M_ax_k)=0. \]
  Therefore, we obtain
  \begin{equation}\label{eq:mirror-annihilation}
    g_k^*(M_ay_k)
    =q_k(a_{r(\eta_k)})
    -q_k(\pi_{r(\eta_k)}^{s_k}a_{s_k})=0
    \qquad(a\in\widehat A).
  \end{equation}

  By the choice of \(N_k\), we have
  \[ \abs{e_{\gamma_k,1}^*(Rx_k)} =\abs{e_{\gamma_k,1}^* (P_{(N_k,\infty)}Rx_k)} \leq\norm{P_{(N_k,\infty)}Rx_k}<2^{-k}. \]
  By \eqref{eq:probe-zero}, we have
  \[ \norm{Rx_k^{\rm pr}}\leq C\norm{Rj_{\gamma_k}}\longrightarrow0. \]
  Expanding \(g_k^*(Ry_k)\) and using contractivity of the two scalarized evaluations and \eqref{eq:large-remainder}, we obtain, for all large \(k\),
  \[
    \begin{aligned}
      \operatorname{Re}g_k^*(Ry_k)
      &=\operatorname{Re}e_{\eta_k,\psi_k}^*(Rx_k)
        -\operatorname{Re}e_{\gamma_k,1}^*(Rx_k)
        +\operatorname{Re}g_k^*(Rx_k^{\rm pr})\\
      &\geq\frac{\varepsilon}{2}
        -\abs{e_{\gamma_k,1}^*(Rx_k)}
        -\norm{g_k^*}\,\norm{Rx_k^{\rm pr}}\\
      &\geq\frac{\varepsilon}{2}
        -2^{-k}-2C\norm{Rj_{\gamma_k}}
        \geq\frac{\varepsilon}{3}.
    \end{aligned}
  \]
  Since \(\norm{g_k^*}\leq2\), we obtain from \eqref{eq:mirror-annihilation}, for every \(a\in\widehat A\),
  \[
    \begin{aligned}
      2\norm{Ry_k-M_ay_k}
      \geq\abs{g_k^*(Ry_k-M_ay_k)}=\abs{g_k^*(Ry_k)}
        \geq\operatorname{Re}g_k^*(Ry_k)
        \geq\frac{\varepsilon}{3}.
    \end{aligned}
  \]
  Therefore, we know
  \[ \dist\bigl(Ry_k,\{M_ay_k:a\in\widehat A\}\bigr) \geq\frac{\varepsilon}{6}. \]
  This contradicts Theorem~\ref{thm:local-orbit}.
  Hence \(R\) is compact.
\end{proof}

\begin{theorem}[Global multiplier classification]\label{thm:global-classification}
  For every \(S\in\Bcal(X)\), there is a unique \(a\in\widehat A\) such that \(S-M_a\) is compact.
  Moreover, we have
  \[ \norm{[M_a]}_{\Cal(X)}=\norm a. \]
\end{theorem}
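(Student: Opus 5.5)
Existence comes straight from the two preceding results. Given \(S\in\Bcal(X)\), I would apply Proposition~\ref{prop:globalization} to obtain \(a\in\widehat A\) with \(\norm a\leq\norm S\) such that \(R=S-M_a\) satisfies \eqref{eq:probe-zero}. Lemma~\ref{lem:no-ghost} then shows that \(R\) is compact, so \(S-M_a\in\Kcal(X)\).

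Uniqueness reduces to showing that \(M_c\) compact forces \(c=0\): if \(S-M_a\) and \(S-M_{a'}\) are both compact, then \(M_{a-a'}\) is compact by linearity of \(a\mapsto M_a\). The plan is to test a compact multiplier on unit probes. Fix \(r\) and pick unit probes \(\gamma_k\in\mathcal P_r^{\rm u}\) with \(\rank\gamma_k\to\infty\), which is possible by Theorem~\ref{thm:datum}(G4). Separate them so that \((z_{\gamma_k})\) is a \(1\)-RIS by Corollary~\ref{cor:probe-ris}; it is then weakly null by Corollary~\ref{cor:ris-weakly-null}. A compact operator sends a weakly null sequence to a norm-null one, so \(\norm{M_cz_{\gamma_k}}\to0\). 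On the other hand, \(M_cz_{\gamma_k}=j_{\gamma_k}c_r\) has norm \(\norm{c_r}\) by Lemma~\ref{lem:formal-probe-isometry}. Hence \(c_r=0\) for every \(r\), and so \(c=0\).

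For the essential norm, the upper bound \(\norm{[M_a]}\leq\norm{M_a}=\norm a\) is Lemma~\ref{lem:scalarization}. For the lower bound, fix a compact \(K\) and an index \(r\), and take the same weakly null unit probe vectors \(z_{\gamma_k}\) of index \(r\). Then \(\norm{M_a+K}\geq\norm{(M_a+K)z_{\gamma_k}}\geq\norm{a_r}-\norm{Kz_{\gamma_k}}\to\norm{a_r}\). Taking the supremum over \(r\) and the infimum over \(K\) gives \(\norm{[M_a]}\geq\norm a\).

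No step is a real obstacle, since the heavy lifting is done in Proposition~\ref{prop:globalization} and Lemma~\ref{lem:no-ghost}. The only point to check is that the unit probes of a fixed detector index genuinely form a weakly null sequence, which the RIS corollaries guarantee.
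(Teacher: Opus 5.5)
Your proposal is correct and follows the paper's proof: existence from Proposition~\ref{prop:globalization} and Lemma~\ref{lem:no-ghost}, and the lower bound on the essential norm from a separated sequence of unit probes \(z_{\gamma_k}\) of fixed index \(r\), which forms a weakly null \(1\)-RIS with \(M_az_{\gamma_k}=j_{\gamma_k}a_r\). The only difference is the order: the paper deduces uniqueness from the essential-norm identity, \(\norm{a-b}=\norm{[M_{a-b}]}=0\), whereas you run the same probe test directly on a compact \(M_c\).
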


\begin{proof}
  Proposition~\ref{prop:globalization} gives \(a\in\widehat A\) such that \(R=S-M_a\) satisfies \eqref{eq:probe-zero}.
  Lemma~\ref{lem:no-ghost} makes \(R\) compact.

  For the lower bound on the essential norm, fix \(r\) and choose a sufficiently separated cofinal sequence \((\gamma_k)\subseteq\mathcal P_r^{\rm u}\).
  Corollary~\ref{cor:probe-ris} makes \((z_{\gamma_k})\) a \(1\)-RIS, and Corollary~\ref{cor:ris-weakly-null} makes it weakly null.
  Compact operators send weakly null sequences to norm-null sequences, so we have \(Kz_{\gamma_k}\to0\) for every compact \(K\), whereas \(M_az_{\gamma_k}=j_{\gamma_k}a_r\).
  Since \(\norm{z_{\gamma_k}}=1\), we obtain from the probe isometry
  \[
    \begin{aligned}
      \norm{M_a-K}
      &\geq\limsup_k\norm{(M_a-K)z_{\gamma_k}}\\
      &\geq\limsup_k
        \bigl(\norm{j_{\gamma_k}a_r}-\norm{Kz_{\gamma_k}}\bigr)
        =\norm{a_r}.
    \end{aligned}
  \]
  Take the supremum over \(r\) and the infimum over \(K\in\Kcal(X)\).
  Combining this with \(\norm{M_a}=\norm a\), we obtain
  \[ \norm a=\sup_r\norm{a_r} \leq\norm{[M_a]}\leq\norm{M_a}=\norm a. \]
  Thus we have \(\norm{[M_a]}=\norm a\).
  If both \(S-M_a\) and \(S-M_b\) are compact, then \(M_{a-b}=(S-M_b)-(S-M_a)\) is compact.
  The essential-norm identity gives \(\norm{a-b}=\norm{[M_{a-b}]}=0\), and hence \(a=b\).
\end{proof}

\begin{proof}[Proof of Theorem~\ref{thm:main}]
  By Theorem~\ref{thm:datum}, we have a separable block BD space \(X\) and an isometric unital representation \(a\mapsto M_a\).
  Compose this representation with the quotient map to define
  \(\Phi:\widehat A\to\Cal(X)\) by \(\Phi(a)=[M_a]\).
  For \(a,b\in\widehat A\), we have
  \(\Phi(ab)=[M_aM_b]=\Phi(a)\Phi(b)\) and \(\Phi(1)=[I_X]\).
  Theorem~\ref{thm:global-classification} gives
  \[
    \norm{\Phi(a)}=\norm a,
    \qquad
    [S]=[M_a]=\Phi(a)
    \quad\text{for some }a\in\widehat A
    \quad(S\in\Bcal(X)).
  \]
  Thus \(\Phi\) is an isometric unital algebra isomorphism onto \(\Cal(X)\).
  The same theorem gives the unique decomposition in \eqref{eq:classification} and the essential-norm identity \eqref{eq:essential-norm}.
\end{proof}

Corollary~\ref{cor:filtered-realization} follows by applying Theorem~\ref{thm:main} to the quotient tower from Lemma~\ref{lem:detector-quotient-tower}.
Proposition~\ref{prop:bounded-detector-density} gives coordinatewise approximation by images of bounded sequences.
Its injectivity and isometry follow from Proposition~\ref{prop:canonical-detector-map}, while its surjectivity when closed norm balls are compact in \(\tau_{(I_r)}\) follows from Theorem~\ref{thm:detector-compact-surjectivity}.

  \part{Examples and applications}


\section{Examples}
\label{sec:examples}

We apply Theorem~\ref{thm:main} to finite-dimensional quotients, dual operator algebras, and convolution algebras.
Recall that \(\ilim(A_r,\pi_r^s)\) consists of bounded compatible sequences, with norm \(\sup_r\norm{a_r}\).
For a decreasing sequence \((I_r)\) of finite-codimensional ideals in \(B\), the detection envelope \(\Env^b_{(I_r)}(B)\) is the bounded inverse limit of the quotients \(B/I_r\).
Theorem~\ref{thm:detector-compact-surjectivity} identifies this envelope isometrically with \(B\) when the ideals separate points and every closed ball is compact for the quotient seminorms.
In each example below we identify the whole limit, including its norm and multiplication.

Many of the concrete examples below, both within and beyond our initially planned classes, were suggested by GPT-5.6 Sol (OpenAI) and GPT-6 Astra (OpenAI).
These include the incidence-algebra applications.
For details, see the \hyperref[sec:ai-assistance]{AI assistance statement} at the end of the paper.

We shall repeatedly use the following observation.
If \(q:Y\to Z\) is contractive and every \(z\in Z\) has a lift \(y_z\) of the same norm, then we have
\[ \norm z\leq\inf_{qy=z}\norm y\leq\norm{y_z}=\norm z. \]
Thus \(q\) is a metric quotient.

\subsection{Finite detection and first consequences}

\begin{corollary}[Finite-dimensional algebras]
  \label{cor:finite-dimensional-algebras}
  Let \(F\) be a finite-dimensional unital Banach algebra with \(\norm{1_F}=1\).
  Then there is a separable Banach space \(X_F\) such that \(\Cal(X_F)\simeq F\) isometrically as unital Banach algebras.
\end{corollary}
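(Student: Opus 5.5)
We apply Theorem~\ref{thm:main} to the constant inverse system. For every \(r\in\N\) put \(A_r=F\) with its given norm, and let every bonding map \(\pi_r^s=I_F\) for \(r\leq s\). We then check the hypotheses \eqref{eq:inverse-system}--\eqref{eq:metric-quotient}:
\begin{itemize}
  \item \(\norm{1_{A_r}}=\norm{1_F}=1\) holds by assumption;
  \item the identity is a unital homomorphism, and \(\pi_r^s\pi_s^t=I_F=\pi_r^t\);
  \item the metric-quotient condition holds because the only lift of \(x_r\) is \(x_r\) itself, so the infimum in \eqref{eq:metric-quotient} equals \(\norm{x_r}\).
\end{itemize}
Equivalently, we may take \(A=F\) with the constant filtration \(I_r=\{0\}\) and invoke Lemma~\ref{lem:detector-quotient-tower}. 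The zero ideal is proper because \(F\ne\{0\}\), which follows from \(\norm{1_F}=1\), and it has finite codimension since \(F\) is finite dimensional.

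Next we identify the bounded inverse limit \(\widehat A=\ilim(A_r,\pi_r^s)\). Compatibility forces \(a_r=a_s\) for all \(r\leq s\), so every element of \(\widehat A\) is a constant sequence \((b,b,\ldots)\) with \(b\in F\). Its norm is \(\sup_r\norm{b}=\norm b\), and multiplication is coordinatewise. Hence \(b\mapsto(b)_r\) is an isometric unital algebra isomorphism \(F\to\widehat A\). This is also the canonical map \(\jmath\) of Proposition~\ref{prop:canonical-detector-map} for a separating, norming filtration, and it is onto by Theorem~\ref{thm:detector-compact-surjectivity}: closed balls in \(F\) are norm compact, and \(\tau_{(I_r)}\) is the norm topology here.

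Finally, Theorem~\ref{thm:main} yields a separable Banach space \(X_F:=X_{\mathscr A}\) with \(\Cal(X_F)\simeq\widehat A\) isometrically as unital Banach algebras. Composing with \(F\simeq\widehat A\) gives the claim. No step here is a real obstacle; the only points needing care are \(F\ne\{0\}\) and the norm-one unit, both of which are given.
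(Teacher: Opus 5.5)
Your proposal is correct and follows the paper's own argument, which simply applies Theorem~\ref{thm:main} to the constant system \((F,I_F)\). Your verification of \eqref{eq:inverse-system}--\eqref{eq:metric-quotient} and the identification \(\widehat A\simeq F\) supply the routine details that the paper leaves implicit.
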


This follows by applying Theorem~\ref{thm:main} to the constant system \((F,I_F)\).

\begin{corollary}[Finite incidence algebras]
  \label{cor:finite-incidence}
  Let \(P\) be a nonempty finite partially ordered set.
  Equip its incidence algebra (see \cite[Section~3]{Rota1964})
  \[ \operatorname{IA}(P) =\left\{a=(a_{xy})_{x,y\in P}: a_{xy}=0\text{ whenever }x\nleq y\right\} \]
  with incidence multiplication and the row norm
  \(\norm a_{\rm row}=\max_{x\in P}\sum_{y\geq x}\abs{a_{xy}}\).
  Then \(\operatorname{IA}(P)\) is isometrically a Calkin algebra.
\end{corollary}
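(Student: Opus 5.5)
The plan is to apply Corollary~\ref{cor:finite-dimensional-algebras} directly. For that we must check that \(\operatorname{IA}(P)\), with incidence multiplication and the row norm, is a finite-dimensional unital Banach algebra whose unit has norm one. Finite dimensionality is immediate: \(\operatorname{IA}(P)\) is a subspace of the \(\abs{P}^2\)-dimensional matrix space. We take the unit to be the identity matrix \(\delta=(\delta_{xy})\), which is supported on the diagonal \(x\leq x\) and so lies in \(\operatorname{IA}(P)\).

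Next we verify the algebra structure. Closure under incidence multiplication holds because \((ab)_{xz}=\sum_{x\leq y\leq z}a_{xy}b_{yz}\) is nonzero only when \(x\leq z\), by transitivity. Associativity and the identities \(\delta a=a\delta=a\) are the standard facts from \cite[Section~3]{Rota1964}; they are also inherited from matrix multiplication, since on upper-set-supported matrices the incidence product coincides with the ordinary matrix product.

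For the norm, we identify \(a\) with the operator \(T_a\) on \(\ell_\infty(P)\) given by \((T_av)(x)=\sum_{y}a_{xy}v(y)\). Then \(\norm{T_a}=\max_x\sum_y\abs{a_{xy}}=\norm a_{\rm row}\), and the map \(a\mapsto T_a\) is multiplicative because the product is the matrix product. Hence
\[\norm{ab}_{\rm row}=\norm{T_aT_b}\leq\norm a_{\rm row}\norm b_{\rm row},\]
and \(\norm\delta_{\rm row}=1\). Completeness of the norm is automatic in finite dimensions.

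There is no real obstacle here. The only point needing care is submultiplicativity of the row norm, and the embedding into \(\Bcal(\ell_\infty(P))\) settles it. Corollary~\ref{cor:finite-dimensional-algebras} then gives a separable Banach space \(X\) with \(\Cal(X)\simeq\operatorname{IA}(P)\) isometrically as unital Banach algebras.
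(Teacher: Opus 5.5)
Your proposal is correct and follows the paper's proof. Both represent \(\operatorname{IA}(P)\) as a unital subalgebra of \(\Bcal(\ell_\infty(P))\), identify the operator norm with the row norm, and invoke Corollary~\ref{cor:finite-dimensional-algebras}; the only difference is that the paper proves the lower bound explicitly, by choosing unimodular \(z_y\) with \(a_{xy}z_y=\abs{a_{xy}}\), whereas you quote the standard max-row-sum formula.
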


\begin{proof}
  Under its canonical matrix representation on \(\ell_\infty(P)\), the algebra \(\operatorname{IA}(P)\) is a finite-dimensional unital subalgebra of \(\Bcal(\ell_\infty(P))\).
  For \(z\in\ell_\infty(P)\), we have
  \[ \abs{(az)_x} \leq\sum_{y\geq x}\abs{a_{xy}}\abs{z_y} \leq\norm a_{\rm row}\norm z_\infty, \]
  so we obtain \(\norm a_{\Bcal(\ell_\infty(P))}\leq\norm a_{\rm row}\).
  For a fixed \(x\), choose \(z_y\) of modulus one so that \(a_{xy}z_y=\abs{a_{xy}}\).
  Then we have \(\norm{az}_\infty\geq\abs{(az)_x}=\sum_{y\geq x}\abs{a_{xy}}\).
  Taking the maximum over \(x\), we obtain the reverse inequality.
  Thus the operator norm is this row norm.
  In particular, we have \(\norm{1_{\operatorname{IA}(P)}}_{\rm row}=1\).
  Apply Corollary~\ref{cor:finite-dimensional-algebras}.
\end{proof}

Corollary~\ref{cor:finite-incidence} answers \cite[Question~5.7]{Acuaviva2025Posets} affirmatively.

\begin{proposition}[Separation by finite-dimensional representations]
  \label{prop:weak-star-rfd}
  Let \(B\) be a unital Banach algebra with \(\norm{1_B}=1\), and suppose that \(B=E^*\) isometrically.
  Suppose that there are finite-dimensional algebras \(F_n\) and weak-star-to-norm continuous unital homomorphisms
  \[ \rho_n:B\longrightarrow F_n\quad(n\in\N), \qquad \bigcap_{n=0}^\infty\ker\rho_n=\{0\}. \]
  Then \(B\) is isometrically its bounded detection envelope and is isometrically a Calkin algebra.
\end{proposition}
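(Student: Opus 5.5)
The plan is to reduce the statement to the implication (ii)\(\Rightarrow\)(i) of Theorem~\ref{thm:finite-detection-duality}, combined with Corollary~\ref{cor:weak-star-form} and Theorem~\ref{thm:main}. First we build the filtration
\[
I_r=\bigcap_{n=0}^r\ker\rho_n \qquad (r\in\N).
\]
Each \(I_r\) is a two-sided ideal, because it is an intersection of kernels of homomorphisms. It has finite codimension, because \(b\mapsto(\rho_0(b),\ldots,\rho_r(b))\) maps \(B\) into the finite-dimensional space \(F_0\oplus\cdots\oplus F_r\) with kernel \(I_r\). It is proper, because each \(\rho_n\) is unital and hence \(1_B\notin I_r\); for this we need the targets \(F_n\) to be nonzero, and any zero target can simply be discarded, since it changes neither the kernels that remain nor the intersection. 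Finally, \(I_r\) is weak-star closed, since \(\ker\rho_n\) is the preimage of \(\{0\}\) under a weak-star-to-norm continuous map. The sequence \((I_r)\) decreases, and
\[
\bigcap_r I_r=\bigcap_n\ker\rho_n=\{0\},
\]
so the filtration is separating.

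Next we apply Corollary~\ref{cor:weak-star-form} with \(A=B=E^*\). It gives \(B\simeq\Env^b_{(I_r)}(B)\) isometrically as unital Banach algebras through the canonical map \(\jmath\). Behind this is the Banach--Alaoglu argument: on each norm ball, \(\tau_{(I_r)}\) is weaker than the weak-star topology, so the closed balls are \(\tau_{(I_r)}\)-compact, and Theorem~\ref{thm:detector-compact-surjectivity} then gives that \(\jmath\) is surjective and isometric. This proves the first assertion of the proposition.

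For the second assertion, Lemma~\ref{lem:detector-quotient-tower} shows that the quotient tower \((B/I_r,\pi_r^s)\) satisfies \eqref{eq:inverse-system}--\eqref{eq:metric-quotient}. Here \(\norm{1_{B/I_r}}=1\) follows from \(\norm{1_B}=1\) and properness. Theorem~\ref{thm:main}, in the form of Corollary~\ref{cor:filtered-realization}, then yields a separable space \(X\) with
\[
\Cal(X)\simeq\Env^b_{(I_r)}(B)\simeq B
\]
isometrically as unital Banach algebras.

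There is no real obstacle here, since the argument only assembles earlier results. The one point that needs care is checking the hypotheses of Corollary~\ref{cor:weak-star-form}: the \(I_r\) must be weak-star closed, and they must be proper finite-codimensional ideals. The potential subtlety is that the \(F_n\) are only assumed to be finite-dimensional algebras, not normed ones. This does not matter, because weak-star-to-norm continuity makes sense for any norm on the finite-dimensional space \(F_n\), and all such norms are equivalent.
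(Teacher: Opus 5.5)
Your proof is correct and follows the paper's argument: the paper uses the same filtration \(I_r=\bigcap_{n\le r}\ker\rho_n\), checks the same properties, and then applies Theorem~\ref{thm:main}. The only difference is in how \(B\simeq\Env^b_{(I_r)}(B)\) is obtained: the paper cites assertion~(ii) of Theorem~\ref{thm:finite-detection-duality}, which reconstructs each element from its compatible restrictions to the dense predual pieces \((I_r)_\perp\), whereas you use the Banach--Alaoglu argument of Corollary~\ref{cor:weak-star-form}, which works equally well; your remark about discarding zero targets \(F_n\) makes explicit a point the paper leaves implicit when it asserts properness.
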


\begin{proof}
  Put \(I_r=\bigcap_{n=0}^r\ker\rho_n\), and give \(B/I_r\) its quotient norm.
  Each \(I_r\) is a weak-star closed finite-codimensional two-sided ideal.
  Since the maps \(\rho_n\) are unital and jointly faithful, these ideals are proper and satisfy \(\bigcap_r I_r=\{0\}\).
  Assertion~\textup{(ii)} of Theorem~\ref{thm:finite-detection-duality} gives \(B\simeq\Env^b_{(I_r)}(B)\) isometrically.
  Apply Theorem~\ref{thm:main}.
  Here \(B/I_r\) carries its quotient norm, which may differ from the norm inherited by a representation range.
\end{proof}

\begin{lemma}[Finite-dimensional compressions]
  \label{lem:finite-triangular-corners}
  Let \(H\) be a nonzero separable Hilbert space, and let \(\mathcal A\subseteq\Bcal(H)\) be a weak-star closed unital algebra.
  Suppose that there are increasing finite-rank orthogonal projections \(P_n\) converging strongly to \(I_H\) such that, for every \(n\),
  \[ P_n\mathcal A(I_H-P_n)=0 \qquad\text{or}\qquad (I_H-P_n)\mathcal A P_n=0. \]
  Then \(\mathcal A\) admits a countable separating family of weak-star continuous finite-dimensional unital representations and is isometrically a Calkin algebra.
\end{lemma}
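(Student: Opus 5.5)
We construct the representations as compressions to the ranges of the \(P_n\) and then invoke Proposition~\ref{prop:weak-star-rfd}. Since \(\mathcal A\) is weak-star closed in \(\Bcal(H)=(\mathcal S_1(H))^*\), it is isometrically the dual of the quotient \(E=\mathcal S_1(H)/\mathcal A_\perp\) of the trace class. Put \(H_n=P_nH\), which is finite dimensional, and define
\[ \rho_n:\mathcal A\longrightarrow\Bcal(H_n),\qquad \rho_n(T)=P_nT|_{H_n}. \]
Then \(\rho_n(I_H)=I_{H_n}\), so \(\rho_n\) is unital. Its range \(F_n=\rho_n(\mathcal A)\) is a finite-dimensional algebra once multiplicativity is established.

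The key step is multiplicativity, and it uses the hypothesis for each fixed \(n\). Suppose first that \(P_n\mathcal A(I_H-P_n)=0\), so that \(P_nS=P_nSP_n\) for \(S\in\mathcal A\). Then
\[ P_nTSP_n=P_nT(P_n+(I_H-P_n))SP_n=P_nTP_nSP_n+P_nT(I_H-P_n)SP_n, \]
and the last term vanishes because \(P_nT(I_H-P_n)=0\). Suppose instead that \((I_H-P_n)\mathcal AP_n=0\). Then \((I_H-P_n)SP_n=0\) for \(S\in\mathcal A\), so the same cross term vanishes. In either case \(\rho_n(TS)=\rho_n(T)\rho_n(S)\). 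Hence \(\rho_n\) is a unital homomorphism, and its range \(F_n\) is a finite-dimensional unital subalgebra of \(\Bcal(H_n)\).

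Next we check weak-star-to-norm continuity. For \(\xi,\eta\in H_n\), the matrix coefficient \(T\mapsto\langle T\xi,\eta\rangle\) is weak-star continuous on \(\Bcal(H)\), because it is pairing with a rank-one trace-class operator. It remains weak-star continuous on \(\mathcal A\) with its own predual topology, since that topology is the restriction of the ambient one. Since \(H_n\) is finite dimensional, finitely many such coefficients determine \(\rho_n(T)\), so \(\rho_n\) is weak-star-to-norm continuous; this follows the pattern of the proof of Corollary~\ref{cor:weak-star-form}.

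Finally, joint faithfulness. If \(\rho_n(T)=0\) for every \(n\), then \(P_nTP_n=0\) for all \(n\). Strong convergence \(P_n\to I_H\) gives \(\langle T\xi,\eta\rangle=\lim_n\langle TP_n\xi,P_n\eta\rangle=0\) for all \(\xi,\eta\in H\), so \(T=0\). Proposition~\ref{prop:weak-star-rfd} now applies and gives the conclusion; note \(\norm{I_H}=1\) since \(H\ne0\).

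There is no serious obstacle. The only point needing care is the cross-term cancellation under each of the two alternative hypotheses, which may differ from one \(n\) to another. This is harmless, because multiplicativity of each \(\rho_n\) is checked separately for its own \(n\).
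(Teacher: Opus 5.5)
Your proof is correct and follows the paper's argument step for step. You compress to \(P_nH\), kill the cross term \(P_nT(I_H-P_n)SP_n\) under whichever corner condition holds at that \(n\), get weak-star continuity from matrix coefficients and joint faithfulness from strong convergence, and then apply Proposition~\ref{prop:weak-star-rfd}.

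The one detail the paper adds, and you omit, is to discard the initial indices with \(P_n=0\). There are only finitely many, since the \(P_n\) increase strongly to \(I_H\ne0\). Such an index gives \(F_n=\{0\}\) and \(\ker\rho_n=\mathcal A\), so the ideals \(I_r=\bigcap_{m\leq r}\ker\rho_m\) built in that proposition would not all be proper.
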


\begin{proof}
  Since \(\mathcal A\) is weak-star closed in \(\Bcal(H)\), it is isometrically a dual space, with predual the quotient of the trace class by its preannihilator.
  Discarding finitely many initial zero projections and reindexing, we may assume that \(P_nH\ne\{0\}\) for every \(n\).
  Define
  \[ \rho_n:\mathcal A\longrightarrow\Bcal(P_nH), \qquad \rho_n(T)=P_nT|_{P_nH}. \]
  This map is unital, has finite-dimensional range, and is weak-star continuous, since each of its matrix entries is a matrix coefficient of \(T\).
  Under either assumption, we have
  \[ \rho_n(ST)-\rho_n(S)\rho_n(T) =P_nS(I_H-P_n)TP_n|_{P_nH}=0 \qquad(S,T\in\mathcal A). \]
  Thus \(\rho_n\) is a homomorphism.

  For \(T\in\mathcal A\) and \(\xi\in H\), we obtain from strong convergence that
  \[ \norm{P_nTP_n\xi-T\xi} \leq\norm T\norm{P_n\xi-\xi}+\norm{(P_n-I_H)T\xi} \longrightarrow0. \]
  Since \(\norm{\rho_n(T)}\leq\norm T\), we deduce that
  \[ \norm T=\sup_n\norm{\rho_n(T)}, \qquad \bigcap_n\ker\rho_n=\{0\}. \]
  Proposition~\ref{prop:weak-star-rfd} applies.
  The finite quotients carry their quotient norms, and the displayed equality alone does not identify an individual quotient norm with its compression norm.
\end{proof}

In the next proposition and the convolution examples below, \(\Delta:C\to C\otimes_{\min}C\) denotes a coproduct. Here \(\otimes_{\min}\) denotes the minimal \(C^*\)-tensor product \cite[Chapter~12]{Paulsen2003}.

\begin{proposition}[Realization from finite-dimensional subcoalgebras]\label{prop:coalgebra}
  Let \(C\) be a unital \(C^*\)-algebra with a unital completely positive map \(\Delta:C\to C\otimes_{\min}C\) and a character \(\varepsilon:C\to\mathbb C\) satisfying
  \[ (\Delta\otimes I)\Delta=(I\otimes\Delta)\Delta, \qquad (\varepsilon\otimes I)\Delta=I=(I\otimes\varepsilon)\Delta. \]
  Suppose that there are increasing finite-dimensional subspaces
  \[ C_0\subseteq C_1\subseteq\cdots\subseteq C, \qquad \overline{\bigcup_rC_r}=C, \]
  such that \(1_C\in C_r\) and \(\Delta(C_r)\subseteq C_r\otimes C_r\).
  Then the convolution algebra \(C^*\), with
  \[ (\mu\star\nu)(x) =(\mu\otimes\nu)(\Delta x), \]
  is isometrically its bounded detection envelope and is a Calkin algebra.
\end{proposition}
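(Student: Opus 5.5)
The plan is to verify assertion \textup{(iii)} of Theorem~\ref{thm:finite-detection-duality} with \(B=C^*\) and predual \(E=C\), using the subspaces \(E_r=C_r\). Theorem~\ref{thm:main} then does the rest.

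First I check that \(C^*\) is a unital Banach algebra with norm-one unit. The product \(\mu\star\nu=(\mu\otimes\nu)\circ\Delta\) is well defined because \(\mu\otimes\nu\) extends to a bounded functional on \(C\otimes_{\min}C\) with \(\norm{\mu\otimes\nu}\leq\norm\mu\norm\nu\). Since \(\Delta\) is UCP, it is contractive, so \(\norm{\mu\star\nu}\leq\norm\mu\norm\nu\).

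\begin{itemize}
\item Coassociativity gives associativity of \(\star\).
\item The counit identities give \(\varepsilon\star\mu=\mu\star\varepsilon=\mu\), so \(\varepsilon\) is the unit.
\item As a character of a unital \(C^*\)-algebra, \(\varepsilon\) is a state, so \(\norm\varepsilon=1\).
\end{itemize}

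The isometric predual is \(C\) itself, with \(E_r=C_r\) increasing, nonzero (they contain \(1_C\)), finite dimensional, and with dense union.

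Next I show that \(E_r^\perp=C_r^\perp\) is a two-sided ideal. Let \(\mu\in C_r^\perp\), \(\nu\in C^*\) and \(x\in C_r\). Since \(C_r\otimes C_r\) is finite dimensional, \(\Delta x=\sum_i y_i\otimes z_i\) is a finite sum with \(y_i,z_i\in C_r\). Then
\[ (\mu\star\nu)(x)=\sum_i\mu(y_i)\nu(z_i)=0, \]
and symmetrically \((\nu\star\mu)(x)=0\). The ideal is proper because \(\varepsilon(1_C)=1\ne0\) and \(1_C\in C_r\), so \(\varepsilon\notin C_r^\perp\).

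Assertion \textup{(iii)} now holds. The proof of \textup{(iii)}\(\Rightarrow\)\textup{(i)} in Theorem~\ref{thm:finite-detection-duality} identifies \(C^*\) isometrically with \(\ilim(C^*/C_r^\perp)\). This inverse limit is exactly \(\Env^b_{(C_r^\perp)}(C^*)\). By Theorem~\ref{thm:main}, \(C^*\) is isometrically a Calkin algebra.

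The only delicate step is the ideal property, specifically making sure that evaluating \(\mu\otimes\nu\) on \(\Delta x\) reduces to the finite algebraic sum. This is legitimate because \(C_r\otimes C_r\) is a finite-dimensional subspace of \(C\otimes_{\min}C\), so no completion issues arise. Bicontinuity plays no further role.
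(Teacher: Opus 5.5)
Your proposal is correct and follows the paper's proof essentially step by step: the same contractivity and counit checks, the same finite-sum computation showing that \(C_r^\perp\) is a two-sided ideal, and the same appeal to assertion \textup{(iii)} of Theorem~\ref{thm:finite-detection-duality} with \(E=C\), followed by Theorem~\ref{thm:main}. Your explicit properness check via \(\varepsilon(1_C)=1\) with \(1_C\in C_r\) is a small point that the paper's proof leaves implicit.
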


\begin{proof}
  A unital completely positive map is contractive \cite[Proposition~3.2]{Paulsen2003}, and the norm of a product functional is the product of the norms \cite[Chapter~12]{Paulsen2003}.
  Hence, for \(x\in C\), we have
  \[ \abs{(\mu\star\nu)(x)} =\abs{(\mu\otimes\nu)(\Delta x)} \leq\norm{\mu\otimes\nu}\norm{\Delta x} \leq\norm\mu\norm\nu\norm x, \]
  and therefore we obtain \(\norm{\mu\star\nu}\leq\norm\mu\norm\nu\).
  Coassociativity gives associativity of \(\star\), and the two counit identities make \(\varepsilon\) its unit.
  As \(\varepsilon\) is a character on a unital \(C^*\)-algebra, we have \(\norm\varepsilon=1\).
  Thus \(C^*\) is a unital Banach algebra.

  Put \(I_r=C_r^\perp\).
  If \(\mu\in I_r\), \(\nu\in C^*\), and \(x\in C_r\), write \(\Delta x=\sum_{j=1}^m y_j\otimes z_j\) with \(y_j,z_j\in C_r\).
  Then we have
  \[ (\mu\star\nu)(x)=\sum_j\mu(y_j)\nu(z_j)=0, \qquad (\nu\star\mu)(x)=\sum_j\nu(y_j)\mu(z_j)=0. \]
  Hence \(I_r\) is a weak-star closed finite-codimensional two-sided ideal.
  By the Hahn--Banach theorem, every element of \(C_r^*\) has a norm-preserving extension to \(C^*\).
  Thus restriction \(C^*\to C_r^*\) is a metric quotient with kernel \(I_r\).
  Density of \(\bigcup_rC_r\) gives \(\bigcap_rI_r=\{0\}\).
  Assertion~\textup{(iii)} of Theorem~\ref{thm:finite-detection-duality} therefore applies with \(E=C\), and Theorem~\ref{thm:main} completes the proof.
\end{proof}

\begin{remark}
  Assertion~\textup{(iii)} of Theorem~\ref{thm:finite-detection-duality} is stated in terms of finite-dimensional predual subspaces and their annihilator ideals, without assuming a coproduct or positivity.
  Here complete positivity gives contractivity and the tensor ampliations on the minimal tensor product \cite[Proposition~3.2 and Theorem~12.3]{Paulsen2003}.
  A merely contractive linear map \(C\to C\otimes_{\min}C\) does not provide the minimal-tensor ampliations.
  Every unital \(^*\)-homomorphism is unital completely positive \cite[p.~28]{Paulsen2003}.
\end{remark}

The subspaces \(C_r\) in Proposition~\ref{prop:coalgebra} need not be subalgebras.
The finite coalgebra condition ensures that each annihilator \(C_r^\perp\) is a two-sided ideal.

\subsection{\texorpdfstring{Sequence, convolution, and incidence algebras}{Sequence, convolution, and incidence algebras}}

\begin{corollary}[\(\ell_\infty\)-sums of finite algebras]
  \label{cor:finite-products}
  Let \((F_k)_{k\geq0}\) be finite-dimensional unital Banach algebras with \(\norm{1_{F_k}}=1\).
  Then
  \( \left(\bigoplus_{k=0}^\infty F_k\right)_{\ell_\infty} \)
  is isometrically a Calkin algebra.
\end{corollary}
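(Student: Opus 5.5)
The plan is to realize the \(\ell_\infty\)-sum as a bounded inverse limit of finite-dimensional truncations and then invoke Theorem~\ref{thm:main}. Put \(A_r=\left(\bigoplus_{k=0}^rF_k\right)_{\ell_\infty}\), with coordinatewise multiplication and max norm, and let \(\pi_r^s:A_s\to A_r\) delete the coordinates \(k\in(r,s]\). Each \(A_r\) is a finite-dimensional unital Banach algebra, and \(\norm{1_{A_r}}=\max_{k\leq r}\norm{1_{F_k}}=1\). The bonding maps are unital homomorphisms with \(\pi_r^s\pi_s^t=\pi_r^t\), so \eqref{eq:inverse-system} holds.

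For \eqref{eq:metric-quotient} we use the observation recorded at the start of this section. The map \(\pi_r^s\) is contractive for the max norms. Every \(x_r\in A_r\) lifts to \((x_r,0,\ldots,0)\in A_s\), and this lift has the same norm. Hence \(\pi_r^s\) is a metric quotient.

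It remains to identify \(\widehat A=\ilim(A_r,\pi_r^s)\) with the \(\ell_\infty\)-sum. A compatible family \((a^{(r)})_r\) is determined by one sequence \((a_k)_k\) with \(a_k\in F_k\), namely \(a^{(r)}=(a_0,\ldots,a_r)\). Conversely, every such sequence gives a compatible family. Moreover,
\[
\sup_r\norm{a^{(r)}}=\sup_r\max_{k\leq r}\norm{a_k}=\sup_k\norm{a_k}.
\]
So the bounded compatible families are exactly the bounded sequences, with the same norm. Both multiplications are coordinatewise and both units are \((1_{F_k})_k\). The map \((a_k)_k\mapsto\bigl((a_0,\ldots,a_r)\bigr)_r\) is therefore an isometric unital algebra isomorphism onto \(\widehat A\).

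Theorem~\ref{thm:main} then gives a separable \(X\) with \(\Cal(X)\simeq\widehat A\simeq\left(\bigoplus_kF_k\right)_{\ell_\infty}\) isometrically. The argument is routine, and its only real step is the metric-quotient check, which is handled by the zero-extension lift. Taking \(F_k=\K\) gives the \(\ell_\infty\) statement of Corollary~\ref{cor:intro-products}.
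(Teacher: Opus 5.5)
Your proposal is correct and follows essentially the same route as the paper: the paper also uses the truncation tower \(A_r=F_0\oplus_\infty\cdots\oplus_\infty F_r\) with coordinate-restriction bonding maps and zero-extension lifts for the metric-quotient property. It then identifies the bounded limit with the \(\ell_\infty\)-sum, including norm and product, before invoking Theorem~\ref{thm:main}.
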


\begin{proof}
  Put \(A_r=F_0\oplus_\infty\cdots\oplus_\infty F_r\), with coordinatewise multiplication.
  Then we have \(\norm{ab}\leq\norm a\norm b\).
  Coordinate restriction \(\sigma_r^s:A_s\to A_r\) is a contractive unital homomorphism.
  Zero extension preserves the norm, so \(\sigma_r^s\) is a metric quotient.
  A compatible sequence is determined by \((a_k)_{k\geq0}\), and we have
  \[ a^{(r)}=(a_0,\ldots,a_r),\qquad \sup_r\norm{a^{(r)}}=\sup_r\max_{k\leq r}\norm{a_k} =\sup_k\norm{a_k}. \]
  Its product has \(k\)-th coordinate \(a_kb_k\).
  Thus the bounded limit is the displayed \(\ell_\infty\)-sum as a normed algebra, and Theorem~\ref{thm:main} applies.
\end{proof}

\begin{corollary}[The algebra \(\ell_\infty\)]\label{ex:ell-infinity}
  There is a separable Banach space \(X_\infty\) with \(\Cal(X_\infty)\simeq C(\beta\N)\simeq\ell_\infty\) isometrically as unital Banach algebras.
\end{corollary}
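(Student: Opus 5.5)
The plan is to apply Corollary~\ref{cor:finite-products} with every \(F_k=\K\), so that the \(\ell_\infty\)-sum is exactly \(\ell_\infty\) with pointwise multiplication and supremum norm. Concretely, take \(F_k=\K\) with its absolute value as norm. This is a one-dimensional unital Banach algebra with \(\norm{1}=1\), so the hypotheses of Corollary~\ref{cor:finite-products} hold.

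That corollary produces a separable Banach space \(X_\infty\) together with an isometric unital algebra isomorphism \(\left(\bigoplus_{k\ge0}\K\right)_{\ell_\infty}\simeq\Cal(X_\infty)\). The bounded inverse system used there has stages \(A_r=\ell_\infty^{r+1}\), coordinatewise multiplication, and restriction maps as bonding maps. Its bounded limit is the set of all bounded scalar sequences with norm \(\sup_k\abs{a_k}\) and coordinatewise product, which is \(\ell_\infty\) as a normed algebra.

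For the identification with \(C(\beta\N)\), I will invoke the standard fact cited in the introduction: every bounded function on the discrete space \(\N\) extends uniquely to a continuous function on \(\beta\N\). This Gelfand/Stone--\v Cech extension is an isometric unital \(*\)-isomorphism \(\ell_\infty\to C(\beta\N)\) for the sup norms and pointwise products. Composing the two isomorphisms gives the statement.

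I expect no step to be a genuine obstacle. The only point needing a line of care is checking that the norm on the limit is really the \(\ell_\infty\) norm, and this is immediate because \(\sup_r\max_{k\le r}\abs{a_k}=\sup_k\abs{a_k}\).
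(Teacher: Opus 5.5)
Your proposal is correct and matches the paper's argument: it applies Corollary~\ref{cor:finite-products} with \(F_k=\K\) and then uses the standard isometric algebra identification \(\ell_\infty\simeq C(\beta\N)\). The norm check \(\sup_r\max_{k\leq r}\abs{a_k}=\sup_k\abs{a_k}\) is exactly the one carried out in the proof of that corollary.
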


This follows directly from Corollary~\ref{cor:finite-products} with \(F_k=\K\) and the isometric algebra identification \(C(\beta\N)\simeq\ell_\infty\)  \cite[Section~V.6]{Conway1990}.
Corollary~\ref{ex:ell-infinity} answers Motakis's question on the existence of a nonseparable \(C(K)\) Calkin algebra \cite[Section~9, Problem~2]{Motakis2024}.

For the standard notions of unconditional and boundedly complete bases, see \cite[Sections~3.1 and~3.2]{AlbiacKalton2016}.

\begin{corollary}[Unconditional sums of algebras]
  \label{cor:unconditional-algebra-sums}
  Let \(U\) have a normalized \(1\)-unconditional boundedly complete Schauder basis \((u_n)_{n\geq1}\).
  For each \(n\), let \(B_n=\ilim(B_{n,r},\pi_{n,r}^s)\) isometrically as Banach algebras, where the \(B_{n,r}\) are finite-dimensional and the bonding maps are metric-quotient algebra homomorphisms.
  Write \(J=(\bigoplus_{n\geq1}B_n)_U\) for the coordinate sum, defined by
  \begin{equation}\label{eq:unconditional-algebra-sum}
    b=(b_n)\in J \ \Longleftrightarrow\ \sum_n\norm{b_n}u_n\text{ converges in }U,\qquad \norm b_U =\left\lVert\sum_n\norm{b_n}u_n\right\rVert_U.
  \end{equation}
  Give \(J\) coordinatewise multiplication and \(J\oplus\K I\) the standard unitization product and norm
  \begin{equation}\label{eq:unconditional-sum-unitization}
    (b+\lambda I)(c+\mu I)=bc+\lambda c+\mu b+\lambda\mu I,\qquad \norm{b+\lambda I}=\norm b_U+\abs\lambda.
  \end{equation}
  Then there is a separable Banach space \(X\) such that 
  \( \Cal(X)\simeq\left(\bigoplus_{n\geq1}B_n\right)_U\oplus\K I \) 
  isometrically as unital Banach algebras.
\end{corollary}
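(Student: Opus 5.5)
The plan is to show that \(J\oplus\K I\) is isometrically a bounded inverse limit of finite-dimensional unital Banach algebras with metric-quotient bonding homomorphisms, and then invoke Theorem~\ref{thm:main}. The most convenient route is criterion \textup{(iii)} of Theorem~\ref{thm:finite-detection-duality}, or equivalently a direct construction of the tower.

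For \(r\geq1\), put
\[
  A_r=\Bigl(\bigoplus_{n\leq r}B_{n,r}\Bigr)_{U_r}\oplus\K I,
  \qquad
  \norm{(c_n)_{n\leq r}+\lambda I}
  =\Bigl\lVert\sum_{n\leq r}\norm{c_n}u_n\Bigr\rVert_U+\abs\lambda,
\]
where \(U_r\) is the span of \(u_1,\ldots,u_r\) and multiplication is the unitization of the coordinatewise product. The bonding map \(A_s\to A_r\) truncates to the first \(r\) coordinates, applies \(\pi_{n,r}^s\) in each coordinate, and fixes \(\lambda\).

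\textbf{Step 1: each \(A_r\) is a unital Banach algebra with \(\norm{1}=1\).} Submultiplicativity of the coordinate sum follows from \(1\)-unconditionality together with \(\norm{u_n}=1\). Indeed, \(\norm{b_nc_n}\leq\norm{b_n}\norm{c_n}\leq\norm{b_n}\sup_m\norm{c_m}\), and \(\sup_m\norm{c_m}\leq\norm c_U\) because the basis is \(1\)-unconditional and normalized. The unitization norm inequality then follows in the usual way.

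\textbf{Step 2: the bonding maps are unital homomorphisms and metric quotients.} Contractivity uses lattice monotonicity of the \(1\)-unconditional norm: the coefficients \(\norm{\pi(c_n)}\) are dominated by \(\norm{c_n}\), and truncation to the first \(r\) coordinates only lowers the norm. For norm-preserving lifts, lift each coordinate \(c_n\) through \(\pi_{n,r}^s\) with equal norm (possible by finite-dimensional attainment, as in Lemma~\ref{lem:detector-quotient-tower}), put \(0\) in the new coordinates, and keep \(\lambda\). By the observation preceding the corollary, this makes each bonding map a metric quotient. Unitality and multiplicativity are immediate.

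\textbf{Step 3: identify the bounded limit with \(J\oplus\K I\), isometrically.} A compatible family determines \(\lambda\) and, for each \(n\), a compatible family \((c_{n,r})_{r\geq n}\). This family is bounded by \(\sup_r\norm{a_r}\), since \(\norm{c_{n,r}}\) is at most the \(U\)-norm of the coordinate sum, and so it defines some \(b_n\in B_n\) with \(\norm{b_n}=\sup_r\norm{c_{n,r}}\).

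The norm of the \(r\)-th stage equals \(\lVert\sum_{n\leq r}\norm{c_{n,r}}u_n\rVert+\abs\lambda\). For fixed \(n\), the coefficients \(\norm{c_{n,r}}\) increase in \(r\), because the bonding maps are contractive, and they converge to \(\norm{b_n}\). Monotone lattice properties then give
\[
  \sup_r\norm{a_r}
  =\sup_N\Bigl\lVert\sum_{n\leq N}\norm{b_n}u_n\Bigr\rVert+\abs\lambda .
\]
This is the step where bounded completeness is essential. The partial sums \(\sum_{n\leq N}\norm{b_n}u_n\) are bounded exactly when the series converges in \(U\), so bounded families correspond exactly to elements of \(J\), and the supremum equals \(\norm b_U\). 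Conversely, each element of \(J\oplus\K I\) yields a compatible family through the coordinate maps of the \(B_n\), and the same computation gives the norm. Multiplication is coordinatewise at every stage, so the product is the unitization product.

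I expect the main obstacle to be the norm identification in Step 3. The stage-\(r\) coefficients \(\norm{c_{n,r}}\) only approximate \(\norm{b_n}\) from below, so the supremum over \(r\) must be exchanged with the \(U\)-norm. This requires monotone convergence in a \(1\)-unconditional space: coordinatewise increasing nonnegative coefficients give increasing norms, whose supremum is the norm of the limit, with bounded completeness guaranteeing that the limit exists in \(U\). Once this is settled, Theorem~\ref{thm:main} yields the separable \(X\) with \(\Cal(X)\simeq J\oplus\K I\) isometrically.
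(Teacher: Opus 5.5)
Your proposal is correct and follows essentially the same route as the paper: the same finite tower $A_r=(\bigoplus_{n\le r}B_{n,r})_U\oplus\K I$ with coordinatewise minimum-norm lifts, and the same norm identification, using increasing coordinate norms, finite-dimensional continuity for each fixed $N$, and bounded completeness to place $b$ in $J$, before applying Theorem~\ref{thm:main}. The only detail you leave implicit is that each family $(c_{n,r})_{r\ge n}$ must first be extended to the indices $r<n$ by the bonding maps before it defines an element of $B_n$, which is immediate.
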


\begin{proof}
  By normalization and unconditionality, we have
  \(
  \sup_n\norm{b_n}\leq\norm b_U.
  \)
  And we then obtain
  \[
    \left\lVert\sum_{n=p}^q\norm{b_nc_n}u_n\right\rVert \leq\norm b_U\left\lVert\sum_{n=p}^q\norm{c_n}u_n\right\rVert.
  \]
  Thus we have \(bc\in J\) and \(\norm{bc}_U\leq\norm b_U\norm c_U\).
  The identification below also proves completeness.
  For \(r\geq1\), take the finite-dimensional unital Banach algebra
  \begin{equation}\label{eq:unconditional-sum-tower}
    A_r=\left(\bigoplus_{n=1}^rB_{n,r}\right)_U\oplus\K I,\qquad \pi_r^s\bigl((b_n)_{n\leq s}+\lambda I\bigr) =(\pi_{n,r}^sb_n)_{n\leq r}+\lambda I.
  \end{equation}
  Its norm and product are those of \eqref{eq:unconditional-sum-unitization}, with finite sums.
  Hence we have \(\norm{1_{A_r}}=1\), and \(\pi_r^s\) is a contractive unital homomorphism.
  Given \((b_n)_{n\leq r}+\lambda I\in A_r\), choose \(c_n\in B_{n,s}\) with \(\pi_{n,r}^sc_n=b_n\) and \(\norm{c_n}=\norm{b_n}\), and put \(c_n=0\) for \(r<n\leq s\).
  These minimum-norm lifts exist because \(B_{n,s}\) is finite-dimensional.
  The resulting lift has norm
  \[ \abs\lambda+\left\lVert\sum_{n=1}^s\norm{c_n}u_n\right\rVert =\abs\lambda+\left\lVert\sum_{n=1}^r\norm{b_n}u_n\right\rVert. \]
  Thus every \(\pi_r^s\) is a metric quotient.

  Let \((x_r)\) be a compatible sequence with \(C=\sup_r\norm{x_r}<\infty\).
  The bonding maps preserve the scalar coordinate, so we may write \(x_r=(b_{n,r})_{n\leq r}+\lambda I\) with one fixed \(\lambda\).
  For fixed \(n\), the coordinates \((b_{n,r})_{r\geq n}\) form a compatible sequence on a cofinal set of stages.
  Applying the bonding maps defines the missing earlier coordinates, so this sequence determines \(b_n\in B_n\), with \(\norm{b_n}=\sup_{r\geq n}\norm{b_{n,r}}\).
  These coordinate norms increase with \(r\).
  For every \(N\), we have
  \[
  \left\lVert\sum_{n=1}^N\norm{b_n}u_n\right\rVert =\lim_{r\to\infty}\left\lVert\sum_{n=1}^N\norm{b_{n,r}}u_n\right\rVert \leq C-\abs\lambda.
  \]
  By bounded completeness, we obtain \(b=(b_n)\in J\).
  Conversely, the coordinate images of every \(b\in J\) give a bounded compatible sequence.
  Combining the preceding estimate with contractivity, we obtain
  \[
    \sup_r\left(\abs\lambda+
    \left\lVert\sum_{n=1}^r\norm{b_{n,r}}u_n\right\rVert\right)
    =\abs\lambda+\norm b_U.
  \]
  Hence we obtain \(J\oplus\K I\simeq\ilim(A_r,\pi_r^s)\) isometrically and multiplicatively.
  Theorem~\ref{thm:main} applies.
\end{proof}

\begin{corollary}[Unconditional finite-dimensional decompositions]
  \label{cor:ufdd-unitization}
  Let \(J\) be a Banach algebra with a \(1\)-unconditional boundedly complete finite-dimensional decomposition \((F_n)_{n\geq1}\) into two-sided ideals.
  Then \(J\oplus\K I\), with the unitization product in \eqref{eq:unconditional-sum-unitization} and norm \(\norm{x+\lambda I}=\norm x+\abs\lambda\), is isometrically a Calkin algebra of a separable Banach space.
\end{corollary}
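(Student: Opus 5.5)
The plan is to reduce this statement to Corollary~\ref{cor:unconditional-algebra-sums}, taking \(U\) to be the normalized \(1\)-unconditional boundedly complete space generated by the decomposition and \(B_n\) to be the finite-dimensional ideals \(F_n\) with the norms they inherit from \(J\). Two points have to be settled. First, each \(F_n\) must itself be a unital finite-dimensional algebra, so that the constant tower \((F_n,I_{F_n})\) exhibits it as a bounded inverse limit. Second, the norm of \(J\) must match the coordinate \(U\)-norm \(\norm{\sum_n\norm{x_n}u_n}_U\) for a suitable \(U\) with a normalized basis.

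For the algebraic part I would rely on the decomposition into two-sided ideals. Since \(F_nF_m\subseteq F_n\cap F_m=\{0\}\) for \(n\ne m\), multiplication is coordinatewise: \((xy)_n=x_ny_n\). The coordinate projections \(P_n\) are contractive by \(1\)-unconditionality, so each \(F_n\) is a finite-dimensional Banach algebra and \(J\) embeds isometrically as a closed subalgebra of the coordinatewise product. Here is the subtle point: \(F_n\) need not be unital. I would therefore replace the constant-tower approach with a tower that is unital at every stage. For this I would mimic \eqref{eq:unconditional-sum-tower} directly, setting \(A_r=(F_1\oplus\cdots\oplus F_r)\oplus\K I\) with the norm \(\norm{x_1+\cdots+x_r}_J+\abs\lambda\) and the unitization product. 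The bonding maps are truncation \(P_{[1,r]}\) together with the identity on scalars. They are contractive unital homomorphisms, and zero extension is an exact-norm lift, so they are metric quotients. Submultiplicativity of the \(A_r\)-norm follows from \(\norm{xy}\le\norm x\norm y\) in \(J\) and the usual unitization estimate, and \(\norm{1}=1\). Rather than introducing an auxiliary \(U\), this tower works directly with the norm of \(J\).

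Next I would identify the bounded inverse limit. A compatible bounded family amounts to a fixed scalar \(\lambda\) together with a sequence \((x_n)\), \(x_n\in F_n\), whose partial sums \(\sum_{n\le r}x_n\) are bounded in \(J\). Bounded completeness of the FDD gives convergence to some \(x\in J\). Conversely, \(1\)-unconditionality, and in particular the bound \(\norm{P_{[1,r]}}\le1\), shows that \(\sup_r\norm{P_{[1,r]}x}=\norm x\); the inequality \(\norm x\le\sup_r\) comes from norm convergence of the partial sums. This yields an isometric unital isomorphism \(\ilim(A_r)\simeq J\oplus\K I\) with the stated norm, and Theorem~\ref{thm:main} then finishes the argument. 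Separability is automatic from the construction.

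The step I expect to need the most care is the identification of the limit norm, specifically the exact equality \(\norm x=\sup_r\norm{P_{[1,r]}x}\) and the fact that the \(\ell_1\)-type unitization norm passes to the limit. The first follows from monotonicity of the partial-sum norms under \(1\)-unconditionality, and the second holds because \(\lambda\) is constant along the tower. Nothing in the argument requires an ordering of the \(F_n\) beyond the given one, and blocks with \(F_n=\{0\}\), if any occur, can simply be discarded.
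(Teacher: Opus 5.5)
Your final argument is correct and is the paper's proof: the tower \(A_r=P_rJ\oplus\K I\) with truncation bonding maps, zero extension as an isometric lift, and bounded completeness together with \(\norm{P_r}\leq1\) to identify the bounded limit isometrically and multiplicatively with \(J\oplus\K I\). Drop the opening plan of reducing to Corollary~\ref{cor:unconditional-algebra-sums}, and note that you misidentify why it fails. Non-unital summands are not the obstacle, since that corollary allows them. The real obstacle is that the norm of \(J\) need not be a function of the block norms \(\norm{x_n}\), which is exactly why working directly with \(\norm{\cdot}_J\) is the right move.
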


\begin{proof}
  Since \(F_nF_m\subseteq F_n\cap F_m=\{0\}\) for \(n\ne m\), we know that each partial-sum projection \(P_r\) is a contractive algebra homomorphism.
  Put
  \[
    A_r=P_rJ\oplus\K I,\qquad \pi_r^s(x+\lambda I)=P_rx+\lambda I\quad(r\leq s).
  \]
  Each \(A_r\) is finite-dimensional and has unit norm one.
  The inclusion \(A_r\hookrightarrow A_s\) is an isometric right inverse of \(\pi_r^s\), so these maps are unital metric-quotient homomorphisms.
  A bounded compatible sequence has the form \((x_r+\lambda I)_r\), with \(P_rx_s=x_r\) for \(r\leq s\).
  By bounded completeness, we obtain \(x\in J\) with \(P_rx=x_r\) and \(\sup_r\norm{x_r+\lambda I}=\norm x+\abs\lambda\).
  Conversely, every \(x+\lambda I\in J\oplus\K I\) gives such a sequence.
  Hence \(J\oplus\K I\simeq\ilim(A_r,\pi_r^s)\) isometrically as unital Banach algebras, and Theorem~\ref{thm:main} applies.
\end{proof}

The same argument applies to any boundedly complete FDD into two-sided ideals whose initial projections are contractive.

\begin{proposition}[Unital summands]
  \label{prop:unconditional-sum-converse}
  Let \((u_n)\) be a normalized \(1\)-unconditional Schauder basis of \(U\), and let each \(B_n\) be a nonzero unital Banach algebra with \(\norm{1_{B_n}}=1\).
  Give \(A=(\bigoplus_n B_n)_U\oplus\K I\) the norm and product in \eqref{eq:unconditional-sum-unitization}.
  Then \(A\) is isometrically isomorphic, as a unital Banach algebra, to the bounded inverse limit of a countable system satisfying \eqref{eq:inverse-system}--\eqref{eq:metric-quotient} if and only if \((u_n)\) is boundedly complete and each \(B_n\) has such a representation.
\end{proposition}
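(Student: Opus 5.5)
The sufficiency direction is essentially Corollary~\ref{cor:unconditional-algebra-sums}. If \((u_n)\) is boundedly complete and each \(B_n=\ilim(B_{n,r},\pi_{n,r}^s)\), then the tower \eqref{eq:unconditional-sum-tower} satisfies \eqref{eq:inverse-system}--\eqref{eq:metric-quotient}, and its bounded limit is \(A\) isometrically. Nothing new is needed there. The work is the converse. Assume \(A\simeq\ilim(A_r,\pi_r^s)\). By Theorem~\ref{thm:finite-detection-duality} this is equivalent to \(A=E^*\) isometrically, together with weak-star-to-norm continuous unital homomorphisms \(\rho_r:A\to A_r\) that separate points. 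I would work with these maps, or equivalently with the weak-star closed finite-codimensional ideals \(I_r=\ker\rho_r\) with \(\bigcap I_r=0\). Then I would deduce the two conclusions separately.

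\emph{Each \(B_n\) is a bounded inverse limit.} The element \(e_n=1_{B_n}\), placed in coordinate \(n\), is a central idempotent of \(A\), and \(e_nAe_n=B_n\) isometrically, since \(\norm{b}_U=\norm{b_n}\norm{u_n}=\norm{b_n}\). Compressing, the maps \(b\mapsto\rho_r(e_n)\rho_r(b)\) are unital homomorphisms \(B_n\to\rho_r(e_n)A_r\rho_r(e_n)\), provided we discard the \(r\) with \(\rho_r(e_n)=0\). They are weak-star-to-norm continuous on \(B_n\) once \(B_n\) is shown to be weak-star closed in \(A\). For that I would note \(B_n=\{x:x=e_nx\}\) and that multiplication by \(e_n\) is separately weak-star continuous. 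This holds because \(\rho_r(e_nx)=\rho_r(e_n)\rho_r(x)\) is weak-star continuous on bounded sets, combined with Krein--Smulian. These compressions jointly separate \(B_n\), so criterion (ii) of Theorem~\ref{thm:finite-detection-duality}, applied to \(B_n\) with its induced predual \(E/(B_n)_\perp\), gives the representation of \(B_n\).

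\emph{Bounded completeness.} Suppose \((u_n)\) is not boundedly complete. Then there are scalars \(t_n\ge0\) such that \(\sup_N\norm{\sum_{n\le N}t_nu_n}<\infty\) while the series diverges. Put \(x_N=\sum_{n\le N}t_ne_n\in A\). This is a bounded sequence, and \(x_N\) is idempotent-weighted: each \(\rho_r(x_N)\) is a combination of the finitely many nonzero central idempotents \(\rho_r(e_n)\). A finite-dimensional algebra contains only finitely many orthogonal nonzero idempotents, so for fixed \(r\) only finitely many \(\rho_r(e_n)\) are nonzero. Hence \(\rho_r(x_N)\) stabilizes in \(N\), giving a bounded compatible family. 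By the isomorphism with the inverse limit (Lemma~\ref{lem:bounded-limit}), this family is \(\rho(x)\) for some \(x\in A\) with \(\norm x\le\sup_N\norm{x_N}\). Compressing by \(e_n\) gives \(e_nx=t_ne_n\) once \(\rho\) separates \(e_nA\). Writing \(x=(b_n)+\lambda I\), the scalar part \(\lambda\) is detected by some unital character that kills \(J\). The coordinates are forced to be \(b_n=(t_n-\lambda)1_{B_n}\), and comparing with a character, \(\lambda=0\) after we check that \(\rho\) restricted to the limits is consistent. Then \(\sum t_nu_n\) converges in \(U\), which is a contradiction.

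The main obstacle is this last identification: showing that the weak-star limit of \(x_N\) has scalar part \(0\) rather than absorbing the divergent tail into \(\lambda I\). Indeed \(\sum e_n\) behaves like \(I\) weak-star whenever \(J\) is weak-star dense, which happens exactly when \((u_n)\) is not boundedly complete. I would first try to exploit this directly. Take \(t_n\equiv1\)-type blocks, i.e.\ weak-star limits of partial sums \(\sum_{n\le N}e_n\). Any weak-star cluster point \(p\) is an idempotent with \(pe_n=e_n\) for all \(n\), so \(p\in\{\sum e_n\text{ (if in }J),\ I\}\). If \(p=I\), then \(\norm{I-\sum_{n\le N}e_n}=1+\norm{\sum_{n\le N}u_n}\) must stay bounded, because \(\rho_r\) sees the difference as eventually \(0\). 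This bounds \(\norm{\sum_{n\le N}u_n}\) and produces a contradiction with weak-star continuity of a coordinate-killing character \(\chi(b+\lambda I)=\lambda\). Making this rigorous, in particular verifying that \(\chi\) is, or is not, weak-star continuous, is where I expect the real work to lie.
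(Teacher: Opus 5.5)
There is a genuine gap, in the final step of your bounded-completeness argument, exactly where you flagged it. The rest is fine. Your sufficiency direction is right. Your proof that each \(B_n\) is a bounded inverse limit also works: you show \(e_nA\) is weak-star closed and apply criterion (ii) of Theorem~\ref{thm:finite-detection-duality}, whereas the paper simply restricts the tower to the corners \(q_r(e_n)A_r\). Your stabilization of \(\rho_r(x_N)\), using that only finitely many of the orthogonal idempotents \(\rho_r(e_n)\) are nonzero, is a valid substitute for the paper's diagonal compactness extraction.

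The gap is this. From \(e_nx=t_ne_n\) you get \(b_n=(t_n-\lambda)1_{B_n}\), and \(b\in J\) gives \(\norm{b_n}=\norm{\norm{b_n}u_n}\to0\), i.e.\ \(t_n\to\lambda\). Nothing forces \(\lambda=0\) for an arbitrary choice of \(t\). For example, with \(U=c_0\) and \(t_n\equiv1\), your limit element is \(x=I\). It satisfies \(e_nx=e_n\) for all \(n\) and \(\norm{x}=1=\sup_N\norm{x_N}\), so no contradiction can be extracted from that sequence. Your proposed way out does not work either. Boundedness of \(\norm{\sum_{n\leq N}u_n}\) is exactly what failure of bounded completeness allows. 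The character \(\chi(b+\lambda I)=\lambda\) is under no obligation to be weak-star continuous, so its discontinuity contradicts nothing.

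The missing idea is to choose the coefficient sequence so that it has infinitely many zeros. Suppose \(\sum t_nu_n\) has bounded partial sums and diverges. By suppression \(1\)-unconditionality, its restriction to the odd indices or to the even indices still has bounded partial sums and diverges. The paper instead keeps \(t\) on separated blocks \(I_j\) with \(\norm{\sum_{n\in I_j}t_nu_n}\geq\delta\) and zeroes the gaps. Call the restricted sequence \(c\).

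Run your argument with \(c\) in place of \(t\). You get \(b_m=(c_m-\lambda)1_{B_m}\) with \(\norm{b_m}\to0\). On the infinite set where \(c_m=0\), this reads \(\norm{b_m}=\abs\lambda\), so \(\lambda=0\). Then \(\norm{b_m}=c_m\) for all \(m\), and \(\sum_mc_mu_m=\sum_m\norm{b_m}u_m\) converges in \(U\). This is the desired contradiction.
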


\begin{proof}
  For sufficiency, use the tower \eqref{eq:unconditional-sum-tower}.
  For necessity, write \(A=\ilim(A_r,\pi_r^s)\), with coordinate maps \(q_r:A\to A_r\).
  Each \(q_r\) is onto: from \(a_r\in A_r\), successively take minimum-norm lifts to obtain
  \[ \pi_j^{j+1}a_{j+1}=a_j,\qquad \norm{a_{j+1}}=\norm{a_j}\quad(j\geq r). \]
  The lower coordinates are fixed by the bonding maps, so this is a bounded compatible sequence of norm \(\norm{a_r}\).
  Let \(e_n\in A\) have \(n\)-th coordinate \(1_{B_n}\) and all other coordinates, including the scalar one, zero.
  It is a central idempotent of norm one, and \(e_nA\simeq B_n\) isometrically.
  Fix \(n\) and put \(p_r=q_r(e_n)\).
  Surjectivity of \(q_r\) makes \(p_r\) central in \(A_r\).
  Since the \(q_r\)'s separate points, we have \(p_r\ne0\) for some \(r\) and then for all later \(r\).
  On this tail, we have \(p_r^2=p_r\) and \(1\leq\norm{p_r}\leq1\).
  The finite-dimensional corner \(p_rA_r\) therefore has unit norm one.

  The restrictions of \(\pi_r^s\) to these corners are unital metric quotients.
  Indeed, for \(y\in p_rA_r\), choose a minimum-norm lift \(z\in A_s\).
  Then we have
  \[ \pi_r^s(p_sz)=p_ry=y,\qquad \norm y\leq\norm{p_sz}\leq\norm z=\norm y. \]
  A bounded compatible sequence \((y_r)_r\), with \(y_r\in p_rA_r\), determines \(y\in A\), and \(q_r(e_ny)=p_ry_r=y_r=q_r(y)\) on the cofinal tail.
  Thus we have \(y=e_ny\), and we obtain
  \[ B_n\simeq e_nA\simeq\ilim(p_rA_r,\pi_r^s|_{p_sA_s}) \]
  isometrically as unital Banach algebras.

  Suppose now that \((u_n)\) is not boundedly complete.
  By unconditionality, there are nonnegative scalars \((a_n)\) whose partial sums are bounded and not Cauchy.
  Choose \(\delta>0\) and successive finite intervals \(I_j\), separated by at least one index, such that \(\norm{\sum_{n\in I_j}a_nu_n}\geq\delta\).
  Set \(c_n=a_n\) on \(\bigcup_jI_j\), and \(c_n=0\) elsewhere.
  By suppression unconditionality, we have
  \[ \sup_N\left\lVert\sum_{n=1}^Nc_nu_n\right\rVert<\infty, \qquad \left\lVert\sum_{n\in I_j}c_nu_n\right\rVert\geq\delta. \]
  Thus \(\sum_nc_nu_n\) does not converge and infinitely many \(c_n\) vanish.
  Put \(s_N=\sum_{n\leq N}c_ne_n\).
  This sequence is bounded in \(A\).
  Finite-dimensional compactness and diagonal extraction give \(N_j\to\infty\) such that \(q_r(s_{N_j})\to x_r\) for every \(r\).
  The limit sequence is bounded and compatible, so it determines \(x=b+\lambda I\in A\).
  For every \(m,r\), we have
  \[ q_r(e_mx)=\lim_jq_r(e_ms_{N_j})=c_mq_r(e_m). \]
  By separation, we have \(e_mx=c_me_m\), and hence we obtain \(b_m=(c_m-\lambda)1_{B_m}\).
  Since \(b\in(\bigoplus_nB_n)_U\), we have \(\norm{b_m}\to0\).
  Along the infinite set where \(c_m=0\), this forces \(\lambda=0\).
  Consequently, we have \(\norm{b_m}=c_m\), contrary to the divergence of \(\sum_mc_mu_m\).
\end{proof}

These conditions characterize the finite-dimensional quotient representation used in Theorem~\ref{thm:main}, but need not hold for Calkin realizations obtained by other methods.
The converse uses the units of all the summands.
The sufficient condition of Corollary~\ref{cor:unconditional-algebra-sums} also applies to nonunital summands.

\begin{remark}[Sequence spaces with the Fatou property]
  \label{rem:fatou-algebra-sums}
  The outer sequence norm can be generalized.
  Let \(E\subseteq\K^{\N_+}\) be a solid Banach sequence space whose standard unit vectors \((e_n)\) have norm one.
  Solidity means that coordinatewise domination of absolute values implies membership and domination of norms.
  Write \(P_Nc=(c_1,\ldots,c_N,0,\ldots)\).
  Assume that, for every scalar sequence \(c\),
  \[ c\in E\ \Longleftrightarrow\ \sup_N\norm{P_Nc}_E<\infty, \qquad \norm c_E=\sup_N\norm{P_Nc}_E\quad(c\in E). \]
  For the algebras \(B_n\) of Corollary~\ref{cor:unconditional-algebra-sums}, put
  \[ J_E=\{b\in\textstyle\prod_nB_n:(\norm{b_n})_n\in E\}, \qquad \norm b_{J_E}=\norm{(\norm{b_n})_n}_E. \]
  Then \(J_E\oplus\K I\), with coordinatewise multiplication and \(\norm{b+\lambda I}=\norm b_{J_E}+\abs\lambda\), is isometrically a Calkin algebra.
  To see this, replace the finite \(U\)-norms in \eqref{eq:unconditional-sum-tower} by the corresponding \(E\)-norms.
  Solidity gives contractivity and the same norm-preserving lifts.
  Let \((x_r)_r\), where \(x_r=(b_{n,r})_{n\leq r}+\lambda I\), be a bounded compatible sequence and put \(C=\sup_r\norm{x_r}\).
  For each \(n\), let \(b_n\in B_n\) be the element determined by the compatible coordinate sequence \((b_{n,r})_{r\geq n}\).
  By finite-dimensional continuity, we have
  \[
  \norm{P_N(\norm{b_n})_n}_E =\lim_{r\to\infty} \norm{(\norm{b_{1,r}},\ldots,\norm{b_{N,r}},0,\ldots)}_E \leq C-\abs\lambda.
  \]
  By the Fatou property, we obtain \(b=(b_n)\in J_E\).
  Combining this with contractivity, we obtain
  \[
  \abs\lambda+\norm b_{J_E} =\abs\lambda+\sup_N\norm{P_N(\norm{b_n})_n}_E \leq C\leq\abs\lambda+\norm b_{J_E}.
  \]
  Conversely, the coordinate images of each \(b\in J_E\) form a bounded compatible sequence.
  The coordinate maps preserve multiplication, so this identifies \(J_E\oplus\K I\) with the bounded inverse limit isometrically as unital Banach algebras.
  Apply Theorem~\ref{thm:main}.

  For a normalized \(1\)-unconditional basis, its scalar sequence space has this property precisely when the basis is boundedly complete.
  Finite sequences need not be dense in a general \(E\).
  In particular, \(E=\ell_\infty\) gives the \(\ell_\infty\)-sum of the \(B_n\)'s.
  If all these algebras are unital with unit norm one, the \(\ell_\infty\)-sum itself is unital with unit norm one.
  The recursive lifting argument in Proposition~\ref{prop:unconditional-sum-converse} makes every coordinate map \(B_n\to B_{n,r}\) onto.
  Its image of \(1_{B_n}\) is therefore the unit of \(B_{n,r}\), of norm one whenever this stage is nonzero.
  Discarding initial zero stages and reindexing each component tower gives unital bonding maps and normalized units.
  Omitting the extra scalar coordinate from the finite tower then realizes the \(\ell_\infty\)-sum directly.
\end{remark}

Taking constant component towers shows that the summands in Corollary~\ref{cor:unconditional-algebra-sums} may be arbitrary finite-dimensional Banach algebras, including matrix and finite incidence algebras.
Infinite-dimensional summands are also allowed whenever they have the specified representations, for example \(H^\infty\) and the multiplier algebras in Corollaries~\ref{cor:hardy-oracles} and~\ref{cor:rkhs-multipliers}.
The next two corollaries give the block \(\ell_p\) and scalar cases.

\begin{corollary}[Unitized block \(\ell_p\)-algebras]
  \label{cor:block-lp-unitizations}
  Let \(1\leq p<\infty\), and let \((F_k)_{k\geq0}\) be finite-dimensional Banach algebras.
  Give \(J_p=(\bigoplus_{k=0}^\infty F_k)_{\ell_p}\) coordinatewise multiplication.
  Then
  \[ J_p\oplus\K I,\qquad \norm{a+\lambda I}=\norm a_p+\abs\lambda, \]
  with the unitization product in \eqref{eq:unconditional-sum-unitization}, is isometrically a Calkin algebra.
\end{corollary}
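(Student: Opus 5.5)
The plan is to reduce Corollary~\ref{cor:block-lp-unitizations} to Corollary~\ref{cor:unconditional-algebra-sums}, taking \(U=\ell_p\) with its unit vector basis \((u_n)_{n\geq1}\) and constant component towers. After reindexing \(k\mapsto n=k+1\), the standard basis of \(\ell_p\), \(1\leq p<\infty\), is normalized and \(1\)-unconditional. It is also boundedly complete: if \(\sup_N\sum_{n\leq N}\abs{c_n}^p<\infty\), then \(\sum\abs{c_n}^p\) converges, so the partial sums of \(\sum c_nu_n\) are Cauchy.

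For each \(n\), I would take the constant tower \(B_{n,r}=F_{n-1}\) with identity bonding maps. These maps are trivially metric-quotient algebra homomorphisms, and the bounded inverse limit is \(F_{n-1}\) isometrically. Corollary~\ref{cor:unconditional-algebra-sums} allows nonunital summands, so no unit hypothesis on \(F_k\) is needed; the remark after Proposition~\ref{prop:unconditional-sum-converse} records this. The outer norm is \(\norm b_U=\norm{\sum_n\norm{b_n}u_n}_{\ell_p}=(\sum_k\norm{b_k}^p)^{1/p}\), which is exactly \(\norm b_p\) on \(J_p\). Membership also matches, since convergence in \(\ell_p\) is equivalent to finiteness of that sum.

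With these identifications, the coordinate sum \((\bigoplus_nB_n)_U\) equals \(J_p\) as a normed algebra with coordinatewise multiplication. The unitization norm and product in \eqref{eq:unconditional-sum-unitization} are the ones in the statement, so Corollary~\ref{cor:unconditional-algebra-sums} yields a separable \(X\) with \(\Cal(X)\simeq J_p\oplus\K I\) isometrically.

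The only step needing care is the verification of bounded completeness together with the exact equality of norms, including the zero-based versus one-based indexing. Beyond that, the argument is pure bookkeeping.
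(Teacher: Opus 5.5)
Your proposal is correct and is essentially the paper's own argument: the paper likewise takes \(U=\ell_p\) with its normalized, \(1\)-unconditional, boundedly complete unit vector basis and the constant component towers \(B_{n,r}=F_{n-1}\) in Corollary~\ref{cor:unconditional-algebra-sums}. You correctly observe that no unit hypothesis on the \(F_k\) is needed, since that corollary allows nonunital summands. You also correctly observe that \(\norm{b}_U\) coincides with \(\norm{b}_p\) on \(J_p\).
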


The unit vector basis of \(\ell_p\) is normalized, \(1\)-unconditional, and boundedly complete.
Taking \(U=\ell_p\) and the constant component towers \(B_{n,r}=F_{n-1}\) in Corollary~\ref{cor:unconditional-algebra-sums}, we obtain the conclusion with the stated norm and product.

The dimensions and multiplication tables of the \(F_k\)'s are arbitrary.
For example, one may take \(F_k=M_{d_k}(\mathbb C)\) with unbounded matrix sizes, or fixed truncated-polynomial algebras \(\K[t]/(t^d)\) with their coefficient \(\ell_1\)-norms.

\begin{corollary}[Boundedly complete unconditional bases]
  \label{cor:boundedly-complete-unitization}
  Let \(U\) have a normalized \(1\)-unconditional boundedly complete basis \((u_n)_{n\geq1}\), and give \(U\) coordinatewise multiplication.
  The algebra \(U\oplus\K I\), with \(\norm{u+\lambda I}=\norm u+\abs\lambda\) and the product in \eqref{eq:unconditional-sum-unitization}, is isometrically isomorphic, as a unital Banach algebra, to \(\Cal(X)\) for a separable Banach space \(X\) with a Schauder basis.
  There are infinite-rank projections \((Q_n)_{n\geq1}\) on \(X\) such that \(Q_nQ_m=0\) for \(n\ne m\) and \(\norm{Q_n}=\norm{[Q_n]}=1\).
  For every finitely supported scalar sequence \((a_n)\) and every \(\lambda\in\K\), we have
  \begin{equation}\label{eq:unconditional-lifting-norm}
    \left\lVert\lambda I_X+\sum_na_nQ_n\right\rVert =\left\lVert\lambda[I_X]+\sum_na_n[Q_n]\right\rVert_{\Cal(X)} =\abs\lambda+\left\lVert\sum_na_nu_n\right\rVert_U.
  \end{equation}
  In particular, taking \(\lambda=0\) gives an isometric lifting of the prescribed basis.
  Moreover, we have the topological direct sum
  \begin{equation}\label{eq:unconditional-operator-decomposition}
    \Bcal(X)=\Kcal(X)\oplus
    \overline{\operatorname{span}}\{Q_n:n\geq1\}\oplus\K I_X.
  \end{equation}
  Equivalently, we have the isometric identification
  \[ \Cal(X)\simeq \left(\bigoplus_{n\geq1}\K[Q_n]\right)_U\oplus\K[I_X], \]
  with the norm and unitization product of \eqref{eq:unconditional-sum-unitization}.
\end{corollary}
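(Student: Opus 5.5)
The first step is to apply Corollary~\ref{cor:unconditional-algebra-sums} with \(B_n=\K\) for every \(n\), using the constant component towers \(B_{n,r}=\K\). This gives the isometric unital algebra isomorphism \(\Cal(X)\simeq U\oplus\K I\). The tower in \eqref{eq:unconditional-sum-tower} is then \(A_r=\operatorname{span}\{u_1,\ldots,u_r\}\oplus\K I\), carrying the \(U\)-norm plus \(\abs\lambda\).

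Next we fix the projections. In \(\widehat A\) let \(e_n\) be the compatible family whose \(r\)-th coordinate is \(u_n\) for \(r\geq n\) and \(0\) for \(r<n\), and put \(Q_n=M_{e_n}\). By Lemma~\ref{lem:scalarization}, the assignment \(a\mapsto M_a\) is an isometric unital algebra homomorphism \(\widehat A\to\Bcal(X)\). Since \(e_n^2=e_n\) and \(e_ne_m=0\) for \(n\ne m\), the \(Q_n\) are idempotents with \(Q_nQ_m=0\). For a finitely supported sequence \((a_n)\) and \(\lambda\in\K\), the element \(\lambda1+\sum_na_ne_n\) of \(\widehat A\) corresponds to \(\lambda I+\sum_na_nu_n\), whose norm is \(\abs\lambda+\norm{\sum a_nu_n}_U\). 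The norm identity \(\norm{M_a}=\norm a\) from \eqref{eq:exact-algebra} then gives the first equality in \eqref{eq:unconditional-lifting-norm}, and the essential norm identity of Theorem~\ref{thm:global-classification} gives the second. Taking the single vector \(u_n\) yields \(\norm{Q_n}=\norm{[Q_n]}=1\). Since \([Q_n]\ne0\), each \(Q_n\) is not compact, and a noncompact idempotent has infinite rank.

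For the decomposition \eqref{eq:unconditional-operator-decomposition}, Theorem~\ref{thm:global-classification} writes every \(S\) uniquely as \(M_a+K\). Because \((u_n)\) is a basis, the closed span of \(\{Q_n\}\) is \(\{M_b:b\in U\}\), since \(b\mapsto M_b\) is isometric and convergence of \(\sum b_nu_n\) transfers to it. Hence \(\{M_a\}=\overline{\operatorname{span}}\{Q_n\}\oplus\K I_X\). The sum is topological because the essential norm controls the coefficient and the split \(U\oplus\K I\) is bounded. The isometric identification with \((\bigoplus\K[Q_n])_U\oplus\K[I_X]\) is then a restatement.

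The main obstacle is the claim that \(X\) has a Schauder basis; the block BD space carries only an FDD. I would first try to choose bases of each fibre \(E_\gamma\) so that the FDD refines to a basis. The FDD is shrinking by Lemma~\ref{lem:compactness-test}, and when \(B_n=\K\) every \(A_r\) has a monotone basis (the \(u_n\) together with \(I\)), which gives a natural choice of fibre bases. However, the fibre projections need not be uniformly bounded, because the dual fibres \(A_r^*\) carry quotient-type norms. The fallback is to prove the uniform bound directly: use \(1\)-unconditionality to show that coordinate projections onto initial segments of \(\{u_1,\ldots,u_r\}\oplus\K I\) are uniformly bounded, with a bound independent of \(r\), on both \(A_r\) and \(A_r^*\). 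Each FDD block \(Z_n\) then has a basis with basis constant bounded independently of \(n\), and a standard argument shows that an FDD whose blocks have uniformly bounded basis constants admits a Schauder basis. Alternatively, one can appeal to the fact that a separable \(\mathcal L_\infty\) space has a basis, after checking that the block BD construction with these fibres is \(\mathcal L_\infty\).
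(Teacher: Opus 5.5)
Your proposal is correct and follows the paper's proof. The algebraic part (the idempotents \(Q_n=M_{u_n}\), the norm identities, and the decomposition with \(\norm K\leq2\norm S\)) matches, and your fallback for the basis is the paper's argument. The link you leave implicit is that \(F_m\) is an \(\ell_\infty\)-sum of fibres \(A_r=U_r\oplus_1\K I\) and \(A_r^*=U_r^*\oplus_\infty\K\). Their canonical bases are \(1\)-unconditional, so your worry about quotient norms on \(A_r^*\) is moot. Moreover, \(i_m|_{F_m}:F_m\to Z_m\) has norm at most \(M\) and inverse of norm at most \(1\), so the concatenated bases have partial-sum projections bounded by \(M+M\kappa\).

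Drop the \(\mathcal L_\infty\) alternative, however. A unit probe gives a norm-one projection onto an isometric copy of \(A_r\) in \(X\), so for \(U=\ell_2\) the space contains uniformly complemented \(\ell_2^n\)'s and is not an \(\mathcal L_\infty\)-space.
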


\begin{proof}
  Apply Corollary~\ref{cor:unconditional-algebra-sums} with \(B_n=\K\) and constant component towers.
  In this case, we have
  \[ U_r=\operatorname{span}\{u_1,\ldots,u_r\},\qquad A_r=U_r\oplus\K I,\qquad \pi_r^s(u+\lambda I)=P_ru+\lambda I, \]
  where \(P_r\) is the basis projection onto \(U_r\).
  Use the multiplier representation of Theorem~\ref{thm:main} and put \(Q_n=M_{u_n}\), viewing \(u_n\) in \(U\oplus\K I\).
  Since \(u_nu_m=\delta_{nm}u_n\), we obtain from the exact representation and essential-norm identities that
  \[ Q_n^2=Q_n,\qquad Q_nQ_m=0\quad(n\ne m),\qquad \norm{Q_n}=\norm{[Q_n]}=1. \]
  In particular, \(Q_n\) is not compact and therefore has infinite rank.
  The same identities give \eqref{eq:unconditional-lifting-norm}.

  The map \(u\mapsto M_u\) is an isometry onto \(\overline{\operatorname{span}}\{Q_n:n\geq1\}\).
  Every operator has a unique expression \(S=K+M_u+\lambda I_X\), and we have
  \[
  \abs\lambda+\norm u=\norm{[S]}\leq\norm S.
  \]
  For the compact term \(K\), we obtain
  \[
    \norm K \leq\norm S+\norm{M_u+\lambda I_X} =\norm S+\norm{[S]} \leq2\norm S.
  \]
  These estimates prove that the three summands in \eqref{eq:unconditional-operator-decomposition} are closed and their coordinate projections are bounded.

  It remains to verify the Schauder basis assertion.
  Return to the BD notation \((Z_m)\), \(P_I\), \(i_m\), \(M\), and \(\kappa\) of Subsection~\ref{sec:bd-realization}, where \(P_I\) denotes an FDD projection on \(X\).
  Both \(A_r=U_r\oplus_1\K I\) and \(A_r^*=U_r^*\oplus_\infty\K\) have their canonical normalized \(1\)-unconditional bases.
  These are precisely the two fibre types used in the construction.
  At construction rank \(m\), their finite \(\ell_\infty\)-sum \(F_m\) therefore has a concatenated basis whose partial-sum projections \(C_{m,k}\) have norm at most one.
  Let \(T_m=i_m|_{F_m}:F_m\to Z_m\), with the earlier coordinates set equal to zero.
  Since \(i_m\) fixes its first \(m\) ambient blocks, we obtain from \eqref{eq:formal-block-bounds} that
  \[ \norm{T_m}\leq M,\qquad \norm{T_m^{-1}}\leq1,\qquad \norm{P_{\{m\}}}\leq\kappa. \]
  Concatenate the transported bases of \(Z_m\) in rank order.
  For the partial-sum projections, we have
  \[
  \begin{aligned}
    \left\lVert P_{[1,m-1]}+T_mC_{m,k}T_m^{-1}P_{\{m\}}\right\rVert\leq\norm{P_{[1,m-1]}}
    +\norm{T_m}\norm{C_{m,k}}\norm{T_m^{-1}}\norm{P_{\{m\}}}\leq M+M\kappa.
  \end{aligned}
  \]
  Their span is dense because \((Z_m)\) is an FDD.
  The uniform bound therefore proves that the concatenated sequence is a Schauder basis.
\end{proof}

\begin{remark}[Comparison with the reflexive construction]
  \label{rem:mpb-bounded-completeness}
  Corollary~\ref{cor:boundedly-complete-unitization} applies to every space in \cite[Theorem~1.2]{MotakisPelczar2025}.
  Indeed, under the hypothesis of that theorem, \cite[Theorem~7.1]{MotakisPelczar2025} gives a blocking \(\Upsilon\), constants \(\theta>0\) and \(1\leq q<\infty\), and the lower estimate
  \begin{equation}\label{eq:mpb-lower-q}
    \theta\left(\sum_{j=1}^m\norm{x_j}^q\right)^{1/q}
    \leq\left\lVert\sum_{j=1}^mx_j\right\rVert
  \end{equation}
  for every finite successive \(\Upsilon\)-block sequence.
  Suppose that \(C=\sup_N\norm{\sum_{n=1}^Na_nu_n}<\infty\), but the series does not converge.
  Contractivity of interval projections implies that the partial sums at the blocking endpoints are not Cauchy.
  Thus there are successive intervals \(I_j\), each a union of complete \(\Upsilon\)-blocks, and \(\delta>0\) such that
  \[ x_j=\sum_{n\in I_j}a_nu_n,\qquad \norm{x_j}\geq\delta. \]
  By suppression unconditionality and \eqref{eq:mpb-lower-q}, we have
  \[ \theta\delta m^{1/q} \leq\left\lVert\sum_{j=1}^mx_j\right\rVert\leq C \qquad(m\geq1), \]
  a contradiction.
  Hence the basis is boundedly complete.
  In \cite[Theorem~1.2]{MotakisPelczar2025}, the realizing projections and their Calkin images are \(128\)-equivalent to the prescribed basis. Here \eqref{eq:unconditional-lifting-norm} gives an isometric unital algebra lift for the standard unitization norm.
\end{remark}

In particular, Corollary~\ref{cor:boundedly-complete-unitization} applies whenever \(U\) is reflexive and has a normalized \(1\)-unconditional basis, since every basis of a reflexive space is boundedly complete \cite[Theorem~3.2.19]{AlbiacKalton2016}.
Indeed, bounded partial sums have a weak cluster point whose basis coefficients are the prescribed scalars, so the basis expansion of this point gives norm convergence.
We fix the notation for the next example. For nonempty finite sets \(E,F\subseteq\N_+\), write \(E<F\) if \(\max E<\min F\), and put \(Ex=\sum_{n\in E}x_ne_n\) for \(x=(x_n)\in c_{00}(\N_+)\).
Following the Figiel--Johnson construction \cite[Section~2]{FigielJohnson1974}, define iteratively
\[
  \norm{x}_0=\norm{x}_\infty,\qquad \norm{x}_{m+1}=\max\left\{\norm{x}_m, \frac12\sup_{\substack{k\leq\min E_1\\E_1<\cdots<E_k}} \sum_{j=1}^k\norm{E_jx}_m\right\}.
\]
The norms increase and are bounded above by \(\norm{x}_{\ell_1}\).
The Figiel--Johnson Tsirelson space \(T\) is the completion of \(c_{00}(\N_+)\) for \(\norm{x}_T=\lim_{m\to\infty}\norm{x}_m\).
We write \(T^*\) for its Banach dual, which is Tsirelson's original space \cite{Tsirelson1974}, and \((e_n^*)\) for the basis biorthogonal to \((e_n)\) (see \cite[Section~2.4]{BaudierLancienSchlumprecht2018}).
The coordinatewise multiplication used below is determined by \(e_i^*e_j^*=\delta_{ij}e_i^*\).
The next example shows that our admissible class of bases is strictly larger than the class in \cite[Theorem~1.2]{MotakisPelczar2025}.

\begin{corollary}[Tsirelson's original space]
  \label{cor:tsirelson-original}
  Let \(T^*\) be Tsirelson's original space with its usual normalized \(1\)-unconditional basis and coordinatewise multiplication.
  Then \(T^*\oplus\K I\), with the norm and product in \eqref{eq:unconditional-sum-unitization}, is isometrically a Calkin algebra of a Banach space with a Schauder basis, with the projections and decomposition in Corollary~\ref{cor:boundedly-complete-unitization}.
  With this basis, \(T^*\) does not satisfy the hypothesis of \cite[Theorem~1.2]{MotakisPelczar2025}.
\end{corollary}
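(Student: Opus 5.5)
The plan has two parts. The realization claim will follow from Corollary~\ref{cor:boundedly-complete-unitization}. For the negative claim we exhibit a specific obstruction to the hypothesis of \cite[Theorem~1.2]{MotakisPelczar2025}.

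\textbf{Realization.} It suffices to check that the unit vector basis \((e_n^*)\) of \(T^*\) is normalized, \(1\)-unconditional and boundedly complete.
\begin{itemize}
\item Normalization and \(1\)-unconditionality of \((e_n)\) in \(T\) follow from the Figiel--Johnson recursion, because every \(\norm{\cdot}_m\) depends only on the moduli of the coordinates.
\item These properties pass to the biorthogonal sequence in \(T^*\).
\item The space \(T\) is reflexive, so \(T^*\) is reflexive as well.
\item By \cite[Theorem~3.2.19]{AlbiacKalton2016}, every basis of a reflexive space is boundedly complete.
\end{itemize}
We also note that \(T^*\) is the closed span of the \(e_n^*\): \((e_n)\) is shrinking, so the coordinate sequence space of \((e_n^*)\) is all of \(T^*\). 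Corollary~\ref{cor:boundedly-complete-unitization} then gives the isometric Calkin realization with a Schauder basis, together with the projections \(Q_n\) and the decomposition \eqref{eq:unconditional-operator-decomposition}.

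\textbf{Failure of the hypothesis.} That hypothesis is that the basis has no \(c_0\) asymptotic version. We will show that \(c_0\) is an asymptotic version of \((e_n^*)\), i.e.\ that \(T^*\) is asymptotically \(c_0\) in the sense of \cite{MaureyMilmanTomczak1995}.

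The starting point is the defining lower estimate in \(T\). For \(k\leq x_1<\cdots<x_k\) we have
\[
\norm{\textstyle\sum_j x_j}_T\geq\tfrac12\sum_j\norm{x_j}_T ,
\]
so \(T\) is asymptotic \(\ell_1\) with constant \(2\). We dualize this estimate. Take successive normalized blocks \(k\leq f_1<\cdots<f_k\) in \(T^*\) and an \(x\in B_T\). Write \(E_j=\ran f_j\). By \(1\)-unconditionality the sets \(E_j\) may be taken to be successive intervals starting after \(k\). This gives
\[
\abs{\textstyle\sum_j f_j(x)}\leq\sum_j\norm{E_jx}_T\leq 2\norm{x}_T ,
\]
hence \(\norm{\sum_j f_j}\leq 2\). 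The lower bound \(\norm{\sum_j f_j}\geq\max_j\norm{f_j}\) is automatic from \(1\)-unconditionality.

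So every admissible block sequence of length \(k\) is \(2\)-equivalent to the unit vector basis of \(\ell_\infty^k\). By the definition of asymptotic versions, \(c_0\) is then an asymptotic version of \((e_n^*)\), and in fact every asymptotic version is \(2\)-equivalent to it. Thus the hypothesis of \cite[Theorem~1.2]{MotakisPelczar2025} fails.

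\textbf{Main obstacle.} The delicate step is matching conventions. We must confirm that the notion of asymptotic version used in \cite{MotakisPelczar2025} (a game or spreading-model type definition, with admissibility \(k\leq\min\supp\)) agrees with the Schreier-admissible estimate obtained above. As a cross-check, we will use the fact that the lower \(\ell_q\) estimate \eqref{eq:mpb-lower-q} fails in \(T^*\): the normalized averages \(k^{-1}\sum_{j=1}^k f_j\) of admissible blocks have norm at most \(2/k\), which contradicts any lower estimate of the form \(\theta\,k^{1/q}\) on successive blocks.
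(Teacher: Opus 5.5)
Your realization step is the same as the paper's. Reflexivity and $1$-unconditionality give bounded completeness of $(e_n^*)$ via \cite[Theorem~3.2.19]{AlbiacKalton2016}, and Corollary~\ref{cor:boundedly-complete-unitization} does the rest.

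For the negative assertion you isolate the paper's key estimate: $\norm{\sum_{j=1}^N f_j}\leq2$ for normalized successive blocks with $N\leq\min\supp f_1$. The paper quotes it from \cite[Section~2.4, (2.13)]{BaudierLancienSchlumprecht2018}. Your dualization of the Figiel--Johnson inequality proves it directly. Putting $\abs{a_j}$ into the same computation gives the upper $\ell_\infty^N$ bound for general coefficients, which your $2$-equivalence claim needs.

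The two routes differ in how this estimate is linked to \cite[Theorem~1.2]{MotakisPelczar2025}.
\begin{itemize}
\item You aim to verify directly that $c_0$ is an asymptotic version. This gives more information (that $T^*$ is asymptotic-$c_0$), but it depends on their precise definition. ``By the definition'' is not accurate if the notion is the Maurey--Milman--Tomczak-Jaegermann one with $(1+\varepsilon)$-equivalence in the asymptotic game. Uniform $2$-equivalence then only shows that every asymptotic version is $2$-equivalent to $c_0$. You would still need a stabilization result, for instance the asymptotic analogue of Krivine's theorem from \cite{MaureyMilmanTomczak1995}: some $\ell_p^n$ lies in $\{T^*\}_n$ for every $n$, and your estimate forces $p=\infty$.
\item The paper never touches the definition. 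It applies \cite[Theorem~7.1]{MotakisPelczar2025} contrapositively: the hypothesis would produce a blocking $\Upsilon$ satisfying \eqref{eq:mpb-lower-q}. Testing this on basis vectors $e^*_{k_1},\ldots,e^*_{k_N}$ taken from distinct successive $\Upsilon$-blocks with $k_1\geq N$ gives $\theta N^{1/q}\leq2$ for every $N$, which is impossible.
\end{itemize}

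The paper's argument is exactly your cross-check, so the cross-check is already a complete proof on its own. You only need to choose the test vectors as $\Upsilon$-blocks in this way. The estimate \eqref{eq:mpb-lower-q} is asserted only for successive $\Upsilon$-block sequences, and arbitrary admissible blocks $f_j$ need not be of that form.
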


\begin{proof}
  The space \(T^*\) is reflexive and has a normalized \(1\)-unconditional basis \((e_n^*)\) \cite[Section~2.4]{BaudierLancienSchlumprecht2018}, so the basis is boundedly complete \cite[Theorem~3.2.19]{AlbiacKalton2016}.
  The first assertion follows from Corollary~\ref{cor:boundedly-complete-unitization}.
  For successive blocks \(x_1^*<\cdots<x_N^*\) in the unit ball with \(N\leq\min\operatorname{supp}x_1^*\), we obtain from \cite[Section~2.4, (2.13)]{BaudierLancienSchlumprecht2018} that
  \[ \left\lVert\sum_{j=1}^Nx_j^*\right\rVert\leq2. \]
  Here \(<\) means that the basis supports are successive.
  For any blocking \(\Upsilon\) and any \(N\), choose basis vectors \(e_{k_1}^*,\ldots,e_{k_N}^*\) in distinct successive \(\Upsilon\)-blocks with \(k_1\geq N\).
  If this blocking satisfied \eqref{eq:mpb-lower-q}, then we would have
  \[ \theta N^{1/q} \leq\left\lVert\sum_{j=1}^Ne_{k_j}^*\right\rVert\leq2 \qquad(N\geq1), \]
  which is impossible.
  By \cite[Theorem~7.1]{MotakisPelczar2025}, \(T^*\) with its canonical basis is therefore outside that class.
\end{proof}

\begin{remark}
  Proposition~\ref{prop:unconditional-sum-converse} explains the role of bounded completeness for the usual coordinatewise sum.
  Without bounded completeness, the finite coordinate tower can give a larger inverse limit.
  For the unit vector basis \((e_n)\) of \(c_0\), we obtain from the same construction that
  \[ \ilim\bigl(\operatorname{span}\{e_1,\ldots,e_r\}\oplus\K I, \pi_r^s\bigr)=\ell_\infty\oplus\K I. \]
  Indeed, the norm of a compatible sequence is
  \[ \sup_{r\geq1}\left(\abs\lambda+\max_{1\leq n\leq r}\abs{a_n}\right) =\abs\lambda+\sup_n\abs{a_n}. \]
  The constant coefficient sequence \(a_n=1\) defines a bounded compatible sequence although it does not belong to \(c_0\).
  Thus this tower does not recover \(c_0\oplus\K I\).
  Both sums carry the norm and product in \eqref{eq:unconditional-sum-unitization}.
\end{remark}

The following construction by finite quotients covers several convolution examples (see \cite[Chapters~3 and~4]{DalesLauStrauss2010} for background on semigroup quotients and convolution algebras).

\begin{corollary}[Weighted monoid algebras]
  \label{cor:rees-filtered-monoids}
  Let \(S\) be a countable monoid with identity \(e\).
  Suppose that there are finite subsets
  \( e\in S_0\subseteq S_1\subseteq\cdots\subseteq S,  S=\bigcup_{r=0}^\infty S_r, \)
  such that
  \begin{equation}\label{eq:rees-factor-closed}
    uv\in S_r
    \quad\Longrightarrow\quad
    u,v\in S_r.
  \end{equation}
  Let \(\omega:S\to(0,\infty)\) satisfy \(\omega(e)=1\) and \(\omega(uv)\leq\omega(u)\omega(v)\) for \(u,v\in S\).
  Then the weighted convolution algebra
  \[ \ell_1(S,\omega) =\left\{a=(a_u)_{u\in S}: \sum_{u\in S}\abs{a_u}\omega(u)<\infty\right\} \]
  is isometrically a Calkin algebra.
\end{corollary}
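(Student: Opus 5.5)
The plan is to realize \(\ell_1(S,\omega)\) as a weak-star closed algebra with a separating family of finite-dimensional quotients by Rees-type ideals, and then invoke Theorem~\ref{thm:finite-detection-duality}\textup{(iii)} together with Theorem~\ref{thm:main}.

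First I check that \(\ell_1(S,\omega)\) is a unital Banach algebra. Submultiplicativity of \(\omega\) gives \(\norm{a\star b}\leq\norm a\norm b\), and \(\delta_e\) is a unit of norm \(\omega(e)=1\). Next I fix the isometric predual. Put \(E=c_0(S,\omega^{-1})\), the space of functions \(f\) with \(f/\omega\in c_0(S)\), normed by \(\sup_u\abs{f(u)}/\omega(u)\). Under the pairing \(\langle a,f\rangle=\sum_ua_uf(u)\) we have \(E^*=\ell_1(S,\omega)\) isometrically; this is just the weighted form of \(c_0^*=\ell_1\).

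Take \(E_r=\operatorname{span}\{\delta_u:u\in S_r\}\subseteq E\). These subspaces are finite dimensional, nonzero since \(e\in S_0\), and increasing. Their union is dense because finitely supported functions are dense in \(E\) and \(S=\bigcup_rS_r\). The annihilator is
\[E_r^\perp=\{a:a_u=0\text{ for }u\in S_r\}.\]
It is proper because \(\delta_e\notin E_r^\perp\).

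The key step is that \(E_r^\perp\) is a two-sided ideal, and this is exactly where \eqref{eq:rees-factor-closed} enters. For \(a\in E_r^\perp\), any \(b\), and \(w\in S_r\), we have
\[(a\star b)_w=\sum_{uv=w}a_ub_v.\]
Each \(u\) occurring in this sum lies in \(S_r\) by factor-closure, so \(a_u=0\) and the coordinate vanishes. The same argument applies to \(b\star a\). Convergence of the convolution sums is absolute, so nothing delicate happens there.

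Theorem~\ref{thm:finite-detection-duality} then gives an inverse system satisfying \eqref{eq:inverse-system}--\eqref{eq:metric-quotient} whose bounded limit is \(\ell_1(S,\omega)\) isometrically. Theorem~\ref{thm:main} then makes it a Calkin algebra.

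I expect the main point to check to be the isometric duality \(c_0(S,\omega^{-1})^*=\ell_1(S,\omega)\), in particular that the norm on the predual is chosen so that the duality is exactly isometric. The rest is routine once the ideal property is verified via \eqref{eq:rees-factor-closed}.
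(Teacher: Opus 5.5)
Your argument is correct, but it takes a different route from the paper.

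The paper never passes through a predual. It writes down the finite stages explicitly as the Rees quotient algebras \(A_r=\operatorname{span}\{e_u:u\in S_r\}\), with the truncated product and the norm \(\sum_{u\in S_r}\abs{a_u}\omega(u)\). It checks submultiplicativity and \(\norm{e_e}=1\) by hand, and shows that coefficient restriction is a unital metric-quotient homomorphism, since zero extension gives norm-preserving lifts. It then identifies the bounded inverse limit with \(\ell_1(S,\omega)\) by monotone convergence. Factor closure is used a second time there: every \(w\) has only finitely many factorizations, so the stage products compute the full convolution coefficients.

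You instead feed the isometric predual \(c_0(S,\omega^{-1})\) and the subspaces \(E_r=\operatorname{span}\{\delta_u:u\in S_r\}\) into Theorem~\ref{thm:finite-detection-duality}\textup{(iii)}. That theorem's Hahn--Banach argument then supplies the metric-quotient property, the unit normalization of the stages, and the multiplicative isometric identification of the bounded limit. The only structural input you need is that \(E_r^\perp\) is a two-sided ideal. This is exactly the same use of \eqref{eq:rees-factor-closed} that makes \(S\setminus S_r\) a semigroup ideal in the paper. You also never need finiteness of the factorization sets, only absolute convergence.

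Your route is shorter and matches how the paper itself handles Proposition~\ref{prop:coalgebra}. The paper's route has the advantage of describing the finite-dimensional algebras \(A_r\) and their norms concretely. Your quotients \(\ell_1(S,\omega)/E_r^\perp\) carry the same truncated weighted \(\ell_1\)-norms, but in your argument this stays implicit.
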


\begin{proof}
  Condition~\eqref{eq:rees-factor-closed} says that \(S\setminus S_r\) is a two-sided semigroup ideal.
  Write \(e_u\) for the coefficient family that is \(1\) at \(u\) and \(0\) elsewhere.
  Let \(A_r\) be the span of \(\{e_u:u\in S_r\}\), with the product
  \[
  e_u\cdot_re_v= \begin{cases} e_{uv},&uv\in S_r,\\
  0,&uv\notin S_r. \end{cases}
  \]
  This is the quotient of the algebraic semigroup algebra by the span of \(\{e_u:u\notin S_r\}\), and is therefore associative.
  Give it the norm \(\norm a_{r,\omega}=\sum_{u\in S_r}\abs{a_u}\omega(u)\).
  Its unit is \(e_e\), of norm one.
  For \(a,b\in A_r\), we have
  \begin{align*}
    \norm{a\cdot_rb}_{r,\omega} &=\sum_{w\in S_r} \abs{\sum_{\substack{uv=w\\u,v\in S_r}}a_ub_v}\omega(w)\\
    &\leq\sum_{\substack{u,v\in S_r\\uv\in S_r}} \abs{a_u}\abs{b_v}\omega(uv) \leq\sum_{u,v\in S_r}\abs{a_u}\omega(u)\abs{b_v}\omega(v) =\norm a_{r,\omega}\norm b_{r,\omega}.
  \end{align*}
  Hence \(A_r\) is a finite-dimensional unital Banach algebra.

  For \(r\leq s\), let \(\sigma_r^s:A_s\to A_r\) be coefficient restriction.
  Since \(S\setminus S_r\) is a two-sided ideal, \(\sigma_r^s\) is a unital homomorphism.
  It is contractive, and zero extension preserves the weighted norm.
  Hence every bonding map is a metric quotient.

  A compatible sequence has one coefficient \(a_u\) for every \(u\in S\), and by monotone convergence we have
  \[ \sup_r\sum_{u\in S_r}\abs{a_u}\omega(u) =\sum_{u\in S}\abs{a_u}\omega(u). \]
  If \(w\in S_r\) and \(uv=w\), then we obtain \(u,v\in S_r\) from \eqref{eq:rees-factor-closed}.
  In particular, \(w\) has only finitely many factorizations \(uv=w\), and the \(w\)-coordinate of the stage product is the full convolution coefficient.
  Hence the inverse-limit multiplication is convolution, and the bounded inverse limit is \(\ell_1(S,\omega)\) isometrically.
  Apply Theorem~\ref{thm:main}.
\end{proof}

\begin{corollary}[Free-monoid convolution algebras]
  \label{cor:free-semigroup}
  If \(1\leq d<\infty\), then \(\ell_1(\mathbb F_d^+)\), with convolution on the free monoid on \(d\) generators, is isometrically a Calkin algebra.
\end{corollary}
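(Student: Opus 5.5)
This will be a direct application of Corollary~\ref{cor:rees-filtered-monoids} with the trivial weight \(\omega\equiv1\). Let \(S=\mathbb F_d^+\) be the free monoid on generators \(g_1,\ldots,g_d\), with identity the empty word \(e\), and let \(S_r\) be the set of words of length at most \(r\). Since the alphabet is finite, each \(S_r\) is finite: it has at most \(\sum_{k\leq r}d^k\) elements. Moreover, \(e\in S_0\), the sets \(S_r\) increase, and their union is all of \(S\). The constant weight \(\omega\equiv1\) satisfies \(\omega(e)=1\) and \(\omega(uv)\leq\omega(u)\omega(v)\), and with this weight the norm \(\sum_u\abs{a_u}\omega(u)\) is the usual \(\ell_1\)-norm, so \(\ell_1(S,\omega)=\ell_1(\mathbb F_d^+)\) with convolution.

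The only hypothesis to check is the factor-closure condition \eqref{eq:rees-factor-closed}. In a free monoid the product is concatenation, so length is additive: \(|uv|=|u|+|v|\). Hence \(uv\in S_r\) gives \(|u|+|v|\leq r\), and therefore \(|u|,|v|\leq r\), that is, \(u,v\in S_r\).

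Corollary~\ref{cor:rees-filtered-monoids} then gives the conclusion. No step is a real obstacle. The one point to keep in mind is that the factor-closure property uses the fact that words cannot shorten under multiplication; this holds because \(\mathbb F_d^+\) has no cancellation relations. Finiteness of \(d\) is used only to make the \(S_r\) finite, which is needed for finite-dimensional stages.
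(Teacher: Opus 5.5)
Your proposal is correct and matches the paper's proof: apply Corollary~\ref{cor:rees-filtered-monoids} with \(\omega\equiv1\) and \(S_r\) the words of length at most \(r\). Finiteness of each \(S_r\) comes from \(d<\infty\), and factor closure comes from additivity of word length.
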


\begin{proof}
  Take \(S_r\) to be the words of length at most \(r\) and \(\omega=1\).
  Since there are finitely many generators, each \(S_r\) is finite.
  Additivity of word length gives \eqref{eq:rees-factor-closed}, and every word belongs to some \(S_r\).
  Corollary~\ref{cor:rees-filtered-monoids} applies.
\end{proof}

For \(d=1\), this recovers \(\ell_1(\N)\), previously obtained by Tarbard \cite{Tarbard2013}, while for \(d\geq2\) the algebra is noncommutative.

\begin{corollary}[Dirichlet convolution algebras]
  \label{cor:dirichlet-convolution}
  Let \(\N_\times=\{1,2,\ldots\}\), with multiplication, and let \(\omega:\N_\times\to(0,\infty)\) satisfy
  \( \omega(1)=1\) and  \(\omega(mn)\leq\omega(m)\omega(n). \)
  Then \(\ell_1(\N_\times,\omega)\), with Dirichlet convolution (see \cite[Section~3, Example~1]{Rota1964}), is isometrically a Calkin algebra.
\end{corollary}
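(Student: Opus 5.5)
The plan is to deduce the statement from Corollary~\ref{cor:rees-filtered-monoids}, applied to the monoid \(S=\N_\times\) with identity \(e=1\) and the given weight \(\omega\). The hypotheses \(\omega(1)=1\) and \(\omega(mn)\leq\omega(m)\omega(n)\) are exactly the weight conditions required there. So the only task is to choose a filtration by finite subsets
\[
  1\in S_0\subseteq S_1\subseteq\cdots,\qquad \bigcup_rS_r=\N_\times ,
\]
satisfying the factor-closure condition \eqref{eq:rees-factor-closed}.

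The natural choice is the initial segment \(S_r=\{1,2,\ldots,r+1\}\). Each \(S_r\) is finite, contains \(1\), increases with \(r\), and the union is all of \(\N_\times\). For factor closure, suppose \(mn\in S_r\) with \(m,n\geq1\). Then \(m\leq mn\leq r+1\) and \(n\leq mn\leq r+1\), so both factors lie in \(S_r\).

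It remains to check that the convolution in \(\ell_1(\N_\times,\omega)\) produced by Corollary~\ref{cor:rees-filtered-monoids} is Dirichlet convolution. For \(w\in\N_\times\), the corollary gives
\[
  (ab)_w=\sum_{uv=w}a_ub_v=\sum_{d\mid w}a_d\,b_{w/d},
\]
which is the Dirichlet product of Rota's divisor-poset example. The norm is the weighted \(\ell_1\)-norm \(\sum_n\abs{a_n}\omega(n)\). Corollary~\ref{cor:rees-filtered-monoids} then gives the isometric Calkin realization.

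There is no real obstacle. The one point to state explicitly is that factor closure uses positivity of the factors, which gives \(m\leq mn\).
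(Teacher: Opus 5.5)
Your proof is correct. It uses the same reduction as the paper, applying Corollary~\ref{cor:rees-filtered-monoids} to $S=\N_\times$; the only difference is the choice of filtration.

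The paper takes $S_0=\{1\}$ and $S_r=\{p_1^{\alpha_1}\cdots p_r^{\alpha_r}:\sum_{j}\alpha_j\leq r\}$, and checks factor closure using prime exponents. Your initial segments $S_r=\{1,\ldots,r+1\}$ are factor closed simply because $m\leq mn$ for positive integers, so you avoid unique prime factorization altogether.

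The paper's choice buys something mild: its stage algebras are weighted truncated polynomial algebras in $r$ commuting variables. This matches the remark after the corollary, which identifies the unweighted algebra with absolutely summable power series in countably many variables.

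For the statement itself the choice does not matter. The proof of Corollary~\ref{cor:rees-filtered-monoids} shows that any exhausting filtration by finite factor-closed sets has bounded inverse limit $\ell_1(\N_\times,\omega)$, isometrically and with Dirichlet convolution.
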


\begin{proof}
  Let \(p_1,p_2,\ldots\) be the prime numbers, and write \(v_p(m)\) for the exponent of the prime \(p\) in \(m\).
  Put \(S_0=\{1\}\), and, for \(r\geq1\), put
  \[ S_r= \left\{ p_1^{\alpha_1}\cdots p_r^{\alpha_r}: \alpha_1,\ldots,\alpha_r\in\N,\quad \sum_{j=1}^r\alpha_j\leq r \right\}. \]
  Every \(S_r\) is finite, the sets increase, and unique prime factorization gives \(\N_\times=\bigcup_{r=0}^\infty S_r\).
  The factor-closure condition is immediate for \(S_0=\{1\}\).
  If \(r\geq1\) and \(mn\in S_r\), neither \(m\) nor \(n\) contains a prime factor larger than \(p_r\), and we have
  \[ \sum_{j=1}^r v_{p_j}(m) \leq\sum_{j=1}^r v_{p_j}(mn) \leq r, \]
  with the same estimate for \(n\).
  Hence we have \(m,n\in S_r\), so \eqref{eq:rees-factor-closed} holds.
  Apply Corollary~\ref{cor:rees-filtered-monoids}.
\end{proof}

Under prime factorization, the unweighted algebra in Corollary~\ref{cor:dirichlet-convolution} is the absolutely summable formal power-series algebra in countably many commuting variables:
\[ \left\{ \sum_{\alpha\in\N^{(\N_+)}}a_\alpha z^\alpha: \sum_\alpha\abs{a_\alpha}<\infty \right\}. \]
Here \(\N^{(\N_+)}\) denotes the set of finitely supported sequences of nonnegative integers, and \(z^\alpha=\prod_{j\geq1}z_j^{\alpha_j}\) \cite[Section~1.1]{CarandoEtAl2015}.
This algebra is not a scalar \(C(K)\)-algebra.
Indeed, if \(a,b\ne0\), let
\[ m=\min\{k:a_k\ne0\}, \qquad n=\min\{k:b_k\ne0\}. \]
If a summand \(a_db_{mn/d}\) in \((a*b)_{mn}\) is nonzero, then we have \(d\geq m\) and \(mn/d\geq n\).
Since their product is \(mn\), both inequalities are equalities.
Therefore we obtain
\[ (a*b)_{mn}=a_mb_n\ne0. \]
Thus the algebra is an infinite-dimensional integral domain, whereas \(C(K)\) has nonzero zero divisors whenever \(K\) contains more than one point.

\begin{corollary}[Finite-quiver path algebras]
  \label{cor:finite-quiver-path}
  Let \(Q=(Q_0,Q_1)\) be a nonempty finite directed graph, and let \(\mathcal P(Q)\) be its set of finite paths, including the paths of length zero (see \cite[Section~II.1]{AssemSimsonSkowronski2006} for the path-algebra construction).
  Write \(s(p)\), \(t(p)\), and \(|p|\) for the initial vertex, terminal vertex, and length of a path \(p\).
  The concatenation \(pq\) first traverses \(p\) and then \(q\), and is defined when \(t(p)=s(q)\).
  Write \(e_p\) for the coefficient family that is \(1\) at \(p\) and \(0\) elsewhere.
  Suppose that \(\omega:\mathcal P(Q)\to(0,\infty)\) satisfies
  \[ \omega(v)=1\quad(v\in Q_0), \qquad \omega(pq)\leq\omega(p)\omega(q) \]
  whenever \(p\) and \(q\) are composable.
  Let \(\mathcal A_\omega(Q)\) consist of the coefficient families \(a=(a_p)_{p\in\mathcal P(Q)}\) for which the following norm is finite, with path convolution as multiplication:
  \[
  \norm a_{\rm row,\omega} =\max_{v\in Q_0} \sum_{\substack{p\in\mathcal P(Q)\\
  s(p)=v}} \abs{a_p}\omega(p).
  \]
  Then \(\mathcal A_\omega(Q)\) is isometrically a Calkin algebra.
\end{corollary}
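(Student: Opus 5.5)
The plan is to follow the proof of Corollary~\ref{cor:rees-filtered-monoids}, with the weighted \(\ell_1\)-norm replaced by the row norm, and to build a finite tower \((A_r,\pi_r^s)\) of truncated path algebras whose bounded inverse limit is \(\mathcal A_\omega(Q)\). Theorem~\ref{thm:main} then finishes the argument. Let \(S_r\) be the set of paths of length at most \(r\). It is finite because \(Q\) is finite, it contains all vertices, the sets increase and exhaust \(\mathcal P(Q)\), and it is factor closed: if \(pq\in S_r\), then \(|p|+|q|=|pq|\leq r\). Let \(A_r\) be the span of \(\{e_p:p\in S_r\}\), with the product \(e_p\cdot_re_q=e_{pq}\) when \(p,q\) are composable and \(|pq|\leq r\), and \(0\) otherwise. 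This is the quotient of the algebraic path algebra by the span of the paths of length greater than \(r\), which is a two-sided ideal, so the product is associative. The unit is \(\sum_{v\in Q_0}e_v\). Give \(A_r\) the truncated row norm \(\norm a_{r}=\max_v\sum_{s(p)=v,\,p\in S_r}\abs{a_p}\omega(p)\). The unit has norm one because \(\omega(v)=1\) and each row contains exactly one vertex path.

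The step I expect to need care is submultiplicativity of the row norm, since that is where the row structure enters. For a fixed vertex \(v\), every path \(w\) with \(s(w)=v\) factors as \(w=pq\) with \(s(p)=v\) and \(s(q)=t(p)\). Using \(\omega(pq)\leq\omega(p)\omega(q)\),
\[
\sum_{s(w)=v}\abs{(ab)_w}\omega(w)\leq\sum_{s(p)=v}\abs{a_p}\omega(p)\sum_{s(q)=t(p)}\abs{b_q}\omega(q)\leq\norm a_r\norm b_r .
\]
Truncation only discards terms, so this holds in \(A_r\). The inequality is uniform in \(r\), so it also shows that \(\mathcal A_\omega(Q)\) is a Banach algebra, provided convolution is well defined there. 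It is, because each path has only finitely many factorizations \(w=pq\) (indeed \(|w|+1\) of them).

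Next take \(\pi_r^s\) to be coefficient restriction from \(S_s\) to \(S_r\). It is a unital homomorphism because the paths of length greater than \(r\) span an ideal. It is contractive because each row sum can only decrease, and zero extension gives a norm-preserving lift, so it is a metric quotient by the observation at the beginning of Section~\ref{sec:examples}. A compatible sequence determines one coefficient family \((a_p)_{p\in\mathcal P(Q)}\). Since the maximum is over the finitely many vertices, monotone convergence in each row lets us interchange \(\sup_r\) and \(\max_v\), which gives \(\sup_r\norm{a|_{S_r}}_r=\norm a_{\rm row,\omega}\). The bounded compatible sequences are therefore exactly the elements of \(\mathcal A_\omega(Q)\), with the same norm. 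Factor closure ensures that the \(w\)-coefficient of the stage-\(r\) product, for \(|w|\leq r\), is the full convolution coefficient, so the inverse-limit multiplication is path convolution. Hence \(\mathcal A_\omega(Q)\simeq\ilim(A_r,\pi_r^s)\) isometrically as unital Banach algebras, and Theorem~\ref{thm:main} applies.
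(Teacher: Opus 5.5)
Your proposal is correct and follows essentially the same route as the paper: truncated path algebras on paths of length at most \(r\) with the truncated row norm, the same row-by-row submultiplicativity estimate, coefficient restriction as metric-quotient bonding maps via zero extension, and interchange of \(\sup_r\) with the finite \(\max_v\) to identify \(\ilim(A_r,\pi_r^s)\) isometrically and multiplicatively with \(\mathcal A_\omega(Q)\) before applying Theorem~\ref{thm:main}. The only cosmetic difference is that you get completeness and associativity of \(\mathcal A_\omega(Q)\) from this identification. The paper instead checks them directly, viewing the algebra as a finite \(\ell_\infty\)-sum of weighted \(\ell_1\)-spaces and expanding over three-fold factorizations.
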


\begin{proof}
  For paths \(w\), define
  \[ (a*b)_w=\sum_{pq=w}a_pb_q. \]
  Each sum has \(|w|+1\) terms, corresponding to the possible places at which to split \(w\).
  Associativity follows by expanding either iterated product over the finite set of factorizations of \(w\) into three paths.
  Fix \(v\in Q_0\).
  By absolute convergence and the weight inequality, we have
  \begin{align*}
    \sum_{s(w)=v}\abs{(a*b)_w}\omega(w) &\leq\sum_{s(w)=v}\sum_{pq=w}\abs{a_p}\abs{b_q}\omega(pq)\\
    &\leq\sum_{s(p)=v}\abs{a_p}\omega(p) \sum_{s(q)=t(p)}\abs{b_q}\omega(q)\\
    &\leq\norm b_{\rm row,\omega} \sum_{s(p)=v}\abs{a_p}\omega(p) \leq\norm a_{\rm row,\omega}\norm b_{\rm row,\omega}.
  \end{align*}
  Taking the maximum over \(v\) proves that the displayed norm is an algebra norm.
  Since \(Q_0\) is finite, the algebra is a finite \(\ell_\infty\)-sum of weighted \(\ell_1\)-spaces and is complete.
  Its unit is \(1_{\mathcal A_\omega(Q)}=\sum_{v\in Q_0}e_v\).
  Every source row contains exactly one nonzero coefficient of this element, so we have \(\norm{1_{\mathcal A_\omega(Q)}}_{\rm row,\omega}=1\).

  Let \(A_r\) be the span of the paths of length at most \(r\), with truncated path concatenation and the corresponding row norm.
  The paths of length greater than \(r\) span a two-sided ideal in the algebraic path algebra, so truncation gives an associative quotient product.
  Since \(Q\) is finite, \(A_r\) is finite dimensional.
  Length restriction \(\sigma_r^s:A_s\to A_r\) is a unital homomorphism because a path of length at most \(r\) can only be factored into paths of length at most \(r\).
  It is contractive, and zero extension preserves the row norm.
  Hence the bonding maps are metric quotients.

  A compatible sequence has one coefficient \(a_p\) for every finite path.
  Since \(Q_0\) is finite, we have
  \[
  \sup_r\max_{v\in Q_0} \sum_{\substack{s(p)=v\\|p|\leq r}}\abs{a_p}\omega(p) =\max_{v\in Q_0}\sup_r \sum_{\substack{s(p)=v\\|p|\leq r}}\abs{a_p}\omega(p) =\max_{v\in Q_0}\sum_{s(p)=v}\abs{a_p}\omega(p).
  \]
  Thus the coefficient identification gives \(\ilim(A_r,\sigma_r^s)\simeq\mathcal A_\omega(Q)\) isometrically as Banach algebras.
  Apply Theorem~\ref{thm:main}.
\end{proof}

When \(Q\) has one vertex and \(d\) loops and \(\omega=1\), Corollary~\ref{cor:finite-quiver-path} reduces to Corollary~\ref{cor:free-semigroup}.
The row norm gives \(\norm{1}=1\), whereas the global coefficient \(\ell_1\)-norm gives \(\norm{1}=\abs{Q_0}\).

\begin{corollary}[Tensor power-series algebras]
  \label{cor:tensor-power-series}
  Let \(E\) be a finite-dimensional Banach space, and let \((\omega_n)_{n\geq0}\subseteq(0,\infty)\) satisfy 
  \( \omega_0=1\) and \( \omega_{i+j}\leq\omega_i\omega_j~(i,j\in\N). \)
  Put \(E^{\widehat\otimes_\pi0}=\K\), and let
  \[ \mathcal T_\omega^1(E) =\left\{x=(x_n)_{n\geq0}: x_n\in E^{\widehat\otimes_\pi n},\quad \sum_{n=0}^\infty \omega_n\norm{x_n}_\pi<\infty\right\}. \]
  Equip this space with multiplication and norm
  \[ (xy)_n=\sum_{i+j=n}x_i\otimes y_j,\qquad \norm x_{\mathcal T_\omega^1}=\sum_{n=0}^\infty\omega_n\norm{x_n}_\pi. \]
  Then \(\mathcal T_\omega^1(E)\) is isometrically a Calkin algebra.
\end{corollary}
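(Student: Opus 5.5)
We plan to follow the pattern of Corollaries~\ref{cor:rees-filtered-monoids} and~\ref{cor:finite-quiver-path}: first check that \(\mathcal T_\omega^1(E)\) is a unital Banach algebra, then write it isometrically as the bounded inverse limit of its degree truncations, and finally invoke Theorem~\ref{thm:main}.

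\emph{Banach algebra structure.} The basic fact is that the projective norm is a cross norm, \(\norm{x_i\otimes y_j}_\pi\leq\norm{x_i}_\pi\norm{y_j}_\pi\) on \(E^{\widehat\otimes_\pi i}\widehat\otimes_\pi E^{\widehat\otimes_\pi j}=E^{\widehat\otimes_\pi(i+j)}\), where the identification is isometric by associativity of \(\widehat\otimes_\pi\). With this and the submultiplicativity of \((\omega_n)\) we get
\[
\sum_n\omega_n\norm{(xy)_n}_\pi\leq\sum_{i,j}\omega_i\omega_j\norm{x_i}_\pi\norm{y_j}_\pi=\norm x\norm y .
\]
Associativity follows because the iterated tensor product is associative. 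The unit is \((1,0,0,\ldots)\), and it has norm \(\omega_0=1\). Completeness holds since the space is a weighted \(\ell_1\)-sum of the finite-dimensional spaces \(E^{\widehat\otimes_\pi n}\).

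\emph{The tower.} For \(r\in\N\) let \(A_r=\bigoplus_{n\leq r}E^{\widehat\otimes_\pi n}\) with the truncated product \((xy)_n=\sum_{i+j=n}x_i\otimes y_j\) for \(n\leq r\) and the norm \(\sum_{n\leq r}\omega_n\norm{x_n}_\pi\). This is the quotient of \(\mathcal T_\omega^1(E)\) by the closed ideal of sequences vanishing in degrees \(\leq r\); the set is an ideal because degrees add. Since \(E\) is finite dimensional, each \(A_r\) is finite dimensional. The quotient norm equals the truncated norm, because truncation is contractive and zero extension is an isometric lift. So the bonding maps \(\pi_r^s\) (degree truncations) are unital homomorphisms and metric quotients, and \eqref{eq:inverse-system}--\eqref{eq:metric-quotient} hold with \(\norm{1_{A_r}}=1\).

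\emph{Identification of the limit.} A bounded compatible family determines one component \(x_n\) in each degree. By monotone convergence, \(\sup_r\sum_{n\leq r}\omega_n\norm{x_n}_\pi=\sum_n\omega_n\norm{x_n}_\pi\). The degree-\(n\) coordinate of a stage product with \(r\geq n\) is the full Cauchy product coefficient, so the multiplication in the limit agrees with that of \(\mathcal T_\omega^1(E)\). Hence \(\ilim(A_r,\pi_r^s)\simeq\mathcal T_\omega^1(E)\) isometrically as unital Banach algebras, and Theorem~\ref{thm:main} gives the conclusion.

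The only point that needs care is the isometric identification \(E^{\widehat\otimes_\pi i}\widehat\otimes_\pi E^{\widehat\otimes_\pi j}=E^{\widehat\otimes_\pi(i+j)}\), which is the standard associativity of the projective tensor product; everything else is routine.
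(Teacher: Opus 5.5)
Your proposal is correct and follows essentially the same route as the paper. The paper likewise checks submultiplicativity via the projective cross-norm and the weight inequality, uses the degree-truncation tower with zero extension as the isometric lift, identifies the limit by monotone convergence of the weighted sums, and then applies Theorem~\ref{thm:main}.
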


\begin{proof}
  We use the natural associative identifications of the projective tensor powers.
  Expanding the degree-\(n\) coefficient of either iterated product over \(i+j+k=n\) proves associativity.
  Since \(\norm{x_i\otimes y_j}_\pi=\norm{x_i}_\pi\norm{y_j}_\pi\) \cite[Proposition~2.1]{Ryan2002}, we obtain from the weight inequality that
  \begin{align*}
    \norm{xy}_{\mathcal T_\omega^1} &=\sum_{n=0}^\infty\omega_n\norm{\sum_{i+j=n}x_i\otimes y_j}_\pi\\
    &\leq\sum_{i,j\geq0}\omega_{i+j}\norm{x_i}_\pi\norm{y_j}_\pi \leq\sum_{i,j\geq0}\omega_i\norm{x_i}_\pi\omega_j\norm{y_j}_\pi =\norm x_{\mathcal T_\omega^1}\norm y_{\mathcal T_\omega^1}.
  \end{align*}
  The weighted \(\ell_1\)-sum is complete, and the scalar in degree zero is a unit of norm one.

  Let
  \( A_r=\bigoplus_{n=0}^rE^{\widehat\otimes_\pi n}, \)
  with the weighted \(\ell_1\)-norm and multiplication truncated above degree \(r\).
  The terms of degree greater than \(r\) form a two-sided ideal, so this product is associative.
  Every \(A_r\) is finite dimensional.
  Degree restriction \(\sigma_r^s:A_s\to A_r\) is a unital homomorphism.
  It is contractive, and zero extension gives a lift of every \(x\in A_r\) with the same norm.
  Thus the bonding maps are metric quotients.

  Finally, a compatible sequence of finite truncations is determined by coefficients \(x_n\in E^{\widehat\otimes_\pi n}\), and we have
  \[ \sup_r\sum_{n=0}^r\omega_n\norm{x_n}_\pi =\sum_{n=0}^\infty\omega_n\norm{x_n}_\pi. \]
  For each \(n\), every stage \(r\geq n\) has product coefficient \(\sum_{i+j=n}x_i\otimes y_j\).
  Thus the bounded inverse limit is \(\mathcal T_\omega^1(E)\), with its prescribed norm and multiplication.
  Apply Theorem~\ref{thm:main}.
\end{proof}

For \(E=\ell_1^d\), with its canonical basis and \(\omega_n=1\), the projective tensor-product identification for \(\ell_1\)-spaces \cite[Section~2.1]{Ryan2002} shows that this recovers \(\ell_1(\mathbb F_d^+)\).
Other finite-dimensional norms on \(E\) give further isometric tensor-series examples.

The next example uses the incidence multiplication of \cite[Section~3]{Rota1964} on a lower-finite poset, with a row norm. For the \(c_0\)-incidence algebra notation, see \cite[Definition~5.9]{Acuaviva2025Posets}.

\begin{corollary}[Lower-finite \(c_0\)-incidence algebras]
  \label{cor:lower-finite-incidence}
  Let \(P\) be a nonempty countable lower-finite partially ordered set, meaning that \(\{x\in P:x\leq y\}\) is finite for every \(y\in P\).
  Let \(\operatorname{IA}(P,c_0)\) be the algebra of matrices \(a=(a_{xy})_{x,y\in P}\) satisfying
  \[ a_{xy}=0\quad(x\nleq y), \qquad \norm a_{\rm row} =\sup_{x\in P}\sum_{y\geq x}\abs{a_{xy}}<\infty. \]
  Then these matrices define bounded operators on \(c_0(P)\), the displayed norm is their operator norm, and \(\operatorname{IA}(P,c_0)\) is isometrically a Calkin algebra.
\end{corollary}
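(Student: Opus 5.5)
The plan is to follow the template of Corollaries~\ref{cor:rees-filtered-monoids} and~\ref{cor:finite-quiver-path}: prove the operator-norm statement on \(c_0(P)\), then exhibit \(\operatorname{IA}(P,c_0)\) isometrically as the bounded inverse limit of finite incidence algebras, and finally apply Theorem~\ref{thm:main}.

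\textbf{Operator norm.} For \(z\in c_0(P)\) put \((az)_x=\sum_{y\geq x}a_{xy}z_y\). The bound \(\abs{(az)_x}\leq\norm a_{\rm row}\norm z_\infty\) is immediate. To see that \(az\in c_0(P)\), first take \(z\) finitely supported, say on \(F\). Then \((az)_x\ne0\) forces \(x\leq y\) for some \(y\in F\). Lower-finiteness makes \(\bigcup_{y\in F}\{x:x\leq y\}\) finite, so \(az\) is finitely supported. Density of finitely supported vectors and boundedness then give \(az\in c_0(P)\) for all \(z\).

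For the reverse norm inequality, fix \(x\) and a finite set \(G\subseteq\{y\geq x\}\). Test on the finitely supported unimodular vector \(z_y=\overline{\operatorname{sgn}a_{xy}}\) for \(y\in G\), and \(z_y=0\) elsewhere. Letting \(G\) grow gives \(\norm a_{\rm op}\geq\sum_{y\geq x}\abs{a_{xy}}\). The incidence product \((ab)_{xz}=\sum_{x\leq y\leq z}a_{xy}b_{yz}\) is a finite sum and agrees with operator composition. Hence \(\operatorname{IA}(P,c_0)\) is a unital Banach algebra with \(\norm{1}=1\). Its completeness follows because it is closed in \(\Bcal(c_0(P))\), since coordinatewise limits preserve the support condition.

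\textbf{Tower.} Enumerate \(P=\{p_0,p_1,\ldots\}\) and let \(L_r=\bigcup_{k\leq r}\{x:x\leq p_k\}\). Each \(L_r\) is a finite down-set, the sets increase, and their union is \(P\). Put \(A_r=\operatorname{IA}(L_r)\) with the row norm \(\max_{x\in L_r}\sum_{y\in L_r,\,y\geq x}\abs{a_{xy}}\), and let \(\pi_r^s\) be restriction of the matrix to \(L_r\times L_r\). Because \(L_r\) is a down-set, any \(y\) with \(x\leq y\leq z\) and \(z\in L_r\) lies in \(L_r\). So restriction is multiplicative, and it is unital.

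Restriction is contractive since rows only lose entries. Zero extension is a norm-preserving lift: its rows indexed by \(L_r\) are unchanged, and its new rows are zero. By the observation at the start of Section~\ref{sec:examples}, each \(\pi_r^s\) is a metric quotient, and \(\norm{1_{A_r}}=1\) by Corollary~\ref{cor:finite-incidence}.

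\textbf{Identification of the limit (main step).} A compatible family determines one entry \(a_{xy}\) for every pair \(x\leq y\) in \(P\). For each fixed \(x\), the finite row sums over \(y\in L_r\) increase with \(r\) to \(\sum_{y\geq x}\abs{a_{xy}}\). Taking the supremum over \(r\) and \(x\) gives \(\sup_r\norm{a|_{L_r}}=\norm a_{\rm row}\), by interchanging the two suprema. This shows that bounded compatible families correspond exactly to elements of \(\operatorname{IA}(P,c_0)\), isometrically.

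Every \(x\leq z\) lies in some common \(L_r\), and there the stage product computes the full incidence coefficient. So the product in the limit is incidence multiplication, and Theorem~\ref{thm:main} yields the conclusion.

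I expect the only delicate point to be the down-set property, which is needed both for multiplicativity of restriction and for the zero-extension lift. This is why the sets are chosen to be the \(L_r\).
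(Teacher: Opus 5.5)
Your proposal is correct and takes essentially the same route as the paper: the same operator-norm computation on \(c_0(P)\) using finitely supported unimodular test vectors, the same tower of finite incidence algebras over the increasing finite down-sets \(\bigcup_{k\le r}\{x:x\le p_k\}\) with restriction maps and zero-extension lifts, and the same monotone-convergence identification of the bounded inverse limit before applying Theorem~\ref{thm:main}. The only differences are minor. You get submultiplicativity by identifying incidence multiplication with operator composition, whereas the paper proves it with a direct row-sum estimate, and for finite \(P\) your enumeration should allow repetitions, as the paper's does.
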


\begin{proof}
  For each \(y\in P\), the \(y\)-th column of an incidence matrix is supported by the finite set \(\{x\in P:x\leq y\}\).
  If \(z\in c_{00}(P)\), only finitely many columns contribute to \(az\), and hence we have \(az\in c_{00}(P)\).
  Moreover, for \(x\in P\), we have
  \[ \abs{(az)_x} \leq\sum_{y\geq x}\abs{a_{xy}}\abs{z_y} \leq\norm a_{\rm row}\norm z_\infty. \]
  Thus \(a\) extends to an operator on \(c_0(P)\) with \(\norm a_{\Bcal(c_0(P))}\leq\norm a_{\rm row}\).

  Conversely, fix \(x\in P\) and a finite set \(F\subseteq\{y:y\geq x\}\).
  Choose \(z\in c_{00}(P)\), supported by \(F\), with \(\norm z_\infty\leq1\) and \(a_{xy}z_y=\abs{a_{xy}}\) for \(y\in F\).
  Then we have
  \[ \norm a_{\Bcal(c_0(P))} \geq\abs{(az)_x} =\sum_{y\in F}\abs{a_{xy}}. \]
  Taking the supremum first over finite \(F\), and then over \(x\), we obtain
  \[ \norm a_{\Bcal(c_0(P))} \geq\sup_{x\in P}\sum_{y\geq x}\abs{a_{xy}} =\norm a_{\rm row}. \]
  Hence the two norms coincide.

  Incidence multiplication is well defined and submultiplicative.
  In fact, for every \(x\in P\), we have
  \begin{align*}
    \sum_{z\geq x}\abs{(ab)_{xz}} &\leq\sum_{z\geq x}\sum_{x\leq y\leq z}\abs{a_{xy}}\abs{b_{yz}}\\
    &=\sum_{y\geq x}\abs{a_{xy}}\sum_{z\geq y}\abs{b_{yz}}\\
    &\leq\norm b_{\rm row}\sum_{y\geq x}\abs{a_{xy}} \leq\norm a_{\rm row}\norm b_{\rm row}.
  \end{align*}
  The diagonal matrix is a unit of row norm one.
  The zero pattern is closed under operator-norm limits, so \(\operatorname{IA}(P,c_0)\) is a unital Banach algebra.

  Choose a sequence \((p_k)\) whose range is \(P\), repeating elements if \(P\) is finite, and put
  \( P_r=\bigcup_{k=0}^r\{x\in P:x\leq p_k\}. \)
  Lower finiteness makes \(P_r\) finite.
  These lower subsets increase and satisfy \(P=\bigcup_{r=0}^\infty P_r\).
  Let \(A_r=\operatorname{IA}(P_r)\), with the finite row norm, and let \(\sigma_r^s:A_s\to A_r\) be matrix restriction.

  The lower-set property makes \(\sigma_r^s\) multiplicative.
  Indeed, if \(x,z\in P_r\) and \(x\leq y\leq z\), then we have \(y\leq z\in P_r\), so we obtain \(y\in P_r\).
  Therefore we have
  \[
  \begin{aligned}
    \bigl(\sigma_r^s(ab)\bigr)_{xz} =(ab)_{xz}=\sum_{x\leq y\leq z}a_{xy}b_{yz}
    =\sum_{\substack{x\leq y\leq z\\y\in P_r}} \bigl(\sigma_r^sa\bigr)_{xy}\bigl(\sigma_r^sb\bigr)_{yz} =\bigl((\sigma_r^sa)(\sigma_r^sb)\bigr)_{xz}.
  \end{aligned}
  \]
  The restriction map is unital and contractive.
  Extending a matrix by zero on pairs involving \(P_s\setminus P_r\) preserves its row norm.
  Hence every bonding map is a metric quotient.

  A compatible sequence determines one coefficient \(a_{xy}\) for every pair \(x\leq y\).
  Since the sets \(P_r\) increase to \(P\), we obtain its bounded-limit norm by monotone convergence:
  \[
  \sup_r\max_{x\in P_r}\sum_{\substack{y\in P_r\\
  y\geq x}}\abs{a_{xy}} =\sup_{x\in P}\sum_{y\geq x}\abs{a_{xy}} =\norm a_{\rm row}.
  \]
  Thus the coefficient identification gives \(\ilim(A_r,\sigma_r^s)\simeq\operatorname{IA}(P,c_0)\) isometrically as Banach algebras.
  Theorem~\ref{thm:main} applies.
\end{proof}

\begin{corollary}[Infinite upper-triangular matrices]
  \label{cor:upper-triangular}
  Let \(\operatorname{UT}_\infty\) be the algebra of scalar upper-triangular matrices \(a=(a_{ij})_{0\leq i\leq j<\infty}\) with \(\norm a_{\rm row}=\sup_i\sum_{j=i}^\infty\abs{a_{ij}}<\infty\).
  Then \(\operatorname{UT}_\infty\), with the row norm, is isometrically a Calkin algebra.
\end{corollary}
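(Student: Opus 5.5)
This is the special case \(P=\N\) of Corollary~\ref{cor:lower-finite-incidence}. The poset \(\N\) with its usual order is nonempty, countable and lower finite, since \(\{i\in\N:i\leq j\}=\{0,\ldots,j\}\) is finite for every \(j\).

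Under the identification \(a_{ij}\leftrightarrow a_{xy}\) with \(x=i\), \(y=j\), the definitions match term by term:
\begin{enumerate}[label={\textup{(\roman*)}}]
  \item the support condition \(a_{xy}=0\) for \(x\nleq y\) is exactly upper triangularity, \(a_{ij}=0\) for \(i>j\);
  \item the incidence product \((ab)_{xz}=\sum_{x\leq y\leq z}a_{xy}b_{yz}\) is the ordinary matrix product of upper-triangular matrices, whose sum over \(j\) is automatically restricted to \(i\leq j\leq k\);
  \item the row norm \(\sup_x\sum_{y\geq x}\abs{a_{xy}}\) is \(\sup_i\sum_{j\geq i}\abs{a_{ij}}\).
\end{enumerate}
Hence \(\operatorname{UT}_\infty=\operatorname{IA}(\N,c_0)\) as unital normed algebras, as already noted after Table~\ref{tab:realization-examples}.

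Corollary~\ref{cor:lower-finite-incidence} then shows that \(\operatorname{UT}_\infty\) is a unital Banach algebra, that its row norm equals the operator norm on \(c_0\), and that it is isometrically a Calkin algebra. Concretely, in that proof one may take \(p_k=k\), so the lower sets are \(P_r=\{0,\ldots,r\}\), the stages \(A_r\) are the \((r+1)\times(r+1)\) upper-triangular matrices with the row norm, and the bonding maps are the leading-corner restrictions. These are metric quotients because extension by zero preserves the row norm.

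No step is a real obstacle. The only point to check is that the identification preserves the product, and this holds because both products are given by the same finite sum over \(i\leq j\leq k\).
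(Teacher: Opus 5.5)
Your proposal is correct and matches the paper's argument: the paper also obtains this corollary directly from Corollary~\ref{cor:lower-finite-incidence} with \(P=\N\) in its usual order. Your unpacking is accurate. This covers the lower finiteness of \(\N\), the term-by-term identification with \(\operatorname{IA}(\N,c_0)\), and the concrete tower \(P_r=\{0,\ldots,r\}\) with corner restrictions as metric quotients.
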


This follows directly from Corollary~\ref{cor:lower-finite-incidence} with \(P=\N\) in its usual order.

Further examples are given by countable rooted trees ordered by ancestry, \(\N^d\) with the coordinate order, and the Young lattice \cite[Example~3.4.4(b)]{Stanley2012}.
Whenever \(P\) contains distinct comparable elements, its incidence algebra is noncommutative.

\subsection{Compactness for quotient seminorms and dual operator algebras}

We next apply Theorem~\ref{thm:detector-compact-surjectivity} to function algebras and Proposition~\ref{prop:weak-star-rfd} to operator algebras.
For the scalar Lipschitz spaces and their algebra structure, see \cite[Chapters~2 and~7]{Weaver2018}.

\begin{corollary}[Algebra-valued H\"older classes]
  \label{cor:algebra-valued-holder}
  Let \((M,d)\) be a nonempty compact metric space, let \(0<\theta\leq1\), and let \(\widehat A=\ilim(A_r,\pi_r^s)\) be the bounded limit of a countable inverse system satisfying \eqref{eq:inverse-system}--\eqref{eq:metric-quotient}.
  For \(f:M\to\widehat A\), put
  \[ [f]_\theta =\sup_{x\ne y} \frac{\norm{f(x)-f(y)}_{\widehat A}}{d(x,y)^\theta}. \]
  If \(M\) is a singleton, this seminorm is defined to be zero.
  Equip the algebra \(\operatorname{Lip}_\theta(M,\widehat A) =\{f:M\to\widehat A:[f]_\theta<\infty\}\) with pointwise multiplication and the norm \(\norm f_{\operatorname{Lip}_\theta}=\norm f_\infty+[f]_\theta\).
  Then \(\operatorname{Lip}_\theta(M,\widehat A)\) is isometrically a Calkin algebra.
\end{corollary}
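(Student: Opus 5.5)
The plan is to realize \(\operatorname{Lip}_\theta(M,\widehat A)\) as a bounded detection envelope via Theorem~\ref{thm:finite-detection-duality}\textup{(iii)}, or more directly via Proposition~\ref{prop:weak-star-rfd}. We produce an isometric predual together with countably many weak-star-to-norm continuous unital homomorphisms into finite-dimensional algebras that jointly separate points, and then apply Theorem~\ref{thm:main}.

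First we check the algebra structure. For \(f,g\in\operatorname{Lip}_\theta(M,\widehat A)\) we have
\[
fg(x)-fg(y)=(f(x)-f(y))g(x)+f(y)(g(x)-g(y)),
\]
so \([fg]_\theta\le[f]_\theta\norm g_\infty+\norm f_\infty[g]_\theta\). Hence \(\norm{fg}\le\norm f\norm g\), the constant \(1\) has norm one, and completeness is standard.

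For the predual, take a countable dense set \(D\subseteq M\). The natural candidate is the closed span \(E\) in \(\operatorname{Lip}_\theta(M,\widehat A)^*\) of the functionals
\[
\delta_x\otimes e\colon f\mapsto\langle f(x),e\rangle,\qquad
\frac{\delta_x-\delta_y}{d(x,y)^\theta}\otimes e,
\]
with \(x,y\in D\) and \(e\in J_r^\infty(A_r^*)\subseteq\widehat A_*\). The norm \(\norm f_\infty+[f]_\theta\) is a sum, not a supremum, so the norming functionals are sums of one point-evaluation term and one difference-quotient term. This gives an \(\ell_1\)-type norming set, and we define \(E\) as the closed span of these combined functionals. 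We must verify that \(E^*=\operatorname{Lip}_\theta(M,\widehat A)\) isometrically. Given \(\Phi\in E^*\), read off for each \(x\in D\) an element \(f(x)\in\widehat A=\widehat A_*^*\) by Lemma~\ref{lem:bounded-limit}. The norm of \(\Phi\) bounds \(\norm{f}_\infty+[f]_\theta\) on \(D\), so \(f\) is uniformly continuous and extends to \(M\) in norm. Weak-star lower semicontinuity handles the H\"older seminorm on the extension. Conversely, every \(f\) defines a bounded functional on \(E\) of the right norm. The functionals are norming because \(\widehat A_*\)-elements norm \(\widehat A\) and the suprema defining the norm can be taken over \(D\) by continuity.

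The homomorphisms are \(\rho_{x,r}(f)=q_r(f(x))\in A_r\), for \(x\in D\) and \(r\in\N\). Each is a unital homomorphism and is weak-star-to-norm continuous, since its coordinates are pairings with the finitely many predual elements \(\delta_x\otimes J_r^\infty u_j\). They separate points: if \(q_r(f(x))=0\) for all \(r\) and all \(x\in D\), then \(f=0\) on \(D\), hence on \(M\) by continuity. Proposition~\ref{prop:weak-star-rfd} then gives the conclusion.

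The main obstacle is the isometric predual identification. Because the norm is \(\norm f_\infty+[f]_\theta\) rather than a maximum, the predual must be built from pairs (point evaluation, difference quotient). The delicate point is showing that every \(\Phi\in E^*\) comes from a genuine function with the same norm, rather than a finitely additive object. We expect to get this by using the fact that the coordinate topology agrees with the weak-star topology on bounded sets (Lemma~\ref{lem:bounded-limit}) together with norm Cauchy estimates from the H\"older bound.

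An alternative is to use Theorem~\ref{thm:detector-compact-surjectivity} directly. Take the ideals \(I_r=\{f: q_r(f(x_j))=0,\ j\le r\}\), with \(x_j\) enumerating \(D\). The quotient norms are larger than \(\max_{j\le r}\norm{q_r f(x_j)}\), and we would need them to increase to \(\norm f\). Compactness of balls then follows from Arzel\`a–Ascoli. The duality route avoids computing these quotient norms, so we prefer it.
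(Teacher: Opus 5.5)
Your argument is correct, but it takes a different route from the paper.

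\textbf{The paper's route.} The paper never builds a predual. It uses the filtration \(I_r=\{f:q_r(f(x_j))=0,\ 0\leq j\leq r\}\) and gets separation from density of \((x_j)\). It then proves directly that closed balls are compact in \(\tau_{(I_r)}\), in four steps:
\begin{enumerate}
  \item Arzel\`a--Ascoli is applied to the bounded, equi-H\"older \(A_r\)-valued functions \(q_r\circ f_n\), with a diagonal extraction over \(r\).
  \item The sum norm is controlled by first passing to a subsequence on which \(\norm{f_n}_\infty\to\alpha\) and \([f_n]_\theta\to\lambda\). This gives \(\norm f_{\operatorname{Lip}_\theta}\leq\alpha+\lambda\leq R\) for the assembled limit \(f(x)=(g_r(x))_r\).
  \item A finite-dimensional constant \(C_r\) turns uniform convergence of \(q_r\circ f_n\) into convergence in the quotient seminorm.
  \item Theorem~\ref{thm:detector-compact-surjectivity} and Theorem~\ref{thm:main} are then applied.
\end{enumerate}

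\textbf{Your route.} You identify an explicit isometric predual \(E\): the closed span in \(\operatorname{Lip}_\theta(M,\widehat A)^*\) of the functionals \(\delta_x\otimes e\), with \(x\in D\) and \(e\in\widehat A_*\). You then invoke Proposition~\ref{prop:weak-star-rfd}, so Banach--Alaoglu inside Theorem~\ref{thm:finite-detection-duality} replaces Arzel\`a--Ascoli.

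The sum norm is handled by one estimate. If \(\norm e,\norm{e'}\leq1\), the functional \(\delta_z\otimes e+d(x,y)^{-\theta}(\delta_x-\delta_y)\otimes e'\) has norm at most one. Choosing the phases of \(e\) and \(e'\) independently gives \(\norm{f|_D}_\infty+[f|_D]_\theta\leq\norm\Phi\) for \(\Phi\in E^*\). The same estimate gives the reverse inequality \(\norm{f|_E}\geq\norm f\).

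\textbf{What each route buys.} Yours shows more: the algebra is a dual Banach algebra with a separable predual on which every evaluation \(f\mapsto q_r(f(x))\) is weak-star continuous. The paper's route avoids all duality bookkeeping.

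\textbf{Two corrections to your write-up.}

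First, the step you call delicate is not delicate. A functional \(\Phi\in E^*\) is determined by its values on the generators \(\delta_x\otimes e\), and these values define \(f(x)\in\widehat A_*^*=\widehat A\) for \(x\in D\). The bound above makes \(f\) H\"older on \(D\). It then extends to \(M\) in norm, with the same \(\norm{\cdot}_\infty\) and \([\cdot]_\theta\), by continuity. No finitely additive objects and no weak-star semicontinuity are involved.

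Second, your reason for rejecting the alternative is mistaken. Theorem~\ref{thm:detector-compact-surjectivity} gives an isometry from separation and \(\tau_{(I_r)}\)-compactness of balls alone. The preimage \(a_\varepsilon\) with \(\norm{a_\varepsilon}\leq R+\varepsilon\) is unique by injectivity, so \(\norm a\leq R\). You therefore never need to compute the quotient norms or prove separately that the filtration is norming. That alternative is exactly the paper's proof.
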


\begin{proof}
  Fix \(x_0\in M\).
  For \(f\) with \([f]_\theta<\infty\), we have \(\norm{f(x)}_{\widehat A}\leq\norm{f(x_0)}_{\widehat A}+[f]_\theta\operatorname{diam}(M)^\theta\) for every \(x\in M\).
  Thus every such \(f\) is bounded, and its stated norm is finite.
  From the identity \(f(x)g(x)-f(y)g(y)=f(x)\bigl(g(x)-g(y)\bigr)+\bigl(f(x)-f(y)\bigr)g(y)\), we obtain \([fg]_\theta\leq\norm f_\infty[g]_\theta+[f]_\theta\norm g_\infty\).
  For the full norm, we then get
  \[
    \norm{fg}_{\operatorname{Lip}_\theta}\leq\norm f_\infty\norm g_\infty+\norm f_\infty[g]_\theta+[f]_\theta\norm g_\infty
    \leq\norm f_{\operatorname{Lip}_\theta}\norm g_{\operatorname{Lip}_\theta}.
 \]
 The constant function with value \(1_{\widehat A}\) is a unit of norm one.

  If \((f_n)\) is Cauchy in this norm, let \(f\) be its uniform limit.
  By lower semicontinuity of the H\"older seminorm, we have
  \([f_n-f]_\theta\leq\liminf_{m\to\infty}[f_n-f_m]_\theta\to0\).
  Hence \(f\in\operatorname{Lip}_\theta(M,\widehat A)\) and \(\norm{f_n-f}_{\operatorname{Lip}_\theta}\to0\), which proves completeness.

  Let \(q_r:\widehat A\to A_r\) be the coordinate homomorphism, choose a sequence \((x_j)\) with dense range in \(M\), and put
  \[
    I_r=\{f:q_r(f(x_j))=0\text{ for }0\leq j\leq r\}.
  \]
  Each \(I_r\) is a closed two-sided ideal and \(I_{r+1}\subseteq I_r\).
  The map \(f+I_r\mapsto\bigl(q_r(f(x_0)),\ldots,q_r(f(x_r))\bigr)\) embeds the quotient into \(A_r^{r+1}\), so \(I_r\) has finite codimension.
  It is proper because it does not contain the constant unit.
  If \(f\in\bigcap_rI_r\), then, for fixed \(j,r\) and \(s\geq\max\{j,r\}\), we have \(q_r(f(x_j))=\pi_r^sq_s(f(x_j))=0\).
  Thus \(f(x_j)=0\) for every \(j\), and density and continuity give \(f=0\).
  Hence the filtration is separating.

  Fix \(R>0\), and let \((f_n)\subseteq R B_{\operatorname{Lip}_\theta(M,\widehat A)}\).
  Passing to a subsequence, suppose that \(\norm{f_n}_\infty\to\alpha\) and \([f_n]_\theta\to\lambda\), where \(\alpha+\lambda\leq R\).
  For each \(r\), we have \(\norm{q_rf_n}_\infty\leq R\) and \(\norm{q_rf_n(x)-q_rf_n(y)}\leq R d(x,y)^\theta\).
  Since \(A_r\) is finite dimensional, we obtain from the Arzel\`a--Ascoli theorem and a diagonal selection functions \(g_r\in C(M,A_r)\) such that, along a further subsequence,
  \[
    \sup_{x\in M}\norm{q_rf_n(x)-g_r(x)}\longrightarrow0
    \qquad(r\in\N).
  \]
  Passing to the limit in \(\pi_r^sq_sf_n(x)=q_rf_n(x)\), we obtain \(\pi_r^sg_s(x)=g_r(x)\) whenever \(r\leq s\).
  Moreover, \(\norm{g_r(x)}\leq\alpha\) and \(\norm{g_r(x)-g_r(y)}\leq\lambda d(x,y)^\theta\) for every \(r\).
  Thus \(f(x)=(g_r(x))_r\) belongs to \(\widehat A\), with \(\norm f_\infty\leq\alpha\) and \([f]_\theta\leq\lambda\), and we obtain
  \[
    \norm f_{\operatorname{Lip}_\theta}\leq\alpha+\lambda\leq R.
  \]
  The finite-dimensional embedding of the quotient into \(A_r^{r+1}\) has a bounded inverse on its range, so there is \(C_r<\infty\) such that \(\norm{h+I_r}\leq C_r\max_{0\leq j\leq r}\norm{q_r(h(x_j))}\) for every \(h\in\operatorname{Lip}_\theta(M,\widehat A)\).
  Applying this to \(h=f_n-f\), we obtain
  \[
    \norm{(f_n-f)+I_r}\leq C_r\max_{0\leq j\leq r}\norm{q_rf_n(x_j)-g_r(x_j)}
    \leq C_r\sup_{x\in M}\norm{q_rf_n(x)-g_r(x)}\longrightarrow0.
  \]
  
  Thus every sequence in the closed \(R\)-ball has a subsequence converging in every quotient seminorm to a member of that ball.
  The topology \(\tau_{(I_r)}\) is metrizable because the filtration is countable and separating.
  Hence the ball is compact in \(\tau_{(I_r)}\).
  Apply Theorem~\ref{thm:detector-compact-surjectivity} and then Theorem~\ref{thm:main}.
\end{proof}

\begin{corollary}[Scalar H\"older algebras]\label{cor:scalar-holder}\label{cor:lipschitz}
  Let \((M,d)\) be a nonempty compact metric space and let \(0<\theta\leq1\).
  The scalar H\"older algebra
  \[
    C^{0,\theta}(M)=\{f\in C(M):[f]_\theta<\infty\},\qquad \norm f_{C^{0,\theta}}=\norm f_\infty+[f]_\theta,
  \]
  with pointwise multiplication, is isometrically a Calkin algebra.
\end{corollary}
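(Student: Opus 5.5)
The plan is to specialize Corollary~\ref{cor:algebra-valued-holder} to the trivial scalar tower. Take the constant inverse system \(A_r=\K\) with \(\pi_r^s=I_\K\) for all \(r\leq s\). This system satisfies \eqref{eq:inverse-system}--\eqref{eq:metric-quotient}, because the unit has norm one and identity maps are unital metric quotients. Its bounded inverse limit is then \(\widehat A=\K\) with the absolute value as norm: a compatible sequence is constant, and its supremum norm is the modulus of that constant.

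With \(\widehat A=\K\), the algebra \(\operatorname{Lip}_\theta(M,\K)\) of Corollary~\ref{cor:algebra-valued-holder} consists of the scalar functions \(f\) with \([f]_\theta<\infty\). It carries pointwise multiplication and the norm \(\norm f_\infty+[f]_\theta\). The only point to check is that this set equals \(C^{0,\theta}(M)\) as defined in the statement.
\begin{enumerate}[label={\textup{(\roman*)}}]
\item If \([f]_\theta<\infty\), then \(\abs{f(x)-f(y)}\leq[f]_\theta\,d(x,y)^\theta\), so \(f\) is continuous and \(f\in C(M)\).
\item Conversely, every \(f\in C^{0,\theta}(M)\) has finite \(\theta\)-seminorm by definition.
\item For a singleton \(M\), both conventions give seminorm zero, so the two definitions agree there as well.
\end{enumerate}
The norms and products coincide verbatim, so \(C^{0,\theta}(M)=\operatorname{Lip}_\theta(M,\K)\) isometrically as unital Banach algebras.

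Corollary~\ref{cor:algebra-valued-holder} then makes this algebra isometrically a Calkin algebra, and that is the claim. No step is a real obstacle; the only care needed is the identification of the two definitions, including continuity and the singleton case.
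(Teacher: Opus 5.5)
Your proposal is correct and follows the paper's own proof: specialize Corollary~\ref{cor:algebra-valued-holder} to the constant tower \(A_r=\K\) with identity bonding maps, so that \(\widehat A=\K\) and \(C^{0,\theta}(M)=\operatorname{Lip}_\theta(M,\K)\) with the same norm and pointwise product. Your extra checks, namely that H\"older functions are continuous and that the singleton conventions agree, are routine details the paper leaves implicit.
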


This follows directly from Corollary~\ref{cor:algebra-valued-holder} for the constant inverse system \(A_r=\K\) with identity bonding maps.
In this case, we have \(\widehat A=\K\) and \(C^{0,\theta}(M)=\operatorname{Lip}_\theta(M,\K)\), with the same norm and pointwise multiplication.

Other choices give, for example,
\( \operatorname{Lip}_\theta(M,\ell_\infty) \) and \( \operatorname{Lip}_\theta\left(M, \left(\bigoplus_{k\in\N}F_k\right)_{\ell_\infty}\right), \)
where the \(F_k\)'s are finite-dimensional unital Banach algebras with \(\norm{1_{F_k}}=1\).

\begin{corollary}[Atomic nest algebras]
  \label{cor:omega-atomic-nest}
  Let \(H=\bigoplus_{j=0}^\infty H_j\), where \(1\leq\dim H_j<\infty\), and put
  \[
    N_r=\bigoplus_{j=0}^rH_j,
    \qquad
    \mathcal N=\{0,N_0,N_1,\ldots,H\}.
  \]
  Then the nest algebra \(\operatorname{Alg}\mathcal N\), with its operator norm, is isometrically a Calkin algebra.
\end{corollary}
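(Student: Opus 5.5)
The plan is to apply Lemma~\ref{lem:finite-triangular-corners} with the projections \(P_r\) onto \(N_r\). First, \(\operatorname{Alg}\mathcal N=\{T\in\Bcal(H):TN_r\subseteq N_r\text{ for all }r\}\) is a unital algebra. It is weak-star closed, being the intersection over \(r\) of the sets \(\{T:(I_H-P_r)TP_r=0\}\). Each of these sets is weak-star closed, since every matrix coefficient \(\langle T\xi,\eta\rangle\) is weak-star continuous.

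Next I check the hypotheses on the projections. Each \(N_r\) is finite dimensional because \(\dim H_j<\infty\). The \(P_r\) increase and converge strongly to \(I_H\), since \(\bigcup_rN_r\) is dense in \(H\). The invariance condition \(TN_r\subseteq N_r\) is exactly \((I_H-P_r)TP_r=0\) for every \(T\in\operatorname{Alg}\mathcal N\), which is the second alternative in Lemma~\ref{lem:finite-triangular-corners}. Separability of \(H\) is immediate from the countable orthogonal decomposition into finite-dimensional summands, and \(H\ne\{0\}\) because \(\dim H_0\geq1\).

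The lemma then yields the countable separating family of weak-star continuous finite-dimensional unital representations \(T\mapsto P_rT|_{N_r}\). Through Proposition~\ref{prop:weak-star-rfd} and Theorem~\ref{thm:main}, this gives the isometric Calkin realization with the operator norm. I expect no real obstacle: the only point needing care is the weak-star closedness, and it is handled by the matrix-coefficient description above.
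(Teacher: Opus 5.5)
Your proposal is correct and matches the paper's proof: you take the projections \(P_r\) onto \(N_r\), use invariance to get \((I_H-P_r)TP_r=0\), note that the \(P_r\) have finite rank and converge strongly to \(I_H\), and then apply Lemma~\ref{lem:finite-triangular-corners}. The only difference is cosmetic: you prove weak-star closedness directly via matrix coefficients, whereas the paper deduces it from weak-operator closedness. Both arguments are valid.
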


\begin{proof}
  Let \(P_r\) be the orthogonal projection onto \(N_r\).
  Recall that \(\operatorname{Alg}\mathcal N\) consists of the operators leaving every member of \(\mathcal N\) invariant \cite{Davidson1988}.
  This algebra is weak-operator closed and unital, and hence weak-star closed.
  Since \(N_r\) is invariant under every \(T\in\operatorname{Alg}\mathcal N\), we have
  \[ (I_H-P_r)TP_r=0. \]
  Moreover, \(P_rH\) is finite dimensional and \(P_r\to I_H\) strongly.
  Apply Lemma~\ref{lem:finite-triangular-corners}.
\end{proof}

In the reproducing-kernel examples below, \(\mathcal H\) consists of complex-valued functions on a set \(\Omega\), with reproducing kernel \(k\), and we write \(k_x=k(\,\cdot\,,x)\) for its kernel vectors \cite[Part~I, Sections~1--2]{Aronszajn1950}.

\begin{corollary}[Matrix-valued RKHS multipliers]
  \label{cor:rkhs-multipliers}
  \label{cor:matrix-rkhs-multipliers}
  Let \(\mathcal H\) be a nonzero separable complex scalar reproducing-kernel Hilbert space and let \(q\in\N_+\).
  Then
  \[ \operatorname{Mult}(\mathcal H\otimes\mathbb C^q) =M_q\bigl(\operatorname{Mult}(\mathcal H)\bigr), \]
  with the multiplier norm, is isometrically a Calkin algebra.
  Multiplier symbols inducing the same operator are identified throughout.
\end{corollary}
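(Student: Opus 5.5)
The plan is to verify the hypotheses of Proposition~\ref{prop:weak-star-rfd} for \(B=\operatorname{Mult}(\mathcal H\otimes\mathbb C^q)\), viewed as a weak-star closed unital subalgebra of \(\Bcal(\mathcal H\otimes\mathbb C^q)\). First, recall the standard facts. The identification \(\operatorname{Mult}(\mathcal H\otimes\mathbb C^q)=M_q(\operatorname{Mult}(\mathcal H))\) holds because a matrix function \(\Phi\) multiplies \(\mathcal H\otimes\mathbb C^q\) precisely when the adjoint relation
\[
M_\Phi^*(k_x\otimes v)=k_x\otimes\Phi(x)^*v
\]
holds. From this relation, each entry \(\varphi_{ij}\) is a multiplier, namely a compression of \(M_\Phi\). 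Conversely, a matrix of multipliers acts boundedly. The relation also shows that the multiplier algebra is weak-operator closed: it is the set of operators \(T\) for which every \(k_x\otimes v\) is an eigenvector-type vector of \(T^*\), that is, \(T^*(k_x\otimes v)\in k_x\otimes\mathbb C^q\). This condition is closed under WOT limits, since \(\{k_x\otimes w\}\) is a finite-dimensional, hence closed, subspace. Hence \(B\) is weak-star closed and isometrically a dual space. Its unit is the identity operator, which has norm one.

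Second, the representations. Choose a countable set \(\{x_j\}\subseteq\Omega\) whose kernel vectors have dense span. This is possible by separability: the set \(\{k_x\}\) spans a dense subspace because \(f(x)=\langle f,k_x\rangle\), and a separable metric space has a countable dense subset of any subset. Define
\[
\rho_j(\Phi)=\Phi(x_j)\in M_q(\mathbb C)\qquad\text{whenever }k_{x_j}\neq0.
\]
Then \(\rho_j\) is a unital homomorphism. It is weak-star continuous because
\[
\langle \Phi(x_j)v,w\rangle\,\norm{k_{x_j}}^2=\langle M_\Phi(k_{x_j}\otimes v),k_{x_j}\otimes w\rangle
\]
is a matrix coefficient. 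Points with \(k_x=0\) are discarded, since every function in \(\mathcal H\) vanishes there and the corresponding multiplier values are irrelevant after the identification of symbols stated in the corollary.

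The family is jointly faithful. If \(\Phi(x_j)=0\) for all \(j\), then \(M_\Phi^*\) kills every \(k_{x_j}\otimes v\), which span a dense subspace. Hence \(M_\Phi=0\), and the symbol is identified with zero.

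Proposition~\ref{prop:weak-star-rfd} then gives the conclusion, with the multiplier (operator) norm on \(B\). The main point to check carefully is the WOT-closedness characterization together with the handling of symbols on points with zero kernel. Everything else is routine.
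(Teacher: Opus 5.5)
Your proposal is correct and is essentially the paper's proof. The paper likewise characterizes the multiplier algebra by the weak-operator closed conditions \((I-E_x)T^*E_x=0\), where \(E_x\) is the projection onto \(k_x\otimes\mathbb C^q\); it uses the point evaluations \(M_\Phi\mapsto\Phi(x_n)\) at a sequence with \(k_{x_n}\ne0\) and dense kernel span, verifies weak-star continuity through the same matrix-coefficient identity and joint faithfulness through density, and concludes with Proposition~\ref{prop:weak-star-rfd}.
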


\begin{proof}
  For \(\Phi:\Omega\to M_q(\mathbb C)\), write \((M_\Phi f)(x)=\Phi(x)f(x)\).
  The multiplier algebra consists of the symbols for which \(M_\Phi\) is bounded on \(\mathcal H\otimes\mathbb C^q\), with norm \(\norm{M_\Phi}\). See \cite[Section~2]{AglerMcCarthy2000} for matrix-valued multipliers.
  Taking matrix entries relative to the standard basis of \(\mathbb C^q\) shows that every entry of \(M_\Phi\) is a bounded scalar multiplier.
  Conversely, a finite matrix of bounded scalar multiplier operators defines a bounded \(M_\Phi\).
  Thus the two multiplier algebras in the statement agree.
  Put \(\mathcal M=M_q(\operatorname{Mult}(\mathcal H))\).
  For each nonzero kernel vector \(k_x\), let \(E_x\) be the orthogonal projection onto \(k_x\otimes\mathbb C^q\).
  Then we have
  \[ T\in\mathcal M \quad\Longleftrightarrow\quad (I-E_x)T^*E_x=0\quad\text{for every such }x. \]
  Indeed, invariance defines a matrix \(\Phi(x)\) by \(T^*(k_x\otimes v)=k_x\otimes\Phi(x)^*v\), and we have
  \[ \langle(Tf)(x),v\rangle =\langle f,T^*(k_x\otimes v)\rangle =\langle\Phi(x)f(x),v\rangle. \]
  Thus we obtain \(T=M_\Phi\).
  At points with \(k_x=0\), every function vanishes, so no additional condition is needed.
  Each displayed corner condition is weak-operator closed.
  Hence \(\mathcal M\) is weak-star closed and has the quotient of the trace class by its preannihilator as a predual.

  Separability gives a sequence \((x_n)\) of points with \(k_{x_n}\ne0\) such that \(\overline{\operatorname{span}}\{k_{x_n}:n\in\N\}=\mathcal H\).
  Define \(\rho_n:\mathcal M\to M_q(\mathbb C)\) by \(\rho_n(M_\Phi)=\Phi(x_n)\).
  These are unital homomorphisms, and they are weak-star-to-norm continuous because we have
  \[ \langle\Phi(x_n)u,v\rangle =\frac{\langle M_\Phi(k_{x_n}\otimes u), k_{x_n}\otimes v\rangle}{\norm{k_{x_n}}^2}. \]
  Their joint faithfulness follows from
  \[ \rho_n(M_\Phi)=0\ (n\in\N) \ \Longrightarrow\ M_\Phi^*(k_{x_n}\otimes v)=0\ (n,v) \ \Longrightarrow\ M_\Phi=0. \]
  Proposition~\ref{prop:weak-star-rfd} applies.

  More explicitly, put \(I_r=\bigcap_{n=0}^r\ker\rho_n\) and identify \(\mathcal M/I_r\) with the finite algebra of attainable values \(W=(\Phi(x_0),\ldots,\Phi(x_r))\).
  Its norm is
  \begin{equation}\label{eq:rkhs-quotient-minimum}
    \norm W_{\mathcal M/I_r}
    =\min\{\norm{M_\Phi}:\Phi(x_i)=W_i,\ 0\leq i\leq r\}.
  \end{equation}
  To see attainment, take a bounded minimizing sequence of multipliers and a weak-star convergent subnet.
  The evaluation identities preserve its values, while weak-star lower semicontinuity gives the minimum.
  Coordinate deletion is the bonding map, and multiplication is \((W_i)_i(V_i)_i=(W_iV_i)_i\).
  To check the metric-quotient property, take a norm-minimizing multiplier \(M_\Phi\) for \(W\) at stage \(r\).
  For \(s\geq r\), its values \(W'=(\Phi(x_0),\ldots,\Phi(x_s))\) give a lift, and we have
  \[ \norm W_{\mathcal M/I_r}
     \leq\norm{W'}_{\mathcal M/I_s}
     \leq\norm{M_\Phi}
     =\norm W_{\mathcal M/I_r}. \]
  Theorem~\ref{thm:finite-detection-duality} therefore identifies the bounded limit with \(\mathcal M\), including its multiplier norm and product.
\end{proof}

\begin{corollary}[Complete Pick multiplier algebras]
  \label{cor:complete-pick}
  Every multiplier algebra of a nonzero separable scalar complete Pick space is isometrically a Calkin algebra.
  Fix distinct points \(x_0,\ldots,x_r\) with \(k(x_i,x_i)>0\), and let \((w_i)_{i=0}^r\) be attainable multiplier values at these points.
  Write \(\norm{(w_i)_{i=0}^r}\) for the quotient norm in \eqref{eq:rkhs-quotient-minimum}, with \(q=1\).
  Then, for every \(C\geq0\), we have
  \begin{equation}\label{eq:pick-certificate}
    \norm{(w_i)_{i=0}^r}\leq C
    \quad\Longleftrightarrow\quad
    \bigl[(C^2-w_i\overline{w_j})k(x_i,x_j)\bigr]_{i,j=0}^r
    \succeq0.
  \end{equation}
\end{corollary}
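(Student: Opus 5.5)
The first assertion is a direct consequence of Corollary~\ref{cor:rkhs-multipliers} with \(q=1\): a complete Pick space is in particular a reproducing-kernel Hilbert space, and separability is assumed. So the real content is the certificate \eqref{eq:pick-certificate}, which identifies the finite quotient norm \(\norm{(w_i)}_{\mathcal M/I_r}\) of \eqref{eq:rkhs-quotient-minimum} with the Pick interpolation value. I would prove the two implications separately, and for this I would restrict attention to the finite set \(F=\{x_0,\ldots,x_r\}\).

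\emph{Necessity.} Suppose \(\norm{(w_i)}\leq C\). By the attainment statement in the proof of Corollary~\ref{cor:rkhs-multipliers}, there is a multiplier \(\varphi\) with \(\varphi(x_i)=w_i\) and \(\norm{M_\varphi}\leq C\). Then \(C^2I-M_\varphi M_\varphi^*\succeq0\). Using \(M_\varphi^*k_x=\overline{\varphi(x)}k_x\), I would compress this positive operator to \(\operatorname{span}\{k_{x_0},\ldots,k_{x_r}\}\). The Gram matrix of the compression is
\[
\bigl\langle (C^2-M_\varphi M_\varphi^*)k_{x_j},k_{x_i}\bigr\rangle=(C^2-w_i\overline{w_j})k(x_i,x_j).
\]
This step uses no Pick property.

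\emph{Sufficiency.} This is where the complete Pick hypothesis enters. Suppose the Pick matrix is positive semidefinite. If \(C=0\), the diagonal entries give \(\abs{w_i}^2k(x_i,x_i)\leq0\), so all \(w_i=0\) and the zero multiplier works. For \(C>0\), I would rescale to \(w_i/C\) and invoke the defining property of a complete Pick space: positivity of the \(1\times1\) Pick matrix on the finite set \(F\) yields a contractive multiplier \(\psi\) interpolating \(w_i/C\) (see \cite{AglerMcCarthy2002}). Then \(C\psi\) attains the values \((w_i)\) with \(\norm{M_{C\psi}}\leq C\), so \eqref{eq:rkhs-quotient-minimum} gives \(\norm{(w_i)}\leq C\).

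The main obstacle is fixing the convention for ``complete Pick''. The usual definition is phrased for every finite subset, with the extension given by a contractive multiplier on all of \(\Omega\). I must also handle points where \(k(x_i,x_i)>0\) is needed for normalisation, which is why the statement imposes that condition. If the definition is phrased only via the matrix-valued (complete) property, I would use its \(1\times1\) case, or alternatively derive sufficiency from the McCullough--Quiggin characterisation of the kernel. Distinctness of the points ensures the data are a genuine finite interpolation problem. Finally, attainability of \((w_i)\) is automatic once a multiplier exists, and it is consistent with the identification of \(\mathcal M/I_r\) as the algebra of attainable value tuples.
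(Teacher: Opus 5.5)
Your proposal is correct and follows the paper's proof essentially step for step. The realization comes from Corollary~\ref{cor:rkhs-multipliers} with \(q=1\). Necessity comes from compressing \(C^2I-M_\varphi M_\varphi^*\), for an attained norm-minimizing interpolant, to \(\operatorname{span}\{k_{x_i}\}\) via \(M_\varphi^*k_{x_i}=\overline{w_i}k_{x_i}\). Sufficiency uses the complete Pick property when \(C>0\), and the diagonal entries to force \(w_i=0\) when \(C=0\).
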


\begin{proof}
  The realization follows from Corollary~\ref{cor:rkhs-multipliers}.
  If \(M_\varphi\) interpolates \((w_i)_i\) and has norm at most \(C\), then we have
  \[ M_\varphi^*k_{x_i}=\overline{w_i}k_{x_i}. \]
  On \(\operatorname{span}\{k_{x_i}:0\leq i\leq r\}\), the norm bound gives
  \[ C^2\norm h^2-\norm{M_\varphi^*h}^2\geq0. \]
  Expanding this quadratic form gives the matrix in \eqref{eq:pick-certificate}.
  Conversely, for \(C>0\), its positivity and the complete Pick property give an interpolating multiplier of norm at most \(C\) \cite[Section~2]{AglerMcCarthy2000} (see also \cite{AglerMcCarthy2002}).
  If \(C=0\), its diagonal entries force \(w_i=0\) for every \(i\), and the zero multiplier suffices.
  Now use \eqref{eq:rkhs-quotient-minimum}.
\end{proof}

If \(\mathcal H\) is a nonzero separable scalar complete Pick space, fix distinct points \(x_0,\ldots,x_r\) with \(k(x_i,x_i)>0\), and let \((W_i)_{i=0}^r\) be attainable values of a matrix-valued multiplier from Corollary~\ref{cor:matrix-rkhs-multipliers}.
Writing \(\norm{(W_i)_{i=0}^r}\) for the quotient norm in \eqref{eq:rkhs-quotient-minimum}, we obtain from the matrix-valued complete Pick property \cite[Section~2]{AglerMcCarthy2000}, for every \(C\geq0\),
\[ \norm{(W_i)_{i=0}^r}\leq C \quad\Longleftrightarrow\quad \bigl[k(x_i,x_j)(C^2I_q-W_iW_j^*)\bigr]_{i,j=0}^r\succeq0. \]
For \(C=0\), positivity of the diagonal blocks gives \(-k(x_i,x_i)W_iW_i^*\succeq0\) and hence \(W_i=0\) for every \(i\), so the zero multiplier gives the required interpolant.

This includes \(H^\infty(\mathbb D)\) and the multiplier algebras \(\operatorname{Mult}(H_d^2)\) of the Drury--Arveson spaces \cite[Section~4]{AglerMcCarthy2000}, as well as the standard Dirichlet-type complete Pick multiplier algebras \cite{AglerMcCarthy2002} (for the classical Dirichlet kernel, see \cite[Section~1]{AglerMcCarthy2000}).
For \(H^\infty(\mathbb D)\), the Taylor and point-evaluation filtrations give the following quotients.

\begin{corollary}[Taylor and point-evaluation quotients of \(H^\infty\)]
  \label{cor:hardy-oracles}
  The algebra \(H^\infty(\mathbb D)\) has the following bounded detection envelopes.
  Let \(S_r\) be the truncated shift on \(\mathbb C^{r+1}\): \(S_re_j=e_{j+1}\) for \(0\leq j<r\), and \(S_re_r=0\), where \((e_j)_{j=0}^r\) is the standard basis.
  \begin{enumerate}[label={\textup{(\roman*)}},leftmargin=27pt]
    \item For the Taylor filtration \(I_r=z^{r+1}H^\infty\), the quotient is the algebra of analytic Toeplitz matrices
    \[ \left\{\sum_{j=0}^ra_jS_r^j:a_0,\ldots,a_r\in\mathbb C\right\}, \]
    with its operator norm.
    \item If \((\lambda_n)\subseteq\mathbb D\) is pairwise distinct and has an accumulation point in \(\mathbb D\), and
    \[ I_r=\{f\in H^\infty: f(\lambda_0)=\cdots=f(\lambda_r)=0\}, \]
    then \(H^\infty/I_r\) is the finite interpolation algebra whose norm is determined, for every \(C\geq0\), by
    \[ \norm{(w_i)}\leq C \quad\Longleftrightarrow\quad \left[ \frac{C^2-w_i\overline{w_j}} {1-\lambda_i\overline{\lambda_j}} \right]_{i,j=0}^r\succeq0. \]
  \end{enumerate}
\end{corollary}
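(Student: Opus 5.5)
We treat the two filtrations separately. In each case we first identify the quotient norm on \(H^\infty/I_r\) concretely. We then note that both are separating filtrations by weak-star closed ideals of the dual algebra \(H^\infty(\mathbb D)=\operatorname{Mult}(H^2)\), so Corollary~\ref{cor:weak-star-form} identifies \(H^\infty\) isometrically with each envelope, and Theorem~\ref{thm:main} realizes it as a Calkin algebra.

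For (i), the map \(f\mapsto\sum_{j\leq r}\hat f(j)S_r^j\) is a unital homomorphism with kernel \(z^{r+1}H^\infty\), because \(S_r^{r+1}=0\). The ideal is weak-star closed, since it is the joint kernel of the weak-star continuous Taylor coefficient functionals \(f\mapsto\hat f(j)=\langle fe_0,e_j\rangle_{H^2}\), \(j\leq r\). It is separating, because a function all of whose Taylor coefficients vanish is zero. Next we verify that the quotient norm equals the operator norm of the Toeplitz matrix; this is the Carathéodory--Fejér/Schur theorem. For the inequality "\(\leq\)": compress \(T_f\) on \(H^2\) to \(\mathcal K=\operatorname{span}\{1,\ldots,z^r\}\). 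Since \(T_f^*\) leaves the model space \(\mathcal K=(z^{r+1}H^2)^\perp\) invariant, this compression is multiplicative and equals \(\sum\hat f(j)S_r^j\) in the monomial basis. Hence \(\norm{\sum\hat f(j)S_r^j}\leq\norm{T_f}=\norm f_\infty\) for every \(f\) in the coset, so the operator norm is at most the quotient norm. For the reverse inequality, we use Sarason's commutant lifting: the matrix commutes with the compressed shift \(S_r\), so it lifts to some \(T_g\) with \(\norm g_\infty\) equal to the matrix norm and \(g\) having the prescribed first \(r+1\) coefficients.

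For (ii), \(I_r=\bigcap_{i\leq r}\ker\rho_i\) with \(\rho_i(f)=f(\lambda_i)=\langle f,k_{\lambda_i}\rangle/\norm{k_{\lambda_i}}^2\), which is weak-star continuous. Separation follows from the identity theorem, because the points accumulate inside \(\mathbb D\). Distinctness of the points makes the quotient equal to all of \(\mathbb C^{r+1}\), by Lagrange interpolation with polynomials. The quotient norm is the minimum-norm interpolation value of \eqref{eq:rkhs-quotient-minimum} with \(q=1\). The Szegő kernel \(k(\lambda,\mu)=(1-\lambda\overline\mu)^{-1}\) is a complete Pick kernel, so Corollary~\ref{cor:complete-pick} gives exactly the displayed Pick matrix criterion. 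This is the classical Nevanlinna--Pick theorem.

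The main obstacle is the lower bound in (i), i.e.\ the existence of an extension with norm exactly the Toeplitz operator norm. We will cite the Carathéodory--Fejér theorem, or Sarason's commutant lifting via the compressed shift, rather than reprove it. Everything else follows from Corollary~\ref{cor:weak-star-form}, Corollary~\ref{cor:complete-pick}, and Theorem~\ref{thm:main}.
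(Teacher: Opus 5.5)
Your proposal is correct. The quotient-norm computations match the paper's. In (i), compressing $T_f$ to $\operatorname{span}\{1,\ldots,z^r\}$ gives the upper bound, and the Carath\'eodory--Fej\'er (Sarason) theorem gives the attained lower bound. In (ii), the norm is the Nevanlinna--Pick criterion; you obtain it by specializing Corollary~\ref{cor:complete-pick} to the Szeg\H{o} kernel rather than citing it directly.

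The real difference is in how the envelope is identified with $H^\infty$. The paper does this by hand. Given a bounded compatible family, it chooses interpolants $f_r$ with $\norm{f_r}_\infty\leq C=\sup_r\norm{x_r}$ and extracts a locally uniform limit $f$ by Montel's theorem. It then recovers the Taylor coefficients or point values by Cauchy's formula, and gets $\norm f_\infty=C$ from the sandwich $C=\sup_r\norm{f+I_r}\leq\norm f_\infty\leq C$. Uniqueness comes from Taylor coefficients or the identity theorem.

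You instead verify the hypotheses of Corollary~\ref{cor:weak-star-form}. Namely, $H^\infty=\operatorname{Mult}(H^2)$ is weak-star closed in $\Bcal(H^2)$ with $\norm{M_f}=\norm f_\infty$, each $I_r$ is a joint kernel of vector functionals, and the filtration separates points. Banach--Alaoglu and Theorem~\ref{thm:detector-compact-surjectivity} then finish the argument.

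Your route is shorter and handles both filtrations uniformly, at the cost of quoting the dual-algebra structure of $H^\infty$. The paper's normal-family argument is the concrete form of the same compactness, and it exhibits the representing function explicitly.

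One small slip: the reproducing property gives $f(\lambda)=\langle f,k_\lambda\rangle$, so your formula for $\rho_i$ is off by the factor $\norm{k_{\lambda_i}}^{-2}$. The intended weak-star continuous expression is $f(\lambda)=\langle M_fk_\lambda,k_\lambda\rangle/\norm{k_\lambda}^2$, or simply $\langle M_f1,k_\lambda\rangle$. This changes neither the kernel nor the continuity, so the argument stands.
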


\begin{proof}
  For \textup{(i)}, write \(\widehat f(j)=f^{(j)}(0)/j!\) and put \(T_r(f)=\sum_{j=0}^r\widehat f(j)S_r^j\).
  From Taylor multiplication, we obtain
  \[ T_r(fg)=T_r(f)T_r(g),\qquad \ker T_r=z^{r+1}H^\infty,\qquad T_r(1)=I. \]
  Compression of multiplication on \(H^2\) gives \(\norm{T_r(f)}\leq\norm f_\infty\).
  By the Carath\'eodory--Fej\'er theorem, we obtain the reverse quotient inequality in its attained form \cite{Sarason1967}:
  \[ \min\{\norm f_\infty:\widehat f(j)=a_j,\ 0\leq j\leq r\} =\norm{\textstyle\sum_{j=0}^ra_jS_r^j}. \]
  Hence these are exactly the quotient norms.
  The bonding maps truncate coefficients, so Lemma~\ref{lem:detector-quotient-tower} gives the metric-quotient condition.

  A bounded compatible sequence determines coefficients \((a_j)\).
  Put
  \[ C=\sup_r\norm{\textstyle\sum_{j=0}^ra_jS_r^j}<\infty. \]
  Choose the preceding interpolants \(f_r\) with \(\norm{f_r}_\infty\leq C\).
  By Montel's theorem, we obtain a subsequence \((f_{r_k})\) converging locally uniformly to \(f\in H^\infty\).
  By Cauchy's formula, we have
  \(
  \widehat f(j)=\lim_k\widehat f_{r_k}(j)=a_j.
  \)
  For the norm of \(f\), we obtain
  \[
    C=\sup_r\norm{T_r(f)}\leq\norm f_\infty\leq C.
  \]
  Thus the bounded limit is \(H^\infty\), with its original norm, and Taylor coefficients give uniqueness and preserve multiplication.

  For \textup{(ii)}, finite polynomial interpolation makes \(f\mapsto(f(\lambda_i))_{i=0}^r\) onto \(\mathbb C^{r+1}\).
  Its product is coordinatewise, and the Nevanlinna--Pick theorem gives the displayed quotient norm, again with a norm-minimizing interpolant \cite{Sarason1967}.
  As above, the bonding maps are metric quotients.
  If \((w_i)_{i\geq0}\) satisfies \(\sup_r\norm{(w_0,\ldots,w_r)}=C<\infty\) in the finite interpolation quotient norms, choose \(f_r\) such that
  \[ f_r(\lambda_i)=w_i\quad(0\leq i\leq r), \qquad \norm{f_r}_\infty\leq C. \]
  A locally uniformly convergent subsequence has a limit \(f\) with
  \[ f(\lambda_i)=w_i\quad(i\geq0). \]
  For its norm, we obtain
  \[ C=\sup_r\norm{f+I_r}\leq\norm f_\infty\leq C. \]
  The interior accumulation point and the identity theorem give uniqueness.
  Thus this bounded limit also identifies isometrically with \(H^\infty\), and the identification is multiplicative.
\end{proof}

\begin{corollary}[Canonical free semigroupoid algebras]
  \label{cor:graph-semigroupoid}
  \label{cor:free-semigroup-algebra}
  Let \(G\) be a nonempty countable directed graph.
  Then its canonical free semigroupoid algebra \(\mathfrak L_G\), with its operator norm, is isometrically a Calkin algebra.
\end{corollary}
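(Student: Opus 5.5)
The plan is to apply Lemma~\ref{lem:finite-triangular-corners} directly. I will represent \(\mathfrak L_G\) on the Fock space \(\ell_2(\mathcal P(G))\) of finite paths of \(G\), including the vertices as paths of length zero. On this space \(\mathfrak L_G\) is, by definition, the weak-operator (equivalently weak-star) closed algebra generated by the left creation partial isometries \(L_e\) (\(e\) an edge) and the vertex projections \(L_v\). This algebra is unital, since \(\sum_vL_v=I\) strongly and \(\mathfrak L_G\) is weak-star closed.

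The key step is to find increasing finite-rank projections \(P_n\to I\) strongly such that \((I-P_n)\mathcal A P_n=0\) for every \(n\). The generators are lower triangular with respect to path length, so the natural candidate is a finite set of paths closed under taking terminal segments. Here \(L_e\xi_p=\xi_{ep}\) when composable, so the operators prepend edges and lengthen paths, and \(\mathcal A\) maps \(\xi_p\) into the span of paths of which \(p\) is a terminal segment. For \(\overline{\operatorname{span}}\{\xi_p:p\in F\}\) to be invariant under \(\mathcal A\), \(F\) must be closed under this extension. That fails in general, since a vertex may have infinitely many incoming edges.

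So I will use the adjoint condition instead. Let \(F_n\) be a finite set of paths closed under passing to terminal segments, meaning that if \(p=eq\) lies in \(F_n\) then \(q\) does too. Enumerate the countably many paths as \(p_0,p_1,\ldots\), and let \(F_n\) be the union of the terminal-segment closures of \(p_0,\ldots,p_n\). This set is finite, increasing, and exhausts \(\mathcal P(G)\). With \(P_n\) the projection onto \(\operatorname{span}\{\xi_p:p\in F_n\}\), each generator \(T\) satisfies \(T^*P_nH\subseteq P_nH\), because \(L_e^*\xi_{eq}=\xi_q\) and \(L_v^*=L_v\). Hence \(P_nT(I-P_n)=0\) for the generators. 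The set of \(T\) with \(P_nT(I-P_n)=0\) is closed under products and sums and is weak-operator closed, so it contains \(\mathcal A\). This is the first alternative in Lemma~\ref{lem:finite-triangular-corners}. Discarding empty initial stages is handled in that lemma, and separability holds because \(G\) is countable.

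The main obstacle is bookkeeping rather than analysis. I have to fix the convention for \(\mathfrak L_G\) (left versus right creation, and the composition order in \cite{KribsPower2004}) consistently, so that the closure direction of \(F_n\) (terminal versus initial segments) matches the triangularity of the generators. With the opposite convention I would take initial segments instead. Once that is settled, Lemma~\ref{lem:finite-triangular-corners} gives the conclusion, with the operator norm preserved through Proposition~\ref{prop:weak-star-rfd} and Theorem~\ref{thm:finite-detection-duality}.
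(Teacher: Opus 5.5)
Your proposal is correct and follows the paper's proof. You take \(P_n\) to be the projection onto the span of the right factors (terminal segments) of the first \(n+1\) enumerated paths, check \(P_nT(I-P_n)=0\) on the generators, extend this to all of \(\mathfrak L_G\) by closure under sums, products and weak-operator limits, and apply Lemma~\ref{lem:finite-triangular-corners}. Your adjoint-invariance check is equivalent to the paper's direct observation that \(\mu\nu\in D_n\) forces \(\nu\in D_n\).
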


\begin{proof}
  Let \(\mathbb F_G^+\) be the set of finite paths in \(G\), including the vertices, and put \(H_G=\ell_2(\mathbb F_G^+)\).
  In this example, \(\mu\nu\) first traverses \(\nu\) and then \(\mu\), as in \cite[Section~3]{KribsPower2004}.
  Write \((\xi_\nu)_{\nu\in\mathbb F_G^+}\) for the standard orthonormal basis of \(H_G\).
  For \(\mu\in\mathbb F_G^+\), let
  \[
  L_\mu\xi_\nu =\begin{cases} \xi_{\mu\nu},&\text{if }\mu\nu\in\mathbb F_G^+,\\
  0,&\text{otherwise}. \end{cases}
  \]
  The algebra \(\mathfrak L_G\) is the weak-operator closed algebra generated by these operators, and hence is weak-star closed.
  It is unital, since finite sums of the vertex projections converge strongly to \(I_{H_G}\).

  Choose a sequence \((\lambda_n)\) whose range is \(\mathbb F_G^+\).
  Let \(D_n\) be the set of all right factors of \(\lambda_0,\ldots,\lambda_n\), let
  \[ H_n=\operatorname{span}\{\xi_\nu:\nu\in D_n\}, \]
  and let \(P_n\) be the orthogonal projection onto \(H_n\).
  Every finite path has only finitely many right factors.
  Hence \(H_n\) is finite dimensional, \(P_n\leq P_{n+1}\), and \(P_n\to I_{H_G}\) strongly.

  Fix \(\mu\in\mathbb F_G^+\) and \(\nu\notin D_n\).
  If \(L_\mu\xi_\nu\ne0\) and \(\mu\nu\in D_n\), then \(\nu\) is a right factor of an element of \(D_n\), and hence a right factor of one of \(\lambda_0,\ldots,\lambda_n\).
  This contradicts \(\nu\notin D_n\).
  Therefore we obtain
  \[ P_nL_\mu(I_{H_G}-P_n)=0. \]
  This corner condition is preserved by sums and products and is weak-operator closed, so we have
  \[ P_n\mathfrak L_G(I_{H_G}-P_n)=0. \]
  Lemma~\ref{lem:finite-triangular-corners} applies.
  For the graph with one vertex and \(d\) loops this recovers \(\mathcal L_d\), and, when \(d\) is finite, the usual length compressions are the noncommutative interpolation models of \cite{DavidsonPitts1998}.
\end{proof}

\begin{corollary}[Finite-dimensional \(W^*\)-correspondence Hardy algebras]
  \label{cor:finite-wstar-hardy}
  Let \(M\) be a finite-dimensional von Neumann algebra and let \(E\) be a \(W^*\)-correspondence over \(M\) which is finite dimensional as a complex vector space.
  Then the Hardy algebra \(H^\infty(E)\), with its standard operator norm, is isometrically a Calkin algebra.
\end{corollary}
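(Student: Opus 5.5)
The plan is to apply Lemma~\ref{lem:finite-triangular-corners} to the standard Fock-space representation of \(H^\infty(E)\), just as in Corollaries~\ref{cor:omega-atomic-nest} and~\ref{cor:graph-semigroupoid}. Represent \(M\) faithfully and normally on a finite-dimensional Hilbert space \(H\). Form the Fock correspondence \(\mathcal F(E)=\bigoplus_{n\geq0}E^{\otimes n}\), where \(E^{\otimes 0}=M\), and the induced Hilbert space \(\mathcal F(E)\otimes_M H=\bigoplus_{n\geq0}E^{\otimes n}\otimes_MH\). Since \(E\) and \(M\) are finite dimensional, each summand \(K_n=E^{\otimes n}\otimes_MH\) is finite dimensional. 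The induced space is separable, and it is nonzero because \(K_0=H\ne\{0\}\). The standard Muhly--Solel definition makes \(H^\infty(E)\) the ultraweak (weak-star) closure of the algebra generated by the induced creation operators \(T_\xi\otimes I\), \(\xi\in E\), and the left action \(\varphi_\infty(a)\otimes I\), \(a\in M\). It is therefore a weak-star closed unital algebra, with unit \(\varphi_\infty(1)\otimes I\), which is the identity. Its standard operator norm is the one inherited from \(\Bcal(\mathcal F(E)\otimes_MH)\).

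Next, let \(P_r\) be the orthogonal projection onto \(N_r=\bigoplus_{n=0}^rK_n\). These projections are increasing, have finite rank, and converge strongly to the identity. For the corner condition, the creation operators map \(K_n\) into \(K_{n+1}\), and the operators \(\varphi_\infty(a)\otimes I\) preserve each \(K_n\). Hence every generator maps \(\bigoplus_{n>r}K_n=(I-P_r)\mathcal H\) into itself, which gives \(P_rT(I-P_r)=0\) for every generator. This relation is preserved under sums and products. Indeed, if \(P_rS(I-P_r)=0=P_rT(I-P_r)\), then
\[P_rST(I-P_r)=P_rS(I-P_r)T(I-P_r)+P_rSP_rT(I-P_r)=0.\]
The relation is also weak-star closed, so it holds on all of \(H^\infty(E)\). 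Lemma~\ref{lem:finite-triangular-corners} then gives the conclusion with the operator norm.

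The main thing to check is that the definition of \(H^\infty(E)\) and its standard norm do not depend on the choice of faithful normal representation of \(M\), so that fixing a finite-dimensional \(H\) is harmless. I would cite Muhly--Solel's result that the induced representation of \(H^\infty(E)\) is isometric and weak-star homeomorphic for any faithful normal representation. If one prefers not to rely on this, it is enough to take the definition in the fixed representation above, and the argument is then self-contained.
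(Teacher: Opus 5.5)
Your proposal is correct and follows essentially the same route as the paper. The paper also uses the Fock representation induced from a faithful normal representation of \(M\) on a finite-dimensional space, the degree-truncation projections \(P_r\), and the corner identity \(P_rT(I-P_r)=0\), proved on generators and extended by weak-star continuity, and then applies Lemma~\ref{lem:finite-triangular-corners}, citing Muhly--Solel for faithfulness and normality of the induced representation exactly as you propose.
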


\begin{proof}
  Choose a faithful normal representation \(\sigma:M\to\Bcal(K)\) on a finite-dimensional Hilbert space.
  Here \(E^{\otimes j}\) denotes the \(j\)-fold internal tensor product over \(M\), with \(E^{\otimes0}=M\), and \(\mathcal F(E)=\bigoplus_{j\geq0}E^{\otimes j}\) is the Fock module.
  On the induced Fock-space representation
  \[ \mathcal K_E =\mathcal F(E)\otimes_\sigma K =\bigoplus_{j=0}^\infty E^{\otimes j}\otimes_\sigma K, \]
  the Hardy algebra \(H^\infty(E)\) is the ultraweak closure of the algebra generated by the diagonal left action of \(M\) and the creation operators \cite[Definition~2.2]{MuhlySolel2004}.
  The induced representation is faithful and normal \cite[Lemma~2.1 and Corollary~2.3]{MuhlySolel2004}, so we identify \(H^\infty(E)\) with its image in \(\Bcal(\mathcal K_E)\).

  Let \(P_r\) be the orthogonal projection onto \(\mathcal K_{E,\leq r}=\bigoplus_{j=0}^r E^{\otimes j}\otimes_\sigma K\).
  This subspace is finite dimensional, and \(P_r\to I_{\mathcal K_E}\) strongly.
  The diagonal left action preserves tensor degree and every creation operator increases it.
  Hence every analytic polynomial \(T\) satisfies \(P_rT(I_{\mathcal K_E}-P_r)=0\).
  The map \(T\mapsto P_rT(I_{\mathcal K_E}-P_r)\) is ultraweakly continuous, so the same identity holds for every \(T\in H^\infty(E)\).
  Apply Lemma~\ref{lem:finite-triangular-corners}.
\end{proof}

\begin{corollary}[Weak-star semicrossed products]
  \label{cor:finite-semicrossed-products}
  Let \(A\subseteq\Bcal(H_0)\) be a finite-dimensional unital operator algebra on a finite-dimensional Hilbert space, and let \(\beta:A\to A\) be a unital completely contractive endomorphism.
  On \(H_0\otimes\ell_2(\N)\), define
  \[ \pi_\beta(a)(\xi\otimes e_n)=\beta^n(a)\xi\otimes e_n, \qquad V(\xi\otimes e_n)=\xi\otimes e_{n+1}. \]
  This is the Fock representation of the dynamical system \((A,\beta)\) (see \cite[Example~2.5]{DavidsonFullerKakariadis2018}).
  Let \(\mathfrak S_\beta(A)\) be the weak-star semicrossed product of \(A\) by \(\beta\), namely the weak-star closed algebra generated by \(\pi_\beta(A)\) and \(V\) \cite[Definition~2.1]{Kakariadis2012}.
  Then \(\mathfrak S_\beta(A)\), with its operator norm, is isometrically a Calkin algebra.
\end{corollary}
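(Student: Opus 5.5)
The plan is to apply Lemma~\ref{lem:finite-triangular-corners} to \(\mathcal A=\mathfrak S_\beta(A)\subseteq\Bcal(H)\), where \(H=H_0\otimes\ell_2(\N)\), exactly as in the proofs of Corollaries~\ref{cor:omega-atomic-nest} and~\ref{cor:finite-wstar-hardy}. The algebra is weak-star closed by definition. It is unital because \(\pi_\beta(1_A)=I_H\): \(\beta\) is unital, so \(\beta^n(1_A)=1_A\) for every \(n\), and \(1_A\) acts as the identity on \(H_0\) since \(A\) is a unital subalgebra of \(\Bcal(H_0)\). The space \(H\) is separable and nonzero.

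For the compressions we take \(P_r\) to be the orthogonal projection onto \(H_0\otimes\operatorname{span}\{e_0,\ldots,e_r\}\). Since \(\dim H_0<\infty\), each \(P_r\) has finite rank. The projections increase, and \(P_r\to I_H\) strongly.

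The key step is the corner identity \(P_rT(I_H-P_r)=0\) for every \(T\in\mathfrak S_\beta(A)\). We first check it on the generators. Each \(\pi_\beta(a)\) is block diagonal in the index \(n\), and \(V\) raises \(n\) by one. So both leave \((I_H-P_r)H=H_0\otimes\overline{\operatorname{span}}\{e_n:n>r\}\) invariant. The set of operators leaving a fixed closed subspace invariant is a unital algebra that is closed in the weak-operator and weak-star topologies. It therefore contains the weak-star closed algebra generated by \(\pi_\beta(A)\) and \(V\), and this invariance is precisely \(P_rT(I_H-P_r)=0\). This is the first alternative in Lemma~\ref{lem:finite-triangular-corners}, and that lemma then gives the isometric Calkin realization with the operator norm.

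No step is a real obstacle. The only point needing care is that the lemma requires the ambient algebra to be weak-star closed in \(\Bcal(H)\); this holds by the definition taken from \cite{Kakariadis2012}. Complete contractivity of \(\beta\) is not used beyond guaranteeing that the Fock representation is well defined.
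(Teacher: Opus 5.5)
Your proposal is correct and follows the paper's proof essentially verbatim. It uses the same projections \(P_r\) onto \(H_0\otimes\operatorname{span}\{e_0,\ldots,e_r\}\), checks the corner identity \(P_rT(I-P_r)=0\) on the generators, extends it by weak-star closedness, and applies Lemma~\ref{lem:finite-triangular-corners}. The paper additionally records \(\norm{\pi_\beta(a)}=\sup_n\norm{\beta^n(a)}=\norm a\), which is the boundedness you attribute to the contractivity of \(\beta\).
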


\begin{proof}
  Since every iterate of \(\beta\) is contractive and the degree-zero block is \(a\), we have
  \[ \norm{\pi_\beta(a)}
     =\sup_{n\geq0}\norm{\beta^n(a)}
     =\norm a. \]
  Thus the displayed formula defines a bounded isometric representation.
  The algebra \(\mathfrak S_\beta(A)\) is weak-star closed and unital.
  Let \(P_r\) be the orthogonal projection onto \(H_0\otimes\operatorname{span}\{e_0,\ldots,e_r\}\).
  These projections have finite rank and converge strongly to the identity.
  Each \(\pi_\beta(a)\) preserves the second tensor degree, while \(V\) increases it by one.
  Therefore every polynomial \(T\) in the generators satisfies \(P_rT(I-P_r)=0\).
  This corner condition is weak-star closed, and hence it holds for every \(T\in\mathfrak S_\beta(A)\).
  Apply Lemma~\ref{lem:finite-triangular-corners}.
\end{proof}

\subsection{\texorpdfstring{Harmonic analysis and quantum convolution algebras}{Harmonic analysis and quantum convolution algebras}}

The measure and quantum convolution algebras below satisfy the hypotheses of Proposition~\ref{prop:coalgebra}. Fourier multiplier algebras are treated through pointwise compactness of their norm balls.

\begin{corollary}[Compact hypergroup measure algebras]
  \label{cor:compact-measures}
  Let \(H\) be a compact metrizable hypergroup in the sense of Jewett \cite{Jewett1975}.
  Then \(M(H)\), with convolution and the total-variation norm, is isometrically a Calkin algebra.
\end{corollary}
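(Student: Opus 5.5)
The plan is to apply Proposition~\ref{prop:coalgebra} with \(C=C(H)\), the commutative unital \(C^*\)-algebra of continuous functions on \(H\). By Jewett's axioms the convolution \(\delta_x*\delta_y\) is a probability measure depending weak-star continuously on \((x,y)\). So
\[ (\Delta f)(x,y)=\int_H f\,d(\delta_x*\delta_y) \]
defines a map \(\Delta:C(H)\to C(H\times H)=C(H)\otimes_{\min}C(H)\), and it is unital and positive. Positive maps into a commutative \(C^*\)-algebra are completely positive, so \(\Delta\) is UCP. Associativity of convolution gives coassociativity of \(\Delta\). The counit is evaluation at the identity \(e\), which is a character, and \(\delta_e*\delta_x=\delta_x=\delta_x*\delta_e\) gives both counit identities. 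By the Riesz representation theorem, \(C(H)^*=M(H)\) isometrically with the total-variation norm, and \((\mu\otimes\nu)(\Delta f)=\int f\,d(\mu*\nu)\). Thus the convolution algebra of Proposition~\ref{prop:coalgebra} is exactly \(M(H)\) with convolution.

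It remains to produce increasing finite-dimensional subspaces \(C_r\ni1\) with dense union and \(\Delta(C_r)\subseteq C_r\otimes C_r\). This is the main obstacle. I would use the harmonic analysis of compact hypergroups (Jewett, and Vrem's Peter--Weyl theory). \(H\) carries a normalized Haar measure, and the irreducible representations are finite dimensional. Their coefficient functions span a dense subalgebra \(\operatorname{Trig}(H)\) of \(C(H)\). For each irreducible class \(\pi\), the span \(T_\pi\) of its coefficient functions is finite dimensional. The representation identity \(\pi(\delta_x*\delta_y)=\pi(x)\pi(y)\) gives
\[ \Delta(\pi_{ij})=\sum_k\pi_{ik}\otimes\pi_{kj}\in T_\pi\otimes T_\pi. \]
Metrizability of \(H\) makes \(C(H)\) separable. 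Orthogonality of coefficients from inequivalent classes in \(L^2(H)\) then shows that the dual object \(\widehat H\) is countable.

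Enumerate \(\widehat H=\{\pi_0,\pi_1,\dots\}\) with \(\pi_0\) trivial, and put \(C_r=T_{\pi_0}+\cdots+T_{\pi_r}\). Each \(C_r\) contains \(1\), is finite dimensional, and satisfies \(\Delta(C_r)\subseteq C_r\otimes C_r\) by the coefficient formula above. Their union \(\operatorname{Trig}(H)\) is uniformly dense, by the hypergroup Peter--Weyl theorem. Proposition~\ref{prop:coalgebra} then gives that \(M(H)\) is isometrically its bounded detection envelope, and Theorem~\ref{thm:main} realizes it isometrically as a Calkin algebra.

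The delicate step is the Peter--Weyl input for hypergroups rather than groups: density of trigonometric polynomials and the coproduct formula for coefficients. I would cite Vrem's theorem for compact hypergroups for these facts rather than reprove them. Note also that the \(C_r\) need not be subalgebras, which the remark after Proposition~\ref{prop:coalgebra} explicitly allows.
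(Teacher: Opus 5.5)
Your proposal is correct and follows the paper's proof. Both arguments use the UCP coproduct \(\Delta f(x,y)=\int f\,d(\delta_x\star\delta_y)\) with counit \(f\mapsto f(e)\), take \(C_r\) to be the sum of the coefficient spaces of finitely many irreducible representations (trivial one first), get density from Vrem's Peter--Weyl theorem, and conclude with Proposition~\ref{prop:coalgebra}. The one difference is how you get countability: you show that all of \(\widehat H\) is countable using the hypergroup Schur orthogonality relations in the separable space \(L^2(H)\), whereas the paper only extracts countably many representations whose coefficients approximate a countable dense subset of \(C(H)\), which avoids orthogonality altogether (and only density of the coefficient span, not closure under pointwise products, is ever needed).
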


\begin{proof}
  For \(f\in C(H)\), define
  \[ (\Delta f)(x,y) =\int_H f(z)\,d(\delta_x\star\delta_y)(z) \qquad(x,y\in H). \]
  The hypergroup axioms imply that
  \[ \Delta:C(H)\longrightarrow C(H\times H) =C(H)\otimes_{\min}C(H) \]
  is unital and positive.
  Since \(C(H)\) is commutative, \(\Delta\) is completely positive \cite[Theorem~3.11]{Paulsen2003}.
  Associativity gives \((\Delta\otimes I)\Delta=(I\otimes\Delta)\Delta\), and evaluation \(\varepsilon(f)=f(e)\) at the identity is a character satisfying the two counit identities.

  By the Peter--Weyl theorem for compact hypergroups, every continuous irreducible representation of \(H\) is finite dimensional and the span of all matrix coefficients is uniformly dense in \(C(H)\) \cite[Theorems~2.2 and~2.13]{Vrem1979}.
  Since \(H\) is metrizable, \(C(H)\) is separable.
  Choose a countable dense subset of \(C(H)\), and approximate each of its members to every accuracy \(1/m\) by finite linear combinations of matrix coefficients.
  The resulting countable family of coefficients involves only countably many representations.
  Taking their full coefficient spaces and including the trivial representation first gives a sequence \((\pi_n)_{n\geq0}\) whose coefficient spaces have dense linear span.

  Write \(d_n=\dim\pi_n\), choose an orthonormal basis for each representation space, and put
  \[
  u_{ij}^{\pi_n}(x) =\langle\pi_n(\delta_x)e_j,e_i\rangle,\qquad C_r=\operatorname{span} \{u_{ij}^{\pi_n}:0\leq n\leq r,\ 1\leq i,j\leq d_n\}.
  \]
  Then \((C_r)\) is increasing and finite dimensional, contains the constants, and has dense union.
  Since \(\pi_n\) is a representation of the measure algebra, we have
  \begin{align*}
    \Delta(u_{ij}^{\pi_n})(x,y) =\langle\pi_n(\delta_x\star\delta_y)e_j,e_i\rangle=\langle\pi_n(\delta_x)\pi_n(\delta_y)e_j,e_i\rangle =\sum_{k=1}^{d_n}u_{ik}^{\pi_n}(x)u_{kj}^{\pi_n}(y).
  \end{align*}
  Hence we have \(\Delta(C_r)\subseteq C_r\otimes C_r\).
  Under the Riesz identification \(C(H)^*=M(H)\), the dual product induced by \(\Delta\) is the usual measure convolution.
  Apply Proposition~\ref{prop:coalgebra}.
\end{proof}

This includes \(M(\mathbb T)\), \(M(\mathbb T^d)\), and \(M(SU(2))\).
The last arises from the connected non-abelian compact group \(SU(2)\).
The finite-dimensional quotient \(M(H)/C_r^\perp\) carries the quotient total-variation norm.

There are also non-group examples.
Let \(\varphi=(1+\sqrt5)/2\), and give \(\{e,\tau\}\) identity \(e\), trivial involution, and convolution \cite[Example~4.1]{Voit1996}
\[ \delta_\tau\star\delta_\tau =\varphi^{-2}\delta_e+\varphi^{-1}\delta_\tau. \]
The countable product \(H_{\rm Fib}=\prod_{n\geq1}\{e,\tau\}\), with
\[ \delta_x\star\delta_y=\bigotimes_{n\geq1}(\delta_{x_n}\star\delta_{y_n}), \]
is a compact metrizable hypergroup, as the inverse limit of its finite coordinate hypergroups \cite[Theorem~1.5]{Voit1996}.
Hence \(M(H_{\rm Fib})\) is an infinite-dimensional isometric Calkin algebra.

\begin{corollary}[Fourier--Stieltjes algebras]\label{cor:fourier-stieltjes}
  Let \(\Gamma\) be a countable discrete group.
  Then \(B(\Gamma)\), with pointwise multiplication, is isometrically a Calkin algebra.
\end{corollary}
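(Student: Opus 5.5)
We realize \(B(\Gamma)\) through the duality criterion rather than through an explicit inverse system. The plan is to use \(B(\Gamma)=C^*(\Gamma)^*\) isometrically, which is Eymard's identification \cite{Eymard1964}: the Fourier--Stieltjes norm is the dual norm of the full group \(C^*\)-algebra, and pointwise multiplication corresponds to the product dual to the coproduct \(\Delta(\lambda_g)=\lambda_g\otimes\lambda_g\). Since \(\Gamma\) is countable, \(C^*(\Gamma)\) is separable.

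We then apply Proposition~\ref{prop:coalgebra} with \(C=C^*(\Gamma)\). The coproduct is the unital \(^*\)-homomorphism \(\Delta:C^*(\Gamma)\to C^*(\Gamma)\otimes_{\min}C^*(\Gamma)\) induced by \(g\mapsto u_g\otimes u_g\). It exists because \(g\mapsto u_g\otimes u_g\) is a unitary representation of \(\Gamma\) into the minimal tensor product, and every such representation extends to the full group \(C^*\)-algebra. Being a unital \(^*\)-homomorphism, \(\Delta\) is unital completely positive. The counit is the trivial character \(\varepsilon(u_g)=1\), which exists on the full algebra. Coassociativity and the two counit identities only need checking on the generators \(u_g\), where they are immediate, and then extend by linearity and continuity. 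The induced convolution of \(\mu,\nu\in C^*(\Gamma)^*\) evaluates at \(u_g\) to \(\mu(u_g)\nu(u_g)\). Under the identification \(\mu\leftrightarrow(g\mapsto\mu(u_g))\), this is exactly pointwise multiplication, and the unit \(\varepsilon\) is the constant function \(1\).

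For the finite-dimensional subcoalgebras, enumerate \(\Gamma=\{g_0=e,g_1,\ldots\}\) and put \(C_r=\operatorname{span}\{u_{g_0},\ldots,u_{g_r}\}\). Each \(C_r\) contains \(1=u_e\), the spaces increase, and their union is the group algebra, which is dense in \(C^*(\Gamma)\). Moreover \(\Delta(u_g)=u_g\otimes u_g\in C_r\otimes C_r\) for \(g\in C_r\). All hypotheses of Proposition~\ref{prop:coalgebra} therefore hold, and that proposition gives the isometric Calkin realization.

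The only step needing care is the isometric identification of \(B(\Gamma)\), with its Fourier--Stieltjes norm, as the dual of the \emph{full} (not reduced) \(C^*\)-algebra, with the multiplication matching. This is quoted from \cite{Eymard1964}. The existence of \(\Delta\) into the minimal tensor product is the point where fullness is used: it relies on the universal property of \(C^*(\Gamma)\), not on any amenability of \(\Gamma\).
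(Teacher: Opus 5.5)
Your proposal is correct and follows the paper's proof essentially verbatim. Both use Eymard's identification \(B(\Gamma)=C^*(\Gamma)^*\), the coproduct \(u_g\mapsto u_g\otimes u_g\) and trivial character obtained from the universal property, finite spans of the \(u_g\) (you take \(g_0=e\) where the paper adjoins \(1\) explicitly), and then Proposition~\ref{prop:coalgebra}; the only blemishes are notational, namely writing \(\lambda_g\) in your first paragraph for what you later call \(u_g\), and ``\(g\in C_r\)'' where you mean \(u_g\in C_r\).
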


\begin{proof}
  By Eymard's identification \cite{Eymard1964}, we have \(B(\Gamma)=C^*(\Gamma)^*\) isometrically.
  Put \(C=C^*(\Gamma)\), and write \(u_g\) for its canonical unitaries.
  The universal property of \(C^*(\Gamma)\) gives a unital \(^*\)-homomorphism \(\Delta:C\to C\otimes_{\min}C\) and a character \(\varepsilon:C\to\mathbb C\) with
  \[ \Delta(u_g)=u_g\otimes u_g, \qquad \varepsilon(u_g)=1. \]
  Coassociativity and the counit identities hold on the generators, hence on \(C\).
  Choose a sequence \((g_n)\) whose range is \(\Gamma\), and put \(C_r=\operatorname{span}\{1,u_{g_0},\ldots,u_{g_r}\}\).
  Then we have \(\Delta(C_r)\subseteq C_r\otimes C_r\), and the group algebra is dense in \(C^*(\Gamma)\).
  For \(f,g\in B(\Gamma)\), we have
  \[ (f\star g)(u_s)=(f\otimes g)(u_s\otimes u_s)=f(s)g(s) \qquad(s\in\Gamma). \]
  Thus the convolution in Proposition~\ref{prop:coalgebra} is the prescribed pointwise product.
  Apply that proposition.
\end{proof}

In particular, \(B(\mathbb F_2)\) is an isometric Calkin algebra.

\begin{corollary}[Fourier multiplier algebras]
  \label{cor:fourier-multipliers}
  Let \(\Gamma\) be a countable discrete group, and let \(A(\Gamma)\) be its Fourier algebra \cite{Eymard1964}.
  The algebras
     \[ MA(\Gamma) \qquad\text{and}\qquad M_{\mathrm{cb}}A(\Gamma)=B_2(\Gamma), \]
  with pointwise multiplication and, respectively, the multiplier norm and the completely bounded multiplier norm, are isometrically Calkin algebras of separable Banach spaces.
\end{corollary}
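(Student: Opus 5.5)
The plan is to reduce to Theorem~\ref{thm:detector-compact-surjectivity}, as announced before the harmonic-analysis subsection: Fourier multiplier algebras are to be handled through pointwise compactness of their norm balls. Let \(\mathcal M\) denote either \(MA(\Gamma)\) or \(M_{\mathrm{cb}}A(\Gamma)\), viewed as a space of functions \(\varphi:\Gamma\to\K\) with pointwise multiplication and norm \(\norm{M_\varphi}\) or \(\norm{M_\varphi}_{\mathrm{cb}}\). These norms are submultiplicative because \(M_{\varphi\psi}=M_\varphi M_\psi\) and composition is (completely) contractive. The constant function \(1\) is the identity operator, so it has norm one. Completeness follows because each of these norms dominates \(\norm{\varphi}_\infty\), and the unit ball is pointwise closed, a fact we establish below.

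Next comes the filtration. Enumerate \(\Gamma=\{g_0,g_1,\ldots\}\) and put \(I_r=\{\varphi\in\mathcal M:\varphi(g_0)=\cdots=\varphi(g_r)=0\}\). Each \(I_r\) is a closed two-sided ideal of codimension at most \(r+1\), since its quotient embeds in \(\K^{r+1}\) by evaluation. It is proper because \(1\notin I_r\). The \(I_r\) decrease, and \(\bigcap_rI_r=\{0\}\), so the filtration is separating. The topology \(\tau_{(I_r)}\) is generated by finite-dimensional quotient seminorms. Each of these is equivalent to \(\max_{i\leq r}\abs{\varphi(g_i)}\), and point evaluations are contractive since \(\abs{\varphi(g)}\leq\norm{\varphi}_\infty\). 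Hence, on bounded sets, \(\tau_{(I_r)}\) coincides with pointwise convergence on \(\Gamma\).

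The main step is compactness of the closed \(R\)-ball in this topology. By Tychonoff, or a diagonal argument since \(\Gamma\) is countable, a bounded sequence \(\varphi_n\) has a pointwise convergent subsequence with some limit \(\varphi\). We must show that \(\varphi\in\mathcal M\) with \(\norm{M_\varphi}\leq R\), respectively \(\norm{M_\varphi}_{\mathrm{cb}}\leq R\). I would test the norm against finitely supported elements. For \(u\in A(\Gamma)\) finitely supported, \(\varphi_nu\to\varphi u\) in \(A(\Gamma)\), because everything lives in a finite-dimensional span of the \(\delta_g\). Such \(u\) are dense in \(A(\Gamma)\), so we get \(\norm{\varphi u}_A\leq R\norm u_A\) first on that dense subspace and then by extension, with \(M_\varphi\) extending as a multiplier.

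For the cb norm, the cleanest route is to use the same argument at each matrix level \(M_k(A(\Gamma))\), since the finitely supported matrices are again dense. Alternatively, one can use the Bożejko--Fendler/Gilbert characterization of \(B_2(\Gamma)\) as Herz--Schur multipliers. In that picture, the condition \(\norm{[\varphi(s^{-1}t)x_{st}]}\leq R\norm{[x_{st}]}\) on finite matrices passes to pointwise limits directly, because only finitely many values of \(\varphi\) enter each inequality.

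With compactness established, Theorem~\ref{thm:detector-compact-surjectivity} shows that \(\jmath:\mathcal M\to\Env^b_{(I_r)}(\mathcal M)\) is an isometric unital isomorphism. Theorem~\ref{thm:main} then realizes \(\mathcal M\) as \(\Cal(X)\) with \(X\) separable. I expect the density and lower-semicontinuity step for the cb norm to be the only real obstacle, and the Herz--Schur finite-matrix characterization is the one I would try first.
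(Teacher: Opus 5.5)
Your proposal is correct and follows essentially the same route as the paper. It uses the separating filtration by vanishing on finite sets, and identifies \(\tau_{(I_r)}\) with pointwise convergence through equivalence of norms on the finite-dimensional quotients. It then shows pointwise closedness of balls, via density of finitely supported functions in \(A(\Gamma)\) for \(MA(\Gamma)\) and via the finite-restriction formula for the Herz--Schur norm in the completely bounded case, before applying Theorems~\ref{thm:detector-compact-surjectivity} and~\ref{thm:main}. The one inequality you assert without proof, \(\norm\varphi_\infty\leq\norm\varphi_{MA}\leq\norm\varphi_{M_{\mathrm{cb}}A}\), follows at once from \(M_\varphi\delta_s=\varphi(s)\delta_s\), as in the paper.
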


\begin{proof}
  For a scalar function \(\varphi\) on \(\Gamma\), write \(M_\varphi f=\varphi f\).
  Thus \(MA(\Gamma)\) consists of the functions for which \(M_\varphi:A(\Gamma)\to A(\Gamma)\) is bounded, with norm \(\norm{M_\varphi}\), while the completely bounded version uses the canonical operator-space structure of \(A(\Gamma)\). See \cite[Section~2]{Knudby2014Semigroups}.
  These are unital Banach algebras with \(\norm1=1\).
  For the point indicators \(\delta_s=\mathbf1_{\{s\}}\in A(\Gamma)\), we have
  \[
    \norm\varphi_\infty
    =\sup_{s\in\Gamma}
      \frac{\norm{M_\varphi\delta_s}_{A(\Gamma)}}{\norm{\delta_s}_{A(\Gamma)}}
    \leq\norm\varphi_{MA}
    \leq\norm\varphi_{M_{\mathrm{cb}}A}.
  \]

  We first check that closed balls of \(MA(\Gamma)\) are pointwise closed.
  Let \(\varphi_i\to\varphi\) pointwise and \(\norm{M_{\varphi_i}}\leq R\).
  Write \(\lambda:\Gamma\to\Bcal(\ell_2(\Gamma))\) for the left regular representation.
  Finitely supported functions are dense in \(A(\Gamma)\): approximate the vectors in coefficients of the left regular representation by finitely supported vectors, using the coefficient-norm estimate \(\norm{\langle\lambda(\cdot)\xi,\eta\rangle}_{A(\Gamma)}\leq\norm\xi_2\norm\eta_2\) \cite{Eymard1964}.
  For a finitely supported \(f\), all \(\varphi_i f\) lie in the same finite-dimensional subspace of \(A(\Gamma)\), so by pointwise convergence we obtain
  \[ \norm{\varphi f}_{A(\Gamma)} =\lim_i\norm{\varphi_i f}_{A(\Gamma)} \leq R\norm f_{A(\Gamma)}. \]
  Hence multiplication by \(\varphi\) extends to an operator of norm at most \(R\) on \(A(\Gamma)\), and continuity of point evaluations identifies the extension with \(M_\varphi\).

  For the completely bounded case, put \(\widehat\varphi(s,t)=\varphi(t^{-1}s)\).
  By the Herz--Schur identification and the finite-restriction formula for the Schur-multiplier norm \cite[Section~2 and Lemma~4.1]{Knudby2014Semigroups}, we have
  \[
    \norm\varphi_{M_{\mathrm{cb}}A}
    =\norm{\widehat\varphi}_{\mathrm S}
    =\sup_{\substack{F\subseteq\Gamma\\F\text{ finite}}}
      \norm{\widehat\varphi|_{F\times F}}_{\mathrm S}.
  \]
  Each finite restriction norm is continuous in the entries, so these closed balls are also pointwise closed.
  In either algebra, the closed \(R\)-ball is therefore a closed subset of the compact product \(\prod_{s\in\Gamma}\{z\in\mathbb C:\abs z\leq R\}\), and is pointwise compact.

  Now let \(B\) denote either algebra, choose increasing nonempty finite sets \(F_r\) with union \(\Gamma\), and put
  \[ I_r=\{\varphi\in B:\varphi|_{F_r}=0\}. \]
  These are proper closed two-sided ideals of finite codimension, and \(\bigcap_r I_r=\{0\}\).
  Restriction induces a linear injection \(B/I_r\to\mathbb C^{F_r}\).
  Since its domain is finite dimensional, the inverse on its range is bounded.
  Thus there is \(C_r<\infty\) such that, for \(\varphi\in B\), we have
  \[ \max_{s\in F_r}\abs{\varphi(s)}
     \leq\norm{\varphi+I_r}
     \leq C_r\max_{s\in F_r}\abs{\varphi(s)}. \]
  Consequently \(\tau_{(I_r)}\) agrees with the topology of pointwise convergence, although the quotient norms need not be maximum norms.
  The closed balls are compact in \(\tau_{(I_r)}\), so Theorems~\ref{thm:detector-compact-surjectivity} and~\ref{thm:main} apply.
\end{proof}

In particular, both \(MA(\mathbb F_2)\) and the Herz--Schur multiplier algebra \(B_2(\mathbb F_2)\) occur with the multiplier and completely bounded multiplier norms, respectively.
The latter strictly contains \(B(\mathbb F_2)\), since \(B(\Gamma)=B_2(\Gamma)\) holds exactly for amenable discrete groups \cite[Proposition~5.6]{Knudby2014Semigroups}.

For a compact quantum group \(\mathbb G\), write \(\operatorname{Pol}(\mathbb G)\) for the Hopf \(^*\)-algebra spanned by its finite-dimensional corepresentation coefficients \cite[Theorem~1.2]{Woronowicz1998}.
Write \(C^u(\mathbb G)\) and \(C_{\mathrm{red}}(\mathbb G)\) for its universal and reduced \(C^*\)-completions, respectively \cite[Sections~2--3]{BedosMurphyTuset2001}.

\begin{corollary}[Universal compact quantum-group duals]
  \label{cor:compact-quantum}
  Let \(\mathbb G\) be a compact quantum group for which \(C^u(\mathbb G)\) is separable.
  Then \(C^u(\mathbb G)^*\), with convolution, is isometrically a Calkin algebra.
\end{corollary}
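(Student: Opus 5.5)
The plan is to apply Proposition~\ref{prop:coalgebra} with \(C=C^u(\mathbb G)\), exactly as in Corollary~\ref{cor:fourier-stieltjes}.

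\emph{Structure maps.} The coproduct \(\Delta:C^u(\mathbb G)\to C^u(\mathbb G)\otimes_{\min}C^u(\mathbb G)\) is a unital \(^*\)-homomorphism. By the remark following Proposition~\ref{prop:coalgebra}, it is therefore unital completely positive. Coassociativity holds by the definition of a compact quantum group. On the universal completion, the Hopf counit of \(\operatorname{Pol}(\mathbb G)\) extends to a character \(\varepsilon\) of \(C^u(\mathbb G)\), because the universal norm dominates every \(^*\)-representation of \(\operatorname{Pol}(\mathbb G)\), including the one-dimensional representation \(\varepsilon\) \cite[Sections~2--3]{BedosMurphyTuset2001}. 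The identities \((\varepsilon\otimes I)\Delta=I=(I\otimes\varepsilon)\Delta\) hold on the dense subalgebra \(\operatorname{Pol}(\mathbb G)\) and hence everywhere by continuity.

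\emph{Finite subcoalgebras.} By Woronowicz's Peter--Weyl theory \cite[Theorem~1.2]{Woronowicz1998}, \(\operatorname{Pol}(\mathbb G)\) is the span of the matrix coefficients \(u^\alpha_{ij}\) of the irreducible unitary corepresentations \(\alpha\), and each coefficient satisfies
\[
\Delta(u^\alpha_{ij})=\sum_k u^\alpha_{ik}\otimes u^\alpha_{kj}.
\]
Since \(C^u(\mathbb G)\) is separable and the coefficient spaces of distinct irreducibles are linearly independent, only countably many irreducible classes are needed. To justify this, I would argue as in Corollary~\ref{cor:compact-measures}. Approximate a countable dense subset of \(C^u(\mathbb G)\) by finite linear combinations of coefficients; these combinations involve only countably many classes, and their span is dense. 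Enumerate those classes as \(\alpha_0,\alpha_1,\ldots\), with \(\alpha_0\) the trivial corepresentation, so that \(1\in C_0\).

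Set \(C_r\) to be the span of all coefficients of \(\alpha_0,\ldots,\alpha_r\). Then each \(C_r\) is finite dimensional, contains \(1\), and satisfies \(\Delta(C_r)\subseteq C_r\otimes C_r\) by the displayed formula. The union of the \(C_r\) is dense by construction.

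\emph{Conclusion.} Proposition~\ref{prop:coalgebra} now gives the Calkin realization, with the convolution \((\mu\star\nu)(x)=(\mu\otimes\nu)(\Delta x)\) and the dual norm.

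The only delicate point is the countability and density step. Separability alone does not immediately give countably many irreducibles, but the approximation argument above settles it. Alternatively, in a separable space, distinct irreducible classes carry coefficient subspaces that are uniformly separated by Haar-state orthogonality, which already forces countability. The remaining steps are routine citations.
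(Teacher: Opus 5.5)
Your proof is correct and is essentially the paper's own argument: extend the coproduct and counit to \(C^u(\mathbb G)\) via \cite{BedosMurphyTuset2001}, select countably many complete Peter--Weyl coefficient spaces with dense span as in Corollary~\ref{cor:compact-measures}, put the trivial corepresentation first, and apply Proposition~\ref{prop:coalgebra}. The one inaccuracy is in your closing aside: distinct coefficient spaces are not uniformly separated (on \(SU(2)\) the norm-one elements \(\chi_n/(n+1)\) and \(\chi_{n+2}/(n+3)\) have sup-distance \(O(1/n)\)), so Haar orthogonality gives countability only after a pigeonhole over the separation constants, and the approximation argument you give first is the one to rely on.
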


\begin{proof}
  The Hopf \(^*\)-algebra spanned by the irreducible corepresentation coefficients is dense in \(C^u(\mathbb G)\) \cite[Theorem~1.2]{Woronowicz1998}.
  On the universal completion, the coproduct extends to a unital \(^*\)-homomorphism and the counit to a character \cite[Section~3]{BedosMurphyTuset2001}.
  On the coefficients, we have
  \[ \Delta(u_{ij}^{\alpha})=\sum_k u_{ik}^{\alpha}\otimes u_{kj}^{\alpha}, \qquad \varepsilon(u_{ij}^{\alpha})=\delta_{ij}. \]
  Applying the countable selection used in the proof of Corollary~\ref{cor:compact-measures}, we obtain a sequence of complete coefficient spaces with dense span.
  Include the trivial corepresentation first, and let \(C_r\) be the sum of the first \(r+1\) spaces.
  Then we have
  \[ 1\in C_r,\qquad \Delta(C_r)\subseteq C_r\otimes C_r, \qquad \overline{\bigcup_rC_r}=C^u(\mathbb G). \]
  Proposition~\ref{prop:coalgebra} applies.
\end{proof}

Thus the universal duals associated with \(SU_q(2)\) (\(0<q<1\)) \cite{Woronowicz1987} and the free orthogonal compact quantum groups \(O_N^+\) (\(N\geq2\)) \cite{Wang1995} are noncommutative examples.
The UCP hypothesis in Proposition~\ref{prop:coalgebra} also permits coproducts that are not multiplicative.

\begin{corollary}[Compact quantum-hypergroup duals]
  \label{cor:compact-quantum-hypergroups}
  Let \(C\) be the separable unital \(C^*\)-algebra of a compact quantum hypergroup in the sense of Chapovsky--Vainerman \cite[Definition~4.1]{ChapovskyVainerman1999}.
  Then \(C^*\), with its dual norm and convolution product, is isometrically a Calkin algebra.
\end{corollary}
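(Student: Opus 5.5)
The natural route is Proposition~\ref{prop:coalgebra}, taking the Chapovsky--Vainerman data as the coproduct input. By \cite[Definition~4.1]{ChapovskyVainerman1999}, a compact quantum hypergroup comes with a coassociative coproduct \(\Delta:C\to C\otimes_{\min}C\) that is unital and completely positive, though not necessarily multiplicative, together with a counit \(\varepsilon\) satisfying \((\varepsilon\otimes I)\Delta=I=(I\otimes\varepsilon)\Delta\).

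The first step is to check that \(\varepsilon\) is a character, or at least a unital bounded functional of norm one. If the definition gives \(\varepsilon\) only as a \(^*\)-homomorphism on a dense \(^*\)-subalgebra, I would extend it by continuity using the stated boundedness in the compact case. Proposition~\ref{prop:coalgebra} only uses \(\norm\varepsilon=1\) and the counit identities, so positivity and unitality of \(\varepsilon\) would suffice for that part of its proof.

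The second step supplies the finite-dimensional subcoalgebras \(C_r\). Chapovsky--Vainerman prove a Peter--Weyl type decomposition for compact quantum hypergroups: there is a family of finite-dimensional irreducible corepresentations whose matrix coefficients \(u_{ij}^\alpha\) span a dense \(^*\)-subalgebra. On these coefficients one has
\[
\Delta(u_{ij}^\alpha)=\sum_k u_{ik}^\alpha\otimes u_{kj}^\alpha,
\]
which follows from the corepresentation identity \((I\otimes\Delta)U=U_{12}U_{13}\). From this point I would repeat the countable selection in the proof of Corollary~\ref{cor:compact-measures}. Separability of \(C\) lets one pick countably many corepresentations whose full coefficient spaces have dense span. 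Put the trivial corepresentation first, so that \(1\in C_0\), and let \(C_r\) be the span of the first \(r+1\) coefficient spaces. Then each \(C_r\) is finite dimensional, increasing, satisfies \(\Delta(C_r)\subseteq C_r\otimes C_r\), and \(\bigcup_r C_r\) is dense in \(C\).

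Proposition~\ref{prop:coalgebra} then gives the isometric Calkin realization of \(C^*\) with the convolution \((\mu\star\nu)(x)=(\mu\otimes\nu)(\Delta x)\), which is the convolution product in the statement.

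The main obstacle is matching the exact hypotheses of \cite[Definition~4.1]{ChapovskyVainerman1999} to those of Proposition~\ref{prop:coalgebra}. Specifically, one must confirm that the coproduct takes values in the minimal tensor product and is completely positive, not merely positive. One must also confirm that the coefficient subspaces obtained from the hypergroup Peter--Weyl theorem are subcoalgebras in the algebraic tensor product. If complete positivity is only stated on the dense polynomial part, I would extend \(\Delta\) by continuity using the fact that UCP maps are contractive. If the counit turns out not to be multiplicative, I would note that the proof of Proposition~\ref{prop:coalgebra} never used multiplicativity of \(\varepsilon\), only \(\norm\varepsilon=1\).
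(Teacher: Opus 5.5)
Your proposal is correct and follows essentially the same route as the paper. The paper reads a coassociative UCP coproduct and a character counit off the Chapovsky--Vainerman axioms, uses their Peter--Weyl theorem and the countable selection from Corollary~\ref{cor:compact-measures} to obtain finite-dimensional subcoalgebras \(C_r\) with dense union, and applies Proposition~\ref{prop:coalgebra}. It settles your max/min concern by composing the coproduct with the canonical quotient \(C\otimes_{\max}C\to C\otimes_{\min}C\), and puts \(1_C\) directly into each \(C_r\). You also do not need your claim that the coefficients span a \(^*\)-subalgebra, which is doubtful when \(\Delta\) is not multiplicative; only density of their span is used.
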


\begin{proof}
  The axioms provide a coassociative UCP coproduct and a character counit \cite[Definitions~1.1 and~4.1]{ChapovskyVainerman1999}.
  If the coproduct is initially given in the maximal tensor product, compose it with the canonical quotient onto the minimal tensor product.
  The Peter--Weyl theorem gives a norm-dense span of finite-dimensional corepresentation coefficients \cite[Theorem~5.11]{ChapovskyVainerman1999}, satisfying
  \[ \Delta(t_{ij}^{\alpha})=\sum_k t_{ik}^{\alpha}\otimes t_{kj}^{\alpha}. \]
  By separability and the countable selection in the proof of Corollary~\ref{cor:compact-measures}, we obtain complete coefficient spaces with dense span.
  Let \(C_r\) be the span of \(1_C\) and the first \(r+1\) such spaces.
  These spaces are increasing and finite dimensional, their union is dense in \(C\), and \(\Delta(C_r)\subseteq C_r\otimes C_r\).
  Proposition~\ref{prop:coalgebra} applies.
\end{proof}

This includes the compact quantum double-coset hypergroups of \cite[Section~3 and Example~4.3]{ChapovskyVainerman1999}.
Thus the finite-coalgebra argument covers both classical hypergroup measure algebras and their quantum counterparts.

The bounded-counit condition can be removed at the cost of standard unitization.
The following construction preserves the prescribed norm.

\begin{lemma}[Adjoining a counit]
  \label{lem:coalgebra-counitalization}
  Let \(C\) be a unital \(C^*\)-algebra with a coassociative UCP map \(\Delta:C\to C\otimes_{\min}C\).
  Suppose that \(C\) has increasing finite-dimensional subspaces \(C_r\) with dense union such that \(1_C\in C_r\) and \(\Delta(C_r)\subseteq C_r\otimes C_r\).
  No bounded counit is assumed.
  Then the standard unitization
  \[
  (C^*)^{\#}=C^*\oplus_1\mathbb C,\qquad (\mu,\lambda)(\nu,\eta) =(\mu\star\nu+\lambda\nu+\eta\mu,\lambda\eta),
  \]
  with \(\norm{(\mu,\lambda)}=\norm\mu+\abs\lambda\), is isometrically a Calkin algebra.
\end{lemma}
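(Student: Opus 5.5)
The plan is to build a finite-dimensional inverse system whose bounded limit is \((C^*)^{\#}\) and then apply Theorem~\ref{thm:main}. Since no bounded counit is available, Proposition~\ref{prop:coalgebra} cannot be used directly. Instead we adjoin the unit by hand at each finite stage. For each \(r\), put
\[
A_r=C_r^*\oplus_1\mathbb C,\qquad (\mu,\lambda)(\nu,\eta)=(\mu\star_r\nu+\lambda\nu+\eta\mu,\lambda\eta),
\]
where \(\star_r\) is the convolution induced by \(\Delta|_{C_r}:C_r\to C_r\otimes C_r\):
\[
(\mu\star_r\nu)(x)=(\mu\otimes\nu)(\Delta x).
\]

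First, each \(A_r\) must be a finite-dimensional unital Banach algebra with \(\norm{1}=1\). Associativity of \(\star_r\) follows from coassociativity restricted to \(C_r\). For submultiplicativity we need \(\norm{\mu\star_r\nu}_{C_r^*}\leq\norm\mu\norm\nu\), and here lies the only subtle point. The functional \(\mu\otimes\nu\) is defined on \(C_r\otimes C_r\subseteq C\otimes_{\min}C\), with \(C_r\) carrying the norm of \(C\). Take Hahn--Banach extensions \(\tilde\mu,\tilde\nu\in C^*\) of the same norms. Then \(\tilde\mu\otimes\tilde\nu\) has norm \(\norm\mu\norm\nu\) on the minimal tensor product and restricts to \(\mu\otimes\nu\). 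Since \(\Delta\) is UCP and hence contractive, we get
\[
\abs{(\mu\star_r\nu)(x)}\leq\norm\mu\norm\nu\norm x .
\]
The usual unitization estimate then gives submultiplicativity of \(\norm\mu+\abs\lambda\), and the unit \((0,1)\) has norm one.

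Next come the bonding maps \(\pi_r^s(\mu,\lambda)=(\mu|_{C_r},\lambda)\). Restriction intertwines \(\star_s\) and \(\star_r\) because \(\Delta(C_r)\subseteq C_r\otimes C_r\). The map is unital and contractive. Hahn--Banach extensions of the same norm give isometric lifts, so \(\pi_r^s\) is a metric quotient in the sense of \eqref{eq:metric-quotient} by the observation preceding Corollary~\ref{cor:finite-dimensional-algebras}.

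Finally we identify \(\ilim(A_r,\pi_r^s)\) with \((C^*)^{\#}\). The scalar coordinate is constant along a compatible sequence. The \(C_r^*\)-coordinates \(\mu_r\) are compatible with \(\sup_r\norm{\mu_r}<\infty\), so they define a bounded functional on \(\bigcup_rC_r\), which extends uniquely to \(\mu\in C^*\) by density. This is exactly the argument in the proof of (iii)\(\Rightarrow\)(i) of Theorem~\ref{thm:finite-detection-duality}. The norm satisfies
\[
\sup_r(\norm{\mu_r}+\abs\lambda)=\norm\mu+\abs\lambda,
\]
since \(\norm{\mu_r}\) increases to \(\norm\mu\) by density. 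Multiplication agrees coordinatewise, because \((\mu\star\nu)|_{C_r}=\mu|_{C_r}\star_r\nu|_{C_r}\) by the invariance of \(C_r\) under \(\Delta\). Conversely, each element of \((C^*)^{\#}\) gives such a compatible sequence. Theorem~\ref{thm:main} then finishes the proof.

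The main obstacle is the norm estimate for \(\star_r\) on \(C_r^*\), since \(C_r\otimes C_r\) carries the norm inherited from the minimal tensor product. The Hahn--Banach extension argument above handles it.
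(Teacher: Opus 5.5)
Your proof is correct, but it follows a different route from the paper.

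The paper works on the predual side. It forms $\widetilde C=C\oplus_\infty\mathbb C$ with
\[
\widetilde\Delta(a,\lambda)=(\Delta a,a,a,\lambda),\qquad \widetilde\varepsilon(a,\lambda)=\lambda .
\]
It then checks four things:
\begin{itemize}
\item $\widetilde\Delta$ is UCP and coassociative, using the eight-component decomposition of $\widetilde C^{\otimes_{\min}3}$;
\item $\widetilde\varepsilon$ is a character counit;
\item the spaces $\widetilde C_r=C_r\oplus\mathbb C$ are finite subcoalgebras;
\item the dual convolution on $\widetilde C^*=C^*\oplus_1\mathbb C$ is exactly the unitization product.
\end{itemize}
Proposition~\ref{prop:coalgebra} is then applied as a black box.

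You instead unitize each finite stage, $C_r^*\oplus_1\mathbb C$, and verify the hypotheses of Theorem~\ref{thm:main} directly. This amounts to inlining the proofs of Proposition~\ref{prop:coalgebra} and of (iii)$\Rightarrow$(i) in Theorem~\ref{thm:finite-detection-duality}, with the adjoined unit carried along. It is the same pattern the paper uses for Corollaries~\ref{cor:unconditional-algebra-sums} and~\ref{cor:ufdd-unitization}.

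What the paper's route buys: the output is again a genuine counital coalgebra satisfying the hypotheses of Proposition~\ref{prop:coalgebra}. It also matches the neutral-element adjunction mentioned after Corollary~\ref{cor:quantum-completions}. The price is the tensor bookkeeping for $\widetilde\Delta$.

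What your route buys: it avoids that bookkeeping entirely and never uses $1_C\in C_r$. It actually proves a more general fact: the $\ell_1$-unitization of any bounded inverse limit of finite-dimensional Banach algebras (not necessarily unital) with metric-quotient bonding homomorphisms is again such a limit, now with unital stages. The price is re-deriving by hand the convolution norm estimate on $C_r^*$ and the identification of the limit, which Proposition~\ref{prop:coalgebra} already packages. Your Hahn--Banach extension argument for the estimate on $C_r^*$ is sound.
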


\begin{proof}
  Put \(\widetilde C=C\oplus_\infty\mathbb C\), with its usual \(C^*\)-algebra structure and unit \((1_C,1)\).
  In the canonical decomposition
  \[ \widetilde C\otimes_{\min}\widetilde C
     \cong(C\otimes_{\min}C)\oplus_\infty C\oplus_\infty C\oplus_\infty\mathbb C, \]
  define
  \[ \widetilde\Delta(a,\lambda)=(\Delta a,a,a,\lambda), \qquad
     \widetilde\varepsilon(a,\lambda)=\lambda. \]
  Each coordinate of \(\widetilde\Delta\) is completely positive, and the displayed formula preserves the unit, so \(\widetilde\Delta\) is UCP.
  The functional \(\widetilde\varepsilon\) is a character, and slicing either tensor factor by it gives \((a,\lambda)\).
  Under the eight-component decomposition of \(\widetilde C^{\otimes_{\min}3}\), ordered lexicographically with \(C\) before \(\mathbb C\) in each factor, the components of
  \((\widetilde\Delta\otimes I)\widetilde\Delta(a,\lambda)\) are
  \[ \bigl((\Delta\otimes I)\Delta a,\Delta a,\Delta a,a,\Delta a,a,a,\lambda\bigr). \]
  Those of \((I\otimes\widetilde\Delta)\widetilde\Delta(a,\lambda)\) are the same, with first component \((I\otimes\Delta)\Delta a\).
  Hence \(\widetilde\Delta\) is coassociative.
  The subspaces \(\widetilde C_r=C_r\oplus\mathbb C\) satisfy
  \[ (1_C,1)\in\widetilde C_r, \qquad
     \widetilde\Delta(\widetilde C_r)\subseteq\widetilde C_r\otimes\widetilde C_r,
     \qquad \overline{\bigcup_r\widetilde C_r}=\widetilde C. \]
  Under the canonical isometric identification \(\widetilde C^*=C^*\oplus_1\mathbb C\), we have \(\langle(\mu,\lambda),(a,z)\rangle=\mu(a)+\lambda z\) and \(\norm{(\mu,\lambda)}=\norm\mu+\abs\lambda\).
  Moreover, we have
  \begin{align*}
    \langle(\mu,\lambda)\star(\nu,\eta),(a,z)\rangle
    &=(\mu\otimes\nu)(\Delta a)+\eta\mu(a)+\lambda\nu(a)+\lambda\eta z\\
    &=\langle(\mu\star\nu+\lambda\nu+\eta\mu,\lambda\eta),(a,z)\rangle.
  \end{align*}
  Thus this isometric identification gives exactly the unitization product.
  Apply Proposition~\ref{prop:coalgebra} to \(\widetilde C\).
\end{proof}

\begin{corollary}[Reduced and exotic quantum-group completions]
  \label{cor:quantum-completions}
  Let \(C_\mu(\mathbb G)\) be a separable completion of \(\operatorname{Pol}(\mathbb G)\) in a quantum group norm: the algebraic coproduct extends to a unital \(^*\)-homomorphism
  \[ \Delta_\mu:C_\mu(\mathbb G)\longrightarrow
       C_\mu(\mathbb G)\otimes_{\min}C_\mu(\mathbb G). \]
  If the algebraic counit is bounded in this norm, then \(C_\mu(\mathbb G)^*\), with its dual norm and convolution, is isometrically a Calkin algebra.
  Without any counit assumption, its standard unitization
  \[ \bigl(C_\mu(\mathbb G)^*\bigr)^{\#}
       =C_\mu(\mathbb G)^*\oplus_1\mathbb C \]
  is isometrically a Calkin algebra.
  In particular, the latter assertion applies to every separable reduced completion, without coamenability, and to every separable exotic quantum-group completion.
\end{corollary}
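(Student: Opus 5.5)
The plan is to reduce both assertions to Proposition~\ref{prop:coalgebra} and Lemma~\ref{lem:coalgebra-counitalization} by producing the finite-dimensional subcoalgebras \(C_r\) in \(C_\mu(\mathbb G)\). First I check the structural hypotheses. The map \(\Delta_\mu\) is a unital \(^*\)-homomorphism, so it is UCP \cite[p.~28]{Paulsen2003}. Coassociativity holds on the dense subalgebra \(\operatorname{Pol}(\mathbb G)\) by the Hopf-algebra identities. Both sides of \((\Delta_\mu\otimes I)\Delta_\mu=(I\otimes\Delta_\mu)\Delta_\mu\) are continuous into the threefold minimal tensor product, using injectivity of \(\otimes_{\min}\) for the ampliations of \(^*\)-homomorphisms. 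Hence the identity extends to \(C_\mu(\mathbb G)\).

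For the subspaces, I use that \(\operatorname{Pol}(\mathbb G)\) is dense in \(C_\mu(\mathbb G)\) by definition of a completion. I then repeat the countable selection from the proof of Corollary~\ref{cor:compact-measures}. Separability gives a countable dense set, and each of its members is approximated to accuracy \(1/m\) by finite combinations of coefficients \(u_{ij}^\alpha\) of irreducible corepresentations. This yields a sequence \(\alpha_0=\text{trivial},\alpha_1,\ldots\) whose full coefficient spaces have dense span. I set \(C_r=\sum_{n\leq r}\operatorname{span}\{u_{ij}^{\alpha_n}\}\). Then \(1\in C_r\), and \(\Delta_\mu(u_{ij}^\alpha)=\sum_k u_{ik}^\alpha\otimes u_{kj}^\alpha\) gives \(\Delta_\mu(C_r)\subseteq C_r\otimes C_r\). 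The algebraic tensor of finite-dimensional subspaces sits inside the minimal tensor product, so the inclusion makes sense there.

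If the algebraic counit \(\varepsilon(u_{ij}^\alpha)=\delta_{ij}\) is bounded for the \(\mu\)-norm, it extends to \(C_\mu(\mathbb G)\). Being a bounded \(^*\)-homomorphism on a dense \(^*\)-subalgebra, its extension is a character. The counit identities hold on \(\operatorname{Pol}(\mathbb G)\) and extend by continuity, since slicing by a bounded functional is continuous on \(\otimes_{\min}\). Proposition~\ref{prop:coalgebra} then gives the first assertion. Without the counit assumption, Lemma~\ref{lem:coalgebra-counitalization} applies directly to \((C_\mu(\mathbb G),\Delta_\mu,(C_r))\) and gives the unitized assertion.

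For the final sentence, I invoke the standard facts from \cite{BedosMurphyTuset2001,KyedSoltan2012}. The reduced completion is a quantum group completion, since its coproduct extends to \(C_{\mathrm{red}}\otimes_{\min}C_{\mathrm{red}}\), and exotic completions satisfy this by definition. The main obstacle is not the construction itself but the continuity bookkeeping: verifying coassociativity and the counit slicing on the minimal tensor product from their validity on \(\operatorname{Pol}(\mathbb G)\). I expect this to follow from density together with contractivity of the ampliations \(\Delta_\mu\otimes I\) and \(\varepsilon\otimes I\).
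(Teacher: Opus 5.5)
Your proposal is correct and follows the paper's proof. You take \(C_r\) to be sums of countably many complete corepresentation coefficient spaces, with the trivial one first, selected as in Corollary~\ref{cor:compact-quantum}; you extend coassociativity and, when the counit is bounded, the counit identities from \(\operatorname{Pol}(\mathbb G)\) by continuity; and you then invoke Proposition~\ref{prop:coalgebra} and Lemma~\ref{lem:coalgebra-counitalization}. Your extra continuity bookkeeping only spells out what the paper states in one line. Strictly, the fact you need there is that ampliations of \(^*\)-homomorphisms and slice maps by bounded functionals are bounded on \(\otimes_{\min}\), not injectivity of \(\otimes_{\min}\).
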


\begin{proof}
  Choose a sequence of complete corepresentation coefficient spaces with dense span in \(C_\mu(\mathbb G)\), including the trivial corepresentation first.
  Their successive sums \(C_r\) contain \(1\) and satisfy \(\Delta_\mu(C_r)\subseteq C_r\otimes C_r\), as in the proof of Corollary~\ref{cor:compact-quantum}.
  Coassociativity holds on the dense Hopf \(^*\)-algebra and therefore on the completion.
  A bounded algebraic counit extends to a character satisfying both counit identities, so Proposition~\ref{prop:coalgebra} gives the first assertion.
  Lemma~\ref{lem:coalgebra-counitalization} gives the second.
\end{proof}

The coproduct-extension condition is part of the term \emph{quantum group norm}, but it is not automatic for an arbitrary \(C^*\)-norm on \(\operatorname{Pol}(\mathbb G)\) \cite[Definition~7.1 and Section~9]{KyedSoltan2012}.
When the original counit is unbounded, the preceding direct-sum construction agrees with adjoining the neutral element in \cite[Section~8]{KyedSoltan2012}.
For a non-coamenable reduced completion, the ununitized convolution dual has no identity.
Indeed, if \(\mu\) were a convolution identity, separation by bounded functionals would give
\[ (\mu\otimes I)\Delta(a)=a
   \qquad\bigl(a\in C_{\mathrm{red}}(\mathbb G)\bigr). \]
For a finite-dimensional unitary corepresentation \(u=(u_{ij})\), put \(A=(\mu(u_{ij}))\).
The coefficient formula for \(\Delta\) then gives \(Au=u\), and multiplication by \(u^*\) yields \(A=I\).
Thus \(\mu(u_{ij})=\delta_{ij}\), so \(\mu\) would extend the algebraic counit boundedly, contrary to non-coamenability \cite[Section~2]{BedosMurphyTuset2001}.
Hence the ununitized dual cannot itself be a nonzero Calkin algebra.
For example, \(\bigl(C_{\mathrm{red}}(O_N^+)^*\bigr)^{\#}\) is realized for every \(N\geq3\), and these reduced compact quantum groups are non-coamenable \cite[Remark~8.4(3)]{KyedSoltan2012}.

\begin{corollary}[Sequentially ultramatricial \(R^*\)-algebras]\label{cor:mori-rstar}
  Let
  \[ \mathscr R_{\rm alg}=\bigcup_{r=0}^\infty\mathscr R_r \]
  be an increasing union of finite-dimensional \(C^*\)-algebras with a common unit, whose connecting maps \(\iota_r^s:\mathscr R_r\to\mathscr R_s\) are unital injective \(^*\)-homomorphisms.
  Suppose that these inclusions preserve coassociative unital \(^*\)-homomorphic coproducts \(\Delta_r:\mathscr R_r\to \mathscr R_r\otimes_{\min}\mathscr R_r\) and character counits \(\varepsilon_r:\mathscr R_r\to\mathbb C\), in the sense that
  \[ \Delta_s\iota_r^s =(\iota_r^s\otimes\iota_r^s)\Delta_r, \qquad \varepsilon_s\iota_r^s=\varepsilon_r \qquad(r\leq s). \]
  If \(R=\overline{\mathscr R_{\rm alg}}\), then its Banach dual \(R^*\), with convolution, is isometrically a Calkin algebra.
\end{corollary}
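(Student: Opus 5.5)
The plan is to reduce Corollary~\ref{cor:mori-rstar} to Proposition~\ref{prop:coalgebra}, applied to \(C=R\) with the finite-dimensional subspaces \(C_r=\mathscr R_r\), identified with their images in \(\mathscr R_{\rm alg}\). This requires three things: a coproduct \(\Delta:R\to R\otimes_{\min}R\), a character counit \(\varepsilon\) on \(R\), and the verification of the hypotheses on \((C_r)\).

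\textbf{Step 1 (the coproduct).} On \(\mathscr R_{\rm alg}\), define \(\Delta\) to be \(\Delta_r\) on \(\mathscr R_r\). The compatibility \(\Delta_s\iota_r^s=(\iota_r^s\otimes\iota_r^s)\Delta_r\) makes this well defined as a unital \(^*\)-homomorphism from \(\mathscr R_{\rm alg}\) into the algebraic tensor product \(\mathscr R_{\rm alg}\odot\mathscr R_{\rm alg}\). Extending it to \(R\) rests on two contractivity facts.
\begin{enumerate}[label=\textup{(\alph*)}]
  \item Each \(\Delta_r\) is a \(^*\)-homomorphism between \(C^*\)-algebras, hence contractive for the \(C^*\)-norm of \(\mathscr R_r\otimes_{\min}\mathscr R_r\).
  \item The minimal tensor norm is injective, so the \(C^*\)-norm of \(\mathscr R_r\otimes\mathscr R_r\) coincides with the norm it inherits from \(R\otimes_{\min}R\). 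Here the norm of \(\mathscr R_r\) is the one inherited from \(R\), because the injective \(^*\)-homomorphisms \(\iota_r^s\) are isometric.
\end{enumerate}
Consequently \(\Delta\) is contractive on \(\mathscr R_{\rm alg}\) and extends by density to a unital \(^*\)-homomorphism \(R\to R\otimes_{\min}R\). Every unital \(^*\)-homomorphism is UCP, as recalled in the remark after Proposition~\ref{prop:coalgebra}. Coassociativity holds on each \(\mathscr R_r\) and therefore on the completion by continuity.

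\textbf{Step 2 (the counit).} The compatible characters \(\varepsilon_r\) define a unital multiplicative functional \(\varepsilon\) on \(\mathscr R_{\rm alg}\). A character on a \(C^*\)-algebra has norm one, so \(\abs{\varepsilon(x)}=\abs{\varepsilon_r(x)}\leq\norm x\) for \(x\in\mathscr R_r\). Thus \(\varepsilon\) extends to a character of \(R\). The counit identities hold on each \(\mathscr R_r\), since \(\Delta_r\) and \(\varepsilon_r\) form a counital structure there; if that is not stated explicitly, it is implicit in the phrase ``counits''. They then pass to \(R\) by density and continuity of the slice maps.

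\textbf{Step 3 (hypotheses of Proposition~\ref{prop:coalgebra}).} The subspaces \(C_r=\mathscr R_r\) are finite dimensional, increasing, contain the common unit, have dense union by definition of \(R\), and satisfy \(\Delta(C_r)=\Delta_r(\mathscr R_r)\subseteq C_r\otimes C_r\). Proposition~\ref{prop:coalgebra} then gives the conclusion, and \(R\) is separable because it is a countable union of finite-dimensional spaces, though separability is not even needed.

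\textbf{Main obstacle.} The only delicate point is the norm bookkeeping in Step~1: the extension needs the \(\mathscr R_r\otimes_{\min}\mathscr R_r\) norm to agree with the norm inherited from \(R\otimes_{\min}R\). This is exactly injectivity of \(\otimes_{\min}\) together with isometry of injective \(^*\)-homomorphisms. If the counit identities are not regarded as part of the hypothesis, I would check them generator by generator on each finite stage.
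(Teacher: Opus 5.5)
Your proposal is correct and follows the paper's proof. The paper likewise uses contractivity of the stage coproducts \(\Delta_r\) and counits \(\varepsilon_r\), together with the compatibility identities, to extend them to a coassociative unital \(^*\)-homomorphism \(\Delta:R\to R\otimes_{\min}R\) and a character on \(R\), and then applies Proposition~\ref{prop:coalgebra} with \(C_r=\mathscr R_r\). Your appeal to injectivity of \(\otimes_{\min}\) and isometry of injective \(^*\)-homomorphisms makes explicit norm bookkeeping that the paper leaves implicit, and, like the paper, you read the stagewise counit identities as part of the hypothesis ``counits''.
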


\begin{proof}
  If \(x\in\mathscr R_r\), then we have
  \[ \norm{\Delta_r x}\leq\norm x, \qquad \abs{\varepsilon_r(x)}\leq\norm x. \]
  The compatibility identities make these values independent of the stage containing \(x\).
  Thus contractivity extends the finite coproducts to a coassociative unital \(^*\)-homomorphism
  \[ \Delta:R\longrightarrow R\otimes_{\min}R, \]
  and the compatible counits extend to a character on \(R\).
  The spaces \(C_r=\mathscr R_r\) satisfy the hypotheses of Proposition~\ref{prop:coalgebra}, which gives the claimed isometric realization of \(R^*\).
  The algebra \(\mathscr R_{\rm alg}\) is a sequentially ultramatricial \(R^*\)-algebra in Mori's sense \cite[Example~3.5]{Mori2023}.
\end{proof}

An arbitrary \(R^*\)-algebra, or an arbitrary AF completion, does not carry a canonical multiplication on its Banach dual.

\begin{corollary}[Profinite monoids]\label{cor:profinite-monoids}
  Let \(S\) be a compact metrizable profinite monoid with identity, meaning that \(S\) is an inverse limit of finite discrete monoids \cite[Section~3.1]{Almeida2005}.
  Then \(M(S)\), with convolution, is isometrically a Calkin algebra.
\end{corollary}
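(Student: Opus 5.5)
The plan is to reduce to Proposition~\ref{prop:coalgebra}, exactly as in Corollary~\ref{cor:compact-measures}, with \(C=C(S)\). Write \(S=\varprojlim S_n\) for a sequence of finite discrete monoids with surjective monoid homomorphisms \(p_n:S\to S_n\). Metrizability lets me take the system countable. I may replace each \(S_n\) by the image \(p_n(S)\) and pass to the inverse sequence obtained by taking joint images of finitely many stages. This gives continuous surjective unital homomorphisms \(p_n\) whose fibres generate the topology.

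Then I define
\[ \Delta:C(S)\to C(S\times S)=C(S)\otimes_{\min}C(S),\qquad (\Delta f)(x,y)=f(xy), \]
and \(\varepsilon(f)=f(e)\). Continuity of the multiplication on \(S\) makes \(\Delta\) a unital \(^*\)-homomorphism, hence UCP. Associativity of \(S\) gives coassociativity, and \(xe=ex=x\) gives the two counit identities. Under the Riesz identification \(C(S)^*=M(S)\), the dual product \((\mu\star\nu)(f)=\int\!\!\int f(xy)\,d\mu(x)\,d\nu(y)\) is the usual convolution, and its unit is \(\delta_e\).

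For the finite-dimensional subcoalgebras, I take \(C_r=\{g\circ p_r:g\in C(S_r)\}\), the functions factoring through \(S_r\). This space is finite dimensional and contains \(1\). It increases with \(r\) because \(p_r\) factors through \(p_s\) for \(r\leq s\). If \(f=g\circ p_r\), then
\[ f(xy)=g(p_r(x)p_r(y))=\sum_{u,v\in S_r}g(uv)\,\mathbf1_{p_r^{-1}(u)}(x)\,\mathbf1_{p_r^{-1}(v)}(y), \]
so \(\Delta(C_r)\subseteq C_r\otimes C_r\). Density of \(\bigcup_rC_r\) follows from the Stone--Weierstrass theorem. The union is a unital self-adjoint subalgebra, since products of functions factoring through \(S_r\) and \(S_s\) factor through \(S_{\max\{r,s\}}\). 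It separates points, since the \(p_r\) jointly separate points of the inverse limit.

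Proposition~\ref{prop:coalgebra} then gives the isometric realization. I expect the only delicate point to be the reduction to a countable cofinal inverse sequence with surjective bonding maps. Metrizability provides this: a countable base of clopen sets, each determined by a finite quotient, together with joint images \(S\to S_{n_0}\times\cdots\times S_{n_k}\), yields the required sequence.
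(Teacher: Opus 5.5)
Your proposal is correct and follows the paper's proof essentially step for step: you take a nested sequence of finite quotient monoids, pull back \(C(S_r)\), check \(\Delta(C_r)\subseteq C_r\otimes C_r\) and Stone--Weierstrass density, and apply Proposition~\ref{prop:coalgebra}. The only difference is how countability is obtained, and it is cosmetic. The paper covers the off-diagonal of \(S\times S\) by the open sets \(\{(s,t):q(s)\neq q(t)\}\) and uses second countability to pass to a countable subcover. You instead extract a countable base from the clopen fibres of finite quotients. After taking joint images, both routes yield the same kind of nested separating sequence.
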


\begin{proof}
  By the finite-quotient description of profinite semigroups \cite[Theorem~3.1]{Almeida2005} and metrizability, we obtain finite quotient monoids \(S_r\) and quotient homomorphisms
  \[ q_r:S\longrightarrow S_r, \qquad q_r=\pi_r^{r+1}q_{r+1}, \]
  which separate the points of \(S\).
  Here \(\pi_r^{r+1}:S_{r+1}\to S_r\) is the bonding homomorphism.
  To obtain a countable separating family, consider, for each finite quotient \(q\), the open set of pairs \((s,t)\) with \(q(s)\ne q(t)\).
  These sets cover the complement of the diagonal in \(S\times S\).
  Since this complement is second countable, a countable subfamily still covers it.
  Include the trivial quotient and repeat quotients if the family is finite.
  Replacing the first \(r+1\) quotients by their joint image gives the nested sequence above.
  Write \(q_r^*:C(S_r)\to C(S)\) for the pullback map \(q_r^*f=f\circ q_r\), and put \(C_r=q_r^*C(S_r)\).
  The spaces \(C_r\) are increasing and finite dimensional.
  Their union is a unital self-adjoint algebra separating points, and is therefore dense by Stone--Weierstrass.
  Define
  \[ (\Delta f)(s,t)=f(st), \qquad \varepsilon(f)=f(e). \]
  Continuity and associativity of multiplication make \(\Delta\) a coassociative unital \(^*\)-homomorphism, and \(\varepsilon\) is a character counit.
  For \(f=h\circ q_r\in C_r\), we have
  \[ (\Delta f)(s,t)=h\bigl(q_r(s)q_r(t)\bigr). \]
  Since \(S_r\) is finite, this is a function in \(C_r\otimes C_r\).
  Under \(C(S)^*=M(S)\), the dual norm is the total-variation norm and the induced product is measure convolution.
  Apply Proposition~\ref{prop:coalgebra}.
\end{proof}

\begin{corollary}[Compact matrix-monoid measure algebras]
  \label{cor:compact-matrix-monoids}
  Let \(S\subseteq M_d(\mathbb C)\) be a compact submonoid containing the identity matrix.
  Then \(M(S)\), with convolution and the total-variation norm, is isometrically a Calkin algebra.
\end{corollary}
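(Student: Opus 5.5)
The plan is to apply Proposition~\ref{prop:coalgebra} to the commutative \(C^*\)-algebra \(C=C(S)\). The coproduct will be \((\Delta f)(s,t)=f(st)\) and the counit \(\varepsilon(f)=f(I_d)\). Continuity of matrix multiplication on the compact set \(S\times S\) makes \(\Delta\) a unital \(^*\)-homomorphism into \(C(S\times S)=C(S)\otimes_{\min}C(S)\). Associativity of multiplication gives coassociativity, and \(I_d\in S\) gives both counit identities. Under the Riesz identification \(C(S)^*=M(S)\), the dual norm is the total-variation norm and the induced product is measure convolution, exactly as in the proof of Corollary~\ref{cor:compact-measures}.

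The finite-dimensional subcoalgebras come from polynomials in the matrix entries. For \(r\in\N\), let \(C_r\) be the restriction to \(S\) of the polynomials of total degree at most \(r\) in the \(2d^2\) real variables \(\operatorname{Re}x_{ij}\) and \(\operatorname{Im}x_{ij}\). Equivalently, these are the polynomials in \(x_{ij}\) and \(\overline{x_{ij}}\). Each \(C_r\) is finite dimensional, contains the constants, and the spaces increase with \(r\). Their union is a unital self-adjoint subalgebra of \(C(S)\) that separates points, since the coordinates separate distinct matrices. By Stone--Weierstrass, the union is dense.

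The key identity is \((st)_{ij}=\sum_k s_{ik}t_{kj}\), together with its complex conjugate. Hence \(\Delta(x_{ij})=\sum_k x_{ik}\otimes x_{kj}\) and \(\Delta(\overline{x_{ij}})=\sum_k\overline{x_{ik}}\otimes\overline{x_{kj}}\). Each generator of degree one is therefore sent into \(C_1\otimes C_1\). Since \(\Delta\) is multiplicative, a monomial of degree at most \(r\) is sent to a product of at most \(r\) such sums. Each term of that product is a tensor of two monomials, each of degree at most \(r\), so \(\Delta(C_r)\subseteq C_r\otimes C_r\).

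The step that needs the most care is this last degree bookkeeping. The polynomials must be treated as functions on \(S\), not as formal polynomials, but the inclusion is unaffected because restriction to \(S\times S\) commutes with the tensor identification. No other obstacle is expected. Proposition~\ref{prop:coalgebra} then yields that \(M(S)\) is isometrically its bounded detection envelope and hence, by Theorem~\ref{thm:main}, isometrically a Calkin algebra.
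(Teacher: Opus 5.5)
Your proposal is correct and follows the paper's proof essentially step for step: the same coproduct \((\Delta f)(s,t)=f(st)\) and counit \(\varepsilon(f)=f(I_d)\), the same spaces \(C_r\) of polynomials of total degree at most \(r\) in the entries and their conjugates, Stone--Weierstrass for density, the degree count giving \(\Delta(C_r)\subseteq C_r\otimes C_r\), and then Proposition~\ref{prop:coalgebra}. The paper's bookkeeping is slightly sharper, since a monomial of degree \(m\) splits into tensors of two degree-\(m\) factors, but your bound of ``at most \(r\)'' is all that is needed.
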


\begin{proof}
  For \(1\leq i,j\leq d\), let \(z_{ij}(a)=a_{ij}\) for \(a\in S\).
  Let \(C_r\) be the restrictions to \(S\) of the complex polynomials in the functions \(z_{ij}\) and \(\overline{z_{ij}}\) of total degree at most \(r\).
  Each \(C_r\) is finite dimensional, the spaces are increasing, and \(1\in C_r\).
  Their union is a unital self-adjoint algebra which separates the points of \(S\).
  The Stone--Weierstrass theorem therefore gives \(\overline{\bigcup_rC_r}=C(S)\).

  Define \((\Delta f)(a,b)=f(ab)\) and \(\varepsilon(f)=f(I_d)\).
  Then \(\Delta:C(S)\to C(S)\otimes_{\min}C(S)\) is a unital \(^*\)-homomorphism and \(\varepsilon\) is its counit.
  Associativity of matrix multiplication makes \(\Delta\) coassociative.
  Moreover, we have
  \[ \Delta z_{ij}=\sum_{k=1}^d z_{ik}\otimes z_{kj}, \qquad \Delta\overline{z_{ij}} =\sum_{k=1}^d\overline{z_{ik}}\otimes\overline{z_{kj}}. \]
  For a monomial of degree \(m\leq r\), multiplicativity of \(\Delta\) gives a finite sum of tensors whose two factors each have degree \(m\).
  Thus both factors belong to \(C_r\).
  Hence we obtain
  \[ \Delta(C_r)\subseteq C_r\otimes C_r. \]
  Under the isometric identification \(C(S)^*=M(S)\), we have
  \[ (\mu\star\nu)(f)=\int_S\int_S f(ab)\,d\mu(a)\,d\nu(b), \]
  which is the prescribed measure convolution.
  Proposition~\ref{prop:coalgebra} applies.
\end{proof}

In particular, for \(d\geq2\),
\[ \mathbb B_d =\{a\in M_d(\mathbb C):\norm a_{\rm op}\leq1\} \]
is a compact noncommutative matrix monoid.
Thus \(M(\mathbb B_d)\) is a noncommutative isometric Calkin algebra.
In dimension one the same result contains \([0,1]\) and the closed unit disc under multiplication.


\section{\texorpdfstring{Discussion}{Discussion}}
\label{sec:discussion}

\subsection{\texorpdfstring{Possible injury constructions}{Possible injury constructions}}
\label{sec:discussion-injury}

Lemma~\ref{lem:formal-legality} ensures persistence without injury in the present construction.
In other problems, finite requirements may cease to be compatible with later choices.
An injury rule could then abandon a current realization and restart it at a larger rank; see \cite[Chapters~6--7]{Soare2016} for the recursion-theoretic background.

An old BD atom remains in the space after its realization is abandoned.
Thus a construction with injury would need estimates showing that the accumulated contribution of abandoned pieces is compact.
Finite injury to each requirement is insufficient, since infinitely many requirements may leave such pieces.
One possible approach would be to control their tail effects uniformly while preserving the evaluation and coding estimates.
An infinite-injury version would require stronger convergence estimates.

\subsection{\texorpdfstring{The algebra \(\ell_\infty/c_0\)}{The algebra ell-infinity/c-zero}}
\label{sec:discussion-quotient}

Corollary~\ref{ex:ell-infinity} realizes \(\ell_\infty\) as a Calkin algebra.
The present mechanism does not yield a Calkin realization of \(\ell_\infty/c_0\).

\subsection{\texorpdfstring{Reusing the analytic input}{Reusing the analytic input}}
\label{sec:discussion-analysis}

Theorem~\ref{thm:datum}, summarized by \textup{(G1)--(G6)}, records the construction properties used in Part~II.
A different requirement language or family of finite modules can use the same analytic argument after these properties, the parameter and coding bounds, and the finite-certificate conditions have been verified.
In particular, the finite-extension property, common-stem comparison, uniform probe shielding, and multiplier identities must be preserved.
An injury rule would additionally require the compactness control discussed above.

  \section*{Acknowledgements}
  Rui Liu and Jie Shen were partially supported by the National Natural Science Foundation of China (Grant Nos. 12471131 and 12071230). 
  Rui Liu is grateful for the opportunities to make several extended visits to the Departments of Mathematics at Texas A\&M University and at The University of Texas at Austin, during which he benefited greatly from the guidance of Thomas Schlumprecht and Edward Odell and from many helpful discussions with them.
  Rui Liu and Jie Shen also thank Longyun Ding and Su Gao for their valuable advice and discussions on set theory, and for their encouragement and support over the years. 
  Jie Shen is grateful to Liang Yu for his lectures on recursion theory at the 2025 Fudan Logic Summer School. 
  He also thanks Zhaokuan Hao, Ruizhi Yang, Ningyuan Yao, and Will Johnson, as well as the School of Philosophy at Fudan University, for organizing and hosting the summer school.

  \section*{AI assistance, provenance, and authors' responsibility}
  \phantomsection\label{sec:ai-assistance}
  \begin{enumerate}[label={\textup{(\arabic*)}},leftmargin=*]
    \item \textbf{The authors' original design.}
    The authors formulated the main theorem and designed the finite-detection construction.
    Our original work also includes the bounded detection envelope and its formulation through finite-dimensional quotient towers.
    We also established the characterization by countably many jointly faithful weak-star-to-norm continuous finite-dimensional representations outlined in Section~\ref{sec:introduction}.
    We incorporated oracle Turing machines and recursion-theoretic methods into the AH-type construction developed here.
    The design includes the finite detector modules, covariant packets, finite requirements and certificates, and the unit and core probes.
    We also developed the vector-valued Bourgain--Delbaen construction, the multiplier representation of the bounded inverse limit, and the reduction of operator classification to local orbit estimates.
    The paired probes then make the finite-dimensional detector coefficients compatible and yield one coefficient in \(\widehat A\).
    These descriptions concern the specific design used in this paper.

    \item \textbf{Research and writing support.}
    We used GPT-5.6 Sol (OpenAI) and GPT-6 Astra (OpenAI) to check notation, formulas, and the domains and codomains of maps, look for gaps in arguments, suggest clearer wording, generate figures and tables, and help find references.
    These systems helped us locate related work in the Bourgain--Delbaen, Gowers--Maurey, and Argyros--Haydon literature, and we checked the relevant statements in the original sources.
    The technical notes in Sections~\ref{sec:ris-method}--\ref{sec:globalization} identify earlier methods for auxiliary averages, off-weight estimates, stopping trees, boundary-incidence counting, and RIS reduction and synchronization.
    The references are given at the points where these methods are used.

    \item \textbf{AI contributions to the proofs.}
    The stopping-event formulation in Lemma~\ref{lem:quantitative-auxiliary} was prompted by an AI suggestion.
    Subsequent literature checks showed that the underlying counting and tree-truncation techniques were already established.

    The contribution to the carrier-labelled argument in Lemma~\ref{lem:frontier} and Proposition~\ref{prop:dependent-mean} was more substantial.
    We confirmed that the remaining gap we had identified in the proof concerned the dependent-sequence estimate for evaluations with low level.
    GPT-5.6 Sol (OpenAI) then supplied the carrier-labelled stopping-tree construction and its counting proof that we used to close this gap.
    We rewrote this argument in our notation and checked it independently against the definitions and assumptions of the construction.
    Later source checks confirmed earlier uses of weight and depth stopping, boundary-incidence counting, and summation over disjoint intervals.
    These sources are cited beside the relevant arguments.

    \item \textbf{Calkin realization examples.}
    Our initial list of applications comprised unitizations associated with boundedly complete unconditional bases, finite-dimensional algebras, the algebras \((\oplus_n F_n)_{\ell_\infty}\) with finite-dimensional \(F_n\), and the convolution-dual examples associated with the special \(R^*\)-algebras in Corollary~\ref{cor:mori-rstar}.
    GPT-5.6 Sol (OpenAI) and GPT-6 Astra (OpenAI) suggested many of the concrete examples developed in Section~\ref{sec:examples}, both within these initially planned classes and beyond them.
    Representative suggestions include finite incidence algebras, lower-finite \(c_0\)-incidence algebras, weighted monoid and path algebras, H\"older algebras, atomic nest algebras, complete Pick multiplier algebras, free semigroupoid algebras, correspondence Hardy algebras, weak-star semicrossed products, Fourier--Stieltjes and Fourier multiplier algebras, and classical and quantum convolution duals.
    Concrete examples include \(H^\infty(\mathbb D)\), \(\operatorname{Mult}(H_2^2)\), \(\mathcal L_2\), \(M_{\mathrm{cb}}A(\mathbb F_2)\), and \(C^u(O_3^+)^*\).
    We checked that each included example satisfies the hypotheses of the realization results used.

    \item \textbf{Authors' responsibility.}
    We independently checked the AI-assisted arguments included in the paper and made all final decisions.
    The authors take full responsibility for the mathematical statements, proofs, citations, and final text.
    No AI system is treated as an author or as an authority for mathematical correctness.
  \end{enumerate}

  \bibliographystyle{amsplain}
  \bibliography{cite}

\end{document}